\documentclass[12pt]{amsart}

\usepackage{amssymb}
\usepackage{amsthm}
\usepackage{amsmath}
\usepackage{mathtools}
\usepackage{longtable}
\usepackage{mathrsfs}
\usepackage{graphicx}
\usepackage{hyperref}
\usepackage{enumitem}

\usepackage[left=1.1in,top=0.85in,right=1.1in,bottom=0.7in]{geometry}

\usepackage[all]{xy}
\usepackage{tikz-cd}
\usepackage[title]{appendix}
\usepackage{color}
\usepackage[utf8]{inputenc}
\usepackage[T1]{fontenc}
\hypersetup{colorlinks=true,linkcolor=blue,citecolor=blue}

\newcommand{\calF}{{\mathcal F}}
\newcommand{\calG}{{\mathcal G}}
\newcommand{\calC}{{\mathcal C}}

\newcommand{\calD}{{\mathcal D}}
\newcommand{\Oo}{{\mathcal O}}
\newcommand{\calP}{{\mathcal P}}

\newcommand{\Ee}{{\mathscr E}}

\newcommand{\Dd}{{\mathscr D}}

\newcommand{\Uu}{{\mathscr U}}

\newcommand{\Vv}{{\mathscr V}}

\newcommand{\Xx}{{\mathscr X}}

\newcommand{\hdot}{{\bullet}}

\newcommand{\Filt}{{\bf Filt}}

\newcommand{\cc}{\mathbb{C}}
\newcommand{\rr}{\mathbb{R}}
\newcommand{\zz}{\mathbb{Z}}
\newcommand{\pp}{\mathbb{P}}
\newcommand{\iiii}{\mathbb{I}}
\newcommand{\tttt}{\mathbb{T}}

\newcommand{\aaa}{\mathbb{A}}

\newcommand{\HH}{\mathbb{H}}
\newcommand{\qq}{\mathbb{Q}}

\newcommand{\delbar}{\overline{\partial}}
\newcommand{\Gr}{\mathrm{Gr}}
\newcommand{\Spec}{\mathrm{Spec}}

\newcommand{\vol}{\mathrm{vol}}
\newcommand{\ch}{\mathrm{ch}}
\newcommand{\Tr}{\mathrm{Tr}}
\newcommand{\End}{\mathrm{End}}
\newcommand{\Hom}{\mathrm{Hom}}
\newcommand{\im}{\mathrm{im}}
\newcommand{\Id}{\mathrm{Id}}
\newcommand{\CS}{CS}
\newcommand{\Del}{\mathrm{Del}}
\newcommand{\Cube}{\mathrm{Cube}}

\DeclareMathOperator{\dlog}{d\!\log}

\newtheorem{theorem}{Theorem}[section]
\newtheorem{corollary}[theorem]{Corollary}
\newtheorem{lemma}[theorem]{Lemma}
\newtheorem{proposition}[theorem]{Proposition}

\newtheorem{construction}[theorem]{Construction}
\newtheorem{conjecture}[theorem]{Conjecture}

\theoremstyle{definition}
\newtheorem{definition}[theorem]{Definition}
\newtheorem{example}[theorem]{Example}

\theoremstyle{remark}
\newtheorem{remark}[theorem]{Remark}
\newtheorem{notation}[theorem]{Notation}
\newtheorem{question}[theorem]{Question}

\begin{document}

\author[J.\,N.\,Iyer]{Jaya NN Iyer}
\address{Institute for Mathematical Sciences (HBNI),Tarmani, Chennai 600113, India}
\email{jniyer@imsc.res.in}
\address{Max-Planck Institute for Mathematics, 7, Vivatsgasse, Bonn 53121, Germany}
\email{jniyer@mpim-bonn.mpg.de}

\author[C.\,Simpson]{Carlos Simpson}
\address{CNRS, Laboratoire J.\,A.\,Dieudonn\'{e}, UMR 6621\\
Universit\'{e}  C\"ote d'Azur,\\
06108 Nice, Cedex 2, France}
\email{carlos.simpson@univ-cotedazur.fr}

\title[Torsion of extended Chern-Simons classes]{Torsion of extended
Chern-Simons classes for canonical extensions of flat bundles}

\subjclass[2010]{14F40, 14D07, 32G20, 55R40, 19E20, 14C30}

\keywords{Flat bundle, Deligne canonical extension, Chern-Simons class,
regulator, variation of Hodge structure, unipotent monodromy, Deligne-Beilinson
cohomology, nil-flat connection, Rees construction, patching data, torsion,
volume regulator, Burgos-Gill arithmetic Chern character.}

\begin{abstract}
Let $X$ be a smooth complex projective variety and
$D = D_1+\cdots+D_k\subset X$ a divisor with simple normal crossings. Consider Deligne's canonical extension $(F,\nabla)$ of a flat algebraic
vector bundle on $X^*:=X\setminus D$ with unipotent monodromy around every
component of~$D$.
We define and compare the various constructions of  the \emph{extended Chern-Simons classes}
\[
  \CS_p(\nabla^{\Del})\;\in\; H^{2p-1}(X,\cc/\zz),\quad p\geq 1,
\]
attached to  $(F,\nabla)$.

Our main theorem states that $\CS_p(\nabla^{\Del})$ is \emph{torsion} in
$H^{2p-1}(X,\cc/\zz)$, for every $p\geq 2$, extending \cite{Reznikov}, \cite{Reznikov2}, \cite{IS-arXiv},\cite{IS-2div}).  In \S \ref{sec:parabolic}, we treat the case of quasi-unipotent local monodromies via \textit{locally abelian parabolic bundles}, and deduce the torsion of Chern-Simons classes. In Appendix \ref{sec:deligne-sullivan}, we extend the Deligne-Sullivan  theorem \cite{DeSu} on triviality of flat bundle on a finite covering of a smooth manifold, to that of a canonical extension, and provide torsion-bounds on the extended characteristic  classes. 
\end{abstract}

\maketitle

\setcounter{tocdepth}{2}
\tableofcontents

\section{Introduction}
\label{sec:intro}

S.S. Chern, J. Cheeger and J. Simons \cite{ChrS}, \cite{CS} introduced
a theory of \emph{differential cohomology} on smooth manifolds.  For vector
bundles equipped with connections, they defined classes in the ring of
differential characters, now called \emph{secondary invariants} or
\emph{Chern-Simons classes}.  These classes lift the closed differential form
defined by the curvature of the given connection.  In particular, when the
connection is flat, the secondary invariants yield classes in cohomology with
$\rr/\zz$-coefficients.  These are the \emph{Chern-Simons classes of flat
	connections}.

The following question was raised in \cite[p.~70--71]{CS} (see also
\cite[p.~104]{Bloch}) by Cheeger and Simons):

\begin{question}\label{q:CS}
	Suppose $X$ is a smooth manifold and $(E,\nabla)$ is a flat connection on $X$.
	Are the Chern-Simons classes $\widehat{c}_p(E,\nabla)$ of $(E,\nabla)$ torsion
	in $H^{2p-1}(X,\rr/\zz)$ for $p\geq 2$?
\end{question}

Let $X$ be a smooth projective variety over $\cc$ and $(E,\nabla)$ a vector
bundle with flat connection.  Bloch \cite{Bloch} showed that for a unitary
connection the Chern-Simons classes map to the Chern classes of $E$ in
Deligne cohomology.  This observation, combined with Question~\ref{q:CS},
led him to conjecture:

\begin{conjecture}[Bloch]\label{conj:bloch}
	The Chern classes of flat bundles are torsion in the Deligne cohomology
	$H^{2p}_{\calD}(X,\zz(k))$ of $X$, for $p\geq 2$.
\end{conjecture}

Several constructions of secondary classes for flat bundles on smooth
projective varieties followed.  Beilinson defined universal secondary classes,
Esnault \cite{Es} constructed secondary classes using a modified
\emph{splitting principle} in $\cc/\zz$-cohomology, and Karoubi \cite{Karoubi} using K-theory.  These classes 
lift the Chern classes in the Deligne cohomology.
  They also admit an
interpretation in terms of differential characters, and the original
$\rr/\zz$-classes of Chern-Simons are recovered by the projection
$\cc/\zz\to\rr/\zz$.  The imaginary parts of the $\cc/\zz$-classes are
Borel's \emph{volume regulators}
\[
\mathrm{Vol}_{2p-1}(E,\nabla)\;\in\; H^{2p-1}(X,\rr).
\]
All of these constructions give the same class in odd degrees, called the
\emph{secondary classes} (or \emph{regulators}) of $(E,\nabla)$ on $X$; see
\cite{DHZ,Es2} for a detailed discussion.

Reznikov \cite{Rez2,Reznikov} proved that the secondary classes
of $(E,\nabla)$ are torsion in $H^{2i-1}(X,\cc/\zz)$ for $i\geq 2$,
thereby confirming Bloch's conjecture.

We extend Bloch's question to a quasi projective variety, as follows.

Let $X$ be a smooth projective variety over $\cc$ of dimension $d$, and let
$D = D_1+\cdots+D_k\subset X$ be a divisor with simple normal crossings.
Write $X^* := X\setminus D$.  Fix a base-point $x_0\in X^*$ and a
representation
\[
  \zeta : \pi_1(X^*,x_0) \longrightarrow GL_r(\cc)
\]
such that for every loop $\gamma_i$ encircling the divisor component $D_i$
the image $\zeta(\gamma_i)$ is \emph{unipotent}.  Let $F^{\zeta}$ denote the
corresponding locally constant sheaf on $X^*$.

Deligne \cite{Deligne70} constructed a canonical extension: a holomorphic
vector bundle $F$ on $X$ equipped with a logarithmic connection
\[
  \nabla : F \longrightarrow F\otimes\Omega^1_X(\log D),
\]
whose residues $\mathrm{Res}_{D_i}(\nabla)$ are nilpotent endomorphisms of
$F|_{D_i}$.  The flat connection $\nabla|_{X^*}$ corresponds to $\zeta$ under
the Riemann-Hilbert correspondence.

Following the approach of Cheeger-Simons \cite{CS} and its extension to the
logarithmic setting, one attaches secondary characteristic classes:

\[
  \CS_p(\nabla^{\Del}) \;\in\; H^{2p-1}(X,\cc/\zz),\quad p\geq 1,
\]
where $\nabla^{\Del}$ is Deligne's \emph{patched connection}: a
$\mathcal{C}^\infty$ locally nil-flat connection on $F$ constructed from
$\nabla$ by subtracting the singular $\dlog$ terms and averaging with a
partition of unity (Appendix~\ref{app:deligne-patch}).

\begin{definition}[Extended Chern-Simons classes]\label{def:extended-CS}
We call $\CS_p(\nabla^{\Del})\in H^{2p-1}(X,\cc/\zz)$ the \emph{extended
Chern-Simons classes} of the flat bundle $(V,\nabla_{\mathrm{flat}})$ on $X^*$,
or of the representation $\zeta$.
\end{definition}

The word ``extended'' refers to the canonical extension: these are classes on
the \emph{projective} variety $X$, attached to Deligne's extension $F$ of $V$
across $D$, rather than classes on the open part $X^*$.   Note
that $\nabla^{\Del}$ is locally nil-flat, so
$\CS_p(\nabla^{\Del})$ is not a Chern-Simons class in the classical
Cheeger-Simons sense. It is nevertheless defined via the vanishing
of all Chern-Weil forms of a locally nil-flat connection
(Proposition~\ref{prop:nilflat-ch-zero}).

The main result of this paper is:

\begin{theorem}[Main theorem]
\label{thm:main}
In the situation above, the extended Chern-Simons classes
$\CS_p(\nabla^{\Del})\in H^{2p-1}(X,\cc/\zz)$ are \emph{torsion}, for every
$p\geq 2$.
\end{theorem}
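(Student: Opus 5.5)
Following Reznikov's strategy for the compact case, the plan is to reduce torsion to two facts. The first is rigidity: the class $\CS_p(\nabla^{\Del})$ is constant in families. The second is the existence of a special point in the deformation space where the class is visibly torsion.

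The argument runs as follows.

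\textbf{Step 1 (rigidity).} Consider the moduli (character) variety $M$ of representations $\zeta$ of $\pi_1(X^*,x_0)$ whose monodromies $\zeta(\gamma_i)$ are unipotent. For a smooth family $\zeta_t$, the canonical extensions $(F_t,\nabla_t)$ and the patched connections $\nabla^{\Del}_t$ vary smoothly, because the residues $N_{i,t}$ are nilpotent. The variational formula for Chern–Simons forms then gives
\[
\frac{d}{dt}\CS_p(\nabla^{\Del}_t)=\Bigl[p\,\Tr\bigl(\dot\nabla^{\Del}_t\wedge (R_t)^{p-1}\bigr)\Bigr].
\]
Since $\nabla^{\Del}_t$ is locally nil-flat, $R_t$ is locally nilpotent and the relevant Chern–Weil-type forms vanish (Proposition~\ref{prop:nilflat-ch-zero}). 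Using this, I would show that the derivative is exact, so the real and imaginary parts of the class are locally constant on each connected component of $M$. For the imaginary part I expect this to need a Reznikov-type argument; the $\rr$-part is the volume regulator.

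\textbf{Step 2 (special point).} By Simpson's results on deformations to variations of Hodge structure in the quasi-projective, unipotent-monodromy setting (via tame harmonic bundles and the $\cc^*$-action on parabolic Higgs bundles with nilpotent residues), each connected component of $M$ contains a complex variation of Hodge structure. Since the cohomology group $H^{2p-1}(X,\cc/\zz)$ has $\qq$-structure, it suffices to prove torsion at such a point, after first reducing to $\zeta$ defined over a number field by a Galois conjugate argument. At a VHS point the Deligne extension carries the Hodge filtration, extended by Schmid's nilpotent orbit theorem, with $\nabla$ satisfying Griffiths transversality. The real part of the class then becomes that of a connection that preserves the filtration up to nilpotent terms. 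Using the Rees construction, I would deform the filtered bundle to its associated graded. On the graded, the connection reduces to the Higgs field plus a unitary-type part compatible with the Hodge metric. This shows the class equals the Cheeger–Simons class of a unitary-like connection plus a $\theta$ contribution that vanishes by nilpotency. In particular the imaginary part (the volume regulator) vanishes.

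\textbf{Step 3 (torsion).} For the remaining real part I would use Bloch/Reznikov: unitary flat classes are torsion because a unitary representation with algebraic entries has Galois conjugates. Summing the class over all conjugates gives the class of a representation defined over $\qq$, hence with integral structure, and the Deligne–Sullivan finite-cover argument (Appendix~\ref{sec:deligne-sullivan}) shows such a class is torsion.

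The main obstacle is Step 1 together with its interaction with the logarithmic singularities. Both the variation and the Rees deformation have to be controlled near $D$, where Hodge norms blow up logarithmically. I would first handle this by checking that $\nabla^{\Del}$ is built from $\dlog$-subtracted data that depend smoothly on the residues, so the transgression forms extend smoothly across $D$.
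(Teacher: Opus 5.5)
Your Steps 1 and 2 follow the paper's outline: deformation invariance on $R^{\mathrm{nil}}$, a move to a VHS point $\zeta_0$, and vanishing of the volume regulator there. Step 3, however, fails, and it is where torsion is actually decided. Vanishing of the imaginary part $\vol_p$ at one point says nothing about the $\rr/\zz$-component, and neither of your devices recovers it.
\begin{itemize}
\item Summing over Galois conjugates gives the class of $\bigoplus_\sigma\sigma\circ\zeta_0$, which is defined over $\qq$. But torsion of a sum does not give torsion of a summand. Moreover, being defined over $\qq$ (or $\zz[1/M]$) is no torsion criterion: Borel's regulator is non-zero on $K_{2p-1}(\qq)\otimes\qq$ for odd $p\geq 3$.
\item The Deligne--Sullivan covering only trivializes the underlying $\calC^\infty$ bundle on a finite cover. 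That kills the Betti class $c^B_p(F)=\partial\,\CS_p$, but $\ker\partial$ is the divisible group $\mathrm{im}\,H^{2p-1}(X,\rr)$, so no finiteness of $\CS_p$ follows. The paper stresses exactly this in Remarks~\ref{rem:betti-vs-CS} and~\ref{rem:DS-significance}: flat connections on trivial bundles routinely have non-zero Chern-Simons classes.
\end{itemize}

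The missing idea is the mechanism that converts volume vanishing into torsion of the whole $\cc/\zz$-class. It has three parts.
\begin{itemize}
\item A classifying map $r_L:X\to BGL^+(K)$ with $\CS_p(\nabla^{\Del})=r_L^*(r_{2p-1})$.
\item Vanishing of $r_L^*\vol^\sigma_{2p-1}$ for \emph{every} embedding $\sigma:K\hookrightarrow\cc$, that is, for each Galois twist separately rather than for their sum.
\item Borel's theorem that these classes span $H^{2p-1}(BSL^+(K),\rr)$.
\end{itemize}
Then $r_L$ is zero on rational homology. Hence $r_L^*(r_{2p-1})$, viewed as a homomorphism on the finitely generated group $H_{2p-1}(X,\zz)$, factors through a finite group.

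The first ingredient is not formal here. There is no flat bundle on $X$ to classify, only $F$ with the non-flat, locally nil-flat $\nabla^{\Del}$. The multi-Rees bundle on the cubical realization, together with the $F^1$-connection/Burgos-Gil and $K$-theoretic comparisons (Theorem~\ref{thm:CS-regulator}, Proposition~\ref{prop:classmap-comparison}), exists precisely to produce $r_L$ and prove this identity. Your Rees deformation plays no such role.

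Two lesser points.
\begin{itemize}
\item In Step 1, nil-flatness of each $\nabla^{\Del}_t$ does not make $\Tr(\dot\nabla_t\wedge R_t^{p-1})$ vanish, because $W(N_I(\zeta_t))$ moves with $t$. The paper restricts to the locus of constant Jordan type and extends by continuity (Theorem~\ref{thm:deformation-app}).
\item In Step 2, the Hodge-metric computation is exactly where the degeneration near $D$ bites. The paper sidesteps it with the flat indefinite polarization $Q$, $N_\bullet$-isotropic filtrations and a \v{C}ech section. These build a smooth indefinite form $\tilde g$ on $F$ with a compatible $\tilde g$-preserving connection, whose transgression is real pointwise (Theorem~\ref{thm:main-hermitian}, Proposition~\ref{prop:hodge-vanish}).
\end{itemize}
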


\medskip

The case of a smooth irreducible divisor was treated in \cite{IS-arXiv}, and
the case of two smooth components meeting transversally in \cite{IS-2div}.
Both proceed by  a \emph{bifiltered method}: the extended class is constructed
directly on $X$ as the Cheeger-Simons character of an explicit
$\calC^\infty$ connection, patched over a cubical decomposition of $X$ given
by corner coordinates; at a corner the datum is the \emph{pair} of monodromy
weight filtrations $W(N_1),W(N_2)$; the comparison with the regulator is made
through a $2$-cubical pushout with $BGL(F[t_1,t_2])^+$ at the $2$-face; and
the volume regulators are killed universally, in hermitian $K$-theory, using
Karoubi's homotopy invariance and Reznikov's computation on
$BO_{\infty,\infty}(\cc)^+$.


The method used in \cite{IS-arXiv,IS-2div} does not 
extend directly to a general normal crossings divisor, indeed we use there
the existence of simultaneous splittings for two filtrations, given essentially by the 
Bruhat decomposition. In the case of three or more filtrations, there is not in general a simultaneous splitting, as may be seen already for three
distinct lines in $\cc^2$, in other words $GL(V)$ has no normal form on triples of
flags. At a triple
point of $D$ the tuple $(W(N_1),W(N_2),W(N_3))$ can not be used as a
corner datum: the iterated graded depends on the order in which it is taken,
and the corner model connection, the independence argument, and the
polarization-compatible splitting of the hermitian step do not work.

The route taken here replaces the tuple of filtrations at a stratum by the
\emph{single} monodromy weight filtration $W(N_I)$ of the total operator
$N_I=\sum_{i\in I}N_i$, whose compatibility along chains of strata is supplied
by Mochizuki's theorem.  Everything else follows from that choice: patching
data indexed by chains rather than tuples
(Section~\ref{sec:patching}), a multi-Rees construction interpolating between
strata (Section~\ref{sec:multrees}), and a hermitian argument that must now be
solved globally on $X$ rather than universally, by way of the contractible
poset of $N_\bullet$-isotropic filtrations and a \v{C}ech section theorem
(Sections~\ref{sec:nisotropic}, \ref{sec:volume}).  It should be said that the
older method is not merely a special case: where it applies it uses no
harmonic bundle theory in the construction of the classes, no algebraic model,
and no arithmetic Chern character, and its vanishing statement is universal.
Moreover its real hypothesis is that at most two branches of $D$ pass through
any point. This comparison is
carried out in Appendix~\ref{app:bifiltered}.
\medskip

We record the consequences of Theorem \ref{thm:main}.
\begin{corollary}
\label{cor:deligne-torsion}
The $p$-th Deligne-Beilinson Chern character class
$\ch^{\calD}_p(F)\in H^{2p}_{\calD}(X,\zz(p))$ of the canonical extension is
torsion, for every $p\geq 2$.
\end{corollary}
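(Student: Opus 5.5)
The plan is to deduce the corollary from Theorem~\ref{thm:main} through the exact sequence for Deligne--Beilinson cohomology of the smooth projective variety $X$:
\[
H^{2p-1}(X,\cc)\longrightarrow H^{2p-1}(X,\cc/\zz(p))\xrightarrow{\ \delta\ } H^{2p}_{\calD}(X,\zz(p))\longrightarrow H^{2p}(X,\zz(p))\oplus F^pH^{2p}(X,\cc).
\]
More precisely, $H^{2p}_{\calD}(X,\zz(p))$ is the hypercohomology of the cone of $\zz(p)\oplus F^p\Omega^\hdot_X\to\Omega^\hdot_X$. Classes in this group whose Hodge component vanishes, i.e.\ classes coming from the truncated complex $\zz(p)\to\Omega^{<p}_X$, receive a map from $H^{2p-1}(X,\cc/\zz(p))$.

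\textbf{Step 1.} I will show that $\ch^{\calD}_p(F)$ is the image under $\delta$ of the extended Chern--Simons class, up to the normalization factor relating $\zz$ and $\zz(p)=(2\pi i)^p\zz$:
\[
\ch^{\calD}_p(F)=\delta\bigl(\CS_p(\nabla^{\Del})\bigr).
\]
This is the usual compatibility between the Cheeger--Simons differential character of a connection and the Deligne--Beilinson Chern character. The differential character of $(F,\nabla^{\Del})$ lies in the Cheeger--Simons group, and the Chern--Weil form of $\nabla^{\Del}$ vanishes identically by Proposition~\ref{prop:nilflat-ch-zero}. The character is therefore a flat character, i.e.\ a class in $H^{2p-1}(X,\cc/\zz)$, namely $\CS_p(\nabla^{\Del})$.

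To get the identity in Deligne cohomology I will use the Beilinson/Esnault (or Burgos--Gill arithmetic Chern character) description. The Deligne class of $F$ is computed from any connection compatible with the holomorphic structure up to $F^p$. Here $\nabla^{\Del}$ differs from a $(1,0)$-type connection, namely the holomorphic structure $\delbar_F$ plus a $\calC^\infty$ $(1,0)$-part, only by the patching. A Chern--Simons transgression form between $\nabla^{\Del}$ and a hermitian Chern connection of $F$ then gives the comparison. The Chern connection represents $\ch^{\calD}_p(F)$ through its Bott--Chern/Burgos representative. The transgression identifies the two classes modulo $F^p$-exact terms, which die in Deligne cohomology.

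\textbf{Step 2.} Once Step 1 is in place the conclusion is formal. If $m\,\CS_p(\nabla^{\Del})=0$, then $m\,\ch^{\calD}_p(F)=\delta(0)=0$, so $\ch^{\calD}_p(F)$ is torsion for $p\ge 2$ by Theorem~\ref{thm:main}.

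The main obstacle is Step 1. The difficulty is checking that the patched, only locally nil-flat, connection $\nabla^{\Del}$ is an admissible input for the Deligne-cohomology Chern character. Equivalently, the transgression form from $\nabla^{\Del}$ to a connection of type $(1,0)$ must lie in $F^p$ modulo exact forms on all of $X$, including near $D$. I would first handle this in local corner coordinates, where $\nabla^{\Del}$ differs from the holomorphic logarithmic connection $\nabla$ by removing the nilpotent $\dlog$ terms $N_i\,dz_i/z_i$ and applying a partition of unity. Locally the difference consists of smooth $(1,0)$-forms valued in nilpotent endomorphisms together with $d\rho\otimes N$ terms. I would then argue that the resulting transgression classes are of the required type, or that they vanish by nilpotency, in the same way as in the proof of Proposition~\ref{prop:nilflat-ch-zero}.
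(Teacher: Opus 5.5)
Your proposal is correct and is the paper's own argument. The paper's one-line proof uses that the edge map $\alpha$ is a homomorphism carrying $\CS_p(\nabla^{\Del})$ to $\ch^\calD_p(F)$ (Theorem~\ref{thm:CS-DB-app}(3)), so torsion goes to torsion; your Step~2 is exactly this.

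The obstacle you anticipate in Step~1 is lighter than you fear, and nilpotency plays no role in it. Each $\nabla^{\Del}_i$ is a holomorphic connection on $F|_{U_i}$, smooth across $D$. The connection $\nabla^{\Del}=\sum_i\rho_i\nabla^{\Del}_i$ is an affine combination, so no $d\rho$ terms appear in its connection form. Hence $(\nabla^{\Del})^{0,1}=\delbar_F$ on all of $X$, and its difference from a Chern connection is a smooth $(1,0)$-form. The transgression along the linear path therefore lies in $F^p$ for type reasons alone, i.e.\ $\nabla^{\Del}$ is an $F^1$-connection.
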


\begin{corollary}
\label{cor:deformation}
The extended Chern-Simons classes are locally constant on the parameter space
$R^{\mathrm{nil}}$ of representations with unipotent monodromy.
\end{corollary}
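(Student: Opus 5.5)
The statement to prove is Corollary~\ref{cor:deformation}: the extended Chern-Simons classes are locally constant on the parameter space $R^{\mathrm{nil}}$. I would deduce it from Theorem~\ref{thm:main} together with a continuity statement. The target group $H^{2p-1}(X,\cc/\zz)$ has a torsion subgroup that is countable, indeed it is a finite union of copies of $(\qq/\zz)$ since $H^*(X,\zz)$ is finitely generated. So a continuous map from a connected set into $H^{2p-1}(X,\cc/\zz)$ that lands in the torsion must be constant.

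\textbf{Step 1: stratify $R^{\mathrm{nil}}$.} The space $R^{\mathrm{nil}}\subset \Hom(\pi_1(X^*,x_0),GL_r(\cc))$ is the closed subvariety where each $\zeta(\gamma_i)$ is unipotent. First restrict to a locally closed subset along which the Jordan types of the $\zeta(\gamma_I)$ are constant. Since the local system of components of such a stratification is finite, it suffices to show constancy on each connected piece of a stratum. One then argues separately that the class is locally constant across strata. An alternative is to use a family version of Deligne's extension, which exists over all of $R^{\mathrm{nil}}$ because $\log$ of a unipotent matrix is a polynomial in it, so the canonical extension varies algebraically in $\zeta$ with no stratification needed.

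\textbf{Step 2: build the family.} Over a connected analytic (or smooth) path $\zeta_s$, $s\in[0,1]$, in $R^{\mathrm{nil}}$, form the canonical extension $\mathcal F$ on $X\times[0,1]$. Its residues $N_i(s)=\log\zeta_s(\gamma_i)/2\pi i$ depend polynomially on $s$. Apply Deligne's patching construction (Appendix~\ref{app:deligne-patch}) fibrewise, with a partition of unity independent of $s$. This gives a smooth family $\nabla^{\Del}_s$ of locally nil-flat connections, which defines a locally nil-flat connection on $\mathcal F$ over $X\times[0,1]$. The point to check is that the relative patched connection remains nil-flat in the $s$-direction, so that Proposition~\ref{prop:nilflat-ch-zero} still kills the Chern-Weil forms on $X\times[0,1]$.

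\textbf{Step 3: homotopy formula.} With all Chern-Weil forms vanishing on $X\times[0,1]$, the Cheeger-Simons character of the total connection is a flat class in $H^{2p-1}(X\times[0,1],\cc/\zz)\cong H^{2p-1}(X,\cc/\zz)$. Its restrictions to $s=0$ and $s=1$ are $\CS_p(\nabla^{\Del}_0)$ and $\CS_p(\nabla^{\Del}_1)$, so the two are equal. This argument does not even need Theorem~\ref{thm:main}. Alternatively, using the theorem: the classes vary continuously, since the transgression forms depend continuously on $s$, and they lie in the countable torsion subgroup, hence are constant.

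\textbf{Main obstacle.} I expect the main difficulty to be the jumps in Jordan type of the $N_i$, where the monodromy weight filtrations $W(N_I)$ used in the construction change discontinuously. There I would rely on the Deligne-patch description, which uses only $\nabla$ and the $\dlog$ subtraction and not the weight filtrations. I would also need independence of the class from auxiliary choices, which I take as established earlier, so the class depends only on the family $\nabla_s$, and that family is continuous.
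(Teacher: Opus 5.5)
There is a genuine gap, and it has two parts.

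\textbf{The shortcut through Theorem~\ref{thm:main} is circular.} In this paper, Theorem~\ref{thm:main} for an arbitrary $\zeta$ is proved by first joining $\zeta$ to a VHS point $\zeta_0$ inside $R^{\mathrm{nil}}$. It then invokes Theorem~\ref{thm:deformation-app}, which is exactly the local constancy you are asked to prove. So torsion at arbitrary $\zeta$ cannot be used as an input. The paper's proof of the corollary is simply a citation of Theorem~\ref{thm:deformation-app}, which is proved without any torsion statement.

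\textbf{The homotopy route leaves the key step open.} The step you mark as ``the point to check'' is the whole difficulty. Local nil-flatness of the total connection on $X\times[0,1]$ amounts to pointwise vanishing of the mixed term $\Tr(\dot\nabla_s\wedge F(\nabla_s)^{p-1})$ of Lemma~\ref{lem:family-variational}. The only available reason for that vanishing is a filtration, varying smoothly in $s$, with two properties: $F(\nabla_s)$ is strictly triangular for it, and $\dot\nabla_s$ preserves it up to an exact gauge contribution. Those filtrations are the $W(N_I(\zeta_s))$, for the sums $N_I=\sum_{i\in I}N_i$ (not the individual $\zeta(\gamma_i)$). They vary smoothly only where the Jordan types of all the $N_I$ stay constant; at a jump they change discontinuously.

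Your fallbacks do not close this. That $\nabla^{\Del}_\zeta$ is defined by $\dlog$ subtraction alone, and that the canonical extension exists in families over all of $R^{\mathrm{nil}}$, gives \emph{continuity} of $\zeta\mapsto\CS_p(\nabla^{\Del}_\zeta)$, not constancy. So ``no stratification needed'' is unjustified. Likewise, constancy on each connected piece of each stratum does not by itself give local constancy; the argument ``across strata'' is left unspecified, and it is precisely what is missing.

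\textbf{The missing idea is the closure argument} from the paper's proof of Theorem~\ref{thm:deformation-app}, and it needs only one stratum. Let $R^{\mathrm{nil},\circ}$ be the Zariski-dense open locus where every $N_I(\zeta)$ has generic Jordan type (\S\ref{subsec:jordan-stratification}). Then:
\begin{itemize}
\item your homotopy argument gives constancy along paths in $R^{\mathrm{nil},\circ}$;
\item your Deligne-patch remark gives continuity on all of $R^{\mathrm{nil}}$;
\item for any $\zeta_0$, take a local irreducible branch of $R^{\mathrm{nil}}$ through $\zeta_0$ and a small connected neighbourhood $V$ in it. Then $V\cap R^{\mathrm{nil},\circ}$ is the complement of a proper analytic subset, hence connected and dense in $V$.
\end{itemize}
The class is therefore constant on $V\cap R^{\mathrm{nil},\circ}$, and continuity forces the same value on all of $V$. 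With this step added and the Theorem~\ref{thm:main} shortcut dropped, your proposal becomes the paper's argument.
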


\begin{corollary}[$\ell$-adic Chern-Simons classes are torsion]
\label{cor:ladic-torsion}
Let $\ell$ be a prime and let $L_\ell$ be the $\ell$-adic local system on
$X^*_{\mathrm{\acute{e}t}}$ attached to $L$ by the Artin comparison
isomorphism.  Then the classes
$\CS_p^{(\ell)}(L_\ell)\in H^{2p-1}_{\mathrm{\acute{e}t}}(X,\qq_\ell/\zz_\ell(p))$
of Definition~\ref{def:ladic-CS} are torsion for every $p\geq2$.
\end{corollary}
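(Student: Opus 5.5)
The plan is to deduce Corollary~\ref{cor:ladic-torsion} from Theorem~\ref{thm:main} by comparison: identify the $\ell$-adic class with the $\ell$-primary part of the transcendental class $\CS_p(\nabla^{\Del})$, and then use that torsion is preserved under such comparison maps. Since Definition~\ref{def:ladic-CS} is not yet visible, I assume it is of the natural form. The $\ell$-adic class is built from the canonical extension, or equivalently from the $\ell$-adic local system together with its unipotent monodromy data. It lands in $H^{2p-1}_{\mathrm{\acute et}}(X,\qq_\ell/\zz_\ell(p))$, and under the Artin comparison
\[
H^{2p-1}_{\mathrm{\acute et}}(X,\qq_\ell/\zz_\ell(p))\cong H^{2p-1}(X^{an},\qq_\ell/\zz_\ell)
\]
(after fixing $\zz_\ell(p)\cong\zz_\ell$ via $2\pi i$) it corresponds to the image of $\CS_p(\nabla^{\Del})$.

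The image in question is taken under the map induced by $\qq_\ell/\zz_\ell\hookrightarrow \qq/\zz\hookrightarrow\cc/\zz$, restricted to the $\ell$-primary torsion part. The first step is to verify this compatibility. If the $\ell$-adic class is itself defined as the $\ell$-primary component of the torsion part of $\CS_p(\nabla^{\Del})$, the compatibility holds by construction. If instead it is defined through a universal class on $BGL$ (via an étale splitting or Borel-type construction applied to the classifying map of the $\ell$-adic local system, patched along $D$), I would compare the two constructions. I would do this on the universal level over $BGL(\zz_\ell)$-type targets, together with the cubical/chain patching of Section~\ref{sec:patching}, which is functorial in the coefficient ring.

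The second step is the torsion transfer. The short exact sequence $0\to\qq_\ell/\zz_\ell\to\cc/\zz\to Q\to 0$ gives a long exact sequence. In it, the map $H^{2p-1}(X,\qq_\ell/\zz_\ell)\to H^{2p-1}(X,\cc/\zz)$ has kernel equal to the image of $H^{2p-2}(X,Q)$. Because $X$ is compact, all cohomology groups are finitely generated over their coefficients. Consequently $H^{2p-1}(X,\qq_\ell/\zz_\ell)$ is isomorphic to $(\qq_\ell/\zz_\ell)^{b}\oplus(\text{finite})$. Its divisible part maps into the image of $H^{2p-1}(X,\cc)$, and the finite part maps to torsion.

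It is therefore not enough to know that the image in $\cc/\zz$-cohomology is torsion; I need the class itself to be torsion. There are two ways to get this. The cleaner one is to arrange the comparison in the opposite direction. Since $\CS_p(\nabla^{\Del})$ is torsion, say $N\cdot\CS_p=0$, the class lifts to $H^{2p-2}$-Bockstein data: it comes from $H^{2p-1}(X,\tfrac1N\zz/\zz)$. Its $\ell$-part then lies in a finite group $H^{2p-1}(X,\zz/\ell^m)$. I would then check that the $\ell$-adic class is the image of this finite-coefficient class, using the fact that the $\ell$-adic construction factors through finite levels $\zz/\ell^m$. Any class coming from $H^{2p-1}(X,\zz/\ell^m)$ is killed by $\ell^m$.

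The main obstacle is this compatibility at finite level, namely making sure the $\ell$-adic construction is genuinely the reduction of the torsion transcendental class and not merely congruent to it modulo the divisible part. I would attack it by working with the rigidity of torsion classes, in the spirit of Corollary~\ref{cor:deformation}: the classes are locally constant on $R^{\mathrm{nil}}$. This lets one deform $\zeta$ to a representation defined over a number ring, where both constructions are built from the same finite-level data of $\zeta$ modulo $\ell^m$.
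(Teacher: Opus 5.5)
Your ``cleaner'' route is the paper's proof. Definition~\ref{def:ladic-CS} defines $\CS_p^{(\ell)}(L_\ell)$ precisely as the reduction of the already-torsion class $\CS_p(\nabla^{\Del})$ to coefficients $\zz/\ell^k\zz(p)$, transported by the Artin comparison and assembled over $k$. So the compatibility you flag holds by construction, and the resulting class is killed by the integer $N$ that kills $\CS_p(\nabla^{\Del})$, indeed by its $\ell$-part $\ell^a$. The concern in your second step, and the deformation argument in your last paragraph, are unnecessary: $H^{2p-1}_{\mathrm{\acute{e}t}}(X,\qq_\ell/\zz_\ell(p))=\varinjlim_k H^{2p-1}_{\mathrm{\acute{e}t}}(X,\zz/\ell^k\zz(p))$ is a torsion group (as your own description $(\qq_\ell/\zz_\ell)^b\oplus(\text{finite})$ already shows), so the only genuine input from Theorem~\ref{thm:main} is that the class is defined at all.
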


A word is needed on what these classes are, since the $\ell$-adic local system
lives on $X^*$ and \emph{not} on $X$: there is no $\ell$-adic sheaf on $X$
extending $L_\ell$, and no \'{e}tale analogue of the canonical extension.  The
classes are therefore not defined by transporting an $\ell$-adic construction
across $D$.  What is used instead is the Artin comparison isomorphism in the
opposite direction: for the finite coefficient rings $\zz/\ell^k$ one has
\[
  H^{n}_{\mathrm{\acute{e}t}}\bigl(X,\zz/\ell^k(p)\bigr)
  \;\cong\;H^{n}\bigl(X(\cc),\zz/\ell^k(p)\bigr),
\]
an isomorphism of the \'{e}tale cohomology of the \emph{projective} variety
$X$ with the singular cohomology of $X(\cc)$.  The transcendental extended
class $\CS_p(\nabla^{\Del})\in H^{2p-1}(X,\cc/\zz)$, being torsion by
Theorem~\ref{thm:main}, lies in the image of
$H^{2p-1}(X(\cc),\qq/\zz)$; one then takes its $\ell$-primary component and
transports it through the comparison isomorphism.  So
$\CS^{(\ell)}_p(L_\ell)$ is by construction a class on $X$, defined only
because the transcendental class is already known to be torsion, and the
notation $\CS^{(\ell)}_p(L_\ell)$ records the local system it comes from
rather than an independent $\ell$-adic construction.  In particular the
corollary is a consequence of Theorem~\ref{thm:main} and not an independent
statement.  See Definition~\ref{def:ladic-CS} for the details.

The analogous result for flat bundles on compact K\"{a}hler manifolds/projective
\emph{without} boundary divisor was proved by Reznikov \cite{Rez2}, \cite{Reznikov}:
every Chern-Simons class of a flat bundle is torsion.  His argument uses a
deformation to a representation underlying a polarized VHS (carrying an
indefinite Hermitian form preserved by the connection) and an explicit
homotopy. He also proved torsion -property over a complex quasi-projective variety in \cite[section 6.1]{Reznikov2}. Our proof extends this to the logarithmic setting  
yielding the torsion result for classes on $X$ rather than $X^*$,
and makes the
role of the hermitian geometry transparent. 

We define and work with classes on the
projective completion $X$, attached to the canonical extension.  Since
restriction $H^{2p-1}(X,\cc/\zz)\to H^{2p-1}(X^*,\cc/\zz)$ has kernel in
general, torsion of the restricted class does not imply torsion of the
extended one, and Theorem~\ref{thm:main} is not a formal consequence of the
quasi-projective statement.  The extended classes also carry information that
the open ones do not: they are the classes that enter the arithmetic
applications, through Corollary~\ref{cor:deligne-torsion} on $X$.

Earlier work of Esnault\cite{Es} established torsion of certain
characteristic classes, when residues are nilpotent, see also \cite{Es3} for a definition of extended Chern-Simons classes, and the case of cVHS on a complex smooth quasi-projective variety \cite{Es4}.   Mochizuki
\cite{Mochizuki} proved the sequential compatibility of weight filtrations for
wild harmonic bundles; in the tame case (unipotent monodromy) which we treat.

The proof proceeds in the following 
steps.

\smallskip\noindent
(1)~We construct, following an idea in 
\cite{IS-arXiv}, a \emph{good adapted open covering}
$X=\bigcup_I U_I$ indexed by multi-indices for the divisor components, in
which non-empty multiple intersections are linearly ordered by inclusion of
index sets, and divisor components do not cross between distinct index classes.

\smallskip\noindent
(2)~We equip the Deligne canonical extension $F$ with a
\emph{strict collection of patching data} $(W(I),\tau(I))$, derived from the
monodromy weight filtrations supplied by Mochizuki's theorem \cite{Mochizuki}.
The data assemble into a functor from the nerve poset $A$ to the category
$\Xi_K$ of graded $K$-vector spaces with refinement morphisms.

\smallskip\noindent
(3)~Via an iterated \emph{multi-Rees construction}, we build an algebraic
vector bundle $F_K$ on the projective cubical realization
$\pp_K\,\Cube\,A$, carrying a stratified patching structure.  This step then
splits into two comparisons which are logically independent and are carried
out by quite different means; both are needed, and their meeting point is the
Deligne-Beilinson Chern class $\ch^\calD_p(F_K)$.

The comparison \emph{Regulator $=$ Deligne class} is treated in Appendix~\ref{app:rees},
    \S\ref{subsec:classmap}.  A classifying map
    $r_L:X\simeq\iiii\Cube A\to BGL^+(K)$ is constructed, and the pullback of
    the universal regulator class is identified with $\ch^\calD_p(F_K)$.  This
    is a statement about \emph{bundles}: no connection appears, and the proof
    is topological and $K$-theoretic, resting on homotopy invariance of the
    $K$-theory of $F[t_1,\dots,t_n]$ and acyclicity of the plus construction.

The comparison \emph{Deligne class $=$ Chern-Simons class} is treated in
    Section~\ref{sec:F1} and Appendix~\ref{app:rees},
    \S\ref{subsec:degeneration}.  The Deligne class is \emph{evaluated} using
    the extra structure of the Rees bundle: on the projective completion
    $\pp_K\Cube A$ it carries $F^1$-connections with logarithmic poles, and
    the Dupont-Hain-Zucker independence theorem together with the Burgos-Gil
    comparison identifies $\ch^\calD_p(F_K)$ with $\CS_p(\nabla^{\Del})$.
    Both theories are applied cube by cube, on the smooth projective pieces
    $(\pp^1)^{|c|}$ of the realization rather than on the singular glued
    scheme; see Appendix~\ref{app:burgosgil}, \S\ref{subsec:BG-on-cubical}.
    

Composing these two comparisons 
gives $r_L^*(r_{2p-1})=\CS_p(\nabla^{\Del})$, which is what
Steps (4) and (5) act on.

\smallskip\noindent
(4)~When $\zeta$ underlies a complex variation of Hodge structure (VHS), the
flat connection preserves an indefinite Hermitian form.  We prove that any
locally nil-flat connection preserving such a form has zero \emph{volume
	regulator}.  The crucial geometric ingredient is the contractibility of the
poset of $N$-isotropic filtrations, proved using Quillen's Theorem~A.

\smallskip\noindent
(5)~All volume regulators vanishing, Borel's theorem forces the real
cohomology of $BGL^+(K)$ to map to zero on the cubical nerve, making every
regulator---and hence every extended Chern-Simons class---torsion in degree
$\geq 2$.

\medskip
 Since the framework of Burgos-Gil \cite{BG}
is invoked at several points, it may help to say precisely which statements
are needed and where.  
They both
concern the second comparison in Step~(3).
\begin{itemize}[leftmargin=2em]
  \item \emph{Independence of the $F^1$-connection.}  Any two
    $F^1$-connections on a bundle over a smooth projective variety, with
    logarithmic poles along a normal crossings divisor, define the same class
    in Deligne-Beilinson cohomology, the Bott-Chern secondary form of the pair
    lying in $F^1$.  This is stated as Theorem~\ref{thm:DHZ} and is what makes
    $\ch^\calD_p$ computable by whichever connection is convenient---the Rees
    connection, Deligne's patched connection, or a Hermitian connection
    pulled back from a Grassmannian.  It is used in
    Section~\ref{sec:F1} and again in
    Appendix~\ref{app:rees}, \S\ref{subsec:degeneration}.
  \item \emph{Arithmetic Chern character and the comparison theorem.}  The
    arithmetic Chern character $\widehat{\ch}_p(E,h)$ maps to $\ch^\calD_p(E)$
    independently of the metric, and its image under the edge map of the
    Deligne exact sequence is the Chern-Simons class
    (Theorem~\ref{thm:BG-comparison}).  This is what converts a Deligne class
    into a class in $H^{2p-1}(-,\cc/\zz)$, and it is the step that requires the
    Chern-Weil forms to vanish---supplied here by local nil-flatness
    (Proposition~\ref{prop:nilflat-ch-zero}).
\end{itemize}
Both statements are theorems about smooth \emph{projective} varieties, which
is why the Rees bundle is constructed on the projective completion
$\pp_K\Cube A$ and not merely on the affine realization: on affine space the
$F^1$-condition carries no information and the independence statement fails.
Since the glued scheme $\pp_K\Cube A$ is projective but not smooth, both are
applied to it levelwise, on the smooth projective cubes $\pp(c)$, and descend.
For the question of smoothness, we need to notice that they are pulled back from a Grassmannian along a face-compatible
classifying map.  This reduction is discussed in
\S\ref{subsec:BG-on-cubical}.
By contrast, the regulator comparison of
Appendix~\ref{app:rees}, \S\ref{subsec:classmap} does not use 
\cite{BG} or any Hodge theory.

\subsection{Outline of the paper}

The paper is organized as follows.

\begin{itemize}[leftmargin=2em]
\item Section~\ref{sec:covering} constructs the adapted open covering and
  establishes its key combinatorial properties.

\item Section~\ref{sec:cubical} reviews cubical sets, the cubical barycentric
  subdivision of a simplicial space, and the associated affine and projective
  realizations.

\item Section~\ref{sec:patching} develops the formalism of patching data and
  the category $\Xi_K$, and proves the crucial contractibility of the poset
  $\Filt^{A,B}$.

\item Section~\ref{sec:multrees} gives the multi-Rees construction and proves
  its basic properties.

\item Section~\ref{sec:rees-global} constructs the global Rees bundle
  $F_K$ on $\pp_K\Cube A$ and establishes its stratified patching data.

\item Section~\ref{sec:F1} constructs a compatible $F^1$-connection and
  compares the resulting Chern-Simons class with the \emph{Deligne-Beilinson
  Chern class} of the Rees bundle, using the independence theorem for
  $F^1$-connections and the Burgos-Gil comparison theorem.  The complementary
  half of the argument---the comparison of the Deligne-Beilinson Chern class
  with the \emph{regulator} class pulled back from $BGL^+(K)$---is
  independent of Hodge theory and is carried out in
  Appendix~\ref{app:rees}, \S\ref{subsec:classmap}.  Neither comparison
  subsumes the other; together they give
  $r_L^*(r_{2p-1})=\CS_p(\nabla^{\Del})$.

\item Section~\ref{sec:volume} defines the volume regulator, proves it
  vanishes for connections preserving an indefinite Hermitian form, and
  deduces vanishing for Deligne's patched connection via the theory of
  $N$-isotropic filtrations.

\item Section~\ref{sec:nisotropic} proves contractibility of the poset of
  $N$-isotropic filtrations using Quillen's Theorem~A.

\item Section~\ref{sec:poset-presheaf} develops the \v{C}ech section theorem for
  contractible poset presheaves.

\item Section~\ref{sec:hermitian-patching} proves the existence theorem for
  connections preserving an indefinite hermitian form.

\item Section~\ref{sec:torsion} assembles the proof of Theorem~\ref{thm:main}
  and its corollaries.

\item Section~\ref{sec:parabolic} treat the case of quasi-unipotent local monodromies via locally abelian parabolic bundles.

\item Section~\ref{app:deligne-patch} gives the detailed construction of
  Deligne's patched connection and proves its properties.

\item Section~\ref{app:deligne-patch} proves deformation invariance.

\item Section~\ref{app:further} treats the case of a variation of Hodge
  structure explicitly.

\item Section~\ref{sec:deligne-sullivan} treats the triviality of the canonical extension on a finite covering.

\item Section~\ref{app:bifiltered} makes a comparison of the bifiltered two-divisor case, and the need for the present Rees construction.

\item Section~\ref{app:further} poses Open Questions.
\end{itemize}

\subsection{Conventions}

Throughout the paper:
\begin{itemize}[leftmargin=2em]
\item $K$ denotes a field of characteristic zero, often a number field or
  $K=\cc$.
\item $X$ is a smooth connected projective variety over $\cc$.
\item $D = D_1+\cdots+D_k\subset X$ is a simple normal crossings divisor.
\item $X^* := X\setminus D$.
\item Multi-indices $I = (i_1,\ldots,i_a)$ have distinct entries in
  $\{1,\ldots,k\}$; we write $|I|=a$, $D_I := D_{i_1}\cap\cdots\cap D_{i_a}$,
  and $I\subset J$ if the entries of $I$ form a subset of those of $J$.
\item For a poset $A$, the nerve $NA$ is the simplicial set whose
  $n$-simplices are chains $a_0\leq\cdots\leq a_n$ in $A$.
\item All open coverings are assumed locally finite; all manifolds are second
  countable.
\end{itemize}


\subsection{Glossary of notation}
\label{subsec:glossary}

We collect here the notation used throughout, with the place
where each item is introduced.  
{\small
\begin{longtable}{@{}p{0.20\textwidth}p{0.76\textwidth}@{}}

\multicolumn{2}{@{}l}{\textit{Geometry and combinatorics}}\\[2pt]
$X$, $D$, $X^*$ &
  $X$ a smooth connected projective variety over $\cc$, $D=D_1+\cdots+D_k$ a
  simple normal crossings divisor, $X^*=X\setminus D$ (\S\ref{sec:intro},
  Conventions).\\
$D_I$, $|I|$ &
  For a multi-index $I$ with distinct entries in $\{1,\dots,k\}$, the closed
  stratum $D_I=\bigcap_{i\in I}D_i$, of codimension $|I|$; $D_\emptyset=X$
  (Conventions).\\
$A$ &
  The poset of multi-indices indexing the adapted covering, ordered by
  inclusion (Proposition~\ref{prop:covering}).\\
$U_I$, $U_I^*$ &
  The open set of the adapted covering attached to $I\in A$, and
  $U_I^*=U_I\cap X^*$ (Proposition~\ref{prop:covering}).\\
$NA$ &
  Nerve of the poset $A$; $|NA|\simeq X$
  (Proposition~\ref{prop:nerve-htpy}).\\
$\Cube A$ &
  The cubical set attached to $A$ (\S\ref{sec:cubical}).\\
$\iiii\Cube A$, $S$ &
  Interval realization, glued from cubes $\iiii(c)=[0,1]^{|c|}$; homotopy
  equivalent to $X$.  We write $S=\iiii\Cube A$ (\S\ref{sec:cubical}).\\
$\aaa_K\Cube A$, $\pp_K\Cube A$ &
  Affine and projective realizations, glued from $\aaa(c)\cong\aaa^{|c|}$ and
  $\pp(c)\cong(\pp^1)^{|c|}$.  The projective one is where the Hodge-theoretic
  arguments run (\S\ref{sec:cubical}, \S\ref{subsec:degeneration}).\\
$\underline{\pp}_K\Cube A$ &
  The same data read as a \emph{cubical scheme} $c\mapsto\pp(c)$, with smooth
  projective terms; this is what Burgos-Gil and Dupont-Hain-Zucker are
  applied to (\S\ref{subsec:BG-on-cubical}).\\
$\Re_\aaa X_\hdot$ &
  The $\aaa$-realization of a semi-simplicial set: algebraic bundles over
  $\aaa^n$ varying simplicially (Definition~\ref{def:A-realization}).\\[6pt]

\multicolumn{2}{@{}l}{\textit{Monodromy, filtrations, patching data}}\\[2pt]
$\zeta$, $\rho$, $L$ &
  The monodromy representation $\pi_1(X^*,x_0)\to GL_r(\cc)$ and the
  associated local system on $X^*$ (\S\ref{sec:intro}).\\
$R^{\mathrm{nil}}$ &
  The variety of rank-$r$ representations with unipotent monodromy around
  every $D_i$ (\S\ref{sec:deform-to-vhs-detail}).\\
$T_i$, $N_i$ &
  Local monodromy around $D_i$, and its nilpotent logarithm
  $N_i=\tfrac{1}{2\pi i}\log T_i$ (\S\ref{app:deligne-patch}).\\
$N_I$, $W(N_I)$ &
  $N_I=\sum_{i\in I}N_i$ and its monodromy weight filtration; independent of
  positive coefficients by \cite{CKS} (\S\ref{sec:patching}).\\
$W(I)$, $\tau(I)$ &
  The patching datum at $I\in A$: a filtration together with a flat
  trivialization of its graded (Theorem~\ref{thm:patching-exists}).\\
$\Xi_K$ &
  The category of patching data over $K$; its objects are graded vector
  spaces, its morphisms filtrations with identifications of the graded
  (\S\ref{sec:patching}).\\
$A\to\Xi_K$ &
  The functor assembled from $\{(W(I),\tau(I))\}$, the combinatorial heart of
  the construction (Construction~\ref{constr:A-to-Xi}).\\
$\mathbf{ifilt}^{N_\hdot}$ &
  Poset of $N_\hdot$-isotropic filtrations; contractible, which is what makes
  a global choice possible
  (\S\ref{sec:nisotropic}, Proposition~\ref{prop:nisofilcontr}).\\[6pt]

\multicolumn{2}{@{}l}{\textit{Bundles and connections}}\\[2pt]
$F$, $\nabla$ &
  Deligne's canonical extension of $L$ to $X$, with its logarithmic connection
  and nilpotent residues (\S\ref{app:deligne-patch}).\\
$\nabla^{\Del}$ &
  Deligne's patched connection: the $\calC^\infty$ connection obtained by
  removing the singular $\dlog$ terms and averaging
  (Definition~\ref{def:deligne-patch}).\\
locally nil-flat &
  A connection preserving local filtrations with flat graded; equivalently
  with locally nilpotent curvature.  All Chern-Weil forms vanish
  (Definition~\ref{def:locally-nilflat},
  Proposition~\ref{prop:nilflat-ch-zero}).\\
$F_K$ &
  The global Rees bundle on $\pp_K\Cube A$, interpolating the filtrations
  $W(I)$ in $|c|$ parameters (Theorem~\ref{thm:global-rees}).\\
$\nabla^{\mathrm{can}}$ &
  The canonical Rees connection, with form $\sum_jn_j\dlog(1-y_j)$ on each
  graded piece; locally nil-flat but \emph{not} flat
  (Proposition~\ref{prop:rees-can-nilflat}).\\
$F^1$-connection &
  A connection whose matrix lies in $F^1$ of the logarithmic de Rham complex.
  Any two define the same Deligne class
  (Theorems~\ref{thm:DHZ}, \ref{thm:F1-exists}).\\[6pt]

\multicolumn{2}{@{}l}{\textit{Characteristic classes and regulators}}\\[2pt]
$\CS_p(\nabla^{\Del})$ &
  The extended Chern-Simons classes, in $H^{2p-1}(X,\cc/\zz)$; the subject of
  Theorem~\ref{thm:main} (Definition~\ref{def:extended-CS}).\\
$\widehat{\ch}_p$, $\widehat H^{\hdot}$ &
  Cheeger-Simons differential character and its group; flat characters are
  exactly $H^{2p-1}(-,\cc/\zz)$ (\S\ref{app:diffchar}).\\
$\ch^\calD_p$, $c^\calD_p$ &
  Deligne-Beilinson Chern character and Chern class of a bundle, in
  $H^{2p}_\calD(-,\zz(p))$; defined without any connection
  (\S\ref{app:burgosgil}).\\
$c^B_p$ &
  The integral Betti Chern class in $H^{2p}(X,\zz)$.\\
$\alpha$ &
  The natural map $H^{2p-1}(-,\cc/\zz)\to H^{2p}_\calD(-,\zz(p))$, with kernel
  the image of $F^pH^{2p-1}$; an isomorphism on $\aaa$-realizations
  (\eqref{eq:alpha-map}, Remark~\ref{rem:alpha-kernel},
  Proposition~\ref{prop:DB-A-realization}).\\
$\partial$ &
  The Bockstein $H^{2p-1}(X,\cc/\zz)\to H^{2p}(X,\zz)$, sending $\CS_p$ to
  $c^B_p(F)$; its kernel is divisible
  (Remark~\ref{rem:betti-vs-CS}).\\
$\vol_p$ &
  The volume regulator: the imaginary component of $\CS_p$ under
  $\cc/\zz\cong\rr/\zz\oplus i\rr$, lying in $H^{2p-1}(X,\rr)$ itself, with
  no reduction modulo $\zz$ (Lemma~\ref{lem:CZ-splitting}).\\
$BGL(R)^+$, $BGL(R)^\delta$ &
  Plus construction and discrete classifying space; the map between them is
  acyclic (\S\ref{subsec:classmap}).\\
$r_{2p-1}$ &
  The universal regulator class in $H^{2p-1}(BGL(F)^+,\cc/\zz)$, defined as
  $\CS_p$ of the tautological flat bundle
  (Definition~\ref{def:universal-regulator}).\\
$r_L$, $r_{\zeta_0}$ &
  The classifying map $X\simeq|NA|\to BGL^+(K)$ built from the patching data
  (Construction~\ref{constr:rL}).\\
$\phi_q$ &
  The Grassmannian classifying map of a face-compatible presentation of $F_K$
  (Proposition~\ref{prop:grassmann-definition}).\\[6pt]

\multicolumn{2}{@{}l}{\textit{Hodge theory}}\\[2pt]
$Q$, $\tilde g$ &
  The polarization of a variation of Hodge structure, and the indefinite
  hermitian form it induces on the canonical extension
  (\S\ref{app:hermitian}, Theorem~\ref{thm:main-hermitian}).\\
$\zeta_0$ &
  A representation underlying a polarized variation of Hodge structure, to
  which $\zeta$ is deformed within $R^{\mathrm{nil}}$
  (Theorem~\ref{thm:deform-to-vhs}).\\
$(E,\theta,h)$ &
  A tame harmonic bundle: holomorphic bundle, Higgs field, pluri-harmonic
  metric (\S\ref{sec:deform-to-vhs-detail}).\\

\end{longtable}
}


\textbf{Acknowledgements}: The first author thanks University of Nice, France,  for hosting us during several visits between 2005-2022, which enabled this collaboration. The first author was partly supported by IMSc-DAE-project "Complex Algebraic Geometry". This work was completed during her stay at Max-Planck Institute for Mathematics, Bonn, 2025-2027, and their support and hospitality is gratefully acknowledged. The second author was partly supported by the ERC Horizon Synergy grant 101167526 (MALINCA), the Horizon 2020 grant 670624 (Mai Gehrke's DuaLL project), and the ANR program IsoMoDyn (ANR-25-CE40-1360).

\textit{AI Disclosure}: The proof-checking of the article used Claude AI.
\section{Preliminaries}

\subsubsection{Chern-Weil theory and secondary classes}

The study of characteristic classes of vector bundles has a long history.
For a complex vector bundle $E$ on a smooth manifold $X$, Chern-Weil theory
associates to any invariant polynomial $P$ on $\mathfrak{gl}_r$ and connection
$\nabla$ on $E$ a closed differential form $P(\nabla)$ whose de Rham class is
independent of the choice of $\nabla$.  The resulting cohomology classes are
the \emph{Chern character forms}.

When the bundle $E$ is topologically trivial (or, more generally, when a
specific Chern class vanishes), one can ask for a \emph{secondary} or
\emph{differential} refinement.  Given two connections $\nabla_0,\nabla_1$
on $E$, the Chern-Weil theory produces a canonically defined cohomology class
in $H^{2p-1}(X,\rr)$ (modulo $H^{2p-1}(X,\zz)$) measuring the difference.
This is the \emph{Chern-Simons class}.

More precisely, Cheeger and Simons \cite{CS} defined \emph{differential
characters}: homomorphisms $\hat h: Z_{2p-1}(X)\to\rr/\zz$ from smooth
$(2p-1)$-cycles, satisfying a compatibility with the boundary.  The secondary
characteristic classes are specific differential characters, and they are
refinements of the integral cohomology classes lying in the Deligne-Beilinson
cohomology $H^{2p}_\calD(X,\zz(p))$.

\subsubsection{Flat bundles and the Reznikov theorem}

A \emph{flat bundle} $(E,\nabla)$ is one where the curvature
$F(\nabla) = \nabla^2 = 0$.  Flat bundles are locally trivial, and their
monodromy gives a representation of the fundamental group.  Since the Chern
character forms $\ch_p(\nabla) = \Tr(F(\nabla)^p) = 0$ vanish for a flat
connection, the primary characteristic classes in de Rham cohomology are zero.
The secondary classes $\CS_p\in H^{2p-1}(X,\cc/\zz)$ may however be non-zero.

The question of whether the Chern-Simons classes of flat bundles are torsion
has a rich history.  Chern-Cheeger--Simons \cite{CS} showed that secondary
classes of representations over algebraically closed fields of characteristic
zero take values in $\cc/\zz$ that map to $\rr/\zz$ (the ``volume'' part).
Reznikov \cite{Reznikov} proved the fundamental theorem: for compact K\"{a}hler
manifolds/projective, all Chern-Simons classes of flat bundles are torsion.   
He gave two proofs, the first using Corlette's theorem on existence of harmonic metrics
and the abelian property of the Higgs field.  The second proof used:
\begin{enumerate}[leftmargin=2em,label=\rm(\roman*)]
  \item deformation to a representation underlying a polarized variation of
    Hodge structure (VHS), which carries an \emph{indefinite Hermitian form}
    (the polarization) preserved by the flat connection 
    by the non-abelian
    Hodge correspondence;   
    
  \item for flat bundles preserving an indefinite Hermitian form (in
    particular, those underlying a VHS), the volume regulator vanishes
    identically, for \emph{every} $p\geq 1$
    (Proposition~\ref{prop:hodge-vanish}): a purely algebraic trace identity
    for the correctly normalized transgression form shows directly that a
    $g$-preserving path from a compatible unitary connection to $\nabla$
    gives a real transgression $v_p^{\mathrm{nm}}$, so that $\vol_p(\nabla) =
    [\Im\,v_p^{\mathrm{nm}}]=0$, this is the
    proof of Proposition~\ref{prop:hodge-vanish}, which records the
    resulting vanishing for every $p\geq1$, in particular in the range
    $p\geq2$ used for Theorem~\ref{thm:main}.
\end{enumerate}

In the present paper we implement this directly in the logarithmic setting:
since the canonical extension of a connection preserving an indefinite
Hermitian form can be joined to a compatible unitary connection by a path of
$g$-preserving connections defined smoothly on all of $X$
(Lemma~\ref{lem:decomp-connection}), no deformation to a VHS, semisimplicity
hypothesis, or harmonic/pluri-harmonic metric is needed to conclude
$\vol_p=0$ for every $p\geq 1$.

\subsubsection{The logarithmic setting}

The extension to the logarithmic setting---flat bundles on a quasi-projective
variety $X^* = X\setminus D$---requires new ideas.  The Deligne canonical
extension provides a mechanism for extending the flat bundle to a bundle with
logarithmic connection on the compactification, but the non-abelian Hodge
correspondence is more delicate.  Mochizuki's deep results \cite{Mochizuki}
provide the asymptotic analysis needed near the boundary divisor.

The present paper takes a different approach, closer in spirit to the theory
of admissible variations of Hodge structure.  Rather than deforming to a
Hermitian-flat connection, we:
\begin{itemize}[leftmargin=2em]
  \item construct, via patching data, a connection that is compatible with
    the filtrations arising from the monodromy weight filtration;
  \item show that whenever a flat connection underlies a VHS, these filtrations
    have a Hermitian compatibility that forces the volume regulator to vanish;
  \item use the Borel-Reznikov argument to conclude torsion from vanishing.
\end{itemize}

\subsection{The de Rham complex on a simplicial space}
\label{subsec:deRham-simplicial}

We record here the de Rham formalism on the simplicial spaces that arise from
our covering construction. See Appendix \ref{app:simplicial}, for details.

\begin{definition}
Let $C^\bullet[\phi]$ be a strict cosimplicial cochain complex of complex
vector spaces, with cochain complex corresponding to $[\phi:m\to n]$ in
$\Delta$.  The \emph{de Rham complex} $DC^\bullet$ is the total complex of the
double complex $D^{s,t}C^\bullet$ consisting of elements
$(w_\phi)\in\prod_\phi A^s(A([n]\setminus\phi[m]))\otimes C^t[\phi]$
satisfying the compatibility: for any morphism $f:\phi\to\phi'$ as in
\eqref{eq:facemap}, one has $(|f|\times\mathrm{id})^*w_\phi = (\mathrm{id}\times f)^*w_{\phi'}$.
\end{definition}

This is modelled on the Dupont \cite{Dupont} construction for simplicial
manifolds.  The double complex gives a spectral sequence converging to the
cohomology of the total space.

For our purposes the key property is:

\begin{proposition}\label{prop:deRham-cube}
The de Rham complex on $\Sigma_F(\Uu_\hdot)$ computes the cohomology of the
realization $|\Uu_\hdot|$ with coefficients in $\cc$.
\end{proposition}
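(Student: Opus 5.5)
The plan is to follow Dupont's argument for simplicial manifolds. We compare the de Rham complex $DC^\bullet$ with the singular cochains of the realization through an integration map, and we make the comparison degreewise using the double complex structure.

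\emph{Step 1: the cosimplicial direction.} For fixed $t$, the complex $s\mapsto D^{s,t}C^\bullet$ consists of compatible families of forms on the faces $A([n]\setminus\phi[m])$. The first step is a simplicial de Rham lemma: the compatible families of polynomial or smooth forms on simplices form a contractible system. Concretely, we construct Dupont's homotopy operator $h$ by fibre integration against the barycentric coordinates. This identifies the cohomology in the $s$-direction with the normalized cochains $C^t[\phi]$ of the underlying cosimplicial object. Equivalently, the restriction-to-vertices map $D^{\bullet,t}C\to \prod_\phi C^t[\phi]$, followed by the alternating face sum, is a quasi-isomorphism. This is the step I expect to be the main obstacle. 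The domains $A([n]\setminus\phi[m])$ are faces indexed by morphisms rather than full simplices, so we must check that the compatibility condition in the definition is preserved by $h$. The approach I would try first is to build $h$ from the standard retraction of $\Delta^n$ onto a vertex, which is natural in $\phi$.

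\emph{Step 2: the degreewise identification.} For each $\phi$, take $C^t[\phi]$ to be the de Rham complex $\Omega^t(\Uu_\phi)$ of the corresponding piece of the covering. By the Poincaré lemma on the open sets of the good adapted covering (Section~\ref{sec:covering}), each $\Omega^\bullet(\Uu_\phi)$ computes $H^\bullet(\Uu_\phi,\cc)$.

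\emph{Step 3: comparing spectral sequences.} Filter the total complex $DC^\bullet$ by $t$ and the singular cochain complex $C^\bullet_{\mathrm{sing}}(|\Uu_\hdot|,\cc)$ by skeleta. Both give spectral sequences with $E_1^{n,t}=H^t(\Uu_n,\cc)$: for $DC^\bullet$ this follows from Steps 1 and 2, and for $|\Uu_\hdot|$ it is the standard Segal spectral sequence of a simplicial space. Dupont's integration map $\int:DC^\bullet\to C^\bullet_{\mathrm{sing}}(|\Uu_\hdot|,\cc)$, which integrates over $\Delta^n\times\sigma$, is a chain map by Stokes' theorem. It respects the filtrations and induces the de Rham isomorphism on $E_1$.

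\emph{Step 4: conclusion.} Both filtrations are bounded, because the covering is locally finite, $X$ has finite dimension, and the nerve of $A$ is finite. Hence the comparison theorem shows that $\int$ is a quasi-isomorphism. Finally, via the barycentric subdivision $\Sigma_F$ of Section~\ref{sec:cubical}, replacing $\Uu_\hdot$ by $\Sigma_F(\Uu_\hdot)$ does not change the homotopy type of the realization, so the statement transfers to $\Sigma_F(\Uu_\hdot)$.
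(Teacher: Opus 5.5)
Your outline is essentially the paper's, whose proof consists only of the algebraic Poincaré lemma on the contractible cells $\aaa^{n-m}_F$ plus ``a standard spectral sequence argument''. A Dupont-style argument is a reasonable way to make that precise. However, Step~1, which carries all the content, has a genuine gap, and both concrete claims you make about it are wrong.

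First, restriction to vertices cannot be the comparison map. It vanishes on forms of positive degree $s$, so it cannot detect $H^s$ of the row $D^{\hdot,t}C$ for $s>0$. As a general statement about rows, which is how Step~1 is phrased, the asserted quasi-isomorphism is false. The comparison has to be integration over the $s$-dimensional cells.

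Second, a homotopy operator built from a retraction onto a single vertex is not natural in $\phi$. Such a retraction of the cube $[0,1]^{[n]\setminus\phi[m]}$ (or of $\Delta^n$) moves every facet not containing that vertex off itself. The face maps $Z(\varphi,u,\varphi')$, however, hit facets of both kinds: $\{t(j)=0\}$ (neighbouring cubes of the same simplex, toward its barycentre) and $\{t(j)=1\}$ (faces coming from non-surjective $u$). So your $h$ does not commute with the face restrictions and does not preserve the compatibility condition defining $D^{s,t}C$. Dupont's own operator escapes this only by combining the contractions onto \emph{all} vertices with the elementary forms in the barycentric coordinates.

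What you actually need on the cubical cells are the two inputs of the Dupont--Sullivan lemma:
(a) the Poincaré lemma for polynomial forms on each $\aaa^{n-m}$;
(b) extendability: a compatible family of polynomial forms on the boundary of a cell $\aaa^{[n]\setminus\phi[m]}$ extends over the cell. All $2(n-m)$ facets occur as faces here.
Property (b) is what governs the gluing of the pieces, and the paper's sketch also leaves it implicit. Given (a) and (b), integration over the real cubes identifies each row's cohomology with the cochain cohomology of $C^t$.

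The remaining steps need tidying rather than new ideas.

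In Step~3, filtering $DC^\hdot$ by $t$ does not give $E_1^{n,t}=H^t(\Uu_n,\cc)$. Its $E_1$ is the row cohomology from Step~1, namely the \v{C}ech-type cohomology of $s\mapsto\Omega^t(\Uu_s)$. That is in fact the shorter route. A partition of unity subordinate to the locally finite covering makes this complex exact in positive degrees, with $H^0=\Omega^t(X)$. Hence $E_2=H^t_{\mathrm{dR}}(X)$, and de Rham's theorem with Proposition~\ref{prop:nerve-htpy} gives $H^*(|\Uu_\hdot|,\cc)$. No integration map to singular cochains and no Segal spectral sequence are needed. To get $E_1^{n,t}=H^t(\Uu_n,\cc)$ you would have to filter by simplicial degree, which only makes sense after Step~1 has replaced $DC^\hdot$ by the \v{C}ech--de Rham double complex.

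Drop the last sentence of Step~4. Homotopy invariance of the realization says nothing about a specific cochain complex. Moreover, $DC^\hdot$ is built on the cells of $\Sigma_F(\Uu_\hdot)$ from the outset, so Steps~1--3 must be carried out on those cubes, integrating over $[0,1]^{n-m}$ rather than over $\Delta^n$.

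If the forms on $\aaa^{n-m}_F$ are algebraic over $F$, you also need $F\subset\cc$ and an extension of scalars to obtain $\cc$-coefficients.
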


\begin{proof}
The affine pieces $A([n]\setminus\phi[m]) = \aaa^{n-m}_F$ are contractible
in the $\aaa^1$-homotopy category.  The de Rham complex on each piece is a
resolution of the constant sheaf by acyclics (by the algebraic Poincar\'{e}
lemma).  The result follows from a standard spectral sequence argument.
\end{proof}

\subsection{Overview of the regulator construction}

Let us give an informal overview of how the regulator classes are constructed
from the data of the flat bundle.

Given a local system $L$ on $X^*$ with unipotent monodromy, and a number field
$K$ over which the monodromy representation is defined, the construction
proceeds in three parallel tracks:

\medskip\noindent
\textit{Track 1: Algebraic $K$-theory-- Appendix \ref{app:rees}:}
The monodromy representation $\rho:\pi_1(X^*)\to GL_r(K)$ defines an element
of $H^0(X^*,BGL_r(K)^+)$ (roughly: a flat bundle with $K$-coefficients).
This gives, via the map $X^*\simeq S = \iiii\Cube A\to BGL_r(K)^+$, a
cohomology class $\phi^*(r_{2p-1})\in H^{2p-1}(X,K_p(K)\otimes\cc/\zz)$.

\medskip\noindent
\textit{Track 2: Analytic secondary classes--Appendix \ref{app:deligne-patch}:}
The Deligne canonical extension $(F,\nabla)$ and Deligne's patched connection
$\nabla^{\Del}$ give a locally nil-flat connection, whose Chern-Simons form
represents a class in $H^{2p-1}(X,\cc/\zz)$.

\medskip\noindent
\textit{Track 3: Deligne-Beilinson cohomology--Appendix \ref{app:burgosgil}:}
The algebraic bundle $F$ on $X$ has a Deligne-Beilinson Chern class
$\ch^{\calD}_p(F)\in H^{2p}_\calD(X,\zz(p))$.
Recall the long exact sequence of the Deligne complex:
\[
  \cdots \longrightarrow H^{2p-1}(X,\cc/\zz(p))
  \xrightarrow{\;\delta\;}
  H^{2p}_\calD(X,\zz(p))
  \longrightarrow H^{2p}(X,\zz) \oplus F^p H^{2p}(X,\cc)
  \longrightarrow \cdots
\]
For a flat bundle, the primary Chern character in de Rham cohomology vanishes:
$\ch_p^{\mathrm{dR}}(F) = 0\in F^p H^{2p}(X,\cc)$.  It follows that
$\ch^{\calD}_p(F)$ lies in the image of the \emph{edge map}
\[
  \delta : H^{2p-1}(X,\cc/\zz(p)) \;\longrightarrow\; H^{2p}_\calD(X,\zz(p)).
\]

Hence Track 3 is: the \emph{Chern-Simons class lifts the
Deligne-Beilinson Chern class} via the edge map $\delta$.

The main technical content of this paper is to show that all three tracks give
the same class in $H^{2p-1}(X,\cc/\zz)$, and that this class is torsion.

\subsection{Differential forms and currents on logarithmic spaces}

We fix notation for the analytic objects used throughout the paper.

\subsubsection{The logarithmic de Rham algebra}

Let $\bar Z$ be a smooth compact complex manifold and $D\subset\bar Z$ a
normal crossings divisor with smooth irreducible components $D_1,\ldots,D_k$.
Let $j: Z = \bar Z\setminus D\hookrightarrow\bar Z$ be the open inclusion.

\begin{definition}
The sheaf of \emph{holomorphic logarithmic $p$-forms} $\Omega^p_{\bar Z}(\log D)$
is the subsheaf of $j_*\Omega^p_Z$ locally generated, near a point where
$D = \{z_1\cdots z_r = 0\}$, by
\[
  \frac{dz_1}{z_1}\wedge\cdots\wedge\frac{dz_r}{z_r}\wedge dz_{r+1}
  \wedge\cdots\wedge dz_p.
\]
\end{definition}

The logarithmic de Rham complex $\Omega^\hdot_{\bar Z}(\log D)$ is a
sub-dga of $j_*\Omega^\hdot_Z$.  Its key property is:

\begin{theorem}[Deligne {\cite{Deligne70}}]
The inclusion $\Omega^\hdot_{\bar Z}(\log D)\hookrightarrow j_*\Omega^\hdot_Z$
is a quasi-isomorphism.  Consequently,
$H^n(\bar Z, \Omega^\hdot_{\bar Z}(\log D))\cong H^n(Z,\cc)$.
\end{theorem}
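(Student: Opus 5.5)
The plan is to compute the cohomology sheaves of both complexes locally on $\bar Z$ and compare them. Note first that "$j_*\Omega^\hdot_Z$" should be understood as a model of $Rj_*\cc_Z$. By the holomorphic Poincar\'e lemma, $\Omega^\hdot_Z$ is a resolution of $\cc_Z$. Each $\Omega^p_Z$ is $j_*$-acyclic, because at a point of $D$ a basis of neighbourhoods is given by polydiscs $\Delta^n$, and $\Delta^n\cap Z=(\Delta^*)^r\times\Delta^{n-r}$ is Stein; Cartan's Theorem B then kills the higher direct images of coherent sheaves. Hence $j_*\Omega^\hdot_Z\simeq Rj_*\cc_Z$, and the cohomology sheaves of $j_*\Omega^\hdot_Z$ at such a point are
$$\mathcal H^q(j_*\Omega^\hdot_Z)_x = H^q\bigl((\Delta^*)^r\times\Delta^{n-r},\cc\bigr)=\Lambda^q\cc^r,$$
with generators dual to the loops around the $D_i$. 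By de Rham's theorem these generators are represented by the closed forms $\frac{1}{2\pi i}\frac{dz_i}{z_i}$, since $\oint dz_i/z_i=2\pi i$.

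The main step is to show that $\mathcal H^q(\Omega^\hdot_{\bar Z}(\log D))_x$ is the exterior algebra on the classes of $dz_1/z_1,\dots,dz_r/z_r$. In local coordinates the logarithmic complex on a polydisc is, at the level of germs, a completed tensor product of one-variable complexes. For the $r$ logarithmic directions the factor is $\Oo\xrightarrow{d}\Oo\,\frac{dz}{z}$, and for the $n-r$ other directions it is $\Oo\xrightarrow{d}\Oo\,dz$. For the logarithmic factor, the kernel of $d$ is $\cc$. For the cokernel, write $f=f(0)+zg$; then $f\,\frac{dz}{z}=f(0)\frac{dz}{z}+g\,dz$, and $g\,dz$ is exact, so the cokernel is $\cc\cdot\frac{dz}{z}$. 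For the ordinary factor, the cohomology is $\cc$ in degree $0$.

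To pass from the one-variable factors to the product, I would not invoke a Künneth formula for completed tensor products directly. Instead I would filter the complex by the degree in $dz_1/z_1$, split off the variable $z_1$ via a homotopy operator (integration in $z_1$ of the $z_1$-divisible part), and induct on $n$. This homotopy-operator and induction argument is the only real obstacle: one must check convergence of the homotopy on germs and its compatibility with the logarithmic poles in the remaining variables.

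The inclusion $\Omega^\hdot_{\bar Z}(\log D)\hookrightarrow j_*\Omega^\hdot_Z$ sends $\frac{dz_{i_1}}{z_{i_1}}\wedge\cdots\wedge\frac{dz_{i_q}}{z_{i_q}}$ to a form whose periods over the corresponding $q$-torus are $(2\pi i)^q$, and whose periods over the other coordinate tori vanish. The map on $\mathcal H^q$ is therefore an isomorphism $\Lambda^q\cc^r\to\Lambda^q\cc^r$ at every point. At points of $Z$ this is trivial. Hence the inclusion is a quasi-isomorphism. Taking hypercohomology gives
$$\HH^n(\bar Z,\Omega^\hdot_{\bar Z}(\log D))\cong \HH^n(\bar Z,Rj_*\cc_Z)=H^n(Z,\cc).$$
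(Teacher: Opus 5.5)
The paper gives no proof of this statement; it only cites Deligne, so the comparison has to be with the classical argument. Your proposal is correct and follows that argument in outline:
\begin{enumerate}
\item Steinness of $U\cap Z$ for polydisc neighbourhoods $U$ gives $j_*\Omega^\hdot_Z\simeq Rj_*\cc_Z$, with stalk cohomology $\Lambda^\hdot\cc^r$.
\item The stalk cohomology of $\Omega^\hdot_{\bar Z}(\log D)$ is spanned by the classes of the $dz_I/z_I$.
\item The torus periods $\pm(2\pi i)^q\delta_{IJ}$ make the comparison map an isomorphism on cohomology sheaves.
\end{enumerate}

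The one place you depart from the usual short proof is the local computation in step 2. There one normally writes the germ complex on a polydisc as an external tensor product of one-variable complexes and applies a K\"unneth formula. That is quicker, but it needs a word about completed tensor products of spaces of holomorphic germs, which your variable-by-variable induction avoids.

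The step you flag as the obstacle does go through. Write a germ as $\phi=\frac{dz_1}{z_1}\wedge\alpha+\beta$, with $\alpha,\beta$ free of $dz_1$, and split $\alpha=\alpha|_{z_1=0}+z_1\alpha_1$. Define, by coefficientwise integration,
\[
h\phi:=\int_0^{z_1}\iota_{\partial/\partial z_1}\Bigl(\phi-\frac{dz_1}{z_1}\wedge\alpha|_{z_1=0}\Bigr).
\]
A direct check gives $dh+hd=\mathrm{id}-P$, where
\[
P\phi=\frac{dz_1}{z_1}\wedge\alpha|_{z_1=0}+\beta|_{z_1=0}.
\]
Integration in $z_1$ from $0$ keeps coefficients holomorphic on the same polydisc. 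It also commutes with wedging by $dz_i/z_i$ for $i\neq 1$, so $h$ preserves the logarithmic complex. Hence the germ complex is homotopy equivalent to $K'\oplus\frac{dz_1}{z_1}\wedge K'$, where $K'$ is the logarithmic germ complex in $z_2,\dots,z_n$.

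When $z_1$ is a non-logarithmic coordinate, the ordinary relative Poincar\'e homotopy $h\phi=\int_0^{z_1}\iota_{\partial/\partial z_1}\phi$ gives $dh+hd=\mathrm{id}-P$ with $P\phi=\beta|_{z_1=0}$. In that case the germ complex is homotopy equivalent to $K'$ alone.

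Induction on $n$ then yields exactly the exterior algebra on the classes of $dz_1/z_1,\dots,dz_r/z_r$ that you claim.
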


\subsubsection{The $\dlog$ map and the residue sequence}

For a smooth component $D_i\subset D$, there is a residue map
$\mathrm{Res}_{D_i}: \Omega^p_{\bar Z}(\log D)\to\Omega^{p-1}_{D_i}(\log D|_{D_i})$.
These fit into the residue exact sequence
\[
  0 \to \Omega^p_{\bar Z} \to \Omega^p_{\bar Z}(\log D)
  \xrightarrow{\sum_i\mathrm{Res}_{D_i}} \bigoplus_i\Omega^{p-1}_{D_i}(\log D|_{D_i})
  \to 0.
\]

For the Deligne canonical extension $(F,\nabla)$, the residue
$\mathrm{Res}_{D_i}(\nabla)\in\End(F|_{D_i})$ is the nilpotent endomorphism
\[
  N_i \;:=\; \frac{1}{2\pi i}\log T_i, \qquad\text{so that}\qquad
  T_i = \exp(2\pi i\,\mathrm{Res}_{D_i}\nabla),
\]
where $T_i$ is the local monodromy around $D_i$; this is the normalization
used consistently throughout the paper (in particular in
Appendix~\ref{app:deligne-patch}, \S\ref{subsec:del-patch-construct}).

\subsubsection{The mixed Hodge structure and the weight filtration}

The logarithmic de Rham complex carries a weight filtration
$W_k\Omega^\hdot(\log D)$: the subcomplex generated in degree $n$ by forms with
at most $k$ logarithmic poles:
\[
  W_k\Omega^n(\log D) = \Omega^{n-k}_{\bar Z}\wedge\Omega^k_{\bar Z}(\log D).
\]

Together with the Hodge filtration, this gives the mixed Hodge structure on
$H^n(Z,\zz)$.  The weight filtration on cohomology is the monodromy weight
filtration of the local system $\zz_{Z}$, shifted appropriately.

\subsection{The Deligne-Beilinson cohomology: explicit description}

Following \cite{EV}, we give an explicit cochain-level description of
Deligne-Beilinson cohomology.

\begin{definition}
The \emph{smooth Deligne complex} $\mathcal{D}^\hdot_{\mathrm{sm}}(p)$ on
$Z$ is the cone complex
\[
  \mathcal{D}^n_{\mathrm{sm}}(p) = F^p A^n(\bar Z,\cc)\oplus A^{n-1}(\bar Z,\cc),
\]
with differential $d_{\mathcal{D}}(\alpha,\beta) = (-d\alpha, d\beta + \alpha)$.
The Deligne-Beilinson cohomology $H^n_{\calD}(Z,\rr(p))$ is the hypercohomology
of the corresponding complex with $\rr(p) = (2\pi i)^p\rr$.
\end{definition}

A class in $H^{2p}_{\calD}(Z,\zz(p))$ can be represented by a pair $(\omega,h)$
where $\omega\in F^pA^{2p}(\bar Z)$ is a closed form representing a class in
$H^{2p}(Z,\cc)$, and $h\in A^{2p-1}(Z,\cc)/A^{2p-1}(\bar Z,\cc)$ satisfies
$dh = \omega - c_p$ for an integral cocycle $c_p$ representing the integral
Chern class.

For a vector bundle $E$ on $Z$:
\begin{itemize}[leftmargin=2em]
  \item The \emph{Deligne Chern class} $c_p^{\calD}(E)\in H^{2p}_{\calD}(Z,\zz(p))$
    maps to the topological Chern class $c_p(E)\in H^{2p}(Z,\zz)$ and to the
    de Rham class $c_p^{\mathrm{dR}}(E)\in H^{2p}(Z,\cc)$.
  \item If $E$ is flat, $c_p^{\mathrm{dR}}(E) = 0$, and $c_p^{\calD}(E)$
    lies in $H^{2p-1}(Z,\cc/\zz)\subset H^{2p}_{\calD}(Z,\zz(p))$.
\end{itemize}

\subsection{The Chern-Weil map and secondary invariants}

\begin{definition}
For a vector bundle $E$ with connection $\nabla$ on $Z$, the
\emph{Chern-Weil homomorphism} assigns to each symmetric polynomial
$P\in\mathrm{Sym}^p(\mathfrak{gl}_r)^{GL_r}$ a closed differential form
$P(\nabla)\in A^{2p}(Z)$.  The de Rham class $[P(\nabla)]$ is independent of
$\nabla$.
\end{definition}

The connections arising in this paper are not flat, and the relevant weakening
of flatness is the following.

\begin{definition}[Locally nil-flat connection]\label{def:locally-nilflat}
A connection $\nabla$ on a bundle $E\to Z$ is \emph{locally nil-flat} if there
is an open covering $Z=\bigcup_iU_i$ such that, on each $U_i$, the bundle
$E|_{U_i}$ carries a filtration $W_i^\hdot$ preserved by $\nabla|_{U_i}$ whose
induced connection on each graded quotient $\Gr^{W_i}(E|_{U_i})$ is flat.
Equivalently, the curvature $F(\nabla)$ is locally nilpotent: it is strictly
upper-triangular with respect to $W_i^\hdot$ on $U_i$.
\end{definition}

A locally nil-flat connection is in general \emph{not} flat: the curvature is
nilpotent, not zero.  The two principal examples are Deligne's patched
connection $\nabla^{\Del}$ on the canonical extension, constructed in
Appendix~\ref{app:deligne-patch}, where the filtrations are the local
monodromy weight filtrations at each stratum of $D$; and the canonical Rees
connection of Section~\ref{sec:rees-global}.  The point of the definition is
the following vanishing, which is what allows secondary invariants to be
defined at all.

\begin{proposition}\label{prop:nilflat-ch-zero}
For a locally nil-flat connection $\nabla$ and the $p$-th Chern character
polynomial $\ch_p = \frac{1}{p!}\Tr(-)^p$, we have $\ch_p(\nabla) = 0$
as a differential form, for all $p\geq1$.
\end{proposition}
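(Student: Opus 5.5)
The vanishing is a pointwise statement, so I will prove it on each open set $U_i$ of the covering in Definition~\ref{def:locally-nilflat}. There the curvature is strictly upper-triangular with respect to the filtration $W_i^\hdot$. After choosing a local $\calC^\infty$ splitting of $W_i^\hdot$, the trace of $F(\nabla)^p$ becomes a trace of products of strictly upper-triangular blocks, and such a trace vanishes.

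\textbf{Step 1: the curvature is strictly triangular.} Since $\nabla$ preserves $W_i^\hdot$, so does its curvature: $F(\nabla)(W_i^k)\subset W_i^k\otimes A^2$. The curvature of the induced connection on $\Gr^{W_i}_k$ is the map that $F(\nabla)$ induces on $W_i^k/W_i^{k-1}$. By hypothesis the graded connections are flat, so this induced map is zero, and hence
\[
F(\nabla)(W_i^k)\subset W_i^{k-1}\otimes A^2_{U_i}
\]
(with the indexing convention that the filtration is increasing). This shows that the two formulations in the definition agree.

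\textbf{Step 2: powers and their traces.} Let $\Omega=F(\nabla)$. It is an $\End(E)$-valued $2$-form, and even-degree forms commute, so composition is unambiguous. Iterating Step 1 gives
\[
\Omega^p(W_i^k)\subset W_i^{k-p}\otimes A^{2p}.
\]
To compute the trace, pick a smooth splitting $E|_{U_i}\cong\bigoplus_k\Gr_k$; one exists on $U_i$, after shrinking it if needed, because $\calC^\infty$ filtered bundles split. In the adapted frame the matrix of $\Omega^p$ is strictly block upper-triangular with entries in the commutative algebra $A^{\mathrm{even}}$. Over a commutative ring the trace of such a matrix is the sum of its diagonal blocks, and these are all zero. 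Therefore $\Tr(\Omega^p)=0$ on $U_i$ for every $p\ge1$, so $\ch_p(\nabla)=\frac{1}{p!}\Tr(\Omega^p)$ vanishes on $U_i$. Since the $U_i$ cover $Z$, it vanishes as a global form.

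\textbf{Main obstacle.} The only subtle point is the graded-commutativity bookkeeping: one must justify that the trace is computed correctly with form-valued entries. This holds because the entries of $\Omega$ lie in the commutative even part $A^{\mathrm{even}}$, so the usual linear algebra applies.

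Degenerate filtration indices cause no extra difficulty: the filtration is finite, so $\Omega$ is nilpotent, and in particular $\Omega^p=0$ once $p$ exceeds the length of the filtration.
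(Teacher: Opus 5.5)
Your proof is correct and follows the paper's argument: on each $U_i$ the curvature is strictly upper-triangular for $W_i^\hdot$, so $F(\nabla)^p$ is as well, and its trace vanishes. Your Step 1, which derives strict triangularity from flatness of the graded connection, and your use of a $\calC^\infty$ splitting with even-degree, hence commuting, form entries only make explicit what the paper's proof takes for granted.
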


\begin{proof}
Locally on each open set $U_i$ of the nil-flat structure, there exists a
filtration $W^\hdot_i$ of $E|_{U_i}$ preserved by $\nabla|_{U_i}$ with
$\Gr^{W_i}(\nabla|_{U_i})$ flat.  The curvature $F(\nabla|_{U_i})$ is
strictly upper-triangular with respect to $W^\hdot_i$ (not merely
nilpotent as an abstract endomorphism: nilpotency of the individual factor
is not by itself enough, since a product of nilpotent matrices need not be
traceless unless they are simultaneously strictly triangularized). Since
$F(\nabla|_{U_i})^p$ is then itself a product of $p$ strictly
upper-triangular matrices with respect to the \emph{same} filtration
$W^\hdot_i$, it is again strictly upper-triangular with respect to
$W^\hdot_i$, hence has zero diagonal and so $\Tr(F(\nabla|_{U_i})^p)=0$.
\end{proof}

\begin{remark}[A note on normalization]
The polynomial $\ch_p=\frac{1}{p!}\Tr(-)^p$ above is the ordinary,
un-normalized Chern-Weil polynomial, used here only to record the vanishing
$\ch_p(\nabla)=0$ as a differential form for a locally nil-flat $\nabla$ ---
a statement unaffected by any choice of overall nonzero normalizing
constant. From \S\ref{sec:volume} on, the closely related but
differently-scaled quantities $\ch_p(\nabla):=\Tr(F(\nabla)^p)$ of
Lemma~\ref{lem:bott-chern} and the normalized
$\ch_p^{\mathrm{nm}}(\nabla):=\frac{1}{p!(2\pi i)^p}\Tr(F(\nabla)^p)$ of
Definition~\ref{def:normalized-ch-vp} are used instead, the latter being the
one that corresponds to the integral Chern class $c_p(E)\in H^{2p}(X,\zz)$
under the Chern-Weil map. All three agree, up to the nonzero constant
$p!(2\pi i)^p$ or $p!$, and in particular vanish together; only
$\ch_p^{\mathrm{nm}}$ is used where the precise integral normalization
matters (e.g.\ Step 5 of the proof of Theorem~\ref{thm:CS-regulator}).
\end{remark}

The Chern-Simons class is the primary obstruction to the flatness being
``globally trivial'': it measures the twisting needed to patch the local
flat trivializations into a global one.

\section{The adapted open covering}
\label{sec:covering}

\subsection{Multi-indices and divisor strata}

Recall that $D = D_1+\cdots+D_k$ is a simple normal crossings divisor on $X$.
By definition, every irreducible component $D_i$ is smooth, and for any
multi-index $I = (i_1,\ldots,i_a)$ with distinct entries in $\{1,\ldots,k\}$
the stratum
\[
  D_I := D_{i_1}\cap\cdots\cap D_{i_a}
\]
is smooth of codimension $a$ (or empty).  We write $|I|:=a$ for the length,
and $I\subset J$ if the set of entries of $I$ is a subset of those of $J$.

\begin{notation}
The empty multi-index is $\emptyset$, with $D_\emptyset := X$.
We include $\emptyset$ as a valid multi-index, corresponding to the open set
that avoids all divisor components.
\end{notation}

\subsection{Construction of the covering}

Choose a Riemannian metric on $X$.  For each $i$, let $p_i: Y_i\to D_i$ be
the nearest-point projection from a tubular neighbourhood $Y_i$ of $D_i$.  We
may assume:
\begin{itemize}[leftmargin=2em]
  \item $p_i$ and $p_j$ commute on $Y_{ij} := Y_i\cap Y_j$;
  \item for any multi-index $I = (i_1,\ldots,i_a)$ the product
    $p_I := p_{i_1}\circ\cdots\circ p_{i_a}$ is a retraction from
    $Y_I := Y_{i_1}\cap\cdots\cap Y_{i_a}$ to $D_I$;
  \item the retractions satisfy the distance estimate
    \begin{equation}\label{eq:dist}
      d(p_I(x),p_I(y)) \leq 2\,d(x,y).
    \end{equation}
\end{itemize}

Fix small constants $0 < \eta < \epsilon$ with $3\epsilon < 4\eta$.
For each multi-index $I$ set
\[
  \epsilon_I := 4^{|I|}\epsilon,\qquad \eta_I := 4^{|I|}\eta.
\]
Define open and closed tubular neighbourhoods
\[
  T_I := \{ x\in X : d(x,p_I(x)) < \epsilon_I \},
\]
\[
  T'_I := \{ x\in X : d(x,p_I(x)) \leq \eta_I \},
\]
and set
\begin{equation}\label{eq:UI-def}
  U_I := T_I \;\setminus\; \Bigl( T_I \cap \bigcup_{J\not\subset I} T'_J \Bigr).
\end{equation}

\begin{proposition}[Adapted covering]
\label{prop:covering}
The open sets $\{U_I\}$, indexed by multi-indices, cover $X$ and satisfy:
\begin{enumerate}[label={\rm(\arabic*)}]
  \item $U_I \subset T_I$ (tubular neighbourhood condition).
  \item If $U_{I_0}\cap\cdots\cap U_{I_m}\neq\emptyset$ then, up to a
    permutation of $\{0,\ldots,m\}$, we have
    $I_0\subset I_1\subset\cdots\subset I_m$.
  \item $D_i\cap U_J\neq\emptyset \Rightarrow i\in J$.
\end{enumerate}
\end{proposition}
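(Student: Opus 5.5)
We prove the covering property first, then (1)--(3) in increasing order of difficulty; the key inputs are the growth $\epsilon_I=4^{|I|}\epsilon$ of the radii, the inequality $3\epsilon<4\eta$, and the distance estimate \eqref{eq:dist}. We take $\epsilon$ small enough that $d(x,D_I)<\epsilon_I$ implies $x\in Y_I$, and that $d(x,p_I(x))$ is comparable to $d(x,D_I)$, say within a factor $2$, on such neighbourhoods.

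\emph{Covering.} Let $x\in X$. Among all multi-indices $J$ with $x\in T'_J$, choose $I$ maximal for inclusion; if there is none, put $I=\emptyset$. Since $\eta_I<\epsilon_I$, we have $x\in T'_I\subset T_I$; for $I=\emptyset$ this holds trivially because $T_\emptyset=X$. Now suppose $x\in T'_J$ with $J\not\subset I$. Then $x$ is simultaneously close to $D_I$ and to $D_J$. By transversality of the strata, and after shrinking $\epsilon$, $x$ is close to $D_{I\cup J}$ at distance bounded by a constant times $\eta_J$. The factor $4$ per added index is chosen so that this constant is absorbed, giving $x\in T'_{I\cup J}$. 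This contradicts the maximality of $I$, since $I\cup J\supsetneq I$. Hence $x\in U_I$. Statement (1) is immediate from the definition \eqref{eq:UI-def}.

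\emph{Property (3).} Let $x\in D_i\cap U_J$ and suppose $i\notin J$. Then $\{i\}\not\subset J$, and $x\in D_i$ gives $d(x,p_i(x))=0\le\eta_{\{i\}}$, so $x\in T'_{\{i\}}$. The definition of $U_J$ then excludes $x$, a contradiction.

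\emph{Property (2).} It suffices to prove the case $m=1$: if $U_I\cap U_J\neq\emptyset$ then $I\subset J$ or $J\subset I$. Indeed, pairwise comparability of $I_0,\dots,I_m$ makes them a chain, which we reorder. So let $x\in U_I\cap U_J$ and suppose $I,J$ are incomparable. Then $x\in T_J$ and, since $J\not\subset I$, $x\notin T'_J$; hence $\eta_J<d(x,p_J(x))<\epsilon_J$, and symmetrically $\eta_I<d(x,p_I(x))<\epsilon_I$. Order so that $|I|\le|J|$. Using \eqref{eq:dist} and the commutation of the projections, we produce a point near $D_{I\cup J}$ and show $x\in T'_{K}$ for some $K\not\subset I$ or $K\not\subset J$, for instance $K=I\cup J$. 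This would be excluded by membership in $U_I$ or $U_J$, giving the contradiction.

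The main obstacle is exactly this metric estimate. We need $d(x,p_{I\cup J}(x))\le\eta_{I\cup J}=4^{|I\cup J|}\eta$ given only $d(x,p_I(x))<4^{|I|}\epsilon$ and $d(x,p_J(x))<4^{|J|}\epsilon$. Writing $p_{I\cup J}=p_{J\setminus I}\circ p_I$ and applying \eqref{eq:dist} yields a bound of the form $d(x,p_I x)+2\,d(x,p_J x)$ up to the comparability constant, so at most $3\cdot4^{|J|}\epsilon$. Since $|I\cup J|\ge|J|+1$, the target satisfies $\eta_{I\cup J}\ge 4^{|J|+1}\eta=4\cdot4^{|J|}\eta$. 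The hypothesis $3\epsilon<4\eta$ is precisely what makes $3\cdot4^{|J|}\epsilon<4\cdot4^{|J|}\eta$ close. I would first verify this inequality carefully in local normal-crossing coordinates, where the $p_i$ are coordinate projections and all distances are explicit. If the constants do not match exactly, I would adjust the comparability between $d(x,p_I x)$ and $d(x,D_I)$ by shrinking $\epsilon$.
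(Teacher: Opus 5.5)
Your plan matches the paper's. You reduce (2) to two incomparable indices, and the contradiction comes from the inclusion $T_I\cap T_J\subset T'_{I\cup J}$ (Lemma~\ref{lem:TI-TJ}), which rests on commuting retractions, \eqref{eq:dist} and $3\epsilon<4\eta$. Two of your pieces go beyond the paper. First, the paper never proves that the $U_I$ cover $X$, and your maximal-element argument supplies this. Second, your proof of (3) uses $x\in T'_{\{i\}}$ with $\{i\}\not\subset J$ directly. That is cleaner than the paper's detour through $K=J\cup\{i\}$, where the claim $d(x,p_K(x))=0$ is not right as written: $p_K(x)=p_J(x)$, so one only gets $d(x,p_K(x))<\epsilon_J<\eta_K$.

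The step that does not close is the one you flag as the main obstacle, and the difficulty comes from your choice of factorization. Writing $p_{I\cup J}=p_{J\setminus I}\circ p_I$ forces you to compare $d(x,p_{J\setminus I}(x))$ with $d(x,p_J(x))$, which is not among the hypotheses. With the factor-$2$ comparability you stipulate, the bound becomes roughly $d(x,p_I(x))+4\,d(x,p_J(x))<5\cdot 4^{|J|}\epsilon$. Since $5\epsilon\le 4\eta$ is incompatible with $\eta<\epsilon$, the numbers cannot close by that route; your figure $3\cdot 4^{|J|}\epsilon$ silently drops the constant.

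Use commutation and idempotence instead to write $p_{I\cup J}(x)=p_I(p_J(x))$, and apply \eqref{eq:dist} to $p_I$ at the two points $x$ and $p_J(x)$:
\[
d(x,p_{I\cup J}(x))\le d(x,p_I(x))+2\,d(x,p_J(x))<\epsilon_I+2\epsilon_J\le 3\cdot 4^{m}\epsilon<4^{m+1}\eta\le\eta_{I\cup J},\qquad m=\max(|I|,|J|),
\]
using $|I\cup J|\ge m+1$ for incomparable $I,J$. This needs no comparability constant, no transversality and no further shrinking. The lower bounds $\eta_J<d(x,p_J(x))$ that you derive play no role.

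The same inclusion also makes your covering step rigorous. Since $T'_I\subset T_I$ and $T'_J\subset T_J$, an incomparable $J$ with $x\in T'_J$ gives $x\in T'_{I\cup J}$ with $I\cup J\supsetneq I$. The case $I\subsetneq J$ contradicts maximality outright. Note that the relevant scale there is $\max(\eta_I,\eta_J)$, not $\eta_J$ as you wrote.
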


\begin{proof}
Property (1) holds by construction.  We prove the key intersection lemma.

\begin{lemma}\label{lem:TI-TJ}
If $I\not\subset J$ and $J\not\subset I$ then $T_I\cap T_J\subset T'_{I\cup J}$.
\end{lemma}
\begin{proof}
Since $I\not\subset J$ and $J\not\subset I$ we have
$|I\cup J| \geq \max(|I|+1,|J|+1)$, so
\[
  \eta_{I\cup J} = 4^{|I\cup J|}\eta \geq 4^{|I|}(4\eta)
  > 3\cdot 4^{|I|}\epsilon = 2\epsilon_I + \epsilon_I.
\]
More precisely, by symmetry between $I$ and $J$:
\begin{equation}\label{eq:etabound}
  \eta_{I\cup J} > \epsilon_I + 2\epsilon_J.
\end{equation}
Now suppose $x\in T_I\cap T_J$, so $d(x,p_I(x)) < \epsilon_I$ and
$d(x,p_J(x)) < \epsilon_J$.  Since
$p_{I\cup J}(x) = p_I(p_J(x))$, the distance estimate~\eqref{eq:dist} gives
$d(p_I(x),p_{I\cup J}(x))\leq 2d(x,p_J(x))$, hence
\[
  d(x,p_{I\cup J}(x)) \leq d(x,p_I(x)) + 2d(x,p_J(x))
  < \epsilon_I + 2\epsilon_J < \eta_{I\cup J},
\]
so $x\in T'_{I\cup J}$.
\end{proof}

\noindent\textit{Proof of~(2).}
Suppose $x\in U_{I_0}\cap U_{I_1}$ with $I_0\not\subset I_1$ and
$I_1\not\subset I_0$.  Then $x\in T_{I_0}\cap T_{I_1}$, so by
Lemma~\ref{lem:TI-TJ} we have $x\in T'_{I_0\cup I_1}$.  Setting
$K := I_0\cup I_1$, note that $I_0\not\subset K$ is false (since
$I_0\subset K$ by definition), so the definition~\eqref{eq:UI-def} of $U_{I_0}$
would exclude $x$ unless $I_0\subset K$ is not ``$J\not\subset I_0$''.  More
carefully: the definition says $U_{I_0} = T_{I_0}\setminus(T_{I_0}\cap
\bigcup_{J\not\subset I_0}T'_J)$.  Since $K = I_0\cup I_1$ satisfies
$K\not\subset I_0$ (because $I_1\not\subset I_0$), we have
$x\in T'_K\cap T_{I_0}\cap T_{I_1}$, which contradicts $x\in U_{I_0}$.

It follows that whenever $x\in U_{I_0}\cap U_{I_1}$, one of $I_0\subset I_1$
or $I_1\subset I_0$ holds.  By transitivity, if $x\in\bigcap_j U_{I_j}$ then
the indices form a totally ordered set; reindexing gives~(2).

\noindent\textit{Proof of~(3).}
Suppose $x\in D_i\cap U_J$ with $i\notin J$.  Then $K := J\cup\{i\}$ satisfies
$K\not\subset J$.  Since $x\in D_i$ we have $d(x,p_{\{i\}}(x))=0\leq\eta_i$,
and because $J\subset K$ the retractions $p_J$ and $p_{\{i\}}$ commute near
$x$, giving $d(x,p_K(x))=0\leq\eta_K$.  So $x\in T'_K$.  But $K\not\subset J$,
so the definition of $U_J$ would exclude $x$, a contradiction.
\end{proof}

\begin{remark}\label{rem:catlocsys}
On $U_{I_0\cdots I_m} := U_{I_0}\cap\cdots\cap U_{I_m}$ (which by (2) has
$I_0\subset\cdots\subset I_m$), the components of $D$ that can appear are
exactly those $D_i$ with $i\in I_m$.  Thus the category of local systems on
$U^*_{I_0\cdots I_m} := U_{I_0\cdots I_m}\cap X^*$ equals the category of
local systems on $U_{I_0\cdots I_m}$ whose monodromy is trivial around every
$D_i$ with $i\in I_m$ that meets $U_{I_0\cdots I_m}$.
\end{remark}

\subsection{The simplicial space of the covering}

\begin{definition}
Given an open covering $X = \bigcup_I U_I$ as above, define the associated
\emph{simplicial space} $\Uu_\hdot$ by
\[
  \Uu_n := \coprod_{I_0,\ldots,I_n} U_{I_0\cdots I_n},
\]
where the coproduct runs over all $(n+1)$-tuples of multi-indices.
\end{definition}

By Proposition~\ref{prop:covering}(2), the realization $|\Uu_\hdot|$ is
homotopy equivalent to $X$.  On $\Uu_n$ the various restriction maps and
permutation maps from the simplicial structure are defined in the standard way.


\subsection{Examples and the picture for two divisor components}

To illustrate the structure of the adapted covering, let us describe it
explicitly in the case $k=2$ (two divisor components $D_1$ and $D_2$
intersecting along $D_{12} = D_1\cap D_2$). 
We note that this picture is intrinsic to the approach taken in \cite{IS-2div}. Even though, as we said above, the case of three or more components is more complicated, the two-divisor case is a good place to start.

There are four multi-indices: $\emptyset$, $(1)$, $(2)$, $(12)$.
The corresponding open sets are:
\begin{align*}
  U_\emptyset &= X \setminus (T'_{(1)}\cup T'_{(2)}),\\
  U_{(1)} &= T_{(1)} \setminus T'_{(2)},\\
  U_{(2)} &= T_{(2)} \setminus T'_{(1)},\\
  U_{(12)} &= T_{(12)}.
\end{align*}

\begin{figure}[h]
\centering
{\setlength{\unitlength}{.42mm}
\begin{picture}(200,200)
\put(20,140){\line(1,0){160}}
\put(140,20){\line(0,1){160}}
\put(140,140){\circle*{2}}

\qbezier(110,140)(111,169)(140,170)
\qbezier(140,170)(169,169)(170,140)
\qbezier(170,140)(169,111)(140,110)
\qbezier(140,110)(111,111)(110,140)

\put(123,40){\line(0,1){80}}
\put(160,40){\line(0,1){82}}
\qbezier(123,40)(131,47)(141,40)
\qbezier(141,40)(153,33)(160,40)
\qbezier(123,120)(141,110)(160,122)

\put(40,123){\line(1,0){80}}
\put(40,157){\line(1,0){80}}
\qbezier(40,123)(33,131)(40,140)
\qbezier(40,140)(47,151)(40,157)
\qbezier(120,123)(110,140)(120,157)

\put(30,128){\line(1,0){91}}
\put(128,30){\line(0,1){91}}
\qbezier(121,128)(123,122)(128,121)

\put(50,30){\line(1,0){78}}
\put(30,50){\line(0,1){78}}
\qbezier(30,50)(30,30)(50,30)

\put(150,150){\ensuremath{U_{(12)}}}
\put(146,50){\ensuremath{U_{(1)}}}
\put(50,147){\ensuremath{U_{(2)}}}
\put(50,50){\ensuremath{U_{\emptyset}}}

\put(143,15){\ensuremath{D_{1}}}
\put(15,143){\ensuremath{D_{2}}}

\put(129,142){\ensuremath{{\scriptstyle D_{12}}}}

\put(109,99){\vector(1,1){16}}
\put(90,93){\ensuremath{{U_{\emptyset (1)(12)}}}}

\put(104,112){\vector(2,1){16}}
\put(82,110){\ensuremath{{U_{\emptyset (12)}}}}

\put(165,110){\vector(-2,1){10}}
\put(167,107){\ensuremath{U_{(1)(12)}}}

\put(105,173){\vector(1,-2){10}}
\put(94,177){\ensuremath{U_{(2)(12)}}}

\end{picture}}
\caption{The adapted covering near the crossing $D_{12}=D_1\cap D_2$ of two
divisor components.  The central regions $U_\emptyset$, $U_{(1)}$, $U_{(2)}$,
$U_{(12)}$ are the four open sets of \S\ref{sec:covering}; the curved
boundary near the corner is the locus $T'_{(12)}$.  The arrows indicate the
non-empty triple and double overlaps
$U_{\emptyset(1)(12)}$, $U_{\emptyset(12)}$, $U_{(1)(12)}$, $U_{(2)(12)}$
appearing in the intersection pattern described below.}
\label{fig:opencover}
\end{figure}

The intersection pattern is:
\begin{itemize}[leftmargin=2em]
  \item $U_\emptyset\cap U_{(1)}$: the transition region between the bulk and
    the neighbourhood of $D_1$, away from $D_2$.
  \item $U_\emptyset\cap U_{(12)}$: this is \emph{empty}, since by
    Proposition~\ref{prop:covering}(2) any intersection involves a chain
    $I_0\subset I_1\subset\cdots$, and $\emptyset\not\subset(12)$ is the only
    option without going through $(1)$ or $(2)$.
  \item $U_{(1)}\cap U_{(12)}$: the transition towards the corner $D_{12}$
    from the side of $D_1$.
  \item $U_\emptyset\cap U_{(1)}\cap U_{(2)}$: \emph{empty} by (2) since
    $\{(1),(2)\}$ is not linearly ordered.
  \item $U_{(1)}\cap U_{(2)}\cap U_{(12)}$: \emph{empty} for the same reason.
\end{itemize}

The non-empty multiple intersections form a simplicial set whose maximal
simplices are:
\[
  U_\emptyset,\; U_{(1)},\; U_{(2)},\; U_{(12)},\;
  U_{\emptyset(1)},\; U_{\emptyset(2)},\; U_{(1)(12)},\; U_{(2)(12)},\;
  U_{\emptyset(1)(12)},\; U_{\emptyset(2)(12)}.
\]
The cubical barycentric subdivision picture for the triple intersection
$U_{\emptyset(1)(12)}$ corresponds to three squares glued along their sides,
as described in the introduction of the companion paper.

\subsection{Local systems on intersections and monodromy}

We record here the key fact about local systems on the open sets of the
adapted covering.

\begin{lemma}\label{lem:locsys-intersect}
On $U_{I_0\cdots I_m}^* = U_{I_0\cdots I_m}\cap X^*$ (with
$I_0\subset\cdots\subset I_m$), the local system $L = V^*$ has monodromy
operators $T_i$ around $D_i$ for every $i\in I_m$.  The monodromy operators
$T_i$ for $i\in I_j\setminus I_{j-1}$ are grouped together and commute.
\end{lemma}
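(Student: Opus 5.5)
The plan is to reduce everything to the local topology of $X^*$ near the strata $D_I$, using Proposition~\ref{prop:covering}. First, by Proposition~\ref{prop:covering}(1) we have $U_{I_0\cdots I_m}\subset U_{I_m}\subset T_{I_m}$. We may shrink the tubular neighbourhoods so that $T_{I_m}$ retracts via $p_{I_m}$ onto $D_{I_m}$. Near any point of $D_{I_m}$ there are local holomorphic coordinates $(z_1,\dots,z_d)$ in which $D=\{\prod_{i\in I_m} z_i=0\}$ locally; no other components appear, by Proposition~\ref{prop:covering}(3) and Remark~\ref{rem:catlocsys}. Over a small polydisc $\Delta$ the complement $\Delta\cap X^*$ is $(\Delta^*)^{|I_m|}\times\Delta^{d-|I_m|}$. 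Its fundamental group is therefore $\zz^{|I_m|}$, freely abelian on the loops $\gamma_i$, $i\in I_m$.

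The monodromy operators $T_i:=\zeta(\gamma_i)$ for $i\in I_m$ are thus defined locally and pairwise commute. This already gives commutation for all of $I_m$, and hence in particular within each group $I_j\setminus I_{j-1}$.

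Second, we transport these operators to the relevant piece of $U^*_{I_0\cdots I_m}$. A point $x\in U^*_{I_0\cdots I_m}$ lies in $T_{I_m}$, so $p_{I_m}(x)\in D_{I_m}$ has a polydisc neighbourhood as above. Choose the tubular radii $\epsilon_I$ small relative to a fixed finite cover of $D_{I_m}$ by such polydiscs (compactness). Then $x$ lies in $\Delta\cap X^*$ for one of them, and the local loops $\gamma_i$ based near $x$ make sense.

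The grouping is by the chain: for $i\in I_j\setminus I_{j-1}$, the loop $\gamma_i$ already lives in $U^*_{I_j}$. Here $D_i$ is among the components visible on $U_{I_j}$, by part (3) read in the converse direction via the definition of $T_{I_j}$. As $j$ increases, the family of operators only enlarges. Different base points in the same connected component give conjugate but still commuting families. So the statement is independent of choices, up to simultaneous conjugation.

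The main obstacle is the claim that the local fundamental group is the full $\zz^{|I_m|}$ on the relevant region, rather than only near $D_{I_m}$ itself. The set $U_{I_0\cdots I_m}$ is cut out by removing the $T'_J$, and it need not be a neighbourhood of $D_{I_m}$. For instance, $U_{\emptyset(1)(12)}$ in Figure~\ref{fig:opencover} avoids $D$ entirely. I would handle this by stating the lemma for monodromy of $L$ restricted from the ambient polydisc $\Delta\cap X^*\supset U^*_{I_0\cdots I_m}$. The restriction of the local system along this inclusion carries the commuting operators $T_i$. It is consistent with Remark~\ref{rem:catlocsys}, since loops around $D_i$ with $i\notin I_m$ never enter.
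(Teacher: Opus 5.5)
Your argument is correct and is essentially the paper's. Remark~\ref{rem:catlocsys} (i.e.\ Proposition~\ref{prop:covering}(3)) leaves only the components $D_i$ with $i\in I_m$, and commutativity comes from the normal crossings local model, which you make explicit as the abelian group $\pi_1\bigl((\Delta^*)^{a}\times\Delta^{d-a}\bigr)\cong\zz^{a}$.

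Two minor points:
\begin{enumerate}
\item A finite chart cover of all of $D_{I_m}$ cannot avoid the components $D_j$, $j\notin I_m$, at points of $D_{I_m}\cap D_j$; Proposition~\ref{prop:covering}(3) constrains $U_{I_m}$, not $D_{I_m}$. Allowing those components in the charts is harmless, since the $\gamma_i$, $i\in I_m$, still commute in $\zz^{a}$ with $a\geq|I_m|$.
\item Your ``main obstacle'' dissolves. In the flat local model the conditions defining $U_{I_0\cdots I_m}$ involve only the radii $|z_i|$, so the set is invariant under rotating each $z_i$. It therefore contains the meridians $\gamma_i$, $i\in I_m$, through each of its points; for example, $U_{\emptyset(1)(12)}$ misses $D$ but is a shell around $D_{12}$ carrying both meridians. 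This makes the $T_i$ intrinsic to $L|_{U^*_{I_0\cdots I_m}}$, so the detour through an ambient chart is unnecessary.
\end{enumerate}
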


\begin{proof}
By Remark~\ref{rem:catlocsys}, the only divisor components that can appear
in $U_{I_0\cdots I_m}$ are those indexed by elements of $I_m$ (the largest
multiindex in the chain).  The monodromy operators around these components
commute because the corresponding divisors meet transversally in normal
crossings, and small loops around different components of a normal crossings
divisor commute in $\pi_1(X^*)$.
\end{proof}

\subsection{Homotopy type of the nerve}

\begin{proposition}\label{prop:nerve-htpy}
The geometric realization $|\Uu_\hdot|$ of the simplicial space associated
to the adapted covering is homotopy equivalent to $X$.
\end{proposition}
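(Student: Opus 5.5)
We prove Proposition~\ref{prop:nerve-htpy} by the standard comparison between the realization of the Čech simplicial space of an open covering and the space itself, that is, Segal's theorem for numerable coverings. Since $X$ is a compact manifold, hence paracompact, the locally finite covering $\{U_I\}$ of Proposition~\ref{prop:covering} admits a subordinate partition of unity $\{\phi_I\}$. We then construct the two maps and check that they are mutually inverse up to homotopy.

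First we write down the projection $\pi:|\Uu_\hdot|\to X$. It forgets the simplex coordinate: a point $(x,t)$ with $x\in U_{I_0\cdots I_n}$ and $t\in\Delta^n$ goes to $x$. This is compatible with the face and degeneracy identifications, because all of these are induced by inclusions of open subsets of $X$.

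Next we write down the section $s:X\to|\Uu_\hdot|$. Given $x$, let $I_0,\dots,I_n$ be the indices with $\phi_{I_j}(x)>0$. By Proposition~\ref{prop:covering}(2) these may be ordered as a chain $I_0\subset\cdots\subset I_n$. We set $s(x)=(x,(\phi_{I_0}(x),\dots,\phi_{I_n}(x)))$, taken in the component $U_{I_0\cdots I_n}$. Continuity follows from local finiteness: near $x$ only finitely many $\phi_I$ are nonzero, and an index dropping out corresponds to a face map. The ordering supplied by (2) is not strictly needed here, since the full tuple coproduct also contains all permutations. It does, however, show that we may equally work with the nerve-of-chains model, which is the one used later.

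Clearly $\pi\circ s=\mathrm{id}_X$. For the other composite we use the straight-line homotopy on the fibres of $\pi$, deforming $s\circ\pi$ to the identity. Fix a point $(x,t)$ in the component $U_{I_0\cdots I_n}\times\Delta^n$. Let $J_0,\dots,J_m$ be the indices with $\phi_{J}(x)>0$. The point $x$ lies in both $U_{I_0\cdots I_n}$ and $U_{J_0\cdots J_m}$, so it lies in $U_{I_0\cdots I_nJ_0\cdots J_m}$. Both $(x,t)$ and $s(x)$ are faces of the simplex over this intersection, and we join them linearly: $(x,(1-u)t\oplus u\,\phi(x))$ for $u\in[0,1]$.

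The main obstacle is checking that this homotopy is well defined and continuous on the quotient $|\Uu_\hdot|$. The concern is whether it respects the degeneracy and face identifications, and whether the realization is "good" enough that the fat and thin realizations agree. Our plan is to use the fat realization, avoiding degeneracies, and to invoke Segal's result ("Classifying spaces and spectral sequences", Prop.~4.1). That result states that for a numerable covering, the fat realization of the Čech space maps to $X$ by a homotopy equivalence, indeed a shrinkable map. The fibre of $\pi$ over $x$ is the full simplex on the set $\{I: x\in U_I\}$, which is contractible, and the partition of unity gives the global contraction described above. We then note that the Čech simplicial space is good, since the degeneracies are inclusions of clopen components. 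Consequently the thin realization is homotopy equivalent to the fat one, which completes the proof.
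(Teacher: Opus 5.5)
Your argument is correct, but it takes a different route from the paper's.

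\textbf{The two routes.} The paper appeals to the nerve theorem. It asserts that each $U_I$ and each intersection $U_{I_0\cdots I_m}$ is contractible, and treats $\Uu_\hdot$ as a hypercover by contractible sets. You instead use Segal's partition-of-unity argument for numerable coverings: the section $s$, the straight-line homotopy from the identity to $s\circ\pi$, and goodness of the \v{C}ech space. That argument needs no hypothesis on the topology of the open sets.

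For the statement as written, your proof is the sounder one. The contractibility the paper asserts fails in general: $U_\emptyset$ is $X$ with closed tubes around the strata of $D$ removed, so for $D=\emptyset$ it is $X$ itself. By contrast, the realization of the topological \v{C}ech simplicial space is homotopy equivalent to $X$ for every numerable cover.

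What the paper's route would buy, if its hypothesis held, is a stronger conclusion: that the \emph{discrete} nerve of the index poset also models $X$. That is how the proposition is used later, as $X\simeq|NA|$ in Construction~\ref{constr:rL}. Your argument gives nothing about the discrete nerve, and cannot do so without some contractibility input.

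\textbf{Three small corrections,} none affecting validity.
\begin{itemize}
\item In the all-tuples model, the fibre of $\pi$ over $x$ is not the simplex on $S_x=\{I:x\in U_I\}$. It is the realization of the nerve of the indiscrete category on $S_x$, which is contractible but infinite-dimensional. It is a simplex only in the ordered-chain model.
\item $s$ and the homotopy need a \emph{consistent} ordering of the support sets. This ensures that near $x$ the formula on a smaller support set is a face of the formula on the support set at $x$. The chain order of Proposition~\ref{prop:covering}(2), or any fixed total order on the indices, provides this. So your remark that the ordering is ``not strictly needed'' should read ``any consistent ordering will do.''
\item Your explicit homotopy already respects the degeneracy identifications. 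The detour through the fat realization is therefore harmless but unnecessary.
\end{itemize}
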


\begin{proof}
This is a standard result for open coverings of paracompact spaces: the
realization of the nerve of a covering by contractible open sets is homotopy
equivalent to the base space.  In our case, each $U_I$ is contractible
(it is a tubular neighbourhood, hence has the homotopy type of $D_I$, which
is smooth, and by choosing the radii small enough each $U_I$ can be assumed
contractible).  The higher intersections $U_{I_0\cdots I_m}$ are also
contractible (they are tubular neighbourhoods of $D_{I_m}$ within the
intersection, by the geometry of the covering).

More precisely: the simplicial space $\Uu_\hdot$ is a hypercover of $X$ in
the sense of \v{C}ech cohomology.  For a hypercover by contractible sets, the
map from the geometric realization to $X$ is a homotopy equivalence by the
basic theorem of \v{C}ech theory.
\end{proof}

\subsection{The affine realization and $\aaa^1$-homotopy}

\begin{proposition}\label{prop:A1-equiv}
The affine realization $\aaa\Cube A\subset(\aaa^1)^A$ is $\aaa^1$-homotopy
equivalent to the interval realization $\iiii\Cube A$, and hence to $X$.
\end{proposition}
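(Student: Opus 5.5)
The plan is to prove the two halves of the statement separately. First, $\aaa\Cube A$ is $\aaa^1$-homotopy equivalent to $\iiii\Cube A$ at the level of the cubical combinatorics. Second, $\iiii\Cube A\simeq X$ follows from Proposition~\ref{prop:nerve-htpy} together with the identification of the interval realization with the realization of the nerve.

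For the first half, I would use that both realizations are glued from the same cubical set $\Cube A$. The affine one uses $\aaa(c)\cong\aaa^{|c|}$ and the interval one uses $\iiii(c)=[0,1]^{|c|}$, with the same face maps in both: a face inclusion sets some coordinate equal to $0$ or $1$. Concretely, I will argue cube by cube by induction on the skeleta of $\Cube A$.

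Each $\aaa(c)$ is $\aaa^1$-contractible via the straight-line homotopy $h(x,s)=sx$, or $s x+(1-s)v$ toward a vertex $v$. The subtlety is that these contractions must be compatible with faces. To handle this, I would instead show that the skeleton inclusions
\[
\aaa\Cube A^{(n-1)}\hookrightarrow\aaa\Cube A^{(n)}
\]
are pushouts along $\partial\aaa^n\hookrightarrow\aaa^n$. Here $\partial\aaa^n$ denotes the union of the coordinate hyperplanes $x_j=0$ and $x_j=1$.

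The analogous statement holds for $\partial[0,1]^n\hookrightarrow[0,1]^n$. Taking complex or real points identifies $\aaa(c)(\rr)$ with a space containing $[0,1]^{|c|}$ as a deformation retract compatibly with faces. The retraction is coordinatewise clamping $x_j\mapsto\min(1,\max(0,x_j))$ on real points, and on complex points one first projects to the real part. Since clamping commutes with setting a coordinate to $0$ or $1$, these retractions glue to a homotopy equivalence $\iiii\Cube A\simeq\aaa\Cube A(\rr)$.

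For the algebraic $\aaa^1$-statement, note that on cubical objects both realizations are homotopy colimits of contractible pieces over the same indexing category $\Cube A$. Since a colimit along cofibrations computes the homotopy colimit, the pushout description above identifies both with the homotopy colimit of the terminal diagram over $\Cube A$. That homotopy colimit is the nerve $|NA|$.

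For the second half, I would apply the cubical barycentric subdivision from Section~\ref{sec:cubical}, which gives $|NA|\cong\iiii\Cube A$ since the cubes are the intervals $[a,b]$ of the poset. I would then invoke Proposition~\ref{prop:nerve-htpy} and Proposition~\ref{prop:covering}(2). The latter says that nonempty intersections are exactly the chains in $A$, so the nerve of the covering is $NA$.

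I expect the main obstacle to be that the pushouts along $\partial\aaa^n\hookrightarrow\aaa^n$ must be homotopy pushouts in the $\aaa^1$-model structure. The gluing is along non-smooth unions of hyperplanes, so it is not automatic that the colimit in schemes, or even in presheaves, is homotopically correct. To address this, I would work with presheaves and use that each inclusion is a monomorphism, hence a cofibration in the injective model structure, so that ordinary colimits compute homotopy colimits.
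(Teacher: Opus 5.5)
Your first half is correct and follows the paper's route. The paper's proof only records that each $\aaa(c)$ is $\aaa^1$-contractible and appeals to induction on the dimension of the cubes plus standard $\aaa^1$-machinery. You supply the mechanism: skeleta attached along boundaries, with colimits along monomorphisms computing homotopy colimits in the injective structure. You also add a face-compatible deformation retraction $y_j\mapsto(1-s)y_j+s\min(1,\max(0,\mathrm{Re}\,y_j))$ of $\aaa\Cube A(\cc)\subset\cc^A$ onto $\iiii\Cube A$.

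Two small corrections. First, the monomorphism that matters is the latching map $\mathrm{colim}_{c'<c}h_{\aaa(c')}\to h_{\aaa(c)}$. It is injective because the faces of a cube are closed under intersection. On smooth test schemes, whose components are integral, this colimit is also the functor of points of the reduced union, so working with presheaves loses nothing. Second, the homotopy colimit of the terminal diagram over $\Cube A$ is $|N(\Cube A)|$, the barycentric subdivision of $\iiii\Cube A$, not $|NA|$. This is harmless, since $y\mapsto y/\sum_a y_a$ is a homeomorphism $\iiii\Cube A\cong|NA|$.

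The genuine gap is in the second half, and the claim there is false as stated. Proposition~\ref{prop:covering}(2) says only that nonempty intersections lie along chains, not that every chain occurs. More seriously, you pass from the covering to the index poset. That is, you go from the simplicial space $\Uu_\hdot$ of Proposition~\ref{prop:nerve-htpy} to the simplicial set $NA$. The realization $|\Uu_\hdot|$ is $\simeq X$ for any open cover, by the partition-of-unity argument. But replacing it by $NA$ requires every nonempty $U_{I_0\cdots I_m}$ to be contractible, and the adapted opens are not. For instance, $U_\emptyset$ is $X$ minus a closed neighbourhood of $D$, and $U_{(i)}$ has the homotopy type of an open part of $D_i$.

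Indeed, $\emptyset$ is the least element of $A$, so $NA$ is a cone and $\iiii\Cube A\cong|NA|$ is contractible. Your (correct) first half then shows that $\aaa\Cube A$ is $\aaa^1$-contractible. Dropping $\emptyset$ does not help: $|NA|$ would then essentially be the dual complex of $D$. Concretely, take $X=\pp^1$ and $D$ one point. Then $A=\{\emptyset\subset(1)\}$, and $\iiii\Cube A$ is the union of the segments joining $(1,0)$ and $(0,1)$ to $(1,1)$, whereas $\pp^1(\cc)\cong S^2$.

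The paper's proof of this proposition treats only the first half. It takes ``hence to $X$'' from Proposition~\ref{prop:nerve-htpy}, whose proof asserts exactly the contractibility of the $U_I$ that fails here. A correct statement about $X$ has to retain the spaces. One option is to run your clamping/contractibility argument on the cubical realization of the simplicial space $\Uu_\hdot$ itself, with cells $\Uu_n\times\aaa^{n-m}$ as in $\Sigma_F(\Uu_\hdot)$ of \S\ref{sec:cubical}, so that the affine directions collapse onto $|\Uu_\hdot|\simeq X$. The other is to pass to a good covering before forming the index poset, at the cost of losing property~(2).
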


\begin{proof}
Each cube $\aaa(c)\cong\aaa^{\dim c}$ is $\aaa^1$-contractible (the affine
$n$-space $\aaa^n$ is $\aaa^1$-contractible via the homotopy $(x,t)\mapsto tx$).
The glueing is along affine faces, which are also $\aaa^1$-contractible.
By an inductive argument on the dimension of the cubes and the standard
machinery of $\aaa^1$-homotopy theory \cite{MV}, the realization $\aaa\Cube A$
is $\aaa^1$-homotopy equivalent to $\iiii\Cube A$.
\end{proof}

\subsection{The sequential compatibility condition}

In the non-compact setting, the monodromy weight filtrations from different
boundary components must be compatible when multiple components meet.

\begin{definition}
Let $I_0,\ldots,I_n$ be a sequence of multiindices that is linearly ordered by
inclusion (up to permutation).  The \emph{sequential compatibility condition}
states that the weight filtrations $W(N_{I_j})$ on the local system near any
point of $D_{I_n}$ commute: they admit a common splitting.

Equivalently, the associated-graded operations $\Gr^{W(N_{I_j})}$ commute for
all $j=0,\ldots,n$.
\end{definition}

\begin{theorem}[Mochizuki {\cite{Mochizuki}}]
\label{thm:seq-compat}
If $X$ is a smooth projective variety, $D$ is a normal crossings algebraic
divisor, and $L$ is an admissible variation of Hodge structure on $X^*$, then
the sequential compatibility condition holds near every point of $D$.
\end{theorem}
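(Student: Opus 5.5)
The statement is local on $X$, so I will first reduce it to a statement about a nilpotent orbit. Fix $x\in D$ and let $J$ be the set of components through $x$. By Proposition~\ref{prop:covering}(3) and Remark~\ref{rem:catlocsys}, every chain $I_0\subset\cdots\subset I_n$ relevant at $x$ consists of subsets of $J$. Choose a polydisc neighbourhood $\Delta^d$ of $x$ with $D=\{z_1\cdots z_{|J|}=0\}$. On $(\Delta^*)^{|J|}\times\Delta^{d-|J|}$ the local system is determined by the commuting unipotent $T_i$, $i\in J$ (Lemma~\ref{lem:locsys-intersect}), so it is determined by the commuting nilpotent logarithms $N_i=\frac{1}{2\pi i}\log T_i$ acting on a fixed fibre $V$. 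Admissibility supplies, via Schmid's nilpotent orbit theorem and its extension by Cattani--Kaplan--Kashiwara and Steenbrink--Zucker to the admissible graded-polarizable case, a limiting filtration $F_\infty$. With $F_\infty$, the data $(V,W,F_\infty,\{N_i\}_{i\in J})$ is an admissible nilpotent orbit. The claim therefore becomes purely linear-algebraic: for a chain $I_0\subset\cdots\subset I_n\subset J$, the filtrations $W(N_{I_0}),\dots,W(N_{I_n})$ of $V$ admit a common splitting.

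For the criterion I will use the standard lemma that finitely many increasing filtrations on a finite-dimensional vector space admit a simultaneous splitting if and only if the lattice of subspaces they generate is distributive. For three or more filtrations this is strictly stronger than pairwise compatibility, which is exactly the triple-flag phenomenon noted in the introduction. So the goal is to exhibit a commuting family of semisimple endomorphisms $Y_0,\dots,Y_n$ such that $W(N_{I_j})$ is the filtration by eigenvalues of $Y_j$ for each $j$. The simultaneous eigenspace decomposition of the $Y_j$ is then the required splitting.

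In the pure polarized case I will construct the $Y_j$ from the multivariable $SL_2$-orbit theorem of Cattani--Kaplan--Schmid. Order $J$ compatibly with the chain, so that the elements of $I_0$ come first, then those of $I_1\setminus I_0$, and so on. The theorem produces commuting $\mathfrak{sl}_2$-triples $(\hat N_m,\hat Y_m)$, one for each initial segment. For every initial segment $I$, the filtration $W(N_I)$ is the eigenvalue filtration of $\hat Y_{(I)}=\sum_{m\le |I|}\hat Y_m$, shifted appropriately. This uses the independence of $W(N_I)$ from the positive coefficients recorded in the glossary via \cite{CKS}. Setting $Y_j:=\hat Y_{(I_j)}$ yields commuting semisimple operators, and the pure case follows.

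The general admissible (mixed) case is where I expect the real difficulty. Here each $W(N_{I_j})$ must be replaced by the relative monodromy filtration $M(N_{I_j},W)$. The existence of these filtrations for an admissible variation is Kashiwara's theorem, and the question is whether they are simultaneously splittable together with $W$. My first attempt is to apply the pure result on each $\Gr^W_k$, where the $M(N_{I_j},W)$ induce the monodromy weight filtrations of a polarized orbit. I will then lift the splittings using a Deligne-type splitting of $W$ attached to the limiting mixed Hodge structure of the largest cone $I_n$. The difficulty is that the lift must be made compatible with all the smaller cones simultaneously. If this direct lifting fails, I will instead pass through Mochizuki's framework of tame harmonic bundles \cite{Mochizuki}. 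There the sequential compatibility is a consequence of the limiting polarized mixed twistor structure along the chain of strata, and the Hodge case is recovered by specialization of the twistor parameter.
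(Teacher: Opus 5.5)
Your argument for the pure case is correct, but it does not follow the paper. The paper does not prove this statement at all: it attributes it to Mochizuki's theory of tame harmonic bundles \cite{Mochizuki}, and in the proof of Theorem~\ref{thm:VHS-hermitian-pp} it calls the same statement the Cattani--Kaplan--Schmid theorem.

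You instead derive it from the several-variable $SL_2$-orbit theorem of \cite{CKS}. Order the components so that every $I_j$ is an initial segment. The commuting semisimple elements $Y_{(I_j)}=\sum_{m\le |I_j|}\hat Y_m$ then split the filtrations $W(N_{I_j})$, and their joint eigenspaces give the common splitting. This is exactly adapted to the hypothesis, and it makes explicit that the polarization, not mere commutativity of the $N_i$, does the work.

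That distinction matters. The paper's Lemma~\ref{lem:W-commute} tries to get the conclusion from commutativity plus a ``simultaneous Jacobson--Morozov'' construction, and this cannot work in general. On $\cc^3$ take $N_1=E_{13}$ and $N_2=N_3=E_{12}$. Then $W(N_1)_0$, $W(N_1+N_2)_0$, $W(N_1+N_2+N_3)_0$ are three distinct planes through $\langle e_1\rangle$, and they generate a non-distributive lattice.

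What Mochizuki's route buys is generality. It applies to every tame harmonic bundle, i.e.\ to all semisimple local systems with unipotent monodromy, which is how the paper later uses sequential compatibility in Theorem~\ref{thm:patching-exists}. Your argument does not reach that case, but the statement as given does not require it.

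Two adjustments are needed.

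\emph{Complex variations.} Since \cite{CKS} is formulated for real polarized variations, for a complex VHS pass to the underlying real variation. Its complexification is $V\oplus\overline V$, with each $W(N_I)$ a direct sum. Then descend the splitting to $V$ by your distributive-lattice criterion: projection to the first factor is a lattice homomorphism, and homomorphic images of distributive lattices are distributive.

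\emph{The mixed case.} Drop your last paragraph. For a pure variation admissibility is automatic. Your mixed discussion also replaces $W(N_{I_j})$ by the relative filtrations $M(N_{I_j},W)$, which is a different statement from the one asserted. And as written it is a plan with a fallback rather than an argument.
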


In the present paper we use the sequential compatibility condition in the
following way: on each non-empty $U_{I_0\cdots I_m}^*$ (with
$I_0\subset\cdots\subset I_m$), the weight filtrations $W(N_{I_j})$ on the
local system $L|_{U_{I_0\cdots I_m}^*}$ commute, so we may take their
associated-gradeds in any order.

\subsection{The Rees module: detailed example in dimension two}

To clarify the multi-Rees construction, we work through the case $k=2$
explicitly.

Let $V$ be a $K$-vector space with two commuting strict filtrations
$W^1_\hdot, W^2_\hdot$.  After choosing a common splitting
$V = \bigoplus_{(a_1,a_2)\in\zz^2} V_{a_1,a_2}$, the multi-Rees module is
\[
  \xi(V;W^1,W^2) = \bigoplus_{(a_1,a_2)} x_1^{-a_1}x_2^{-a_2}V_{a_1,a_2}
  \cdot K[x_1,x_2].
\]
As an abstract $K[x_1,x_2]$-module,
\[
  \xi(V;W^1,W^2) \cong \bigoplus_{(a_1,a_2)} V_{a_1,a_2}\otimes_K K[x_1,x_2].
\]

The restrictions are:
\begin{align*}
  \xi(V;W^1,W^2)|_{x_1=1} &\cong \xi(V;W^2),\\
  \xi(V;W^1,W^2)|_{x_2=1} &\cong \xi(V;W^1),\\
  \xi(V;W^1,W^2)|_{x_1=0} &\cong \xi(\Gr^{W^1}(V);W^2),\\
  \xi(V;W^1,W^2)|_{x_2=0} &\cong \xi(\Gr^{W^2}(V);W^1),\\
  \xi(V;W^1,W^2)|_{x_1=0,x_2=0} &\cong \Gr^{W^1}\Gr^{W^2}(V)
  \cong \Gr^{W^2}\Gr^{W^1}(V).
\end{align*}

The bundle over $(\pp^1)^2$ that this defines is a twisted direct sum:
\[
  F(c) \cong \bigoplus_{(a_1,a_2)} V_{a_1,a_2}\otimes
  \Oo_{(\pp^1)^2}(-a_1Q_1-a_2Q_2),
\]
where $Q_i = \{y_i=1\}$.  Over $(\aaa^1)^2$, each factor
$\Oo(-a_iQ_i)|_{\aaa^1}$ is trivialized by the section $(1-y_i)^{a_i}$,
giving the canonical trivialization $\tau$.

\subsection{Compatibility of Rees modules with morphisms in $\Xi_K$}

A key property of the multi-Rees construction is its compatibility with the
morphisms in $\Xi_K$.

\begin{proposition}\label{prop:rees-morphism}
Let $(W,g):(V,I)\to(V',I')$ be a morphism in $\Xi_K$ (a filtration $W$ on $V$
with grading isomorphism $g:\Gr^W(V)\cong V'$).  Suppose $W^1,\ldots,W^k$ are
additional filtrations on $V$ that are coarser than $W$ (every step of $W^j$
is a step of $W$), and let $W'^1,\ldots,W'^k$ be the induced filtrations on
$\Gr^W(V)\cong V'$ via $g$.  Then there is a canonical isomorphism
\[
  \xi(V;W,W^1,\ldots,W^k)|_{x_0=0}
  \cong \xi(V';W'^1,\ldots,W'^k),
\]
where $x_0$ is the variable for the filtration $W$.
\end{proposition}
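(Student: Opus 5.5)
The plan is to reduce to an explicit computation in a splitting adapted to all the filtrations at once, following the two-variable example of the preceding subsection. Since each $W^j$ is coarser than $W$ (every step of $W^j$ is a step of $W$), the filtrations $W,W^1,\ldots,W^k$ are mutually compatible. So the first step is to choose a common splitting
\[
  V=\bigoplus_{(a_0,a_1,\ldots,a_k)} V_{a_0,a_1,\ldots,a_k}
\]
with $W_{m}=\bigoplus_{a_0\le m}V_{a_\hdot}$ and $W^j_{m}=\bigoplus_{a_j\le m}V_{a_\hdot}$. Concretely, I would first split $W$, writing $V\cong\bigoplus_{a_0}\Gr^W_{a_0}V$. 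Because each $W^j_m$ is a sum of steps of $W$ and hence a union of whole $W$-layers, each $W^j$ is automatically split by any splitting of $W$. In that splitting the index $a_j$ is just a function of $a_0$.

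The second step is to write the multi-Rees module in this splitting:
\[
  \xi(V;W,W^1,\ldots,W^k)=\bigoplus_{a_\hdot}x_0^{-a_0}x_1^{-a_1}\cdots x_k^{-a_k}V_{a_\hdot}\cdot K[x_0,\ldots,x_k].
\]
Setting $x_0=0$ kills exactly the cross-terms between different $W$-layers. What remains is
\[
  \bigoplus_{a_\hdot}V_{a_\hdot}\otimes K[x_1,\ldots,x_k]\;\cong\;\bigoplus_{a_0}\xi\bigl(\Gr^W_{a_0}V;W^1|,\ldots,W^k|\bigr),
\]
the multi-Rees module of $\Gr^W V$ with its induced filtrations. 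This is the same computation as $\xi(V;W^1,W^2)|_{x_1=0}\cong\xi(\Gr^{W^1}V;W^2)$ in the example, now with $k$ extra variables. Transporting the result along $g$ gives $\xi(V';W'^1,\ldots,W'^k)$, since $W'^j$ is by definition $g$ applied to the filtration induced by $W^j$ on $\Gr^W V$.

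The step I expect to be the main obstacle is canonicity, that is, independence of the chosen splitting. I would handle it by giving an intrinsic description of the fibre at $x_0=0$. The degree-$(n_0,n_\hdot)$ piece of $\xi|_{x_0=0}$ is
\[
  W_{n_0}\cap\bigcap_j W^j_{n_j}\Big/\Bigl(W_{n_0-1}\cap\bigcap_j W^j_{n_j}\Bigr),
\]
which is the step $(\Gr^W_{n_0})\cap\bigcap_j W'^j_{n_j}$ of the induced filtrations. This quotient uses no splitting, and the isomorphism defined above via the splitting agrees with it. The point to check is strictness, namely that the quotient really equals the induced filtration on $\Gr^W$. This holds because the filtrations admit a common splitting, and that in turn is exactly where the coarseness hypothesis is used.

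Finally, I would note that compatibility with the $K[x_1,\ldots,x_k]$-module structure and with further restrictions $x_j=0,1$ follows from the same graded description. Naturality in $g$ is clear from the construction.
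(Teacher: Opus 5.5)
Your proposal is correct and takes the same route as the paper, whose proof simply observes that setting $x_0=0$ passes to $\Gr^W(V)\cong V'$, while $x_1,\ldots,x_k$ then parametrize the Rees construction of the induced filtrations $W'^1,\ldots,W'^k$. Your common splitting (available because every step of each $W^j$ is a step of $W$) and your intrinsic graded description $\bigl(W_{n_0}\cap\bigcap_j W^j_{n_j}\bigr)/\bigl(W_{n_0-1}\cap\bigcap_j W^j_{n_j}\bigr)$ supply the canonicity and strictness details that the paper leaves implicit; you should fix a single sign convention for the Rees monomials, since your split formula with $x^{-a}$ (taken from the paper's example) and your intrinsic formula correspond to opposite conventions, a harmless clash already present in the paper.
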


\begin{proof}
Setting $x_0=0$ in the Rees construction with respect to $W$ takes
$\Gr^W(V) = V'$.  The remaining variables $x_1,\ldots,x_k$ then parametrize
the Rees construction for the filtrations $W^1,\ldots,W^k$ on $\Gr^W(V) = V'$,
giving exactly $\xi(V';W'^1,\ldots,W'^k)$.
\end{proof}

This proposition is used in the glueing of the global Rees bundle: when
restricting from a cube $c$ to a face $c'$ obtained by setting some
$c(a_i)=t$ to $c'(a_i)=0$, one applies this proposition with $W = W(c;a_i)$
to identify the restricted bundle.

\section{Cubical sets and affine realizations}
\label{sec:cubical}

\subsection{Cubical sets}

Let $\tttt := (\{0,t,1\},\leq)$ be the poset with three elements and order
$0 < t > 1$ (so $0$ and $1$ are incomparable, both less than $t$).  

\begin{definition}
A \emph{cubical set} on an index set $\{1,\ldots,n\}$ is a subset
$C\subset\tttt^n$ closed under specialization: if $(c_1,\ldots,c_n)\in C$ and
$c'_j\leq c_j$ for all $j$, then $(c'_1,\ldots,c'_n)\in C$.
\end{definition}

The elements of $C$ are called \emph{cubes}; their dimension is the number of
coordinates equal to~$t$.  A cube $c'\in C$ is a \emph{face} of $c\in C$ if
$c'\leq c$ coordinate-wise (i.e.\ $c'$ is obtained from $c$ by replacing some
$t$'s by $0$'s or $1$'s).

\begin{definition}
The \emph{realization} of a cubical set $C$ with respect to the interval
$\iiii = [0,1]$ is
\[
  \iiii C := \bigcup_{c\in C} \iiii(c)\;\subset\;\iiii^n,
\]
where $\iiii(c) := \{(y_1,\ldots,y_n): y_j=0\text{ if }c_j=0,\;
y_j=1\text{ if }c_j=1,\; y_j\in\iiii\text{ if }c_j=t\}$.
Similarly define the \emph{affine realization} $\aaa C\subset\aaa^n$ and
\emph{projective realization} $\pp C\subset(\pp^1)^n$ using $\aaa^1_K$ and
$\pp^1_K$ in place of $\iiii$.
\end{definition}

\subsection{The cubical set of a poset}

Given a finite poset $(A,\leq)$, define
\[
  \Cube A \;\subset\; \tttt^A
\]
to be the set of functions $c:A\to\{0,t,1\}$ such that
$\mathrm{Supp}(c) := \{a : c(a)\neq 0\}$ is linearly ordered in $A$ and
$c^{-1}(1)\neq\emptyset$ (at least one coordinate equals~$1$).

\begin{definition}
The \emph{dimension} of $c\in\Cube A$ is $|c^{-1}(t)|$.  The
\emph{maximum element} of $c$ is $\mu(c) := \max\,\mathrm{Supp}(c)$.
\end{definition}

The assignment $c\mapsto\mu(c)$ is a morphism of posets
$(\Cube A,\leq)\to(A,\leq)$.

\begin{proposition}\label{prop:cuberealize}
The realization $\iiii\Cube A$ is homotopy equivalent to the classifying space
$|NA|$ of the poset $A$.
\end{proposition}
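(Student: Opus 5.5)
The plan is to compare $\iiii\Cube A$ with the barycentric subdivision of $|NA|$ via the poset of cubes and the map $\mu$. First I will describe each cube concretely. A cube $c$ is determined by its support, a chain $a_1<\dots<a_m$ in $A$. On the support, $c$ takes values in $\{t,1\}$, and at least one value is $1$. Its realization $\iiii(c)$ is the face of $[0,1]^A$ where the coordinates off the support vanish, the coordinates in $c^{-1}(1)$ equal $1$, and the coordinates in $c^{-1}(t)$ are free.

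For a fixed chain $\sigma=\{a_1<\dots<a_m\}$, I will set $Q_\sigma$ to be the union of the $\iiii(c)$ with $\mathrm{Supp}(c)\subseteq\sigma$. This is exactly the part of the cube $[0,1]^\sigma$ on which at least one coordinate equals $1$, i.e. the union of the "far" faces $\{y_a=1\}$. The map $y\mapsto y/\max_a y_a$ identifies $[0,1]^\sigma\setminus\{0\}$, radially, with $Q_\sigma$. So $Q_\sigma$ is homeomorphic to the link of the origin in the cube, which is an $(m-1)$-disc. Next I will compare with the standard simplex $\Delta^\sigma=\{y\ge0,\ \sum y_a=1\}$. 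The radial rescaling $y\mapsto y/\sum_a y_a$ gives a homeomorphism $Q_\sigma\cong\Delta^\sigma$. This homeomorphism is compatible with restriction to subchains $\tau\subset\sigma$, because setting coordinates to $0$ commutes with both normalizations.

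The complex $\iiii\Cube A=\bigcup_\sigma Q_\sigma\subset[0,1]^A$ is glued along exactly these subchain faces, since $Q_\sigma\cap Q_{\sigma'}=Q_{\sigma\cap\sigma'}$ inside $[0,1]^A$. Hence the homeomorphisms assemble into a global homeomorphism
\[
\iiii\Cube A\;\cong\;\bigcup_{\sigma}\Delta^\sigma\subset\rr^A,
\]
and the right-hand side is the standard geometric realization of the order complex of $A$, which is $|NA|$. The degenerate simplices of $NA$ do not change the realization. I expect this to give a homeomorphism, which is stronger than the homotopy equivalence claimed.

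The only point needing care, and the place I expect the main obstacle, is checking that the intersections of the pieces inside $[0,1]^A$ are exactly the sub-pieces, so that the glued topology agrees with the subspace topology. A point of $Q_\sigma\cap Q_{\sigma'}$ has support inside $\sigma\cap\sigma'$ and some coordinate equal to $1$, so it lies in $Q_{\sigma\cap\sigma'}$. Since $A$ is finite, both spaces are finite CW complexes and the continuous bijection is a homeomorphism by compactness. As an alternative route, one could apply Quillen's Theorem~A to $\mu:\Cube A\to A$ together with the fact that the cubical poset realizes to $\iiii\Cube A$, but the direct radial argument seems cleaner.
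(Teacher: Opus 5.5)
Your proof is correct and is essentially the paper's argument made explicit: the paper identifies $\Cube A$ with the cubical barycentric subdivision of $NA$ and invokes invariance of geometric realization under subdivision. Your mutually inverse maps $y\mapsto y/\sum_a y_a$ and $x\mapsto x/\max_a x_a$ are precisely the homeomorphism underlying that subdivision; since they are given by global formulas, the gluing check is automatic, and you get a homeomorphism $\iiii\Cube A\cong|NA|$, slightly stronger than the stated homotopy equivalence.
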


\begin{proof}
Each $k$-cube in $\Cube A$ corresponds to a chain
$a_0 < a_1 < \cdots < a_k$ in $A$ (by reading off $c^{-1}(0)$, $c^{-1}(t)$,
$c^{-1}(1)$ in order).  The cubical set $\Cube A$ is exactly the cubical
barycentric subdivision of the nerve $NA$, and the geometric realization is
invariant under barycentric subdivision.
\end{proof}

\subsection{The cubical barycentric subdivision and \texorpdfstring{$\Sigma_F$}{SigmaF}}

Let $F$ be a commutative $k$-algebra and $\Uu_\hdot$ a simplicial space.
The \emph{cubical barycentric subdivision} $\Sigma_F(\Uu_\hdot)$ is defined as
follows.

The $k$-dimensional cells of $\Sigma_F(\Uu_\hdot)$ are indexed by injective
maps $\varphi:[m]\hookrightarrow[n]$ in $\Delta$ (increasing injections of
ordered sets $[m]=\{0,\ldots,m\}$).  The complement $[n]\setminus\varphi[m]$
has cardinality $n-m$.  To such $\varphi$ we associate the affine scheme
\[
  A([n]\setminus\varphi[m]) := \aaa^{[n]\setminus\varphi[m]}_F
  = \Spec F[\ldots,t(j),\ldots]_{j\in [n]\setminus\varphi[m]},
\]
and the cell
\[
  \sigma_F(\Uu_\hdot)([\varphi:[m]\to[n]])
  := \Uu_n \times A([n]\setminus\varphi[m]).
\]

Cells are glued together along the face maps given by diagrams of the form
\begin{equation}\label{eq:facemap}
\begin{array}{ccc}
{[m]} & \xrightarrow{\varphi} & {[n]} \\
\uparrow & & \downarrow u \\
{[m']} & \xrightarrow{\varphi'} & {[n']}
\end{array}
\end{equation}
with $u$ injective, via the map
\[
  Z(\varphi,u,\varphi'): A([n]\setminus\varphi[m]) \to A([n']\setminus\varphi'[m'])
\]
defined by: $s(k)=t(j)$ if $k=u(j)$ with $j\in[n]\setminus\varphi[m]$;
$s(k)=0$ if $k=u(j)$ with $j\in\varphi[m]$; $s(k)=1$ if $k\in[n']$ is not in
$u([n])$.

\begin{theorem}\label{thm:sigma-htpy}
A vector bundle of rank $r$ on $\Sigma_F(\Uu_\hdot)$ yields a classifying map
$|\Uu_\hdot|\to BGL_r(F)^+$, well-defined up to homotopy.
\end{theorem}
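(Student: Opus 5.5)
The plan is to build the classifying map from two ingredients. The first is homotopy invariance of algebraic $K$-theory, in the form of the plus construction on polynomial rings. The second is the standard fact that a simplicial space mapping levelwise to a simplicial model of $BGL_r$ yields a map on realizations. Throughout, write $E$ for the given rank-$r$ bundle on $\Sigma_F(\Uu_\hdot)$.

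\emph{Step 1 (local trivialization).} On each cell $\sigma_F(\Uu_\hdot)([\varphi])=\Uu_n\times A([n]\setminus\varphi[m])$ the restriction of $E$ is an algebraic bundle on a polynomial affine space over $F$, varying locally constantly in the $\Uu_n$ factor. By Quillen--Suslin (when $F$ is a field; in general we impose extended-from-$F$ as part of the hypothesis), it is free. We therefore choose frames cell by cell. The transition data along the face maps $Z(\varphi,u,\varphi')$ then become matrices in $GL_r(F[t(j)]_{j\in[n]\setminus\varphi[m]})$, with values locally constant on the connected pieces of $\Uu_n$, and they satisfy the cocycle identities forced by the cubical gluing.

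\emph{Step 2 (simplicial classifying map).} The transition matrices define a map from the nerve of the cell category of $\Sigma_F(\Uu_\hdot)$ to a bisimplicial object. Its $(n,\varphi)$-term is $BGL_r(F[t_1,\dots,t_{n-m}])$, with face maps given by the substitutions $t\mapsto 0,1$ prescribed by $Z$. Realizing this gives a map
\[
|\Uu_\hdot|\simeq|\Sigma_F(\Uu_\hdot)|\longrightarrow \bigl|\,[\varphi]\mapsto BGL_r(F[t_\bullet])\,\bigr|.
\]
Here the first equivalence is the cubical subdivision invariance of Proposition~\ref{prop:cuberealize}, combined with the contractibility of $\iiii(c)$.

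\emph{Step 3 (collapsing the polynomial directions).} After stabilizing $GL_r\to GL$ and applying the plus construction, homotopy invariance gives $BGL(F[t_1,\dots,t_k])^+\simeq BGL(F)^+$ for regular $F$, via evaluation at $t=0$. Evaluation at $t=1$ is homotopic to it. Consequently the diagram on the right is, up to coherent homotopy, the constant diagram with value $BGL(F)^+$, and its hocolim maps to $BGL(F)^+$. Composing this with the map of Step 2 gives the classifying map.

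\emph{Step 4 (well-definedness).} Two choices of frames differ by a cellwise gauge transformation in $GL_r(F[t_\bullet])$. Running Step 2 for $\Sigma_F(\Uu_\hdot)\times\aaa^1$ produces a homotopy between the two maps. Changing $\Uu_\hdot$ by refinement is handled the same way.

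The main obstacle is Step 3. It needs the homotopies $t\mapsto 0$ versus $t\mapsto 1$ to be \emph{coherent} across all faces simultaneously, not merely objectwise. Replacing $BGL(F[t_\bullet])^+$ by Volodin's or Karoubi's simplicial model $BGL(F[\Delta^\bullet])^+$ is intended to make the face substitutions strict, so that a rectification of the diagram applies. The regularity hypothesis on $F$ is essential here, since homotopy invariance can fail otherwise.
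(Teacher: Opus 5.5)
Your argument is correct for regular $F$, which both routes need, but it is not the proof the paper gives here.

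\textbf{How the two routes differ.} The paper puts the $\aaa^1$-invariance on the \emph{source}. The cells of $\Sigma_F(\Uu_\hdot)$ are $\aaa^1$-contractible, so by Morel--Voevodsky the coend is $\aaa^1$-equivalent to $|\Uu_\hdot|$, and the bundle then defines a map into the $K$-theory space $\Omega|Q\mathrm{Vect}(F)|$. No frames, cocycles or stabilization are written down, and well-definedness up to homotopy comes for free from working in the $\aaa^1$-homotopy category. You put the invariance on the \emph{target}. You trivialize cell by cell (Quillen--Suslin), read the transition cocycle as a section of the Grothendieck construction of $c\mapsto BGL_r(F[t_c])$, and collapse the polynomial directions with Quillen's homotopy invariance after stabilizing and applying the plus construction.

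\textbf{What each buys.} The paper's route is shorter. It black-boxes both what a bundle on the hybrid object means and the representability of $K$-theory in the $\aaa^1$-homotopy category, which itself rests on homotopy invariance for regular rings. Yours is elementary, shows exactly where regularity and stabilization enter, and gives the explicit cocycle-level map needed later for the regulator comparison. In fact it is essentially what the paper reconstructs in \S\ref{subsec:classmap}. Lemma~\ref{lem:univ-classifying} is your Steps 1--2, and Lemma~\ref{lem:htpy-constant} with Construction~\ref{constr:ctaut} is your Step 3. The \v{C}ech nerve of trivialization torsors in Proposition~\ref{prop:classmap-comparison} is a strict version of your Step 4.

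\textbf{Step 3 is easier than you fear.} The coherence problem you flag disappears if you map \emph{into} the diagram rather than out of it. The inclusions of constants $F\hookrightarrow F[t_c]$ commute strictly with every substitution $t\mapsto 0,1$. They therefore give a strict natural transformation from the constant diagram $c\mapsto BGL(F)^+$ to $c\mapsto BGL(F[t_c])^+$, using a functorial plus construction, and this transformation is objectwise a weak equivalence. So the homotopy colimit of the constant diagram, which is $\simeq|\Uu_\hdot|\times BGL(F)^+$, maps by a weak equivalence to $\mathrm{hocolim}_c\,BGL(F[t_c])^+$. Projection to the second factor is then the required map. No evaluation maps, Volodin/Karoubi model or rectification are needed.

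\textbf{A caution on Step 4.} Take arbitrary frames on the cylinder cells $c\times\aaa^1$, so that $g_c$ appears as a transition matrix on an end face. Do not try to interpolate $g_c$ polynomially to the identity: that is impossible when $\det g_c\neq 1$, since $F[t_c,s]^\times=F^\times$.

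\textbf{The target.} Because you stabilize, your map lands in $BGL(F)^+$, not in $BGL_r(F)^+$ as the statement says. The paper's proof has the same target, since homotopy invariance is a statement about the stable groups $K_i=\pi_iBGL(-)^+$ (cf.\ Remark~\ref{rem:why-stabilize}). You should say explicitly that this stable target is the one you prove the theorem for.
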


\begin{proof}
The cells are contractible in the $\aaa^1$-homotopy category (each is an affine
space), and the face maps satisfy appropriate cofibration conditions. This is
the same contractibility argument used to prove
Proposition~\ref{prop:A1-equiv} above. By the standard machinery of
$\aaa^1$-homotopy theory \cite{MV}, the coend formula assembling
$\Sigma_F(\Uu_\hdot)$ is homotopy equivalent to $|\Uu_\hdot|$ in the
$\aaa^1$-homotopy category.  A vector bundle on
$\Sigma_F(\Uu_\hdot)$ therefore defines a map to the $K$-theory space
$\Omega|QVect(F)|=BGL(F)^+$.
\end{proof}

\section{Patching data, the category \texorpdfstring{$\Xi$}{Xi}, and
the filtration poset}
\label{sec:patching}

\subsection{Quasi-filtrations and quasi-gradings}

\begin{definition}
A \emph{quasi-filtration} of a vector space $V$ is a linearly ordered set
$\calF = \{W_i\}_{i\in I}$ of distinct subspaces with $W_i\subset W_j$ for
$i\leq j$, including $\{0\}$ and $V$.

A \emph{quasi-grading} of $V$ is a direct sum decomposition
$V = \bigoplus_{i\in I} V_i$ with linear ordering on $I$.  The associated
quasi-filtration has $W_i = \bigoplus_{j\leq i}V_j$.
\end{definition}

\begin{definition}
Given two quasi-filtrations $\calF,\calG$ of $V$, we say $\calF$ is
\emph{coarser} than $\calG$ (or $\calG$ \emph{refines} $\calF$), written
$\calF\leq\calG$, if every subspace in $\calF$ appears in $\calG$.
\end{definition}

\subsection{The category \texorpdfstring{$\Xi_K$}{Xi}}

\begin{definition}
Let $K$ be a field.  The \emph{category $\Xi = \Xi_K$} has:
\begin{itemize}[leftmargin=2em]
  \item \textit{Objects:} quasi-graded $K$-vector spaces
    $V = \bigoplus_{i\in I}V_i$ with linear ordering $\leq$ on $I$.
  \item \textit{Morphisms} $(V,I)\to(V',I')$: pairs $(W,g)$ where $W$ is a
    quasi-filtration of $V$ \emph{containing the standard filtration}
    (i.e.\ each $\bigoplus_{j\leq i}V_j$ is one of the $W$-steps), and
    $g:\Gr^W(V)\xrightarrow{\;\sim\;}V'$ is an isomorphism of quasi-graded
    vector spaces.
  \item \textit{Composition} of $(V,I)\xrightarrow{(W,g)}(V',I')
    \xrightarrow{(W',g')}(V'',I'')$: lift $W'$ to a quasi-filtration of $V$
    via $g$, take the join with $W$, and compose the grading isomorphisms.
  \item \textit{Identity:} the standard filtration with identity isomorphism.
\end{itemize}
\end{definition}

\begin{definition}
A \emph{weight function} for an object $(V,I)\in\Xi$ is a strictly increasing
function $\alpha:I\to\zz$.  Given a weight function, the quasi-graded space
becomes $\zz$-graded.  A morphism $(W,g):(V,I)\to(V',I')$ induces a unique
weight function on $W$ making $g$ an isomorphism of graded spaces.
\end{definition}

\subsection{Patching data}

\begin{definition}\label{def:patchingdatum}
Let $B\subset X$ be a contractible open set and $V$ a vector bundle on $X$.
A \emph{patching datum} $(W_\hdot,\tau)$ for $V$ over $B$ consists of:
\begin{itemize}[leftmargin=2em]
  \item a quasi-filtration $W_\hdot$ of $V|_B$ by strict subbundles;
  \item a trivialization $\tau$ of $\Gr^W(V|_B)$ (i.e.\ a flat connection on
    the associated-graded bundle with trivial monodromy, equivalently an
    isomorphism $\Gr^W(V|_B)\cong\Gamma(B)\otimes_\cc\Oo_B$ where $\Gamma(B)$
    is a fixed vector space).
\end{itemize}
We write $(W,\tau)\leq(W',\tau')$ if $W'\geq W$ (refinement) and $\tau'$ is
induced by $\tau$ via the natural map
$\Gr^{W}(V|_B)\to\Gr^{W'}(\Gr^W(V|_B))$.
\end{definition}

\begin{definition}
A \emph{strict collection of patching data} for a bundle $V$ over a very good
covering $X=\bigcup_i B_i$ (all non-empty multiple intersections contractible)
is a patching datum $(W(i),\tau(i))$ over each $B_i$ such that on every
non-empty $B_i\cap B_j$ the restrictions satisfy $(W(i),\tau(i))\leq(W(j),\tau(j))$
or vice versa.
\end{definition}

\begin{lemma}
If $B_I := B_{i_1}\cap\cdots\cap B_{i_k}\neq\emptyset$ then the restrictions
$\{(W(i_j),\tau(i_j))|_{B_I}\}$ form a totally ordered set under $\leq$.
\end{lemma}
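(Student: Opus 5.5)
The plan is to use the definition of a strict collection directly, together with the fact that $\leq$ on patching data over a fixed open set is a partial order.

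\emph{Step 1: every pair is comparable on $B_I$.} For any two indices $i_j,i_l$ in $I$ we have $B_I\subset B_{i_j}\cap B_{i_l}\neq\emptyset$. By strictness, on $B_{i_j}\cap B_{i_l}$ either $(W(i_j),\tau(i_j))\leq(W(i_l),\tau(i_l))$ or the reverse holds. Both conditions in Definition~\ref{def:patchingdatum} are preserved by restriction to the smaller open set $B_I$:
\begin{itemize}[leftmargin=2em]
\item refinement of quasi-filtrations by strict subbundles is a statement about which subbundles occur, and restricts;
\item the inducing condition on $\tau$ is an equality of isomorphisms of bundles, which also restricts.
\end{itemize}
So the restrictions are pairwise comparable on $B_I$.

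\emph{Step 2: $\leq$ is a partial order on patching data over $B_I$.} This is the step needing care.
\begin{itemize}[leftmargin=2em]
\item \emph{Reflexivity} is clear.
\item \emph{Transitivity:} suppose $(W,\tau)\leq(W',\tau')\leq(W'',\tau'')$. Then $W\leq W'\leq W''$ as quasi-filtrations, since "every subspace of one appears in the other" is transitive. The trivialization $\tau''$ is induced from $\tau'$, which is induced from $\tau$. The natural maps $\Gr^W\to\Gr^{W'}\Gr^W\to\Gr^{W''}\Gr^W$ compose compatibly, because for $W\leq W'\leq W''$ the iterated gradeds are canonically identified. Hence $\tau''$ is induced from $\tau$.
\item \emph{Antisymmetry:} if each datum is $\leq$ the other, the two filtrations have the same set of subspaces, so $W=W'$. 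Then $\tau'$ is induced from $\tau$ via the identity, so $\tau=\tau'$.
\end{itemize}

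\emph{Step 3: conclusion.} A finite set with a partial order in which any two elements are comparable is totally ordered. Applying this to the $k$ restricted data gives the lemma.

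\emph{Expected obstacle.} The only real subtlety is the transitivity check in Step 2. One must verify that the canonical identification $\Gr^{W'}\Gr^{W}(V)\cong\Gr^{W'}(V)$ for $W\leq W'$ is compatible with composition of refinements. This is the same identification used to define composition in $\Xi_K$, so I would argue it by the standard isomorphism theorems applied step by step.
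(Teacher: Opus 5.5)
Your proof is correct and follows the same approach as the paper, whose entire argument is that any two of the restrictions are comparable by the strict collection condition. Your Step~2 additionally checks that $\leq$ is a partial order (transitivity and antisymmetry); the paper leaves this implicit, and it is what licenses concluding ``totally ordered'' from pairwise comparability.
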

\begin{proof}
Any two of them are comparable by the strict collection condition.
\end{proof}

Define $(W(I),\tau(I))$ to be the minimal element.  Let $A$ denote the nerve
poset of the covering (the poset of non-empty $I$, ordered by reverse inclusion
of index sets).

\begin{proposition}\label{prop:functor-A-Xi}
A strict collection of patching data on $V$ determines a functor $A\to\Xi_K$.
To $I\in A$ it assigns the quasi-graded space
$V(I) := \Gamma(B_I,\Gr^{W(I)}(V|_{B_I}),\tau(I))$.
For $I\leq I'$ in $A$ (i.e.\ $B_I\supset B_{I'}$) the filtration
$W(I')|_{B_I}$ refines $W(I)|_{B_I}$, defining a filtration $W(I\leq I')$ on
$V(I)$ with isomorphism $g(I\leq I'):\Gr^{W(I\leq I')}(V(I))\cong V(I')$.
This pair is the morphism $V(I)\to V(I')$ in $\Xi_K$.
\end{proposition}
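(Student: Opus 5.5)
The plan has three parts: build the object and morphism assignments, check the identity axiom, and then check compatibility with composition.

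\emph{Objects.} Fix $I\in A$. By the preceding lemma, the restrictions of the patching data to $B_I$ form a chain, and $(W(I),\tau(I))$ is its minimal element. The trivialization $\tau(I)$ identifies $\Gr^{W(I)}(V|_{B_I})$ with $\Gamma\otimes\Oo_{B_I}$. We set $V(I)$ to be the space of $\tau(I)$-flat sections. The flat sections on the graded piece $\Gr^{W(I)}_i$ give the summand $V(I)_i$, and the index set inherits the linear order of the quasi-filtration. Since $B_I$ is contractible, hence connected, evaluation at any point identifies $V(I)$ with a fibre, so $V(I)$ is a quasi-graded $K$-vector space. This gives an object of $\Xi_K$.

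\emph{Morphisms.} Let $I\le I'$, so $B_{I'}\subset B_I$. Restricting from $B_I$ to $B_{I'}$, the datum $(W(I),\tau(I))$ still belongs to the chain over $B_{I'}$. Minimality of $(W(I'),\tau(I'))$ on $B_{I'}$ gives $(W(I'),\tau(I'))\le (W(I),\tau(I))|_{B_{I'}}$. The filtration order should therefore be read so that the datum at $I$ refines the one at $I'$ in the sense of Definition~\ref{def:patchingdatum}, i.e.\ we pass to the graded of the coarser filtration. I expect the first real task to be pinning down this direction so that the morphism goes $V(I)\to V(I')$ as stated.

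Granting the direction, the key point is that the refinement relation gives a filtration of $\Gr^{W(I)}$ whose steps are flat subbundles for $\tau(I)$. This follows from the condition that $\tau'$ is induced from $\tau$: the induced trivialization on the iterated graded agrees with $\tau(I')$, which forces the intermediate subbundles to be $\tau$-flat. Flatness lets the filtration descend to a filtration $W(I\le I')$ of the flat-section space $V(I)$. It contains the standard filtration, because every $W(I)$-step is a step of the finer filtration. Taking flat sections of the identification $\Gr(\Gr^{W(I)})\cong \Gr^{W(I')}$ produces $g(I\le I')$, and compatibility of the trivializations makes $g$ an isomorphism of quasi-graded spaces.

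\emph{Functoriality.} For $I=I'$ the construction returns the standard filtration with $g=\mathrm{id}$. For $I\le I'\le I''$, the composite in $\Xi_K$ lifts $W(I'\le I'')$ to $V(I)$ through $g(I\le I')$ and joins it with $W(I\le I')$. Both this composite and $W(I\le I'')$ are computed on $B_{I''}$, where all three data are restrictions lying in one chain. There, the iterated graded of nested refinements equals the graded of the finest refinement, and the induced trivializations agree by transitivity of the inducing relation, so the filtrations and grading isomorphisms coincide. The main obstacle is verifying that flatness of the subbundles under $\tau$ indeed follows from the definition of $\le$; if it is not automatic, I would build it into the meaning of ``$\tau'$ induced by $\tau$''.
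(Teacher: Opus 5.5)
You located the one real issue, but then assumed it away, and as you set things up it cannot be granted. With your reading $B_{I'}\subset B_I$, your own minimality argument gives $(W(I'),\tau(I'))\le (W(I),\tau(I))|_{B_{I'}}$. That says $W(I)$ is the \emph{finer} filtration on $B_{I'}$. The refinement therefore induces a filtration on $\Gr^{W(I')}$, not on $\Gr^{W(I)}$, and the construction yields a morphism $V(I')\to V(I)$, i.e.\ a functor on $A^{\mathrm{op}}$. In this situation there is in general no $\Xi_K$-morphism $V(I)\to V(I')$ at all: a morphism of $\Xi_K$ can only refine the grading of its source, whereas here $V(I')$ carries the coarser grading. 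So your Morphisms paragraph, which extracts ``a filtration of $\Gr^{W(I)}$'' from the refinement relation, uses the opposite of what the sentence before it proved.

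The fix is to read the order on $A$ as the paper declares just before the proposition and uses in Construction~\ref{constr:A-to-Xi}(4). That order is reverse inclusion of index sets: $I\le I'$ means $I\supset I'$, hence $B_I\subset B_{I'}$. The parenthetical ``$B_I\supset B_{I'}$'' in the statement is reversed, as the expression $W(I')|_{B_I}$ already shows. With this reading, $(W(I'),\tau(I'))|_{B_I}$ is the restriction of some $(W(i),\tau(i))$ with $i\in I'\subset I$. It therefore lies in the chain over $B_I$, whose minimum is $(W(I),\tau(I))$. Hence $(W(I),\tau(I))\le (W(I'),\tau(I'))|_{B_I}$, which says precisely that $W(I')|_{B_I}$ refines $W(I)$ and that $\tau(I')|_{B_I}$ is induced from $\tau(I)$.

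From here your argument goes through unchanged:
\begin{itemize}
\item The intermediate steps are $\tau(I)$-flat. This is built into ``induced'', since a flat connection only descends to subquotients by flat subbundles, and the paper states it explicitly in its later notion of refinement of pre-patching data. So these steps descend to a filtration of $V(I)$.
\item $V(I')$ is identified with the $\tau(I')$-flat sections over $B_I$ by restriction, since $B_I$ is connected.
\item The composition check for $I\le I'\le I''$ should be done on $B_I$, the smallest of the three sets. There your join argument is exactly the paper's one-line functoriality remark following Construction~\ref{constr:A-to-Xi}; the paper gives no further proof of the proposition.
\end{itemize}
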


\subsection{The filtration poset and its contractibility}

\begin{definition}
Let $A = \{A_i\}_{i\in I}$ be a collection of commuting nilpotent endomorphisms
of a finite-dimensional $K$-vector space $V$, and let $B\subset K := \ker A$
be a non-zero subspace (where $K$ is the intersection of the kernels).  An
\emph{$(A,B)$-nil-filtration} is a normalized quasi-filtration $\calF$ of $V$
such that:
\begin{itemize}[leftmargin=2em]
  \item the first (smallest non-zero) element $F_{\min}$ satisfies
    $F_{\min}\subset B$;
  \item for every $F\in\calF$ with $F\neq\{0\}$, there exists
    $G\in\calF$ with $G\subsetneq F$ and $A_i(F)\subset G$ for all $i$.
\end{itemize}
\end{definition}

Let $\Filt^{A,B}$ be the set of $(A,B)$-nil-filtrations, partially ordered by
inclusion of quasi-filtrations ($\calF\leq\calG$ if every step of $\calF$ is
a step of $\calG$).

\begin{theorem}[Contractibility of filtration poset]
\label{thm:filt-contractible}
If $\Filt^{A,B}$ is non-empty, then it is contractible as a poset (i.e.\ the
geometric realization $|N\Filt^{A,B}|$ is contractible).
\end{theorem}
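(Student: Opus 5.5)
The plan is to contract $\Filt^{A,B}$ by a zig-zag of order-preserving self-maps that ends at a fixed base point, using the standard fact that comparable poset maps $f\leq g$ induce homotopic maps on $|N\Filt^{A,B}|$. Since $\Filt^{A,B}\neq\emptyset$, fix once and for all a nil-filtration $\calG=(0=G_0\subsetneq G_1\subsetneq\cdots\subsetneq G_m=V)$. For $\calF=(0=F_0\subsetneq F_1\subsetneq\cdots\subsetneq F_n=V)$, use the two Zassenhaus (Schreier) refinements:
\[
  \psi(\calF):=\{F_i+(G_j\cap F_{i+1})\}_{i,j},\qquad
  \varphi(\calF):=\{G_j+(F_i\cap G_{j+1})\}_{i,j}.
\]
Each is linearly ordered lexicographically in $(i,j)$, resp.\ $(j,i)$. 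Clearly $\calF\leq\psi(\calF)$ and $\calG\leq\varphi(\calF)$, and the Zassenhaus lemma identifies the graded pieces of $\psi(\calF)$ and $\varphi(\calF)$.

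\emph{Step 1: $\psi(\calF)$ and $\varphi(\calF)$ are $(A,B)$-nil-filtrations.} The nil condition lets us assume $A_\ell(F_{i+1})\subset F_i$ and $A_\ell(G_{j+1})\subset G_j$, since any strictly smaller step lies in the immediate predecessor. Take a step $S=F_i+(G_j\cap F_{i+1})$. Then $A_\ell(S)\subset F_{i-1}+(G_{j-1}\cap F_i)\subset F_i+(G_{j-1}\cap F_{i+1})$, which is the predecessor of $S$ whenever $j\geq1$. If $j=0$ then $S=F_i$, and its predecessor in $\psi(\calF)$ is $F_{i-1}+(G_{m-1}\cap F_i)\supset F_{i-1}\supset A_\ell(F_i)$. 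The first nonzero step has the form $G_j\cap F_1\subset F_1\subset B$. The computation for $\varphi(\calF)$ is symmetric: its first nonzero step lies in $G_1\subset B$.

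\emph{Step 2: the zig-zag.} One wants a chain of comparisons
\[
  \mathrm{id}\leq\psi\;\sim\;\varphi\geq\mathrm{const}_{\calG}.
\]
The middle link is the delicate one. The union of the steps of $\psi(\calF)$ and $\varphi(\calF)$ is not a chain. Instead I would insert the intermediate chains that progressively interchange the roles of $\calF$ and $\calG$ one index at a time, producing a path of filtrations that are pairwise comparable. Each is a nil-filtration by the same computation as in Step~1.

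\emph{Main obstacle.} The hard part is naturality: Zassenhaus refinement uses consecutive steps, so $\calF\leq\calF'$ does not obviously imply $\psi(\calF)\leq\psi(\calF')$. If the direct zig-zag fails, I would switch to Quillen's Theorem~A (the tool the paper already uses for $\mathbf{ifilt}^{N_\hdot}$). The first move is to consider the subposet $P\subset\Filt^{A,B}$ of filtrations refining $\calG$, where $\calF\mapsto\psi$-type refinement is monotone. The second is to show that $P$ is contractible, having $\calG$ as minimum, and that for each $\calF$ the fibre $\{\calH\in P:\calH\geq\calF\}$ is contractible. The latter reduces, by induction on $\dim V$, to the same statement on the graded pieces $F_{i+1}/F_i$, with the induced operators and $B$ replaced by the image of the previous step. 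Granting the fibre contractibility, Theorem~A makes the inclusion $P\hookrightarrow\Filt^{A,B}$ a homotopy equivalence, and hence $\Filt^{A,B}$ is contractible.
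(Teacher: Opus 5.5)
There is a genuine gap: neither of your two routes actually produces a contraction.

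\textbf{First route.} The map $\psi$ is not order-preserving, so $\mathrm{id}\leq\psi$ gives no homotopy. Take $V=\cc^2$, $A=\{0\}$, $B=V$ and $\calG=\{0,M,V\}$. Then $\{0,V\}\leq\{0,L,V\}$, but $\psi(\{0,V\})=\{0,M,V\}$ and $\psi(\{0,L,V\})=\{0,L,V\}$ are incomparable for $L\neq M$.

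The map $\varphi$, by contrast, is monotone: its set of steps $\{G_j+(F\cap G_{j+1}) : F\in\calF\}$ depends only on the set $\calF$. So the link $\varphi\geq\mathrm{const}_{\calG}$ is fine.

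What remains is a natural link between $\mathrm{id}$ and $\varphi$. That is exactly the content of the theorem, and you leave it as ``intermediate chains''. It cannot be built from what you use. Your Step~1 amounts to the observation that any refinement of an $(A,B)$-nil-filtration is again one. This closure under refinement, plus a base point $\calG$, is all your argument invokes.

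Both properties also hold for the poset of nontrivial flags $0\subsetneq L\subsetneq\cc^2$, which is an infinite discrete poset. There your maps are $\psi=\mathrm{id}$ and $\varphi=\mathrm{const}_{\calG}$, so the middle link would have to contract a discrete set. Any valid argument must therefore use $F_{\min}\subset B\subset\ker A$ in an essential way, and yours never does.

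\textbf{Theorem~A fallback.} This fails for a related reason. For the inclusion $P=\{\calH:\calH\geq\calG\}\hookrightarrow\Filt^{A,B}$, the fibre $\{\calH\in P:\calH\geq\calF\}$ is the set of common refinements of $\calF$ and $\calG$. It is empty unless $\calF\cup\calG$ is already a chain, e.g.\ for $\calF=\{0,L,V\}$ above. The other comma fibre $\{\calH\in P:\calH\leq\calF\}$ is empty unless $\calG\leq\calF$. No induction on graded pieces can repair an empty fibre.

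\textbf{Missing idea.} As the paper does, organize $\Filt^{A,B}$ by its bottom step.
\begin{itemize}
\item Consider the map $\calF\mapsto F_{\min}$ to the nonzero subspaces of $B$ under reverse inclusion. Its fibres are $Y_L=\{\calF: L\subset F_{\min}\}$.
\item The map $\calF\mapsto\calF\cup\{L\}$ retracts $Y_L$ onto the filtrations starting with $L$. This is legitimate exactly because $L\subset B\subset\ker A$.
\item Filtrations starting with $L$ are the same as nil-filtrations of $V/L$ whose first step lies in the common kernel of the induced operators, so induction on $\dim V$ applies.
\end{itemize}
The paper assembles this either via the stratification by $\dim F_{\min}$ and Mayer--Vietoris, or via Quillen's fibre lemma as in \S\ref{subsec:filt-top}.
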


\begin{proof}
We induct on $\dim V$.  For $\dim V = 0$ there is nothing to prove.

\medskip\noindent
\textbf{Step 1: Setup.}
Since the $A_i$ are commuting nilpotent endomorphisms, they admit a common
upper-triangular form, so $K = \ker A \neq\{0\}$.  Let $K_B := B\neq\{0\}$.

For $1\leq k\leq\dim B$, define the sub-poset
\[
  \Filt^{A,B}_k := \{\calF\in\Filt^{A,B} : \dim F_{\min}\geq k\}.
\]
Note $\Filt^{A,B}_k = \emptyset$ for $k > \dim B$.

\medskip\noindent
\textbf{Step 2: Top level.}
Set $k_{\max} := \dim B$.  For $\calF\in\Filt^{A,B}_{k_{\max}}$, the first
element $F_{\min}$ has $\dim F_{\min}\geq\dim B$ but $F_{\min}\subset B$, so
$F_{\min} = B$.  The rest of $\calF$ is determined by its image in $V/B$, and
\[
  \Filt^{A,B}_{k_{\max}}
  \;\cong\;
  \Filt^{A|_{V/B},\, K_{V/B}}(V/B),
\]
where $K_{V/B}$ is the kernel of the induced operators on $V/B$.  By induction
on $\dim V$ (since $\dim(V/B) < \dim V$), the right-hand side is contractible.

\medskip\noindent
\textbf{Step 3: Descending induction on $k$.}
Assume $\Filt^{A,B}_{k+1}$ is contractible.  We show $\Filt^{A,B}_k$ is
contractible.

For a subspace $L\subset B$ with $\dim L = k$, define the sub-posets
\[
  Y_L := \Filt^{A,B}[L\subset\mathrm{first}]
  := \{\calF : L\subset F_{\min}\},
\]
\[
  Y^=_L := \Filt^{A,B}[L=\mathrm{first}]
  := \{\calF : F_{\min} = L\}.
\]
Both are downward-closed subsets of $\Filt^{A,B}$.  We have the decomposition
\[
  \Filt^{A,B}_k = \Filt^{A,B}_{k+1}
  \;\cup\; \bigcup_{\substack{L\subset B\\ \dim L=k}} Y_L.
\]

\medskip\noindent
\textbf{Step 4: Contractibility of $Y_L$.}
Define a retraction functor $\rho: Y_L\to Y^=_L$ by $\rho(\calF) = \calF\cup\{L\}$.
This is well-defined (adding $L$ to $\calF$ gives a valid filtration) and
preserves the order: $\calG\leq\calF\Rightarrow\rho(\calG)\leq\rho(\calF)$.

Moreover, $\calF\leq\rho(\calF)$ for all $\calF\in Y_L$, giving a natural
transformation $\mathrm{Id}\Rightarrow\iota\circ\rho$ (where $\iota:Y^=_L\hookrightarrow Y_L$).
Natural transformations of functors between posets induce homotopies on
realizations.  Thus $|NY_L|\simeq|NY^=_L|$.

Now $Y^=_L \cong \Filt^{A|_{V/L},B/L}(V/L)$: a filtration with first element
$L$ corresponds via the quotient $V\to V/L$ to a filtration with first element
$B/L$.  By induction on $\dim V$ (since $\dim(V/L) < \dim V$), the right-hand
side is contractible.  Hence $Y_L$ is contractible.

\medskip\noindent
\textbf{Step 5: Intersections.}
For $L\neq L'$ with $\dim L = \dim L' = k$, the intersection $Y_L\cap Y_{L'}$
consists of filtrations whose first element contains both $L$ and $L'$, hence
has dimension $\geq k+1$, so $Y_L\cap Y_{L'}\subset\Filt^{A,B}_{k+1}$.

\medskip\noindent
\textbf{Step 6: Union is contractible.}
We have $\Filt^{A,B}_k = Z\cup\bigcup_L Y_L$ where $Z := \Filt^{A,B}_{k+1}$
is contractible (induction hypothesis), each $Y_L$ is contractible (Step 4),
$Y_L\cap Z$ is contractible (by an argument similar to Step 2 applied to
$Y_L\cap Z \cong \Filt^{A|_{V/L},B/L}_{k+1}(V/L)$, contractible by induction),
and $Y_L\cap Y_{L'}\subset Z$ for $L\neq L'$ (Step 5).

Since all sets involved are downward-closed subsets of $\Filt^{A,B}_k$, the
corresponding nerves satisfy the same union and intersection formulae.  By
Mayer-Vietoris in homology and Van Kampen for $\pi_1$, the union $N\Filt^{A,B}_k$
has trivial homotopy groups, hence is contractible (since it is a CW complex).
\end{proof}

\begin{corollary}\label{cor:Filt-A}
For a single collection $A$ of commuting nilpotent endomorphisms, the poset
$\Filt^A := \Filt^{A,K}$ of $A$-nil-filtrations is contractible whenever
non-empty (equivalently, whenever $K\neq\{0\}$).
\end{corollary}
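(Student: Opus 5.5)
This is essentially the special case $B=K$ of Theorem~\ref{thm:filt-contractible}, so the plan is to reduce to that theorem and check its hypotheses, rather than to repeat the induction.

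First, non-emptiness is equivalent to $K\neq\{0\}$. If $V\neq 0$, the commuting nilpotent $A_i$ admit a common strictly upper-triangular form (Engel/Lie for a commuting family of nilpotents), so $K=\bigcap_i\ker A_i\neq\{0\}$. Then the flag produced by a common triangularizing basis, $0=V_0\subset V_1\subset\cdots\subset V_n=V$ with $A_i(V_j)\subset V_{j-1}$, is an $(A,K)$-nil-filtration: its first nonzero step $V_1$ lies in $K$, and every nonzero step $V_j$ has the strictly smaller step $V_{j-1}$ containing all $A_i(V_j)$. Conversely, any $(A,K)$-nil-filtration has $F_{\min}\subset K$ nonzero, which forces $K\neq\{0\}$. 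The degenerate case $V=0$ is handled directly: the poset is the single filtration $\{0\}$, which is trivially contractible.

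Second, with $B=K$ nonzero, invoke Theorem~\ref{thm:filt-contractible}. The point to check is that its inductive steps stay inside the class of pairs $(A,B)$ with $B$ a nonzero subspace of the common kernel. Step~2 passes to $(V/B,K_{V/B})$, which is again of the form $\Filt^{A'}$ with $B'$ the full kernel. Step~4 passes to $(V/L,B/L)$, where $B/L$ is contained in the kernel of the induced operators on $V/L$. So the induction never leaves the scope of the theorem, and $\Filt^{A}=\Filt^{A,K}$ is contractible.

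The only place needing care, and hence the main obstacle, is where $B/L=0$ (when $L=B$) or $V/B=0$. Here the relevant poset must be interpreted as the one-point poset $\{\{0\}\}$ (or the base of the induction), not as empty. I would make this convention explicit when applying the theorem, so the corollary follows cleanly.
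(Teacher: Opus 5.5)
Your proposal is correct and follows the paper's route: the paper gives no separate proof, since the corollary is just Theorem~\ref{thm:filt-contractible} specialized to $B=K$, and $K\neq\{0\}$ (automatic for $V\neq 0$ by common triangularization, as you show) is exactly the hypothesis that makes $B=K$ an admissible choice. Your re-audit of the theorem's inductive steps is unnecessary, because the theorem is already stated for every pair $(A,B)$ with $B$ a nonzero subspace of the common kernel; one small wrinkle is that declaring the poset to be $\{\{0\}\}$ when $V=0$ would make it non-empty while $K=0$, contradicting the ``equivalently'' clause, so the case $V=0$ is better treated as excluded, as the definition (which requires $B\neq 0$) already does.
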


\subsection{Existence of strict patching data}

The following is the main existence theorem for patching data.

\begin{theorem}\label{thm:patching-exists}
Let $V^*$ be a local system on $X^*$ with unipotent monodromy around each $D_i$.
Then there exists a very good open covering $X = \bigcup_i B_i$ and a strict
collection of patching data $(W(i),\tau(i))$ for $V^*$.
\end{theorem}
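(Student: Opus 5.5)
The covering will be the adapted covering $\{U_I\}$ of Proposition~\ref{prop:covering}, refined as follows: for each $I$, cover $U_I$ by small product neighbourhoods of points of $D_I$ (polydisc-type sets $\Delta^{|I|}\times B'$ centered on $D_I$), together with small balls away from $D$. Call the resulting covering $\{B_i\}$. Each $B_i$ is assigned a label $I(i)$, namely the index of the $U_I$ it was taken from. We shrink the radii, using the nearest-point retractions $p_I$ and the estimate \eqref{eq:dist}, so that two properties hold. First, every non-empty multiple intersection $B_{i_0}\cap\cdots\cap B_{i_m}$ is a product of a convex set with punctured-polydisc factors, hence contractible, so the covering is very good. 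Second, by Proposition~\ref{prop:covering}(2), the labels of any non-empty intersection form a chain $I_0\subset\cdots\subset I_m$.

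On $B_i$ we take $W(i)$ to be the monodromy weight filtration $W(N_{I(i)})$ of $N_{I(i)}=\sum_{j\in I(i)}N_j$. We transport it flatly on $B_i^*$ and extend it to strict subbundles of the canonical extension $F|_{B_i}$. This extension is possible because $N_{I(i)}$ is the residue-logarithm of a flat endomorphism on $B_i$: the Deligne extension is trivialized by $\exp(-\sum_j \log z_j\, N_j)$ applied to multivalued flat sections. By \cite{CKS} the filtration is independent of the positive coefficients. Since $N_{I(i)}$ lowers $W(i)$ by $2$, every monodromy $T_j$ with $j\in I(i)$ acts unipotently on $W(i)$ and trivially on $\Gr^{W(i)}$. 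This holds because $N_j$ preserves $W(N_{I})$ and acts on the graded pieces compatibly with the limit mixed Hodge structure; more precisely, we use that for $j\in I$, $N_j$ maps $W_k(N_I)$ into $W_{k-2}(N_I)$ (the standard fact for nilpotent orbits). Hence $\Gr^{W(i)}$ is a local system on $B_i^*$ with trivial monodromy, which extends over $B_i$ as a trivial flat bundle, and $\tau(i)$ is the resulting trivialization. At this point we also perturb $W(i)$ to a weight \emph{quasi}-filtration so that it contains the required standard steps.

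Strictness is the central step. On a non-empty $B_i\cap B_{i'}$ with labels $I\subset I'$, we must show that $W(N_{I'})$ refines $W(N_I)$, or, failing a literal refinement, replace each $W(i)$ by a common refinement along the chain. Here Theorem~\ref{thm:seq-compat} (Mochizuki) enters. After deforming $\zeta$ to an admissible variation when needed, the filtrations $W(N_{I_0}),\dots,W(N_{I_m})$ on a chain admit a common splitting. We then redefine the datum at $I$ as the join of $W(N_{J})$ over all $J\subseteq I$ appearing in chains through $I$, with $\tau$ induced on the iterated graded. Commutation of the $\Gr$ operations makes the iterated graded independent of order, so the induced trivializations agree and $(W(i),\tau(i))\le (W(i'),\tau(i'))$. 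Finally, the restriction of the coarser datum to the smaller set is compatible because on $B_i\cap B_{i'}$ only the components $D_j$ with $j\in I'$ occur (Remark~\ref{rem:catlocsys}).

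We expect the main obstacle to be this strictness step. Mochizuki's theorem is stated for admissible variations, while $V^*$ is an arbitrary unipotent local system. The plan is to apply sequential compatibility through the tame harmonic bundle attached to $V^*$, for which \cite{Mochizuki} proves the same compatibility of $W(N_{I_j})$ along chains. Making the joined filtrations genuinely subbundle filtrations of $F$ on the whole of $B_i$, including along the deeper strata of $D$ inside it, is the delicate point.
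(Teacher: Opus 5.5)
\textbf{The strictness step has a genuine gap.} No canonical operation on the filtrations $W(N_I)$ produces a strict collection.

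First, $W(N_{I'})$ need not refine $W(N_I)$ for $I\subset I'$, even for a polarized variation. Take $V=\cc^2\otimes\cc^2$, $N_1=N\otimes1$, $N_2=1\otimes N$ with $N\neq0$, $N^2=0$ (the tensor product of two degenerating elliptic curves). The step $W(N_1)_{-1}=\ker N\otimes\cc^2$ is not among the steps of $W(N_1+N_2)$, which have dimensions $1,3,4$.

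Second, your ``join'' is not a quasi-filtration. A common splitting of two filtrations is a bigrading $V=\bigoplus V_{a,b}$. The union of their steps is linearly ordered only if the support of the bigrading is totally ordered for the product order, and sequential compatibility does not give this. In the example above, $W(N_1)_{-1}$ and $W(N_2)_{-1}=\cc^2\otimes\ker N$ are distinct planes, so the join over $J\subseteq\{1,2\}$ is not a chain. It already fails along the single chain $\{1\}\subset\{1,2\}$: take $V=\cc^2\oplus\cc^3$ with $N_1$ of rank one on $\cc^2$ and zero on $\cc^3$, and $N_2$ zero on $\cc^2$ and a single Jordan block on $\cc^3$. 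Then the lines $W(N_1)_{-1}\subset\cc^2$ and $W(N_1+N_2)_{-2}\subset\cc^3$ are not nested.

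So a non-canonical linear refinement has to be chosen on each open, with all choices pairwise comparable on every overlap. Nothing in your argument makes such choices coherently, and this is the missing idea. The paper supplies it by abandoning weight filtrations altogether. It lets $\calP(U)$ be the poset of \emph{all} filtrations of $V^*|_{U^*}$ by sub-local systems on which the local $N_j$ are strictly upper-triangular and whose graded extends across $D$. For polydisc-type $U$ this poset is $\Filt^{A,K}$, which is contractible by Theorem~\ref{thm:filt-contractible}. The \v{C}ech section theorem (Theorem~\ref{thm:cech-section}) then yields a refined covering carrying such filtrations, comparable on every overlap; that is, a strict collection.

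\textbf{The graded-extension step also fails as you set it up.} For an arbitrary unipotent $V^*$, the $N_j$ with $j\in I$ need not lower $W(N_I)$. Consider a double point with local monodromies $T$ and $T^{-1}$; this occurs on $\pp^2$ minus three lines, with the meridians sent to $T,T^{-1},1$. There $N_1+N_2=0$, so $W(N_{\{1,2\}})$ is the trivial filtration and its graded still has monodromy $T\neq1$. Hence no $\tau(i)$ exists.

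Neither of your remedies applies. Deforming $\zeta$ to a variation gives patching data for a different local system. A tame harmonic bundle exists only when $V^*$ is semisimple, and the example above is not. Nil-filtrations, by contrast, exist for every commuting nilpotent tuple (for instance the iterated common-kernel filtration), so the paper's posets are non-empty with no admissibility hypothesis.

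Two smaller points. Punctured polydiscs are not contractible; very-goodness concerns the $B_i$ and their intersections, not the $B_i^*$. And the extension of sub-local systems to strict subbundles of $F$, which you flag as the delicate point, is not one: Deligne's canonical extension is exact on local systems with unipotent monodromy.
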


\begin{proof}
We define a poset presheaf $\calP$ on good opens of $X$ as follows: for a
good open $U\subset X$, let $\calP(U)$ be the poset of strict patching data
for $V^*|_U$, i.e.\ quasi-filtrations of $V^*|_{U^*}$ such that (i) the
associated-graded extends across $U\cap D$, and (ii) the monodromy logarithms
are strictly upper-triangular.  Elements of $\calP(U)$ are ordered by
refinement.  Restriction maps are obvious.

For any simply connected open $U$ that intersects $D$ in a product of discs,
the monodromy logarithms $N_1,\ldots,N_a$ acting on $V^*|_U$ are a collection
of commuting nilpotent endomorphisms, so $\calP(U) \cong \Filt^{A,K}$ with
$A = \{N_1,\ldots,N_a\}$.  By Corollary~\ref{cor:Filt-A}, $\calP(U)$ is
contractible whenever non-empty.  By the sequential compatibility theorem
(Mochizuki \cite{Mochizuki}), the posets are non-empty.

We therefore have a contractible poset presheaf, and the \v{C}ech section theorem
(Theorem~\ref{thm:cech-section}) produces the desired strict collection of
patching data.
\end{proof}


\subsection{The weight filtration and its properties}

Let $T = e^{2\pi i N}$ be a unipotent monodromy operator, with $N$ nilpotent
and $T-I = N+\frac{N^2}{2!}+\cdots$ (finite sum).

\begin{definition}[Monodromy weight filtration]
The \emph{monodromy weight filtration} $W(N)_\hdot$ centered at $0$ is the
unique increasing filtration of $V$ satisfying:
\begin{enumerate}[leftmargin=2em,label=\rm(\arabic*)]
  \item $N(W(N)_k)\subset W(N)_{k-2}$ for all $k$;
  \item $N^k:\Gr^{W(N)}_k(V)\xrightarrow{\;\sim\;}\Gr^{W(N)}_{-k}(V)$ is an
    isomorphism for $k\geq 0$.
\end{enumerate}
\end{definition}

The monodromy weight filtration exists and is unique (Jacobson-Morozov lemma).

\begin{proposition}
The Rees module $\xi(V, W(N)_\hdot)$ over $K[x]$ has the following properties:
\begin{enumerate}[leftmargin=2em,label=\rm(\arabic*)]
  \item $\xi(V,W(N))_{x=1}\cong V$.
  \item $\xi(V,W(N))_{x=0}\cong\Gr^{W(N)}(V)$.
  \item The endomorphism $\tilde N := xN$ of $V[x]$ restricts to an endomorphism
    of $\xi(V,W(N))$, and at $x=0$ it equals $0$ (since $N$ decreases weight
    by $2$ and the factor $x$ compensates).
  \item At $x=1$, $\tilde N|_{x=1} = N$.
\end{enumerate}
\end{proposition}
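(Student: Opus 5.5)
I will prove the four properties by writing the Rees module explicitly after choosing a splitting of the weight filtration. The convention is the one used in the two-divisor example above: $\xi(V,W) = \bigoplus_k x^{-k}V_k\cdot K[x]$, where $V=\bigoplus_k V_k$ is a grading splitting $W(N)$, so that $W(N)_k=\bigoplus_{j\leq k}V_j$. A splitting adapted to $N$ exists by Jacobson--Morozov: complete $N$ to an $\mathfrak{sl}_2$-triple $(N,H,N^+)$ and take $V_k$ to be the $k$-eigenspace of $H$. Then $N(V_k)\subset V_{k-2}$ on the nose, not merely modulo lower filtration steps. Intrinsically, $\xi(V,W(N))=\sum_k x^{-k}W(N)_k\otimes K[x]\subset V[x,x^{-1}]$, so the module itself does not depend on the splitting; the splitting is only used for computations.

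For (1), $\xi$ is a free $K[x]$-module with basis $x^{-k}v$, where $v$ runs over bases of the $V_k$. Setting $x=1$ sends $x^{-k}v\mapsto v$, which identifies $\xi_{x=1}$ with $\bigoplus_k V_k=V$. Equivalently, the inclusion into $V[x,x^{-1}]$ becomes an isomorphism after inverting $x$ and specializing at $x=1$.

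For (2), I compute $\xi/x\xi$. The quotient is $\bigoplus_k x^{-k}V_k\otimes K$, and intrinsically its degree-$k$ piece is $x^{-k}W_k/x^{-k}W_{k-1}=\Gr^{W}_k V$. This also shows that the identification is canonical, independent of the splitting.

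For (3), I check on generators. For $v\in V_k$ we get $\tilde N(x^{-k}v)=x\cdot x^{-k}Nv = x^2\cdot\bigl(x^{-(k-2)}Nv\bigr)$, using $Nv\in V_{k-2}$ from property (1) of $W(N)$. Hence $\tilde N$ preserves $\xi$, and its image lies in $x^2\xi\subset x\xi$, so it induces $0$ on $\xi_{x=0}$. Without the adapted splitting one can argue intrinsically instead: $N W_k\subset W_{k-2}$ gives $xN(x^{-k}W_k)\subset x^{2}\,x^{-(k-2)}W_{k-2}$.

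This reveals a subtlety: the factor $x$ over-compensates. The untwisted operator $N$ already preserves $\xi$ and reduces to $0$ on the graded, because $N(x^{-k}W_k)=x^2\cdot x^{-(k-2)}W_{k-2}$; the nontrivial "graded" operator $\Gr N$ would instead be $x^{-2}N$. I will record this in a remark but prove the statement as written.

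For (4), under the identification in (1), $\tilde N|_{x=1}=1\cdot N=N$.

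The only genuine step is the existence of a splitting compatible with $N$, or alternatively the intrinsic formulation. Neither is a real obstacle, since the whole proof is a direct check on generators.
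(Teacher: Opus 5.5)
Your proof of (3) fails in the convention you fixed, because of an exponent slip. You write $x\cdot x^{-k}Nv=x^{2}\cdot\bigl(x^{-(k-2)}Nv\bigr)$, but the right side is $x^{4-k}Nv$; the correct identity is $x\cdot x^{-k}Nv=x^{1-k}Nv=x^{-1}\cdot\bigl(x^{-(k-2)}Nv\bigr)$. When $Nv\neq0$, the vector $x^{-(k-2)}Nv$ is a basis vector of $\xi=\bigoplus_k x^{-k}V_k\cdot K[x]$, so $\tilde N=xN$ does not preserve this module at all, and (3) is false there.

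Already for $V=K^2$, $Ne_2=e_1$, $V_{-1}=Ke_1$, $V_1=Ke_2$ you get $\xi=xe_1K[x]\oplus x^{-1}e_2K[x]$ and $\tilde N(x^{-1}e_2)=e_1\notin\xi$. In this module it is $x^2N$ that preserves $\xi$, and it induces the nonzero map $\Gr N$ at $x=0$. The same slip (writing $x^{-k}=x^{2}\cdot x^{-(k-2)}$ instead of $x^{-2}\cdot x^{-(k-2)}$) recurs in your intrinsic variant and in your remark about $N$.

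The root cause is the convention. One has $\bigoplus_k x^{-k}V_k\cdot K[x]=\bigoplus_d x^dF^{-d}$, which is the Rees module of the opposite decreasing filtration $F^c=\bigoplus_{j\geq c}V_j$. This consequently leads to three further errors:
\begin{enumerate}
\item $F$ depends on the splitting, so your claim that the module is splitting-independent is false.
\item The relevant piece of $\xi/x\xi$ is $F^k/F^{k+1}$, not $W_k/W_{k-1}$; it is isomorphic to $W_k/W_{k-1}$ only through the splitting.
\item Your ``intrinsic'' module $\sum_k x^{-k}W(N)_k\otimes K[x]$ is actually all of $V[x,x^{-1}]$, since $W(N)_k=V$ for $k\gg0$.
\end{enumerate}

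The repair is to use the increasing-filtration convention $\xi=\sum_k x^kW(N)_k\subset V[x,x^{-1}]$, equivalently $\bigoplus_k x^kV_k\cdot K[x]$. This is the convention of the paper's multi-Rees definition, which is generated by $x^{a}v$ with $v\in W_a$. Do not copy the paper's single-variable formula $\bigoplus_a x^{-a}W_a$ either, since it is stable under $x^{-1}$ rather than under $x$.

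With this convention:
\begin{enumerate}
\item $\xi$ is independent of any splitting.
\item $\xi/x\xi=\bigoplus_k W(N)_k/W(N)_{k-1}$ canonically, and $\xi/(x-1)\xi\cong V$.
\item $xN\bigl(x^kW(N)_k\bigr)\subset x^{k+1}W(N)_{k-2}=x^3\cdot x^{k-2}W(N)_{k-2}\subset x^3\xi$.
\end{enumerate}
So (3) and (4) follow from the same generator check the paper uses. It needs only $NW(N)_k\subset W(N)_{k-2}$, with no $\mathfrak{sl}_2$-splitting. Your closing remark then becomes correct as stated: $N$ itself already maps $\xi$ into $x^2\xi$, and the operator inducing $\Gr N$ on $\xi/x\xi$ is $x^{-2}N$.
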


\begin{proof}
Claims (1) and (2) follow from the definition of the Rees module.  For (3):
if $v\in W(N)_k V$, then $xN(x^a v) = x^{a+1}Nv$.  Since $Nv\in W(N)_{k-2}$,
the element $x^{a+1}Nv$ is in $x^{a-(-2)+1}W(N)_{k-2}V\subset\xi(V,W(N))$.
\end{proof}

\subsubsection{The canonical splitting via $\mathfrak{sl}_2$}

Associated to a unipotent monodromy $T = e^{2\pi iN}$ with weight filtration
$W(N)$, there is an $\mathfrak{sl}_2$-triple $(N^-,H,N^+)$ in $\End(V)$
defined over $\qq$, where $N^+ = N$.  The corresponding splitting of $W(N)$
over $\rr$ is given by the eigendecomposition of $H$:
\[
  H\cdot v = kv \;\Leftrightarrow\; v\in\Gr^{W(N)}_k(V).
\]

This $\mathfrak{sl}_2$-splitting depends on choices, so it is not canonical,
however,
any two splittings are conjugate by an element of $\ker(N^+)$.  The
freedom in choosing the splitting corresponds precisely to the structure of
the space of $(A,B)$-nil-filtrations, which is contractible by
Theorem~\ref{thm:filt-contractible}.

\subsection{The local system and its extensions}

On $U_I^* = U_I\cap X^*$, the local system $L$ has monodromy around the
components of $D\cap U_I$ given by the operators $T_i$ for $i\in I$.
The associated weight filtration $W(N_I)$ with $N_I = \sum_{i\in I}N_i$
provides a canonical filtration of $L|_{U_I^*}$.

\begin{lemma}\label{lem:W-commute}
On $U_{I_0\cdots I_m}^*$ with $I_0\subset\cdots\subset I_m$, the weight
filtrations $W(N_{I_j})$ commute, i.e.\ they admit a common splitting.
\end{lemma}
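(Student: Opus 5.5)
The plan is to reduce the statement to linear algebra on a single fibre, and then get the common splitting from commuting $\mathfrak{sl}_2$-triples supplied by the multi-variable $SL_2$-orbit theorem. The hard input, as the obstacle paragraph explains, is Theorem~\ref{thm:seq-compat}.

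\emph{Reduction to one fibre.} Fix a base point $x\in U^*_{I_0\cdots I_m}$ and work on the fibre $V=L_x$. By Lemma~\ref{lem:locsys-intersect}, the operators $N_i$ for $i\in I_m$ are pairwise commuting nilpotent endomorphisms of $V$. So for each $j$, $N_{I_j}=\sum_{i\in I_j}N_i$ is nilpotent and commutes with every $N_i$. Hence $W(N_{I_j})$ is preserved by every local monodromy, and by uniqueness of the monodromy weight filtration it is invariant under parallel transport. It therefore defines a flat subbundle filtration of $L|_{U^*_{I_0\cdots I_m}}$. Once a common splitting of the finitely many filtrations $W(N_{I_0}),\dots,W(N_{I_m})$ exists on $V$, it can be chosen invariant under the monodromies and transported to the whole open set. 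Equivalently, the claim is that these filtrations generate a distributive lattice of subspaces of $V$.

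\emph{Common splitting from $\mathfrak{sl}_2$-triples.} Theorem~\ref{thm:seq-compat} says that, near a point of $D_{I_m}$, the tuple $(N_i)_{i\in I_m}$ has the sequential compatibility property. Concretely, for the chain $I_0\subset\cdots\subset I_m$ the following hold:
\begin{itemize}
\item $W(N_{I_{j}})$ is the relative weight filtration of $N_{I_j\setminus I_{j-1}}$ with respect to $W(N_{I_{j-1}})$;
\item by the independence of positive coefficients of \cite{CKS}, this filtration does not depend on the positive weights used.
\end{itemize}
The multi-variable $SL_2$-orbit theorem of \cite{CKS} (in Mochizuki's tame harmonic version \cite{Mochizuki}) then produces mutually commuting $\mathfrak{sl}_2$-triples $(N_j^-,H_j,N_j^+)$, $j=0,\dots,m$, with these properties:
\begin{itemize}
\item the semisimple elements $H_j$ commute;
\item $H_0+\cdots+H_j$ splits $W(N_{I_j})$.
\end{itemize}
The joint eigenspace decomposition $V=\bigoplus_{(k_0,\dots,k_m)}V_{k_0,\dots,k_m}$ of $(H_0,\dots,H_m)$ is then a simultaneous splitting of all the $W(N_{I_j})$, which is what the lemma asks.

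\emph{Descent to the coefficient field.} This splitting is a priori defined over $\rr$ or $\cc$. If a splitting over $K$ is needed, I would instead use the distributive-lattice criterion directly. Distributivity of a finite family of subspaces is a condition stable under base change, so it descends. A distributive lattice of subspaces always admits a splitting over the ground field, obtained by inductively choosing complements.

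\emph{Main obstacle.} The hard step is the applicability of Theorem~\ref{thm:seq-compat}, which is stated for admissible variations of Hodge structure, whereas $L$ is an arbitrary local system with unipotent monodromy. I would handle this in three steps.
\begin{enumerate}
\item Pass to the semisimplification. The weight filtrations depend only on the conjugacy class of the tuple $(N_i)_{i\in I_m}$, but a non-semisimple $L$ is not directly covered, so I would first treat the semisimple case and then argue via a $W$-compatible Jordan--H\"older filtration.
\item In the semisimple case, apply Mochizuki's theorem for tame pure imaginary harmonic bundles, in which the sequential compatibility is proved for the limiting mixed twistor structure without a Hodge structure.
\item Alternatively, deform inside $R^{\mathrm{nil}}$ to a VHS point as in Theorem~\ref{thm:deform-to-vhs}. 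This works if one checks that the conjugacy type of the commuting tuple of local logarithms, and hence the relative weight filtrations, is constant along the deformation. I expect that verification to be the delicate point.
\end{enumerate}
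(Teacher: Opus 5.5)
You are right that commutativity of the $N_i$ is not enough. However, the ``main obstacle'' cannot be removed by any of your routes (1)--(3), because for an arbitrary $L$ with unipotent local monodromy the simultaneous splitting asserted by the lemma is false. Here is a counterexample.

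Let $V=\cc^3$ with basis $e_1,e_2,e_3$, and let $N_1,N_2,N_3$ kill $e_1,e_2$ and send $e_3$ to $e_1$, $e_2-e_1$, $e_1$ respectively. All products $N_jN_l$ vanish, so the $N_j$ commute. Then $T_j=\exp(2\pi i N_j)$ defines a point of $R^{\mathrm{nil}}$ for $X=(\pp^1)^3$, $X^*=(\cc^*)^3$, $\pi_1(X^*)=\zz^3$. Its local logarithms at the triple point $z=0$ are the $N_j$, and the overlap $U_{\{1\}}\cap U_{\{1,2\}}\cap U_{\{1,2,3\}}$ of the adapted covering is non-empty there.

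For a rank-one $N$ with $N^2=0$ one has $W(N)\colon 0\subset\im N\subset\ker N\subset V$. Along the chain $\{1\}\subset\{1,2\}\subset\{1,2,3\}$, the operators $N_1$, $N_1+N_2$, $N_1+N_2+N_3$ send $e_3$ to $e_1$, $e_2$, $e_1+e_2$. So the three filtrations share the step $\langle e_1,e_2\rangle$, and their weight $-1$ steps are three distinct lines in that plane. This is exactly the configuration of Example~\ref{ex:threeflags}, so no common splitting exists. Consistently, $W(N_1+aN_2)$ varies with $a>0$, so this tuple is not a nilpotent orbit and Theorem~\ref{thm:seq-compat} says nothing about it.

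The example also shows concretely why routes (1) and (3) fail. Every semisimple point of this $R^{\mathrm{nil}}$, in particular every VHS point, is the trivial representation. So passing to a Jordan--H\"older graded, or deforming as in Theorem~\ref{thm:deform-to-vhs}, sends all $N_j$ to $0$; the type of the tuple jumps, and there is no splitting to transport back.

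The paper's own argument does not close the gap either. It asserts that because the $N_i$ commute, Jacobson--Morozov can be run simultaneously for the whole family. The example refutes exactly this: commuting nilpotents need not admit commuting $\mathfrak{sl}_2$-triples adapted to all the $W(N_{I_j})$. That reasoning only yields the pairwise statement, which holds for any two filtrations (Lemma~\ref{lem:twofilt}).

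What is provable is the lemma under a Hodge-theoretic hypothesis:
\begin{itemize}
\item for $L$ underlying a polarized VHS, your $SL_2$-orbit argument is the right proof;
\item route (2), via Mochizuki's sequential compatibility for tame harmonic bundles, extends it to semisimple $L$;
\item your descent to $K$ via distributivity is sound.
\end{itemize}
So the lemma should only be invoked under such a hypothesis, e.g.\ at the VHS point $\zeta_0$.

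One smaller slip in your reduction: a common splitting can never be chosen monodromy-invariant unless the $N_{I_j}$ vanish. An $N_{I_j}$ that preserves every summand of a splitting of $W(N_{I_j})$ while lowering weight by $2$ must be zero. Only the fibrewise statement (or a non-flat $\mathcal{C}^\infty$ splitting) is available, and that is all the lemma needs.
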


\begin{proof}
The $N_j$ for $j\in I_m$ are commuting nilpotent endomorphisms.  For any pair
$I_a\subset I_b$, we have $N_{I_a} = \sum_{i\in I_a}N_i$ and
$N_{I_b} = N_{I_a} + \sum_{i\in I_b\setminus I_a}N_i$.  Since all $N_i$
commute, the pair $(N_{I_a},N_{I_b})$ forms part of a commuting family,
and the Jacobson-Morozov construction can be performed simultaneously for all
members of the family over an algebraically closed field.  The common splitting
exists over a finite extension of $K$.
\end{proof}

\subsection{Construction of the functor $A\to\Xi_K$}

We now make explicit how the local systems and weight filtrations assemble
into the functor $A\to\Xi_K$ of Proposition~\ref{prop:functor-A-Xi}.

\begin{construction}\label{constr:A-to-Xi}
For each multiindex $I$ with $U_I^*\neq\emptyset$:
\begin{enumerate}[leftmargin=2em,label=\rm(\arabic*)]
  \item The monodromy weight filtration $W(N_I)$ provides the filtration
    component of the patching datum $(W(I),\tau(I))$.
  \item The associated-graded local system $\Gr^{W(N_I)}(L|_{U_I^*})$ extends
    to a local system on $U_I$ (by Remark~\ref{rem:catlocsys}), which we
    trivialized by choosing a basepoint in $U_I$ and using the flat connection.
    This gives the trivialization $\tau(I)$.
  \item The quasi-graded vector space $V(I) = \Gamma(B_I,\Gr^{W(I)}(V|_{B_I}),\tau(I))$
    is the value of our functor at $I\in A$.
  \item For $I\leq I'$ (i.e.\ $I\supset I'$ as sets), the filtration
    $W(I')|_{U_I}$ refines $W(I)|_{U_I}$, and the induced filtration
    $W(I\leq I') = \Gr^{W(I)}(W(I'))$ on $V(I)$ together with the isomorphism
    $\Gr^{W(I\leq I')}(V(I))\cong V(I')$ defines the morphism in $\Xi_K$.
\end{enumerate}
\end{construction}

\begin{proposition}
Construction~\ref{constr:A-to-Xi} is well-defined and functorial.
\end{proposition}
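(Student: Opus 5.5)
To prove that Construction~\ref{constr:A-to-Xi} is well-defined and functorial, I would verify three things in turn: each value $V(I)$ is a well-defined object of $\Xi_K$; each arrow $(W(I\leq I'),g(I\leq I'))$ is a morphism of $\Xi_K$; and these arrows respect identities and composition. After that the statement reduces to Proposition~\ref{prop:functor-A-Xi}, applied to the collection $\{(W(N_I),\tau(I))\}$ over the adapted covering of Proposition~\ref{prop:covering}.

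\emph{Objects.} By Lemma~\ref{lem:locsys-intersect}, on $U_I^*$ the local monodromies $T_i$, $i\in I$, commute, so $N_I=\sum_{i\in I}N_i$ is a well-defined nilpotent endomorphism of the local system. Its weight filtration $W(N_I)$ is therefore a filtration by sub-local systems. By \cite{CKS} it does not depend on the positive coefficients, and it is preserved by every $T_i$. Hence the graded pieces $\Gr^{W(N_I)}_k$ carry the induced monodromies. Each $N_i$ preserves $W(N_I)$, so it induces a map on $\Gr$; I would need this induced map to vanish, so that the graded pieces have trivial monodromy. This is where I would use Mochizuki's theorem~\ref{thm:seq-compat}, together with the choice of $\Gr$ at the deepest stratum, in the form that $N_i$ lowers $W(N_I)$. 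Once the monodromy is trivial, Remark~\ref{rem:catlocsys} shows that $\Gr$ extends across $U_I$. Since $U_I$ is contractible (Proposition~\ref{prop:nerve-htpy}), the global flat sections give $\tau(I)$, and they do not depend on the base point. The integer weights give the quasi-grading.

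\emph{Morphisms.} For $I'\subset I$ with $U_I\cap U_{I'}\neq\emptyset$, I would use Lemma~\ref{lem:W-commute}: $W(N_{I'})$ and $W(N_I)$ admit a common splitting. The induced filtration $\Gr^{W(N_I)}(W(N_{I'}))$ is then a quasi-filtration of $V(I)$ containing the standard grading steps. This is exactly the $\Xi_K$-requirement once the join is taken, so I would define $W(I\leq I')$ as this join. The comparison isomorphism $g$ is the canonical map from the bigraded pieces to $\Gr^{W(N_{I'})}$, transported through the trivializations. Because both $\tau(I)$ and $\tau(I')$ come from flat sections on the connected set $U_I\cap U_{I'}$, $g$ is independent of all choices.

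\emph{Functoriality.} The identity case is immediate. For a chain $I''\subset I'\subset I$, all three filtrations share a common splitting on the triple overlap by Lemma~\ref{lem:W-commute}. The composite in $\Xi_K$ is the join of the lifted filtrations, and it agrees with $W(I\leq I'')$ because iterated $\Gr$ commutes for commuting filtrations. This is a Zassenhaus-type identity $\Gr^{A}\Gr^{B}\cong\Gr^{A\vee B}$ that holds in a common splitting. The grading isomorphisms compose correctly because everything is computed by parallel transport.

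I expect the main obstacle to be the refinement claim, namely that $W(N_I)$ restricted to $U_I\cap U_{I'}$ is genuinely comparable with $W(N_{I'})$ in the sense required by $\Xi_K$. Commuting, as in Lemma~\ref{lem:W-commute}, is weaker than refining. My first attempt would replace $W(I)$ by the join of the chain of filtrations $W(N_{I_0}),\dots,W(N_{I_m})$, and then use Mochizuki's sequential compatibility to check that this join is independent of the chain. If it is, this yields a consistent functor.
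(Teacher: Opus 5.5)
The gap is in your \emph{Morphisms} step, where you yourself locate the obstacle, and it cannot be closed while $W(I)=W(N_I)$.

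A morphism $(W,g)$ of $\Xi_K$ only subdivides, in order, the graded pieces of its source. So you need $V(I')\cong\Gr^{W}(V(I))$ for some filtration $W$ refining the grading of $V(I)$, and this forces one of $W(N_I)$, $W(N_{I'})$ to refine the other. A common splitting (Lemma~\ref{lem:W-commute}) only produces the bigraded object $\Gr^{W(N_{I'})}\Gr^{W(N_I)}L$, which is a further graded of \emph{both} $V(I)$ and $V(I')$. There is no isomorphism from it onto $V(I')$, and your Zassenhaus identity computes $\Gr$ of the join, not $V(I')$.

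Here is a concrete counterexample. Let $\cc^2=\langle e_+,e_-\rangle$ with $Ee_+=e_-$, $Ee_-=0$, and on $V=\cc^2\otimes\cc^2$ put $N_1=E\otimes 1$, $N_2=1\otimes E$. This is a two-variable nilpotent orbit (e.g.\ $H^1\otimes H^1$ of two degenerating elliptic curves), so Theorem~\ref{thm:seq-compat} holds and the basis $e_\pm\otimes e_\pm$ splits everything.
\begin{itemize}
\item $W(N_1)$ is $0\subset e_-\otimes\cc^2\subset V$, with graded pieces of dimensions $(2,2)$.
\item $N_1+N_2$ has Jordan blocks of sizes $3$ and $1$, so $W(N_1+N_2)$ is $0\subset\langle e_-\otimes e_-\rangle\subset\ker(N_1+N_2)^2\subset V$, with graded pieces of dimensions $(1,2,1)$.
\end{itemize}
Hence there is no morphism of $\Xi_K$ between $V(\{1\})$ and $V(\{1,2\})$ in either direction. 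The paper's own proof simply asserts that $W(N_{I''})$ refines $W(N_{I'})$, which refines $W(N_I)$. This example refutes that assertion, so your suspicion is right: the construction as literally stated does not define a functor.

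Your chain-join repair does not rescue it. The join depends on the chain and not on $I$ alone. In the example, the chains $\{1\}\subset\{1,2\}$ and $\{2\}\subset\{1,2\}$ give joins whose two-dimensional steps, $e_-\otimes\cc^2$ and $\cc^2\otimes e_-$, are incomparable. Moreover, with $W(\{i\})=W(N_i)$ no filtration at the corner works at all. It would either have to contain both of these incomparable subspaces as steps, or have all its steps among those of both $W(N_1)$ and $W(N_2)$. The latter forces $0\subset V$, whose graded is $L$ with nontrivial monodromy.

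The way out is to abandon $W(I)=W(N_I)$ and use a strict collection of patching data from the \v{C}ech section theorem (Theorem~\ref{thm:patching-exists}, Corollary~\ref{cor:cech-exists}). There, comparability under refinement on overlaps holds by construction. The filtrations are nil-filtrations, so the graded pieces automatically have trivial monodromy. Proposition~\ref{prop:functor-A-Xi}, together with your iterated-graded argument, then gives the functor.

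Two further steps in your \emph{Objects} paragraph also fail as written.
\begin{itemize}
\item Independence of the coefficients, and the lowering property $N_iW(N_I)_k\subset W(N_I)_{k-2}$, need $L$ to underlie an admissible VHS; Theorem~\ref{thm:seq-compat} is stated only for those. For general unipotent $L$, take $N_2=-N_1\neq 0$: then $W(N_{12})$ is trivial and $\Gr^{W(N_{12})}L=L$ has nontrivial monodromy.
\item $U_I$ is not contractible. For instance $U_\emptyset$ is homotopy equivalent to $X^*$, so $\tau(\emptyset)$ would be a trivialization of $L$ itself. The trivializations must therefore live on a very good refinement, as Definition~\ref{def:patchingdatum} requires.
\end{itemize}
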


\begin{proof}
Functoriality requires checking that the morphisms $V(I)\to V(I')$ compose
correctly when $I\leq I'\leq I''$.  This follows from the fact that
$W(N_{I''})$ refines $W(N_{I'})$ which refines $W(N_I)$, and the associated
graded operations compose in the correct order.  Specifically, if
$I\subset I'\subset I''$, then:
\[
  \Gr^{W(N_I)}\Gr^{W(N_{I'})}\Gr^{W(N_{I''})}(L) \cong
  \Gr^{W(N_{I'}\setminus W(N_I))}\Gr^{W(N_{I''}\setminus W(N_{I'}))}(\Gr^{W(N_I)}L),
\]
and this is the composition of the morphisms $V(I)\to V(I')\to V(I'')$ in $\Xi_K$.
\end{proof}

\section{The multi-Rees construction}
\label{sec:multrees}

\subsection{Single-variable Rees module}

Let $F$ be a commutative ring, $V$ a finite projective $F$-module, and
$W_\hdot$ an exhaustive increasing filtration of $V$.

\begin{definition}
The \emph{Rees module} of $(V,W_\hdot)$ is
\[
  \xi(V,W_\hdot) := \bigoplus_{a\in\zz} x^{-a}W_a V
  \;\subset\; V[x,x^{-1}],
\]
where $x$ is a formal variable.  It is a $F[x]$-submodule of $V[x,x^{-1}]$.
\end{definition}

Setting $x=0$ recovers $\Gr^W(V)$; setting $x=1$ recovers $V$.  If $W_a = V$
for $a\geq n$ and $W_a=0$ for $a < -n$, then $\xi(V,W_\hdot)$ is a finite
$F[x]$-module.  If $W_\hdot$ is a \emph{strict} filtration (meaning each
$\Gr^W_a(V)$ is projective), then $\xi(V,W_\hdot)$ is projective over $F[x]$.

\subsection{Multi-variable Rees construction}

\begin{definition}
A collection of filtrations $W^1_\hdot,\ldots,W^k_\hdot$ of a finite projective
$F$-module $V$ is \emph{locally abelian} if, after passing to an \'{e}tale
cover of $\Spec F$, there exists a decomposition $V = \bigoplus_{\mathbf{a}}
V_{\mathbf{a}}$ (where $\mathbf{a} = (a_1,\ldots,a_k)\in\zz^k$) such that
$W^j_b = \bigoplus_{a_j\leq b}(\text{relevant summands})$ for all $j,b$.
\end{definition}

\begin{definition}
Given a locally abelian collection $W^1_\hdot,\ldots,W^k_\hdot$ of strict
filtrations of $V$, choose $n$ large enough that $W^j_n = V$ for all $j$, and
define the \emph{multi-Rees module}
\[
  \xi(V; W^1_\hdot,\ldots,W^k_\hdot)
  \;\subset\; x_1^{-n}\cdots x_k^{-n} V[x_1,\ldots,x_k]
\]
to be the submodule generated by monomials $x_1^{a_1}\cdots x_k^{a_k}v$
with $v\in W^1_{a_1}V\cap\cdots\cap W^k_{a_k}V$.
\end{definition}

\begin{proposition}\label{prop:rees-projective}
If $V$ is finite projective over $F$ and the collection is locally abelian,
then $\xi(V;W^1_\hdot,\ldots,W^k_\hdot)$ is finite projective over
$F[x_1,\ldots,x_k]$.
\end{proposition}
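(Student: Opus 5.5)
The plan is to reduce to the split case, where the module is visibly free, and then descend along the étale cover. Finite projectivity can be checked étale-locally on $\Spec F$. Indeed, formation of $\xi$ commutes with the flat base change $F\to F'$: intersections of submodules and sums of monomial pieces are preserved by flat extension. Moreover, $F[x_1,\dots,x_k]\to F'[x_1,\dots,x_k]$ is faithfully flat whenever $F\to F'$ is an étale cover. Finite presentation and flatness both descend along faithfully flat maps, so it suffices to treat the case where the decomposition $V=\bigoplus_{\mathbf a}V_{\mathbf a}$ exists over $F$ itself. Each summand $V_{\mathbf a}$ is then finite projective, being a direct summand of $V$.

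In the split case we use that $W^j_b=\bigoplus_{a_j\le b}V_{\mathbf a}$. An intersection of such sums is again a sum of summands:
\[
W^1_{b_1}\cap\cdots\cap W^k_{b_k}=\bigoplus_{\mathbf a\le \mathbf b}V_{\mathbf a},
\]
where $\mathbf a\le\mathbf b$ is the componentwise order. Intersection commutes with these direct sums because the summands are independent of $j$. Consequently
\[
\xi(V;W^1,\dots,W^k)=\bigoplus_{\mathbf b}x^{\mathbf b}\bigoplus_{\mathbf a\le\mathbf b}V_{\mathbf a}=\bigoplus_{\mathbf a}x^{\mathbf a}V_{\mathbf a}\otimes_F F[x_1,\dots,x_k],
\]
with $x^{\mathbf b}=x_1^{b_1}\cdots x_k^{b_k}$. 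This matches the explicit two-variable computation given earlier, with the paper's sign convention $x^{-a}$ absorbed into the indexing. The right-hand side is a finite direct sum of modules $V_{\mathbf a}\otimes_F F[\underline x]$. Each of these is finite projective over $F[\underline x]$, since it is the base change of a finite projective $F$-module. The sum is finite because only finitely many $V_{\mathbf a}$ are nonzero, the filtrations being bounded.

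The main obstacle is the compatibility of the construction with base change: we need $\xi(V;W^\bullet)\otimes_F F'=\xi(V\otimes F';W^\bullet\otimes F')$. I would handle it by noting that $\xi$ is, in each multidegree, the intersection $\bigcap_j W^j_{b_j}$, which is a finite limit and so commutes with flat base change. The sum over multidegrees is direct inside $V[x^{\pm1}]$, and direct sums always commute with base change. Strictness of each $W^j$ ensures that the $W^j_b\otimes F'$ are still the corresponding steps of the filtrations on $V\otimes F'$. Finite generation over $F$ follows at the end from the bound $n$ on the filtrations. Together these descend projectivity from the split case to $F$.
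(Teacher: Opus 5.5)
Your proof is correct and follows the same route as the paper: split étale-locally so that $\xi\cong\bigoplus_{\mathbf a}V_{\mathbf a}\otimes_F F[x_1,\dots,x_k]$, observe this is finite projective, and descend along the faithfully flat map $F[x_1,\dots,x_k]\to F'[x_1,\dots,x_k]$. You also justify the compatibility of $\xi$ with flat base change, which the paper leaves implicit; your closing remark should say finite generation over $F[x_1,\dots,x_k]$ rather than over $F$, though descent of finite presentation already covers this.
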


\begin{proof}
Locally on $\Spec F$ (in the \'{e}tale topology), $V$ splits as a direct sum
of rank-one modules, each lying in a single multidegree $\mathbf{a}$.  The
multi-Rees module then decomposes as
\[
  \xi(V;\ldots) \;\cong\; \bigoplus_{\mathbf{a}} V_{\mathbf{a}}\otimes_F
  F[x_1,\ldots,x_k],
\]
which is clearly free, hence projective.  By \'{e}tale descent, the global
multi-Rees module is projective.
\end{proof}

\begin{proposition}\label{prop:rees-face}
For each coordinate $x_j$:
\begin{align*}
  x_j = 0:\quad &\xi(V;W^1,\ldots,W^k)|_{x_j=0}
  = \xi(\Gr^{W^j}(V); W^1,\ldots,\widehat{W^j},\ldots,W^k),\\
  x_j = 1:\quad &\xi(V;W^1,\ldots,W^k)|_{x_j=1}
  = \xi(V; W^1,\ldots,\widehat{W^j},\ldots,W^k).
\end{align*}
These identifications are compatible with multiple restrictions and satisfy
cocycle conditions.
\end{proposition}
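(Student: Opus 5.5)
The plan is to reduce everything to the split case, where the identifications become visible term by term, and then to show that the result does not depend on the splitting. By the locally abelian hypothesis, after an \'{e}tale cover of $\Spec F$ we may fix a decomposition $V=\bigoplus_{\mathbf a}V_{\mathbf a}$ adapted simultaneously to all the $W^j$. In the split situation
\[
  \xi(V;W^1,\ldots,W^k)=\bigoplus_{\mathbf a}x_1^{-a_1}\cdots x_k^{-a_k}\,V_{\mathbf a}\otimes_F F[x_1,\ldots,x_k],
\]
exactly as in the two-variable example given earlier. This uses the intersections $W^1_{b_1}\cap\cdots\cap W^k_{b_k}=\bigoplus_{a_j\le b_j}V_{\mathbf a}$, which hold because the splitting is common. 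The sign convention is matched to the single-variable definition $\xi=\bigoplus_a x^{-a}W_a$.

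For the specialization $x_j=1$, substitute $x_j=1$ in the displayed decomposition. This merges all summands that differ only in the $j$-th index, giving $\bigoplus_{\mathbf a'}x^{-\mathbf a'}\bigl(\bigoplus_{a_j}V_{\mathbf a}\bigr)\otimes F[\widehat{x_j}]$. This is precisely $\xi(V;\ldots,\widehat{W^j},\ldots)$ computed with respect to the coarsened splitting. Intrinsically, the map is the evaluation map $V[x^{\pm}]\to V[\widehat{x}^{\pm}]$, $x_j\mapsto1$. That map is defined without any splitting, so the resulting isomorphism is canonical.

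For the specialization $x_j=0$, consider the quotient $\xi/x_j\xi$. In the split form, its $\mathbf a$-component is $V_{\mathbf a}$ placed in $x_j$-degree $-a_j$. Grouping by $a_j$ identifies it with $\bigoplus_{a_j}\Gr^{W^j}_{a_j}(V)$, with $V_{\mathbf a}$ mapping isomorphically onto the image of $V_{\mathbf a}$ in $\Gr^{W^j}_{a_j}$. The remaining filtrations are the induced ones, $W^i\cap W^j_{a_j}$ modulo $W^i\cap W^j_{a_j-1}$; their Rees module over $F[\widehat{x_j}]$ is exactly the claimed one. To make this canonical, I will define the map intrinsically as $x_j^{-a_j}v\mapsto[v]\in\Gr^{W^j}_{a_j}$ for $v\in\bigcap_i W^i_{a_i}$. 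I then check in the split case that it is well defined, that it is surjective, and that its kernel is $x_j\xi$. This is the same argument as Proposition~\ref{prop:rees-morphism}. Because the intrinsic maps agree with the split description, they descend along the \'{e}tale cover by \'{e}tale descent, as in Proposition~\ref{prop:rees-projective}.

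Compatibility with multiple restrictions and the cocycle conditions then follow formally. Each identification is induced by an intrinsic map: either evaluation at $x_j=1$, or ``leading coefficient in $x_j$'' at $x_j=0$. For distinct $j\ne j'$ these maps commute, because in the split model they act on different exponent coordinates. Restricting in either order therefore gives the same map to $\xi(\Gr^{W^{j'}}\Gr^{W^j}V;\ldots)$. The induced filtrations on the iterated graded $\Gr^{W^j}\Gr^{W^{j'}}V\cong\Gr^{W^{j'}}\Gr^{W^j}V$ agree, by the common splitting.

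I expect the main obstacle to be the $x_j=0$ case when more than one filtration is present. There, one must check that the induced collection on $\Gr^{W^j}V$ is again locally abelian and strict, and that the intersection formula really holds, which requires the common splitting. Without local abelianness, the quotient could have torsion or fail to agree with the Rees module of the graded, so I will keep every verification on the split \'{e}tale-local model.
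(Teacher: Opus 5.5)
Your proof is correct, but it is organized differently from the paper's. The paper argues directly on the generating monomials $x_1^{a_1}\cdots x_k^{a_k}v$, with no splitting and no appeal to local abelianness. At $x_j=1$ it drops the $W^j$-condition, at $x_j=0$ it reads off $\Gr^{W^j}$ from which monomials survive, and it does not argue the clause about multiple restrictions and cocycles. That buys brevity.

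You instead pass to an \'{e}tale-local common splitting, define intrinsic evaluation and leading-coefficient maps, check them on the free split model and descend. That buys an exact account of where the hypothesis enters. Since $\xi=\bigoplus_{\mathbf b}x^{\mathbf b}\bigcap_iW^i_{b_i}$ for arbitrary filtrations, the following need no splitting: the whole $x_j=1$ case, well-definedness of the leading-coefficient map, and the fact that its kernel is $x_j\xi$. Only surjectivity at $x_j=0$ needs the splitting, through the identity
\[
  \textstyle\bigcap_{i\neq j}\bigl(W^i_{b_i}\cap W^j_b+W^j_{b-1}\bigr)
  \;=\;\bigl(\bigcap_{i\neq j}W^i_{b_i}\bigr)\cap W^j_b+W^j_{b-1}.
\]
This identity is automatic for $k=2$ but fails for three filtrations. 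Take the three lines of Example~\ref{ex:threeflags}, indexed by $W^i_0=L_i$, $W^i_1=V$. In the $\Gr^{W^1}_1$-part in bidegree $(b_2,b_3)=(0,0)$, the image of $\xi$ comes from $W^1_1\cap W^2_0\cap W^3_0=L_2\cap L_3=0$. By contrast, $\xi(\Gr^{W^1}V;W^2,W^3)$ has all of $V/L_1$ there. So the paper's monomial count silently uses what you make explicit.

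Your compatibility argument is also sound, and it can be made splitting-free. Both orders of restriction present $\xi/(x_j\xi+x_{j'}\xi)$, so the identification of the two iterated gradeds is forced; it is the Zassenhaus isomorphism.

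One bookkeeping correction. Take increasing $W^j$ and $\xi$ generated by $x^{\mathbf a}v$ with $v\in\bigcap_iW^i_{a_i}$, as in the paper's multi-variable definition. Then the split model is $\bigoplus_{\mathbf a}x^{\mathbf a}V_{\mathbf a}\otimes F[x_1,\ldots,x_k]$, and the leading-coefficient map is $x_j^{a_j}v\mapsto[v]$. The single-variable formula $\bigoplus_ax^{-a}W_a$ that you matched signs to is closed under multiplication by $x$ only for decreasing filtrations. Nothing else in your argument changes.
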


\begin{proof}
For $x_j=1$: a monomial $x_1^{a_1}\cdots x_k^{a_k}v$ with
$v\in\bigcap_l W^l_{a_l}V$, when $x_j$ is set to $1$, loses the $W^j$
condition freely and becomes $x_1^{a_1}\cdots\widehat{x_j^{a_j}}\cdots x_k^{a_k}v$
with $v\in\bigcap_{l\neq j}W^l_{a_l}V$.  For $x_j=0$: the only surviving
monomials are those with $a_j\geq 0$, and $x_j^{a_j}\to 0$ for $a_j>0$,
so only $a_j=0$ contributes, giving exactly $\Gr^{W^j}$.
\end{proof}

\subsection{Restriction maps and the face structure of the Rees module}

We describe how the multi-Rees module behaves under the face maps of the
cubical set $\Cube A$, making explicit the glueing data for the global bundle.

Let $c,c'\in\Cube A$ with $c'\leq c$ (i.e.\ $c'$ is a face of $c$, obtained
by fixing some coordinates of $c$ to $0$ or $1$).  Let $P^c$ and $P^{c'}$
denote the polynomial rings for the respective cubes:
\[
  P^c = K[t(j)]_{j\in c^{-1}(t)},\qquad
  P^{c'} = K[t(j)]_{j\in c'^{-1}(t)}.
\]
There is a ring map $P^c\to P^{c'}$ sending $t(j)\mapsto 0$ if $c(j)=t$ but
$c'(j)=0$, $t(j)\mapsto 1$ if $c(j)=t$ but $c'(j)=1$, and $t(j)\mapsto t(j)$
if $c'(j)=t$ as well.

\begin{proposition}\label{prop:rees-face-detailed}
Under the ring map $P^c\to P^{c'}$ above, there is a canonical isomorphism
\[
  \xi(G(c);\{W(c;a_j)\}_{j\in c^{-1}(t)})\otimes_{P^c} P^{c'} \;\cong\;
  \xi(G(c');\{W(c';a_j)\}_{j\in c'^{-1}(t)}).
\]
These isomorphisms are compatible: for a chain $c''\leq c'\leq c$, the
composition of the two isomorphisms equals the isomorphism for $c''\leq c$.
\end{proposition}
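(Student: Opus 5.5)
The plan is to factor the ring map $P^c\to P^{c'}$ as a composite of elementary specializations, each setting a single variable $t(j)$ to $0$ or to $1$. Each elementary step is then handled by Proposition~\ref{prop:rees-face} and Proposition~\ref{prop:rees-morphism}.

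\textbf{Step 1: the setup for a cube $c$.} The cube $c$ has $\mu(c)$ and chain $\mathrm{Supp}(c)$. The space $G(c)$ is the value $V(\mu')$ of the functor of Construction~\ref{constr:A-to-Xi} at the smallest element $\mu'$ of $c^{-1}(1)$, pushed forward along the morphisms in $\Xi_K$ through the elements of $c^{-1}(0)$ below it. For each $a_j\in c^{-1}(t)$, $W(c;a_j)$ is the filtration on $G(c)$ induced by the morphism to $V(a_j)$. Since $\mathrm{Supp}(c)$ is a chain, Lemma~\ref{lem:W-commute} gives a common splitting, so the collection $\{W(c;a_j)\}$ is locally abelian. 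The Rees module is then defined and projective by Proposition~\ref{prop:rees-projective}.

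\textbf{Step 2: elementary faces.} Fix a single $j$ and consider the specialization $t(j)\mapsto 1$. By the $x_j=1$ case of Proposition~\ref{prop:rees-face}, the filtration $W(c;a_j)$ is simply dropped, and the remaining filtrations are unchanged. This matches $c'$, because $G(c')=G(c)$ and $a_j$ has just left the $t$-set. Now consider $t(j)\mapsto 0$. The $x_j=0$ case gives $\xi(\Gr^{W(c;a_j)}G(c);\ldots)$. Proposition~\ref{prop:rees-morphism} then identifies this, via the grading isomorphism $g$ of the $\Xi_K$-morphism, with $\xi(G(c');\{W(c';a_l)\})$. This requires checking that the remaining filtrations are coarser than $W(c;a_j)$, or compatible with it via the common splitting, and that their images under $g$ are exactly the $W(c';a_l)$. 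Both facts follow from functoriality of $A\to\Xi_K$: composing morphisms in $\Xi_K$ means taking joins of lifted filtrations.

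\textbf{Step 3: composites and compatibility.} For a general face, define the isomorphism as the composite of the elementary ones taken in some order. Two things must be shown. First, the result is independent of the order. Second, it is compatible along chains $c''\le c'\le c$. The second follows from the first, since a chain is just a longer ordered sequence of elementary moves.

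Independence of order reduces to commutation of two elementary moves in distinct variables $j\neq l$. There are three cases. When both moves set variables to $1$, they commute trivially. When one move sets a variable to $1$ and the other to $0$, they commute because dropping a filtration commutes with passing to the graded of another. When both moves set variables to $0$, commutation amounts to the canonical identification $\Gr^{W^j}\Gr^{W^l}\cong\Gr^{W^l}\Gr^{W^j}$ given by the common splitting. This must be compared with composition of the $g$'s in $\Xi_K$, which is exactly functoriality, i.e.\ the Proposition following Construction~\ref{constr:A-to-Xi}.

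\textbf{Main obstacle.} The $0$--$0$ commutation is where I expect the difficulty. One must verify that the isomorphism induced by the splitting is independent of the choice of splitting. I would argue this directly: the identification of iterated gradeds of two filtrations by the Zassenhaus butterfly,
$$W^j_a\cap W^l_b\big/\bigl(W^j_{a-1}\cap W^l_b+W^j_a\cap W^l_{b-1}\bigr),$$
is intrinsic. Once this is established, every isomorphism in the argument is canonical, and the cocycle condition holds automatically.
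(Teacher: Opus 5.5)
Your architecture is the paper's: one-coordinate faces, the forgetful specialization via the $x_j=1$ case of Proposition~\ref{prop:rees-face}, the graded one via the $x_j=0$ case plus Proposition~\ref{prop:rees-morphism}, and iteration along chains. Your Step~3 also supplies the order-independence that the paper's ``applying the argument twice'' leaves implicit.

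The genuine problem is in Step~2: it attaches the two specializations to the wrong faces. The paper's sketch pairs the cases the same way.

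\emph{Which face is which.} The face $c'(a_j)=0$ removes $a_j$ from the support and leaves $c^{-1}(1)$ unchanged, so $G(c')=G(c)$. The face $c'(a_j)=1$ adds $a_j$ to $c^{-1}(1)$. So $G(c')$ (the graded of $U(a_0)$ along all $W(c';a)$ with $c'(a)=1$, canonically $U(\max c'^{-1}(1))$) acquires the graded along $W(c;a_j)$.

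\emph{An example.} Take $a<b$, $c(a)=1$, $c(b)=t$. Then $G(c)=U(a)$ and $W(c;b)=W(a\leq b)$. The face with $b\mapsto 0$ is the vertex $\{a\}$, with core $U(a)$. The face with $b\mapsto 1$ is the vertex of the chain $\{a,b\}$, with core $U(b)\cong\Gr^{W(a\leq b)}U(a)$. With the Rees parameter equal to $t(b)$, as in your Step~2, $t(b)\mapsto 1$ returns $U(a)$ and $t(b)\mapsto 0$ returns $U(b)$. Both identifications therefore land on the wrong target.

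\emph{The pairing is forced.} The vertex $\{a\}$ is a face of the cube built on every $b'>a$, so its core cannot be $\Gr^{W(a\leq b')}U(a)$. Your Step~1 description of $G(c)$ through $\min c^{-1}(1)$ and ``the elements of $c^{-1}(0)$ below it'' does not change this. Here $c^{-1}(0)$ is simply the complement of the support, and in the example your $0$-case still produces $U(b)$ rather than $U(a)$.

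\emph{The repair.} Use $x_j=1-t(j)$ as the Rees parameter of $a_j$; this is the convention of the explicit formula for $F(c)$ via the sections $(1-y_i)^{j_i}$.
\begin{itemize}
\item Then $t(j)\mapsto 0$ is forgetful and matches $G(c')=G(c)$, with the other $W(c;a_l)$ unchanged.
\item And $t(j)\mapsto 1$ is the graded specialization. If $a_j>\max c^{-1}(1)$, it is identified with $G(c')\cong U(a_j)$ by the grading isomorphism of $U(\max c^{-1}(1)\leq a_j)$, and the induced filtrations become the $W(c';a_l)$ by functoriality of $U$. If $a_j<\max c^{-1}(1)$, then $W(c;a_j)$ and all remaining filtrations are split by the grading of $G(c)$, so $\Gr^{W(c;a_j)}G(c)\cong G(c)=G(c')$ canonically.
\end{itemize}
Your Step~3 then goes through with the roles of $0$ and $1$ exchanged.

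Two simplifications are available. First, the filtrations $W(c;a)$, $a\in\mathrm{Supp}(c)$, are nested, since every $\Xi_K$-morphism contains the standard filtration and composition is by joins. So a common splitting is automatic, and Lemma~\ref{lem:W-commute} is not needed; it concerns the specific $W(N_I)$ and is unavailable for an arbitrary functor $U:A\to\Xi_K$. Second, for nested filtrations both iterated gradeds are canonically the graded of the finer filtration. So the splitting-independence you flag as the main obstacle is immediate.
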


\begin{proof}
We verify the isomorphism for a single face.  There are two cases:

\textit{Case $c'(a_i) = 1$ (setting $t(a_i)=1$):}
Setting $t(a_i)=1$ in $\xi(G(c);\ldots)$ removes the filtration $W(c;a_i)$
from the Rees construction (by Proposition~\ref{prop:rees-face}).  The core
vector space changes from $G(c) = \bigoplus_{c(a)=1}\Gr^{W(c;a)}U(a_0)$
to $G(c') = \bigoplus_{c'(a)=1}\Gr^{W(c';a)}U(a_0)$.
When $a_i$ joins $c'^{-1}(1)$, the corresponding graded piece
$\Gr^{W(c;a_i)}U(a_0)$ is absorbed into $G(c')$ via the isomorphism
$G(c')\cong G(c)\otimes\Gr^{W(c;a_i)}$.

\textit{Case $c'(a_i) = 0$ (setting $t(a_i)=0$):}
Setting $t(a_i)=0$ takes the associated-graded with respect to $W(c;a_i)$.
The core vector space $G(c')$ is $\Gr^{W(c;a_i)}(G(c))$.  The remaining
filtrations on $G(c')$ are the images of those on $G(c)$.

In both cases the result follows from Propositions~\ref{prop:rees-face}
and~\ref{prop:rees-morphism}.  The compatibility for chains follows by
applying the argument twice.
\end{proof}

\subsection{Functoriality}

\begin{proposition}\label{prop:rees-functor}
The multi-Rees construction is functorial for maps of locally abelian
multi-filtered modules: if $f:(V,W^1,\ldots,W^k)\to(V',W'^1,\ldots,W'^k)$
is a module map preserving all filtrations, then $f$ induces a map of
$F[x_1,\ldots,x_k]$-modules
$\xi(f):\xi(V;\ldots)\to\xi(V';\ldots)$.
\end{proposition}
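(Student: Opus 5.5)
The plan is to define $\xi(f)$ as the restriction of the obvious extension of scalars and then check that it lands in the target submodule. The map $f:V\to V'$ is $F$-linear, so it extends uniquely to an $F[x_1^{\pm1},\ldots,x_k^{\pm1}]$-linear map
\[
  f[x^{\pm}]:V[x_1^{\pm1},\ldots,x_k^{\pm1}]\longrightarrow V'[x_1^{\pm1},\ldots,x_k^{\pm1}],
\]
acting coefficientwise on Laurent polynomials. I will take $\xi(f)$ to be the restriction of $f[x^{\pm}]$ to $\xi(V;W^1,\ldots,W^k)$. The definition of the multi-Rees module uses a bound $n$ with $W^j_n=V$. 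I will choose $n$ large enough for both $V$ and $V'$, so that both modules sit inside $x_1^{-n}\cdots x_k^{-n}$ times the respective polynomial modules.

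The main step is to show that $f[x^{\pm}]$ carries $\xi(V;\ldots)$ into $\xi(V';\ldots)$. By definition $\xi(V;\ldots)$ is generated over $F[x_1,\ldots,x_k]$ by the monomials $x_1^{a_1}\cdots x_k^{a_k}v$ with $v\in W^1_{a_1}V\cap\cdots\cap W^k_{a_k}V$. Since $f[x^{\pm}]$ is $F[x_1,\ldots,x_k]$-linear, it is enough to check these generators. Such a generator is sent to $x_1^{a_1}\cdots x_k^{a_k}f(v)$. Because $f$ preserves every filtration, $f(v)\in\bigcap_j f(W^j_{a_j}V)\subset\bigcap_j W'^j_{a_j}V'$, so the image is a generator of $\xi(V';\ldots)$. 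This gives a well-defined $F[x_1,\ldots,x_k]$-linear map. Compatibility with composition and identities then follows from the same property of $f\mapsto f[x^{\pm}]$.

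I will also check that $\xi(f)$ is compatible with the face identifications of Proposition~\ref{prop:rees-face}. At $x_j=1$, the map $f[x^{\pm}]$ specializes to the analogous map on the Rees module with $W^j$ removed, and this is again induced by $f$. At $x_j=0$, the map specializes to $\xi(\Gr^{W^j}f)$, since a filtered map induces maps on the graded pieces. Both statements follow by the same generator-level check.

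The one point that needs care is the locally abelian hypothesis: Proposition~\ref{prop:rees-projective} is stated only for locally abelian collections, and functoriality does not require $f$ to respect the splittings. The construction above never uses a splitting, because the multi-Rees module is defined intrinsically as a submodule of the Laurent polynomial module. So I expect no real obstacle. The only care needed is to use the intrinsic definition, and not the split description $\bigoplus_{\mathbf a}V_{\mathbf a}\otimes F[x_1,\ldots,x_k]$, which would not be preserved by $f$.
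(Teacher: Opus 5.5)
Your argument is correct. The paper states this proposition without proof. Extending $f$ coefficientwise to Laurent polynomials and noting that each generator $x_1^{a_1}\cdots x_k^{a_k}v$ is sent to the generator $x_1^{a_1}\cdots x_k^{a_k}f(v)$ of the target is exactly the routine check it leaves implicit. You are also right that the locally abelian hypothesis plays no role in defining $\xi(f)$ or in its compatibility with composition; it matters only for projectivity and for the face identifications of Proposition~\ref{prop:rees-face}, which go beyond what the statement asks.
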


Moreover, if the filtrations $W^1,\ldots,W^k$ are pairwise compatible (admit
a common splitting), the multi-Rees module can be computed by iterating
single-variable Rees constructions in any order.

\section{The global Rees bundle}
\label{sec:rees-global}

\subsection{Setup and local Rees bundles}

Let $A$ be a finite poset and $U:A\to\Xi_K$ a functor.  Choose weight functions
making each $U(a)$ a $\zz$-graded vector space.

For each cube $c\in\Cube A$, let $\mu(c)$ be its maximum element and write
$c^{-1}(t) = \{a_1,\ldots,a_k\}$ (in order).  Let $a_0\in c^{-1}(1)$ be any
element.

\begin{definition}
For $a\in\mathrm{Supp}(c)$, define the filtration $W(c;a)$ of $U(a_0)$ as
follows:
\begin{itemize}[leftmargin=2em]
  \item If $a_0\leq a$: take the filtration on $U(a_0)$ given by the
    morphism $U(a_0\leq a)$ in the functor $U$.
  \item If $a\leq a_0$: use the isomorphism $U(a_0)\cong\Gr^{W(a\leq a_0)}U(a)$
    to pull back the standard filtration of $U(a)$.
\end{itemize}
\end{definition}

\begin{definition}
Define the \emph{core vector space} of the cube $c$ as
\[
  G(c) := \prod_{a:c(a)=1}\Gr^{W(c;a)}(U(a_0)),
\]
which is canonically independent of the choice of $a_0$.  It carries
filtrations $W(G(c);a_i)$ for each $a_i\in c^{-1}(t)$.
\end{definition}

\begin{definition}
The \emph{local Rees bundle} over the projective cube
$\pp_K(c)\cong(\pp^1_K)^k$ is
\[
  F(c) := \xi(G(c);\; W(G(c);a_1),\ldots,W(G(c);a_k))
  \;\to\;\pp_K(c).
\]
\end{definition}

Explicitly, if $(y_1,\ldots,y_k)$ are the coordinates on $(\pp^1)^k$, the Rees
construction is performed along $y_i=1$ for each $i$:
\[
  F(c) = \sum_{j_1,\ldots,j_k}(1-y_1)^{j_1}\cdots(1-y_k)^{j_k}\cdot
  \Oo_{\pp(c)}\otimes\bigl[W_{j_1}(G(c);a_1)\cap\cdots\cap W_{j_k}(G(c);a_k)\bigr].
\]

\subsection{Glueing and the global bundle}

\begin{proposition}\label{prop:rees-glue}
For $c'\leq c$ in $\Cube A$ (i.e.\ $c'$ is a face of $c$), there is a natural
isomorphism $F(c)|_{\pp(c')}\cong F(c')$.  These isomorphisms satisfy the
cocycle condition.
\end{proposition}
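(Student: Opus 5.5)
The plan is to reduce the glueing statement to the algebraic face-compatibility already established, Proposition~\ref{prop:rees-face-detailed}, and then to check that the isomorphisms it produces are compatible with the twisting along $Q_i=\{y_i=1\}$ that turns the Rees module into a bundle on $\pp(c)=(\pp^1)^k$. A face $c'\leq c$ is obtained by fixing some $t$-coordinates of $c$ to $0$ or $1$, so $\pp(c')\subset\pp(c)$ is a product of coordinate points and factors. Since restrictions compose, it suffices to treat a codimension-one face, where a single coordinate $a_i$ with $c(a_i)=t$ is specialized. Here $c'(a_i)=0$ corresponds to $y_i=0$, and $c'(a_i)=1$ corresponds to the point $y_i=\infty$ lying off the affine chart. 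The general case follows by iteration once the cocycle condition is verified.

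\textbf{The two cases.} On the affine chart $\aaa(c)$, use the description of $F(c)$ as a sum of $(1-y_j)^{j_\ell}\Oo\otimes(\bigcap W_{j_\ell})$. After the change of variable $x_j=1-y_j$, this is exactly the multi-Rees module $\xi(G(c);W(G(c);a_1),\dots,W(G(c);a_k))$, with the Rees locus at $y_j=1$.
\begin{itemize}
\item For $c'(a_i)=0$ the restriction is the specialization handled in Proposition~\ref{prop:rees-face-detailed}. It identifies the restricted module with $\xi(G(c');\dots)$, where $G(c')=\Gr^{W(c;a_i)}G(c)$, and I would invoke Proposition~\ref{prop:rees-morphism} to transport the remaining filtrations.
\item For $c'(a_i)=1$ I would use the splitting form $F(c)\cong\bigoplus_{\mathbf a}G(c)_{\mathbf a}\otimes\Oo(-\sum a_jQ_j)$. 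This is available because the filtrations are locally abelian: they come from the commuting weight filtrations of Lemma~\ref{lem:W-commute}, and Proposition~\ref{prop:rees-projective} applies. Restricting to the hyperplane $\{y_i=\text{const}\}$ away from $Q_i$ kills the $\Oo(-a_iQ_i)$ twist canonically, leaving $\bigoplus_{\mathbf a}G(c)_{\mathbf a}\otimes\Oo(-\sum_{j\neq i}a_jQ_j)$.
\item In that second case, $G(c)$ with the filtration $W(c;a_i)$ forgotten is identified with $G(c')$, where $a_i$ now joins $c'^{-1}(1)$. This uses the independence of $G(c)$ from the choice of $a_0\in c^{-1}(1)$, and recovers the $x_j=1$ clause of Proposition~\ref{prop:rees-face}.
\end{itemize}

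\textbf{Independence of the splitting.} In both cases I must check that the identification does not depend on the chosen common splitting. For this I would describe the restriction intrinsically: the isomorphisms are induced by the inclusion $\xi\subset G(c)[x^{\pm1}]$, so they depend only on the filtrations. The lines $Q_j$ and the $\Oo(-a_jQ_j)$ structure are defined by the filtrations themselves, so no splitting is needed to state them.

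\textbf{Cocycle condition.} For a chain $c''\leq c'\leq c$, or two commuting face operations on distinct coordinates, both composites are induced by the same canonical maps on Rees modules. On the core spaces these maps are the compositions of graded identifications coming from the functor $U:A\to\Xi_K$. The cocycle condition therefore reduces to functoriality of $U$, meaning composition of morphisms in $\Xi_K$ (join of filtrations, composed grading isomorphisms), together with the iterated-graded compatibility of Proposition~\ref{prop:rees-face-detailed}.

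\textbf{Main obstacle.} I expect the hardest part to be this last bookkeeping step. Two different $0$-specializations give $\Gr^{W^i}\Gr^{W^j}$ versus $\Gr^{W^j}\Gr^{W^i}$, and the agreement of these on $G(c'')$ must be shown to be the canonical one. I would handle it by using the intrinsic Zassenhaus-type identification $\Gr^{W^i}\Gr^{W^j}\cong\bigoplus\frac{W^i_a\cap W^j_b}{\cdots}$, which is symmetric in $i,j$ and valid because the filtrations admit a common splitting.
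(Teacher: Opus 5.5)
Your case analysis puts the faces in the wrong places, and so the one restriction that carries content is never computed.

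\textbf{The faces are misplaced.} By the definition of the realizations in \S\ref{sec:cubical}, $c'(a_i)=1$ means $y_i=1$, not $y_i=\infty$. The face $\pp(c')$ is the Rees divisor $Q_i=\{y_i=1\}$ itself. Both kinds of face lie in the affine chart $\aaa(c)$; this is what lets $\aaa_K\Cube A$ be glued from the $\aaa(c)$, and it is the boundary $\bigcup_i\{y_i=0\}\cup\{y_i=1\}$ used in Theorem~\ref{thm:F1-exists}.

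\textbf{The specializations are swapped.} With your own substitution $x_i=1-y_i$, restricting to $y_i=0$ sets $x_i=1$. By Proposition~\ref{prop:rees-face} this forgets $W(c;a_i)$; it does not produce $\Gr^{W(c;a_i)}G(c)$. The associated graded appears only on $Q_i$, which your argument never restricts to.

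\textbf{The core spaces go the opposite way.} Since $G(c)=\prod_{c(a)=1}\Gr^{W(c;a)}U(a_0)$ depends only on $c^{-1}(1)$, one has $G(c')=G(c)$ when $c'(a_i)=0$. When $a_i$ joins $c'^{-1}(1)$, one has $G(c')=\Gr^{W(c;a_i)}G(c)$. This is the reverse of what both of your bullets assert.

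\textbf{It already fails for a $1$-cube.} Take $c(a)=1$, $c(b)=t$ with $a<b$. Then $F(c)$ is the Rees bundle of $(U(a),W(a\le b))$ on $\pp^1$. Its fibre is $U(a)$ at $y=0$ and at $y=\infty$, and $\Gr^{W(a\le b)}U(a)\cong U(b)$ at $y=1$. Your matching would identify the fibre $U(a)$ at $y=0$ with $U(b)$, and the fibre $U(a)$ at $y=\infty$ with $G(c'')\cong U(b)$. Neither is a natural isomorphism compatible with the patching functor $U\circ\mu$ of Theorem~\ref{thm:global-rees}.

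\textbf{The fix.} Exchange your two bullets.
\begin{itemize}
\item At $y_i=0$ the factor $1-y_i$ is a unit with value $1$. Evaluation therefore gives a canonical identification $F(c)|_{\{y_i=0\}}\cong\xi(G(c);W(c;a_j),\,j\neq i)=F(c')$; this is the $x_i=1$ clause of Proposition~\ref{prop:rees-face}.
\item At $y_i=1$ use the $x_i=0$ clause together with the common splitting. This gives $F(c)|_{Q_i}\cong\xi\bigl(\Gr^{W(c;a_i)}G(c);\text{induced filtrations}\bigr)=F(c')$.
\end{itemize}

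This is exactly the paper's reduction to Proposition~\ref{prop:rees-face}. The paper's proof also places the $c'(a_i)=1$ face at $y_i=1$, with $G(c')$ a graded of $G(c)$. However, its wording of the $c'(a_i)=0$ case, like the case split in Proposition~\ref{prop:rees-face-detailed}, has the specializations inverted, so do not take your bookkeeping from there.

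Once the roles are corrected, your symmetric Zassenhaus-type description of $\Gr^{W^i}\Gr^{W^j}$ is a sound way to make the cocycle condition precise, and more careful than the paper's one-line appeal. It is needed for two $y=1$ specializations. Restrictions to $y_i=0$ and $y_j=0$, or a mixed pair, commute for elementary reasons.
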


\begin{proof}
When passing from $c$ to $c'$, some coordinates $a_i$ with $c(a_i)=t$ become
either $c'(a_i)=0$ or $c'(a_i)=1$.
\begin{itemize}[leftmargin=2em]
  \item If $c'(a_i)=1$: we set $y_i=1$ in the Rees construction.  By
    Proposition~\ref{prop:rees-face}, this removes the filtration $W(G(c);a_i)$,
    corresponding to the identification $G(c')\cong\Gr^{W(G(c);a_i^-)}G(c)$
    for the appropriate sub-filtration.
  \item If $c'(a_i)=0$: we set $y_i=0$, taking the associated-graded along
    $W(G(c);a_i)$.  Since $G(c')=\Gr^{W(c;a_i^-)}G(c)$ (where $a_i^-$ denotes
    the minimal face that becomes the new $c'$-min), this matches.
\end{itemize}
The cocycle condition follows from the compatibility of multiple restrictions
in Proposition~\ref{prop:rees-face}.
\end{proof}

\begin{theorem}[Global Rees bundle]
\label{thm:global-rees}
Let $A$ be a finite poset and $U:A\to\Xi_K$ a functor.  The local Rees bundles
$F(c)$ glue together to define an algebraic vector bundle $F$ on
$\pp_K\Cube A$.  Its restriction to $\aaa_K\Cube A$ carries a stratified
collection of patching data, and the corresponding functor
$\Cube A\to\Xi_K$ is the composition
$\Cube A\xrightarrow{\mu}A\xrightarrow{U}\Xi_K$.
\end{theorem}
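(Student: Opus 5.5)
The plan has three parts: glue the local Rees bundles, identify the patching data on each affine cube, and check that these data restrict correctly along faces.

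\emph{Gluing.} The bundle $F$ is defined as a colimit over the face poset of $\Cube A$. By definition $\pp_K\Cube A$ is the union of the closed subschemes $\pp(c)\subset(\pp^1_K)^A$, glued along the faces $\pp(c')\subset\pp(c)$ for $c'\leq c$. A vector bundle on such a glued scheme is the same as a compatible system: a bundle $F(c)$ on each $\pp(c)$, together with isomorphisms $\phi_{c',c}:F(c)|_{\pp(c')}\cong F(c')$ satisfying $\phi_{c'',c'}\circ\phi_{c',c}=\phi_{c'',c}$.

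\begin{itemize}
\item Each $F(c)$ is locally free by Proposition~\ref{prop:rees-projective}. The filtrations $W(G(c);a_i)$ are all induced, through the functor $U$, from the chain $a_1<\dots<a_k$ in $A$, so they are successive refinements. Successive refinements admit a common splitting, and hence form a locally abelian collection.
\item The isomorphisms $\phi_{c',c}$ and their cocycle condition are Proposition~\ref{prop:rees-glue}, or equivalently Proposition~\ref{prop:rees-face-detailed}.
\end{itemize}

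The one point to check is that the glued scheme is covered by the $\pp(c)$ with only these identifications. Two cubes meet exactly along their common maximal face $c\wedge c''$. Descent is therefore effective: on the affine charts $\aaa^{|c|}$ the gluing is a fibre product of polynomial rings along surjections, and finitely generated projective modules glue along such (Milnor) squares.

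\emph{Patching data on each affine cube.} On $\aaa(c)\cong\aaa^k$ the twisting line bundles $\Oo(-a_iQ_i)$ are trivialized by the sections $(1-y_i)^{a_i}$. With this trivialization, $F(c)|_{\aaa(c)}$ is the multi-Rees module $\xi(G(c);W(G(c);a_1),\dots,W(G(c);a_k))$. The patching datum is then as follows.

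\begin{itemize}
\item The filtration is the one induced by the finest filtration, namely $W(c;\mu(c))$ transported to the Rees module. It is preserved because each $x_i$ has degree $0$ with respect to this filtration.
\item The trivialization $\tau(c)$ of the associated graded is the identification $\Gr\,\xi\cong\Gr^{W(c;\mu(c))}G(c)\otimes K[x_1,\dots,x_k]$. This holds because all the filtrations become split after passing to the graded of the finest one, and by the splitting formula in the proof of Proposition~\ref{prop:rees-projective}.
\item The resulting graded space is $U(\mu(c))$. This is because $\Gr^{W(c;\mu(c))}$ of $U(a_0)$ is $U(\mu(c))$ by the definition of $W(c;a)$ and functoriality of $U$.
\end{itemize}

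\emph{Stratification and the induced functor.} For a face $c'\leq c$ we have $\mu(c')\leq\mu(c)$. We must show that the datum on $c$, restricted to $\aaa(c')$, refines the datum on $c'$, and that the induced $\Xi_K$-morphism is $U(\mu(c')\leq\mu(c))$. This follows from Proposition~\ref{prop:rees-morphism} applied with $W=W(c;a_i)$ for the coordinates set to $0$. Coordinates set to $1$ simply drop a filtration and leave $\mu$ unchanged unless the dropped coordinate was the maximal one. With this, $c\mapsto(W,\tau)(c)$ is a strict collection of patching data, and its functor factors as $U\circ\mu$, by comparing objects and morphisms separately and using the composition law of $\Xi_K$.

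\emph{Main obstacle.} I expect the main obstacle to be showing that $G(c)$ and the $\phi_{c',c}$ are independent of the auxiliary choice $a_0\in c^{-1}(1)$ and of the weight functions. This independence is needed so that the cocycle identities hold on the nose and not merely up to isomorphism. I would settle it first by proving the following: for $a_0\leq a_0'$ in $c^{-1}(1)$, the canonical isomorphism $\Gr^{W(a_0\leq a_0')}U(a_0)\cong U(a_0')$ carries each $W(c;a)$ to the corresponding filtration. This reduces to the associativity of composition in $\Xi_K$, that is, to the functoriality of $U$.
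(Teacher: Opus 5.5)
Your plan follows the paper's proof. You glue via Proposition~\ref{prop:rees-glue}, filter each $F(c)$ by $W(G(c);\mu(c))$, trivialize the graded pieces $\Oo(-\sum_i j_iQ_i)\otimes U_j(\mu(c))$ over $\aaa(c)$ by the sections $(1-y_i)^{j_i}$, and read off $U\circ\mu$ by construction. Your remarks on local freeness, Milnor patching and independence of $a_0$ fill in steps the paper leaves implicit. Independence from the weight functions, however, is neither needed nor true: they are fixed once and for all, and the bundle changes with them.

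There is one slip in your face analysis. The lattice along $Q_i$ is $\sum_j(1-y_i)^jW_j(G(c);a_i)$, so the face $y_i=1$ (that is, $c'(a_i)=1$, support unchanged) is where $\Gr^{W(c;a_i)}$ appears. The face $y_i=0$ is where $W(c;a_i)$ is forgotten, and it is the only kind of face along which $\mu$ can drop. Your roles of $0$ and $1$ are therefore reversed; the same reversal occurs in Proposition~\ref{prop:rees-glue}. Moreover, at $y_i=1$ the surviving filtrations $W(c;a_j)$ with $a_j>a_i$ are finer than $W(c;a_i)$. So at that face you need Proposition~\ref{prop:rees-face} together with the composition law of $\Xi_K$, not Proposition~\ref{prop:rees-morphism}, whose hypothesis is that the other filtrations are coarser.
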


\begin{proof}
The glueing is given by Proposition~\ref{prop:rees-glue}.  For the patching
data: over each affine cube $\aaa(c)$, define $W(F(c);\mu(c))$ to be the
filtration of $F(c)$ induced by $W(G(c);\mu(c))$.  For the trivialization
$\tau(c)$: on each graded piece $\Gr^{W(F(c);\mu(c))}_j(F(c))$, the Rees
construction with the remaining filtrations is trivial (up to line bundle
twists), giving a canonical trivialization.  Explicitly,
\[
  \Gr^{W(F(c);\mu(c))}_j(F(c))
  \cong \Oo_{\pp(c)}(-j_1Q_1-\cdots-j_kQ_k)\otimes U_j(\mu(c)),
\]
where $Q_i$ is the divisor $\{y_i=1\}$ in $\pp(c)$ and $j_1,\ldots,j_k$
are determined by the grading.  Over $\aaa(c)$ the sections
$(1-y_i)^{j_i}$ trivialize the line bundles $\Oo(-j_iQ_i)|_{\aaa(c)}$,
giving the trivialization $\tau(c)$.

The functor identification follows: $G(c)$ is constructed from $U(\mu(c))$ via
the filtrations $W(c;a_i)$, and after taking the graded pieces these recover
$U(\mu(c))$.  The composition formula $\Cube A\xrightarrow{\mu}A\to\Xi_K$
holds by construction.
\end{proof}

\begin{remark}\label{rem:log-conn}
The trivializations $\tau(c)$ over $\aaa(c)$ arise from the sections
$(1-y_i)^{j_i}$.  These give connections with logarithmic poles along
$\pp(c)\setminus\aaa(c)$, with connection form
$\sum_i j_i\,\dlog(1-y_i)\otimes\mathrm{Id}_{U_j(\mu(c))}$ on each graded piece.
\end{remark}

\subsection{Monotonicity of the patching data}

\begin{proposition}\label{prop:patching-monotone}
For $c'\leq c$ in $\Cube A$, the filtration $W(F(c),\mu(c))|_{\aaa(c')}$
is finer than $W(F(c'),\mu(c'))$, and the trivialization $\tau(c)$ induces
$\tau(c')$.
\end{proposition}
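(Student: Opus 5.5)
The plan is to reduce the statement to a compatibility between the functor $\Cube A\to\Xi_K$ of Theorem~\ref{thm:global-rees}, which factors through $\mu$, and the face restrictions of Proposition~\ref{prop:rees-glue}. Let $c'\leq c$. Since $\mathrm{Supp}(c')\subset\mathrm{Supp}(c)$, we have $\mu(c')\leq\mu(c)$ in $A$, and $U$ supplies a morphism $U(\mu(c')\leq\mu(c))$ in $\Xi_K$: a filtration $W(\mu(c')\leq\mu(c))$ on $U(\mu(c'))$ together with an isomorphism of its graded with $U(\mu(c))$. I will compute both filtrations inside the core space $G(c')$, using the base point $a_0$ chosen in $c'^{-1}(1)$. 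This is legitimate since $c'^{-1}(1)\neq\emptyset$, and $G(c)$ is independent of the choice of $a_0$.

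\textbf{Step 1 (filtrations).} I will restrict $F(c)$ to $\aaa(c')$ using Proposition~\ref{prop:rees-face-detailed}. Setting a coordinate $t(a_i)=1$ simply forgets $W(G(c);a_i)$. Setting $t(a_i)=0$ replaces $G(c)$ by $\Gr^{W(c;a_i)}G(c)$. In both cases the filtration $W(G(c);\mu(c))$ descends to a filtration of $G(c')$: in the second case this is by Proposition~\ref{prop:rees-morphism}, using that the filtrations coming from a chain are coarser or finer in the order of the chain. I will then identify the descended filtration with $W(G(c');\mu(c))$, the filtration attached to $\mu(c)\geq\mu(c')$ by the definition of $W(c';a)$. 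By the composition law of $\Xi_K$, this filtration is the join of $W(G(c');\mu(c'))$ with the lift of $W(\mu(c')\leq\mu(c))$. It therefore contains every step of $W(F(c'),\mu(c'))$, which is the refinement claim. One case needs care: if $\mu(c)\notin\mathrm{Supp}(c')$, so that $c'(\mu(c))=0$, then $\mu(c)$ is a graded-taking direction, and the induced filtration on the graded is still the image filtration. I will check that this is exactly the one predicted by functoriality.

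\textbf{Step 2 (trivializations).} On each graded piece, $\tau(c)$ is given by the sections $(1-y_i)^{j_i}$. The graded pieces of $W(F(c),\mu(c))|_{\aaa(c')}$ are graded pieces of a refinement of $W(F(c'),\mu(c'))$. Under the isomorphism $\Gr^{W(\mu(c')\leq\mu(c))}U(\mu(c'))\cong U(\mu(c))$, the restricted sections $(1-y_i)^{j_i}$ are again the monomials that define $\tau(c')$. Coordinates set to $1$ contribute the factor $0^{j_i}$ only on twisted pieces, which are absorbed by the shift of weights, and coordinates set to $0$ contribute $1$. So $\tau(c)$ is the trivialization induced from $\tau(c')$ in the sense of Definition~\ref{def:patchingdatum}.

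\textbf{Main obstacle.} The main obstacle is the bookkeeping in Step~1 when $\mu(c)$ is one of the coordinates sent to $0$ or to $1$. I would handle this by first treating a codimension-one face and then iterating, using the cocycle compatibility of Proposition~\ref{prop:rees-face-detailed} to reduce to that case.
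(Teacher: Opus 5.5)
Your proposal is correct and is the paper's own two-line argument written out. The refinement comes from $\mu(c')\le\mu(c)$ and functoriality of $U$; your join via the composition law of $\Xi_K$ is precisely what ``$U$ is order-preserving'' means there. The compatibility of the trivializations comes from restricting the monomial frames $(1-y_i)^{j_i}$.

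One bookkeeping repair is needed. In the explicit formula for $F(c)$ the Rees variable is $1-y_i$, so $y_i=0$ is the generic fibre and $y_i=1$ the graded one. The definition of $G(c')$ forces the same reading: sending a coordinate to $0$ leaves $c'^{-1}(1)=c^{-1}(1)$, hence $G(c')=G(c)$. Consequently $(1-y_i)^{j_i}$, which generates $\Oo(-j_iQ_i)$ on all of $\aaa(c)$, restricts to a generator rather than to ``$0^{j_i}$''. Your delicate codimension-one case $c'(\mu(c))=0$ is then actually the easy one: the restricted filtration is $W(c;\mu(c))$ itself on $G(c')=G(c)$, and it refines $W(c';\mu(c'))$ by the same composition law.
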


\begin{proof}
Since $c'\leq c$ we have $\mu(c')\leq\mu(c)$, so the filtration associated to
$\mu(c)$ refines that associated to $\mu(c')$ (as the functor $U$ is
order-preserving from $A$ to $\Xi_K$).  The trivialization compatibility
follows from the multiplicativity of the sections $(1-y_i)^{j_i}$ under
restriction.
\end{proof}

\subsection{The Deligne complex and logarithmic de Rham cohomology}
\label{subsec:deligne-complex-detail}

Let us review the Deligne complex in detail, following \cite{EV} and
\cite{BG}.

Let $Z$ be a smooth quasi-projective variety with a smooth compactification
$\bar Z$ such that $\bar Z\setminus Z = D_Z$ is a normal crossings divisor.
The \emph{logarithmic de Rham complex} on $\bar Z$ is the de Rham complex
$\Omega^\hdot_{\bar Z}(\log D_Z)$ of differential forms with at most simple
poles along $D_Z$.  By Deligne's theorem, its hypercohomology computes the
cohomology of $Z$ with constant coefficients:
\[
  H^n(Z,\cc) \cong \HH^n(\bar Z, \Omega^\hdot_{\bar Z}(\log D_Z)).
\]

The \emph{Hodge filtration} on $\Omega^\hdot(\log D_Z)$ is the ``b\^ete''
filtration:
\[
  F^p\Omega^\hdot(\log D_Z) := \Omega^{\geq p}_{\bar Z}(\log D_Z).
\]

\begin{definition}
The \emph{Deligne complex} $\calD(p)$ on $Z$ is the complex
\[
  \calD(p) = [\zz(p) \to \Oo_{\bar Z} \to \Omega^1_{\bar Z}(\log D_Z)
  \to \cdots \to \Omega^{p-1}_{\bar Z}(\log D_Z)],
\]
placed in degrees $0, 1, \ldots, p$.  The \emph{Deligne-Beilinson cohomology}
is $H^n_\calD(Z,\zz(p)) := \HH^n(\bar Z, \calD(p))$.
\end{definition}

There is a long exact sequence
\begin{equation}\label{eq:DB-les}
  \cdots \to H^{n-1}(Z,\cc/\zz(p)) \to H^n_\calD(Z,\zz(p))
  \to H^n(Z,\zz) \oplus F^pH^n(Z,\cc) \to \cdots
\end{equation}

For $Z = \aaa^n$ (affine space), the logarithmic forms are just regular
forms.  Since $\aaa^n$ is contractible, $H^k(\aaa^n,\cc) = 0$ for $k>0$ and
$F^pH^0(\aaa^n,\cc)=0$ for $p\geq1$.  Hence the Deligne cohomology of an
affine space is concentrated in degree \emph{one}:
\begin{align*}
  H^{1}_\calD(\aaa^n,\zz(p)) &\cong \cc/\zz(p) \cong \cc/\zz
  \quad\text{(via }(2\pi i)^p),\\
  H^{k}_\calD(\aaa^n,\zz(p)) &= 0 \qquad (k\neq1).
\end{align*}
This is Lemma~\ref{lem:DB-affine}, proved in Appendix~\ref{app:rees}.  It is
worth emphasising, since it is easy to misremember, that the group in degree
$2p$ \emph{vanishes} on a single affine cell as soon as $p\geq2$: the degree
$2p$ is produced by the combinatorics of the covering, not by any one piece.
For $p=1$ the statement is the familiar
$H^1_\calD(-,\zz(1))=\Oo^{\ast}$, which on $\aaa^n$ gives
$\cc^{\ast}\cong\cc/\zz(1)$, together with
$H^2_\calD(-,\zz(1))=\mathrm{Pic}(\aaa^n)=0$.

For the cubical realization $Z^o = \aaa\Cube A$ with affine pieces
$\aaa(c)\cong\aaa^{\dim c}$, the Mayer-Vietoris sequence for Deligne
cohomology gives:

\begin{lemma}\label{lem:DB-cube}
There is a natural isomorphism
\[
  H^{2p}_\calD(\aaa\Cube A,\zz(p)) \cong H^{2p-1}(\iiii\Cube A,\cc/\zz).
\]
\end{lemma}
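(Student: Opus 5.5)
Write $Z^o:=\aaa\Cube A$ and $S:=\iiii\Cube A$. The plan is to compute $H^\hdot_\calD(Z^o,\zz(p))$ by descent over the cubes, using that on each single cube the Deligne complex is concentrated in degree one (Lemma~\ref{lem:DB-affine}). Concretely, I would write the Deligne complex $\calD(p)$ as the cone of $\zz(p)\oplus F^p\Omega^\hdot\to\Omega^\hdot$, shifted by $-1$, on each cube $\aaa(c)$; here the log de Rham complex is taken on a smooth compactification, as in \S\ref{subsec:deligne-complex-detail}. These complexes are then assembled by the face maps of $\Cube A$ into a cubical (equivalently, via the barycentric correspondence of Proposition~\ref{prop:cuberealize}, simplicial) diagram. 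Mayer--Vietoris for the closed covering of $Z^o$ by the cubes then gives a descent spectral sequence
\[
E_1^{s,q}=\prod_{c\,:\,\dim c = s}H^q_\calD(\aaa(c),\zz(p))\ \Longrightarrow\ H^{s+q}_\calD(Z^o,\zz(p)).
\]
More precisely, one uses the spectral sequence attached to the cellular filtration, with $E_1$ given by cubes relative to their boundaries. I would justify this the same way as Proposition~\ref{prop:deRham-cube}, using the Dupont-type de Rham complex on the cubical space.

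Next I would insert Lemma~\ref{lem:DB-affine}. It gives $E_1^{s,q}=0$ for $q\neq 1$, and $E_1^{s,1}\cong\prod_{\dim c=s}\cc/\zz$ via $(2\pi i)^p$. The restriction maps along faces $\aaa(c')\subset\aaa(c)$ induce the identity on $H^1_\calD\cong\cc/\zz$, because both sides are the constants $H^0(\aaa(c),\cc/\zz(p))$. The $d_1$-differential is therefore exactly the cellular cochain differential of $S$ with constant coefficients $\cc/\zz$. The spectral sequence is concentrated in the single row $q=1$, so it degenerates at $E_2$, and we obtain
\[
H^{n}_\calD(Z^o,\zz(p))\cong H^{n-1}_{\mathrm{cell}}(S,\cc/\zz)=H^{n-1}(\iiii\Cube A,\cc/\zz).
\]
Taking $n=2p$ gives the claim. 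The isomorphism is natural because every step is functorial for maps of cubical sets. I would also check that it agrees with the connecting map $\alpha$ of \eqref{eq:DB-les}. Equivalently, the computation shows that on $Z^o$ the groups $H^n(Z^o,\zz)\oplus F^pH^n(Z^o,\cc)$ contribute nothing in the relevant degrees: for the $F^p$-part because there are no relative de Rham contributions, and the integral part is absorbed in $\cc/\zz$.

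The main obstacle is making sense of Deligne--Beilinson cohomology on the singular glued scheme $Z^o$, and proving the descent spectral sequence for it. $Z^o$ is not smooth, so $H_\calD$ must be defined cubically: on each smooth piece $\aaa(c)$, with compactification $\pp(c)$ and boundary divisor $\pp(c)\setminus\aaa(c)$, and then totalized over the cubical diagram. For this I would take the levelwise definition to be the definition, as the paper does for $\underline{\pp}_K\Cube A$. With that definition, the Mayer--Vietoris spectral sequence is formal: it is the filtration of the total complex by cube dimension. The second delicate point is that the identification $H^1_\calD(\aaa^n,\zz(p))\cong\cc/\zz$ must be compatible with the face restrictions, including restriction to the faces $y=1$ as well as $y=0$. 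This holds because the identification factors through evaluation of a constant function, which commutes with every face inclusion.
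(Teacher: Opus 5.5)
Your proposal is correct and follows essentially the same route as the paper: a descent (Mayer--Vietoris) spectral sequence over the affine cubes, collapsed onto the single row $q=1$ by Lemma~\ref{lem:DB-affine}, whose surviving row is identified with the $\cc/\zz$-cochain complex of $\iiii\Cube A$. The difference is only bookkeeping. The paper uses the \v{C}ech nerve of the closed cover by the $\aaa(c)$: its $E_1$ terms are absolute groups on intersections of cubes, its $d_1$ is given by restriction maps, and it then identifies the \v{C}ech cohomology of the cover with $H^*(\iiii\Cube A)$. In your cellular version, by contrast, the $E_1$ terms should be the relative groups $H^{s+q}_\calD(\aaa(c),\partial\aaa(c);\zz(p))$, and $d_1$ is given by connecting maps rather than restrictions.
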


\begin{proof}
Consider the \v{C}ech-to-Deligne spectral sequence for the covering of
$\aaa\Cube A$ by the affine pieces $\aaa(c)$.  By
Lemma~\ref{lem:DB-affine}, $H^t_\calD(\aaa(c),\zz(p))$ vanishes for
$t\neq1$ and equals $\cc/\zz(p)$ for $t=1$, so the $E_2$ page is
concentrated in the single row $t=1$,
\[
  E_2^{s,1} = \check H^s(\{\aaa(c)\},\cc/\zz)
  \;\Longrightarrow\; H^{s+1}_\calD(\aaa\Cube A,\zz(p)),
\]
and the spectral sequence therefore degenerates.  Taking $s=2p-1$ gives
$H^{2p}_\calD(\aaa\Cube A,\zz(p))\cong\check H^{2p-1}(\{\aaa(c)\},\cc/\zz)$.
The \v{C}ech cohomology of the covering by affine cubes computes the
cohomology of the topological realization $\iiii\Cube A$ with $\cc/\zz$
coefficients (since the affine cubes are contractible and the \v{C}ech
complex models the simplicial complex structure).
\end{proof}

\subsection{Green currents and the arithmetic Chern character}
\label{subsec:green-currents}

The Burgos-Gil theory \cite{BG} provides a concrete way to compute Deligne
cohomology classes via Green currents.  We recall the main definition, 
see also Appendix~\ref{app:burgosgil}, where the theory is developed in full.

\begin{definition}
Let $E$ be a hermitian vector bundle $(E,h)$ on a smooth complex variety $Z$.
The \emph{canonical Hermitian connection} $\nabla_h$ is the unique connection
on $E$ that is compatible with the Hermitian metric and the holomorphic
structure:
\[
  d(h(s,t)) = h(\nabla_h s,t) + h(s,\nabla_h t),\qquad
  \nabla_h^{0,1} = \overline\partial_E.
\]
Its curvature $\Theta_h = \nabla_h^2$ is a $(1,1)$-form with values in
$\End(E)$, and the \emph{Chern character form} is
\[
  \ch_p(E,h) := \frac{(-1)^p}{p!(2\pi i)^p}\Tr(\Theta_h^p).
\]
\end{definition}

\begin{theorem}[Arithmetic Chern character, Burgos-Gil {\cite[Thm.~4.3.1]{BG}}]
\label{thm:arith-chern}
The class $[\ch_p(E,h)]\in H^{2p-1,2p-1}(Z)$ (the Deligne cohomology class
defined by the current $\ch_p(E,h)$ via the canonical map from smooth forms
to currents) is independent of the choice of Hermitian metric $h$.  The
resulting class $\widehat{\ch}_p(E)\in H^{2p}_\calD(Z,\zz(p))$ is the
arithmetic Chern character.
\end{theorem}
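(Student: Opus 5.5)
The plan is to reduce the metric-independence to a transgression argument: for two hermitian metrics $h_0,h_1$ on $E$, produce an explicit form $\widetilde{\ch}_p(E;h_0,h_1)$ of degree $2p-1$ with
\[
  \ch_p(E,h_1)-\ch_p(E,h_0) \;=\; \tfrac{1}{2}\,dd^c\,\widetilde{\ch}_p(E;h_0,h_1),
\]
and then show that such an exact difference, with the transgression lying in the appropriate piece of the Deligne complex, represents zero in $H^{2p}_\calD(Z,\zz(p))$. First I would join the metrics by the straight-line path $h_s=(1-s)h_0+sh_1$, $s\in[0,1]$; each $h_s$ is hermitian since the space of metrics is convex. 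Over $Z\times\pp^1$ with coordinate $s$ I would build the hermitian bundle $(\mathrm{pr}_1^*E,\tilde h)$ restricting to $h_0$ at $s=0$ and $h_1$ at $s=\infty$. Then I would set $\widetilde{\ch}_p:=\int_{\pp^1}\ch_p(\mathrm{pr}_1^*E,\tilde h)\log|s|^2$, which is the Bott-Chern construction.

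The Poincar\'e-Lelong equation $dd^c\log|s|^2=\delta_\infty-\delta_0$, together with Stokes on the fibres, yields the displayed identity as currents. Since both $\ch_p(E,h_i)$ are of type $(p,p)$ and the Bott-Chern form is of type $(p-1,p-1)$, it lies in $F^{p-1}$ after the $\partial/\bar\partial$ splitting of $dd^c$. This is exactly what is needed in the Burgos-Gil model of the Deligne complex, the cone of $F^p\oplus$ (real or integral part) $\to A^\bullet$, for $(\ch_p(E,h_1),0)-(\ch_p(E,h_0),0)$ to be a coboundary. The integral component is untouched, since the topological Chern character does not depend on $h$. Hence $[\ch_p(E,h)]$ gives a well-defined class $\widehat{\ch}_p(E)$. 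Additivity and functoriality in $E$ can be checked with the same transgression when needed, but they are not part of the statement.

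The main obstacle is the case where $Z$ is not compact: there the Deligne complex must be built from forms with logarithmic growth on $\bar Z$, and both metrics must be taken good, or smooth across $D_Z$. My first attempt would be to restrict to metrics smooth on $\bar Z$, in which case everything above happens on the compact $\bar Z$. For the general case I would verify that $\widetilde{\ch}_p$ has logarithmic growth along $D_Z$ by the standard local estimates of Burgos-Gil, and I would cite \cite[Thm.~4.3.1]{BG} for the required growth bounds instead of reproving them. A secondary check is the sign and normalization convention $(-1)^p/(p!(2\pi i)^p)$, which makes the class integral. This is needed for the image to land in $H^{2p}_\calD(Z,\zz(p))$ and not only in real Deligne cohomology.
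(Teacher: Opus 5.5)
Your main line is the paper's own argument. You transgress from $h_0$ to $h_1$ by the Bott--Chern form $\widetilde{\ch}_p(E;h_0,h_1)$, whose $dd^c$ is $\ch_p(E,h_1)-\ch_p(E,h_0)$ up to the usual constant. You then note that a $dd^c$-exact difference of $(p,p)$-forms is a coboundary in the Deligne complex. You build $\widetilde{\ch}_p$ by the $\pp^1$-deformation and Poincar\'e--Lelong, whereas the paper invokes the variational formula along $h_t$. This difference is immaterial: both constructions give the same class modulo $\mathrm{im}\,\partial+\mathrm{im}\,\bar\partial$.

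There are some slips in this part.
\begin{enumerate}
\item $\widetilde{\ch}_p$ has degree $2p-2$, not $2p-1$; your later ``type $(p-1,p-1)$'' is the correct statement.
\item The property Burgos's complex actually uses is that $\widetilde{\ch}_p$ is \emph{real} of pure type $(p-1,p-1)$. That makes it an element of Burgos's complex $\calD^{\bullet}(Z,p)$ in degree $2p-1$, where the differential is $-2\partial\bar\partial$. Mere membership in $F^{p-1}$ is not the point.
\item In the non-compact case, citing \cite[Thm.~4.3.1]{BG} for the growth bounds is citing the very statement you are proving.
\end{enumerate}

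The genuine gap is your integrality claim. The transgression argument shows only that the class of $\ch_p(E,h)$ in \emph{real} Deligne cohomology $H^{2p}_\calD(Z,\rr(p))$ is independent of $h$. That is the only group in which a form or current by itself defines a class. It is also where the Burgos--Gil class $\widehat{\ch}_p\in\widehat{\mathrm{CH}}^p(Z)\otimes\qq$ maps.

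No normalization can make this class integral, for two reasons.
\begin{enumerate}
\item $\ch_p$ has denominators (already $\ch_2=\tfrac12c_1^2-c_2$), so at best one reaches $\qq(p)$.
\item More seriously, a class in $H^{2p}_\calD(Z,\zz(p))$ has a component that no form detects. For $Z$ projective this component lies in the intermediate Jacobian $H^{2p-1}(Z,\cc)/\bigl(F^p+H^{2p-1}(Z,\zz(p))\bigr)$. A cocycle of the cone is a triple $(c,\omega,\eta)$ with $c$ an integral cocycle and $d\eta=\omega-c$, and $\omega=\ch_p(E,h)$ does not determine $\eta$.
\end{enumerate}

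Your sentence ``the integral component is untouched'' therefore assumes two things: a canonical $\eta_h$ for each metric, and that $\eta_{h_1}-\eta_{h_0}$ agrees with an $F^p$-primitive of $\omega_1-\omega_0$ modulo exact and integral cochains. That is the entire content of the integral statement, and the $(p-1,p-1)$-type of $\widetilde{\ch}_p$ does not provide it.

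What does provide it is the Dupont--Hain--Zucker mechanism the paper uses elsewhere (Theorem~\ref{thm:DHZ}).
\begin{itemize}
\item Take $\eta_h$ from the Cheeger--Simons character of the Chern connection $\nabla_h$.
\item Along $\nabla_t=\nabla_{h_0}+t(\nabla_{h_1}-\nabla_{h_0})$, the variation $\dot\nabla_t$ is of type $(1,0)$ and $F(\nabla_t)\in F^1$.
\item Hence the Chern--Simons transgression $\int_0^1\Tr(\dot\nabla_t\wedge F(\nabla_t)^{p-1})\,dt$ lies in $F^p$, and it is exactly the required primitive.
\end{itemize}
The paper's sketch proves only the $dd^c$-exactness, so on the real part you match it. Your integrality remark should either be replaced by this argument, with $\qq(p)$ coefficients, or be dropped.
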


\begin{proof}[Sketch of proof]
For two metrics $h_0,h_1$, the difference
$\ch_p(E,h_1) - \ch_p(E,h_0) = dd^c\widetilde\ch_p(E;h_0,h_1)$
where $\widetilde\ch_p$ is the Bott-Chern secondary class, a smooth form of
degree $(2p-2)$ on $Z$ (or a current with logarithmic singularities in the
non-compact case).  This follows from the variational formula for the curvature.
\end{proof}

\subsection{The $F^1$ condition and Deligne cohomology}
\label{subsec:F1-condition-detail}

We now explain in detail why $F^1$-connections provide representatives of
Deligne cohomology classes.

\begin{definition}[{\cite[Def.~5.1.1]{DHZ}}]
Let $Z$ be a smooth complex variety, $D_Z$ a normal crossings divisor, and
$E$ a vector bundle on $Z$ with filtration $F^\hdot E$.  A connection $\nabla$
on $E$ is an \emph{$F^1$-connection with respect to $(F^\hdot,D_Z)$} if:
\begin{enumerate}[leftmargin=2em,label=\rm(\roman*)]
  \item $\nabla$ is a $\mathcal{C}^\infty$ connection on $E$;
  \item $\nabla$ preserves $F^\hdot E$: $\nabla(F^p E)\subset F^p E\otimes A^1(Z)$;
  \item in any local frame adapted to $F^\hdot E$, the connection matrix $A$
    satisfies $A^{1,0}\in F^1\Omega^1_Z(\log D_Z)\otimes\End(E)$
    (the $(1,0)$-component has at most logarithmic poles along $D_Z$).
\end{enumerate}
\end{definition}

The key result connecting $F^1$-connections to Deligne cohomology is:

\begin{theorem}[{\cite[Lem.~5.2.1]{DHZ}}]\label{thm:F1-Deligne}
Suppose $E$ is a vector bundle on $Z^o = Z\setminus D_Z$, with an extension
to a bundle (still denoted $E$) on $Z$.  If $\nabla$ is an $F^1$-connection
on $E$ with respect to $D_Z$, then the Chern character form
$\ch_p(E,\nabla) := \frac{1}{p!(2\pi i)^p}\Tr(F(\nabla)^p)$ defines a class
\[
  [\nabla]_p \;\in\; H^{2p}_\calD(Z^o,\zz(p))
\]
that depends only on $E$ and not on the specific choice of $\nabla$ among
$F^1$-connections.
\end{theorem}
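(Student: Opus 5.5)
The plan is to reduce the statement to the standard transgression argument in the Deligne complex, carried out with logarithmic forms on $Z$. I work with the cone description of $\calD(p)$ and represent a class in $H^{2p}_\calD(Z^o,\zz(p))$ by a triple $(c,\omega,h)$. Here $c$ is an integral cocycle representing the Betti Chern character, $\omega$ is a closed form in $F^pA^{2p}(\log D_Z)$, and $h$ is a $(2p-1)$-form with logarithmic growth satisfying $dh=\omega-c$.

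The first step is to check that the Chern character form of an $F^1$-connection lies in $F^p$. In an adapted local frame the connection matrix $A$ has $(1,0)$-part in $F^1\Omega^1(\log D_Z)\otimes\End(E)$. Since $\nabla$ preserves $F^\hdot E$, its curvature $dA+A\wedge A$ lies in $F^1$ of the logarithmic complex tensored with $\End(E)$, which is filtered-compatible. Hence $\Tr(F(\nabla)^p)\in F^p$, and it has at most logarithmic poles along $D_Z$. The resulting form is closed, so it gives the $\omega$-component of a Deligne cocycle.

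The second step pins down the $h$-component. I pull back a universal Cheeger-Simons/Chern-Weil comparison: choose any smooth connection $\nabla_0$ on $E$ over the compact $Z$, for instance a hermitian one. Its form $\ch_p(\nabla_0)$ differs from the integral class by an exact form $d\eta_0$, with $\eta_0$ smooth on $Z$. I then set $h=\eta_0+\widetilde{\ch}_p(\nabla_0,\nabla)$, where $\widetilde{\ch}_p$ is the Chern-Simons transgression along the linear path $\nabla_t=(1-t)\nabla_0+t\nabla$, which satisfies $d\widetilde\ch_p=\ch_p(\nabla)-\ch_p(\nabla_0)$. The transgression has at most logarithmic singularities along $D_Z$ because $\nabla$ does, so the triple defines a class in $H^{2p}_\calD(Z^o,\zz(p))$.

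The third step is independence, which I expect to be the main obstacle. Given two $F^1$-connections $\nabla,\nabla'$, both preserving $F^\hdot E$, the difference of the two cocycles is the coboundary of the form $\widetilde\ch_p(\nabla,\nabla')$ computed along the linear path. Since $\nabla_t$ preserves $F^\hdot$ and its $(1,0)$-part stays in $F^1$ for every $t$, the family is a single $F^1$-connection on $Z\times[0,1]$. Its Chern character form therefore lies in $F^p$, and integrating over the fibre $[0,1]$ drops one degree, putting $\widetilde\ch_p$ in $F^{p-1}$. That is exactly not enough in general.

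The delicate point is showing that the transgression lies in $F^p$ modulo closed forms plus $d(F^{p-1})$. Equivalently, in the cone for $\calD(p)$ one needs the pair $(0,\widetilde\ch_p)$ to be a coboundary. For this I would follow DHZ: split $\widetilde\ch_p$ into its $F^p$-part and its $\overline F$-part. Using $\partial\bar\partial$-type arguments on the smooth compactification $Z$ with log poles (logarithmic Dolbeault/mixed Hodge theory, so that $F^p\cap$ exact $=dF^{p}$ on cohomology by strictness), I would show that the non-$F^p$ part is $d$ of something in $F^{p-1}$ up to a real error. That error is absorbed by changing $h$ by a coboundary. Strictness of the Hodge filtration for the logarithmic complex on $Z$ is the key input, and it is where projectivity/compactness of $Z$ is used.
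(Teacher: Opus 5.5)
Steps 1 and 2 are fine in outline, but Step 1 is justified for the wrong reason, and Step 3 rests on a miscomputation followed by a repair that cannot work.

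\textbf{The missed computation.} The operative $F^1$ condition is that, in local holomorphic frames of the extension, the connection matrix is itself a logarithmic form of type $(1,0)$ (equivalently $\nabla^{0,1}=\bar\partial_E$), i.e.\ $A\in F^1A^1(Z\log D_Z)\otimes\End(E)$. That is why $F(\nabla)=dA+A\wedge A$ lies in $F^1$. It has nothing to do with $\nabla$ preserving a filtration of $E$, and a condition on the $(1,0)$-part of $A$ alone leaves the $(0,1)$-part free, so the curvature could have a $(0,2)$-part.

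With the correct condition, your claim that the transgression only reaches $F^{p-1}$ is false. For two $F^1$-connections, $A=\nabla'-\nabla$ is a global $\End(E)$-valued logarithmic $(1,0)$-form, so $A\in F^1$. Each $F(\nabla_t)$ lies in $F^1$ for the same reason. Hence $\Tr(A\wedge F(\nabla_t)^{p-1})\in F^pA^{2p-1}(Z\log D_Z)$ for every $t$, and so does its integral $\eta$ over $t\in[0,1]$. Integrating over the \emph{real} interval only removes $dt$, which carries no Hodge type. The loss of one Hodge level you have in mind occurs for integration over a complex fibre such as $\pp^1$, not here.

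Once $\eta\in F^p$, the difference of your two triples is $d_\calD$ of a cochain whose $F^p$-component is $\eta$; its other component is the exact correction from the triangle $\nabla_0,\nabla,\nabla'$ relating your two choices of $h$. Independence is then immediate. This pointwise type count is exactly the paper's argument, and neither strictness nor anything global enters this step.

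\textbf{Why your repair fails.} Strictness does let you write $d\eta=d\phi$ with $\phi\in F^p$. But the obstruction that remains is the class of the closed form $\eta-\phi$ in $H^{2p-1}(Z^o,\cc)/\bigl(F^pH^{2p-1}(Z^o,\cc)+\im H^{2p-1}(Z^o,\zz(p))\bigr)$. This is a generalized intermediate Jacobian, non-zero in general, and nothing in your argument controls it; there is no ``real error'' that can be absorbed into $h$.

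The obstruction is genuine already for $p=1$. Take an elliptic curve $Z$ with $E=\Oo_Z$, a holomorphic $1$-form $\omega$, and $c\in\cc$. The connections $d$ and $d+c\,\bar\omega$ are both flat and have the same $(1,0)$-part, so both satisfy the condition as you phrased it. Yet their triples differ by $(0,0,h)$ with $h$ a non-zero multiple of $c\,\bar\omega$. Its class in $H^1(Z,\cc)/(F^1+\text{integral classes})\cong\mathrm{Pic}^0(Z)$ is non-zero for generic $c$. So the statement holds precisely because $A$ has no $(0,1)$-part, and that fact has to enter through the direct type count above, not through a correction made afterwards.
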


\begin{proof}
Given two $F^1$-connections $\nabla_0$ and $\nabla_1$ on $E$, the difference
$A = \nabla_1 - \nabla_0\in A^1(Z;\End(E))$ satisfies $A^{1,0}\in F^1$.
The Bott-Chern formula:
\[
  \ch_p(E,\nabla_1) - \ch_p(E,\nabla_0) = d\eta
\]
where $\eta$ is a secondary class.  For $F^1$-connections, $\eta$ has the
property that $\eta^{p-1,p}\in F^1A^{2p-1}(Z^o\log D_Z)$, which means $\eta$
represents zero in $H^{2p-1}(Z^o,\cc)$ and hence the two classes agree in
Deligne cohomology.

More precisely, the form $\eta$ is computed by integrating along a path
$\{\nabla_t = \nabla_0 + tA\}_{t\in[0,1]}$:
\[
  \eta = p\int_0^1\Tr(A\wedge F(\nabla_t)^{p-1})\,dt.
\]
The $F^1$ condition on $A$ and on $F(\nabla_t)$ (which inherits this condition
from the path) ensures that $\eta$ is in the correct filtration piece to give
a boundary in the Deligne complex.
\end{proof}

\subsection{Comparison between $\nabla^{\Del}$ and the Rees connection}
\label{subsec:Del-Rees-compare}

Recall from Appendix~\ref{app:deligne-patch} that $\nabla^{\Del}$ denotes
\emph{Deligne's patched connection} on the canonical extension $F$: the
$\mathcal{C}^\infty$ connection obtained from the logarithmic connection
$\nabla$ by subtracting, on each $U_I$, the singular $\dlog$ terms of the
residues, and gluing the results with a partition of unity subordinate to the
adapted covering.  It is locally nil-flat in the sense of
Definition~\ref{def:locally-nilflat}, with respect to the monodromy weight
filtrations $W(I)$, but it is not flat; its construction and properties are
established in Appendix~\ref{app:deligne-patch}.  The extended Chern-Simons
classes of Definition~\ref{def:extended-CS} are the classes of this
connection.

We now give the key comparison between $\nabla^{\Del}$ and the
$F^1$-connection on the Rees bundle.

\begin{proposition}\label{prop:Del-Rees-compare}
Let $\pi: S\to X$ be the composition of the map $\iiii\Cube A\to X$ (the
homotopy equivalence from the cubical nerve to $X$, which we denote $\pi$).
Under $\pi^*$, the Chern-Simons class of $\nabla^{\Del}$ in $H^{2p-1}(X,\cc/\zz)$
maps to the Chern-Simons class of $\nabla$ (the Rees $F^1$-connection) in
$H^{2p-1}(S,\cc/\zz)$.
\end{proposition}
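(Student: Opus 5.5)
Since $\pi:S=\iiii\Cube A\to X$ is a homotopy equivalence (Propositions~\ref{prop:nerve-htpy} and~\ref{prop:cuberealize}), the plan is to compare the two classes on $S$, after pulling $F$ and $\nabla^{\Del}$ back along $\pi$. The starting point is the cube-by-cube identification of $\pi^*F$ with the Rees bundle $F_K$ restricted to the interval realization. On a cube $c$ with $\mu(c)=I$, both bundles carry the filtration $W(N_I)$ and the trivialization $\tau(I)$ of its graded. These come from Construction~\ref{constr:A-to-Xi} on the Deligne side and from Theorem~\ref{thm:global-rees} on the Rees side. This gives a topological isomorphism $\pi^*F\cong F_K|_S$ that is compatible with the face identifications of Proposition~\ref{prop:rees-glue}, built inductively over the skeleta of $\Cube A$. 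The extension step needs the space of filtered isomorphisms inducing the identity on $\Gr$ to be contractible; this holds because it is an affine space over the unipotent radical.

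Next, transport $\pi^*\nabla^{\Del}$ to $F_K|_S$ and join it to the Rees connection $\nabla^{\mathrm{can}}$ by the linear path
\[
\nabla_s=(1-s)\,\pi^*\nabla^{\Del}+s\,\nabla^{\mathrm{can}},\qquad s\in[0,1].
\]
On each cube $\iiii(c)$, both endpoints preserve the same filtration $W(\mu(c))$: for $\pi^*\nabla^{\Del}$ this is Definition~\ref{def:locally-nilflat} on the corresponding $U_I$, and for $\nabla^{\mathrm{can}}$ it is Proposition~\ref{prop:patching-monotone}. Both also induce connections on the graded for which $\tau(\mu(c))$ is flat. Hence every $\nabla_s$ preserves $W(\mu(c))$, and on the graded it induces a convex combination of flat connections with the same flat frame, i.e. flat connections whose only difference is the closed forms $\sum_j n_j\,\dlog(1-y_j)$. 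These forms are scalar on each graded piece, so on the graded pieces
\[
F(\nabla_s)=d\bigl(s\,\textstyle\sum_j n_j\,\dlog(1-y_j)\bigr)=0,
\]
because the forms are closed. This is consistent with Remark~\ref{rem:log-conn}. It follows that each $\nabla_s$ is locally nil-flat, after refining covers near cube boundaries.

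By Proposition~\ref{prop:nilflat-ch-zero}, $\ch_p(\nabla_s)=0$ for all $s$. The total connection $\widetilde\nabla$ on $S\times[0,1]$ is then also locally nil-flat, with respect to the pulled-back filtrations, once the $ds$-component is included: the difference $\nabla^{\mathrm{can}}-\pi^*\nabla^{\Del}$ preserves $W$ and is strictly triangular on $\Gr$ up to scalar closed terms. Its Chern--Weil form therefore vanishes identically, so the transgression
\[
\int_0^1 p\,\Tr\bigl((\nabla^{\mathrm{can}}-\pi^*\nabla^{\Del})\wedge F(\nabla_s)^{p-1}\bigr)\,ds
\]
is zero as a form, using the same strict-triangularity argument. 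Consequently the differential characters $\widehat{\ch}_p$ of the two endpoints agree. As flat characters, these are the classes in $H^{2p-1}(S,\cc/\zz)$, and the proposition follows. The one point needing care is that the scalar graded term of $\nabla^{\mathrm{can}}-\pi^*\nabla^{\Del}$, paired with $F(\nabla_s)^{p-1}$, still contributes zero trace. Since $F(\nabla_s)$ is strictly triangular, the product is strictly triangular and so traceless; this disposes of it.

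The main obstacle is the first step, namely the compatibility of $\pi$ with the filtrations. Near the boundaries of cubes, $\pi^*\nabla^{\Del}$ is only nil-flat with respect to the $W(I)$ of the open sets it was patched over, whereas the Rees side uses $W(\mu(c))$. I would address this using the monotonicity in Proposition~\ref{prop:patching-monotone} together with the sequential compatibility of Lemma~\ref{lem:W-commute}, which makes the finest filtration $W(N_{I_m})$ on an overlap preserved by both connections. If a global topological isomorphism of bundles proves awkward, the fallback is to compare the classes via Theorem~\ref{thm:F1-Deligne}, treating both connections as $F^1$-connections on the projective cubes and descending along the edge map.
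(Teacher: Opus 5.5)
Your first step is the paper's: the paper identifies $\pi^*F$ with $F^\sigma|_{\iiii\Cube A}$ as bundles with patching data via Theorem~\ref{thm:uniqueness}. Your skeletal induction uses the fact that filtered isomorphisms inducing the identity on $\Gr$ form a torsor under a unipotent, hence contractible, group. That is a more accurate version of the proof of that theorem, which instead asserts that these automorphisms are trivial.

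The second step is genuinely different. The paper concludes by citing the Dupont--Hain--Zucker independence theorem (Theorem~\ref{thm:DHZ}) for all connections compatible with the patching data. You instead join the two connections by a linear path and show, cube by cube, that the Chern--Simons transgression form vanishes identically by triangularity with respect to $W(\mu(c))$. Your route is better adapted to the statement: it stays with differential characters on $S$ and lands directly in $H^{2p-1}(S,\cc/\zz)$. Theorem~\ref{thm:DHZ}, by contrast, concerns Deligne classes of $F^1$-connections over proper varieties. $\iiii\Cube A$ is a topological cubical complex where neither notion makes sense, so reaching $\cc/\zz$-classes that way needs the projective completion, the $\aaa$-realization and injectivity of $\alpha$ (Proposition~\ref{prop:DB-A-realization}). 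What the DHZ route buys is uniformity over all $F^1$-connections, which the paper uses elsewhere.

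There is, however, a genuine gap in your treatment of the graded parts. You assert two things that cannot both hold, since a flat frame determines the connection:
\begin{itemize}
\item both endpoints induce connections on $\Gr^{W(\mu(c))}$ for which $\tau(\mu(c))$ is flat;
\item they differ there by the scalar forms $\sum_j n_j\,\dlog(1-y_j)$.
\end{itemize}
The scalar terms are present for $\nabla^{\mathrm{can}}$ as literally defined in \S\ref{subsec:canconn}, where the form is $\sum_l j_l\,\dlog(1-y_l)$ in the very frame $\{s_{\mathbf j}\}$ that defines $\tau(c)$. The paper's Propositions~\ref{prop:can-conn-compatible} and~\ref{prop:rees-can-nilflat} gloss over this. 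With those terms present, your argument fails for three reasons:
\begin{itemize}
\item $\dlog(1-y_j)$ has a pole along $Q_j=\{y_j=1\}$ (Lemma~\ref{lem:rees-log-poles}), which meets $\iiii(c)=[0,1]^{|c|}$ in a face. So $\nabla^{\mathrm{can}}$, and every $\nabla_s$ with $s>0$, is singular on $S$ and has no Cheeger--Simons character there.
\item The total connection on $S\times[0,1]$ has curvature $ds\wedge\omega\,\mathrm{Id}\neq 0$ on the graded pieces, so it is not locally nil-flat.
\item For $p=1$ the integrand $\Tr(\dot\nabla_s)$ need not vanish.
\end{itemize}

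The repair is to compare $\pi^*\nabla^{\Del}$ not with $\nabla^{\mathrm{can}}$ but with a connection inducing on $\Gr^{W(\mu(c))}$ exactly the $\tau(c)$-flat connection. That is the trivial connection in the frame $\{s_{\mathbf j}\}$, which is smooth on $\aaa(c)\supset\iiii(c)$. This is precisely the compatibility required of the connection $\nabla$ of Theorem~\ref{thm:F1-exists}, the one the statement names. On the Deligne side, cite Lemma~\ref{lem:Del-compatible}: $\nabla^{\Del}$ preserves $W(I)$ on all of $U_I$ and induces $\tau(I)$. Definition~\ref{def:locally-nilflat} only supplies some filtration on some cover. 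With both endpoints inducing the same graded connection, $\nabla-\pi^*\nabla^{\Del}$ is strictly triangular for $W(\mu(c))$ on each closed cube, so the transgression integrand is traceless for every $p\geq 1$. Your argument then goes through, provided the skeletal induction ensures that on each face $\iiii(c')$ the isomorphism respects $W(\mu(c))$ for every cube $c\supseteq c'$, which is the point you rightly single out at the end.
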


\begin{proof}
The pullback $\pi^*F$ of the Deligne canonical extension to $S$ is a bundle
with a pre-patching collection induced by $\pi$.  By the uniqueness theorem
(Section~\ref{subsec:uniqueness}) for bundles with a given patching functor, up
to isomorphism compatible with the patching data there is a unique bundle on
$\iiii\Cube A$ with the given functor $\Cube A\to\Xi$.  Both $\pi^*F$ and the
restriction $F^\sigma|_{\iiii\Cube A}$ have the same functor $\Cube A\to\Xi$
(namely the composition $\Cube A\xrightarrow\mu A\to\Xi$), so they are
isomorphic as bundles with patching data.

Under this isomorphism, both $\pi^*\nabla^{\Del}$ and $\nabla|_{\iiii\Cube A}$
are connections compatible with the patching collection.  By
Theorem~\ref{thm:DHZ} applied to the simplicial scheme $\iiii\Cube A$, all
such connections define the same class.
\end{proof}

\subsection{Uniqueness of bundles with given patching data}
\label{subsec:uniqueness}

\begin{theorem}\label{thm:uniqueness}
Let $C$ be a cubical set and $\phi: C\to\Xi$ a functor.  There is, up to
isomorphism, a unique $\mathcal{C}^\infty$ vector bundle $E$ on $\iiii C$
with a stratified collection of patching data defining the functor $\phi$.
\end{theorem}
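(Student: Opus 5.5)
Existence is essentially the Rees construction. Take the local Rees bundles $F(c)$ of Theorem~\ref{thm:global-rees}, built from the functor $\phi$, and restrict them from $\pp_K(c)$ to the real cube $\iiii(c)\subset\aaa(c)(\rr)$. Proposition~\ref{prop:rees-glue} gives the gluing isomorphisms on faces, and they satisfy the cocycle condition. Over $\aaa(c)$ the sections $(1-y_i)^{j_i}$ supply the patching data $(W,\tau)$. So the real issue is uniqueness. Fix two bundles $E,E'$ on $\iiii C$ carrying stratified patching data that both define $\phi$. I will build an isomorphism $E\cong E'$ compatible with the patching data, by induction over the cubes of $C$ ordered by dimension, extending across one cube at a time.

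\textbf{Step 1 (vertices and local model).} Fix a vertex $v$, a $0$-cube. There the patching datum is a filtration of $E_v$ together with a trivialization of $\Gr^{W}(E_v)$ by the graded space $\phi(\mu(v))$, and the same holds for $E'_v$. Choose splittings of both filtrations. These give an isomorphism $E_v\cong E'_v$ that preserves $W$ and induces the identity on $\Gr^W$ under $\tau,\tau'$. The set of such isomorphisms is a torsor under the unipotent group $U_W=\{g:\ (g-1)W_k\subset W_{k-1}\}$. This group is contractible, being an affine space $\exp$ of a nilpotent Lie algebra.

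\textbf{Step 2 (extension over cubes).} Suppose an isomorphism compatible with patching is already defined on the boundary $\partial\iiii(c)$, and we want to extend it over $\iiii(c)$. On $\iiii(c)$, both $E$ and $E'$ carry the patching datum attached to $\mu(c)$. By Proposition~\ref{prop:patching-monotone}, on each face this datum is refined by, and induces, the face data. Compatible isomorphisms over $\iiii(c)$ are sections of a bundle whose fibres are torsors under the group $P_c$. Here $P_c$ is the group of automorphisms that preserve all filtrations $W(c;a)$, $a\in\mathrm{Supp}(c)$, and act as the identity on the appropriate graded pieces. Because these filtrations are locally abelian (they admit a common splitting, Lemma~\ref{lem:W-commute}), $P_c$ is again a unipotent group, hence contractible. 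The boundary condition asks, face by face, for a section of a sub-torsor under a subgroup $P_{c'}\supset P_c$; each of these subgroups is also unipotent. The extension problem is therefore to extend a section of a fibration with contractible fibre from $\partial\iiii(c)$ to the disc $\iiii(c)$. That is always possible, since a contractible fibre gives no obstruction in any $\pi_{\dim c-1}$.

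\textbf{Step 3 (assembly).} Carry out Step 2 inductively on $\dim c$. Because $C$ is finite, or locally finite, the result is a global $\mathcal{C}^\infty$ isomorphism $E\cong E'$ respecting the patching data. The same argument, run one dimension higher on $\iiii C\times[0,1]$, shows the isomorphism is unique up to homotopy, though the statement does not require this.

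The main obstacle is Step 2. There one must check that the torsors really form a locally trivial fibration over $\iiii(c)$ with contractible fibre, and that the boundary data coming from different faces are consistent on their overlaps, that is, on lower faces. This is exactly where strictness is used: two faces can carry incomparable refinements, and Proposition~\ref{prop:patching-monotone} together with the functoriality of $\phi$ (compositions in $\Xi$ are joins of filtrations) guarantees they are compatible. I would first verify this consistency in the $2$-dimensional case, the square picture of \S\ref{sec:covering}, and then argue that the general case is formally the same, because the joins are computed in the common splitting.
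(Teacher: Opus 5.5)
Your proof has the same architecture as the paper's: existence by the Rees construction, and uniqueness by induction over the cubes of $\iiii C$ starting from the vertices. The key step differs, however, and yours is the one that holds.

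The paper asserts two rigidity facts. First, at a vertex the trivializations $\tau$ identify the two fibres canonically. Second, over higher cubes the automorphisms permitted by the patching data are upper-triangular and ``compatible with the grading, hence trivial''. As you observe, $\tau$ only trivializes $\Gr^W$, so the vertex identification is canonical only where the filtration there is trivial. Moreover, the isomorphisms compatible with $(W(c),\tau(c))$ form a torsor under the unipotent radical $U_{W(c)}$. That group is non-trivial (already for $0\subset L\subset\cc^2$ it is a copy of $\cc$), but it is contractible. Your route therefore proves what the statement asserts: an isomorphism exists, and it is even unique up to homotopy. The paper's wording would give a canonical isomorphism, which is more than is true. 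Your existence step is the paper's, and it shares the paper's limitation: Theorem~\ref{thm:global-rees} is only set up for $C=\Cube A$ and $\phi=U\circ\mu$.

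Several points in Step~2 need tidying.
\begin{enumerate}
\item The inclusion of groups goes the other way. Since $W(c)|_{\iiii(c')}$ refines $W(c')$, one has $U_{W(c')}\subseteq U_{W(c)}$, and hence $T_{c'}\subseteq T_c|_{\iiii(c')}$ for the torsors of compatible isomorphisms.
\item That inclusion is all you need to see that the map already built on $\partial\iiii(c)$ is a section of $T_c$. It follows from the refinement axiom: the extra steps of $W(c)$ are $\tau(c')$-flat. Through $\tau(c')$ they correspond to the filtration and grading isomorphism of the morphism $\phi(c')\to\phi(c)$ in $\Xi$, and this correspondence is identical for $E$ and $E'$.
\item Agreement between different faces is then automatic, because the map on the lower skeleton is already single-valued. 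So nothing about ``incomparable refinements'' needs to be checked.
\item Only the single filtration $W(c)$ carried by $\iiii(c)$ enters. Lemma~\ref{lem:W-commute}, which concerns monodromy weight filtrations, plays no role for an abstract $\phi$.
\item No obstruction theory is needed. Smooth splittings give a reference section $g_0$ of $T_c$. The map $g\mapsto\log(g_0^{-1}g)$ then identifies $T_c$ with the vector bundle of endomorphisms strictly lowering $W(c)$, so the extension problem is linear.
\item At a vertex $v$ the graded space is $\phi(v)$, not $\phi(\mu(v))$, since $\mu$ is not defined for a general cubical set.
\end{enumerate}
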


\begin{proof}
\textit{Existence} is given by the Rees construction (Theorem~\ref{thm:global-rees}).

\textit{Uniqueness:} Given two bundles $E$ and $E'$ with patching functors
both equal to $\phi$, we construct an isomorphism by induction on the skeleta
of $\iiii C$.  Over the $0$-skeleton (the vertices), the fibers $E(c)$ and
$E'(c)$ are both identified with $U(\mu(c))$ via the trivializations $\tau(c)$,
so there is a canonical isomorphism.

Over higher-dimensional cubes, the patching data determines the bundle up to
automorphisms that are upper-triangular with respect to the filtration.  Since
the filtrations come from a common functor $\phi$, these automorphisms are
further constrained to be compatible with the grading, hence trivial.
\end{proof}

\subsection{The Chern-Simons class as a secondary characteristic class}
\label{subsec:CS-secondary}

Let us elaborate on the definition of the Chern-Simons class as a differential
character.

\begin{definition}
Let $(E,\nabla)$ be a $\mathcal{C}^\infty$ complex vector bundle with connection
on a compact manifold $X$.  The \emph{Chern-Simons class} is the element
\[
  \CS_p(E,\nabla) \;\in\; H^{2p-1}(X,\cc/\zz)
\]
defined as follows: choose any smooth form $\alpha\in A^{2p-1}(X;\rr)$ such
that $d\alpha = \ch_p^{\mathrm{nm}}(\nabla)$ (possible if
$\ch_p^{\mathrm{nm}}(\nabla)$ is exact, as happens when $E$ is trivial or
when the representation is flat), where $\ch_p^{\mathrm{nm}}$ is the
normalized Chern character form of \S\ref{sec:volume} (\emph{not} the
un-normalized $\ch_p=\Tr(F^p)$ of Lemma~\ref{lem:bott-chern}): this is the
choice that makes the map to $c_p(E)\in H^{2p}(X,\zz)$ below the genuine,
untwisted integral Chern class rather than a class valued in the twisted
lattice $\zz(p)=(2\pi i)^p\zz$.  The class of $\alpha\pmod{\zz}$ is
independent of the choice.  More precisely, via the long exact sequence
\[
  \cdots\to H^{2p-1}(X,\rr)\to H^{2p-1}(X,\cc/\zz)\to H^{2p}(X,\zz)\to\cdots
\]
the class $\CS_p$ maps to the integral Chern class $c_p(E)\in H^{2p}(X,\zz)$.
\end{definition}

For a flat bundle, $c_p(E)_\rr = 0$, so $\CS_p$ lifts to $H^{2p-1}(X,\cc/\rr)
\cong H^{2p-1}(X,\rr/\zz)$.  The volume regulator is the imaginary part of $\CS_p$ under the
splitting $\cc/\zz = \rr/\zz\oplus i\rr$ (Lemma~\ref{lem:CZ-splitting}):
$\vol_p(\nabla) = \Im_{\cc/\zz}^*(\CS_p)\pmod{\zz}$, lifted to $H^{2p-1}(X,\rr)$
via the map $H^{2p-1}(X,\rr)\to H^{2p-1}(X,\rr/\zz)$.

\subsection{Explicit formula for the volume regulator via a path integral}

\begin{proposition}\label{prop:vol-explicit}
Let $\nabla$ be a locally nil-flat connection on $E$, and let $h$ be a
Hermitian metric on $E$.  Write $\nabla = \nabla_h + A_h$ where $\nabla_h$ is
the Chern connection for $(E,h)$ and $A_h$ is an $\End(E)$-valued $1$-form.
Define the path $\nabla_t := \nabla_h + tA_h$ for $t\in[0,1]$.  Then
\[
  \vol_p(\nabla) = \left[\Im\,\frac{p}{p!\,(2\pi i)^p}
  \int_0^1\Tr(A_h\wedge F(\nabla_t)^{p-1})\,dt\right]
  \;\in\; H^{2p-1}(X,\rr).
\]
\end{proposition}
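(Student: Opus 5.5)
The formula should follow from computing $\CS_p(\nabla)$ by a transgression from a reference connection whose secondary class has no imaginary part. The Chern connection $\nabla_h$ is such a reference: it is unitary, so its Chern--Simons character is real-valued, and its imaginary component vanishes at the level of differential characters. Concretely, I will use the Cheeger--Simons variation formula. For two connections $\nabla_0,\nabla_1$ on $E$ joined by a smooth path $\nabla_t$,
\[
  \widehat{\ch}_p(\nabla_1)-\widehat{\ch}_p(\nabla_0) \;=\; \Bigl[\,\tfrac{p}{p!(2\pi i)^p}\int_0^1 \Tr\bigl(\dot\nabla_t\wedge F(\nabla_t)^{p-1}\bigr)\,dt\Bigr] \pmod{\text{exact}}.
\]
This is the identity $d\eta=\ch^{\mathrm{nm}}_p(\nabla_1)-\ch^{\mathrm{nm}}_p(\nabla_0)$ from the proof of Theorem~\ref{thm:F1-Deligne}, upgraded to differential characters in the standard way. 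Applying it to the affine path $\nabla_t=\nabla_h+tA_h$, where $\dot\nabla_t=A_h$, gives exactly the form $v_p$ in the displayed integrand.

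The steps are as follows. First, Proposition~\ref{prop:nilflat-ch-zero} gives $\ch^{\mathrm{nm}}_p(\nabla)=0$ as a form. So $\widehat{\ch}_p(\nabla)$ is a flat character, and it defines $\CS_p(\nabla)\in H^{2p-1}(X,\cc/\zz)$. Second, $\ch^{\mathrm{nm}}_p(\nabla_h)$ is a real form: the curvature of a unitary connection is skew-hermitian, so $\Tr(F^p)/(2\pi i)^p$ is real. Hence $\widehat{\ch}_p(\nabla_h)$ is an $\rr/\zz$-valued character, refined by a real form. Third, the variation formula gives $\widehat{\ch}_p(\nabla)=\widehat{\ch}_p(\nabla_h)+[v_p]$. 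The $\cc/\zz$-valued character then decomposes under Lemma~\ref{lem:CZ-splitting} as $\cc/\zz\cong\rr/\zz\oplus i\rr$.

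Taking imaginary parts, the imaginary component of $\widehat{\ch}_p(\nabla)$ is represented by the honest real form $\Im v_p$, since $\widehat{\ch}_p(\nabla_h)$ contributes nothing. The relation $dv_p=\ch^{\mathrm{nm}}_p(\nabla)-\ch^{\mathrm{nm}}_p(\nabla_h)=-\ch^{\mathrm{nm}}_p(\nabla_h)$ is real. Therefore $d(\Im v_p)=0$, and $\Im v_p$ defines a class in $H^{2p-1}(X,\rr)$. By definition of $\vol_p$ this class equals $\vol_p(\nabla)$.

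The main obstacle is making the imaginary projection rigorous at the level of characters rather than classes. I will check that the map $\Im:\widehat H^{2p}(X;\cc/\zz)\to\widehat H^{2p}(X;i\rr)$ sends a character with a real curvature form to one whose $i\rr$-part is determined by a global form, with no $\zz$-ambiguity since $\Im(\zz)=0$. With that in place, the class $[\Im v_p]$ is well defined and independent of representative up to exact forms. Independence of $h$ follows by the same variation argument applied to a path of metrics, since all intermediate Chern connections are unitary.
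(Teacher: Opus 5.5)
Your argument is correct, but it takes a different and longer route than the paper. In the paper the statement is a one-line unwinding of Definition~\ref{def:vol-p-normalized}. There $\vol_p(\nabla)$ is \emph{defined} as $[\Im\,v_p^{\mathrm{nm}}(\{\nabla_t\})]$ for any path from a unitary connection to $\nabla$. So one takes the path $\nabla_h+tA_h$ starting at the $h$-unitary connection $\nabla_h$, notes $\dot\nabla_t=A_h$, and uses Lemma~\ref{lem:normalized-formula} for the constant $\frac{p}{p!(2\pi i)^p}$. Closedness and independence of choices are delegated to Propositions~\ref{prop:CS-closed-nm} and~\ref{prop:vol-indep}. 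Read with that definition, your final sentence already is the whole proof.

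What your detour through differential characters actually proves is the formula for the other description, $\vol_p=\Im_{\cc/\zz}\CS_p(\nabla)$ of Lemma~\ref{lem:CZ-splitting}. It does so via Proposition~\ref{prop:transgression} and the vanishing of the imaginary part of the unitary character. This has genuine value. It identifies the right-hand side with an intrinsic invariant of $\nabla$, so independence of $h$ and of the path comes for free, and your closing argument with a path of metrics becomes unnecessary. It also proves, rather than asserts, that the paper's two definitions of $\vol_p$ agree.

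One justification needs repair. The ``Hence'' in your second step does not follow from the reality of $\ch_p^{\mathrm{nm}}(\nabla_h)$. A real curvature form only makes $\Im\,\widehat{\ch}_p(\nabla_h)$ a class in $H^{2p-1}(X,\rr)$, not zero: a flat line bundle with non-unitary holonomy has zero curvature but a character with nonzero imaginary part. The vanishing comes from unitarity at the level of characters. Reduce the structure group to $U(r)$ and compute the character through a classifying map carrying the universal Hermitian connection, whose Chern character form is real. This is what your opening paragraph correctly invokes.
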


\begin{proof}
This is Definition~\ref{def:vol-p-normalized}, with the explicit path
$\nabla_t = \nabla_h + tA_h$ from the Chern connection $\nabla_0 = \nabla_h$
to $\nabla_1 = \nabla$. The derivative $\dot\nabla_t = A_h$ is constant in
$t$.
\end{proof}

\begin{corollary}\label{cor:vol-upper-triangular}
If $\nabla$ is locally nil-flat with respect to a filtration $W^\hdot E$, and
if there exists a Hermitian metric $h$ such that:
\begin{enumerate}[label=\rm(\arabic*)]
  \item $\nabla_h$ preserves $W^\hdot E$;
  \item $A_h = \nabla - \nabla_h$ is strictly upper-triangular with respect
    to $W^\hdot E$;
\end{enumerate}
then $\vol_p(\nabla) = 0$.
\end{corollary}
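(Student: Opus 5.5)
The plan is to apply the explicit path-integral formula of Proposition~\ref{prop:vol-explicit} to the straight-line path $\nabla_t=\nabla_h+tA_h$ from the Chern connection to $\nabla$. We then show that the integrand $\Tr(A_h\wedge F(\nabla_t)^{p-1})$ vanishes identically as a differential form. That gives $\vol_p(\nabla)=0$ already at the level of forms, with no cohomological argument needed. The work is local, on an open set where the filtration $W^\hdot E$ is defined. Since the transgression form is built globally from $\nabla_h$ and $A_h$, pointwise vanishing on each $U_i$ suffices.

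First, fix a local frame adapted to $W^\hdot E$, so that endomorphisms preserving $W$ are block upper-triangular. Hypothesis (1) says $\nabla_h$ preserves $W^\hdot E$, so its connection matrix is block upper-triangular. Hypothesis (2) says $A_h$ is strictly block upper-triangular, i.e.\ it maps $W_k$ into $W_{k-1}$. Hence $\nabla_t$ preserves $W$ for every $t$, and its curvature
\[
F(\nabla_t)=F(\nabla_h)+t\,[\nabla_h,A_h]+t^2A_h\wedge A_h
\]
is block upper-triangular.

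Second, observe that the product of one strictly block upper-triangular matrix of forms with any number of block upper-triangular ones is again strictly block upper-triangular. This is the same argument used in Proposition~\ref{prop:nilflat-ch-zero}, applied with respect to one fixed filtration. Therefore $A_h\wedge F(\nabla_t)^{p-1}$ has zero diagonal blocks, and its trace vanishes. Integrating in $t$ gives a zero transgression form, so its imaginary part, and hence $\vol_p(\nabla)$, is zero.

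The main obstacle is globalization. Hypotheses (1) and (2) are stated for a single filtration $W^\hdot E$, but locally nil-flat connections only come with local filtrations $W_i^\hdot$ on a cover. The fix is to read the corollary as requiring (1) and (2) for each $W_i^\hdot$ on $U_i$, with a single global metric $h$. The decomposition $\nabla=\nabla_h+A_h$ is global, so the local vanishing of the trace glues to global vanishing of the form. We also need to confirm that the sign and normalization conventions of Definition~\ref{def:vol-p-normalized} agree with those of Proposition~\ref{prop:vol-explicit}, since the argument only uses that the transgression is a multiple of the trace above.
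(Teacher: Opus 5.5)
Your argument is correct and is essentially the paper's proof. You take the straight-line path $\nabla_t=\nabla_h+tA_h$ of Proposition~\ref{prop:vol-explicit}, observe that $F(\nabla_t)$ preserves $W^\hdot E$ while $A_h$ strictly lowers it, and conclude that $\Tr(A_h\wedge F(\nabla_t)^{p-1})$ vanishes identically as a form. Your extra remarks are harmless and consistent with the paper: on globalization, the vanishing is pointwise, so adapted local frames suffice; on normalization, Proposition~\ref{prop:vol-explicit} is just Definition~\ref{def:vol-p-normalized} along this path, with $\nabla_h$ unitary.
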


\begin{proof}
Under the two conditions, $F(\nabla_t) = F(\nabla_h + tA_h)$.  Both $F(\nabla_h)$
(which preserves filtration steps) and $A_h$ (strictly upper-triangular) make
the product $A_h\wedge F(\nabla_t)^{p-1}$ of trace zero: the trace of a
product of matrices where one is strictly upper-triangular is zero if the other
matrices are upper-triangular.  Hence the form is zero.
\end{proof}

\subsection{Proof that the filtration poset is contractible: a topological perspective}
\label{subsec:filt-top}

We elaborate on the topological argument used in the proof of
Theorem~\ref{thm:filt-contractible}.

The poset $\Filt^{A,B}$ is a partially ordered set, and we want to show
$|N\Filt^{A,B}|$ is contractible.  We use the following general criterion:

\begin{lemma}[Quillen's Fiber Lemma]\label{lem:quillen-fiber}
Let $f:P\to Q$ be a functor between posets (order-preserving map).  If for
every $q\in Q$ the fiber poset $f/q := \{p\in P : f(p)\geq q\}$ is
contractible, and if $Q$ is contractible, then $P$ is contractible.
\end{lemma}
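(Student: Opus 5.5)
The argument is Quillen's Theorem~A specialised to posets. I would not route it through a general comma-category statement. Instead I would run the standard bisimplicial argument directly, because every fiber that occurs can then be written down explicitly. Throughout, a poset is viewed as a category with one arrow $p\to p'$ whenever $p\leq p'$. For each $q\in Q$ the comma category $q\downarrow f$ is then the subposet $f/q=\{p\in P: f(p)\geq q\}$ of the statement. The aim is to prove that $|NP|\simeq|NQ|$; since $Q$ is contractible, this gives the lemma.

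\textbf{Step 1 (the bisimplicial set).} Define a bisimplicial set $T$ whose $(m,n)$-simplices are pairs of chains
\[
  (p_0\leq\cdots\leq p_m,\; q_0\leq\cdots\leq q_n),\quad p_i\in P,\ q_j\in Q,\qquad\text{with } q_n\leq f(p_0).
\]
The faces and degeneracies are the usual ones in each variable. They preserve the linking condition: in the $p$-direction, deleting $p_0$ replaces it by $p_1\geq p_0$, and $f$ is monotone, so $f(p_1)\geq f(p_0)\geq q_n$. In the $q$-direction, deleting $q_n$ replaces it by the smaller element $q_{n-1}$. There are two augmentations, $T\to NP$ (forget the $q$-chain) and $T\to NQ$ (forget the $p$-chain), each regarded as a map of bisimplicial sets constant in one direction.

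\textbf{Step 2 (fibers over $Q$).} Fix $n$ and a $q$-chain $q_0\leq\cdots\leq q_n$. The simplicial set in the $m$-direction lying over it is exactly $N(f/q_n)$, which is contractible by hypothesis. Therefore, for each $n$, the map $T_{\hdot,n}\to N_nQ$ (the target being a discrete simplicial set) is a weak equivalence.

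\textbf{Step 3 (fibers over $P$).} Fix $m$ and a $p$-chain. The simplicial set in the $n$-direction lying over it is the nerve of $Q_{\leq f(p_0)}$. This poset has a maximum, so its nerve is a cone and is contractible. Hence, for each $m$, the map $T_{m,\hdot}\to N_mP$ is a weak equivalence.

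\textbf{Step 4 (conclusion, and the main obstacle).} I would now apply the realization lemma: a map of bisimplicial sets which is a weak equivalence in each column (respectively, each row) induces a weak equivalence on diagonals. Steps 2 and 3 then give
\[
  |NQ|\xleftarrow{\;\sim\;}|\mathrm{diag}\,T|\xrightarrow{\;\sim\;}|NP|.
\]
Since $|NQ|$ is contractible, so is $|NP|$.

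The only non-formal input is the realization lemma. Its proof is a skeletal induction: the realization of a bisimplicial set is built by gluing the pieces $|T_{m,\hdot}|\times\Delta^m$ along latching objects, and the gluing lemma propagates levelwise equivalences through these homotopy pushouts. I would cite this lemma rather than reprove it.

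The one point where orientation matters is the direction of the condition $q_n\leq f(p_0)$. It has to be chosen so that both the face maps in Step 1 are well defined and the fibers are $f/q$ rather than $\{p: f(p)\leq q\}$. I would check this convention first, since a mismatch there is the likeliest source of error.
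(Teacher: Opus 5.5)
Your argument is correct. The paper's proof is only the one-line citation of Quillen's Theorem~A, and what you have written is Quillen's own bisimplicial proof of Theorem~A specialised to posets, with the linking condition $q_n\leq f(p_0)$ oriented so that the fibres over $NQ$ are exactly the paper's posets $f/q$.

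So the approach is the same as the paper's, just made self-contained modulo the realization lemma. It also yields the stronger conclusion $|NP|\simeq|NQ|$, with contractibility of $Q$ used only in the final line.
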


This is a special case of Quillen's Theorem~A \cite{Quillen}.

We apply this as follows.  Consider the functor
\[
  f: \Filt^{A,B}\to\mathcal{L}(B),\quad \calF\mapsto F_{\min}(\calF),
\]
where $\mathcal{L}(B)$ is the poset of non-zero subspaces of $B$, ordered
by reverse inclusion ($L\leq L'$ iff $L\supset L'$).  The fiber $f/L$ for
$L\in\mathcal{L}(B)$ consists of those filtrations whose first element
\emph{contains} $L$, which is exactly $\Filt^{A,B}[L\subset\mathrm{first}]$.

We showed (Step 4 of the proof) that each such fiber is contractible.  We also
need $\mathcal{L}(B)$ to be contractible.  The poset $\mathcal{L}(B)$ (non-zero
subspaces of $B$ by reverse inclusion) has a maximum element $B$ itself, hence
is contractible.

Applying Quillen's Fiber Lemma gives the contractibility of $\Filt^{A,B}$.

\begin{remark}
The above argument is cleaner than the direct Mayer-Vietoris argument we gave
in Section~\ref{sec:patching}.  However, the direct argument has the advantage
of being more self-contained and not requiring Quillen's theorem.
\end{remark}

\subsection{The indefinite case: detailed computation}
\label{subsec:indefinite-detail}

We give a more detailed treatment of the vanishing argument for connections
preserving an indefinite Hermitian form.

Let $g$ be an indefinite Hermitian form of signature $(r_+,r_-)$ on $E$,
and let $h = h_+\oplus h_-$ be a positive definite metric compatible with
a splitting $E = E_+\oplus E_-$ such that $g|_{E_+} = h_+$ and
$g|_{E_-} = -h_-$.

\begin{lemma}\label{lem:g-preserving-form}
A connection $\nabla$ preserves $g$ if and only if, writing
$\nabla = \begin{pmatrix}\nabla_+&\beta\\-\beta^*&\nabla_-\end{pmatrix}$
in the splitting $E = E_+\oplus E_-$, the connections $\nabla_\pm$ preserve
$h_\pm$ and $\beta\in A^1(X;\Hom(E_-,E_+))$ is an arbitrary $1$-form.
Here $\beta^*$ is the $h_\pm$-adjoint of $\beta$.
\end{lemma}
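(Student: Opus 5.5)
The statement to prove is Lemma~\ref{lem:g-preserving-form}, and the proof is a direct block computation. Write $\nabla=\nabla^0+\Phi$, where $\nabla^0=\nabla_{++}\oplus\nabla_{--}$ is the block-diagonal part of $\nabla$ with respect to $E=E_+\oplus E_-$. Write the off-diagonal part as
\[
\Phi=\begin{pmatrix}0&\beta\\ \gamma&0\end{pmatrix},\qquad \beta\in A^1(X;\Hom(E_-,E_+)),\quad \gamma\in A^1(X;\Hom(E_+,E_-)).
\]
In the splitting, $g=h_+\oplus(-h_-)$, that is, $g(s,t)=h(Js,t)$ with $J=\mathrm{diag}(1,-1)$.

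The condition that $\nabla$ preserves $g$ is the identity
\[
d\,g(s,t)=g(\nabla s,t)+g(s,\nabla t)\qquad\text{for all sections } s,t.
\]
It is $\cc$-sesquilinear in $(s,t)$, so we may test it separately on the four pairs with $s\in E_\pm$ and $t\in E_\pm$.

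First take $s,t\in E_+$. Then $g(s,t)=h_+(s,t)$. The $E_-$-component $\gamma s$ of $\nabla s$ pairs to zero with $t$, because $E_+\perp_g E_-$. So the condition reduces to $d\,h_+(s,t)=h_+(\nabla_{++}s,t)+h_+(s,\nabla_{++}t)$, which says exactly that $\nabla_{++}$ preserves $h_+$.

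Next take $s,t\in E_-$. The same argument gives $-d\,h_-(s,t)=-h_-(\nabla_{--}s,t)-h_-(s,\nabla_{--}t)$, so $\nabla_{--}$ preserves $h_-$.

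Now take the mixed case $s\in E_-$, $t\in E_+$. Here $g(s,t)=0$, so the condition becomes
\[
0=g(\beta s,t)+g(s,\gamma t)=h_+(\beta s,t)-h_-(s,\gamma t).
\]
This says $\gamma$ equals the $h_\pm$-adjoint $\beta^*$ of $\beta$. Since the relevant adjoint here is taken for $g$ rather than $h$, the sign makes it $-\beta^*$ in the block matrix. The remaining mixed case $s\in E_+$, $t\in E_-$ is the complex conjugate of this one and gives nothing new.

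In this last step the form parts must be handled with care. Pairings of $1$-forms are taken with the convention that $h(\omega\otimes s,t)=\omega\,h(s,t)$ and $h(s,\omega\otimes t)=\bar\omega\,h(s,t)$. Accordingly, $\beta^*$ means the adjoint on the endomorphism part combined with conjugation of the form part. The sign bookkeeping of this convention is the only real care needed; with it, the computation shows $\gamma=-\beta^*$.

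Conversely, the same four computations run backwards show that any $\nabla$ of the stated form preserves $g$, for an arbitrary $\beta$. In the forward direction $\beta$ was also unconstrained, since the mixed condition fixes $\gamma$ in terms of $\beta$ but places no condition on $\beta$ itself. So $\beta$ is indeed an arbitrary $1$-form, which completes the equivalence.
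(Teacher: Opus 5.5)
Your route, testing the compatibility identity on the four pairs of summands, is the block computation that the paper's one-line proof asserts. You handle the diagonal blocks, the reduction by sesquilinearity, and the redundancy of the second mixed case correctly. The gap is the sign in the mixed case. Your own computation gives
\[
  h_+(\beta s,t)\;=\;h_-(s,\gamma t)\qquad(s\in E_-,\ t\in E_+).
\]
With $\beta^*$ the $h_\pm$-adjoint (adjoint on the endomorphism part, form coefficients conjugated, exactly as you set it up), this says $\gamma=+\beta^*$, which is what you first conclude. The two sentences that then turn this into $-\beta^*$ do not hold up. The identity above already has $g=h_+\oplus(-h_-)$ built in, so there is no further sign to extract by ``taking the adjoint for $g$''. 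And under your stated conventions $h(\omega\otimes s,t)=\omega\,h(s,t)$, $h(s,\omega\otimes t)=\bar\omega\,h(s,t)$, conjugating the coefficient of a $1$-form introduces no sign. So the closing claim that the bookkeeping ``shows $\gamma=-\beta^*$'' is false. If you mean $\beta^*$ to be the $g$-adjoint $\beta^\dagger$ of $\beta:E_-\to E_+$, then $\gamma=-\beta^\dagger$ is indeed correct. But $\beta^\dagger=-\beta^{*}$, which is not the adjoint the statement specifies, and the two readings differ by exactly the sign at issue.

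In fact the statement, read literally with the $h_\pm$-adjoint, is off by a sign. In a frame that is $h_\pm$-orthonormal on each summand, a $g$-preserving connection has connection form valued in $\mathfrak{u}(p,q)=\bigl\{\bigl(\begin{smallmatrix}X_1&Z\\ Z^*&X_2\end{smallmatrix}\bigr)\bigr\}$ with $X_i$ skew-Hermitian. The block form with $-Z^*$ is $\mathfrak{u}(p+q)$, i.e.\ the condition of being $h$-unitary for $h=h_+\oplus h_-$. Concretely, take the trivial rank-$2$ bundle with $g=\mathrm{diag}(1,-1)$, $\nabla_\pm=d$ and $\beta=dx$. The connection $d+\bigl(\begin{smallmatrix}0&dx\\ -dx&0\end{smallmatrix}\bigr)$ gives $g(\nabla e_2,e_1)+g(e_2,\nabla e_1)=2\,dx$, whereas $d\,g(e_2,e_1)=0$. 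With $+dx$ in the lower-left corner the sum vanishes. You should therefore prove one of two correct statements: the block form $\bigl(\begin{smallmatrix}\nabla_+&\beta\\ \beta^*&\nabla_-\end{smallmatrix}\bigr)$ with the $h_\pm$-adjoint, or the stated form with $\beta^*$ reinterpreted as the $g$-adjoint. Say explicitly that the statement needs this correction, rather than adjusting the sign to match it. The correction does not affect Proposition~\ref{prop:Re-trace-zero}, which uses only that the connection and its curvature are $\mathfrak{u}(g)$-valued. The signs in Lemma~\ref{lem:g-curvature}, however, must be adjusted accordingly.
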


\begin{proof}
Computing $\nabla g = 0$ in the splitting gives the conditions on $\nabla_\pm$
and the anti-symmetric condition on the off-diagonal part, which is exactly
the stated form.
\end{proof}

\begin{lemma}\label{lem:g-curvature}
In the setup of Lemma~\ref{lem:g-preserving-form}, the curvature of $\nabla$ is
\[
  F(\nabla) = \begin{pmatrix}F_+-\beta\wedge\beta^*&\nabla_+\beta+\beta\wedge\nabla_-\\\nabla_-(-\beta^*)+(-\beta^*)\wedge\nabla_+&F_-+\beta^*\wedge\beta\end{pmatrix}.
\]
\end{lemma}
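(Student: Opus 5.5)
The plan is to compute $F(\nabla)=\nabla\circ\nabla$ directly as the square of the $2\times 2$ operator matrix in the splitting $E=E_+\oplus E_-$. The entries are read as operators on $E$-valued forms, extended to higher degree by the Leibniz rule. This is exactly how the off-diagonal entries of the displayed formula are written: $\nabla_+\beta+\beta\wedge\nabla_-$ is an operator composition, not yet the covariant derivative of $\beta$.

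\emph{Step 1.} Write $\nabla=\nabla_0+\Phi$ with $\nabla_0=\mathrm{diag}(\nabla_+,\nabla_-)$ and
\[
\Phi=\begin{pmatrix}0&\beta\\-\beta^*&0\end{pmatrix}.
\]
By Lemma~\ref{lem:g-preserving-form}, $\nabla_\pm$ are $h_\pm$-unitary connections and $\beta$ is an arbitrary $\Hom(E_-,E_+)$-valued $1$-form.

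\emph{Step 2.} Multiply the operator matrices:
\[
\nabla\circ\nabla=\begin{pmatrix}\nabla_+\nabla_+ +\beta\wedge(-\beta^*) & \nabla_+\beta+\beta\wedge\nabla_-\\ (-\beta^*)\wedge\nabla_+ +\nabla_-(-\beta^*) & (-\beta^*)\wedge\beta+\nabla_-\nabla_-\end{pmatrix}.
\]
Then use $\nabla_\pm\circ\nabla_\pm=F_\pm$.

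\emph{Step 3.} Check that each entry is $\Oo$-linear, so that it really is an $\End(E)$-valued $2$-form. For the off-diagonal entries, apply them to $fs$ and use the Leibniz rule. The terms $df\wedge\beta s$ and $-df\wedge\beta s$ cancel because $\beta$ has odd degree. This identifies $\nabla_+\beta+\beta\wedge\nabla_-$ with the covariant derivative $\nabla^{\Hom}\beta$, and similarly for the lower-left entry, which is $-\nabla^{\Hom}\beta^*$. As a consistency check, the lower-left entry should be minus the $h$-adjoint of the upper-right one, as it must be for a $g$-skew curvature.

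\emph{Main obstacle.} The only delicate point is the sign of the quadratic diagonal terms. The naive matrix product gives $F_+-\beta\wedge\beta^*$ in the upper-left corner, matching the statement. In the lower-right corner it gives $F_--\beta^*\wedge\beta$. So the sign there depends on how $\beta^*$ is defined on form-valued homomorphisms: whether one takes the fibrewise adjoint, or the adjoint combined with conjugation of the form coefficients and the graded sign $(\alpha\otimes\phi)^*=\bar\alpha\otimes\phi^*$.

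I would first fix the convention by testing on a rank-$(1,1)$ example with $\beta=b\,dx$. Then I would check which convention makes the curvature satisfy the $g$-skewness identity $g(F s,t)+g(s,Ft)=0$, which is forced by $\nabla g=0$. That identity determines the relative sign of the two diagonal quadratic terms unambiguously. If it yields $-\beta^*\wedge\beta$ under the fibrewise convention, I would record the lower-right entry with that sign, or equivalently restate it as $+\beta^*\wedge\beta$ under the graded-adjoint convention.

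Nothing in the later trace computations is affected by this sign: only the block structure and the skewness with respect to $g$ are used there.
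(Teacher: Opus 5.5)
Your Step~2 computation is correct, and it is already the whole proof. It gives
\[
F(\nabla)=\begin{pmatrix}F_+-\beta\wedge\beta^*&\nabla_+\beta+\beta\wedge\nabla_-\\ \nabla_-(-\beta^*)+(-\beta^*)\wedge\nabla_+&F_--\beta^*\wedge\beta\end{pmatrix},
\]
which differs from the printed statement in the lower-right entry. The paper states the lemma without proof. The gap is in your plan to reconcile the two: the discrepancy is not a matter of convention, and the printed $(2,2)$ entry is simply wrong.

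Both quadratic diagonal terms come from $\Phi\wedge\Phi$, whose diagonal entries are $\beta\wedge(-\beta^*)$ and $(-\beta^*)\wedge\beta$. Whatever $\beta^*$ means (fibrewise adjoint, adjoint with conjugated coefficients, any graded sign), it is the same symbol in the connection matrix and in the curvature formula. So the $(2,2)$ quadratic term is $-\beta^*\wedge\beta$ in every convention. If you let the formula's $\beta^*$ differ from the connection's by a sign, the $(1,1)$ term flips too, to $+\beta\wedge\beta^*$. No consistent reading gives the mixed signs $-\beta\wedge\beta^*$, $+\beta^*\wedge\beta$, so your fallback of restating the entry as $+\beta^*\wedge\beta$ ``under the graded-adjoint convention'' is not available.

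Your proposed test by $g$-skewness cannot settle the sign either. Write $\beta=\sum_i b_i\,dx^i$ in real coordinates. Then $\beta\wedge\beta^*$ and $\beta^*\wedge\beta$ have components $b_ib_j^*-b_jb_i^*$ and $b_i^*b_j-b_j^*b_i$, each of which is skew-Hermitian. So both sign choices give skew-Hermitian diagonal blocks, which is all that $g$-skewness (or $h$-skewness) asks of them.

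The check that does detect the sign is the trace. For any matrix of $1$-forms, $\Tr(\Phi\wedge\Phi)=0$, so $\Tr F(\nabla)=\Tr F_++\Tr F_-$. The printed formula instead adds $-2\Tr(\beta\wedge\beta^*)$. Concretely, take $E=\underline{\cc}\oplus\underline{\cc}$, $\nabla_\pm=d$ and $\beta=dx+i\,dy$. Then $F(\nabla)=\Phi\wedge\Phi=\mathrm{diag}(2i,-2i)\,dx\wedge dy$, while the statement gives $\mathrm{diag}(2i,2i)\,dx\wedge dy$.

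With $\beta^*$ the $h$-adjoint, the block form of Lemma~\ref{lem:g-preserving-form} is actually the condition for preserving $h=h_+\oplus h_-$, not $g=h_+-h_-$. A $g$-preserving connection has $+\beta^*$ in the lower-left slot, and then both quadratic terms carry a plus sign: $F_++\beta\wedge\beta^*$ and $F_-+\beta^*\wedge\beta$. The printed lemma mixes these two cases. For the same reason, with $(\alpha\otimes\phi)^*=\bar\alpha\otimes\phi^*$, your Step~3 relation ``lower-left $=-$ adjoint of upper-right'' is the $h$-skew relation; $g$-skewness forces a plus sign there.

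The right outcome of your proposal is therefore the corrected formula above, not a proof of the statement as written.
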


\begin{proposition}\label{prop:Re-trace-zero}
If $\nabla$ preserves $g$ and $A = \dot\nabla_t$ is the derivative along a
path of $g$-preserving connections, then, writing $A^{\mathrm{nm}} :=
\frac{1}{2\pi i}A$ and $F^{\mathrm{nm}}(\nabla) := \frac{1}{2\pi i}F(\nabla)$
for the normalized connection variation and curvature of
Definition~\ref{def:normalized-curvature}, for every $p\geq1$
\[
  \Im\,\Tr\bigl(A^{\mathrm{nm}}\wedge F^{\mathrm{nm}}(\nabla)^{p-1}\bigr) = 0.
\]
\end{proposition}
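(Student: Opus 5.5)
The plan is to show that the form $\Tr(A^{\mathrm{nm}}\wedge F^{\mathrm{nm}}(\nabla)^{p-1})$ is equal to its own complex conjugate. Its imaginary part then vanishes pointwise. Everything will follow from the fact that both $A$ and $F(\nabla)$ are skew-adjoint with respect to $g$. The factors of $2\pi i$ in the normalization should exactly absorb the sign $(-1)^p$ that this skew-adjointness produces.

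\textbf{Step 1: skew-adjointness.} Work locally in a frame and let $J$ be the (invertible, hermitian) matrix of $g$. For an $\End(E)$-valued form $M$, write $M^*$ for the conjugate transpose, with the form coefficients conjugated as well. Then $\nabla=d+\omega$ preserves $g$ if and only if $\omega^*J+J\omega=dJ$. Differentiating along the path $\nabla_t$ of $g$-preserving connections gives $A^*J+JA=0$, that is, $A^*=-JAJ^{-1}$. For the curvature, apply $d$ to the compatibility identity, or equivalently use $d\,g(s,t)=g(\nabla s,t)+g(s,\nabla t)$ twice. This gives $F^*=-JFJ^{-1}$, where $F=F(\nabla)$. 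One can cross-check both identities against the block description in Lemma~\ref{lem:g-preserving-form} and Lemma~\ref{lem:g-curvature}: the diagonal blocks are $h_\pm$-skew and the off-diagonal blocks are $\beta$ and $-\beta^*$.

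\textbf{Step 2: conjugating the trace.} For matrix-valued forms we have $\overline{\Tr(M)}=\Tr(M^*)$ and $(X\wedge Y)^*=(-1)^{|X||Y|}Y^*\wedge X^*$. Apply this to $M=A\wedge F^{p-1}$. Since $F$ has even degree, all the reordering signs are $+1$, so
\[
\overline{\Tr(A\wedge F^{p-1})}=\Tr\bigl((F^*)^{p-1}\wedge A^*\bigr)=(-1)^p\Tr\bigl(JF^{p-1}\wedge AJ^{-1}\bigr)=(-1)^p\Tr\bigl(A\wedge F^{p-1}\bigr).
\]
The last equality uses conjugation invariance of the trace together with graded cyclicity. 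Moving the $1$-form $A$ past the even-degree form $F^{p-1}$ costs no sign.

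\textbf{Step 3: normalization.} We have $A^{\mathrm{nm}}\wedge(F^{\mathrm{nm}})^{p-1}=(2\pi i)^{-p}A\wedge F^{p-1}$ and $\overline{(2\pi i)^{-p}}=(-1)^p(2\pi i)^{-p}$. Combining this with Step 2, the two signs $(-1)^p$ cancel. Hence $\Tr(A^{\mathrm{nm}}\wedge (F^{\mathrm{nm}})^{p-1})$ is a real form, and its imaginary part vanishes for every $p\ge 1$.

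\textbf{Main obstacle.} The only delicate point is the sign bookkeeping in Step 2. One must use the graded rule for adjoints of wedge products of matrix-valued forms, and one must check that conjugating the form coefficients is consistent with $g$ being sesquilinear. I would verify this bookkeeping first on the case $p=1$, where the statement reduces to $\Tr(A)\in i\,\rr$ and $\Tr(A/2\pi i)\in\rr$. That case follows directly from $A^*=-JAJ^{-1}$.
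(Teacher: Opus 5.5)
Your proof is correct and follows essentially the paper's argument: the paper also observes that $A$ and $F(\nabla)$ are $g$-skew (via Lemma~\ref{lem:g-preserving-form}), applies the parity identity $\overline{\Tr(A\wedge F(\nabla)^{p-1})}=(-1)^p\Tr(A\wedge F(\nabla)^{p-1})$ of Lemma~\ref{lem:koszul-parity}(b), which is your Step~2 carried out in a $g$-orthonormal frame where $J$ is a diagonal sign matrix, and then cancels this sign against $\overline{(2\pi i)^p}=(-1)^p(2\pi i)^p$. Your derivation of $A^*=-JAJ^{-1}$ and $F^*=-JFJ^{-1}$ directly from the compatibility equation in an arbitrary frame is a self-contained substitute for citing the block description, and your sign bookkeeping is right.
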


\begin{proof}
By Lemma~\ref{lem:g-preserving-form}, in the splitting $E = E_+\oplus E_-$,
both $A$ and $F(\nabla)$ take values in the Lie algebra $\mathfrak{u}(g)$ of
the group $U(g) = U(r_+,r_-)$; in the notation of
Lemma~\ref{lem:koszul-parity}, taking $\mathfrak{u}_E := \mathfrak{u}(g)$,
this says $A\in A^1(X;\mathfrak{u}_E)$ and $F(\nabla)\in
A^2(X;\mathfrak{u}_E)$, and hence, since $2\pi i$ is a nonzero scalar, also
$A^{\mathrm{nm}}\in A^1(X;\mathfrak{u}_E)$ and $F^{\mathrm{nm}}(\nabla)\in
A^2(X;\mathfrak{u}_E)$.

By Lemma~\ref{lem:koszul-parity}(b), $\overline{\Tr(A\wedge
F(\nabla)^{p-1})} = (-1)^p\Tr(A\wedge F(\nabla)^{p-1})$. Since
$\overline{(2\pi i)^p} = (-2\pi i)^p = (-1)^p(2\pi i)^p$, the two factors of
$(-1)^p$ cancel identically, for every $p$:
\[
  \overline{\Tr\bigl(A^{\mathrm{nm}}\wedge F^{\mathrm{nm}}(\nabla)^{p-1}\bigr)}
  = \frac{\overline{\Tr(A\wedge F(\nabla)^{p-1})}}{\overline{(2\pi i)^p}}
  = \frac{(-1)^p\Tr(A\wedge F(\nabla)^{p-1})}{(-1)^p(2\pi i)^p}
  = \Tr\bigl(A^{\mathrm{nm}}\wedge F^{\mathrm{nm}}(\nabla)^{p-1}\bigr).
\]
Hence $\Tr(A^{\mathrm{nm}}\wedge F^{\mathrm{nm}}(\nabla)^{p-1})$ is real, so
its imaginary part vanishes, for every $p\geq1$.
\end{proof}

\section{The \texorpdfstring{$F^1$}{F1}-connection and the Burgos-Gil
comparison}
\label{sec:F1}

\subsection{Review of Deligne-Beilinson cohomology}

Let $Z$ be a smooth complex algebraic variety with a normal crossings divisor
$D_Z$, and set $Z^o := Z\setminus D_Z$.

\begin{definition}
The \emph{Deligne-Beilinson cohomology} groups are defined by
\[
  H^n_{\calD}(Z,\zz(p)) := H^n(Z, \zz(p)\to\Oo_Z\to\Omega^1_Z(\log D_Z)\to
  \cdots\to\Omega^{p-1}_Z(\log D_Z)),
\]
where $\zz(p) := (2\pi i)^p\zz$ and the complex is the \emph{Deligne complex}
placed in degrees $0,1,\ldots,p-1$.  For smooth projective $X$ with
$D_Z=\emptyset$, this recovers Deligne's definition \cite{Deligne74}.
\end{definition}

For the affine space $\aaa^n_\cc$ we have, for $p\geq1$,
\begin{equation}\label{eq:DB-affine}
  H^{1}_{\calD}(\aaa^n_\cc,\zz(p)) \cong \cc/\zz,\quad
  H^{k}_{\calD}(\aaa^n_\cc,\zz(p)) = 0 \text{ for } k\neq1 .
\end{equation}
This reflects the fact that $\aaa^n$ is contractible, so that the only
surviving contribution is $H^0(\aaa^n,\cc)/\bigl(F^pH^0+H^0(\aaa^n,\zz(p))\bigr)
=\cc/\zz(p)$ in degree one; see Lemma~\ref{lem:DB-affine}.  In particular the
degree $2p$ group vanishes on an affine space for $p\geq2$, and the isomorphism
with $\cc/\zz$-cohomology in degree $2p-1$ arises only after assembling the
cells (Proposition~\ref{prop:DB-A-realization}).

\subsection{The \texorpdfstring{$F^1$}{F1}-connection}

\begin{definition}
Let $(E,F^\hdot)$ be a holomorphic vector bundle with a filtration on a complex
manifold $Z$, and let $D_Z$ be a normal crossings divisor.  An
\emph{$F^1$-connection} on $E$ with respect to $(F^\hdot,D_Z)$ is a
$\mathcal{C}^\infty$ connection $\nabla$ on $E$ such that in any local frame
compatible with $F^\hdot$, the connection matrix $A$ satisfies
$A\in F^1A^1(Z\log D_Z,\End(E))$, i.e.\ the $(1,0)$-part of $A$ lies in
$\Omega^1_Z(\log D_Z)$ and the overall form lies in $F^1$.
\end{definition}

This notion is due to Dupont-Hain-Zucker \cite{DHZ}.  An $F^1$-connection
whose curvature vanishes to the extent required defines a representative of the
Chern character in Deligne cohomology.

\begin{theorem}[Dupont-Hain-Zucker {\cite[Lem.~6.1.3]{DHZ}}]
\label{thm:DHZ}
Let $E$ be a vector bundle on $Z^o = Z\setminus D_Z$, where $Z$ is proper and
$E$ admits an extension to $Z$.  Then any two $F^1$-connections on $E$ with
respect to $D_Z$ define the same class in $H^{2p}_{\calD}(Z^o,\zz(p))$.
\end{theorem}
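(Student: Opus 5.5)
The plan is to follow the earlier sketch of Theorem~\ref{thm:F1-Deligne}, now with full control over filtrations. Given two $F^1$-connections $\nabla_0,\nabla_1$ on the extension $E$ over the proper $Z$, I would join them by the affine path $\nabla_t=\nabla_0+tA$ with $A=\nabla_1-\nabla_0$. Since the $F^1$-condition is affine in the connection matrix (both conditions (ii) and (iii) of the definition are linear in $A$), every $\nabla_t$ is again an $F^1$-connection. I would then form the transgression
\[
  \eta=\frac{p}{p!(2\pi i)^p}\int_0^1\Tr\bigl(A\wedge F(\nabla_t)^{p-1}\bigr)\,dt,
\]
so that $d\eta=\ch_p(E,\nabla_1)-\ch_p(E,\nabla_0)$. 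The Deligne class attached to $\nabla_j$ is represented in the cone description by the pair $(\ch_p(E,\nabla_j),\,\text{integral Chern cocycle correction})$. The difference of the two representatives is therefore $d_{\calD}(\eta,0)$, provided $\eta$ lies in $F^pA^{2p-1}(Z\log D_Z)$. Equivalently, one needs $\eta\in F^p+\text{(exact)}$ modulo the image of the first component.

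The key step is to verify the filtration degree of $\eta$. In a frame adapted to $F^\hdot E$, write $\End(E)$ with its induced filtration $F^k\End(E)$, meaning endomorphisms raising $F$-degree by $k$. Condition (iii) says that the $(0,1)$-part of the connection matrix, together with the part that is not logarithmic-holomorphic, is controlled. Hence $A$ and $F(\nabla_t)$ satisfy a Griffiths-type bound: the component of Hodge type $(a,b)$ with values in $\Gr^k_F\End$ is nonzero only when $a+k\geq 1$ for $A$ and $a+k\geq 1$ for the curvature, using the Bianchi identity and the fact that $\bar\partial$ of a log $(1,0)$-form has type $(1,1)$. In a trace, the total endomorphism degree is $0$. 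Summing the constraints over the $p$ factors then gives holomorphic degree $\geq p$ for the full product $\Tr(A\wedge F^{p-1})$. In other words, $\eta\in F^pA^{2p-1}(Z\log D_Z)$. This is the standard DHZ bookkeeping, which I would carry out explicitly for a single filtration step first and then in general.

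Next, I would handle the logarithmic boundary. Because the forms have at most log poles, $\eta$ is a section of the logarithmic Dolbeault complex, whose $F^p$ part computes $F^pH^{\hdot}(Z^o)$ by Deligne's theorem recalled earlier. So $(\eta,0)$ is a legitimate cochain in the Deligne complex of $Z^o$. The integral parts agree because both connections compute the same topological Chern class of $E$.

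I expect the main obstacle to be the degree count in the second step. In particular, one must make sure the mixed $(0,1)$ terms of $A$ cannot lower the Hodge degree. Properness of $Z$ is what makes $F^p$ on forms compute $F^p$ on cohomology, via strictness of the Hodge filtration, so that no further correction is needed.
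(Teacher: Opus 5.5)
Your argument is the one the paper relies on (see the sketches after Theorem~\ref{thm:F1-Deligne} and Theorem~\ref{thm:DHZ-app}(2)): join the two $F^1$-connections by the straight-line path, show that the transgression $\eta$ lies in the Hodge filtration of the logarithmic complex, and conclude that the two Deligne representatives differ by a coboundary. You also correctly identify $F^pA^{2p-1}(Z\log D_Z)$ as the piece that is needed, rather than merely $F^1$ as written in those sketches.

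The one step to tighten is the cochain-level identity ``the difference of the representatives is $d_{\calD}(\eta,0)$''. It comes from the Cheeger--Simons variation formula (Proposition~\ref{prop:transgression}) applied to the characters defining the two classes, not from the agreement of the topological Chern classes. It also requires neither strictness of the Hodge filtration nor properness of $Z$.
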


\subsection{Construction of a compatible \texorpdfstring{$F^1$}{F1}-connection
on the Rees bundle}

\begin{theorem}\label{thm:F1-exists}
Let $F$ be the bundle of Theorem~\ref{thm:global-rees} and $\sigma:K\hookrightarrow\cc$
a field embedding.  There exists an $F^1$-connection $\nabla$ on
$F^\sigma|_{\aaa_\cc\Cube A}$ compatible with the stratified patching data.
\end{theorem}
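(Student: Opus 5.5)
The $F^1$-connection will be built cube by cube on the projective cubes $\pp(c)\cong(\pp^1)^{|c|}$, starting from the canonical Rees connection, and glued along faces by induction on $\dim c$, using the identifications of Proposition~\ref{prop:rees-glue}.

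\emph{Local model.} On $F(c)=\xi(G(c);W(G(c);a_1),\ldots,W(G(c);a_k))$ we choose a common splitting $G(c)=\bigoplus_{\mathbf j}G(c)_{\mathbf j}$. This exists because the filtrations come from a chain of multi-indices and are compatible (Lemma~\ref{lem:W-commute}). The splitting gives $F(c)\cong\bigoplus_{\mathbf j}G(c)_{\mathbf j}\otimes\Oo(-\sum_i j_iQ_i)$, and on this sum we put the connection $\nabla^{\mathrm{can}}(c)=d+\sum_i j_i\,\dlog(1-y_i)$ on each summand, as in Remark~\ref{rem:log-conn}. Its matrix is of type $(1,0)$ with logarithmic poles along $\pp(c)\setminus\aaa(c)=\{y_i=\infty\}$, so it lies in $F^1A^1(\pp(c)\log D_c,\End)$. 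On $\aaa(c)$ it is exactly the trivialization $\tau(c)$ on the graded pieces of $W(F(c);\mu(c))$, so it is compatible with the patching data (Theorem~\ref{thm:global-rees}). Restricting to $y_i=0$ or $y_i=1$ gives the analogous connection on the face. At $y_i=1$ the pole of $j_i\,\dlog(1-y_i)$ is only an apparent one, since the twist $\Oo(-j_iQ_i)$ cancels it; we check this by writing the Rees sections $(1-y_i)^{j_i}v$ as holomorphic frames.

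\emph{Gluing.} The splittings chosen on different cubes need not restrict to each other, so the local connections agree on a common face $\pp(c')$ only up to a term $B\in A^1(\pp(c'),\End F(c'))$. This term is of type $(1,0)$, logarithmic, and strictly upper-triangular for the filtration $W(\mu(c'))$. It arises because two splittings of the same filtration differ by a unipotent, filtration-preserving gauge transformation, and $g^{-1}dg$ is then strictly triangular and holomorphic algebraic. To correct this, we proceed by induction over the skeleta of $\Cube A$, ordered by dimension. Suppose a connection has already been fixed on $\partial\pp(c)$. We extend the difference $B$ from the boundary to all of $\pp(c)$, using a $\mathcal C^\infty$ collar and cut-off functions depending only on the $|y_i|$ (or $y_i$), and we keep it of type $(1,0)$ with log poles and strictly triangular for $W(\mu(c))$. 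We then set $\nabla(c)=\nabla^{\mathrm{can}}(c)+\tilde B$.

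\emph{Checking the result.} The $F^1$ condition is preserved, because multiplying a $(1,0)$ log form by a smooth function keeps it in $F^1$. Compatibility with the patching data is also preserved, because a strictly triangular perturbation leaves the graded connections unchanged. The glued family then defines the desired connection on $F^\sigma|_{\aaa_\cc\Cube A}$, extending with log poles to $\pp_\cc\Cube A$.

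\emph{Main obstacle.} The hardest step is the extension in the gluing: we must extend boundary data into the interior of $\pp(c)$ while keeping $\tilde B$ in $F^1$ with only logarithmic growth near the points at infinity, where several $y_i=\infty$ meet. First I would try choosing the splittings coherently from the start, pulling back one global $\mathfrak{sl}_2$-type splitting at the minimal element of each chain through the maps $U(a_0\le a)$. This might make $B=0$ and remove the extension step entirely. Failing that, I would carry out the extension with partitions of unity in the $\pp^1$ coordinates that are constant near $\infty$, so that the log-pole behaviour is not disturbed there.
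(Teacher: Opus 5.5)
This is essentially the paper's proof: induction on $\dim c$, then a base $F^1$-connection on $\pp(c)$ compatible with $(W(F(c);\mu(c)),\tau(c))$, then a strictly triangular $(1,0)$ logarithmic correction prescribed on the faces and extended into the interior; your only variation is to take the split canonical Rees connection as the base, where the paper patches one together from local filtered trivializations. One sign needs fixing: with $d+\sum_i j_i\,\dlog(1-y_i)$ as written, the twist does not cancel the pole along the face $Q_i=\{y_i=1\}\subset\aaa(c)$ (the holomorphic frame $(1-y_i)^{j_i}v$ still sees a nonzero residue there), so the connection with the properties you actually use (smooth on $\aaa(c)$, inducing $\tau(c)$, poles only at $y_i=\infty$) is the one making $(1-y_i)^{j_i}v$ flat, i.e.\ $d-\sum_i j_i\,\dlog(1-y_i)$ on the constant sections $v$.
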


\begin{proof}
We construct $\nabla(c)$ by induction on $\dim(c)$.

\medskip\noindent
\textbf{Base case $\dim(c)=0$:}
A $0$-cube corresponds to a single element $\{a_0\}\subset A$ with $c(a_0)=1$.
Here $\pp(c)$ is a point and there is nothing to construct.

\medskip\noindent
\textbf{Inductive step:}
Assume $\nabla(c')$ has been constructed for all faces $c'$ of $c$ of dimension
$< n = \dim(c)$.

We work over $Z := \pp(c)\cong(\pp^1)^n$ with divisor at infinity
$D := Z\setminus\aaa(c)$ (the union of coordinate hyperplanes $\{y_i=\infty\}$),
and the boundary
$Y := \bigcup_i\{y_i=0\}\cup\{y_i=1\}$ (the faces of $\aaa(c)$).

\textit{Step A: Initial connection.}
Choose any $F^1$-connection $\nabla'(c)$ on $F(c)$ such that:
\begin{itemize}[leftmargin=2em]
  \item The filtration $W(F(c);\mu(c))$ is $\nabla'(c)$-flat.
  \item On $\Gr^{W(F(c);\mu(c))}(F(c))$, $\nabla'(c)$ induces the
    log-gt-connection $\tau(c)$.
\end{itemize}
Such a connection exists locally; we will patch using the partition of unity.
Over any affine open subset $\aaa^n(c)$, trivialize $F(c)$ compatibly with the
filtration $W(F(c);\mu(c))$.  The diagonal blocks of the connection matrix are
determined by $\tau(c)$ (logarithmic forms with poles at $y_i=1$), and the
off-diagonal strictly upper-triangular blocks can be chosen freely.

\textit{Step B: Boundary correction.}
The inductive connections $\nabla(c')$ on the faces $c'\subsetneq c$ are
compatible among themselves (by the inductive hypothesis).  Let
\[
  A(c,c') := \nabla(c')|_{d(c')\pp(c)} - \nabla'(c)|_{d(c')\pp(c)}
  \;\in\; A^0(\mathcal{C}^\infty\text{-section of }F^1\Omega^1(\log D)\otimes\End F(c)),
\]
where $d(c')$ denotes the face embedding.  These $A(c,c')$ are compatible
under further restriction: if $c''\leq c'\leq c$ then
$A(c,c'')|_{c'''} = A(c,c'')$ for $c'''\leq c''$.

By the flasqueness of smooth $(1,0)$-forms on $(\pp^1)^n$, extend
$\{A(c,c')\}$ to a global section $A(c)$ over $\pp(c)$, strictly
upper-triangular with respect to $W(F(c);\mu(c))$, that restricts to $A(c,c')$
on each face $c'$.

\textit{Step C: Final connection.}
Set $\nabla(c) := \nabla'(c) + A(c)$.  It is an $F^1$-connection (since
$F^1A^1$ is a linear subspace preserved under addition), compatible with the
filtration (since $A(c)$ is upper-triangular), inducing $\tau(c)$ on the graded
(since $A(c)$ is \emph{strictly} upper-triangular), and matching $\nabla(c')$
on each face by construction.
\end{proof}

\subsection{The Burgos-Gil arithmetic Chern character}
\label{subsec:BG-Chern-character}

We recall the relevant parts of the Burgos-Gil theory \cite{BG};
see Appendix~\ref{app:burgosgil} for the full development.

Throughout this subsection $\bar Z$ is a smooth projective variety,
$D_Z\subset\bar Z$ a normal crossings divisor, and $Z^o=\bar Z\setminus D_Z$.
The object to which we shall apply the results, the cubical realization
$\pp_K\Cube A$, is \emph{not} of this form; the reduction is carried out in
\S\ref{subsec:BG-on-cubical}, where $\pp_K\Cube A$ is treated as a cubical
scheme with smooth projective terms and, alternatively, mapped to a
Grassmannian.
Let $\bar E$ be a vector bundle on $\bar Z$ and $h$ a Hermitian metric on
$\bar E$.  The \emph{arithmetic Chern character} of Burgos-Gil is a class
\[
  \widehat{\ch}_p(\bar E,h)\;\in\;\widehat{H}^{2p}\bigl(\bar Z,\zz(p)\bigr),
\]
defined using Green currents, whose image under the natural map
$\widehat{H}^{2p}(\bar Z,\zz(p))\to H^{2p}_\calD(\bar Z,\zz(p))$ is the
Deligne-Beilinson Chern character $\ch^\calD_p(\bar E)$, independently of $h$
(Theorem~\ref{thm:BG-full}(2)).

The key comparison result is the following.  Note that the class produced by
an $F^1$-connection is a class on the \emph{compactification} $\bar Z$, not
merely on $Z^o$; this is the whole content of the statement, and it is what
allows a connection defined by local data on $Z^o$ to compute a global
invariant of $\bar E$.

\begin{theorem}[Burgos-Gil {\cite[Thm.~7.3.5]{BG}}]
\label{thm:BG-comparison}
Let $\bar E$ be a vector bundle on $\bar Z$ and let $\nabla$ be an
$F^1$-connection on $\bar E$ with logarithmic poles along $D_Z$.  Then the
Chern-Weil class
\[
  [\nabla]_p\;\in\;H^{2p}_\calD\bigl(\bar Z,\zz(p)\bigr)
\]
determined by $\nabla$ through the Bott-Chern construction equals
$\ch^\calD_p(\bar E)$.  In particular $[\nabla]_p$ is independent of the choice
of $F^1$-connection, and depends only on the bundle $\bar E$.
\end{theorem}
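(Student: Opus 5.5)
The plan is to reduce to the case of a Chern connection. Fix a Hermitian metric $h$ on $\bar E$ and let $\nabla_h$ be its canonical Hermitian connection. In a local holomorphic frame its matrix is $\partial h\cdot h^{-1}$, which has type $(1,0)$ and is smooth on all of $\bar Z$. So $\nabla_h$ is an $F^1$-connection, with no poles at all. By the defining property of the arithmetic Chern character (Theorem~\ref{thm:arith-chern}, together with the statement that $\widehat{\ch}_p(\bar E,h)$ maps to $\ch^\calD_p(\bar E)$ independently of $h$), the Deligne class carried by the pair consisting of $\ch_p(E,h)$ and the integral Chern cocycle is exactly $\ch^\calD_p(\bar E)$. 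It then suffices to show $[\nabla]_p=[\nabla_h]_p$. The final sentence of the statement, independence of the choice of $F^1$-connection, is then immediate, since $h$ was arbitrary.

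For the comparison I will use the linear path $\nabla_t=\nabla_h+tA$ with $A:=\nabla-\nabla_h$. Since $F^1$ of the logarithmic complex is a linear subspace, each $\nabla_t$ is again an $F^1$-connection. Its curvature $F(\nabla_t)=F(\nabla_h)+t\,dA+t^2A\wedge A$ lies in $F^1A^2(\bar Z\log D_Z)\otimes\End$, because $d$ preserves the Hodge filtration and $F^1\wedge F^1\subset F^2$. Two consequences follow.
\begin{itemize}
\item Each $\ch_p(\nabla_t)$ lies in $F^pA^{2p}(\bar Z\log D_Z)$, so it is a legitimate first component of a cocycle in the smooth Deligne cone complex $F^pA^\hdot\oplus A^{\hdot-1}$.
\item The transgression $\eta=p\int_0^1\Tr(A\wedge F(\nabla_t)^{p-1})\,dt$ is a product of one $F^1$ factor and $p-1$ factors in $F^1$, hence $\eta\in F^pA^{2p-1}(\bar Z\log D_Z)$.
\end{itemize}
The difference of the two cocycles, $(\ch_p(\nabla)-\ch_p(\nabla_h),0)=(d\eta,0)$, is therefore the coboundary $d_\calD(-\eta,0)$ up to sign conventions. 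This is the argument of Theorem~\ref{thm:F1-Deligne} and Theorem~\ref{thm:DHZ}, and it gives $[\nabla]_p=[\nabla_h]_p$.

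The main obstacle is the passage from $Z^o$ to $\bar Z$. The cone complex built from logarithmic forms a priori computes Deligne cohomology of $Z^o$ (Burgos's log-growth complex), while the statement asserts a class in $H^{2p}_\calD(\bar Z,\zz(p))$. To handle this I would:
\begin{itemize}
\item work throughout with Burgos-Gil's Dolbeault complex of forms with logarithmic singularities, in which the Green-current representative of $\widehat{\ch}_p$ lives, and use \cite[Thm.~7.3.5]{BG} to identify the relevant cohomology with that of the extended bundle $\bar E$;
\item check that $\eta$ has at most logarithmic growth, so that it defines a current on $\bar Z$, and that $d$ of this current introduces no residue terms along $D_Z$.
\end{itemize}
I would try to control the residue terms via the $F^1$ condition: the polar part of $A$ is of type $(1,0)$ with only $\dlog$ singularities, so the wedge products in $\eta$ stay integrable. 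If a residual boundary contribution remained, I would expect it to be killed by the residue exact sequence, since it would lie in $F^p$ of the cohomology of the strata $D_i$.
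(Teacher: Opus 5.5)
Your two steps are the paper's own (proof of Theorem~\ref{thm:CS-DB-app}(2)). First, compare $\nabla$ with a Chern connection $\nabla_h$ via the Dupont--Hain--Zucker transgression. Second, identify $[\nabla_h]_p$ with $\ch^\calD_p(\bar E)$ through the Burgos-Gil arithmetic Chern character.

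\textbf{The gap: passing from $Z^o$ to $\bar Z$.} The step you flag as the main obstacle is a genuine gap, and the remedy you sketch fails. Your transgression computation takes place in the logarithmic complex, so at best it yields $[\nabla]_p=[\nabla_h]_p$ in $H^{2p}_\calD(Z^o,\zz(p))$. To reach $\bar Z$ you need $d[\eta]=[d\eta]$ as currents. But $d[\eta]-[d\eta]$ is the residue current of $\eta$ along $D_Z$, which in general is neither zero nor exact.

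Concretely, take $\bar Z=\pp^1$, $D_Z=\{0\}$, $\bar E=\Oo$ and $\nabla=d+\lambda\chi(|z|)\,dz/z$, where $\chi$ is a cutoff equal to $1$ near $0$ and to $0$ near $\infty$. This is an $F^1$-connection with a logarithmic pole along $D_Z$. Its curvature vanishes near $0$, so $\ch_1(\nabla)$ is a smooth closed form on all of $\pp^1$. Stokes on $\{|z|\geq\epsilon\}$ gives $\int_{\pp^1}\ch_1(\nabla)=-\lambda$, which is nonzero for $\lambda\neq0$, whereas $\ch_1(\Oo)=0$. The discrepancy is, up to sign, the residue current $\lambda\,\delta_0$ of $\eta=\frac{1}{2\pi i}\Tr(A)$. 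It lies in $F^1$ but changes the de Rham class, so no Deligne coboundary and no appeal to the residue sequence can remove it. Tensoring with the pullback of $\Oo(1)$, with its Chern connection, from a second factor $\pp^1$ gives the same failure for $p=2$.

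So the statement can only be proved in one of two forms. It holds on $Z^o$, which is what Theorems~\ref{thm:DHZ} and \ref{thm:DHZ-app}(2) actually assert; the paper's own proof cites the latter and so carries the same limitation. It holds on $\bar Z$ only under an extra hypothesis that kills the residue currents of $\eta$. The clean case is that $\nabla$ has no poles at all, as for $\nabla_h$, a Grassmannian connection, or $\nabla^{\Del}$.

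\textbf{Cone-complex bookkeeping.} Separately, this needs repair. With the differential $d_\calD(\alpha,\beta)=(-d\alpha,d\beta+\alpha)$ of the smooth Deligne complex, one has $d_\calD(-\eta,0)=(d\eta,-\eta)$, not $(d\eta,0)$. So your two cocycles are cohomologous only if their transgression components differ by $-\eta$ modulo exact forms. Verifying this requires fixing how $[\nabla]_p$ and its integral lift are defined; in Dupont--Hain--Zucker this is done through local holomorphic trivializations on a simplicial nerve. That verification is the real content of the independence statement. If you instead define $[\nabla]_p$ by transgressing from $\nabla_h$, the equality you want becomes a tautology.
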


\begin{remark}\label{rem:BG-machinery}
The Burgos-Gil theory \cite{BG} constructs arithmetic characteristic classes
via a double complex of smooth forms on the simplicial scheme resolving
$\bar Z$, incorporating both the algebraic de Rham cohomology (via
$\Omega^\hdot(\log D_Z)$) and the analytic data (via the
$\overline\partial$-operator and currents).  For our purposes the essential
consequences are the independence statement in
Theorem~\ref{thm:BG-comparison}---which is what permits the Rees connection,
Deligne's patched connection and a Grassmannian Hermitian connection to be
used interchangeably---and the passage from a Deligne class to a class in
$H^{2p-1}(-,\cc/\zz)$ recorded in Theorem~\ref{thm:BG-full}(4), which requires
the Chern-Weil forms to vanish and is supplied here by local nil-flatness
(Proposition~\ref{prop:nilflat-ch-zero}).
\end{remark}

\subsection{Comparison of the Chern-Simons class with the regulator}

\begin{theorem}\label{thm:CS-regulator}
Let $F_K$ be the Rees bundle of Theorem~\ref{thm:global-rees} and $\nabla$ the
$F^1$-connection of Theorem~\ref{thm:F1-exists}.  Let $S = \iiii\Cube A$ be
the interval realization.  Under the homotopy equivalence
$S\simeq|\Uu_\hdot|\simeq X$, the Chern-Simons class defined by $\nabla$ in
$H^{2p-1}(Z^o_{\cc,\sigma},\cc/\zz)$ pulls back to the regulator class
$r^\sigma_{2p-1}|_S\in H^{2p-1}(S,\cc/\zz)$.
\end{theorem}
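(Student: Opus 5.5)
The comparison goes through the Deligne-Beilinson Chern character of the Rees bundle $F_K^\sigma$ on the affine realization, which will serve as the common meeting point of both sides. The regulator side is purely topological; the Chern-Simons side is Hodge-theoretic.

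\textbf{Step 1 (Chern-Simons side).} On each projective cube $\pp(c)\cong(\pp^1)^{|c|}$ the connection $\nabla(c)$ of Theorem~\ref{thm:F1-exists} is an $F^1$-connection with logarithmic poles along the divisor at infinity. It is also locally nil-flat, being strictly upper-triangular off the graded, where it is the log-connection $\tau(c)$ of Remark~\ref{rem:log-conn}. By Proposition~\ref{prop:nilflat-ch-zero} its Chern-Weil forms vanish identically. Theorem~\ref{thm:BG-comparison} then identifies the class $[\nabla(c)]_p$ with $\ch^\calD_p(F(c))$. Because the forms vanish, this Deligne class is the image under the edge map $\delta$ of the transgression class, namely the Chern-Simons class of $\nabla(c)$ in $H^{2p-1}(\aaa(c),\cc/\zz)$. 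Since the $\nabla(c)$ agree on faces by construction, these levelwise identifications are compatible with the face maps. They therefore assemble, through the cubical de Rham/Deligne complex, to an equality on $\aaa\Cube A$. Lemma~\ref{lem:DB-cube} says that $\delta$ is an isomorphism $H^{2p-1}(\iiii\Cube A,\cc/\zz)\cong H^{2p}_\calD(\aaa\Cube A,\zz(p))$. Hence $\CS_p(\nabla)$ is exactly the class corresponding to $\ch^\calD_p(F_K^\sigma|_{\aaa\Cube A})$.

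\textbf{Step 2 (regulator side).} By Theorem~\ref{thm:sigma-htpy}, $F_K$ on the affine realization defines a classifying map $r:S\to BGL(K)^+$. I would show that $r^*(r^\sigma_{2p-1})$ also corresponds to $\ch^\calD_p(F_K^\sigma)$ under Lemma~\ref{lem:DB-cube}. Both sides are natural in the cubical datum, and the regulator is defined as $\CS_p$ of the tautological flat bundle. So it suffices to check this over the discrete model $BGL(K)^\delta$, where each cell carries a genuinely flat trivial bundle. There the Deligne Chern character is computed by the flat connection and equals the Cheeger-Simons class, by Bloch/Beilinson. The passage to the plus construction is harmless because $BGL^\delta\to BGL^+$ is acyclic. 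The Rees bundle differs from a flat one only by nil-flat, upper-triangular data, and the filtration homotopy $t\to0$ (the $\aaa^1$-contraction used in Proposition~\ref{prop:A1-equiv}) deforms $F_K$ to its graded, flat bundle without changing $K$-theory classes, by homotopy invariance of $K(K[t_1,\dots,t_n])$.

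\textbf{Step 3.} Combining Steps 1 and 2 with the homotopy equivalences $S\simeq|\Uu_\hdot|\simeq X$ gives the statement.

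\textbf{Main obstacle.} I expect the hardest part to be the descent in Step 1. Burgos-Gil applies only on smooth projective varieties, while $\pp_K\Cube A$ is singular. One must therefore verify that the Bott-Chern/transgression forms chosen on different cubes restrict compatibly, so that a cocycle is produced in the total cubical complex rather than just a family of classes. My first attempt would be to run the inductive face-extension argument of Theorem~\ref{thm:F1-exists} for the transgression forms as well, choosing them by induction on $\dim c$ with prescribed boundary values. If that fails, I would pull back from a Grassmannian via a face-compatible classifying map $\phi_q$, where a single global Hermitian connection exists, and then use the independence in Theorem~\ref{thm:DHZ}.
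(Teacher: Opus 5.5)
Your architecture is the paper's. The Deligne--Beilinson class $\ch^\calD_p(F_K^\sigma)$ is the common currency. The Chern--Simons side runs through local nil-flatness, $F^1$-independence, the edge map $\delta$ and Lemma~\ref{lem:DB-cube}. Your fallback for the ``main obstacle'' (the face-compatible Grassmannian map $\phi_q$ together with Theorem~\ref{thm:DHZ}) is what the paper uses. Your cell-by-cell identifications in Step~1 are empty for $p\geq2$, since $H^{2p-1}(\aaa(c),\cc/\zz)$ and $H^{2p}_\calD(\aaa(c),\zz(p))$ both vanish on a single cell (Lemma~\ref{lem:DB-affine}). However, no inductive choice of forms is needed. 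The straight-line transgression (or Bott--Chern) form between two face-compatible connections restricts to the corresponding form on every face, so it is automatically a cochain of the total cubical complex.

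The genuine gap is in Step~2. Naturality does not reduce you to $BGL(K)^\delta$. A bundle on the $\aaa$-realization is classified (Lemma~\ref{lem:univ-classifying}) by a map into $B_\hdot GL(r,K[\hdot])$, whose transition matrices are polynomial in the cell coordinates, and whose tautological bundle carries no flat structure. The bridge you propose, degenerating $F_K$ to its ``graded, flat bundle'' by $t\to0$, does not exist:
\begin{itemize}
\item the contraction $(x,t)\mapsto tx$ is defined cube by cube and moves the faces $\{y_i=1\}$ to $\{y_i=t\}$;
\item $\aaa\Cube A$ is $\aaa^1$-equivalent to $X$ rather than contractible;
\item the filtrations $W(F(c);\mu(c))$ vary with $c$, so there is no global filtration to degenerate along.
\end{itemize}
Homotopy invariance $BGL(K)^+\simeq BGL(K[t])^+$ is a statement about the target, not a deformation of the bundle. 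Even the honest version of ``restrict to the vertex'' fails before stabilization, since $BGL_r(K)^\delta\to BGL_r(K[t])^\delta$ is not a homology isomorphism (Remark~\ref{rem:why-stabilize}).

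What is missing is the content of \S\ref{subsec:classmap}, on which Step~4 of the paper's proof rests. It has two parts.
\begin{enumerate}
\item After stabilizing in $r$, the inclusion of constants $\iota_\infty:BGL(K)^\delta\to\Omega_\infty=|m\mapsto BGL(K[\Delta^m])^\delta|$ is an integral homology isomorphism. This holds levelwise by homotopy invariance of $K$-theory plus acyclicity of $(-)^+$, then on realizations by a skeletal spectral sequence. Hence $\iota_\infty^*$ is injective on $H^{2p-1}(-,\cc/\zz)$. The universal identity $\kappa^*(r_{2p-1})=c^\calD_p(\mathbf{V}_{\mathrm{taut},2})$ can therefore be checked on the constant part, where the tautological bundle really is flat (Lemmas~\ref{lem:iota-homology-iso} and~\ref{lem:universal-comparison}).
\item One needs a \emph{strict} classifying map into $B_\hdot GL(r,K[\hdot])$. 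It is obtained by resolving the cubical datum by the \v{C}ech nerves of the torsors of cell trivializations of $F_K$, and that identity is pulled back to $F_K$ along it (Proposition~\ref{prop:classmap-comparison}).
\end{enumerate}
Without these two ingredients, Step~2 does not connect $r^*(r^\sigma_{2p-1})$ to $\ch^\calD_p(F_K^\sigma)$.
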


\begin{proof}
We break the proof into steps.

\textit{Step 1: Algebro-geometric Chern class.}
Since $Z = \pp_K\Cube A$ is a projective scheme, there exists a surjection
$\Oo_Z(n)^s\twoheadrightarrow F$ for some $n,s$.  This expresses $F$ as the
pullback of the tautological bundle $\mathcal{S}$ on the Grassmannian
$\mathbf{Gr}(r,s)$ via the classifying map $\phi:Z\to\mathbf{Gr}(r,s)$.

\textit{Step 2: Universal $F^1$-connection.}
Choose a Hermitian metric on $\cc^s$ and equip $\mathcal{S}$ with the
induced Hermitian connection $\nabla_{\mathcal{S}}$.  This is an $F^1$-connection
on $\mathcal{S}$ and defines a representative $\ch^{\calD}_p(\mathcal{S})$ in
Deligne cohomology of the Grassmannian.  Pulling back via $\phi$:
\[
  \phi^*\nabla_{\mathcal{S}} =: \nabla_F
  \quad\text{is an }F^1\text{-connection on }F\text{ over }Z.
\]
By functoriality of Deligne cohomology,
$\ch^{\calD}_p(F) = \phi^*\ch^{\calD}_p(\mathcal{S})$.

\textit{Step 3: Independence by Dupont-Hain-Zucker.}
By Theorem~\ref{thm:DHZ}, the class in $H^{2p}_{\calD}(Z^o_\sigma,\zz(p))$
defined by $\nabla$ (constructed in Theorem~\ref{thm:F1-exists}) equals the
class defined by $\nabla_F$.  Both are equal to $\ch^{\calD}_p(F^\sigma|_{Z^o})$.

\textit{Step 4: Regulator identification.}
The bundle $F_K^\sigma$ on $Z^o$ defines, via the $\aaa^1$-homotopy equivalence
$Z^o\simeq S$, a map $S\to BGL_r(K)^+$.  The regulator class
$r^\sigma_{2p-1}\in H^{2p-1}(BGL^+(K),\rr/\zz)$ pulls back to a class on $S$.
By~\eqref{eq:DB-affine} together with the descent spectral sequence over the
cubical decomposition (Lemma~\ref{lem:DB-cube}),
\[
  H^{2p}_{\calD}(Z^o,\zz(p)) \cong H^{2p-1}(S,\cc/\zz),
\]
and under this identification $\ch^{\calD}_p(F^\sigma)$ corresponds to
$r^\sigma_{2p-1}|_S$.  This identification uses the fact that the regulator
is exactly the Deligne Chern class in the case of the tautological bundle
on a Grassmannian.

\textit{Step 5: Chern-Simons form.}
We now bridge the Deligne-cohomological identification of Steps 1--4 with
the actual $\cc/\zz$-valued Chern-Simons class appearing in the statement of
the theorem.

By Proposition~\ref{prop:nilflat-ch-zero}, the $F^1$-connection $\nabla$ of
Theorem~\ref{thm:F1-exists} is locally nil-flat, so $\ch_p(\nabla)=0$, hence
also $\ch_p^{\mathrm{nm}}(\nabla)=0$.
Choose any reference $F^1$-connection $\nabla_0$ on $F^\sigma|_{Z^o}$ that is
also locally nil-flat (for instance the canonical connection
$A^{\mathrm{can}}$ of \S\ref{subsec:canconn} below, restricted to $Z^o$, which
is flat by Lemma~\ref{lem:can-flat}).  The \emph{normalized} Chern-Simons
transgression form of Appendix~\ref{app:diffchar}, \S A.2--A.3
(the normalization of \eqref{eq:CS-form-normalised} there, and of
$v_p^{\mathrm{nm}}$ above),
\[
  v_p^{\mathrm{nm}}(\{\nabla_0,\nabla\}) \;=\; \frac{p}{p!\,(2\pi i)^p}\int_0^1
  \Tr\bigl(\dot\nabla_t\wedge F(\nabla_t)^{p-1}\bigr)\,dt
  \;\in\; A^{2p-1}(Z^o_{\cc,\sigma}),
\]
along any path $\nabla_t$ from $\nabla_0$ to $\nabla$, is closed: by
Lemma~\ref{lem:bott-chern} divided by $p!(2\pi i)^p$,
$d\,v_p^{\mathrm{nm}} = \ch_p^{\mathrm{nm}}(\nabla)-\ch_p^{\mathrm{nm}}(\nabla_0) = 0$,
both terms vanishing by local nil-flatness. Its class modulo $\zz$ (plain
$\zz$, \emph{not} the twisted lattice $\zz(p)=(2\pi i)^p\zz$ of Steps~1--4:
this is exactly what the normalization achieves --- compare the $p=1$ check
in the proof of Lemma~\ref{lem:gauge-variation}, where the analogous
normalized quantity $\Tr(A)/2\pi i$, and not the raw $\Tr(A)$, is what has
honestly integral, $\zz$-valued periods) defines
\[
  \CS_p(\nabla) := [v_p^{\mathrm{nm}}(\{\nabla_0,\nabla\})] \;\in\; H^{2p-1}(Z^o_{\cc,\sigma},\cc/\zz),
\]
the \emph{Chern-Simons class defined by $\nabla$} referred to in the
statement of the theorem. (An earlier version of this Step used the
un-normalized $v_p$ in place of $v_p^{\mathrm{nm}}$; that formula's natural
periods lie in $\zz(p)$, not $\zz$, exactly parallel to the un-normalized
$\Tr(A)$ having periods in $2\pi i\zz=\zz(1)$ at $p=1$, and so did not
literally land in the group $H^{2p-1}(Z^o_{\cc,\sigma},\cc/\zz)$ claimed
above without the normalizing factor supplied here.)

Recall from the edge map $\delta$ of Track~3 (the long exact
sequence~\eqref{eq:DB-les}) that
\[
  \delta : H^{2p-1}(Z^o_{\cc,\sigma},\cc/\zz) \longrightarrow
  H^{2p}_\calD(Z^o_\sigma,\zz(p))
\]
sends a differential character with zero curvature to its Deligne class. By
construction, $\delta(\CS_p(\nabla)) = [\nabla]_p$, the class of $\nabla$ as
an $F^1$-connection in $H^{2p}_\calD(Z^o_\sigma,\zz(p))$.  By Steps 1--3,
\[
  \delta(\CS_p(\nabla)) \;=\; [\nabla]_p \;=\; \ch^{\calD}_p(F^\sigma|_{Z^o}).
\]

It remains to identify $\delta$ with the isomorphism of Step~4.  Here one
must be careful not to argue cell by cell: by~\eqref{eq:DB-affine} both
$H^{2p-1}(\aaa(c)^o,\cc/\zz)$ and $H^{2p}_\calD(\aaa(c)^o,\zz(p))$ vanish on a
single affine piece once $p\geq2$, so the statement that $\delta$ is an
isomorphism is empty there.  The isomorphism is instead global, and comes from
the assembly of the cells: $Z^o$ is covered by the affine pieces $\aaa(c)$,
$c\in\Cube A$ (Corollary~\ref{cor:F-trivial-affine}), the associated
spectral sequence is concentrated in the row $t=1$ by
Lemma~\ref{lem:DB-affine}, and Lemma~\ref{lem:DB-cube} (equivalently
Proposition~\ref{prop:DB-A-realization}) yields
\[
  \delta: H^{2p-1}(Z^o,\cc/\zz)\;\xrightarrow{\;\sim\;}\;
  H^{2p}_\calD(Z^o,\zz(p)),
\]
compatibly with the cubical decomposition.
This is precisely the isomorphism $H^{2p}_\calD(Z^o,\zz(p))\cong
H^{2p-1}(S,\cc/\zz)$ used in Step~4 (composed with the homotopy equivalence
$Z^o\simeq S$).

Therefore $\CS_p(\nabla) = \delta^{-1}(\ch^{\calD}_p(F^\sigma|_{Z^o}))$, and by
Step~4 this corresponds, under $\delta^{-1}$ followed by $Z^o\simeq S$,
exactly to $r^\sigma_{2p-1}|_S$.  This proves the theorem.
\end{proof}

\subsection{The Rees bundle: explicit local coordinates}
\label{subsec:rees-local-coords}

We now give explicit formulas for the Rees bundle in local coordinates,
which were used in Step~5 above and will be needed again for the
$F^1$-connection construction.

Let $c\in\Cube A$ be a cube with $c^{-1}(t) = \{a_1,\ldots,a_k\}$ and
$c^{-1}(1) = \{a_0\}$ (for simplicity, a single ``$1$'' vertex).  The
corresponding projective cube is $\pp(c) = (\pp^1)^k$ with coordinates
$(y_1,\ldots,y_k)$, and the affine part is $\aaa(c) = (\aaa^1)^k$.

The vector space $G(c)$ is graded:
\[
  G(c) = \bigoplus_{\mathbf{j}=(j_1,\ldots,j_k)\in\zz^k}G(c)_\mathbf{j},
\]
where $G(c)_\mathbf{j} = \bigcap_l\Gr^{W(c;a_l)}_{j_l}(U(a_0))$.  The Rees
bundle is then:
\[
  F(c) = \bigoplus_\mathbf{j}G(c)_\mathbf{j}\otimes
  \Oo_{(\pp^1)^k}\left(-\sum_l j_l Q_l\right),
\]
where $Q_l = \{y_l=1\}\subset\pp(c)$.

\begin{remark}
The twist by $-j_l Q_l$ rather than $-j_l[0]$ or $-j_l[\infty]$ reflects
our choice of Rees construction along $y_l=1$ (i.e.\ we use $(1-y_l)$ as
the uniformizer rather than $y_l$ or $1/y_l$).  This choice is dictated by
the structure of Deligne's canonical extension, where the monodromy acts at
``$y=0$'' and the ``generic fibre'' lives at ``$y=1$''.
\end{remark}

\begin{lemma}
Over $\aaa(c) = \{y_l\neq\infty\;\forall l\}$, the bundle $F(c)$ is
trivializable, with a canonical trivialization given by the sections
$s_\mathbf{j} := \prod_l(1-y_l)^{j_l}$ of $\Oo(-\sum_l j_l Q_l)|_{\aaa(c)}$.
\end{lemma}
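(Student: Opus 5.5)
The plan is to read off the trivialization directly from the explicit splitting of $F(c)$ recorded just above. Since
\[
  F(c) = \bigoplus_\mathbf{j}G(c)_\mathbf{j}\otimes\Oo_{(\pp^1)^k}\Bigl(-\sum_l j_l Q_l\Bigr),
\]
it suffices to trivialize each line bundle $\Oo(-\sum_l j_lQ_l)$ over $\aaa(c)$, canonically and compatibly with the grading, and then tensor with the fixed finite-dimensional vector spaces $G(c)_\mathbf{j}$. By the product structure $\pp(c)=(\pp^1)^k$, the bundle $\Oo(-\sum_l j_lQ_l)$ is the exterior tensor product $\boxtimes_l \Oo_{\pp^1}(-j_l[1])$ pulled back along the projections. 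So I reduce to the one-variable statement: on $\pp^1$ with coordinate $y$, the rational function $(1-y)^{j}$ is a nowhere vanishing section of $\Oo(-j[1])$ over $\aaa^1=\pp^1\setminus\{\infty\}$. The products $s_\mathbf{j}=\prod_l(1-y_l)^{j_l}$ then follow by taking external tensor products.

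For the one-variable statement, I identify $\Oo(-j[1])$ with the subsheaf of rational functions $f$ satisfying $\mathrm{div}(f)\geq j[1]$. This is consistent with the Rees description $\xi=\sum x^{-a}W_a$ with $x=1-y$. The function $(1-y)^j$ has divisor $j[1]-j[\infty]$. On $\aaa^1$, the $[\infty]$ term is invisible, so $(1-y)^j$ is a local generator at every finite point. At $y=1$ it is exactly the uniformizer to the required power, and away from $y=1$ it is a unit. Hence multiplication by $(1-y)^j$ gives $\Oo_{\aaa^1}\xrightarrow{\sim}\Oo(-j[1])|_{\aaa^1}$, and this holds for all $j\in\zz$, negative $j$ included. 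I will also check that the identification of the Rees module at $y_l=1$ and $y_l=0$ (Proposition~\ref{prop:rees-face}) agrees with this convention, so that the sign of $j_l$ is right.

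For canonicity, the one step needing care, the point is that $s_\mathbf{j}$ depends only on the coordinates $y_l$ and the multidegree $\mathbf{j}$, both of which are fixed by the cube $c$ and the weight functions. No choice of splitting enters beyond the grading of $G(c)$, which is itself canonical once the weight functions are chosen. I expect the main obstacle to be the bookkeeping here rather than any real difficulty. It is worth remarking, though not needed for the statement, that under a face map setting $y_l$ to $0$ or $1$ the sections restrict to the corresponding $s_{\mathbf{j}'}$: the value at $y_l=0$ is $1$, and at $y_l=1$ only $j_l=0$ survives after passing to the graded. This gives compatibility with Proposition~\ref{prop:rees-glue}, in line with Remark~\ref{rem:log-conn}, where the associated connection form $\sum_l j_l\,\dlog(1-y_l)$ is the logarithmic derivative of $s_\mathbf{j}$.
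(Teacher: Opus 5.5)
Your argument is correct and is essentially the paper's. On $\aaa^1=\pp^1\setminus\{\infty\}$ the function $1-y_l$ generates $\Oo(-Q_l)$, since its divisor there is exactly $Q_l$, and taking products of powers trivializes $\Oo(-\sum_l j_lQ_l)|_{\aaa(c)}$ for every $\mathbf{j}\in\zz^k$.

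One caution on your optional face-map remark: $s_\mathbf{j}$ is a nowhere-vanishing section of $\Oo(-\sum_l j_lQ_l)$ on all of $\aaa(c)$, including along $\{y_l=1\}$. So every graded degree $j_l$ survives on that face; it is only the underlying rational function, not the section, that vanishes or has a pole there. The claim that ``only $j_l=0$ survives'' should therefore be dropped.
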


\begin{proof}
Over $\aaa^1 = \pp^1\setminus\{\infty\}$, the divisor $Q_l = \{1\}$ is
disjoint from $\{0\}$, so $\Oo(-Q_l)|_{\aaa^1}$ is trivial with generator
$(1-y_l)$.  The product $\prod_l(1-y_l)^{j_l}$ trivializes
$\Oo(-\sum_l j_l Q_l)|_{\aaa(c)}$.
\end{proof}

\begin{corollary}\label{cor:F-trivial-affine}
The bundle $F$ is trivializable on every affine piece $\aaa(c)$.  In
particular, $F|_{\aaa\Cube A}$ is locally trivial with respect to the
affine cover by $\{\aaa(c)\}_{c\in\Cube A}$.
\end{corollary}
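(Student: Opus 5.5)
The plan is to glue the cubewise trivializations of the preceding lemma, which already do most of the work. Fix a cube $c\in\Cube A$ and recall the explicit presentation
\[
  F(c)=\bigoplus_{\mathbf j}G(c)_{\mathbf j}\otimes\Oo_{(\pp^1)^k}\Bigl(-\sum_l j_lQ_l\Bigr).
\]
On $\aaa(c)$ each line bundle $\Oo(-\sum_l j_lQ_l)$ is trivialized by $s_{\mathbf j}=\prod_l(1-y_l)^{j_l}$, by the preceding lemma. Choosing a basis of each finite-dimensional $K$-vector space $G(c)_{\mathbf j}$ then gives a global frame of $F(c)|_{\aaa(c)}$. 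Since $F|_{\aaa(c)}=F(c)|_{\aaa(c)}$ by the gluing of Proposition~\ref{prop:rees-glue} and Theorem~\ref{thm:global-rees}, this proves the first assertion. Alternatively, one can note that $\xi(G(c);\dots)$ is free over $K[y_1,\dots,y_k]$ by Proposition~\ref{prop:rees-projective}, once a common splitting of the filtrations has been fixed.

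For the second assertion, the sets $\aaa(c)$ must cover $\aaa\Cube A$, which holds by definition of the realization as $\bigcup_c\aaa(c)$. These pieces are closed rather than open, so ``locally trivial with respect to the cover'' should be read in the cubical sense: $F$ restricted to each piece is trivial, and the transition data on a common face $\aaa(c')$ with $c'\le c$ are the canonical isomorphisms $F(c)|_{\aaa(c')}\cong F(c')$ of Proposition~\ref{prop:rees-glue}. I would also check that each such transition is expressed in the chosen frames by an invertible constant matrix over $K$, or at least by an element of $GL_r(K[y])$.

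To see this, note that restricting $s_{\mathbf j}$ to $y_i=0$ gives $1$ in the $i$-th factor, which matches the passage to $\Gr^{W}$ in Proposition~\ref{prop:rees-face}. Restricting to $y_i=1$ instead means forgetting the $i$-th filtration, and here one must use the $x_j=1$ identification rather than the literal vanishing of $(1-y_i)$.

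The main obstacle is this compatibility at faces $y_i=1$. There the frame $s_{\mathbf j}$ degenerates, so the canonical trivialization does not literally restrict to the trivialization of the face. I would handle this by re-choosing frames inductively on the dimension of the cube, in the same style as the proof of Theorem~\ref{thm:F1-exists}. Since the statement only asks for triviality on each piece, existence of some frame suffices, and the mismatch at $y_i=1$ only affects how the transition isomorphisms of Proposition~\ref{prop:rees-glue} look in coordinates, not whether they are isomorphisms.
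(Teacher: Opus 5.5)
Your proposal is correct and is the paper's argument: the corollary is read off from the preceding lemma, with the frame given by the sections $s_{\mathbf j}=\prod_l(1-y_l)^{j_l}$ tensored with bases of the spaces $G(c)_{\mathbf j}$, and local triviality over the pieces $\aaa(c)$ is then immediate. The face-compatibility analysis in your last two paragraphs is not needed, and the worry behind it is unfounded: $(1-y_i)^{j_i}$ vanishes as a function at $y_i=1$, but as a section of $\Oo(-j_iQ_i)$ it generates the line bundle at every point of $\aaa^1$, including along $Q_i$, so the frame does not degenerate there.
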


\subsection{The canonical connection on the Rees bundle}
\label{subsec:canconn}

\begin{definition}
The \emph{canonical connection} on $F(c)|_{\aaa(c)}$ is defined, in the
trivialization given by $\{s_\mathbf{j}\}$, by the connection form
\[
  A^{\mathrm{can}} := \sum_\mathbf{j}\left(\sum_l j_l\,\dlog(1-y_l)\right)
  \mathrm{Id}_{G(c)_\mathbf{j}}.
\]
\end{definition}

This is the logarithmic connection with poles along $\pp(c)\setminus\aaa(c)$
coming from the trivialization of the line bundles $\Oo(-j_lQ_l)$.

\begin{lemma}\label{lem:can-flat}
The canonical connection $A^{\mathrm{can}}$ is flat on $\aaa(c)$.
\end{lemma}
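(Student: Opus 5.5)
The plan is to compute the curvature block by block, using the grading $F(c)=\bigoplus_{\mathbf j}G(c)_{\mathbf j}\otimes\Oo(-\sum_l j_lQ_l)$. By definition, in the frame given by the sections $s_{\mathbf j}=\prod_l(1-y_l)^{j_l}$, the connection form $A^{\mathrm{can}}$ is block diagonal. On the summand $G(c)_{\mathbf j}$ it is the scalar $1$-form
\[
  \omega_{\mathbf j}:=\sum_l j_l\,\dlog(1-y_l)
\]
times the identity. The curvature in this frame is $F(A^{\mathrm{can}})=dA^{\mathrm{can}}+A^{\mathrm{can}}\wedge A^{\mathrm{can}}$, so it suffices to show that both terms vanish on each block.

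For the quadratic term, the blocks do not interact because $A^{\mathrm{can}}$ preserves the decomposition into the $G(c)_{\mathbf j}$. On a single block we get $\omega_{\mathbf j}\wedge\omega_{\mathbf j}\cdot\mathrm{Id}$, and $\omega_{\mathbf j}\wedge\omega_{\mathbf j}=0$ because $\omega_{\mathbf j}$ is a scalar $1$-form. For the linear term, each $\dlog(1-y_l)=d\log(1-y_l)$ is locally exact wherever it is defined, so $d\omega_{\mathbf j}=0$. Hence $F(A^{\mathrm{can}})=0$ blockwise, which gives flatness. Equivalently, each block is the rank-one connection for which the local section $s_{\mathbf j}^{-1}$ (a branch of $\prod_l(1-y_l)^{-j_l}$) is flat, and a rank-one connection with a nowhere-vanishing flat section is flat.

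The main obstacle is making sense of the statement near the divisors $Q_l=\{y_l=1\}$, which lie inside $\aaa(c)$. There $\dlog(1-y_l)$ has a pole, and one must check that this does not introduce curvature. I will follow the conventions of the preceding Lemma and Remark~\ref{rem:log-conn}: the $s_{\mathbf j}$ form the chosen frame of $\Oo(-\sum_l j_lQ_l)|_{\aaa(c)}$, and the connection is defined through this frame. I will read flatness as the vanishing of the curvature form on the locus where $A^{\mathrm{can}}$ is smooth, namely $\aaa(c)\setminus\bigcup_lQ_l$. On that locus the computation above is literally valid.

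I will then note that the curvature is a $\mathcal{C}^\infty$ (indeed holomorphic) $\End$-valued $2$-form wherever defined, and that it vanishes identically on the dense open complement of the $Q_l$. Therefore no nonzero curvature term can be supported on the $Q_l$ in the sense used later. All later uses need only the vanishing of $\ch_p$ and closedness of transgression forms, and these are consistent with Proposition~\ref{prop:nilflat-ch-zero}. If a current-level statement were needed, I would first try checking that the residue $j_l\,\mathrm{Id}$ along $Q_l$ is an integer multiple of the identity. That makes the monodromy of the local flat sections trivial, so no delta-type contribution arises.
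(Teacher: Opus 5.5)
Your main computation is the paper's proof. Block-diagonality with scalar blocks kills $A^{\mathrm{can}}\wedge A^{\mathrm{can}}$, closedness of $\dlog(1-y_l)$ kills $dA^{\mathrm{can}}$, and the conclusion holds away from $\{y_l=1\}$, which is exactly what the paper establishes.

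You should, however, drop your supplementary claims about $Q_l$.

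\emph{The density argument.} It says nothing about $Q_l$, because $A^{\mathrm{can}}$ is not defined there.

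\emph{The residue argument.} Integrality of the residue does not remove a delta-type term. By Poincar\'e--Lelong, as currents one has $d\bigl(j_l\,\dlog(1-y_l)\bigr)=2\pi i\,j_l\,\delta_{Q_l}$ (up to sign convention), where $\delta_{Q_l}$ is the current of integration over $Q_l$. This holds for every $j_l\neq0$. Integrality only gives trivial local monodromy, i.e.\ meromorphic gauge-equivalence to $d$ via $s_{\mathbf j}^{-1}$, not a vanishing curvature current.

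\emph{Relevance to later uses.} Since $Q_l$ contains the face $\{y_l=1\}$ of $\iiii(c)$, it is not a negligible locus for the later uses you invoke.

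The correct statement is flatness of the logarithmic connection on $\aaa(c)\setminus\bigcup_l Q_l$, exactly as in the paper's argument.
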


\begin{proof}
The curvature of $A^{\mathrm{can}}$ is $dA^{\mathrm{can}}+A^{\mathrm{can}}\wedge A^{\mathrm{can}}$.
Since $A^{\mathrm{can}}$ is block-diagonal (acting on each graded piece $G(c)_\mathbf{j}$
as scalar multiplication), $A^{\mathrm{can}}\wedge A^{\mathrm{can}} = 0$.  And
$d(\dlog(1-y_l)) = 0$ away from $y_l=1$ (since $\dlog(1-y_l) =
-\frac{dy_l}{1-y_l}$ is closed on $\aaa^1\setminus\{1\}$).
\end{proof}

\begin{proposition}\label{prop:can-conn-compatible}
The canonical connection $A^{\mathrm{can}}$ is compatible with the filtration
$W(F(c);\mu(c))$ and induces the trivialization $\tau(c)$ on the associated
graded.
\end{proposition}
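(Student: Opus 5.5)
The plan is to work in the trivialization $\{s_\mathbf{j}\}$ of $F(c)|_{\aaa(c)}$ given by the Lemma preceding Corollary~\ref{cor:F-trivial-affine}. There the connection form is block-diagonal with respect to the decomposition $F(c)=\bigoplus_\mathbf{j}G(c)_\mathbf{j}\otimes\Oo(-\sum_l j_lQ_l)$, and the statement reduces to two checks: first, that each step of the filtration $W(F(c);\mu(c))$ is a sum of the graded blocks $G(c)_\mathbf{j}\otimes\Oo(-\sum_l j_lQ_l)$; second, that on each graded quotient the induced connection is exactly the log-connection of Remark~\ref{rem:log-conn}, which defines $\tau(c)$.

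\textbf{Step 1 (compatibility with the filtration).} By the Definition of $W(c;a)$, the filtration $W(G(c);\mu(c))$ on $G(c)$ is either one of the Rees filtrations $W(G(c);a_l)$ (when $\mu(c)\in c^{-1}(t)$) or the standard filtration coming from the grading of $U(\mu(c))$. In both cases the common splitting $G(c)=\bigoplus_\mathbf{j}G(c)_\mathbf{j}$ is adapted to it, using the locally abelian hypothesis underlying Proposition~\ref{prop:rees-projective}. Hence
\[
W(F(c);\mu(c))_m=\bigoplus_{\mathbf{j}:\,w(\mathbf{j})\le m}G(c)_\mathbf{j}\otimes\Oo\Bigl(-\sum_l j_lQ_l\Bigr)
\]
for the weight function $w$ induced by the morphism $\mu$ into $\Xi_K$. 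Since $A^{\mathrm{can}}$ acts on each summand by a scalar $1$-form, it preserves every such direct sum.

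\textbf{Step 2 (graded quotients).} On $\Gr_m$, the induced connection is $\bigoplus_{w(\mathbf{j})=m}\bigl(\sum_l j_l\,\dlog(1-y_l)\bigr)\mathrm{Id}$, computed in the frame $s_\mathbf{j}$. This is precisely the connection whose flat sections are the $\tau(c)$-trivializing sections, namely the sections $(1-y_l)^{j_l}$ in the proof of Theorem~\ref{thm:global-rees}, via the isomorphism $\Gr^{W(F(c);\mu(c))}_j(F(c))\cong\Oo(-\sum j_lQ_l)\otimes U_j(\mu(c))$. One caveat needs checking here: on the frame of $\Oo(-jQ)$ generated by $(1-y)^j$, the trivial connection in the ambient $\Oo$-frame yields $j\,\dlog(1-y)$. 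I would verify this sign convention against Remark~\ref{rem:log-conn} so that $\tau(c)$ is literally induced, not merely up to a scalar gauge.

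\textbf{Step 3 (gluing).} Compatibility must persist across faces $c'\le c$, so that the statement holds on $\aaa\Cube A$ and not just cube by cube. Under the restrictions of Proposition~\ref{prop:rees-glue}, the identity $(1-y_l)^{j_l}|_{y_l=0}=1$ shows that the term $j_l\,\dlog(1-y_l)$ vanishes when $y_l$ is set to $0$ or $1$. Indeed, $d y_l=0$ on those faces, and the pole at $y_l=1$ must be handled by noting that the $y_l=1$ face removes the filtration $W(G(c);a_l)$ and re-indexes the blocks. Consequently $A^{\mathrm{can}}(c)$ restricts to $A^{\mathrm{can}}(c')$, and Proposition~\ref{prop:patching-monotone} gives compatibility of the $\tau$'s.

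\textbf{Main obstacle.} I expect the hardest point to be this last gluing step at the face $y_l=1$, where $\dlog(1-y_l)$ has a pole. There one must show that the summands with $j_l\neq 0$ become identified, through the isomorphism $G(c')\cong G(c)$ that forgets $W(G(c);a_l)$, in a way that removes the singular term. My first attempt would be to pass to the frame $s_\mathbf{j}$ only after restricting, that is, to compare both connections in the frame of $G(c')$.
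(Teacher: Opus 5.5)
Your Steps 1 and 2 are the paper's proof: $W(F(c);\mu(c))$ is a sum of the blocks $G(c)_{\mathbf{j}}\otimes\Oo(-\sum_l j_lQ_l)$ sorted by $\mu(c)$-weight, $A^{\mathrm{can}}$ is scalar on each block, and on $\Gr$ it is the log-connection $\sum_l j_l\,\dlog(1-y_l)$, which the paper identifies with $\tau(c)$ (Remark~\ref{rem:log-conn}). Your sign caveat is well founded: the flat sections of $A^{\mathrm{can}}$ are $\prod_l(1-y_l)^{-j_l}s_{\mathbf{j}}$ rather than $s_{\mathbf{j}}$, so the statement holds only with that convention for $\tau(c)$.

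Step 3 is not needed, since the proposition concerns a single $\aaa(c)$ and face compatibility is Proposition~\ref{prop:patching-monotone}. As written, Step 3 is also wrong at $y_l=1$, where $\dlog(1-y_l)$ has its pole rather than restricting to zero, so the ``main obstacle'' you anticipate is not part of this proof.
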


\begin{proof}
The filtration $W(F(c);\mu(c))$ on $F(c)$ is induced by the weight filtration
$W(c;\mu(c))$ on $G(c)$ (the filtration associated to the maximum element).
In the decomposition $G(c) = \bigoplus_\mathbf{j}G(c)_\mathbf{j}$, the weight
$j_{\mu}$ in the $\mu(c)$-direction gives the $W(c;\mu(c))$-degree.  Since
$A^{\mathrm{can}}$ acts diagonally on $G(c)_\mathbf{j}$, it preserves the
filtration.  On the associated graded pieces, the scalar
$\sum_l j_l\,\dlog(1-y_l)$ restricted to fixed $j_\mu$ (i.e.\ fixed
$W(c;\mu(c))$-degree) gives the trivialization $\tau(c)$.
\end{proof}

\subsection{Local description of the patching compatibility}

We now verify that the patching data on the Rees bundle is indeed stratified.

\begin{proposition}[Detailed version of Theorem~\ref{thm:global-rees}]
For $c'\leq c$ (inclusion of cubes), the isomorphism
$\Phi_{c',c}: F(c)|_{\pp(c')}\xrightarrow{\;\sim\;}F(c')$ is compatible with
the filtrations and trivializations:
\[
  \Phi_{c',c}^*(W(F(c');\mu(c'))) = W(F(c);\mu(c))|_{\pp(c')},
\]
and $\Phi_{c',c}$ maps $\tau(c)|_{\aaa(c')}$ to $\tau(c')$.
\end{proposition}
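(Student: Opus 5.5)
The plan is to reduce everything to an explicit computation with the graded decomposition of $G(c)$, one face at a time, and then assemble faces by the cocycle condition of Proposition~\ref{prop:rees-glue}. Since every face inclusion $c'\leq c$ factors as a chain of codimension-one faces, and both the filtrations and the trivializations are compatible with composition (Proposition~\ref{prop:rees-face-detailed}), it suffices to treat a single coordinate $a_i\in c^{-1}(t)$ which becomes $c'(a_i)=0$ or $c'(a_i)=1$. In each case I will write $F(c)=\bigoplus_{\mathbf j}G(c)_{\mathbf j}\otimes\Oo(-\sum_l j_lQ_l)$ as in \S\ref{subsec:rees-local-coords}, restrict to the hyperplane $\{y_i=0\}$ or $\{y_i=1\}$, and compare with the analogous decomposition of $F(c')$.

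\textbf{Case $c'(a_i)=1$.} On $\{y_i=1\}$ the factor $\Oo(-j_iQ_i)$ restricts to the fibre at the Rees point. Proposition~\ref{prop:rees-face} identifies the restriction with $\xi(G(c);\ldots,\widehat{W(G(c);a_i)},\ldots)$, and $G(c')$ absorbs the $a_i$-graded piece. Now $\mu(c')$ is either $\mu(c)$, or a smaller element if $a_i=\mu(c)$. In the first sub-case the filtration $W(c;\mu(c))$ is untouched, so the equality of filtrations is immediate. In the second sub-case, $\mu(c')=a_{i-1}$ (or the old $1$-vertex), and I need the functoriality of $U:A\to\Xi_K$ (Construction~\ref{constr:A-to-Xi}). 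That functoriality says the filtration from $\mu(c)$ refines the one from $\mu(c')$, so the $\mu(c')$-filtration on $F(c')$ pulls back to the coarsening, along $U(\mu(c')\leq\mu(c))$, of $W(F(c);\mu(c))|_{\pp(c')}$. This is where I expect the main obstacle. The statement asserts literal equality $\Phi^*W(F(c');\mu(c'))=W(F(c);\mu(c))|_{\pp(c')}$, whereas Proposition~\ref{prop:patching-monotone} only gives refinement when $\mu$ drops. I would therefore first try to read the equality as an equality of \emph{steps of the standard filtration of $U(\mu(\cdot))$ transported through $g$}, and verify it using the definition of morphisms in $\Xi_K$ (the filtration contains the standard one). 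If literal equality fails, I would restate it as the refinement of Proposition~\ref{prop:patching-monotone}, with equality on the coarse steps.

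\textbf{Case $c'(a_i)=0$.} Setting $y_i=0$ leaves $\Oo(-j_iQ_i)$ trivial near $y_i=0$, since $Q_i$ sits at $1$. The restriction is $\xi(\Gr^{W(G(c);a_i)}G(c);\ldots)$ by Proposition~\ref{prop:rees-morphism}, and $\mu$ is unchanged unless $a_i$ was the maximum, which cannot happen because $\mu(c)\in\mathrm{Supp}(c')$ is forced by linear order. Thus $W(c;\mu(c))$ descends to the induced filtration on the graded, which is $W(c';\mu(c'))$ by the definition of $W(c;a)$ through $U(a_0)$.

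\textbf{Trivializations.} In both cases I will check directly on the canonical sections $s_{\mathbf j}=\prod_l(1-y_l)^{j_l}$. Restricting to $y_i=1$ or $y_i=0$ kills or evaluates the factor $(1-y_i)^{j_i}$: at $y_i=0$ it becomes $1$, and at $y_i=1$ the identification of Proposition~\ref{prop:rees-face} is precisely division by it. Hence $s_{\mathbf j}|_{\aaa(c')}=s'_{\mathbf j'}$ with $\mathbf j'$ the truncated multi-index. Because $\tau(c)$ is defined via these sections and the identification $G(c)_{\mathbf j}\cong U_j(\mu(c))$ of Theorem~\ref{thm:global-rees}, this gives $\Phi_{c',c}(\tau(c)|_{\aaa(c')})=\tau(c')$, up to the grading isomorphism $g$. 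That isomorphism is absorbed by functoriality when $\mu$ drops.
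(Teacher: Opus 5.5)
Your case analysis of when $\mu$ changes is inverted, and the subcase you dismiss is the one that carries the difficulty. Since $\mathrm{Supp}(c)=\{a: c(a)\neq 0\}$, replacing $c(a_i)=t$ by $c'(a_i)=1$ never changes the support, so $\mu(c')=\mu(c)$ on every such face. The maximum $\mu$ drops precisely when $c'(a_i)=0$ and $a_i=\mu(c)$.

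Your claim that $\mu(c)\in\mathrm{Supp}(c')$ is forced by linear order is false. Take $a_0<a_1$ in $A$ with $c(a_0)=1$, $c(a_1)=t$ and $c'(a_1)=0$; then $\mu(c)=a_1$ and $\mu(c')=a_0$. Here $G(c)\cong U(a_0)$ and $F(c)=\xi(U(a_0);W)$, with $W$ the filtration of the morphism $U(a_0\leq a_1)$. On $\{y=0\}$ the uniformizer $1-y$ is a unit (as you note yourself, $Q_i$ sits at $y_i=1$), so the fibre there is $U(a_0)$ with $W$ forgotten rather than graded. Thus $W(F(c);\mu(c))$ restricts to $W$, while $W(F(c');\mu(c'))$ is the standard filtration of $U(a_0)$, and these differ whenever $W$ strictly refines the standard filtration.

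So your conclusion that, in the case $c'(a_i)=0$, $W(c;\mu(c))$ descends to $W(c';\mu(c'))$ fails exactly when $a_i=\mu(c)$. Meanwhile the obstacle you analyse under $c'(a_i)=1$ never arises. You also inherit from Proposition~\ref{prop:rees-face-detailed} the placement of the associated graded at $y_i=0$; with uniformizer $1-y_i$, the graded sits at $y_i=1$.

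The repair is the fallback you already anticipate, attached to the right faces.
On faces with $\mu(c')=\mu(c)$, prove the equality and check $\tau(c)|_{\aaa(c')}\mapsto\tau(c')$ by your computation on the sections $(1-y_l)^{j_l}$.
On faces with $\mu(c')<\mu(c)$, prove only that $W(F(c);\mu(c))|_{\pp(c')}$ refines $\Phi_{c',c}^*W(F(c');\mu(c'))$, using monotonicity of $U$. There the two trivializations are related by the order of Definition~\ref{def:patchingdatum} rather than identified, and the displayed equality fails in general.

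This refinement is in fact all the paper's own proof establishes. Without reducing to codimension one, it argues from $\mu(c')\leq\mu(c)$ that the restricted $c$-filtration contains the $c'$-filtration as a coarser sub-filtration, which is the content of Proposition~\ref{prop:patching-monotone}. It then restricts the sections $(1-y_l)^{j_l}$.
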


\begin{proof}
Since $c'\leq c$ we have $\mu(c')\leq\mu(c)$.  The filtration
$W(F(c);\mu(c))$ is associated to the maximum element of $c$, while
$W(F(c');\mu(c'))$ is associated to the maximum element of $c'$.  The
inequality $\mu(c')\leq\mu(c)$ means the filtration for $c$ is finer; its
restriction to $\pp(c')$ contains the filtration for $c'$ as a coarser
sub-filtration.

For the trivialization: over $\aaa(c')$, the Rees construction for $c$
restricted to $\aaa(c')$ (by setting $y_l=1$ for $l$ with $c(a_l)=1$
but $c'(a_l)=t$, and $y_l=0$ for $l$ with $c(a_l)=t$
but $c'(a_l)=0$) gives exactly the Rees construction for $c'$.  Under this
identification, the canonical trivialization $\tau(c)$ (given by sections
$(1-y_l)^{j_l}$) restricts to $\tau(c')$.
\end{proof}


\subsection{Review of secondary characteristic classes}

We give a more systematic treatment of secondary characteristic classes,
following \cite{CS} and \cite{DHZ}.

\subsubsection{Differential characters}

\begin{definition}
A \emph{differential character} of degree $2p-1$ on $Z$ is a group
homomorphism $\hat{h}: Z_{2p-1}^\infty(Z)\to\rr/\zz$ from smooth singular
$(2p-1)$-cycles, such that for every smooth singular $2p$-chain $\sigma$:
\[
  \hat{h}(\partial\sigma) = \int_\sigma\omega \pmod{\zz},
\]
for a fixed smooth closed $2p$-form $\omega = \omega(\hat{h})$ called the
\emph{curvature} of $\hat{h}$.
\end{definition}

The space of differential characters of degree $2p-1$ with curvature $\omega$
is a torsor under $H^{2p-1}(Z,\rr/\zz)$.

\begin{theorem}[Cheeger-Simons {\cite{CS}}]
Every $\mathcal{C}^\infty$ complex vector bundle $E\to Z$ with connection
$\nabla$ defines a differential character $\widehat{\ch}_p(E,\nabla)$ of
degree $2p-1$ with curvature $\ch_p(\nabla)$.  Two connections define the
same character if and only if they are homotopic through connections with
the same curvature.
\end{theorem}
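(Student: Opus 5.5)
The plan is to prove the two assertions separately: first existence of $\widehat{\ch}_p(E,\nabla)$ with curvature $\ch_p(\nabla)$, then the comparison of two connections through the variation formula for a path. The "only if" half is where I expect the real difficulty.

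\emph{Existence.} I would use the universal connection. By Narasimhan–Ramanan, $(E,\nabla)$ is pulled back from the tautological bundle with its Hermitian connection on a Grassmannian $\mathbf{Gr}(r,N)$, $N\gg0$, via a smooth map $f$. On the Grassmannian $H^{\mathrm{odd}}(\mathbf{Gr},\rr/\zz)=0$, because its cohomology is torsion-free and concentrated in even degrees. Hence the torsor of characters with curvature $\ch_p(\nabla_{\mathcal S})$ has a unique element whose associated integral class is the universal $\ch_p$ (after clearing denominators by working with $c_p$). Set $\widehat{\ch}_p(E,\nabla):=f^*$ of that element. Independence of $f$ and of $N$ follows by joining two classifying maps by a homotopy and using the homotopy formula below on $Z\times[0,1]$. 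Its curvature is $f^*\ch_p(\nabla_{\mathcal S})=\ch_p(\nabla)$.

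\emph{Variation formula.} For a smooth path $\nabla_t$, form the connection $\widetilde\nabla=\nabla_t+dt\,\partial_t$ on $E\times[0,1]\to Z\times[0,1]$. Applying the naturality just established to the inclusions at $t=0,1$ and to the chain $\sigma\times[0,1]$ gives
\[
\widehat{\ch}_p(\nabla_1)-\widehat{\ch}_p(\nabla_0)=\Bigl[\int_0^1 \iota_{\partial_t}\ch_p(\widetilde\nabla)\,dt\Bigr]
=\Bigl[\tfrac{p}{p!(2\pi i)^p}\int_0^1\Tr\bigl(\dot\nabla_t\wedge F(\nabla_t)^{p-1}\bigr)dt\Bigr]\pmod{\text{integral periods}},
\]
evaluated on cycles. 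This is the transgression form $v_p^{\mathrm{nm}}$ already used in Step 5 of Theorem~\ref{thm:CS-regulator}, and Lemma~\ref{lem:bott-chern} gives $dv_p^{\mathrm{nm}}=\ch_p(\nabla_1)-\ch_p(\nabla_0)$.

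\emph{"If" direction.} If $\ch_p(\nabla_t)$ is independent of $t$, then for each subinterval the transgression is closed, so $t\mapsto\widehat{\ch}_p(\nabla_t)$ is a continuous path in the single torsor over $H^{2p-1}(Z,\rr/\zz)$. I would then show that its derivative, the closed form $\iota_{\partial_t}\ch_p(\widetilde\nabla)|_{t}$, is exact. To do this, I would reduce it to a statement on $Z\times[0,1]$: the form $\ch_p(\widetilde\nabla)$ has vanishing $Z$-part of constant-in-$t$ type, hence $d_Z$ of its $dt$-part vanishes. I would then apply the Cheeger–Simons rigidity argument: the class lies in the image of $H^{2p-1}(Z,\rr)$ and varies through a connected family whose endpoints differ by a rational (torsion) class. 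This step is where the statement is most delicate, and I would first test it on flat connections with $p\ge2$, where $F(\nabla_t)=0$ kills the integrand outright.

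\emph{"Only if" direction, the main obstacle.} Suppose $\widehat{\ch}_p(\nabla_0)=\widehat{\ch}_p(\nabla_1)$. Then the curvatures agree, and the linear path has transgression with integral periods modulo exact forms. I must now produce a path with \emph{constant} $\ch_p$. My first attempt would be to modify the linear path $\nabla_0+tA$ by a correction $B_t$, vanishing at the ends, solving $\frac{d}{dt}\ch_p(\nabla_t+B_t)=0$. This linearizes to $d\bigl(p\Tr(\dot B_t F^{p-1})\bigr)=-d\bigl(p\Tr(A F_t^{p-1})\bigr)$, which is solvable pointwise if $E$ has large enough rank to make the map $B\mapsto\Tr(BF^{p-1})$ surjective onto $(2p-1)$-forms. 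If the rank is too small, I would stabilize by adding a trivial flat summand, which does not change the character. The remaining integral periods would be absorbed by concatenating with a gauge loop, whose character change is the integral class. I expect this surjectivity/stabilization step to be the main obstacle.
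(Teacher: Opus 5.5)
The paper does not prove this statement; it quotes it from \cite{CS}. Its second sentence is not what Cheeger and Simons prove. Their results are existence and uniqueness of natural characters \cite[Thm.~2.2]{CS} and the variation formula \cite[Prop.~2.9]{CS}, which is your middle step. As written, the ``if'' direction is false, so the step where you propose to show that $\iota_{\partial_t}\ch_p(\widetilde\nabla)$ is exact cannot be carried out. Constancy of $\ch_p(\nabla_t)$ only makes the transgression form closed, and its class modulo integral periods is exactly the change of the character.

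Here is a counterexample. Take $Z=S^1\times S^2$, let $E$ be the pull-back of a degree-one line bundle on $S^2$ with a unitary connection $\nabla_0$, and set $\nabla_t=\nabla_0+its\,d\theta$. The curvature is literally constant, $F(\nabla_t)=F(\nabla_0)$. Yet your own variation formula gives $\widehat{\ch}_2(\nabla_1)-\widehat{\ch}_2(\nabla_0)=\bigl[\tfrac{is}{(2\pi i)^2}\,d\theta\wedge F(\nabla_0)\bigr]$. Its value on $[S^1\times S^2]$ is $\pm s$, which is non-zero in $\rr/\zz$ for $s\notin\zz$. For $p=1$, the flat path $d+its\,d\theta$ on the trivial line bundle over $S^1$ already changes the holonomy.

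If ``same curvature'' is read as ``same $\ch_p$-form'', the failure is starker. On a closed $3$-manifold $\ch_2\equiv 0$ for degree reasons, so every path qualifies. The ``if'' direction would then make the Chern--Simons functional constant, contradicting the paper's own Remark~\ref{rem:DS-CS-no-order}. The rigidity you appeal to exists only for families of \emph{flat} connections with $p\geq 2$, where $\Tr(\dot\nabla_t\wedge F(\nabla_t)^{p-1})$ vanishes pointwise. That is your test case and nothing more; no mechanism forces the endpoints to differ by a torsion class.

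The ``only if'' construction does not work either. Under the literal reading it is false: $\nabla$ and a gauge transform $g\cdot\nabla$ have the same character, but their curvatures $F$ and $gFg^{-1}$ are in general different. So no path of constant curvature joins them.

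Under the $\ch_p$-reading, your correction scheme has concrete defects:
\begin{itemize}
\item Adding a trivial flat summand produces a path of connections on $E\oplus\cc^m$, not on $E$.
\item That summand contributes nothing to $F$, so it cannot make $B\mapsto\Tr(B\wedge F^{p-1})$ surjective where $F^{p-1}$ degenerates, for instance where $\nabla_t$ is flat.
\item The equation for $B_t$ is nonlinear, since $F$ depends on $B_t$, so pointwise solvability of the linearization does not produce a path.
\item Composing with a gauge loop moves the endpoint to $g\cdot\nabla_1$ rather than $\nabla_1$.
\end{itemize}

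Even the existence step needs repair. An $\rr/\zz$-valued character must have curvature with integral periods, whereas $\ch_p$ has only rational ones; for example $\int_{\pp^2}\ch_2(\Oo(1))=\tfrac12$. So the uniqueness argument on the Grassmannian gives $\widehat{\ch}_p$ only with values in $\rr/\qq$, as for Cheeger--Simons' $\hat{ch}$, or gives $\hat c_p$ in $\rr/\zz$. ``Clearing denominators'' does not return an $\rr/\zz$-valued $\widehat{\ch}_p$.

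What your outline does establish is existence in this corrected sense plus the variation formula. That is the true content of the cited result, and it is all the paper actually uses (Proposition~\ref{prop:transgression}).
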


\subsubsection{The integration formula}

For a locally nil-flat connection $\nabla$, $\ch_p(\nabla) = 0$, so the
curvature of the differential character $\widehat{\ch}_p(E,\nabla)$ is zero.
Hence $\widehat{\ch}_p(E,\nabla)\in\Hom(Z_{2p-1}^\infty(Z),\rr/\zz)$ is a
cohomology class.

Concretely: for a smooth $(2p-1)$-cycle $\gamma$, the value
$\widehat{\ch}_p(E,\nabla)(\gamma)\in\rr/\zz$ is computed as follows.
Write $\gamma = \partial\Gamma$ for a $2p$-chain $\Gamma$ in $BGL_r(K)^+$
(using the classifying map of $E$).  Then:
\[
  \widehat{\ch}_p(E,\nabla)(\gamma) = \int_\Gamma r_{2p}^{\calD} \pmod{\zz},
\]
where $r_{2p}^{\calD}$ is the universal Deligne Chern form on $BGL^+(K)$.

\subsubsection{The transgression formula}

\begin{proposition}[Transgression]\label{prop:transgression}
Let $\nabla_0,\nabla_1$ be connections on $E$, with $\nabla_t = \nabla_0 + tA$
a linear path ($A = \nabla_1-\nabla_0$).  Then:
\[
  \widehat{\ch}_p(E,\nabla_1) - \widehat{\ch}_p(E,\nabla_0)
  = [CS_p(\nabla_0,\nabla_1)]\in H^{2p-1}(Z,\mathbb{C}/\zz),
\]
where $CS_p(\nabla_0,\nabla_1) = \dfrac{p}{p!\,(2\pi i)^p}\displaystyle\int_0^1
\Tr(A\wedge F(\nabla_t)^{p-1})\,dt$. 
\end{proposition}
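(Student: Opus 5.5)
The plan is to realize the difference of characters as the class of a closed transgression form, via a one-parameter family of connections and Stokes on a cylinder. Put $\tilde Z := Z\times[0,1]$ with projection $\pi:\tilde Z\to Z$, and on $\pi^*E$ the connection
\[
\tilde\nabla := \nabla_0 + tA, \qquad\text{so that}\qquad F(\tilde\nabla) = dt\wedge A + F(\nabla_t).
\]
The terms with no $dt$ give $F(\nabla_t)$ on each slice. Expanding $\Tr(F(\tilde\nabla)^p)$ and keeping the part linear in $dt$ gives $p\,dt\wedge\Tr(A\wedge F(\nabla_t)^{p-1})$; this uses cyclicity of the graded trace and the fact that the even-degree $F(\nabla_t)$ commutes with $dt$. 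Fibre integration over $[0,1]$ followed by Stokes then yields the transgression identity
\[
\ch_p^{\mathrm{nm}}(\nabla_1)-\ch_p^{\mathrm{nm}}(\nabla_0) = d\,CS_p(\nabla_0,\nabla_1),
\]
with the normalizing factor $\frac{1}{p!(2\pi i)^p}$ carried through. This is Lemma~\ref{lem:bott-chern}, and I would simply cite it.

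To compare the characters, I would use naturality of $\widehat{\ch}_p$ under pullback. Take a smooth cycle $\gamma$ in $Z$ and form the chain $\gamma\times[0,1]$ in $\tilde Z$, whose boundary is $i_1\gamma - i_0\gamma - (\partial\gamma)\times[0,1] = i_1\gamma-i_0\gamma$. The defining property of a differential character, applied to $\widehat{\ch}_p(\pi^*E,\tilde\nabla)$, gives
\[
\widehat{\ch}_p(\nabla_1)(\gamma)-\widehat{\ch}_p(\nabla_0)(\gamma) = \int_{\gamma\times[0,1]}\ch_p^{\mathrm{nm}}(\tilde\nabla) = \int_\gamma CS_p(\nabla_0,\nabla_1) \pmod{\zz}.
\]
So the difference of the two characters is the character given by integrating the form $CS_p$.

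The last step is to read off the class in $H^{2p-1}(Z,\cc/\zz)$. This requires both characters to be flat, meaning their curvatures $\ch_p(\nabla_i)$ vanish. That holds in our applications by Proposition~\ref{prop:nilflat-ch-zero}, since the connections are locally nil-flat. With both curvatures zero, $CS_p$ is closed, and its de Rham class reduced mod $\zz$ is the stated element.

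I expect the main obstacle to be the normalization and sign bookkeeping. One has to check that the integrality lattice is $\zz$ and not $\zz(p)$, in line with Step~5 of Theorem~\ref{thm:CS-regulator}. One also has to fix the sign convention $(-1)^p$ relative to the Burgos--Gil form $\ch_p(E,h)$. I would settle both by checking the case $p=1$, where $CS_1 = \Tr(A)/2\pi i$ and gauge transformations change it by integral periods.
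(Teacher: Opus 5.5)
Your proposal is correct and is the standard Cheeger--Simons argument: pass to the cylinder $Z\times[0,1]$ with the connection $\nabla_0+tA$, then use naturality of $\widehat{\ch}_p$ and the boundary property on the chain $[0,1]\times\gamma$. This is exactly what the paper relies on, since it states the proposition without proof, attributes it to \cite[Prop.~2.9]{CS}, and treats Lemma~\ref{lem:bott-chern} as its differentiated form.

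Two minor points. First, for the difference to land in $H^{2p-1}(Z,\cc/\zz)$ you only need $\ch_p(\nabla_0)=\ch_p(\nabla_1)$, not flatness of each character separately. Second, no separate $p=1$ or Burgos--Gil sign check is needed: the constant $\frac{p}{p!(2\pi i)^p}$ is inherited directly from the curvature $\ch_p^{\mathrm{nm}}$ of $\widehat{\ch}_p$ when you integrate over $[0,1]\times\gamma$.
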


\subsubsection{Vanishing criterion}

\begin{corollary}\label{cor:vanishing-crit}
For a locally nil-flat connection $\nabla$ on a topologically trivial bundle,
$\widehat{\ch}_p(E,\nabla) = 0$ in $H^{2p-1}(Z,\rr/\zz)$ if and only if
$\vol_p(\nabla) = 0$ in $H^{2p-1}(Z,\rr)$ (i.e.\ the imaginary part of the
Chern-Simons class is zero, under the normalized convention of
Definition~\ref{def:vol-p-normalized}).
\end{corollary}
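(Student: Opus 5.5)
The plan is to compute $\widehat{\ch}_p(E,\nabla)$ explicitly by transgression from a reference connection whose character is known to vanish. Then decompose the resulting $\cc/\zz$-class with the splitting $\cc/\zz\cong\rr/\zz\oplus i\rr$ of Lemma~\ref{lem:CZ-splitting}, and identify the $i\rr$-component with $\vol_p(\nabla)$.

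\textbf{Step 1 (reference connection).} Since $E$ is topologically trivial, fix a $\mathcal{C}^\infty$ trivialization $E\cong Z\times\cc^r$ and let $d$ be the trivial connection. It is flat and has trivial holonomy, so $\widehat{\ch}_p(E,d)=0$. Proposition~\ref{prop:transgression} applied to the linear path $\nabla_t=d+tA$, $A=\nabla-d$, then gives
\[
\widehat{\ch}_p(E,\nabla)=\bigl[CS_p(d,\nabla)\bigr]\in H^{2p-1}(Z,\cc/\zz).
\]
This form is closed because both $\ch_p(d)$ and $\ch_p(\nabla)$ vanish; for the latter we use Proposition~\ref{prop:nilflat-ch-zero}. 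So the character is the class of one explicit closed $(2p-1)$-form, taken modulo periods in $\zz$; the normalization $\frac{p}{p!(2\pi i)^p}$ is what makes the periods plain $\zz$ rather than $\zz(p)$.

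\textbf{Step 2 (splitting).} Under Lemma~\ref{lem:CZ-splitting}, the class of a closed complex form $\omega$ modulo $\zz$ decomposes as $([\Re\,\omega]\bmod\zz,\ [\Im\,\omega])$. The second component lies in $H^{2p-1}(Z,\rr)$ with no reduction, since the lattice $\zz$ is real. Hence the character vanishes exactly when two things hold: $[\Re\,CS_p(d,\nabla)]$ vanishes in $H^{2p-1}(Z,\rr/\zz)$, and $[\Im\,CS_p(d,\nabla)]=0$ in $H^{2p-1}(Z,\rr)$.

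\textbf{Step 3 (identifying the imaginary part).} I will show $[\Im\,CS_p(d,\nabla)]=\vol_p(\nabla)$. The formula of Proposition~\ref{prop:vol-explicit} uses instead the path from a Chern connection $\nabla_h$, where the trivialization makes $h$ the standard metric. Then $d=\nabla_h$ is unitary, so that path is exactly the one in Step~1, and the two forms coincide. If a different metric is used, the discrepancy is $CS_p(d,\nabla_h)$ for two $h$-unitary connections. Along a unitary path both $A$ and $F(\nabla_t)$ lie in $\mathfrak{u}(h)$, so Proposition~\ref{prop:Re-trace-zero} (with $g=h$ definite) makes the normalized transgression real. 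Its imaginary part therefore vanishes identically and $\vol_p$ is independent of choices.

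\textbf{Main obstacle.} The delicate point is the real component. Read literally, vanishing in $H^{2p-1}(Z,\rr/\zz)$ also involves $[\Re\,CS_p]\bmod\zz$, and this is not controlled by $\vol_p$. I would therefore prove the equivalence for the component that detects non-torsion information. For the image of $\widehat{\ch}_p$ in $H^{2p-1}(Z,\cc/\zz)/H^{2p-1}(Z,\rr/\zz)\cong H^{2p-1}(Z,\rr)$, the class is zero iff $\vol_p=0$, by Steps 2–3. I would flag explicitly that the statement must be read with ``$=0$'' meaning vanishing of the $i\rr$-part, equivalently lying in $H^{2p-1}(Z,\rr/\zz)$. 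That reading is the form used later with Borel's theorem to deduce torsion.
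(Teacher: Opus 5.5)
Your Steps~1--3 are sound and follow the same route as the paper. You compare with a reference connection whose character vanishes, split along $\cc/\zz\cong\rr/\zz\oplus i\rr$ (Lemma~\ref{lem:CZ-splitting}), and identify the $i\rr$-component with $\vol_p(\nabla)$.

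One small repair is needed in Step~3. The connection $d$ and a connection unitary for a \emph{different} metric are not unitary for a common metric, so Proposition~\ref{prop:Re-trace-zero} does not cover that comparison as you phrase it. It is cleaner to argue in two steps. First, for whatever $h$ you like, take the trivialization $h$-orthonormal (Gram--Schmidt); then $d$ itself is $h$-unitary. Second, a change of trivialization by $g:Z\to GL_r(\cc)$ alters $\mathrm{CS}_{2p-1}$ by $-g^*\mu_{2p-1}+d\beta$ (Lemma~\ref{lem:gauge-variation}). Since $g^*\mu_{2p-1}$ has integral, hence real, periods, the class of the imaginary part does not change.

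The obstacle you flag at the end is real, and you are right not to try to close it. The `if' direction, read literally as vanishing of $\widehat{\ch}_p(E,\nabla)$, is false, and the paper's own proof breaks at exactly this point. It argues that $c_p(E)=0$ makes the real component $\CS_p^{\rr}$ torsion, and concludes `hence $\CS_p=0$'. Both inferences fail:
\begin{itemize}
\item $c_p(E)=0$ only says that $\CS_p^{\rr}$ lies in the kernel of the Bockstein, i.e.\ in the image of $H^{2p-1}(Z,\rr)\to H^{2p-1}(Z,\rr/\zz)$. That image is a divisible torus, not a torsion group; this is the point made in Remark~\ref{rem:betti-vs-CS}.
\item Even a torsion class need not vanish.
\end{itemize}

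Concretely, take $Z=S^1$ and the flat connection $d+2\pi i\theta\,dx$ on the trivial line bundle, with $\theta\in\rr\setminus\zz$. Then $\mathrm{CS}_1=\theta\,dx$ is real, so $\vol_1=0$, while $\widehat{\ch}_1=\theta\bmod\zz\neq0$. For $p=2$, take the flat $SU(2)$ connection on the trivial bundle over $S^3/\Gamma$ of Remark~\ref{rem:DS-CS-no-order}. It is unitary and flat, hence has $\vol_2=0$, but its class is a nonzero element of $\qq/\zz$.

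What is true is precisely your reformulation: $\vol_p(\nabla)=0$ if and only if $\widehat{\ch}_p(E,\nabla)$ lies in the summand $H^{2p-1}(Z,\rr/\zz)$ of $H^{2p-1}(Z,\cc/\zz)$. Given Lemma~\ref{lem:CZ-splitting} this is nearly tautological. It is also all that the Borel argument in Step~C of the proof of Theorem~\ref{thm:main} uses, since that argument concludes torsion rather than vanishing.
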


\begin{proof}
For a topologically trivial bundle, $c_p(E) = 0$, so the integer-valued
part of $\widehat{\ch}_p$ is zero.  The Chern-Simons class lives in
$H^{2p-1}(Z,\cc/\zz)$; under the splitting $\cc/\zz = \rr/\zz\oplus i\rr$,
its imaginary component is the volume regulator
$\vol_p(\nabla) = \Im_{\cc/\zz}(\CS_p)\in H^{2p-1}(Z,\rr)$, with no reduction
modulo $\zz$ (Lemma~\ref{lem:CZ-splitting}).  Vanishing of the volume
regulator is thus exactly the vanishing of that component.
Since $c_p(E)=0$ the real component $\CS_p^{\rr}$ is also torsion (it maps
to zero in $H^{2p-1}(Z,\rr)$, so lies in the image of integral classes).
Hence $\CS_p = 0$ in $H^{2p-1}(Z,\cc/\zz)$.
\end{proof}

\subsection{The $\theta$-connection and the structure of the Chern-Simons form}

For the Deligne canonical extension, there is a natural decomposition of
the Chern-Simons form that illuminates its structure.

\begin{proposition}
Write $\nabla = \theta + \bar\theta + A_g$ where $\theta$ is the holomorphic
part (type $(1,0)$), $\bar\theta = \overline\partial_E$, and $A_g$ is a
$\mathfrak{u}(g)$-valued $1$-form.  Then:
\[
  \CS_p(\nabla^{\Del}) = p\int_0^1\Tr\bigl(A_g\wedge(F_g + tF_\theta + \cdots)^{p-1}\bigr)\,dt
  \pmod{\text{exact}},
\]
where $F_g = \nabla_g^2$ and $F_\theta = \theta\wedge\theta + \theta\wedge\bar\theta$.
\end{proposition}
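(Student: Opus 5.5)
The plan is to read the formula as a transgression identity and derive it from Proposition~\ref{prop:transgression}, using the splitting of $\nabla$ into a $g$-compatible part and a Higgs-type part. Write $\nabla_g := \nabla - (\theta + \bar\theta)$ for the connection obtained by removing the $(1,0)$ Higgs-type part and the $\bar\partial_E$-part from $\nabla^{\Del}$. I interpret $\nabla_g$ as the $\mathfrak{u}(g)$-valued piece, with connection matrix $A_g$, so that $F_g = \nabla_g^2$.

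I would use a one-parameter family of connections $\nabla_t$ joining a reference connection to $\nabla^{\Del}$. Its derivative should be $A_g$, and its curvature should expand as $F(\nabla_t) = F_g + tF_\theta + O(t^2)$. Here $F_\theta = \theta\wedge\theta + \theta\wedge\bar\theta$ arises as the cross term of $(\theta+\bar\theta)^2$, and the $O(t^2)$ terms are what the ``$\cdots$'' in the statement records.

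\textbf{Step 1 (the path).} Fix the path concretely and check that $\dot\nabla_t = A_g$ is constant in $t$, as in Proposition~\ref{prop:vol-explicit}. Then compute $F(\nabla_t)$ by the Leibniz rule. I would use Lemma~\ref{lem:g-curvature} in the $E_+\oplus E_-$ splitting to organize the blocks, so that the linear-in-$t$ term is identified with $F_\theta$ modulo terms that are $d$-exact after tracing.

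\textbf{Step 2 (apply the transgression formula).} Proposition~\ref{prop:transgression} gives
\[
\widehat{\ch}_p(\nabla^{\Del}) - \widehat{\ch}_p(\nabla_0) = \Bigl[\tfrac{p}{p!(2\pi i)^p}\int_0^1 \Tr\bigl(A_g\wedge F(\nabla_t)^{p-1}\bigr)\,dt\Bigr].
\]
The normalizing constant is absorbed into the convention of the statement. Substituting the curvature expansion from Step~1 gives the displayed integrand.

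\textbf{Step 3 (the reference term).} Show that the reference connection $\nabla_0$ contributes nothing. The class of $\nabla_0$ should be exact, or zero modulo $\zz$: either it is $g$-unitary with vanishing Chern--Weil forms, or it is locally nil-flat by Proposition~\ref{prop:nilflat-ch-zero}, combined with Corollary~\ref{cor:vol-upper-triangular}. In addition, changing the path only alters the transgression form by an exact form, by Lemma~\ref{lem:bott-chern}. Together these justify the qualifier ``$\pmod{\text{exact}}$''.

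I expect the main obstacle to be Step~3, together with the exact bookkeeping of the curvature expansion in Step~1. Near $D$, $\theta$ has logarithmic poles, so I must check that $A_g$, $\theta$ and the intermediate connections are smooth on all of $X$. For this I would work with the patched connection of Appendix~\ref{app:deligne-patch}, from which the $\dlog$ terms have been removed. I must also check that the reference class vanishes in $\cc/\zz$, not merely in real cohomology. If the direct argument fails, I would fall back on the $g$-preserving path of Lemma~\ref{lem:decomp-connection} and read off the identity only modulo exact forms.
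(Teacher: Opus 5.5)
The paper states this proposition without proof and uses it only as motivation. As written it is schematic: the ``$\cdots$'' is unspecified, $\nabla_g$ is never defined, and there is no reference connection and no normalization. Read literally it even degenerates. $\bar\theta=\bar\partial_E$ is the entire $(0,1)$-part of $\nabla^{\Del}$, since the patching of Definition~\ref{def:deligne-patch} only alters $(1,0)$-terms, so $A_g$ would be of type $(1,0)$. But the $(0,1)$-coefficient of a $\mathfrak{u}(g)$-valued $1$-form is minus the $g$-adjoint of its $(1,0)$-coefficient, so $A_g=0$. Your $\nabla_g=\nabla-\theta-\bar\partial_E$ is then a $(1,0)$-operator, not a connection. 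You therefore have to decide first what is being claimed, and under any reasonable reading your Step~1 fails. A path with constant derivative $A_g$ is $\nabla_t=\nabla_0+tA_g$, with $F(\nabla_t)=F(\nabla_0)+t\,d_{\nabla_0}A_g+t^2A_g\wedge A_g$. The endpoint condition $\nabla_1=\nabla^{\Del}$ forces $\nabla_0=\nabla^{\Del}-A_g$, which is $d+\theta+\bar\theta$ in the frame where $\nabla_g=d+A_g$. So $\theta$ enters only through the constant term $F(\nabla_0)$, which is of $F_\theta$-type rather than $F_g$. The linear coefficient $d_{\nabla_0}A_g$ has no reason to equal $F_\theta$. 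Conversely, the straight path from $\nabla_g$ (curvature $F_g$) to $\nabla^{\Del}$ has derivative $\theta+\bar\theta$, not $A_g$. No single path produces the displayed integrand. Working ``modulo terms that are $d$-exact after tracing'' does not repair this: replacing $F(\nabla_t)$ by $F(\nabla_t)+d\beta$ inside $\Tr\bigl(A_g\wedge F(\nabla_t)^{p-1}\bigr)$ changes the trace by something that is not exact, and in general not even closed.

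Step~3 also does not deliver what is needed. $\CS_p(\nabla^{\Del})$ is the flat character $\widehat{\ch}_p(F,\nabla^{\Del})$, and Proposition~\ref{prop:transgression} only expresses it as $\widehat{\ch}_p(\nabla_0)$ plus a transgression. You therefore need $\widehat{\ch}_p(\nabla_0)=0$ in $H^{2p-1}(-,\cc/\zz)$. That holds for the product connection $d$ on a trivialized bundle, but not for the references you propose:
\begin{itemize}
\item A $g$-preserving connection generally has $\ch_p(\nabla_0)\neq0$ as a form, so its character is not flat and the transgression is not even closed.
\item A locally nil-flat reference has a flat character, but that character is a class of exactly the kind you are computing and need not vanish.
\item Corollary~\ref{cor:vol-upper-triangular} concerns only the imaginary part $\vol_p$, under extra triangularity hypotheses.
\end{itemize}
Using $d$ requires a global trivialization, which $F$ does not have on $X$ in general. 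That is why the paper passes to the Deligne--Sullivan cover in Construction~\ref{con:DS-CS}. There, however, the derivative of the path is the full connection matrix, and its $\mathfrak{p}$-component (Lemma~\ref{lem:decomp-connection}) cannot be dropped. Finally, ``$\pmod{\text{exact}}$'' is the wrong equivalence for a $\cc/\zz$-class. A change of trivialization $u$ alters the form by $-u^*\mu_{2p-1}+d\beta$ (Lemma~\ref{lem:gauge-variation}), and that term has integral periods only with the factor $1/p!\,(2\pi i)^p$ you propose to absorb into the convention. What you can actually prove is a precise transgression formula, for instance $\widetilde{\CS}_p=[\mathrm{CS}_{2p-1}(A)]$ on $\tilde X$, with $A$ split into its $\mathfrak{u}(g)$- and $\mathfrak{p}$-parts and expanded. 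That is not the displayed identity.
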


This decomposition is useful because, for a VHS, the $A_g$ part is constrained
by the Hermitian geometry of the polarization, and the vanishing of the volume
regulator reduces to a trace identity in the Lie algebra $\mathfrak{u}(p,q)$.

\subsection{Explicit computation of the $F^1$-connection in the two-cube case}

We work out the $F^1$-connection explicitly for the case of a two-dimensional
cube $c$ with $c^{-1}(t) = \{a_1, a_2\}$ and $c^{-1}(1) = \{a_0\}$.

The Rees bundle $F(c)$ over $(\pp^1)^2$ with coordinates $(y_1,y_2)$ is
\[
  F(c) = \bigoplus_{(j_1,j_2)\in\zz^2} G(c)_{j_1,j_2}\otimes
  \Oo(-j_1Q_1-j_2Q_2)
\]
where $Q_l = \{y_l=1\}$.

The diagonal part of the $F^1$-connection is
\[
  \nabla^{\mathrm{diag}} = d +
  \begin{pmatrix} j_1\,\dlog(1-y_1)+j_2\,\dlog(1-y_2) & 0 & \cdots \\
  0 & j_1'\,\dlog(1-y_1)+j_2'\,\dlog(1-y_2) & \cdots \\
  \vdots & & \ddots
  \end{pmatrix}
\]
(acting on the block $G(c)_{j_1,j_2}$ by the scalar $j_1\dlog(1-y_1)+j_2\dlog(1-y_2)$).

For the off-diagonal blocks: we need to choose connection forms
$A_{\mathbf{j},\mathbf{j}'}$ (for $\mathbf{j}\neq\mathbf{j}'$) in
$F^1A^1((\pp^1)^2\log D_\infty, \Hom(G(c)_{\mathbf{j}},G(c)_{\mathbf{j}'}))$
where $D_\infty = \{y_1=\infty\}\cup\{y_2=\infty\}$.

\begin{lemma}
Any choice of $A_{\mathbf{j},\mathbf{j}'}$ making the connection matrix
upper-triangular with respect to the filtration $W(F(c);\mu(c))$ gives a
valid $F^1$-connection.  The resulting Chern-Simons class is independent of
this choice.
\end{lemma}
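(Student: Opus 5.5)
The lemma makes two claims: any upper-triangular choice of off-diagonal blocks gives a valid $F^1$-connection, and the resulting Chern-Simons class does not depend on that choice. I would treat them in that order, working throughout in the graded decomposition $G(c)=\bigoplus_{\mathbf{j}}G(c)_{\mathbf{j}}$ and the trivialization $\{s_{\mathbf{j}}\}$ over $\aaa(c)$.

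\emph{Validity.} The connection is $\nabla=\nabla^{\mathrm{diag}}+A$, where $A=(A_{\mathbf{j},\mathbf{j}'})$ is strictly upper-triangular for $W(F(c);\mu(c))$. The diagonal part $\nabla^{\mathrm{diag}}$ is the canonical connection $A^{\mathrm{can}}$. Its entries $j_l\,\dlog(1-y_l)$ are holomorphic $1$-forms on $\aaa(c)$ with logarithmic poles only along $D_\infty$, so it lies in $F^1A^1(\log D_\infty)$. Since $F^1A^1$ is a linear subspace, adding the $A_{\mathbf{j},\mathbf{j}'}$, which are assumed to lie in $F^1$, keeps $\nabla$ an $F^1$-connection. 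Because $A$ is upper-triangular, $\nabla$ preserves $W(F(c);\mu(c))$. Because $A$ is strictly upper-triangular, $\nabla$ induces $A^{\mathrm{can}}$ on the graded pieces, and hence induces $\tau(c)$ by Proposition~\ref{prop:can-conn-compatible}. This is exactly the argument of Step C of Theorem~\ref{thm:F1-exists}. Consequently $\nabla$ is locally nil-flat: its curvature is strictly upper-triangular, because $\Gr(\nabla)=A^{\mathrm{can}}$ is flat by Lemma~\ref{lem:can-flat}. Proposition~\ref{prop:nilflat-ch-zero} then gives $\ch_p(\nabla)=0$.

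\emph{Independence.} Take two admissible choices $A,A'$ and use the linear path $\nabla_t=\nabla^{\mathrm{diag}}+A+t(A'-A)$. Every $\nabla_t$ is again of the admissible form, so its curvature is upper-triangular with respect to the filtration. The derivative $\dot\nabla_t=A'-A$ is strictly upper-triangular. By Proposition~\ref{prop:transgression}, the difference of the two classes is represented by $\frac{p}{p!(2\pi i)^p}\int_0^1\Tr\bigl((A'-A)\wedge F(\nabla_t)^{p-1}\bigr)\,dt$. The integrand is a product of one strictly upper-triangular matrix with upper-triangular matrices, all for the same filtration, so its trace vanishes pointwise, as in Corollary~\ref{cor:vol-upper-triangular}. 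The transgression form is therefore identically zero, and the two Chern-Simons classes coincide already at the level of forms.

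\emph{Expected obstacle.} The delicate point is that the classes live on the cubical realization and must agree with the face data. The independence argument should therefore be run with $A$ and $A'$ fixed to the same prescribed boundary values $A(c,c')$ on the faces, so that the path restricts on each face to a constant path. If the choices differ on the boundary, I would instead fall back on Theorem~\ref{thm:DHZ}: both connections are $F^1$-connections on the same bundle, so they define the same class in $H^{2p}_\calD$. Transferring this to $\cc/\zz$ through the isomorphism $\delta$ of Lemma~\ref{lem:DB-cube} then gives equality of the Chern-Simons classes.
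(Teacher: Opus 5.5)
Your validity argument is the paper's: the $F^1$ condition is linear, exactly as in Step~C of Theorem~\ref{thm:F1-exists}. Your independence argument, however, takes a genuinely different route.

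The paper just cites Theorem~\ref{thm:DHZ}: any two $F^1$-connections on the same bundle over the compactification give the same Deligne--Beilinson class. It leaves implicit the return to $H^{2p-1}(-,\cc/\zz)$, which goes through Proposition~\ref{prop:DB-A-realization}; your fallback spells this step out.

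You instead transgress along the straight line between the two choices. As in Corollary~\ref{cor:vol-upper-triangular}, $\Tr\bigl((A'-A)\wedge F(\nabla_t)^{p-1}\bigr)$ vanishes identically, because $A'-A$ is strictly upper-triangular and every $F(\nabla_t)$ preserves the same filtration. This is more elementary and proves more: the two Chern--Simons forms agree on the nose. It uses no projective completion, no Deligne cohomology, and no injectivity of $\alpha$.

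The price is that both choices must induce the same graded connection $\tau(c)$. So ``upper-triangular'' has to be read as strictly upper-triangular off the diagonal blocks of $W(F(c);\mu(c))$, as you do. That reading is forced anyway: otherwise $\Gr(\nabla)$ need not be flat, and no $\cc/\zz$-class is defined. The DHZ route can compare arbitrary $F^1$-connections, but only through their Deligne classes.

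Two corrections.

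First, $\dlog(1-y_l)=-dy_l/(1-y_l)$ is not holomorphic on $\aaa(c)$. It has a logarithmic pole along $Q_l=\{y_l=1\}$ as well as at $y_l=\infty$. The $F^1$ condition must therefore be taken relative to $D_\infty+\sum_l Q_l$, as in Lemma~\ref{lem:rees-log-poles}. Your linearity argument is unaffected.

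Second, your ``expected obstacle'' is not an obstacle. Restriction to a face is affine-linear in the connection, so the straight-line path between two face-compatible systems is again face-compatible. Suppose both systems are compatible with the patching data on every cube, as those of Theorem~\ref{thm:F1-exists} are. Then on each cube $c$ the derivative of the path is strictly upper-triangular for $W(F(c);\mu(c))$. Your form-level argument therefore runs cube by cube even when the boundary values differ, and the fallback to Theorem~\ref{thm:DHZ} is never needed.
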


\begin{proof}
The $F^1$ condition is preserved by adding upper-triangular correction
terms (since $F^1$ is a linear condition on the connection matrix).  The
independence of the Chern-Simons class from the choice of $A_{\mathbf{j},\mathbf{j}'}$
follows from Theorem~\ref{thm:DHZ}: any two $F^1$-connections on the same
bundle (over a compactification) give the same Deligne cohomology class.
\end{proof}

\subsection{The de Rham complex of the cubical realization and the
Chern-Simons form}

Let us describe how the $F^1$-connection gives a class in the de Rham
cohomology of the cubical realization $S = \iiii\Cube A$.

\begin{definition}
A \emph{connection on a bundle over $\iiii\Cube A$} is a collection of
connections $\nabla(c)$ on $E|_{\iiii(c)}$ for each cube $c\in\Cube A$,
compatible under face restrictions: for $c'\leq c$, the restriction
$\nabla(c)|_{\iiii(c')} = \nabla(c')$ under the isomorphism
$E|_{\iiii(c')} \cong E(c')$.
\end{definition}

The $F^1$-connections $\nabla(c)$ on $F(c)$ over $\pp(c)$ restrict to
$\iiii(c)\subset\aaa(c)\subset\pp(c)$, giving a connection on
$F^\sigma|_{\iiii\Cube A}$.

\begin{proposition}
The Chern-Simons form of the restricted connection is a well-defined closed
$(2p-1)$-form on $\iiii\Cube A$, representing a class in
$H^{2p-1}(\iiii\Cube A, \cc/\zz)$.
\end{proposition}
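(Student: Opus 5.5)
The argument has three parts: first show that the restricted connection is a genuine connection on the glued space, then define the Chern--Simons form cube by cube relative to a face-compatible reference connection, and finally show that its class does not depend on the choices made.

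By Theorem~\ref{thm:F1-exists}, the connections $\nabla(c)$ match on faces under the glueing isomorphisms $\Phi_{c',c}$. By the detailed version of Theorem~\ref{thm:global-rees}, these isomorphisms also respect the filtrations and trivializations. Restricting to $\iiii(c)\subset\aaa(c)$ therefore gives a connection on $F^\sigma|_{\iiii\Cube A}$ in the sense of the definition just given.

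Each $\nabla(c)$ preserves $W(F(c);\mu(c))$ and induces the flat trivialization $\tau(c)$ on the graded pieces. Hence it is locally nil-flat, and Proposition~\ref{prop:nilflat-ch-zero} gives $\ch_p^{\mathrm{nm}}(\nabla(c))=0$ pointwise.

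For the reference connection I would take the canonical connection $A^{\mathrm{can}}$ on each cube. It is flat by Lemma~\ref{lem:can-flat}. It is also face-compatible, because the sections $\prod_l(1-y_l)^{j_l}$ restrict correctly under $y_l\mapsto 0,1$; this is the trivialization part of the proposition on patching compatibility.

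On each cube I then set
\[
v(c):=v_p^{\mathrm{nm}}(\{A^{\mathrm{can}},\nabla(c)\}),
\]
using the linear path $\nabla_t$ between the two connections. Because both endpoints and the path restrict compatibly to faces, the forms $v(c)$ agree on overlaps. They therefore define a form in the de Rham complex of the cubical space (Proposition~\ref{prop:deRham-cube}), which computes $H^\hdot(\iiii\Cube A,\cc)$.

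Closedness comes from Lemma~\ref{lem:bott-chern}:
\[
dv(c)=\ch_p^{\mathrm{nm}}(\nabla(c))-\ch_p^{\mathrm{nm}}(A^{\mathrm{can}}).
\]
Both terms vanish, the first by nil-flatness and the second by flatness, so $dv(c)=0$. Reducing the resulting class modulo $\zz$ gives an element of $H^{2p-1}(\iiii\Cube A,\cc/\zz)$.

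To make the class well defined I must show that changing $\nabla(c)$, the reference connection, or the path alters $v$ only by an exact form plus integral periods. Two such paths span a $2$-parameter family of connections. The Chern--Simons transgression for this family gives the difference of the two $v$'s as $d$ of a secondary form, plus the integral of $\ch_p^{\mathrm{nm}}$ over the family. When all connections in the family are locally nil-flat with respect to the same filtration $W(F(c);\mu(c))$, that integral vanishes. This holds for the linear paths used here, since affine combinations of connections inducing $\tau(c)$ on the graded still induce $\tau(c)$ on the graded.

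Changing the trivialization or the reference connection by a gauge transformation $g$ changes $v$ by the pull-back of the universal $(2p-1)$-class on $GL_r$. That class has periods in $\zz$ once normalized by $\frac{1}{p!(2\pi i)^p}$. This is the step I expect to be the main obstacle: the integrality of the gauge variation has to be checked compatibly across faces. I would handle it using the same $p=1$ normalization check as in the gauge-variation argument, together with the standard fact that the normalized Chern--Simons form of $g^{-1}dg$ represents an integral class on $GL_r(\cc)$.
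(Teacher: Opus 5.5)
Your outline is the same as the paper's: closedness from $\ch_p(\nabla(c))=0$, and gluing from face-compatibility. However, the explicit reference connection you choose makes the construction vacuous. Put $W:=W(F(c);\mu(c))$. Both $\nabla(c)$ and $A^{\mathrm{can}}$ preserve $W$ and induce the same flat connection $\tau(c)$ on $\Gr^W$, so $\dot\nabla_t=\nabla(c)-A^{\mathrm{can}}$ maps $W_k$ into $W_{k-1}$. Every $\nabla_t$ on the linear path also preserves $W$ with flat graded, so $F(\nabla_t)$ maps $W_k$ into $W_{k-1}$ as well. Hence $\dot\nabla_t\wedge F(\nabla_t)^{p-1}$ strictly lowers $W$, its trace vanishes pointwise, and $v(c)\equiv 0$ on every cube. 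This is exactly the triangularity you invoke for your two-parameter family. Closedness and agreement on faces therefore hold only because the form is zero. By Proposition~\ref{prop:transgression}, what you have computed is the relative class $\CS_p(\nabla)-\CS_p(A^{\mathrm{can}})=0$. The whole Chern--Simons class of the restricted connection has been pushed into the secondary class of the flat reference $A^{\mathrm{can}}$. A transgression cannot see that class, and it is not zero in general: its Bockstein is $c^B_p(F)$ up to normalization, cf.\ Remark~\ref{rem:betti-vs-CS}. (Step~5 of the proof of Theorem~\ref{thm:CS-regulator} suggests the same reference and is open to the same objection.)

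What is missing is a reference whose own class is known to vanish, and this is where the integrality you flagged as the ``main obstacle'' becomes the actual content. Your $v$ is an intrinsic transgression between two connections on one bundle. It depends on no trivialization, so no integrality question arises for it; that question belongs to the absolute construction. There one trivializes $F(c)|_{\aaa(c)}$ (Corollary~\ref{cor:F-trivial-affine}), writes $\nabla(c)=d+A(c)$, and takes $\mathrm{CS}_{2p-1}(A(c))$, which is closed since $\ch_p(\nabla(c))=\ch_p(d)=0$. On a common face, however, the two trivializations differ by some $g$, and by Lemma~\ref{lem:gauge-variation} the two forms differ by $-g^*\mu_{2p-1}+d\beta$. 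The cube-wise forms therefore assemble into a cocycle of the cubical \v{C}ech--de Rham complex modulo integral periods, not into one global form. In general nothing better is possible: a class represented by a single closed form lies in the image of $H^{2p-1}(\iiii\Cube A,\cc)$ and so has zero Bockstein, whereas the Chern--Simons class has Bockstein $c^B_p(F)$. Alternatively, take the Cheeger--Simons character of $(F|_{\iiii\Cube A},\nabla)$ directly, as in Theorem~\ref{thm:CS-DB-app}(3). Its curvature is $\ch_p(\nabla)=0$, so it is a flat character, i.e.\ a class in $H^{2p-1}(\iiii\Cube A,\cc/\zz)$, and no reference connection is needed at all.
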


\begin{proof}
Closedness: each $\ch_p(\nabla(c)) = 0$ since $\nabla(c)$ is locally
nil-flat (the Rees bundle on $\pp(c)$ has a filtration preserved by
$\nabla(c)$ with flat associated-graded).  Hence the Chern-Simons form is
closed.

Compatibility: the Chern-Simons forms on adjacent cubes agree on their
common face, since the connections are compatible.  Hence they assemble to
a global form on $\iiii\Cube A$.
\end{proof}

\subsection{The Green current and the Burgos-Gil formalism in detail}
\label{subsec:green-BG-detail}

Following \cite{BG} (see also Appendix~\ref{app:burgosgil}), we now describe the Green current associated to the
Deligne canonical extension $F$ and show it computes the Deligne-Beilinson
Chern character.

\begin{definition}
For a vector bundle $E$ on a smooth projective variety $\bar Z$, a
\emph{Green current of degree $2p-2$} for the $p$-th Chern character is
a current $g_p(E)\in\mathcal{D}^{p-1,p-1}(\bar Z)$ such that
\[
  dd^c[g_p(E)] + \delta_{Z(\sigma)} = [\omega_p(E)],
\]
where $\sigma$ is a meromorphic section of $E^{\otimes p}$, $Z(\sigma)$ is
its zero locus (counted with multiplicity), and $\omega_p(E)$ is a smooth
closed form representing $\ch_p(E)$ in de Rham cohomology.
\end{definition}

For the Deligne canonical extension $(F,\nabla^{\Del})$:

\begin{proposition}[Green current for the canonical extension]
\label{prop:green-current}
The $F^1$-connection $\nabla$ on $F|_{Z^o}$ defines a Green current
$g_p(F,\nabla)$ on $\bar Z = \pp\Cube A$ by the formula
\[
  g_p(F,\nabla) = p\int_0^1(1-t)^{p-1}\Tr(A\wedge F(\nabla_t)^{p-1})\,dt
\]
where $\nabla_t = \nabla_{\mathrm{Grass}} + t(\nabla-\nabla_{\mathrm{Grass}})$
and $\nabla_{\mathrm{Grass}}$ is the connection pulled back from the Grassmannian.
\end{proposition}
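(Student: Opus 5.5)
The plan is to check the two defining properties of a Green current in the sense of the preceding definition for the transgression form
\[
  g_p(F,\nabla)=p\int_0^1(1-t)^{p-1}\Tr\bigl(A\wedge F(\nabla_t)^{p-1}\bigr)\,dt,\qquad A=\nabla-\nabla_{\mathrm{Grass}}.
\]
These are, first, that $g_p$ defines a current of the right type on the compactification $\bar Z=\pp\Cube A$, and second, that it satisfies the defining equation up to the Bott-Chern identity. As in \S\ref{subsec:BG-on-cubical}, everything is carried out cube by cube on the smooth projective pieces $\pp(c)\cong(\pp^1)^{|c|}$, and then descended using the face compatibility of both $\nabla$ (Theorem~\ref{thm:F1-exists}) and $\nabla_{\mathrm{Grass}}$, which is pulled back along the face-compatible classifying map $\phi$ of Step~1 of Theorem~\ref{thm:CS-regulator}.

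First I would establish local integrability. On $\pp(c)$ the difference $A$ has $(1,0)$-part with at most logarithmic poles along $D_\infty=\pp(c)\setminus\aaa(c)$, since both connections are $F^1$. The curvatures $F(\nabla_t)$ are polynomial in $A$, $dA$ and the smooth curvature of $\nabla_{\mathrm{Grass}}$. The integrand therefore has at worst products of $\dlog$ poles along a normal crossings divisor, and such forms are locally $L^1$. Hence $g_p$ extends to a current on $\pp(c)$, and it lies in the logarithmic Dolbeault complex used by Burgos-Gil. The $(1-t)^{p-1}$ weight is just a reparametrization of the path, because $\int_0^1(1-t)^{p-1}\,dt$ normalizes it. So I would rewrite $g_p$ as the standard transgression along a modified path, to which Lemma~\ref{lem:bott-chern} applies.

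Next I would compute $d$ of the current. The Bott-Chern identity gives $dg_p=\ch_p(\nabla)-\ch_p(\nabla_{\mathrm{Grass}})$ as smooth forms on $\aaa(c)$. As currents on $\pp(c)$ there are additional residue terms supported on $D_\infty$, which I would compute by the Poincaré-Lelong formula for $\dlog(1-y_l)$. The first term vanishes by Proposition~\ref{prop:nilflat-ch-zero}. The second term, $\omega_p(E):=\ch_p(\nabla_{\mathrm{Grass}})$, is smooth, closed, and represents $\ch_p(F)$. The residue terms assemble into the current of integration $\delta_{Z(\sigma)}$ for a meromorphic section $\sigma$ built from the trivializing sections $s_{\mathbf j}=\prod_l(1-y_l)^{j_l}$ of the canonical trivialization (Corollary~\ref{cor:F-trivial-affine}). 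Passing from $d$ to $dd^c$ is done by splitting $g_p$ into types and using that the $F^1$ condition kills the $(p,p-1)$ components modulo $\partial$-exact terms, as in the proof of Theorem~\ref{thm:F1-Deligne}.

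The main obstacle is this last identification: matching the boundary residues with $\delta_{Z(\sigma)}$ with the correct multiplicities and signs, and checking that the type decomposition really yields a $(p-1,p-1)$ current. I would first attempt the rank-one case $F(c)=\Oo(-jQ)$ on $\pp^1$, where $g_1$ reduces to $\log|1-y|^{2j}$ up to normalization and Poincaré-Lelong is explicit. I would then argue blockwise using the grading $G(c)=\bigoplus G(c)_{\mathbf j}$, since the off-diagonal strictly upper-triangular parts contribute nothing to traces (Corollary~\ref{cor:vol-upper-triangular}). Finally, compatibility under the face maps of Proposition~\ref{prop:rees-glue} makes the cube-wise currents descend to $\pp\Cube A$.
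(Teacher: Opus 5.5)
Your skeleton matches the paper's proof: transgress from $\nabla_{\mathrm{Grass}}$ to $\nabla$, kill $\ch_p(\nabla)$ by Proposition~\ref{prop:nilflat-ch-zero}, and use that $\ch_p(\nabla_{\mathrm{Grass}})$ represents $\ch_p(F)$. The paper's proof simply asserts $d\,g_p=\ch_p(\nabla)-\ch_p(\nabla_{\mathrm{Grass}})$ from the Bott--Chern formula.

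\textbf{First gap: the weight is not a reparametrization.} The step by which you obtain that identity is false. Under a change of parameter $t=\phi(s)$ the transgression is invariant,
$\int_0^1\phi'(s)\Tr\bigl(A\wedge F(\nabla_{\phi(s)})^{p-1}\bigr)\,ds=\int_0^1\Tr\bigl(A\wedge F(\nabla_t)^{p-1}\bigr)\,dt$.
So no reparametrization can produce a weight while the curvature is still evaluated at $t$. Integrating by parts with $\frac{d}{dt}\Tr(F(\nabla_t)^p)=p\,d\,\Tr(A\wedge F(\nabla_t)^{p-1})$ (the computation of Lemma~\ref{lem:bott-chern}) gives, for $p\geq2$,
\[
  d\,g_p=\int_0^1(1-t)^{p-1}\tfrac{d}{dt}\Tr\bigl(F(\nabla_t)^p\bigr)\,dt
  =-\Tr\bigl(F(\nabla_{\mathrm{Grass}})^p\bigr)+(p-1)\int_0^1(1-t)^{p-2}\Tr\bigl(F(\nabla_t)^p\bigr)\,dt .
\]
The last term is a weighted average of Chern--Weil forms of the intermediate connections $\nabla_t$. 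These are neither nil-flat nor Hermitian, so the term does not vanish as a form in general. Lemma~\ref{lem:bott-chern} applies only to the unweighted form $p\int_0^1\Tr(A\wedge F(\nabla_t)^{p-1})\,dt$. That is the form for which the identity invoked in the paper's proof actually holds, and you should work with it rather than try to absorb the weight.

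\textbf{Second gap: the current-theoretic half fails for a reason of degree.} The form $g_p$ has odd degree $2p-1$. Its type components are of type $(a,2p-1-a)$, and none is $(p-1,p-1)$. Moreover $dd^c[g_p]$ has degree $2p+1$, while $\delta_{Z(\sigma)}$ and $\omega_p(E)$ have degree $2p$. No splitting into types places $g_p$ in $\mathcal{D}^{p-1,p-1}$. That would need a genuinely new ingredient, for example a $(p-1,p-1)$-form $G$ with $g_p\equiv\partial G$ modulo exact forms (a $\partial\bar\partial$-lemma on the logarithmic complex), or the Burgos--Gil Bott--Chern form in place of the transgression.

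Your own rank-one test exposes this. There $g_1=\Tr(A)$ is a $(1,0)$-form, e.g.\ $j\,\dlog(1-y)-\partial\log h=\partial\bigl(\log|1-y|^{2j}-\log h\bigr)$ with $h$ the squared norm of the frame $(1-y)^j$. So the Green function is a $\partial$-primitive of $g_1$, not $g_1$ itself.

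The Poincar\'e--Lelong matching fails similarly. Boundary terms of a logarithmic $(2p-1)$-form along $D_\infty$ are currents $\mathrm{Res}(g_p)\wedge\delta_{D_\infty}$ with a $(2p-2)$-form residue, not integration currents over a codimension-$p$ cycle. Also, the $s_{\mathbf j}$ are nowhere zero on $\aaa(c)$, and their divisors on $\pp(c)$ lie on $D_\infty$ in codimension one.

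Two smaller points. The connection $\nabla$ also has $\dlog(1-y_l)$ poles along $Q_l=\{y_l=1\}\subset\aaa(c)$ (Lemma~\ref{lem:rees-log-poles}), so even the smooth identity holds only off $\bigcup_lQ_l$. Face-compatibility of the Grassmannian map comes from Lemma~\ref{lem:compatible-surjection}, not from Step~1 of Theorem~\ref{thm:CS-regulator}.

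The paper's proof attempts none of the current-theoretic verification. It uses only the $d$-identity, i.e.\ it treats $g_p$ as the degree-$(2p-1)$ entry of a cochain in the smooth Deligne complex, which is all the subsequent corollary uses.
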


\begin{proof}
This is the Bott-Chern formula for the difference between two connections.
The key point is that $\nabla_{\mathrm{Grass}}$ is Hermitian and its
Chern character form $\ch_p(F,\nabla_{\mathrm{Grass}})$ is the canonical
representative of $\ch_p^{\calD}(F)$.  The Green current $g_p(F,\nabla)$
satisfies $d(g_p(F,\nabla)) = \ch_p(\nabla) - \ch_p(\nabla_{\mathrm{Grass}})$,
so after adding the contribution from the Grassmannian we get the Deligne
Chern character.
\end{proof}

\begin{corollary}
The class of $g_p(F,\nabla)$ in $H^{2p-1}(\bar Z, \cc/\zz)$ (via the
isomorphism $\frac{H^{2p-1}(\bar Z,\cc)}{ H^{2p-1}(\bar Z,\zz)} \cong
H^{2p-1}(\bar Z,\cc/\zz)$) is independent of the choice of $\nabla$ among
$F^1$-connections, and equals the Chern-Simons class $\CS_p(F,\nabla^{\Del})$
restricted to $\bar Z$.
\end{corollary}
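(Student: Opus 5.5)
The plan is to compare the Green current $g_p(F,\nabla)$ of Proposition~\ref{prop:green-current} with the normalized Chern--Simons transgression. Both are built from the same Bott--Chern integrand, differing only by the weight $(1-t)^{p-1}$. The comparison therefore splits into two parts: show that the class of $g_p(F,\nabla)$ modulo $H^{2p-1}(\bar Z,\zz)$ depends only on the Deligne class of $\nabla$, and then evaluate that class using $\nabla^{\Del}$.

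\emph{Step 1: closedness and independence.} By Proposition~\ref{prop:nilflat-ch-zero}, $\ch_p(\nabla)=0$. Hence $d\,g_p(F,\nabla)=-\ch_p(\nabla_{\mathrm{Grass}})$ is independent of $\nabla$. So for two $F^1$-connections $\nabla,\nabla'$ on $F$, the difference $g_p(F,\nabla)-g_p(F,\nabla')$ is a closed current, and its class lies in $H^{2p-1}(\bar Z,\cc)$. Transgressing along the linear path from $\nabla$ to $\nabla'$ identifies this class, up to an exact current, with the transgression $v_p(\{\nabla,\nabla'\})$. By Theorem~\ref{thm:DHZ} the two connections give the same class in $H^{2p}_\calD$. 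In the long exact sequence \eqref{eq:DB-les}, this means $[v_p]$ lies in $F^pH^{2p-1}+H^{2p-1}(\bar Z,\zz)$.

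To remove the $F^p$ ambiguity, I would work on $Z^o=\aaa\Cube A$, levelwise on the cubes. There Lemma~\ref{lem:DB-cube} and Proposition~\ref{prop:DB-A-realization} say that the edge map $\delta$ is an isomorphism, so a difference with zero Deligne class is zero in $H^{2p-1}(-,\cc/\zz)$. This yields independence of the choice of $F^1$-connection, with the class read through the homotopy equivalence $Z^o\simeq S\simeq X$.

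\emph{Step 2: identification with $\CS_p(\nabla^{\Del})$.} I would specialize $\nabla$ to the connection with which $\nabla^{\Del}$ is compared in Proposition~\ref{prop:Del-Rees-compare}. That proposition says $\pi^*\CS_p(\nabla^{\Del})$ equals the Chern--Simons class of the Rees $F^1$-connection. Theorem~\ref{thm:CS-regulator}, Step~5, identifies the latter with $\delta^{-1}$ of $[\nabla]_p=\ch^\calD_p$. It remains to check that $\delta$ sends $[g_p(F,\nabla)]$ to the same Deligne class. By Theorem~\ref{thm:BG-comparison}, the pair $(\ch_p(\nabla_{\mathrm{Grass}}),\,g_p)$ represents $\widehat{\ch}_p$ with image $\ch^\calD_p(F)$. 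Since $\ch_p(\nabla)=0$, the edge-map component of this class is exactly the class of the secondary form. Injectivity of $\delta$ on $Z^o$ then finishes the argument.

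\emph{Main obstacle.} The hard step is reconciling the weight $(1-t)^{p-1}$ and the missing factor $1/(p!(2\pi i)^p)$ in $g_p$ with the normalized transgression $v_p^{\mathrm{nm}}$. Without this, the classes could differ by a nonzero rational multiple or land in $\zz(p)$ rather than $\zz$. First I would try the homotopy formula for the two-parameter family $\nabla_{s,t}$. Its exactness should show that the difference between the weighted and unweighted integrals is $d$ of a form, plus terms proportional to $\ch_p(\nabla_t)$ along the path; these terms only need to vanish at the endpoint where nil-flatness holds. If that fails, I would instead take the formula for $g_p$ as defining its class only up to the normalization fixed by Theorem~\ref{thm:BG-full}(4), and verify the constant by checking $p=1$ against Lemma~\ref{lem:gauge-variation}.
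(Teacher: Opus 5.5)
The paper states this corollary without proof, and as literally stated it cannot be proved, so several of your steps are aimed at a false target.

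\textbf{The tacit restriction and the counterexample.} $\nabla_{\mathrm{Grass}}$ is itself an $F^1$-connection (Theorem~\ref{thm:CS-DB-app}(1)). For $\nabla=\nabla_{\mathrm{Grass}}$ one has $A=0$, hence $g_p(F,\nabla_{\mathrm{Grass}})=0$. Independence over all $F^1$-connections would therefore force $\CS_p(\nabla^{\Del})=0$. Your Step~1 silently restricts to locally nil-flat $\nabla$, since you use $\ch_p(\nabla)=0$. That restriction has to be written into the statement.

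\textbf{The gap in Step~2.} Even for nil-flat $\nabla$, $g_p(F,\nabla)$ is not closed. By your own formula $d\,g_p=-\ch_p(\nabla_{\mathrm{Grass}})$, and this is in general a non-zero form; on a cube $\pp(c)$ not even its de Rham class, $\ch_p$ of the Rees bundle, need vanish. So ``$[g_p(F,\nabla)]$'' is not a cohomology class. Your assertion that the $\cc/\zz$-component of the Deligne class represented by $(\ch_p(\nabla_{\mathrm{Grass}}),g_p)$ is ``exactly the class of the secondary form'' is false. What holds is an identity of differential characters, along the linear path $\nabla_t$ from $\nabla_{\mathrm{Grass}}$ to $\nabla$:
\[
  \widehat{\ch}_p(F,\nabla)\;=\;\widehat{\ch}_p(F,\nabla_{\mathrm{Grass}})
  +\bigl[v_p^{\mathrm{nm}}(\{\nabla_t\})\bigr].
\]
Here the bracket is the image of a form among differential characters. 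The left side is flat, but neither term on the right is. The Grassmannian character (the ``contribution from the Grassmannian'' in the proof of Proposition~\ref{prop:green-current}) cannot be discarded.

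There is also a structural obstruction to any statement of this shape. A class represented by one closed global form lies in the image of $H^{2p-1}(-,\cc)$, so it has zero Bockstein. But $\partial\,\CS_p(\nabla^{\Del})=c^B_p(F)$ can be non-zero torsion (Remark~\ref{rem:betti-vs-CS}). For the same reason, the displayed ``isomorphism'' $H^{2p-1}(\bar Z,\cc)/H^{2p-1}(\bar Z,\zz)\cong H^{2p-1}(\bar Z,\cc/\zz)$ fails whenever $H^{2p}(\bar Z,\zz)$ has torsion.

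Finally, $\CS_p(\nabla^{\Del})$ lives on $X\simeq\iiii\Cube A\subset\aaa\Cube A\subset\bar Z$. Classes restrict from $\bar Z$ to $X$, not conversely. Your move to $Z^o$ is the right one, but it changes the statement.

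\textbf{The weight.} Your ``main obstacle'' is genuine, but neither proposed fix works. Take the linear path $\nabla_t$ from $\nabla_0=\nabla_{\mathrm{Grass}}$ to $\nabla_1=\nabla$ and any weight $w(t)$. The Bianchi identity gives
\[
  d\Bigl(p\int_0^1 w(t)\,\Tr\bigl(A\wedge F(\nabla_t)^{p-1}\bigr)\,dt\Bigr)
  = w(1)\,\Tr\bigl(F(\nabla_1)^p\bigr)-w(0)\,\Tr\bigl(F(\nabla_0)^p\bigr)
  -\int_0^1 w'(t)\,\Tr\bigl(F(\nabla_t)^p\bigr)\,dt .
\]
For $w=(1-t)^{p-1}$ and $p\geq2$, the endpoint term at $\nabla$ drops out entirely because $w(1)=0$. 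Beyond $-\Tr\bigl(F(\nabla_{\mathrm{Grass}})^p\bigr)$, what remains is $(p-1)\int_0^1(1-t)^{p-2}\,\Tr\bigl(F(\nabla_t)^p\bigr)\,dt$. This involves the curvature at interior points of the path, where nothing is nil-flat. Consequences:
\begin{itemize}
\item $g_p$ differs from the transgression by a non-closed form, not an exact one.
\item Nil-flatness of $\nabla$ at the endpoint plays no role, contrary to your first fix.
\item The formula $d\,g_p=\ch_p(\nabla)-\ch_p(\nabla_{\mathrm{Grass}})$ of Proposition~\ref{prop:green-current} is itself false.
\item Calibrating the constant at $p=1$ cannot detect any of this, since the weight equals $1$ there.
\end{itemize}

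\textbf{What can be proved.} The repair is to replace $g_p$ by the unweighted normalized transgression $v_p^{\mathrm{nm}}$ and to keep the term $\widehat{\ch}_p(F,\nabla_{\mathrm{Grass}})$. For that corrected statement, on $Z^o$ and for locally nil-flat $F^1$-connections, your Step~1 is the right mechanism. It combines the cocycle property of transgressions, Theorem~\ref{thm:DHZ}, and injectivity of $\delta$ on the $\aaa$-realization (Proposition~\ref{prop:DB-A-realization}). This is the same mechanism the paper uses in Theorem~\ref{thm:Del-indep-app} and in Step~5 of Theorem~\ref{thm:CS-regulator}.
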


\section{The volume regulator}
\label{sec:volume}

\subsection{Definition and basic properties}

Let $X$ be a compact $\mathcal{C}^\infty$ manifold and $(E,\nabla)$ a complex
vector bundle with connection.

\begin{definition}\label{def:CS-integrand}
For a smooth path of connections $\{\nabla_t\}_{0\leq t\leq 1}$ with derivative
$\dot\nabla_t := \frac{d}{dt}\nabla_t\in A^1(X;\End(E))$, define the
\emph{Chern-Simons transgression form}
\[
  v_p(\{\nabla_t\}) := \int_0^1
  \Tr\!\bigl(\dot\nabla_t\wedge F(\nabla_t)^{p-1}\bigr)\,dt
  \;\in\; A^{2p-1}(X),
\]
where $F(\nabla_t) = \nabla_t^2\in A^2(X;\End(E))$ is the curvature.
\end{definition}

\begin{lemma}\label{lem:bott-chern}
One has $d\bigl(p\,v_p(\{\nabla_t\})\bigr) = \ch_p(\nabla_1) - \ch_p(\nabla_0)$,
where $\ch_p(\nabla) := \Tr(F(\nabla)^p)$ is the $p$-th Chern character form.
\end{lemma}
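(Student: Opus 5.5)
The plan is to differentiate the transgression integrand in $t$ and integrate, using the standard Chern--Weil identities for a one-parameter family of connections. Write $F_t := F(\nabla_t)$ and $\dot\nabla_t\in A^1(X;\End(E))$. The two ingredients, both of which I will verify directly from $F_t=\nabla_t^2$, are the variation formula
\[
  \tfrac{d}{dt}F_t \;=\; \nabla_t\dot\nabla_t \;=\; d^{\nabla_t}\dot\nabla_t ,
\]
which holds because $\nabla_{t+s} = \nabla_t + s\dot\nabla_t + O(s^2)$ and $(\nabla_t+sB)^2 = F_t + s(\nabla_t B + B\nabla_t) + O(s^2)$, with the graded commutator $[\nabla_t,B]$ equal to the covariant exterior derivative $d^{\nabla_t}B$ on $\End(E)$-valued forms; and the Bianchi identity $d^{\nabla_t}F_t=0$, which holds since $[\nabla_t,\nabla_t^2]=0$.

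Next I will compute $\tfrac{d}{dt}\ch_p(\nabla_t)=\tfrac{d}{dt}\Tr(F_t^p)$. Since $F_t$ has even degree, the graded cyclicity of the trace lets me collect the $p$ terms of the Leibniz rule into one:
\[
  \tfrac{d}{dt}\Tr(F_t^p) \;=\; p\,\Tr\bigl(\dot F_t\wedge F_t^{p-1}\bigr) \;=\; p\,\Tr\bigl(d^{\nabla_t}\dot\nabla_t\wedge F_t^{p-1}\bigr).
\]
For any $\End(E)$-valued form $\omega$ one has $d\Tr(\omega)=\Tr(d^{\nabla}\omega)$, because the connection-matrix terms enter as a commutator and have zero trace. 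Applying this together with the graded Leibniz rule for $d^{\nabla_t}$ and Bianchi ($d^{\nabla_t}F_t^{p-1}=0$), I get
\[
  d\,\Tr\bigl(\dot\nabla_t\wedge F_t^{p-1}\bigr) \;=\; \Tr\bigl(d^{\nabla_t}\dot\nabla_t\wedge F_t^{p-1}\bigr) \;-\; \Tr\bigl(\dot\nabla_t\wedge d^{\nabla_t}F_t^{p-1}\bigr) \;=\; \Tr\bigl(d^{\nabla_t}\dot\nabla_t\wedge F_t^{p-1}\bigr).
\]
Hence $\tfrac{d}{dt}\ch_p(\nabla_t)=p\,d\,\Tr(\dot\nabla_t\wedge F_t^{p-1})$.

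Finally I integrate over $t\in[0,1]$ and commute $d$ with $\int_0^1\,dt$, which is legitimate for a smooth family on the compact manifold $X$. This gives $\ch_p(\nabla_1)-\ch_p(\nabla_0)=d\bigl(p\,v_p(\{\nabla_t\})\bigr)$, as claimed.

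The main point of care is the sign bookkeeping: the graded cyclicity step and the graded Leibniz step. The signs work out because $F_t$ is even, so every transposition past it carries no sign, and $\dot\nabla_t$ is odd, so only the first Leibniz term survives without a sign issue. No earlier result of the paper is needed; the argument uses only $\ch_p(\nabla)=\Tr(F(\nabla)^p)$ as fixed in the statement.
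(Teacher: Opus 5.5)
Your proof is correct and follows the same route as the paper's: the variation formula $\dot F_t = d^{\nabla_t}\dot\nabla_t$, cyclicity of the trace to collect the $p$ Leibniz terms, the identity $d\,\Tr = \Tr\, d^{\nabla_t}$ combined with the Bianchi identity to drop the correction term, and then integration over $t\in[0,1]$. The only difference is that you write out the sign bookkeeping, which the paper's one-line computation leaves implicit.
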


\begin{proof}
Differentiating and using the Bianchi identity ($d_{\nabla_t}F(\nabla_t)=0$)
gives $\frac{d}{dt}\Tr(F(\nabla_t)^p) = p\,\Tr\bigl(\dot F(\nabla_t)\wedge
F(\nabla_t)^{p-1}\bigr) = p\,\Tr\bigl(d_{\nabla_t}(\dot\nabla_t)\wedge
F(\nabla_t)^{p-1}\bigr) = p\,d\,\Tr\bigl(\dot\nabla_t\wedge
F(\nabla_t)^{p-1}\bigr)$, using $d_{\nabla_t}F(\nabla_t)=0$ once more to
drop the correction term. Integrating over $t\in[0,1]$ gives
$\ch_p(\nabla_1)-\ch_p(\nabla_0) = d\left(p\int_0^1
\Tr(\dot\nabla_t\wedge F(\nabla_t)^{p-1})\,dt\right) = d(p\,v_p(\{\nabla_t\}))$,
by Definition~\ref{def:CS-integrand}.
\end{proof}

\begin{lemma}[Koszul sign / parity]\label{lem:koszul-parity}
Let $g$ be a (possibly indefinite) Hermitian form on $E$ and let
$\mathfrak{u}_E$ denote the Lie algebra of $g$-skew-Hermitian endomorphisms.
\begin{enumerate}[leftmargin=2em,label=\rm(\alph*)]
  \item For $F\in A^2(X;\mathfrak{u}_E)$, $\overline{\Tr(F^p)} =
    (-1)^p\Tr(F^p)$.
  \item For $A\in A^1(X;\mathfrak{u}_E)$ and $B\in A^2(X;\mathfrak{u}_E)$,
    $\overline{\Tr(A\wedge B^{p-1})} = (-1)^p\Tr(A\wedge B^{p-1})$.
\end{enumerate}
\end{lemma}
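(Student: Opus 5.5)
The plan is to prove both parts pointwise, treating differential forms with values in $\End(E)$ as elements of $A^\hdot(X)\otimes\End(E_x)$. First we need a formula for how complex conjugation interacts with the $g$-adjoint. Fix a point and a frame in which $g$ has matrix $J=\mathrm{diag}(I_{r_+},-I_{r_-})$. An endomorphism $M$ is $g$-skew-Hermitian exactly when $M^\dagger:=J\overline{M}^{T}J=-M$. For a matrix-valued form we set $\omega^\dagger:=J\,\overline{\omega}^{T}J$, conjugating the scalar form coefficients and transposing the matrix indices. We will establish the rule
\[
\overline{\Tr(\omega_1\wedge\cdots\wedge\omega_m)}=\pm\Tr(\omega_m^\dagger\wedge\cdots\wedge\omega_1^\dagger),
\]
with an explicit Koszul sign. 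The sign arises because reversing the order of the matrix product forces us to reorder the scalar form factors; the reorder costs $\prod_{i<j}(-1)^{\deg\omega_i\deg\omega_j}$.

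The key step is this sign bookkeeping, done on decomposable elements $\omega_i=\alpha_i\otimes M_i$. There we have $\Tr(\omega_1\cdots\omega_m)=\alpha_1\wedge\cdots\wedge\alpha_m\,\Tr(M_1\cdots M_m)$. Conjugating gives $\overline{\alpha_1}\wedge\cdots\wedge\overline{\alpha_m}\,\overline{\Tr(M_1\cdots M_m)}$. Next, $\overline{\Tr(M)}=\Tr(\overline M^T)=\Tr(JM^\dagger J)=\Tr(M^\dagger)$, using $J^2=1$, and $(M_1\cdots M_m)^\dagger=M_m^\dagger\cdots M_1^\dagger$. Finally we reverse the scalar forms to match the matrix order.

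Now apply this to part (a). Take $\omega_i=F$ for all $i$, each of even degree, so every Koszul sign is $+1$. Since $F^\dagger=-F$, we get $\overline{\Tr(F^p)}=\Tr((-F)^p)=(-1)^p\Tr(F^p)$.

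For part (b), take the factors $A,B,\dots,B$. Here $A^\dagger=-A$ and $B^\dagger=-B$, which gives $p$ sign flips, i.e. a factor $(-1)^p$. Reversing the order gives $\Tr(B^{p-1}\wedge A)$ up to Koszul signs. Only $A$ has odd degree and the $B$'s are even, so all Koszul signs are $+1$. Graded cyclicity of the trace, $\Tr(\omega\wedge\eta)=(-1)^{\deg\omega\deg\eta}\Tr(\eta\wedge\omega)$, moves $A$ back to the front, and since $B^{p-1}$ has even degree this costs no sign. Hence $\overline{\Tr(A\wedge B^{p-1})}=(-1)^p\Tr(A\wedge B^{p-1})$.

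The main obstacle is making sure the sign conventions are correct and consistent. Specifically, one must check that conjugation of forms commutes with the wedge product without a sign, and that the Koszul sign for reversing products of forms is exactly the one used above. Because only one odd-degree factor appears in each part, every potentially dangerous sign is trivial; I would verify this carefully on decomposable elements and then extend by linearity.
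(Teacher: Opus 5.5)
Your proposal is correct and follows essentially the same route as the paper. Both work pointwise in a $g$-orthonormal frame with sign matrix $J$, write the skew-Hermitian condition as $\overline{M}=-JM^{T}J$ (your $M^\dagger=-M$), and track the signs incurred when reversing the order of a wedge product of matrix-valued forms. In part (b) your explicit graded-cyclicity step is actually the more careful one: the paper asserts $A^T\wedge(B^{p-1})^T=(A\wedge B^{p-1})^T$, which by its own rule $(X\wedge Y)^T=(-1)^{ac}Y^T\wedge X^T$ should read $(B^{p-1}\wedge A)^T$, and it only becomes the identity needed after taking traces, i.e.\ after exactly the cyclicity move you make.
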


\begin{proof}
Write $g(x,y) = x^*Jy$ in a local $g$-orthonormal frame, with $J$ the
constant diagonal matrix of signs ($J=\Id$ if $g$ is positive definite); then
$M\in\mathfrak{u}_E$ satisfies $\bar M = -JM^TJ$.

(a) $\overline{F^p} = \bar F^{\,p} = (-JF^TJ)^p = (-1)^p J(F^T)^pJ =
(-1)^pJ(F^p)^TJ$, using $J^2=\Id$ and $(F\wedge F)^T=F^T\wedge F^T$ (valid
since $F$ has even form-degree, so wedging commutes freely with itself).
Taking traces and using $\Tr(JMJ)=\Tr(M)$, $\Tr(M^T)=\Tr(M)$ gives
$\overline{\Tr(F^p)} = (-1)^p\Tr(F^p)$.

(b) Since $B$ has even form-degree, $(B^{p-1})^T = (B^T)^{p-1}$ as in (a),
and since $A$ has odd form-degree while $B^{p-1}$ has even total
form-degree, $A^T\wedge(B^{p-1})^T = (A\wedge B^{p-1})^T$ (transposing a
wedge product of matrix-valued forms of degrees $a,c$ satisfies
$(X\wedge Y)^T=(-1)^{ac}Y^T\wedge X^T$, and here $ac=1\cdot 2(p-1)$ is
even). Hence
\begin{align*}
  \overline{A\wedge B^{p-1}} &= \bar A\wedge\bar B^{\,p-1}
  = (-JA^TJ)\wedge(-1)^{p-1}J(B^T)^{p-1}J\\
  &= (-1)^p J\bigl(A^T\wedge (B^{p-1})^T\bigr)J
  = (-1)^p J(A\wedge B^{p-1})^TJ,
\end{align*}
and taking traces as in (a) gives $\overline{\Tr(A\wedge B^{p-1})} =
(-1)^p\Tr(A\wedge B^{p-1})$.
\end{proof}

\subsection{Normalization of the Chern-Weil quantities}
\label{subsec:normalization-quantities}

\begin{definition}\label{def:normalized-curvature}
For a connection $\nabla$ on $E$ with curvature $F(\nabla)\in
A^2(X;\End(E))$, and for a smooth path $\{\nabla_t\}$ with derivative
$\dot\nabla_t\in A^1(X;\End(E))$ as in Definition~\ref{def:CS-integrand},
define the \emph{normalized curvature} and \emph{normalized variation}
\[
  F^{\mathrm{nm}}(\nabla) := \frac{1}{2\pi i}\,F(\nabla), \qquad
  \dot\nabla_t^{\mathrm{nm}} := \frac{1}{2\pi i}\,\dot\nabla_t .
\]
This is the usual Chern-Weil normalization of the curvature that produces
integral Chern classes; since $2\pi i$ is a nonzero scalar,
$F^{\mathrm{nm}}(\nabla)$ is skew-Hermitian (resp.\ nilpotent) exactly when
$F(\nabla)$ is, so all of the structural hypotheses used below transfer
unchanged between $F$ and $F^{\mathrm{nm}}$.
\end{definition}

\begin{definition}[Normalized Chern character and transgression form]\label{def:normalized-ch-vp}
The \emph{normalized Chern character form} and \emph{normalized
Chern-Simons transgression form} are
\[
  \ch_p^{\mathrm{nm}}(\nabla) := \frac{1}{p!}\,\Tr\bigl(F^{\mathrm{nm}}(\nabla)^p\bigr),
  \qquad
  v_p^{\mathrm{nm}}(\{\nabla_t\}) := \frac{1}{(p-1)!}\int_0^1
  \Tr\bigl(\dot\nabla_t^{\mathrm{nm}}\wedge F^{\mathrm{nm}}(\nabla_t)^{p-1}\bigr)\,dt .
\]
\end{definition}

\begin{lemma}\label{lem:normalized-formula}
In terms of the un-normalized quantities $\ch_p(\nabla)$ of
Lemma~\ref{lem:bott-chern} and $v_p(\{\nabla_t\})$ of
Definition~\ref{def:CS-integrand},
\[
  \ch_p^{\mathrm{nm}}(\nabla) = \frac{1}{p!\,(2\pi i)^p}\,\ch_p(\nabla), \qquad
  v_p^{\mathrm{nm}}(\{\nabla_t\}) = \frac{p}{p!\,(2\pi i)^p}\,v_p(\{\nabla_t\}),
\]
matching, up to notation, the normalized $\ch_p$ of
Appendix~\ref{app:diffchar}, \S A.2, and the form $\mathrm{CS}_{2p-1}(A)$ of
\eqref{eq:CS-form-normalised} there.
\end{lemma}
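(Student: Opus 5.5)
This identity is a direct substitution into the definitions, so the plan is to unwind Definition~\ref{def:normalized-ch-vp} using the scalings of Definition~\ref{def:normalized-curvature}, and then compare with Lemma~\ref{lem:bott-chern} and Definition~\ref{def:CS-integrand}. No geometric input is needed. The only points to watch are how scalar factors pass through traces and wedge products of matrix-valued forms, and the bookkeeping of the factorials.

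For the Chern character, observe that $F^{\mathrm{nm}}(\nabla)=(2\pi i)^{-1}F(\nabla)$. Complex scalars commute with matrix multiplication and with the wedge product. Hence
\[
F^{\mathrm{nm}}(\nabla)^p=(2\pi i)^{-p}F(\nabla)^p .
\]
Taking the trace, which is $\cc$-linear, gives $\ch_p^{\mathrm{nm}}(\nabla)=\frac{1}{p!(2\pi i)^p}\Tr(F(\nabla)^p)$. This is the first identity, since $\ch_p(\nabla):=\Tr(F(\nabla)^p)$.

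For the transgression form, the same argument applies pointwise in $t$. We have $\dot\nabla_t^{\mathrm{nm}}\wedge F^{\mathrm{nm}}(\nabla_t)^{p-1}=(2\pi i)^{-p}\,\dot\nabla_t\wedge F(\nabla_t)^{p-1}$, since there is one factor of $(2\pi i)^{-1}$ from the variation and $p-1$ from the curvature. The scalar comes out of both the trace and the $t$-integral. This yields
\[
v_p^{\mathrm{nm}}=\frac{1}{(p-1)!\,(2\pi i)^p}\,v_p .
\]
The remaining step is the arithmetic identity $\frac{1}{(p-1)!}=\frac{p}{p!}$, which puts this in the stated form. It also shows the consistency check $d\,v_p^{\mathrm{nm}}=\ch_p^{\mathrm{nm}}(\nabla_1)-\ch_p^{\mathrm{nm}}(\nabla_0)$, obtained by dividing Lemma~\ref{lem:bott-chern} by $p!(2\pi i)^p$.

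Finally, compare with the normalized $\ch_p$ and $\mathrm{CS}_{2p-1}$ of Appendix~\ref{app:diffchar}. Both carry the prefactors $\frac{1}{p!(2\pi i)^p}$ and $\frac{p}{p!(2\pi i)^p}$, as already displayed in Proposition~\ref{prop:transgression} and Proposition~\ref{prop:vol-explicit}. The match is therefore a matter of notation.

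The only mildly delicate point, and the step I would check most carefully, is that no sign arises when the scalars are pulled through the wedge product. This holds because the scalars are $0$-forms and so commute with everything, unlike the transposition signs in Lemma~\ref{lem:koszul-parity}.
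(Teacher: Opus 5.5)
Your proposal is correct and follows essentially the same route as the paper: pull a factor $(2\pi i)^{-1}$ out of each of the $p$ normalized factors (one variation and $p-1$ curvatures in the transgression case), then use $\frac{1}{(p-1)!}=\frac{p}{p!}$. For the comparison with \eqref{eq:CS-form-normalised}, the cleanest justification is to observe directly that $\mathrm{CS}_{2p-1}(A)$ is $v_p^{\mathrm{nm}}$ of the straight-line path $d+tA$, for which $\dot\nabla_t=A$ and $F(\nabla_t)=t\,dA+t^2A\wedge A$; this is better than appealing to Propositions~\ref{prop:transgression} and~\ref{prop:vol-explicit}.
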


\begin{proof}
Each factor of $F^{\mathrm{nm}}(\nabla)$ contributes one power of
$(2\pi i)^{-1}$ relative to the corresponding factor of $F(\nabla)$, and
likewise for $\dot\nabla_t^{\mathrm{nm}}$ relative to $\dot\nabla_t$. Since
$\ch_p^{\mathrm{nm}}(\nabla)$ involves $p$ factors of $F^{\mathrm{nm}}(\nabla)$,
it picks up $(2\pi i)^{-p}$ in total:
\[
  \ch_p^{\mathrm{nm}}(\nabla) = \frac{1}{p!}\Tr\Bigl(\bigl(\tfrac{1}{2\pi i}F(\nabla)\bigr)^{\!p}\Bigr)
  = \frac{1}{p!\,(2\pi i)^p}\Tr\bigl(F(\nabla)^p\bigr)
  = \frac{1}{p!\,(2\pi i)^p}\,\ch_p(\nabla).
\]
Likewise $v_p^{\mathrm{nm}}(\{\nabla_t\})$ involves one factor of
$\dot\nabla_t^{\mathrm{nm}}$ together with $p-1$ factors of
$F^{\mathrm{nm}}(\nabla_t)$ --- again $p$ normalized factors in total --- so
\begin{align*}
  v_p^{\mathrm{nm}}(\{\nabla_t\}) &= \frac{1}{(p-1)!}\int_0^1
  \Tr\Bigl(\tfrac{1}{2\pi i}\dot\nabla_t\wedge\bigl(\tfrac{1}{2\pi i}F(\nabla_t)\bigr)^{\!p-1}\Bigr)\,dt\\
  &= \frac{1}{(p-1)!\,(2\pi i)^p}\int_0^1
  \Tr\bigl(\dot\nabla_t\wedge F(\nabla_t)^{p-1}\bigr)\,dt
  = \frac{1}{(p-1)!\,(2\pi i)^p}\,v_p(\{\nabla_t\}),
\end{align*}
and $\frac{1}{(p-1)!}=\frac{p}{p!}$ gives the stated formula.
\end{proof}

\begin{proposition}\label{prop:CS-closed-nm}
If $\nabla_0$ is unitary and $\nabla_1$ is locally nil-flat, then
$\Im\,v_p^{\mathrm{nm}}(\{\nabla_t\})$ is closed, for every $p\geq1$.
\end{proposition}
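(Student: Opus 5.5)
The plan is to compute $d\,v_p^{\mathrm{nm}}$ by the transgression formula and show that its imaginary part vanishes. Since $d$ is a real operator on complex-valued forms, it commutes with taking imaginary parts: $d\,\Im\,\omega = \Im\,d\omega$. It therefore suffices to show that $d\,v_p^{\mathrm{nm}}(\{\nabla_t\})$ is a real form. The argument works for any smooth path $\{\nabla_t\}$ from $\nabla_0$ to $\nabla_1$, for instance the linear path $\nabla_0+t(\nabla_1-\nabla_0)$.

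\textbf{Step 1 (transgression).} Lemma~\ref{lem:bott-chern} gives $d(p\,v_p)=\ch_p(\nabla_1)-\ch_p(\nabla_0)$. Divide by $p!\,(2\pi i)^p$ and use Lemma~\ref{lem:normalized-formula}. This yields
\[
  d\,v_p^{\mathrm{nm}}(\{\nabla_t\}) \;=\; \ch_p^{\mathrm{nm}}(\nabla_1)-\ch_p^{\mathrm{nm}}(\nabla_0).
\]

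\textbf{Step 2 (the nil-flat end).} Since $\nabla_1$ is locally nil-flat, Proposition~\ref{prop:nilflat-ch-zero} gives $\ch_p(\nabla_1)=0$ as a form. Hence $\ch_p^{\mathrm{nm}}(\nabla_1)=0$.

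\textbf{Step 3 (the unitary end).} Since $\nabla_0$ is unitary for a positive definite metric $h$, its curvature $F(\nabla_0)$ lies in $A^2(X;\mathfrak{u}_E)$ with $g=h$. Lemma~\ref{lem:koszul-parity}(a) then gives $\overline{\Tr(F(\nabla_0)^p)}=(-1)^p\Tr(F(\nabla_0)^p)$. We also have $\overline{(2\pi i)^p}=(-1)^p(2\pi i)^p$, so the two signs cancel. Exactly as in the proof of Proposition~\ref{prop:Re-trace-zero}, this shows
\[
  \overline{\ch_p^{\mathrm{nm}}(\nabla_0)}=\ch_p^{\mathrm{nm}}(\nabla_0),
\]
that is, $\ch_p^{\mathrm{nm}}(\nabla_0)$ is a real form. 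Equivalently, $F^{\mathrm{nm}}(\nabla_0)$ is Hermitian-valued.

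\textbf{Conclusion.} Combining the three steps,
\[
  d\,\Im\,v_p^{\mathrm{nm}} \;=\; \Im\,d\,v_p^{\mathrm{nm}} \;=\; -\Im\,\ch_p^{\mathrm{nm}}(\nabla_0) \;=\; 0,
\]
for every $p\geq1$.

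There is no real obstacle here; the only care needed is the sign bookkeeping in Step~3. That bookkeeping is exactly the cancellation already carried out in Lemma~\ref{lem:koszul-parity} and Proposition~\ref{prop:Re-trace-zero}. Note that the real part of $v_p^{\mathrm{nm}}$ need not be closed, since $\Re\,\ch_p^{\mathrm{nm}}(\nabla_0)$ is in general nonzero.
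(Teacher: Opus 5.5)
Your proposal is correct and follows essentially the same route as the paper. In both, the transgression identity of Lemma~\ref{lem:bott-chern} is rescaled by $p!(2\pi i)^p$, $\ch_p^{\mathrm{nm}}(\nabla_1)$ vanishes by local nil-flatness, and $\ch_p^{\mathrm{nm}}(\nabla_0)$ is real by Lemma~\ref{lem:koszul-parity}(a) with the two factors $(-1)^p$ cancelling, which gives $d(\Im\,v_p^{\mathrm{nm}})=-\Im\,\ch_p^{\mathrm{nm}}(\nabla_0)=0$.
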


\begin{proof}
Since $\nabla_0$ is unitary, $F(\nabla_0)$ is skew-Hermitian, i.e.\
$F(\nabla_0)\in A^2(X;\mathfrak{u}(r))\subset A^2(X;\mathfrak{u}_E)$ (taking
$g$ positive definite). Since $\overline{(2\pi i)^p}=(-2\pi i)^p=(-1)^p(2\pi
i)^p$, Lemma~\ref{lem:koszul-parity}(a) gives
\[
  \overline{\ch_p^{\mathrm{nm}}(\nabla_0)}
  = \frac{\overline{\Tr(F(\nabla_0)^p)}}{\overline{p!(2\pi i)^p}}
  = \frac{(-1)^p\Tr(F(\nabla_0)^p)}{(-1)^p\,p!(2\pi i)^p}
  = \ch_p^{\mathrm{nm}}(\nabla_0),
\]
so $\ch_p^{\mathrm{nm}}(\nabla_0)$ is real, i.e.\ $\Im\,\ch_p^{\mathrm{nm}}(\nabla_0)
= 0$, for every $p\geq1$. Since $\nabla_1$
is locally nil-flat, $F(\nabla_1)$ is locally nilpotent, so
$\ch_p(\nabla_1)=\Tr(F(\nabla_1)^p)=0$ identically, hence also
$\ch_p^{\mathrm{nm}}(\nabla_1)=0$. Dividing Lemma~\ref{lem:bott-chern}'s
identity $d(p\,v_p) = \ch_p(\nabla_1)-\ch_p(\nabla_0)$ through by
$p!(2\pi i)^p$ gives, directly and with no further constant to track,
$d\,v_p^{\mathrm{nm}} = \ch_p^{\mathrm{nm}}(\nabla_1) -
\ch_p^{\mathrm{nm}}(\nabla_0) = -\ch_p^{\mathrm{nm}}(\nabla_0)$, so
$d(\Im\,v_p^{\mathrm{nm}}) = -\Im\,\ch_p^{\mathrm{nm}}(\nabla_0) = 0$.
\end{proof}

\begin{definition}\label{def:vol-p-normalized}
The \emph{volume regulator} of a locally nil-flat connection $\nabla$ is
\[
  \vol_p(\nabla) := [\Im\,v_p^{\mathrm{nm}}(\{\nabla_t\})]
  \;\in\; H^{2p-1}(X,\rr),
\]
where $\{\nabla_t\}$ is any path from a unitary connection $\nabla_0$ to
$\nabla_1 = \nabla$; this is well defined by
Proposition~\ref{prop:CS-closed-nm}, and is independent of the choice of
$\nabla_0$ and of the path by Proposition~\ref{prop:vol-indep} below.
\end{definition}

\begin{proposition}\label{prop:vol-indep}
The volume regulator $\vol_p(\nabla)$ is independent of the choice of unitary
connection $\nabla_0$ and path $\{\nabla_t\}$.
\end{proposition}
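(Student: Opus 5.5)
The plan is to reduce everything to the transgression identity of Lemma~\ref{lem:bott-chern} applied to a two-parameter family of connections. I will treat the two choices separately. First, for a fixed unitary $\nabla_0$, I show independence of the path. Second, I change the unitary endpoint. In both cases the difference of the two $\Im\,v_p^{\mathrm{nm}}$ forms should turn out to be exact, up to a term whose imaginary part vanishes identically.

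\textbf{Path independence.} Let $\{\nabla_t\}$ and $\{\nabla'_t\}$ be two paths with the same endpoints $\nabla_0$ and $\nabla_1=\nabla$. The space of connections is affine, so they are joined by a smooth homotopy $\nabla_{s,t}$, $(s,t)\in[0,1]^2$, with the ends fixed. Consider the connection $\widetilde\nabla=\nabla_{s,t}+ds\,\partial_s+dt\,\partial_t$ on the pullback of $E$ to $X\times[0,1]^2$. Apply the standard homotopy formula: integrate $\Tr(F(\widetilde\nabla)^p)$ over the square fibre and use Stokes. This gives
\[
v_p(\{\nabla'_t\})-v_p(\{\nabla_t\})=d\,\beta .
\]
Here $\beta$ is a fibre integral of $\Tr(\partial_s\nabla\wedge\partial_t\nabla\wedge F^{p-2})$-type terms. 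The side edges, where $s$ varies at $t=0$ and at $t=1$, contribute nothing because the endpoints are constant there. After normalizing by Lemma~\ref{lem:normalized-formula} and taking imaginary parts, the difference is $d(\Im\beta^{\mathrm{nm}})$, which is exact. Note that closedness of each form individually is already given by Proposition~\ref{prop:CS-closed-nm}.

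\textbf{Change of unitary endpoint.} Let $\nabla_0,\nabla_0'$ be two unitary connections. By path independence I may use the concatenated path from $\nabla_0'$ to $\nabla_0$ (the linear path) followed by any path from $\nabla_0$ to $\nabla$. The transgression form is additive under concatenation, so
\[
\vol_p'=\vol_p+\bigl[\Im\,v_p^{\mathrm{nm}}(\nabla_0'\to\nabla_0)\bigr].
\]
The linear path $\nabla_0'+t(\nabla_0-\nabla_0')$ stays inside the unitary connections for $h$ if both are $h$-unitary. In that case $\dot\nabla_t$ and $F(\nabla_t)$ lie in $\mathfrak{u}_E$, and the argument of Proposition~\ref{prop:Re-trace-zero} (Lemma~\ref{lem:koszul-parity}(b) with the $(2\pi i)^p$ cancellation) shows the integrand is real pointwise. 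Hence the extra term is zero as a form.

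\textbf{Main obstacle.} The delicate point is when $\nabla_0$ and $\nabla_0'$ are unitary for \emph{different} metrics $h,h'$. The linear path is then not unitary for a single metric. I would first join the metrics by $h_s=h^{1-s}h'^{s}$ (or the linear path of metrics) and choose a family of $h_s$-unitary connections $\nabla^{(s)}$ with the given ends. I would then use the homotopy formula from the path-independence step. It compares the direct path with the one through the family $\nabla^{(s)}$, reducing the question to showing that $\Im\,v_p^{\mathrm{nm}}$ along a family of connections unitary for varying metrics is exact. For that I would use a gauge transformation $g_s=h^{-1/2}h_s^{1/2}$, which conjugates each $\nabla^{(s)}$ to an $h$-unitary connection. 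The traces are gauge invariant, and the contribution of the gauge variation is of Chern--Simons form $\Tr(g^{-1}dg\wedge\cdots)$. Combined with the fact that $\ch_p^{\mathrm{nm}}$ of unitary connections is real, as shown in Proposition~\ref{prop:CS-closed-nm}, this should show that this contribution is exact up to real terms. I expect this bookkeeping, showing that the imaginary part of the gauge term is exact, to be the step needing the most care.
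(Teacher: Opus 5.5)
Your path-independence step (the homotopy formula on $X\times[0,1]^2$ with fixed ends) and your change of unitary endpoint for two connections unitary for the \emph{same} metric $h$ are correct, and together they are the paper's argument. The paper joins $\nabla_0$ to $\nabla_0'$ by a path of unitary connections, along which the transgression is real, as you show via Lemma~\ref{lem:koszul-parity}(b). It then asserts that the resulting loop contributes nothing; your homotopy formula is the precise form of that step.

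The paper justifies the unitary path by convexity of the space of unitary connections, which holds only for a fixed metric. So the case you single out as the main obstacle is not treated there at all. It is nevertheless used: Proposition~\ref{prop:hodge-vanish} takes $\nabla_0$ unitary for the particular metric $h_1\oplus h_2$ adapted to $g$.

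Your sketch for that case is unfinished, and it is simpler than the interpolation $h_s$ and the two-parameter family you propose. The operative point is also not the reality of $\ch_p^{\mathrm{nm}}$. If $\nabla_0'$ is $h'$-unitary, write $h'(u,v)=h(\gamma u,\gamma v)$ with $\gamma$ positive and $h$-self-adjoint; then $\gamma\nabla_0'\gamma^{-1}$ is $h$-unitary. Along $\nabla_s:=\gamma^{s}\nabla_0'\gamma^{-s}$ one has $\dot\nabla_s=-[\nabla_s,\log\gamma]$, so the Bianchi identity gives $\Tr\bigl(\dot\nabla_s\wedge F(\nabla_s)^{p-1}\bigr)=-d\,\Tr\bigl(\log\gamma\cdot F(\nabla_s)^{p-1}\bigr)$. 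Hence the transgression from $\nabla_0'$ to $\gamma\nabla_0'\gamma^{-1}$ is exact as a complex form.

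Concatenate this with the linear $h$-unitary path from $\gamma\nabla_0'\gamma^{-1}$ to $\nabla_0$ and apply your path independence. This shows that $\Im\,v_p^{\mathrm{nm}}$ of any path from $\nabla_0$ to $\nabla_0'$ is exact, which completes the proof. What makes this work is that $\gamma$ lies in the identity component of the gauge group. For a general gauge transformation you would instead get the class $g^*\mu_{2p-1}$ of Lemma~\ref{lem:gauge-variation}. Its imaginary part is still exact because its periods are real, but that route needs more input.
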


\begin{proof}
Suppose $\{\nabla_t\}$ and $\{\nabla'_t\}$ are two paths from $\nabla_0, \nabla'_0$
(both unitary) to $\nabla$.  Connect $\nabla_0$ to $\nabla'_0$ by a path
$\{\nabla''_s\}$ of unitary connections (possible since the space of unitary
connections on a given bundle is convex).  The concatenated path
$\nabla''_\hdot\to\nabla'_\hdot\to\nabla^{-1}_\hdot$ is a loop and the
corresponding Chern-Simons form integrates to zero over a cycle (by Chern-Weil
theory for unitary connections).  Hence the two choices define the same class.
\end{proof}

\subsection{Comparison with the Cheeger--Simons formalism and the classical
$TP$-forms}
\label{subsec:CS-comparison}

We record how $v_p$ and $\vol_p$ above relate to Cheeger and Simons'
original construction of secondary characteristic classes \cite{CS}: their
differential characters $S_{P,u}$, their Chern class and Chern character
characters $\hat c_k$, $\hat{ch}$, and their forms $TP(\theta)$
\cite{ChrS}.

\subsubsection*{The abstract secondary class $S_{P,u}$}

For a pair $(P,u)\in K^{2k}(G,\Lambda)$ (an invariant polynomial $P$ of
degree $k$ and an integral class $u$ with $w(P)=r(u)$), \cite[Thm.~2.2]{CS}
produce, for each $G$-bundle-with-connection $\alpha=\{E,M,\theta\}$, a
unique differential character
\[
  S_{P,u}(\alpha)\in\widehat H^{2k-1}(M,\rr/\Lambda),
  \qquad
  \delta_1(S_{P,u}(\alpha)) = P(\Omega),\qquad
  \delta_2(S_{P,u}(\alpha)) = u(\alpha),
\]
simultaneously refining the Chern-Weil form $P(\Omega)$ and the integral
characteristic class $u(\alpha)$. Specializing to the Chern polynomial and
integral Chern class $(C_k,c_k)$ gives the Chern class character
$\hat c_k(V):=S_{C_k,c_k}(E(V))\in\widehat H^{2k-1}(M,\rr/\zz)$
(\cite[(4.4)--(4.5)]{CS}); specializing instead to the Chern-character
polynomial $P_{ch}$ and topological Chern character $ch$ gives
$\hat{ch}(V):=S_{P_{ch},ch}(E(V))\in\widehat H^{\mathrm{odd}}(M,\rr/\qq)$
(\cite[p.~64]{CS}), related to the $\hat c_k$ by the exponential-type
formula $\hat{ch}=1+\hat c_1+\tfrac12\hat c_1{*}\hat c_1+\cdots$
(\cite[(4.10)]{CS}). Since $C_k$ is the $k$-th elementary symmetric
polynomial in the (normalized) eigenvalues of the curvature, while our
$\ch_p(\nabla)=\Tr(F(\nabla)^p)$ (Lemma~\ref{lem:bott-chern}) is a power
sum, it is $\hat{ch}$, not $\hat c_k$, that is the literal analogue of the
present objects; the two are related only through the Newton identities.

\subsubsection*{The transgression formula and $v_p$}

Cheeger-Simons' key computational tool, \cite[Prop.~2.9]{CS}, states that
for a linear path $\theta_t=\theta_0+tA$ ($A=\theta_1-\theta_0$) of
connections,
\[
  S_{P,u}(\alpha_1) - S_{P,u}(\alpha_0)
  = \Bigl[\,k\int_0^1 P(\theta_t'\wedge\Omega_t^{k-1})\,dt\,\Bigr]\pmod\Lambda.
\]
With $P=\ch_p$, $k=p$, this is, up to the overall constant $p$ and up to
working with the honest real number rather than its class mod $\Lambda$,
exactly Definition~\ref{def:CS-integrand}. Lemma~\ref{lem:bott-chern} is
the differentiated form of the same computation that proves
\cite[Prop.~2.9]{CS}.

\subsubsection*{The $TP$-forms}

Chern and Simons' original construction \cite{ChrS} (recalled in
\cite[\S 2, eq.~(2.7)]{CS}) associates to a \emph{single} connection
$\theta$ on the total space of a principal $G$-bundle $\pi:E\to M$ the form
\[
  TP(\theta) := k\int_0^1 P\bigl(\theta\wedge\phi_t^{k-1}\bigr)\,dt,
  \qquad
  \phi_t := t\,\Omega + \tfrac12(t^2-t)[\theta,\theta],
\]
satisfying $d\,TP(\theta)=P(\Omega)$ on $E$ (pulled back from $M$), and
\cite[Prop.~2.8]{CS} shows that $TP(\theta)$, reduced mod $\Lambda$,
represents the pullback $\pi^*(S_{P,u}(\alpha))$.

$TP(\theta)$ is a special case of the two-connection formula above: $\phi_t$
is exactly the curvature of the linearly rescaled connection
$\theta_t:=t\theta$, since
\[
  \Omega_t = d\theta_t + \tfrac12[\theta_t,\theta_t]
  = t\,d\theta + \tfrac{t^2}2[\theta,\theta]
  = t\bigl(d\theta+\tfrac12[\theta,\theta]\bigr) + \tfrac12(t^2-t)[\theta,\theta]
  = t\,\Omega + \tfrac12(t^2-t)[\theta,\theta] = \phi_t,
\]
and $\theta_t' = \theta$ is constant in $t$. Thus
$TP(\theta) = k\int_0^1 P(\theta_t'\wedge\Omega_t^{k-1})\,dt$ is literally
\cite[Prop.~2.9]{CS}'s formula applied to the straight-line path from the
connection ``$0$'' to $\theta$ --- a comparison that makes sense on the
total space $E$ (where the tautological connection can be linearly
rescaled to $0$) but not, in general, directly on the base $M$, since a
nontrivial $G$-bundle over $M$ carries no connection ``$0$'' to compare
against. 

Our $v_p(\{\nabla_t\})$ is the direct descendant of this idea, specialized
to $GL_r(\cc)$-bundles: rather than transgressing $\pi^*\ch_p(\nabla)$ to
zero on the total space via a linear rescaling to the connection $0$, we
transgress $\ch_p(\nabla_1)-\ch_p(\nabla_0)$ directly on $X$ between two
honestly defined connections $\nabla_0$ (unitary) and $\nabla_1$ (locally
nil-flat), exploiting only that a unitary reference connection always
exists globally on a Hermitian vector bundle, whereas a flat or ``$0$''
reference connection generally does not. In the special case where
$\nabla_0$ and $\nabla_1$ are related by such a rescaling --- e.g.\ along
the straight-line path within a single fibre of the symmetric-space bundle
of Hermitian metrics on a flat $GL_r(\cc)$-bundle --- the two constructions
literally coincide, with $TP$ recovered upstairs on the frame bundle and
$v_p$ its descent to $X$ (or $X^*$).

\subsubsection*{Target groups: differential characters versus $\vol_p$}

$S_{P,u}$, $\hat c_k$ and $\hat{ch}$ are valued in the group
$\widehat H^{2k-1}(M,\rr/\Lambda)$ of differential characters, which by
\cite[Thm.~1.1]{CS} sits in an exact sequence
\[
  0\to H^{2k-1}(M,\rr)/r\bigl(H^{2k-1}(M,\Lambda)\bigr) \to
  \widehat H^{2k-1}(M,\rr/\Lambda) \to R^{2k-1}(M,\Lambda)\to 0,
\]
carrying both the real transgression data and the integral (Bockstein)
data at once. Our $\vol_p(\nabla)\in H^{2p-1}(X,\rr)$ is the projection
onto the first,  real, torsion-free term of this sequence, after
discarding the mod-$\Lambda$ term.

\subsection{Decomposition into Hermitian and anti-Hermitian parts}

Let $g$ be an indefinite Hermitian metric of type $(p,q)$ on $E$.

\begin{lemma}\label{lem:h1h2}
There exists a direct sum decomposition $E = E_1\oplus E_2$ and positive
definite Hermitian metrics $h_i$ on $E_i$ such that $g = h_1 - h_2$.
\end{lemma}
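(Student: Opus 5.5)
The plan is to diagonalize $g$ pointwise and then globalize the result using an auxiliary positive definite metric. First I fix any positive definite Hermitian metric $h$ on $E$; one exists by a partition of unity. Then $g$ is represented by a unique $h$-self-adjoint endomorphism $H\in\Gamma(\End E)$, defined by $g(x,y)=h(Hx,y)$. Since $g$ is nondegenerate of constant signature $(p,q)$, the operator $H$ is invertible at every point. It has exactly $p$ positive and $q$ negative eigenvalues, counted with multiplicity, at each point of $X$.

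Next I set $E_1$ to be the sum of the positive eigenspaces of $H$ and $E_2$ the sum of the negative ones. The key point is that these are smooth subbundles even though individual eigenvalues may collide. To see this, I would write the spectral projection onto $E_1$ as the Riesz integral
\[
  P_1=\frac{1}{2\pi i}\oint_\Gamma (\lambda-H)^{-1}\,d\lambda ,
\]
where $\Gamma$ is a contour enclosing the positive part of the spectrum. Locally such a contour can be chosen uniformly, because the spectrum stays a positive distance from $0$ on a neighbourhood of any point. Hence $P_1$ is smooth, of constant rank $p$, and $E_1=\im P_1$, $E_2=\ker P_1$ are smooth subbundles. Alternatively, one can use $P_1=\tfrac12(1+H|H|^{-1})$ with $|H|=(H^2)^{1/2}$, which is smooth since $H^2$ is positive definite.

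These subbundles are $h$-orthogonal, since they are eigenspaces of an $h$-self-adjoint operator. They are also $g$-orthogonal: for $x\in E_1$ and $y\in E_2$ we have $g(x,y)=h(Hx,y)=0$, because $Hx\in E_1$. I then define $h_1:=g|_{E_1}$ and $h_2:=-g|_{E_2}$. Both are positive definite, since $g(x,x)=h(Hx,x)$ is positive on $E_1$ and negative on $E_2$. Combining $g$-orthogonality with these definitions gives $g=h_1-h_2$ on $E=E_1\oplus E_2$.

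The only step needing real care is smoothness of the eigenbundle splitting. It is handled by the Riesz projection, or equivalently by smoothness of $|H|$ via the square root of a positive definite operator. The remaining steps are direct verifications.
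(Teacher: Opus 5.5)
Your proof is correct, but it takes a different route from the paper. The paper argues abstractly. At each point the admissible splittings are parametrized by the $g$-maximal positive definite subspaces $E_1$ (with $E_2=E_1^{\perp_g}$), that is, by the symmetric space $U(p,q)/(U(p)\times U(q))$. This space is contractible, so the associated fibre bundle over $X$ has a global section.

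You instead write a section down explicitly. An auxiliary positive metric $h$ turns $g$ into an invertible $h$-self-adjoint $H$. Its positive and negative spectral subbundles, made smooth by the Riesz projection or by $\tfrac12(1+H|H|^{-1})$, give the splitting.

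The paper's version is a one-liner. It leaves implicit both the section theorem for bundles with contractible fibres and the smoothing needed to obtain $\mathcal{C}^\infty$ subbundles. Yours is elementary linear algebra, gives smoothness directly, and produces a splitting that is in addition $h$-orthogonal.

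Your construction also recovers the paper's input. Sending a splitting to the positive metric $g|_{E_1}\oplus(-g|_{E_2})$ is a right inverse of your map $h\mapsto(E_1,E_2)$: for that metric one has $H=\mathrm{id}_{E_1}\oplus(-\mathrm{id}_{E_2})$, whose positive eigenspace is $E_1$. Hence the space of splittings is a retract of the convex cone of positive metrics, and is therefore contractible. This is exactly the fibrewise contractibility the paper quotes.
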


\begin{proof}
Over any point $x\in X$, the set of such decompositions is homeomorphic to
$\frac{U(p,q)}{(U(p)\times U(q))}$, which is the Riemannian symmetric space of $U(p,q)$
and is contractible.  Hence the bundle of classifying spaces over $X$ has a
section, giving the desired global decomposition.
\end{proof}

\begin{definition}
The metric $g$ induces a splitting
\[
  \End(E) = \mathfrak{u}_E\oplus\mathfrak{p}_E,
\]
where $\mathfrak{u}_E = \{A : g(Ax,y)+g(x,Ay)=0\}$ (skew-$g$-Hermitian) and
$\mathfrak{p}_E = \{B : g(Bx,y)=g(x,By)\}$ ($g$-Hermitian).
\end{definition}

\begin{lemma}\label{lem:decomp-connection}
Every connection $\nabla$ on $E$ can be written uniquely as
$\nabla = \nabla_g + B$, where $\nabla_g$ preserves $g$ and
$B\in A^1(X;\mathfrak{p}_E)$.
\end{lemma}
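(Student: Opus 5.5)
The plan is to prove existence and uniqueness pointwise, using linear algebra, and then observe that the construction depends smoothly on the base point. Fix $x\in X$ and an endomorphism $M\in\End(E_x)$. Let $M^\dagger$ denote the $g$-adjoint of $M$, defined by $g(Mu,v)=g(u,M^\dagger v)$. This adjoint exists and is unique because $g$ is nondegenerate, even though it is indefinite.

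First, check that $M\mapsto M^\dagger$ is a conjugate-linear involution on $\End(E_x)$. Then $\mathfrak{u}_E=\{M:M^\dagger=-M\}$ and $\mathfrak{p}_E=\{M:M^\dagger=M\}$ are its $\mp1$ eigenspaces (as real subspaces). Hence $\End(E)=\mathfrak{u}_E\oplus\mathfrak{p}_E$ via
\[
  M=\tfrac12(M-M^\dagger)+\tfrac12(M+M^\dagger).
\]
The two pieces meet only in $0$, since $M^\dagger=M=-M$ forces $M=0$. In a local frame where $g$ is the constant matrix $J$ of Lemma~\ref{lem:koszul-parity}, one has $M^\dagger=J\bar M^TJ$. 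This formula shows the projections are smooth bundle maps, and they extend to $\End(E)$-valued $1$-forms by acting on the coefficient.

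Next, choose any reference connection $\nabla^0$ preserving $g$. To get one, take the decomposition $g=h_1-h_2$ from Lemma~\ref{lem:h1h2}. Take unitary connections $\nabla_i$ for $(E_i,h_i)$ and set $\nabla^0=\nabla_1\oplus\nabla_2$. This preserves $g$ by Lemma~\ref{lem:g-preserving-form} with $\beta=0$.

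Now write $\nabla=\nabla^0+C$ with $C\in A^1(X;\End(E))$, and split $C=C_{\mathfrak u}+C_{\mathfrak p}$ as above. Set
\[
  \nabla_g:=\nabla^0+C_{\mathfrak u},\qquad B:=C_{\mathfrak p}.
\]
A connection $\nabla^0+D$ preserves $g$ exactly when $g(Du,v)+g(u,Dv)=0$ for all sections, i.e.\ $D\in A^1(\mathfrak{u}_E)$. This follows by subtracting the compatibility identity for $\nabla^0$ from the one for $\nabla^0+D$, using real (not complex) conjugation on the $1$-form coefficients. Hence $\nabla_g$ preserves $g$.

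For uniqueness, suppose $\nabla_g+B=\nabla'_g+B'$. Then $\nabla_g-\nabla'_g=B'-B$. The left side lies in $A^1(\mathfrak{u}_E)$, because the difference of two $g$-preserving connections is $\mathfrak{u}_E$-valued by the criterion just stated. The right side lies in $A^1(\mathfrak{p}_E)$. Both vanish by the pointwise direct-sum decomposition, so the decomposition is unique.

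The main point needing care is that $\mathfrak{u}_E$ and $\mathfrak{p}_E$ are only real subbundles, since multiplying by $i$ swaps them. The decomposition must therefore be taken with complex-valued $1$-forms, where conjugation acts on the form coefficients as well; the adjoint on $\End(E)\otimes T^*_\cc X$ is $(M\otimes\omega)^\dagger=M^\dagger\otimes\bar\omega$. One has to check that $g$-compatibility of a connection is exactly skew-adjointness of $D$ in this sense. With that convention the argument above goes through verbatim.
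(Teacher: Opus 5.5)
Your proof is correct and follows the paper's route: pick a $g$-preserving reference connection $\nabla_1\oplus\nabla_2$ from Lemma~\ref{lem:h1h2}, then split the difference form along $\End(E)=\mathfrak{u}_E\oplus\mathfrak{p}_E$. You also supply two things the paper's proof leaves implicit: the uniqueness argument (the difference of two $g$-preserving connections is $\mathfrak{u}_E$-valued, and $\mathfrak{u}_E\cap\mathfrak{p}_E=0$), and the observation that $A^1(X;\mathfrak{p}_E)$ must be read with real forms, or equivalently with your combined conjugation $M\otimes\omega\mapsto M^\dagger\otimes\bar\omega$, since otherwise uniqueness fails because $i\,\mathfrak{p}_E=\mathfrak{u}_E$.
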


\begin{proof}
Choose a $g$-preserving connection $\nabla_0$ (exists by Lemma~\ref{lem:h1h2}:
take $\nabla_0 = \nabla_1\oplus\nabla_2$ where $\nabla_i$ are $h_i$-unitary
connections).  Write $\nabla = \nabla_0 + A$ and decompose
$A = U + B$ using the splitting $\mathfrak{u}_E\oplus\mathfrak{p}_E$.
Then $\nabla_g = \nabla_0 + U$ preserves $g$.
\end{proof}

\begin{proposition}[Groups of Hodge type vanishing]\label{prop:hodge-vanish}
Let $g$ be an indefinite Hermitian form on $E$ of signature $(a,b)$ and
$\nabla$ a locally nil-flat connection preserving $g$. Then $\vol_p(\nabla)
= 0$ for every $p\geq 1$.
\end{proposition}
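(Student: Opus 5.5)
The plan is to compute $\vol_p(\nabla)$ using a path of connections that stays inside the $g$-preserving connections, so that the transgression integrand is real pointwise. Then $\Im\,v_p^{\mathrm{nm}}$ vanishes as a form, not merely in cohomology. By Definition~\ref{def:vol-p-normalized} and Proposition~\ref{prop:vol-indep}, $\vol_p(\nabla)$ may be computed from \emph{any} path that starts at a unitary connection $\nabla_0$ and ends at $\nabla_1=\nabla$. The task is therefore to choose the path well.

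First, apply Lemma~\ref{lem:h1h2} to get a splitting $E=E_1\oplus E_2$ with $g=h_1-h_2$, and set $h=h_1\oplus h_2$. Choose $h_i$-unitary connections $\nabla_i$ and put $\nabla_0:=\nabla_1\oplus\nabla_2$. This connection is unitary for the positive metric $h$, so it is admissible as a starting point in Definition~\ref{def:vol-p-normalized}. It also preserves $g$, since it is block diagonal and preserves each $\pm h_i$.

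Next, take the straight-line path $\nabla_t=\nabla_0+tA$ with $A=\nabla-\nabla_0$. Both endpoints preserve $g$, so $A\in A^1(X;\mathfrak{u}_E)$, where $\mathfrak{u}_E=\mathfrak{u}(g)$. The condition of preserving $g$ is affine, so every $\nabla_t$ preserves $g$. It follows that $F(\nabla_t)\in A^2(X;\mathfrak{u}_E)$: the curvature of a $g$-compatible connection is $g$-skew, which can be checked directly, or via Lemma~\ref{lem:g-curvature} in the block form of Lemma~\ref{lem:g-preserving-form}.

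With this setup, Proposition~\ref{prop:Re-trace-zero} applies with $\dot\nabla_t=A$. It gives $\Im\,\Tr\bigl(A^{\mathrm{nm}}\wedge F^{\mathrm{nm}}(\nabla_t)^{p-1}\bigr)=0$ pointwise for every $t$ and every $p\geq1$. Integrating over $t$ and multiplying by the real constant $1/(p-1)!$ gives $\Im\,v_p^{\mathrm{nm}}(\{\nabla_t\})=0$ as a differential form, and hence $\vol_p(\nabla)=0$. Local nil-flatness of $\nabla$ is used only to make $\vol_p$ well defined, through Proposition~\ref{prop:CS-closed-nm}.

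The main point needing care is that the starting connection must be unitary for a positive definite metric, as Definition~\ref{def:vol-p-normalized} requires, while also preserving the indefinite form $g$. The block-diagonal choice above satisfies both conditions at once, and it is exactly what lets the whole path remain in $\mathfrak{u}(g)$. A second point to verify is the sign bookkeeping in Lemma~\ref{lem:koszul-parity}(b), which is what Proposition~\ref{prop:Re-trace-zero} relies on. I would check it once more for odd-degree forms: there, conjugation of $\Tr(A\wedge B^{p-1})$ produces $(-1)^p$, and this cancels exactly against $\overline{(2\pi i)^p}=(-1)^p(2\pi i)^p$.
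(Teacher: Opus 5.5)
Your proof is correct and is essentially the paper's. The paper also takes $\nabla_0$ to be the $g$-preserving $h$-unitary connection for $h=h_1\oplus h_2$ from Lemma~\ref{lem:h1h2}. It then runs a path of $g$-preserving connections to $\nabla$, obtained by taking the $g$-preserving part of an arbitrary path via Lemma~\ref{lem:decomp-connection}, where you simply note that the straight line between $g$-preserving endpoints stays $g$-preserving. Lemma~\ref{lem:koszul-parity}(b) then makes $v_p^{\mathrm{nm}}$ real as a form.

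Your explicit block-diagonal choice $\nabla_0=\nabla_1\oplus\nabla_2$ makes precise what the paper leaves implicit, namely that a general $h$-unitary connection need not preserve $g$. For the $g$-skewness of $F(\nabla_t)$, rely on your direct check rather than on the block form of Lemma~\ref{lem:g-preserving-form}. Its off-diagonal entry $-\beta^*$ is the sign for the definite metric $h_+\oplus h_-$, whereas for $g=h_+-h_-$ the $\mathfrak{u}(g)$ condition forces $+\beta^*$.
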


\begin{proof}
 By Lemma~\ref{lem:h1h2}, write $g=h_1-h_2$ on
$E=E_1\oplus E_2$; the metric $h:=h_1\oplus h_2$ is then positive definite
and extends smoothly across all of $X$, so we may take $\nabla_0$ to be the
($g$-preserving) $h$-unitary connection on all of $X$. By
Proposition~\ref{prop:vol-indep} and Lemma~\ref{lem:decomp-connection},
extract the $g$-preserving part $\{\nabla_{g,t}\}$ of a path from $\nabla_0$
to $\nabla_1=\nabla$, defined smoothly on all of $X$, intersections of
several components of $D$ included. Along it, $\dot\nabla_{g,t}$ and
$F(\nabla_{g,t})$ are $\mathfrak{u}_E$-valued: since
$\overline{(2\pi i)^p}=(-1)^p(2\pi i)^p$, Lemma~\ref{lem:koszul-parity}(b)
gives
\[
  \overline{v_p^{\mathrm{nm}}(\{\nabla_{g,t}\})}
  = \frac{\overline{p\,v_p(\{\nabla_{g,t}\})}}{\overline{p!(2\pi i)^p}}
  = \frac{(-1)^p\,p\,v_p(\{\nabla_{g,t}\})}{(-1)^p\,p!(2\pi i)^p}
  = v_p^{\mathrm{nm}}(\{\nabla_{g,t}\}),
\]
so $v_p^{\mathrm{nm}}(\{\nabla_{g,t}\})$ is real for every $p\geq1$; hence
$\vol_p(\nabla) = [\Im\,v_p^{\mathrm{nm}}(\{\nabla_{g,t}\})] = 0$ for every
$p\geq1$.
\end{proof}

\subsection{Vanishing for Deligne's patched connection}

\begin{theorem}\label{thm:vol-zero}
Let $\zeta:\pi_1(X^*,x_0)\to GL_r(\cc)$ underlie a complex variation of Hodge
structure with unipotent monodromy around each $D_i$.  Then
\[
  \vol_p(\nabla^{\Del}) = 0 \quad\text{for all }p\geq 2.
\]
\end{theorem}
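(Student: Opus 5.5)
The plan is to reduce Theorem~\ref{thm:vol-zero} to Proposition~\ref{prop:hodge-vanish}. That proposition needs a locally nil-flat connection on $F$ that preserves an indefinite hermitian form $\tilde g$ on all of $X$ and has the same volume regulator as $\nabla^{\Del}$. The polarization $Q$ of the complex VHS gives a flat indefinite hermitian form $g$ on the local system $L$ over $X^*$. The first step is to extend $g$ to a smooth nondegenerate hermitian form $\tilde g$ on the canonical extension $F$ over $X$.

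\emph{Constructing $\tilde g$ near a stratum.} Near a stratum $D_I$ the residues $N_i$ are infinitesimal isometries of $Q$. Hence the weight filtration $W(N_I)$ is self-dual, $W_k^{\perp}=W_{-k-1}$, and $Q$ induces nondegenerate pairings between $\Gr^W_k$ and $\Gr^W_{-k}$. I would call a filtration adapted to the residues in this way \emph{$N_\hdot$-isotropic}. On $U_I$ I would choose such a filtration and a splitting compatible with $Q$, and define $\tilde g$ there as the graded pairing transported through the flat trivialization $\tau(I)$. This graded form is flat on each graded piece, so it extends smoothly across $D\cap U_I$.

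\emph{Gluing.} The local choices must agree on overlaps $U_{I_0}\cap\cdots\cap U_{I_m}$, where the chain $I_0\subset\cdots\subset I_m$ gives nested weight filtrations by Lemma~\ref{lem:W-commute}. This is the step I expect to be the main obstacle, since ordinary averaging does not work: convex combinations of indefinite forms can degenerate. My first attempt would use the contractibility of the poset $\mathbf{ifilt}^{N_\hdot}$ of $N_\hdot$-isotropic filtrations, proved from Quillen's Theorem~A as in the argument of \S\ref{subsec:filt-top}. Combined with the \v{C}ech section theorem (Theorem~\ref{thm:cech-section}) for contractible poset presheaves, this should give a global strict system of isotropic patching data. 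Over each such datum, the space of compatible splittings is an affine space over strictly upper-triangular, $Q$-skew maps, so it is contractible. That lets us glue the local forms into a global nondegenerate $\tilde g$.

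\emph{Building $\nabla'$ and comparing with $\nabla^{\Del}$.} Rerun Deligne's patching (Appendix~\ref{app:deligne-patch}) with these isotropic data. Remove the $\dlog$ terms of the residues, which are $\tilde g$-skew, then decompose by Lemma~\ref{lem:decomp-connection} and discard the $\mathfrak p_E$-part. The aim is a connection $\nabla'$ that is locally nil-flat for the same filtrations, preserves $\tilde g$, and differs from $\nabla^{\Del}$ by a one-form that is strictly upper-triangular for the local filtrations. With a common strictly triangular difference, the straight-line path between $\nabla^{\Del}$ and $\nabla'$ has transgression integrand $\Tr(A\wedge F(\nabla_t)^{p-1})=0$, as in Corollary~\ref{cor:vol-upper-triangular}. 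Therefore $\vol_p(\nabla^{\Del})=\vol_p(\nabla')$ by Proposition~\ref{prop:vol-indep}.

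\emph{Conclusion.} Proposition~\ref{prop:hodge-vanish} applied to $\nabla'$ and $\tilde g$ gives $\vol_p(\nabla')=0$ for every $p\geq 1$, and hence $\vol_p(\nabla^{\Del})=0$ for every $p\geq 1$, in particular for $p\geq 2$.
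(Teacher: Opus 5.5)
Your proposal is correct. Its skeleton is the paper's: contractibility of $\mathbf{ifilt}^{N_\hdot}$ plus the \v{C}ech section theorem gives isotropic patching data; from these one produces a compatible indefinite form $\tilde g$ and a locally nil-flat $\tilde g$-preserving connection with the same volume regulator as $\nabla^{\Del}$; Proposition~\ref{prop:hodge-vanish} then finishes.

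You differ from the paper in the last two constructions.

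\emph{Gluing $\tilde g$.} The paper obtains both $\tilde g$ and $\tilde\nabla$ by partition-of-unity averaging after the tightness refinement of Construction~\ref{constr:refcov} (Theorem~\ref{thm:main-hermitian}). Your worry about degeneration is answered there: near each point, every averaged form is compatible with one self-dual datum. The forms compatible with a fixed datum are those agreeing with the graded $Q$-pairing on each $W_a\times W_{-a}$, and they form a convex set of nondegenerate forms. That is exactly what your contractible-splittings gluing uses.

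\emph{Comparing volume regulators.} The paper equates $\vol_p(\nabla^{\Del})$ with $\vol_p(\tilde\nabla)$ by asserting that $\vol_p$ depends only on the patching collection, citing Proposition~\ref{prop:vol-indep}. That proposition only proves independence of the unitary base point and of the path. Your canonical projection does better. The discarded part $B$ of Lemma~\ref{lem:decomp-connection} is given by $\tilde g(Bu,v)=-\tfrac12(\nabla^{\Del}\tilde g)(u,v)$. This vanishes for $u\in W_a$, $v\in W_{-a}$, because $\tilde g$ induces the graded $Q$-pairing and that pairing is flat for the graded connection $\tau$. Hence $BW_a\subset W_{a-1}$, so $\nabla'=\nabla^{\Del}-B$ carries the same data, and the straight-line transgression vanishes as in Corollary~\ref{cor:vol-upper-triangular}. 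Write this verification out, since you state it only as an aim.

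Three smaller points:
\begin{itemize}
\item $\tilde g$ is not an extension of $Q$, which grows polynomially in $\log|z_i|$ in Deligne's frame. It only agrees with $Q$ on each $W_a\times W_{-a}$.
\item The residue terms $N_j\,dz_j/z_j$ are not $\tilde g$-skew, since $N_j$ is $Q$-self-adjoint and $\mathrm{Re}(dz_j/z_j)=d\log|z_j|$. This is harmless because you project afterwards.
\item Compatibility of $\nabla^{\Del}$ with the isotropic \v{C}ech data is not covered by Lemma~\ref{lem:Del-compatible}, which is stated for $W(N_I)$. It needs Deligne's polydiscs and partition of unity to be chosen subordinate to the refined cover, followed by Theorem~\ref{thm:Del-indep-app}. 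The paper glosses this point in the same way.
\end{itemize}
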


\begin{proof}
We show that $\nabla^{\Del}$ is compatible with an indefinite Hermitian
pre-patching collection with refined neighbours, and then apply
Corollary~\ref{cor:maincor}.

A complex VHS with unipotent monodromy gives a flat vector bundle $\mathbb{V}$
on $X^*$ with a flat indefinite Hermitian form $g$ (the polarization of the VHS
viewed as a flat Hermitian structure).  The monodromy logarithms $N_i$ are
$g$-self-adjoint (since the polarization is preserved by the monodromy) and
commute (since the $D_i$ form a normal crossings divisor and the monodromy
around separate components commutes).

By Proposition~\ref{prop:nisofilcontr} (proved in
Section~\ref{sec:nisotropic}), the poset of $N_\hdot$-isotropic filtrations on
a fibre of $\mathbb{V}$ is contractible.  By the \v{C}ech section theorem
(Theorem~\ref{thm:cech-section}), there exists a \v{C}ech section of the
associated poset presheaf over the adapted covering of
Proposition~\ref{prop:covering}.

This section provides, for each open set $U_I$, a filtration
$W(I)$ of $\mathbb{V}|_{U_I^*}$ that is $N_\hdot$-isotropic with respect to $g$.
The filtrations satisfy the refined-neighbours condition: on $U_{IJ} = U_I\cap U_J$
one is a refinement of the other.

These filtrations extend to filtrations of the Deligne canonical extension $F$,
and the associated-graded inherits a flat connection from the original flat
connection on $\mathbb{V}$.  The hermitian structure on $\mathbb{V}$ induces
hermitian structures on the associated-graded bundles compatible with $g$.

By Theorem~\ref{thm:main-hermitian} (proved in
Section~\ref{sec:hermitian-patching}), there exists an indefinite Hermitian
form $\tilde{g}$ on $F$ compatible with this hermitian pre-patching collection.
There exists a connection $\tilde\nabla$ compatible with the patching collection
and preserving $\tilde{g}$.

Since $\nabla^{\Del}$ is also compatible with the same patching collection
(Lemma~\ref{lem:Del-compatible} in Section~\ref{app:deligne-patch}), and by
Proposition~\ref{prop:vol-indep} the volume regulator depends only on the
patching collection, we have $\vol_p(\nabla^{\Del}) = \vol_p(\tilde\nabla)$.
Since $\tilde\nabla$ preserves the indefinite Hermitian form $\tilde{g}$,
Proposition~\ref{prop:hodge-vanish} gives $\vol_p(\tilde\nabla) = 0$ for every
$p\geq 1$, hence $\vol_p(\nabla^{\Del})=0$ for every $p\geq 2$.
\end{proof}

\section{\texorpdfstring{$N$}{N}-isotropic subspaces and filtrations}
\label{sec:nisotropic}

\subsection{Definition and setup}

Let $V$ be a finite-dimensional complex vector space with a nondegenerate
Hermitian form $h(u,v)$ (linear in $u$, conjugate-linear in $v$).  Let
$N_\hdot = (N_1,\ldots,N_k)$ be a $k$-tuple of pairwise commuting, nilpotent,
$h$-self-adjoint endomorphisms: $h(N_iu,v) = h(u,N_iv)$ for all $u,v$.

\begin{definition}
For a subspace $H\subset V$, write $H^\perp := \{v : h(u,v)=0\;\forall u\in H\}$.
A subspace $H\subset V$ is \emph{$N_\hdot$-isotropic} if:
\begin{enumerate}[label={\rm(\arabic*)}]
  \item $H\subset H^\perp$ (isotropic);
  \item $N_i(H^\perp)\subset H$ for all $i=1,\ldots,k$.
\end{enumerate}
\end{definition}

Let $\mathbf{itro}^{N_\hdot}(V,h)$ denote the poset of $N_\hdot$-isotropic
subspaces, ordered by inclusion.

\subsection{Verification that the retraction is well-defined}

We verify in detail that the functor $b(H) = H\cap v^\perp$ in the proof
of Theorem~\ref{thm:itro} maps $N_\hdot$-isotropic subspaces to
$N_\hdot$-isotropic subspaces.

\begin{lemma}\label{lem:b-well-defined}
If $H\in\mathbf{itro}^{N_\hdot}(V,h)$, then $b(H) = H\cap v^\perp$ is also
$N_\hdot$-isotropic.
\end{lemma}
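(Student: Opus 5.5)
The plan is a direct verification of the two defining conditions for $b(H)=H\cap v^\perp$. I assume the standing hypothesis on $v$ from the setup of the proof of Theorem~\ref{thm:itro}: $v$ is a fixed vector in the common kernel $\bigcap_i\ker N_i$ (isotropic, $h(v,v)=0$, though this will not be needed here). That hypothesis is exactly what the argument uses. The only tools are nondegeneracy of $h$, to compute orthogonal complements, and $h$-self-adjointness of the $N_i$.

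\emph{Isotropy.} This is immediate. Since $b(H)\subset H$, we have $b(H)^\perp\supset H^\perp\supset H\supset b(H)$, so $b(H)\subset b(H)^\perp$.

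\emph{Compute the orthogonal complement.} Because $h$ is nondegenerate on the finite-dimensional space $V$, taking $\perp$ is an inclusion-reversing involution on subspaces. It therefore exchanges intersections and sums:
\[
  b(H)^\perp=(H\cap (\cc v)^\perp)^\perp = H^\perp+\cc v .
\]
This holds whether or not $v\in H^\perp$; if $v\in H^\perp$ the sum is just $H^\perp$.

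\emph{The $N$-condition.} For each $i$ we get
\[
  N_i\bigl(b(H)^\perp\bigr)=N_i(H^\perp)+\cc\,N_iv = N_i(H^\perp)\subset H,
\]
using $N_iv=0$ and the hypothesis that $H$ is $N_\hdot$-isotropic. It remains to see that $N_i(H^\perp)\subset v^\perp$. For $u\in H^\perp$, self-adjointness gives $h(N_iu,v)=h(u,N_iv)=0$. Hence $N_i(b(H)^\perp)\subset H\cap v^\perp=b(H)$, which proves the lemma.

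The only point requiring care is the identity $(H\cap v^\perp)^\perp=H^\perp+\cc v$, which needs nondegeneracy of $h$ (it may be indefinite, but that is irrelevant). The substantive input is that $v$ lies in the common kernel of the $N_i$: it is used twice, to kill the extra term $\cc N_iv$ and, via self-adjointness, to show that $N_i(H^\perp)$ is orthogonal to $v$. If in the proof of Theorem~\ref{thm:itro} the vector $v$ were instead only assumed to lie in some $N_\hdot$-isotropic subspace, I would first reduce to this case by noting that the relevant $v$ is chosen in $\bigcap_i\ker N_i$. This choice is the natural one for the retraction and is the step I would double-check against the statement of Theorem~\ref{thm:itro}.
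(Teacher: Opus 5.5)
Your proof is correct and is essentially the paper's argument: both compute $b(H)^\perp=H^\perp+\cc v$ by nondegeneracy, kill the term $N_i v$ using $v\in\bigcap_i\ker N_i$, and use self-adjointness to get $N_i(H^\perp)\subset v^\perp$, hence $N_i(b(H)^\perp)\subset H\cap v^\perp=b(H)$. The only difference is that you get isotropy from monotonicity of $\perp$ ($b(H)\subset H\subset H^\perp\subset b(H)^\perp$) rather than from the explicit formula for $b(H)^\perp$; this is a cosmetic variation.
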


\begin{proof}
We must check:
\begin{enumerate}[leftmargin=2em,label=\rm(\roman*)]
  \item $b(H)\subset b(H)^\perp$;
  \item $N_i(b(H)^\perp)\subset b(H)$ for all $i$.
\end{enumerate}

For (i): we compute $b(H)^\perp = (H\cap v^\perp)^\perp = H^\perp + (v^\perp)^\perp = H^\perp + \cc\cdot v$.
Since $H$ is isotropic, $H\subset H^\perp$, so
$b(H) = H\cap v^\perp\subset H^\perp\subset H^\perp + \cc\cdot v = b(H)^\perp$.

For (ii): by the claim in the proof of Theorem~\ref{thm:itro}, $N_i(V)\subset v^\perp$
(since $h(N_iu,v) = h(u,N_iv) = h(u,0) = 0$ for all $u\in V$).
Therefore $N_i(H^\perp)\subset N_i(V)\cap N_i(H^\perp)\subset v^\perp$.
Combined with $N_i(H^\perp)\subset H$ (by isotropy of $H$), we get
$N_i(H^\perp)\subset H\cap v^\perp = b(H)$.

Now $b(H)^\perp = H^\perp + \cc\cdot v$, so:
$N_i(b(H)^\perp) = N_i(H^\perp) + N_i(\cc\cdot v) = N_i(H^\perp) + \{0\}
\subset H\cap v^\perp = b(H)$.
\end{proof}

\begin{remark}
The key fact used in part (ii) is that $v\in\ker N_i$ for all $i$---this
was established in the claim at the start of the proof.  Without this, the
functor $b$ would not be well-defined on $\mathbf{itro}^{N_\hdot}(V,h)$.
\end{remark}

\subsection{Contractibility}

\begin{theorem}\label{thm:itro}
The poset $(\mathbf{itro}^{N_\hdot}(V,h),\subseteq)$ is contractible.
\end{theorem}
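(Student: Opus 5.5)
The plan is to induct on $\dim V$, using the functor $b(H)=H\cap v^\perp$ of Lemma~\ref{lem:b-well-defined} together with a companion functor $c(H):=b(H)+\cc v$. These give a zig-zag of natural transformations that deforms $\mathbf{itro}^{N_\hdot}(V,h)$ onto the subposet of isotropic subspaces containing $v$. That subposet should in turn be identified with the analogous poset on a smaller space.

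\emph{Base case.} Suppose all $N_i=0$. Then the $N_\hdot$-condition (2) is vacuous, so $\{0\}$ is $N_\hdot$-isotropic. It is a minimum of the poset, and a poset with a minimum is contractible. This covers in particular $\dim V\le 1$.

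\emph{Choice of $v$.} Otherwise some $N_j\neq0$. The $N_i$ commute and are nilpotent, so $\bigcap_i\ker N_i$ meets $\sum_j\operatorname{im}N_j$ nontrivially: take a nonzero element of $\operatorname{im}N_j$ and apply a suitable monomial in the $N_i$ so that the result is nonzero but killed by every $N_i$. Call this vector $v$. Two properties follow.
\begin{itemize}[leftmargin=2em]
  \item $v$ is isotropic: writing $v=N_jw+\cdots$ and using self-adjointness, $h(v,v)=\sum h(w,N_jv)=0$.
  \item $N_i(V)\subset v^\perp$ for all $i$, since $h(N_iu,v)=h(u,N_iv)=0$. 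This is the claim invoked in Lemma~\ref{lem:b-well-defined}.
\end{itemize}

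\emph{The functor $c$.} Lemma~\ref{lem:b-well-defined} shows $b$ preserves $N_\hdot$-isotropy. I would check directly that $c(H)=b(H)+\cc v$ does as well.
\begin{itemize}[leftmargin=2em]
  \item Isotropy: $b(H)\subset H\subset H^\perp$, $b(H)\perp v$, and $v$ is isotropic.
  \item Condition (2): $c(H)^\perp=(H^\perp+\cc v)\cap v^\perp$. Since $N_iv=0$, we get $N_i\bigl(c(H)^\perp\bigr)\subset N_i(H^\perp)$. The proof of Lemma~\ref{lem:b-well-defined} gives $N_i(H^\perp)\subset b(H)\subset c(H)$.
\end{itemize}
Both $b$ and $c$ are monotone. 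The inclusions $H\supseteq b(H)\subseteq c(H)$ are natural transformations $\Id\Leftarrow b\Rightarrow \iota\circ c$. Here $c$ lands in the subposet $P_v:=\{H : v\in H\}$ and $\iota$ is its inclusion. Hence $\iota c\simeq\Id$ on realizations. On $P_v$ we have $H\subset H^\perp\subset v^\perp$, so $c|_{P_v}=\Id$. Therefore $|N\mathbf{itro}^{N_\hdot}(V,h)|\simeq|NP_v|$.

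\emph{Reduction to a smaller space.} Set $V':=v^\perp/\cc v$. It carries a nondegenerate induced Hermitian form $h'$. The operators $N_i$ preserve $v^\perp$ (because $N_i(V)\subset v^\perp$) and kill $v$, so they descend to commuting, nilpotent, $h'$-self-adjoint operators $N'_i$ on $V'$. I would show that $H\mapsto H/\cc v$ is an order isomorphism
\[
  P_v\;\cong\;\mathbf{itro}^{N'_\hdot}(V',h').
\]
The key identity is $(H/\cc v)^{\perp'}=H^\perp/\cc v$, valid for $v\in H\subset v^\perp$. Then isotropy and the condition $N_i(H^\perp)\subset H$ correspond exactly on both sides. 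Since $\dim V'=\dim V-2$, the induction hypothesis gives contractibility.

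The main obstacle is this last step. One must verify carefully that $(H/\cc v)^{\perp'}=H^\perp/\cc v$ and that the $N_\hdot$-condition transfers in both directions. The choice of $v$ in $\bigcap\ker N_i\cap\sum\operatorname{im}N_j$ is exactly what is needed both to make $v$ isotropic and to ensure $N_i(V)\subset v^\perp$, so that the $N_i$ descend to $V'$.
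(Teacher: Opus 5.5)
Your proposal is correct and is essentially the paper's proof: the same choice of a nonzero isotropic $v\in\bigcap_i\ker N_i\cap\operatorname{im}N_j$, the same zig-zag of natural transformations $\Id\Leftarrow b\Rightarrow\iota\circ q$ with $\iota q(H)=(H\cap v^\perp)+\cc v$ (your $c$), and the same induction through $V'=v^\perp/\cc v$. The small differences only fill in points the paper states briefly: you dispose of the case where all $N_i=0$ directly, since $\{0\}$ is then a minimum; you check explicitly that $c$ preserves $N_\hdot$-isotropy; and you note that $c$ restricts to the identity on the subposet of subspaces containing $v$ (the paper's $q\circ\iota=\Id$, left implicit there).
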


\begin{proof}
By induction on $n = \dim V$.  The case $n=0$ is trivial (unique element
$\{0\}$).

\medskip\noindent
\textbf{Claim:} There exists a nonzero vector $v\in V$ with $N_iv = 0$ for all
$i$ and $h(v,v) = 0$.

\textit{Proof of claim.}
If all $N_i = 0$: since $h$ is indefinite (otherwise the only isotropic subspace
is $\{0\}$ and we are done), there exists an isotropic vector.

If some $N_i\neq 0$: let $K = \bigcap_i\ker N_i$.  The $N_i$ are
commuting nilpotent on $V$, so $K\neq 0$.  Since $N_i\neq 0$, we have
$\im(N_i)\neq 0$, and the image is preserved by the other $N_j$'s (as they
commute).  Hence $\im(N_i)\cap K\neq 0$: pick $v\in\im(N_i)\cap K$, write
$v = N_iu$.  Then
\[
  h(v,v) = h(N_iu,v) = h(u,N_iv) = h(u,0) = 0,
\]
and $v\in K$ means $N_jv = 0$ for all $j$.  This proves the claim.

\medskip
Note that $v^\perp$ is preserved by the $N_i$: if $u\in v^\perp$ then
$h(N_iu,v) = h(u,N_iv) = 0$, so $N_iu\in v^\perp$.

Consider $V' := v^\perp/\cc\cdot v$ of dimension $n-2$.  The $N_i$ induce
endomorphisms $N'_i$ on $V'$, and $h$ induces a nondegenerate Hermitian form
$h'$ on $V'$.  The $N'_i$ are commuting, nilpotent, and $h'$-self-adjoint.

\medskip\noindent
\textbf{The inclusion of posets.}
Define $\iota: \mathbf{itro}^{N'_\hdot}(V',h')\to\mathbf{itro}^{N_\hdot}(V,h)$
by: $\iota(H') := \pi^{-1}(H')\subset v^\perp\subset V$, where
$\pi:v^\perp\to V'$ is the projection.  Then:
\begin{itemize}[leftmargin=2em]
  \item $\iota(H')$ is isotropic: for $u,w\in\iota(H')$, their projections
    $u',w'$ satisfy $h'(u',w')=0$, and $h(u,w)=h'(u',w')=0$.
  \item $N_i(\iota(H')^\perp)\subset\iota(H')$: we have
    $\iota(H')^\perp = \pi^{-1}((H')^\perp)$ and
    $N_i(\pi^{-1}((H')^\perp))\subset\pi^{-1}(N'_i(H')^\perp)\subset
    \pi^{-1}(H') = \iota(H')$.
\end{itemize}
So $\iota$ is well-defined and order-preserving.  Its image consists of those
$H\in\mathbf{itro}^{N_\hdot}(V,h)$ with $v\in H$.

\medskip\noindent
\textbf{The retraction.}
Define the functor $q:\mathbf{itro}^{N_\hdot}(V,h)\to\mathbf{itro}^{N'_\hdot}(V',h')$
by $\iota(q(H)) := (H\cap v^\perp) + \cc\cdot v$ (equivalently, $q(H)$ is the
image of $H\cap v^\perp$ in $V'$).

This is well-defined: one checks that $\iota(q(H))$ is $N_\hdot$-isotropic
using the hypothesis on $H$ and the fact that $N_i(V)\subset v^\perp$.

Define the intermediate functor $b(H) := H\cap v^\perp$.  We have:
\[
  b(H)\subseteq H \qquad\text{and}\qquad b(H)\subseteq\iota(q(H))
\]
for all $H$.  These inclusions are natural transformations of functors:
\[
  b \Rightarrow \mathrm{Id} \qquad\text{and}\qquad b\Rightarrow\iota\circ q.
\]
A natural transformation between functors of posets induces a homotopy on
geometric realizations.  Hence $\mathrm{Id}\simeq\iota\circ q$, showing that
$\iota\circ q$ is homotopic to the identity, i.e.\ $\iota$ is a homotopy
equivalence.

By the induction hypothesis, $\mathbf{itro}^{N'_\hdot}(V',h')$ is contractible
($\dim V' = \dim V - 2$).  Hence $\mathbf{itro}^{N_\hdot}(V,h)$ is contractible.
\end{proof}

\subsection{\texorpdfstring{$N$}{N}-isotropic filtrations}

\begin{definition}
An \emph{$N_\hdot$-isotropic filtration} on $(V,h)$ is an integer-indexed
filtration
\[
  (0)\subset\cdots\subset F_{-a}\subset\cdots\subset F_{-1}
  \subset F_0\subset F_1\subset\cdots\subset V
\]
such that $F_{-k} = F_{k-1}^\perp$ and $N_i F_k\subset F_{k-1}$ for all $i,k$.
\end{definition}

In particular, $F_{-1} = F_0^\perp$ and $N_i F_0\subset F_{-1}$, meaning
$F_{-1}$ is $N_\hdot$-isotropic.  The filtration is thus determined by $F_{-1}$
and a further filtration of $F_{-1}$.

Let $(\mathbf{ifilt}^{N_\hdot}(V,h),\leq)$ denote the poset of $N_\hdot$-isotropic
filtrations, ordered by refinement.

\begin{proposition}\label{prop:nisofilcontr}
The poset $(\mathbf{ifilt}^{N_\hdot}(V,h),\leq)$ is contractible.
\end{proposition}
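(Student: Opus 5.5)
The approach is to reduce to Theorem~\ref{thm:itro} by induction on $\dim V$, using the observation recorded after the definition: an $N_\hdot$-isotropic filtration is determined by its step $H:=F_{-1}$, which is an $N_\hdot$-isotropic subspace, together with the remaining data. The remaining data is an $N'_\hdot$-isotropic filtration of the nondegenerate quotient $V_H:=H^\perp/H$, with its induced form $h_H$ and induced operators $N_{i,H}$. These operators are still commuting, nilpotent and self-adjoint, since $N_i$ preserves $H^\perp$ and $H$ by the isotropy conditions. There is also a part of the filtration inside $H$ (and, dually, outside $H^\perp$); by the duality $F_{-k}=F_{k-1}^\perp$ these two parts determine each other.

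The argument runs in three steps.

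\emph{Step 1: the first-step functor.} Define $\phi:\mathbf{ifilt}^{N_\hdot}(V,h)\to\mathbf{itro}^{N_\hdot}(V,h)^{\mathrm{op}}$ by taking the smallest nonzero isotropic step, or alternatively $F_{-1}$. We must first fix the convention so that $\phi$ is order-preserving with respect to refinement. The cleaner choice is to send a filtration to its \emph{largest} isotropic step $F_{-1}$ and to order the target by reverse inclusion. We then verify that refinement of filtrations is compatible with this, or pass to a barycentric-type reformulation: view a filtration as a chain $F_{-a}\subset\cdots\subset F_{-1}$ of $N_\hdot$-isotropic subspaces satisfying the nesting condition $N_iF_{-k}^\perp\subset F_{-k-1}$.

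\emph{Step 2: Quillen fibres.} Apply Lemma~\ref{lem:quillen-fiber}. The target $\mathbf{itro}^{N_\hdot}(V,h)$ is contractible by Theorem~\ref{thm:itro}. For a fixed isotropic $H$, the fibre consists of filtrations whose distinguished step is comparable to $H$. A retraction of the form $\calF\mapsto\calF\cup\{H,H^\perp\}$, exactly as in Step~4 of the proof of Theorem~\ref{thm:filt-contractible}, should deform the fibre onto the subposet of filtrations containing $H$ and $H^\perp$ as steps. For this to work we need that adding $H$ and $H^\perp$ preserves the defining conditions: $N_i$ must map the new steps correctly. That holds because $N_iH^\perp\subset H$, and for the intermediate steps we intersect or sum with $H$. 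Once the steps $H$ and $H^\perp$ are forced, the subposet factors as filtrations of $V_H$ (contractible by induction, since $\dim V_H<\dim V$ when $H\neq0$) times nil-filtrations of $H$ for the $N_i|_H$ with the dual part determined. The latter is contractible by Theorem~\ref{thm:filt-contractible} / Corollary~\ref{cor:Filt-A}. A product of contractible posets is contractible. The case $H=0$ occurs only when $h$ is definite and all $N_i=0$, where the poset is a point.

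\emph{Step 3: conclusion.} Quillen's lemma then gives contractibility.

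The main obstacle is Step~2. The retraction $\calF\mapsto\calF\cup\{H,H^\perp\}$ must remain $N_\hdot$-isotropic: inserting $H$ between steps $F_{k-1}\subset F_k$ requires $N_iH\subset F_{k-1}$ and $N_iF_k\subset H$, which need not hold. If this fails, I would first try the modified insertion $\calF\mapsto\{F_k\cap H+N_\hdot\text{-saturation}\}$, that is, replacing $H$ by $(F_k\cap H)+F_{k-1}$ in a two-sided natural-transformation zigzag $\calF\Leftarrow b(\calF)\Rightarrow\iota q(\calF)$. This would mimic the $b$-functor trick of Theorem~\ref{thm:itro} with $v$ replaced by $H$, using Lemma~\ref{lem:b-well-defined}-type checks stepwise.
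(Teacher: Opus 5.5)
Your route is the paper's: Quillen's fibre lemma for $T:\calF\mapsto F_{-1}$ into $\mathbf{itro}^{N_\hdot}(V,h)$, with Theorem~\ref{thm:itro} for the base and Theorem~\ref{thm:filt-contractible} for the fibres. But the step that carries all the content, contractibility of the fibres, is not established, and as you set it up it fails.

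\emph{The orientation is backwards.} $F_{-1}$ is the largest isotropic step of $\calF$: if $k\geq 0$ and $F_k$ is isotropic, then $F_0\subseteq F_k\subseteq F_k^\perp\subseteq F_0^\perp=F_{-1}$. So if $\calF\leq\calF'$, then $F_{-1}$ is an isotropic step of $\calF'$, which forces $F_{-1}\subseteq F'_{-1}$. Thus $T$ is order-preserving for inclusion, not for reverse inclusion.

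\emph{The retraction is not defined on your fibre.} Your decomposition as $\mathbf{ifilt}(V_H)$ times nil-filtrations of $H$ describes filtrations having $H$ and $H^\perp$ as steps. These sit in the fibre $\{F_{-1}\supseteq H\}$. There, $\calF\mapsto\calF\cup\{H,H^\perp\}$ is not even defined, because $H$ need not be comparable with the steps $F_{-2},F_{-3},\dots$ inside $F_{-1}$. Where the union is a chain, you note yourself that $N_iF_k\subseteq F_{k-1}$ can fail. The zigzag fallback is not carried out.

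\emph{The case $H=0$ is not harmless.} $0\in\mathbf{itro}^{N_\hdot}(V,h)$ exactly when all $N_i=0$, whatever the signature of $h$. In that case the fibre $\{F_{-1}\supseteq 0\}$ is the whole poset. So the argument is circular there, and induction does not help, since $V_H=V$.

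The repair is to use the other fibre, $T^{-1}(\leq H)=\{\calF:F_{-1}\subseteq H\}$; this is the dual form of Lemma~\ref{lem:quillen-fiber}, obtained by passing to opposite posets.

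\begin{enumerate}
\item Every step of $\calF$ is now comparable with $H$ and $H^\perp$: steps up to $F_{-1}$ lie in $H$, and steps from $F_0=F_{-1}^\perp$ on contain $H^\perp$.
\item Inserting $H\subseteq H^\perp$ between $F_{-1}$ and $F_0$ gives an $N_\hdot$-isotropic filtration, because $N_iH\subseteq N_iF_0\subseteq F_{-1}$, $N_iH^\perp\subseteq H$, and $N_iF_0\subseteq F_{-1}\subseteq H^\perp$.
\item The map $r(\calF)=\calF\cup\{H,H^\perp\}$ is monotone, satisfies $\calF\leq r(\calF)$, and is the identity on $T^{-1}(H)=\{F_{-1}=H\}$. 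Hence $T^{-1}(\leq H)\simeq T^{-1}(H)$.
\item The map $\calF\mapsto\{F_k\}_{k\leq -1}$ identifies $T^{-1}(H)$ with the non-empty poset of nil-filtrations of $H$ for $N_\hdot|_H$. The upper half is recovered as $F_{k-1}=F_{-k}^\perp$, and its conditions are dual to the lower ones by self-adjointness. A refinement with the same $F_{-1}$ only adds steps inside $H$ and, dually, above $H^\perp$.
\item This poset is contractible by Corollary~\ref{cor:Filt-A}, and Theorem~\ref{thm:itro} finishes.
\end{enumerate}

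Neither induction on $\dim V$ nor the factor $\mathbf{ifilt}(V_H)$ is needed. This is presumably what the paper's one-line identification of the fibre with ``$N$-compatible filtrations of $H$'' intends. The paper, too, states $T$ as order-reversing with fibre $\{F_{-1}\supset H\}$, for which that identification is not literally correct.
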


\begin{proof}
Define a functor $T:\mathbf{ifilt}^{N_\hdot}(V,h)\to\mathbf{itro}^{N_\hdot}(V,h)$
by $T(F_\hdot) := F_{-1}$ (the sub-middle piece).  This is well-defined and
order-reversing (finer filtration $\Rightarrow$ smaller $F_{-1}$).

For any $H\in\mathbf{itro}^{N_\hdot}(V,h)$, the \emph{over-category} $T/H$
consists of those filtrations with $F_{-1}\supset H$, i.e.\ the sub-middle
piece is at least $H$.  This is the same as the poset of $N$-compatible
filtrations of $H$ (filtrations of $H$ such that $N_iF_k\subset F_{k-1}$),
which by Theorem~\ref{thm:filt-contractible} is contractible.

By Quillen's Theorem~A \cite{Quillen}: if $T:C\to D$ is a functor between
categories such that the over-category $T/d$ is contractible for every
$d\in D$, and if $D$ is contractible, then $C$ is contractible.

Applying this: $D = \mathbf{itro}^{N_\hdot}(V,h)$ is contractible by
Theorem~\ref{thm:itro}, and $T/H$ is contractible for every $H$.  Hence
$\mathbf{ifilt}^{N_\hdot}(V,h)$ is contractible.
\end{proof}

\section{Poset presheaves and the \v{C}ech section theorem}
\label{sec:poset-presheaf}

\subsection{Setup}

\begin{definition}
Let $X$ be a manifold and $\mathbf{Op}^g(X)$ the category of good open subsets
of $X$ (non-empty, contractible, simply connected).  A
\emph{poset presheaf on good opens} is a functor
$\calP:\mathbf{Op}^g(X)^{\mathrm{op}}\to\mathrm{Posets}$,
i.e.\ for every good open $U\subset X$ a poset $\calP(U)$ with restriction maps
$\calP(V)\to\calP(U)$ for $U\subset V$, satisfying transitivity.
\end{definition}

\begin{definition}
A \emph{\v{C}ech section} of $\calP$ consists of:
\begin{itemize}[leftmargin=2em]
  \item a good open covering $\{U_i\}_{i\in I}$;
  \item elements $a(i)\in\calP(U_i)$;
\end{itemize}
such that for every non-empty $U_{ij}:=U_i\cap U_j$, the restrictions
$a(i)|_{U_{ij}}$ and $a(j)|_{U_{ij}}$ are comparable in $\calP(U_{ij})$
(one is $\leq$ the other).
\end{definition}

\begin{remark}
On any non-empty multiple intersection $V = U_{i_1\cdots i_k}$, the restrictions
form a totally ordered subset of $\calP(V)$.
\end{remark}

\subsection{Realization of posets}

The \emph{realization} $|P|$ of a poset $P$ is the CW complex
$|\Delta^P|_{\mathrm{lin}}$, i.e.\ the subcomplex of the infinite simplex
$\Delta^P = \{(x_a)_{a\in P}: x_a\geq 0,\,\sum x_a=1\}$ consisting of points
where the support $\{a:x_a\neq 0\}$ is linearly ordered in $P$.  This is
homotopy equivalent to the classical realization of the nerve $NP$.

A poset $P$ is \emph{contractible} if $|P|$ is contractible (equivalently,
$|NP|$ is contractible).

\begin{lemma}[Criterion for contractibility]\label{lem:contr-criterion}
A poset $P$ is contractible if it is non-empty and for every finite subset
$P'\subset P$ there is a larger finite subset $P''\supset P'$ such that
$|P'|\to|P''|$ is null-homotopic.
\end{lemma}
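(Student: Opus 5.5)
To prove Lemma~\ref{lem:contr-criterion}, I would show that every homotopy group $\pi_n(|P|,x_0)$ vanishes and then invoke Whitehead's theorem. This is legitimate because $|P|$ is a CW complex (a subcomplex of the simplex $\Delta^P$ with the weak topology), and non-emptiness of $P$ supplies a basepoint.

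\textbf{Step 1: compactness.} Fix $n\geq0$ and a based map $f:S^n\to|P|$. Since $S^n$ is compact, its image meets only finitely many open cells of $|P|$. Each open cell is the interior of a simplex spanned by a finite chain in $P$. Let $P'\subset P$ be the finite union of the vertex sets of these chains, together with a vertex carrying the basepoint. Then $f$ factors through $|P'|$, the full subcomplex on $P'$. Here $|P'|$ means the simplices of $\Delta^{P'}$ whose support is linearly ordered in $P$; the order is the restriction of that of $P$.

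\textbf{Step 2: apply the hypothesis.} Choose the finite $P''\supset P'$ given by the hypothesis. The inclusion $|P'|\to|P''|$ is null-homotopic, so the composite $S^n\to|P'|\to|P''|\to|P|$ is freely null-homotopic. Hence $f$ is freely null-homotopic.

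\textbf{Step 3: from free to based null-homotopy.} For $n\geq1$, a map $S^n\to Y$ that is freely null-homotopic is trivial in $\pi_n(Y,x_0)$. Indeed, it extends to $D^{n+1}$, and a map from a disc can be deformed rel nothing to a based one; equivalently, the class of $f$ lies in the $\pi_1$-orbit of the trivial class, which is the trivial class itself.

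For $n=0$, Step~2 says that any two points of $|P'|$ are joined by a path in $|P''|$. So $|P|$ is path-connected, and $\pi_0$ is trivial.

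\textbf{Conclusion.} All homotopy groups of the CW complex $|P|$ vanish, so $|P|\to\mathrm{pt}$ is a weak equivalence. By Whitehead's theorem it is a homotopy equivalence, and $|P|$ is contractible.

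The only delicate point is Step~1: one must make sure the realization carries the weak (CW) topology, so that compact subsets lie in finite subcomplexes. The equivalence with $|NP|$ stated just before the lemma makes this standard, and I would cite it rather than reprove it. The remaining steps are routine.
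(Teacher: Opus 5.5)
Your proof is correct and follows the paper's argument: by compactness a sphere in $|P|$ lands in a finite subcomplex $|P'|$, the hypothesis makes it null-homotopic in $|P''|$, so all homotopy groups vanish and Whitehead's theorem gives contractibility. Your treatment of free versus based null-homotopies, of $\pi_0$, and of the need for the CW topology fills in details the paper leaves implicit.
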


\begin{proof}
Elements of $\pi_n(|P|,*)$ are represented by maps $S^n\to|P|$; since $S^n$
is compact, the image lies in $|P'|$ for a finite sub-poset $P'\subset P$.
The extension condition allows contracting $|P'|$ inside a larger piece,
giving the vanishing of all homotopy groups.
\end{proof}

\subsection{Continuous sections}

\begin{definition}\label{def:cont-section}
A \emph{continuous section} of $\calP$ consists of:
\begin{itemize}[leftmargin=2em]
  \item a good open covering $\{U_i\}$;
  \item a good partition of unity $\sum_i\rho_i = 1$ subordinate to $\{U_i\}$;
  \item for each multiindex $J\subset I$, a small map
    $f_J:\overline{\mathrm{Supp}_J(\rho)}\to|\calP(U_J)|$
    (where $\mathrm{Supp}_J(\rho) = \{x:\rho_i(x)\neq 0\Leftrightarrow i\in J\}$);
\end{itemize}
satisfying: if $J\subset K$ and $x\in\overline{\mathrm{Supp}_J}\cap\overline{\mathrm{Supp}_K}$,
then $f_J(x)|_{U_K} = f_K(x)$ in $|\calP(U_K)|$.
\end{definition}

\begin{proposition}\label{prop:cont-section-exists}
If $\calP(U)$ is contractible for every good open $U\subset X$, then $\calP$
admits a continuous section.
\end{proposition}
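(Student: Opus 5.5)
We build the continuous section by induction over the skeleta of the nerve of a suitable good covering, using contractibility of each $|\calP(U)|$ to extend maps from boundaries. First we choose a good open covering $\{U_i\}_{i\in I}$ of $X$ that is locally finite, with all non-empty multiple intersections $U_J$ good (a very good covering, e.g.\ geodesically convex balls for a Riemannian metric). We fix a subordinate good partition of unity $\{\rho_i\}$. We may shrink so that each closed stratum $\overline{\mathrm{Supp}_J(\rho)}$ lies in $U_J$. The closures $\overline{\mathrm{Supp}_J}$ are then closed subsets of $U_J$. The boundary of $\overline{\mathrm{Supp}_J}$ is contained in $\bigcup_{K\supsetneq J}\overline{\mathrm{Supp}_K}$, since a point where $\rho_i\to0$ for $i\in J$ is impossible; rather, the limit points of $\mathrm{Supp}_J$ are where additional $\rho_k$ become nonzero or some vanish. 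We will arrange the stratification so that it behaves like a CW/handle decomposition, with strata ordered by decreasing $|J|$.

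The maps $f_J$ are constructed by descending induction on $|J|$. Local finiteness bounds $|J|$ by the dimension of the nerve. For maximal $J$ (no $K\supsetneq J$ with $U_K\neq\emptyset$), we let $f_J$ be the constant map to a vertex $a\in\calP(U_J)$, which is non-empty since it is contractible. Suppose $f_K$ has been defined for all $K\supsetneq J$, compatibly among themselves. On $Y_J:=\overline{\mathrm{Supp}_J}\cap\bigcup_{K\supsetneq J}\overline{\mathrm{Supp}_K}$ the compatibility condition prescribes a map $Y_J\to|\calP(U_J)|$. This map is well defined on overlaps by the inductive compatibility and transitivity of restriction, after pushing through the restriction maps $|\calP(U_K)|\to|\calP(U_J)|$ wherever the condition forces $f_J(x)|_{U_K}=f_K(x)$. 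As literally stated the condition runs in the direction $\calP(U_J)\to\calP(U_K)$, so at this step we may have to reverse the induction and define $f_J$ for small $|J|$ first, prescribing $f_K$ on the boundary via restriction. We choose whichever direction makes the prescribed boundary data a genuine map into the target of the map being constructed, which is ascending in $|J|$. The step is then: given a map on a closed subset $Y$ of the stratum closure, extend it to all of $\overline{\mathrm{Supp}}$. Since $(\overline{\mathrm{Supp}},Y)$ is a pair of finite-dimensional (triangulable, for a generic partition of unity) spaces, and the target $|\calP(U)|$ is a contractible CW complex, hence an absolute extensor for such pairs, the extension exists by obstruction theory: all obstructions lie in $\pi_n$ of a contractible space.

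Smallness of $f_J$ requires the image to lie in a finite subcomplex. We get this by simplicial approximation, using compactness of the closed strata on compact $X$ and Lemma~\ref{lem:contr-criterion} to perform the extension inside a finite subposet. We expect the main obstacle to be the bookkeeping of the compatibility condition: checking that the boundary data on $Y_J$ glue consistently when $x$ lies in several $\overline{\mathrm{Supp}_K}$. The delicate point is that the condition as written ties $f_J(x)$ restricted to $U_K$ to $f_K(x)$ for each such $K$. We handle it by ordering strata so that the map is only ever prescribed from one side, and by using that restriction maps of posets induce cellular maps compatible under composition.
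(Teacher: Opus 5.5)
The route you eventually settle on is the paper's argument: ascending induction on $|J|$, with $f_J$ prescribed on the part of $\overline{\mathrm{Supp}_J}$ met by lower strata via the restriction maps $|\calP(U_K)|\to|\calP(U_J)|$ for $K\subsetneq J$, and extended because the contractible complex $|\calP(U_J)|$ is an absolute extensor. But the stratification picture you set it up with is backwards. Since each condition $\rho_k=0$ is closed, a limit point of $\mathrm{Supp}_J$ can never acquire an extra non-zero $\rho_k$ with $k\notin J$. What does happen is that some $\rho_i$ with $i\in J$ tends to $0$, which you call impossible. For example, on $\rr$ take $\rho_2=1-\rho_1$ with $\rho_1$ decreasing from $1$ to $0$ on $[0,1]$. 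Then $\overline{\mathrm{Supp}_{\{1,2\}}}=[0,1]$, its endpoints lie in $\mathrm{Supp}_{\{1\}}$ and $\mathrm{Supp}_{\{2\}}$, and there is no $K\supsetneq\{1,2\}$ at all. The correct fact is $\overline{\mathrm{Supp}_J}\subseteq\bigcup_{M\subseteq J}\mathrm{Supp}_M$, so $\overline{\mathrm{Supp}_J}\setminus\mathrm{Supp}_J$ lies in strata with \emph{smaller} index sets. Consequently your $Y_J$ (defined via $K\supsetneq J$) and your base case (constant maps for maximal $J$) belong to the descending induction you then abandon. In the ascending version the constrained set is $\overline{\mathrm{Supp}_J}\cap\bigcup_{K\subsetneq J}\overline{\mathrm{Supp}_K}=\overline{\mathrm{Supp}_J}\setminus\mathrm{Supp}_J$. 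The base case is $|J|=1$, where $\mathrm{Supp}_{\{i\}}=\{\rho_i=1\}$ is already closed and carries no constraint.

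The same fact is what makes the boundary data well defined, which is the step you yourself single out as delicate. ``Ordering strata so that the map is only prescribed from one side'' does not settle it. Two index sets $K,K'\subsetneq J$ may be incomparable, and the inductive hypothesis relates $f_K$ and $f_{K'}$ only through a common smaller index set. The repair goes as follows. A point $x\in\overline{\mathrm{Supp}_K}\cap\overline{\mathrm{Supp}_{K'}}$ lies in $\mathrm{Supp}_M$ with $M=\{i:\rho_i(x)\neq0\}$. This $M$ is non-empty because $\sum_i\rho_i=1$, and it is contained in $K\cap K'$. So $f_K(x)=f_M(x)|_{U_K}$ and $f_{K'}(x)=f_M(x)|_{U_{K'}}$, and transitivity of restriction gives $f_K(x)|_{U_J}=f_{K'}(x)|_{U_J}$. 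The finitely many pieces then glue by the pasting lemma. With this correction your proof coincides with the paper's. Your handling of smallness, doing the extension inside a finite subposet via Lemma~\ref{lem:contr-criterion} (in its easy converse direction, by compactness of a null-homotopy), is a useful addition, since the paper's proof leaves the smallness requirement implicit.
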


\begin{proof}
We construct the maps $f_J$ by induction on $|J|$.  For $|J|=1$ (single index),
$f_{\{i\}}$ is simply a constant map to any point of $|\calP(U_i)|$ (possible
since it is non-empty).

For the inductive step, assume all $f_K$ for $|K|<|J|$ have been constructed,
compatibly.  The compatibility conditions on $f_J$ are prescribed on the
closed subset
\[
  Z_J := \overline{\mathrm{Supp}_J(\rho)}\setminus\mathrm{Supp}_J(\rho)
  = \bigcup_{i\in J}\{x:\rho_i(x)=0\}\cap\overline{\mathrm{Supp}_J(\rho)}.
\]
Define $f_{\partial J}: Z_J\to|\calP(U_J)|$ by piecing together the maps
$f_K|_{Z_J\cap\overline{\mathrm{Supp}_K(\rho)}}$ for $K\subsetneq J$, composed
with the restriction map $|\calP(U_K)|\to|\calP(U_J)|$.  By the inductive
compatibility hypothesis, these maps agree on overlaps of the closed sets
$Z_J\cap\overline{\mathrm{Supp}_K}$, so they glue to give $f_{\partial J}$.

Since $|\calP(U_J)|$ is contractible, the map
$f_{\partial J}:Z_J\to|\calP(U_J)|$ extends to a continuous map
$f_J:\overline{\mathrm{Supp}_J(\rho)}\to|\calP(U_J)|$.  Here we use that
$Z_J$ is a closed subset of $\overline{\mathrm{Supp}_J(\rho)}$ and
$|\calP(U_J)|$ is an ANR (contractible CW complex), so the extension exists.
\end{proof}

\begin{theorem}[Continuous to \v{C}ech]\label{thm:cont-to-cech}
If $\calP$ admits a continuous section, then it admits a \v{C}ech section.
\end{theorem}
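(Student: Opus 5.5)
The plan is to convert the continuous section $(\{U_i\},\rho,\{f_J\})$ into a \v{C}ech section on a finer covering indexed by simplices of the posets. Each map $f_J$ lands in $|\calP(U_J)|$, the linear realization. By definition a point of $|\calP(U_J)|$ is a convex combination supported on a \emph{chain} of $\calP(U_J)$. Comparability on overlaps is exactly what a \v{C}ech section needs, and chains are the source of comparability. So I would pull back the open-star covering of each $|\calP(U_J)|$ along $f_J$.

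\textbf{Open-star cover on the target.} For $a\in\calP(U_J)$, let $\mathrm{St}(a)=\{x\in|\calP(U_J)| : x_a>0\}$. These sets form an open covering of $|\calP(U_J)|$. If $\mathrm{St}(a)\cap\mathrm{St}(b)\neq\emptyset$, then $a$ and $b$ lie in a common chain, so they are comparable. The restriction maps $|\calP(U_J)|\to|\calP(U_K)|$ for $J\subset K$ are realizations of order-preserving maps $r$. Consequently $r(\mathrm{St}(a))\subset\mathrm{St}(r(a))$ in the sense that supports map to supports, and comparability is preserved under restriction.

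\textbf{New covering.} Since the $f_J$ are small maps on the closed sets $\overline{\mathrm{Supp}_J(\rho)}$, for each $x\in X$ I choose:
\begin{itemize}
\item the unique $J$ with $x\in\mathrm{Supp}_J(\rho)$;
\item a vertex $a_x$ with $f_J(x)\in\mathrm{St}(a_x)$;
\item a good (contractible) open neighbourhood $V_x\subset U_J$ of $x$, small enough that $f_{J'}(V_x\cap\overline{\mathrm{Supp}_{J'}})\subset\mathrm{St}(a_x|_{U_{J'}})$ whenever $J\subset J'$ and $V_x$ meets $\mathrm{Supp}_{J'}$.
\end{itemize}
This is possible by continuity and by the compatibility $f_J|_{U_K}=f_K$, after shrinking $V_x$ so that it only meets strata $\mathrm{Supp}_{J'}$ with $J'\supset J$. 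The latter holds because the sets $\{\rho_i>0\}$ are open. I then assign $a(x):=a_x|_{V_x}\in\calP(V_x)$. Local finiteness is obtained by extracting a locally finite subcover, using paracompactness.

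\textbf{Comparability check.} Take $V_x\cap V_y\neq\emptyset$, with associated index sets $J_x,J_y$. I would pick a point $z$ of the intersection, lying in $\mathrm{Supp}_{K}$. By the shrinking, $K\supset J_x\cup J_y$. Then $f_K(z)$ lies in $\mathrm{St}(a_x|_{U_K})\cap\mathrm{St}(a_y|_{U_K})$, so these two restrictions are comparable in $\calP(U_K)$. Restricting further to $V_x\cap V_y\subset U_K$ keeps them comparable, because restriction maps are monotone.

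\textbf{Main obstacle.} The hard step is making the shrinking uniform, since $V_x$ may meet several strata. This requires $J_x\subset K$ for every stratum met, and the star condition must hold simultaneously for all of them. I would handle this by taking $V_x$ inside $\bigcap_{i\in J_x}\{\rho_i>0\}$. That set is open, and every point of it lies in some $\mathrm{Supp}_K$ with $K\supset J_x$. There are only finitely many such $K$, by local finiteness. On each of these finitely many closed pieces I apply continuity of $f_K$ at $x$ (when $x\in\overline{\mathrm{Supp}_K}$), and intersect the resulting neighbourhoods. A subtle point is that when $x\notin\overline{\mathrm{Supp}_K}$, the piece can be avoided altogether by further shrinking.
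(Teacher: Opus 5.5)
Your route is the paper's: pull back open stars of vertices along the $f_K$, and use the compatibility $f_J(x)|_{U_K}=f_K(x)$ together with continuity on the closed strata to carry the star condition to nearby strata. Comparability then comes from the fact that a point of $|\calP(U_K)|$ is supported on a chain. The paper just packages your neighbourhoods as the open sets $W_{J,a}$ and takes a good refinement.

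However, one step in your comparability check does not follow from your construction: the containment $V_x\cap V_y\subset U_K$. Your shrinkings give only $V_x\subset U_{J_x}$ and $V_y\subset U_{J_y}$, hence $V_x\cap V_y\subset U_{J_x\cup J_y}$. The witness $z$ may lie in a stratum $\mathrm{Supp}_K$ with $K\supsetneq J_x\cup J_y$. For $k\in K\setminus(J_x\cup J_y)$ nothing stops $V_x$ from leaving $U_k$: take $x$ on the boundary of $\mathrm{supp}\,\rho_k$, with $V_x$ protruding beyond $U_k$ on the far side. Comparability of $a_x|_{U_K}$ and $a_y|_{U_K}$ in $\calP(U_K)$ then tells you nothing about $\calP(V_x\cap V_y)$. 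You cannot restrict from $U_K$ to a set it does not contain. Nor, in a general poset presheaf, can comparability after restriction be lifted back to $\calP(U_{J_x\cup J_y})$. The paper's own proof passes over the same point when it goes from comparability in $\calP(U_L)$ to comparability in $\calP(V_i\cap V_j)$.

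The repair is one more shrinking, and your own ``avoid $\overline{\mathrm{Supp}_K}$ when $x\notin\overline{\mathrm{Supp}_K}$'' step sets it up.
\begin{itemize}
\item After that step, $V_x$ meets only strata $K$ with $x\in\overline{\mathrm{Supp}_K}$, and $\overline{\mathrm{Supp}_K}\subset\bigcap_{k\in K}\mathrm{supp}\,\rho_k\subset U_K$.
\item So $x$ lies in each of these finitely many open sets $U_K$, and you may take $V_x$ inside their intersection.
\item Then $V_x\subset U_K$ for every stratum $K$ that $V_x$ meets. For $z\in V_x\cap V_y\cap\mathrm{Supp}_K$ you genuinely have $V_x\cap V_y\subset U_K$, and restricting the comparable pair from $U_K$ finishes the argument.
\end{itemize}
A smaller slip: paracompactness gives a locally finite \emph{refinement}, not a locally finite subcover. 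That is enough: assign to each refining set $V'\subset V_x$ the restriction of $a_x$. Comparability on $V_x\cap V_y$ then restricts, by monotonicity of the restriction maps, to the smaller overlaps.
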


\begin{proof}
Given a continuous section with maps $\{f_J\}$, for each multiindex $J$ and
$a\in\calP(U_J)$ define
\[
  W_{J,a} := \{x\in X: x\in\mathrm{Supp}^J(\rho),\;
  f_K(x)(a|_{U_K})>0\;\text{for }K\supset J\text{ with }x\in\mathrm{Supp}_K(\rho)\}.
\]
By continuity of $f_K$ and the compatibility conditions, $W_{J,a}$ is open.
The $W_{J,a}$ cover $X$ (at any $x\in\mathrm{Supp}_J(\rho)$, some coordinate
of $f_J(x)$ is positive).  Choose a good refinement $\{V_i\}_{i\in I'}$ with
$V_i\subset W_{J(i),a(i)}$, and set $\alpha(i) := a(i)|_{V_i}\in\calP(V_i)$.

We check the \v{C}ech condition: if $V_i\cap V_j\neq\emptyset$ with
$y\in V_i\cap V_j$, let $L$ be the multiindex with $y\in\mathrm{Supp}_L(\rho)$.
Then $J(i)\subset L$ and $J(j)\subset L$, so $f_L(y)$ has positive entries
at $a(i)|_{U_L}$ and $a(j)|_{U_L}$.  Since $f_L(y)\in|\calP(U_L)|$, the
non-zero support is linearly ordered, so $a(i)|_{U_L}$ and $a(j)|_{U_L}$ are
comparable.  Therefore $\alpha(i)|_{V_i\cap V_j}$ and $\alpha(j)|_{V_i\cap V_j}$
are comparable in $\calP(V_i\cap V_j)$.
\end{proof}

\begin{theorem}[\v{C}ech section from contractibility]\label{thm:cech-section}
If $\calP(U)$ is contractible for every good open $U$, then $\calP$ admits a
\v{C}ech section.
\end{theorem}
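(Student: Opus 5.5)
The plan is to chain the two results just established. First, Proposition~\ref{prop:cont-section-exists} turns contractibility of every $\calP(U)$ into a continuous section. Then Theorem~\ref{thm:cont-to-cech} converts that continuous section into a \v{C}ech section. So the substance lies in checking that the hypotheses of these two results are really available on the manifold $X$, with the covering and partition of unity they require.

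Concretely, I will first fix a good open covering $\{U_i\}$ of $X$. For instance, take geodesically convex balls for a Riemannian metric, so that all non-empty finite intersections are again convex, hence contractible and simply connected, hence good opens. Local finiteness follows from compactness, or paracompactness in general. Next I will choose a partition of unity $\{\rho_i\}$ subordinate to this covering. It should be good in the sense of Definition~\ref{def:cont-section}, meaning that each closure $\overline{\mathrm{Supp}_J(\rho)}$ lies inside $U_J$ and that the sets $\overline{\mathrm{Supp}_J(\rho)}$ and $Z_J$ are closed subsets on which extension problems make sense. This can be arranged by shrinking the supports slightly.

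With these choices I run the inductive construction of Proposition~\ref{prop:cont-section-exists} on $|J|$. At each stage the boundary map $f_{\partial J}\colon Z_J\to|\calP(U_J)|$ is glued from the lower maps via the restriction maps $|\calP(U_K)|\to|\calP(U_J)|$, and it is then extended over $\overline{\mathrm{Supp}_J(\rho)}$. This is where contractibility of $|\calP(U_J)|$ is used: $U_J$ is a good open, and a contractible CW complex is an absolute extensor for metrizable spaces. By local finiteness, only finitely many $J$ are relevant near each point. Finally, Theorem~\ref{thm:cont-to-cech} yields the open sets $W_{J,a}$ and a good refinement $\{V_i\}$ with comparable restrictions, which is exactly a \v{C}ech section.

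The main obstacle I expect is in the extension step. The target $|\calP(U_J)|$ may be an infinite-dimensional CW complex, since the poset can be infinite, so its ANR/AE property for metrizable pairs has to be justified. I would first use Lemma~\ref{lem:contr-criterion} to compress $f_{\partial J}$, which by compactness of $Z_J$ when $X$ is compact has image in a finite subcomplex $|P'|$. That finite subcomplex becomes null-homotopic in some larger finite $|P''|$. I would then extend into $|P''|$ using the homotopy extension property for the closed pair $(\overline{\mathrm{Supp}_J},Z_J)$ with a finite polyhedral target. A second, more bookkeeping-level point is ensuring the refinement $\{V_i\}$ in the last step can again be chosen good, for example by using convex balls inside each $W_{J,a}$.
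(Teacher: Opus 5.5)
Your proposal is correct and is the paper's own argument: the theorem is proved simply by composing Proposition~\ref{prop:cont-section-exists} with Theorem~\ref{thm:cont-to-cech}, and your extra bookkeeping (convex-ball covering, shrunken partition of unity, good final refinement) only spells out what those two results presuppose. One small correction in your extension step: Lemma~\ref{lem:contr-criterion} is stated in the direction ``compression property $\Rightarrow$ contractible'', but you need the converse, which holds because a null-homotopy of $|P'|\hookrightarrow|\calP(U_J)|$ has compact image and therefore lands in some finite $|P''|$ (for noncompact $X$ you would instead fall back on the extensor property of CW complexes for metrizable pairs that you mention).
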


\begin{proof}
Combine Proposition~\ref{prop:cont-section-exists} and
Theorem~\ref{thm:cont-to-cech}.
\end{proof}


\subsection{The poset presheaf for patching data: explicit description}

We spell out the poset presheaf $\calP$ used in the proof of
Theorem~\ref{thm:patching-exists}.

\begin{definition}\label{def:P-presheaf}
For a good open subset $U\subset X$, define $\calP(U)$ to be the set of
patching data for the restriction $L|_{U^*}$, where $U^* = U\cap X^*$.
An element of $\calP(U)$ is a quasi-filtration $W_\hdot$ of $L|_{U^*}$ by
sub-local systems, satisfying:
\begin{enumerate}[leftmargin=2em,label=\rm(\arabic*)]
  \item $N_i(W_j)\subset W_{j-2}$ for every divisor component $D_i$ meeting
    $U$ and every step $W_j$ of the filtration.
  \item The associated-graded $\Gr^W(L|_{U^*})$ extends to a local system
    on $U$ (i.e.\ has trivial monodromy around $D\cap U$).
\end{enumerate}

The partial order on $\calP(U)$ is by refinement: $W\leq W'$ if every step
of $W$ is a step of $W'$.
\end{definition}

\begin{proposition}\label{prop:P-iso-filt}
For any good open $U$ intersecting $D$ in a product of discs, the poset
$\calP(U)$ is naturally isomorphic to $\Filt^{A,K}$ where $A = \{N_i : D_i\cap U\neq\emptyset\}$
and $K = \ker A$.  Hence it is contractible by Theorem~\ref{thm:filt-contractible}.
\end{proposition}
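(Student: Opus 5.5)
The plan is to write down an explicit order-preserving bijection between $\calP(U)$ and $\Filt^{A,K}$, and then invoke Theorem~\ref{thm:filt-contractible}, or Corollary~\ref{cor:Filt-A} for the case $B=K$.

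\textbf{Reduction to linear algebra.} Since $U$ is good and $U\cap D$ is a product of discs (coordinate hyperplanes in a polydisc), $U^*$ is homotopy equivalent to a torus $(S^1)^a$, where $a$ is the number of components $D_i$ meeting $U$. So $\pi_1(U^*)\cong\zz^a$ is generated by the loops $\gamma_i$. Fix a base point and take the fibre $V:=L_{x}$. Then sub-local systems of $L|_{U^*}$ are exactly the subspaces of $V$ stable under the commuting unipotent $T_i$, equivalently under the $N_i=\frac{1}{2\pi i}\log T_i$. A quasi-filtration by sub-local systems is therefore a quasi-filtration of $V$ by $A$-stable subspaces. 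Refinement on both sides is inclusion of the set of steps, so any bijection of this form is automatically an isomorphism of posets.

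\textbf{Matching the conditions.} Condition (2) of Definition~\ref{def:P-presheaf} says each graded piece has trivial monodromy, i.e.\ $N_i(W_j)\subset W_{j-1}$ (the previous step). This is exactly the nil-filtration axiom: every nonzero step $F$ has a strictly smaller step $G$ with $A_i(F)\subset G$. Applied to the smallest nonzero step, it forces $F_{\min}\subset\ker A=K$, which is the first axiom of $\Filt^{A,K}$. Conversely, an $(A,K)$-nil-filtration has steps that are automatically $A$-stable, so they are sub-local systems, and its graded pieces carry trivial monodromy. The graded pieces therefore extend across $D\cap U$, because $U$ is simply connected and a local system on $U^*$ with trivial monodromy is constant.

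\textbf{Main obstacle.} The difficulty is reconciling condition (1), $N_i(W_j)\subset W_{j-2}$, which is phrased in terms of an integer weight indexing, with the unindexed quasi-filtrations of $\Filt^{A,K}$. I would handle this by choosing the canonical indexing of a quasi-filtration by doubled integers, i.e.\ consecutive steps indexed $2m, 2m+2,\dots$, or more simply by noting that for quasi-filtrations "the step two below" in the weight convention is read as "a strictly smaller step". Under that reading, (1) is implied by the nil-condition and adds no further constraint, and the identification is compatible with refinement.

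\textbf{Conclusion.} Finally, $\Filt^{A,K}$ is non-empty: since $K\neq 0$ for commuting nilpotents, the filtration $0\subset K\subset\ker(A|_{V/K})^{\text{lift}}\subset\cdots$ by iterated kernels is a nil-filtration. Hence Theorem~\ref{thm:filt-contractible} gives contractibility of $\calP(U)$.
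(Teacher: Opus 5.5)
Your proposal is correct and follows essentially the same route as the paper. You identify $L|_{U^*}$ with its fibre, so that quasi-filtrations by sub-local systems become quasi-filtrations of $V$ by $A$-stable subspaces. You then match the conditions of Definition~\ref{def:P-presheaf} with the nil-filtration axioms, reading condition (1) as ``$N_i$ maps each step into a strictly smaller step'' exactly as the paper does, and invoke Theorem~\ref{thm:filt-contractible}.

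Your version is in fact more careful than the paper's on three points:
\begin{itemize}
\item You correctly use $\pi_1(U^*)\cong\zz^a$. The paper's assertion that $U\cap X^*$ is simply connected would force the local monodromy to be trivial.
\item You see that condition (2) alone already gives the full nil-condition, and hence $F_{\min}\subset K$.
\item You supply the non-emptiness of $\Filt^{A,K}$ that the theorem requires, via the iterated-kernel filtration.
\end{itemize}
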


\begin{proof}
Since $U$ is simply connected and $U\cap X^*$ is also simply connected (for
a sufficiently small polydisk neighbourhood), $L|_{U^*}$ is trivial as a
local system; choosing a basepoint identifies its fibre $V = L_x$ with
$L|_{U^*}$.  The monodromy operators are $T_i = e^{2\pi iN_i}$ for $D_i\cap U\neq\emptyset$.

A quasi-filtration $W_\hdot$ of $L|_{U^*}$ by sub-local systems corresponds
to a quasi-filtration of the vector space $V$ (constant local system) preserved
by the $N_i$.  The conditions in Definition~\ref{def:P-presheaf} say exactly
that $N_i$ strictly decreases the filtration (condition 1) and the first step
is in the common kernel $K = \bigcap_i\ker N_i$ (so that condition 2 holds:
the monodromy acts trivially on the graded pieces).

Hence $\calP(U)\cong\Filt^{A,K}$.
\end{proof}

\begin{corollary}[Existence of a \v{C}ech section]\label{cor:cech-exists}
The poset presheaf $\calP$ of Definition~\ref{def:P-presheaf} admits a
\v{C}ech section.
\end{corollary}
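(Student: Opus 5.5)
The plan is to verify the two hypotheses of Theorem~\ref{thm:cech-section} for the poset presheaf $\calP$ of Definition~\ref{def:P-presheaf}: that it is a genuine presheaf on good opens, and that $\calP(U)$ is non-empty and contractible for every good open $U$. The presheaf structure is formal. For $U\subset U'$, restricting a quasi-filtration of $L|_{U'^*}$ by sub-local systems to $U^*$ gives a quasi-filtration of $L|_{U^*}$. Conditions (1) and (2) survive restriction, because fewer divisor components meet $U$ and monodromy triviality on the graded is a local statement. Refinement is also preserved, so restriction is order-preserving and transitive. One caveat is that two distinct steps may coincide after restriction, so we discard repetitions (normalization); this is still order-preserving.

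\textbf{Step 1: reduction to polydisc-type opens.} The good opens of interest are small enough that, in local coordinates, $U\cap D$ is a union of coordinate hyperplanes in a polydisc. We refine the class of good opens to those of this form, or to those with $U\cap D=\emptyset$; such opens are cofinal, so Theorem~\ref{thm:cech-section} may be applied to this subclass. On opens with $U\cap D=\emptyset$, $L|_U$ is trivial and $\calP(U)$ is the poset of all normalized quasi-filtrations with no constraint. This poset has a minimum $\{0\subset V\}$, so it is contractible. (It coincides with $\Filt^{\emptyset,V}$ with empty operator family.)

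\textbf{Step 2: the boundary case.} For $U$ meeting $D$ in a product of discs, Proposition~\ref{prop:P-iso-filt} identifies $\calP(U)\cong\Filt^{A,K}$ with $A=\{N_i: D_i\cap U\neq\emptyset\}$. Theorem~\ref{thm:filt-contractible} (or Corollary~\ref{cor:Filt-A}) then gives contractibility as soon as the poset is non-empty. Non-emptiness is the point needing an explicit witness. Since the $N_i$ commute and are nilpotent, the kernel filtration $K_1=\bigcap_i\ker N_i$, $K_{j+1}=\{v: N_iv\in K_j\ \forall i\}$ is a strictly increasing chain ending at $V$. It satisfies the $(A,K)$-nil-filtration conditions: the first step lies in $K$, and $N_i(K_{j+1})\subset K_j$. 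Alternatively, one could use the weight filtration $W(N_I)$ of Lemma~\ref{lem:W-commute}, which after reindexing also has steps mapped strictly down by each $N_i$, using the commuting family. I will use the kernel filtration, since it avoids Mochizuki's theorem entirely.

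\textbf{Step 3: conclusion.} With every $\calP(U)$ non-empty and contractible, Proposition~\ref{prop:cont-section-exists} produces a continuous section, and Theorem~\ref{thm:cont-to-cech} converts it into a \v{C}ech section.

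The main obstacle is the mismatch in Step 1 between the class of good opens in Theorem~\ref{thm:cech-section} and the opens where Proposition~\ref{prop:P-iso-filt} applies. A contractible open may meet $D$ in a complicated way, so that $U^*$ is not a product of punctured discs and $\calP(U)$ is not $\Filt^{A,K}$. I would handle this by rerunning the proofs of Proposition~\ref{prop:cont-section-exists} and Theorem~\ref{thm:cont-to-cech} verbatim with ``good'' replaced by ``adapted polydisc or disjoint from $D$''. That class is closed under the finite intersections actually used once the covering is chosen via the tubular-neighbourhood coordinates of \S\ref{sec:covering}, and it admits arbitrarily fine refinements, which are the only properties the proofs use.
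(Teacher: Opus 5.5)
Your proposal follows the paper's route: Proposition~\ref{prop:P-iso-filt} plus Theorem~\ref{thm:filt-contractible} in the boundary case, a cone point when $U\cap D=\emptyset$, then Theorem~\ref{thm:cech-section}. It is more careful than the paper in two respects. First, you supply an explicit non-emptiness witness, the iterated common-kernel filtration. Second, you notice that Proposition~\ref{prop:P-iso-filt} only covers polydisc-type opens, whereas the proof of Theorem~\ref{thm:cech-section} needs contractibility on every finite intersection of the covering it uses. The paper simply asserts that $\calP(U)$ is contractible for every good open $U$, and this can fail. A contractible $U$ meeting $D$ in two separated pieces has $\pi_1(U^*)$ free on two meridians. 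If their monodromies generate a non-unipotent group, then $\calP(U)=\emptyset$.

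Your repair is right in spirit but not as stated. Polydisc-type opens taken from different charts are not closed under intersection, so ``closed under the finite intersections actually used'' is not justified. What does hold is that $\calP(U)\cong\Filt^{A,K}$ for every connected, simply connected $U$ contained in a single adapted chart $W$. The reason is that the image of $\pi_1(U^*)$ in $\pi_1(W^*)\cong\zz^a$ is exactly the span of the meridians $\gamma_i$ with $D_i\cap U\neq\emptyset$:
\begin{itemize}
\item If $z_j$ has no zero on the simply connected $U$, then $\frac{1}{2\pi i}\oint dz_j/z_j$ vanishes on every loop in $U$, so meridians of components missing $U$ do not appear.
\item If $D_i\cap U\neq\emptyset$, a small loop at a point of $D_i\cap U$ off the other components realizes $\gamma_i$.
\end{itemize}
Now take a cover by small geodesically convex balls, each inside an adapted chart. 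All its finite intersections are of this kind, and on that class the proofs of Proposition~\ref{prop:cont-section-exists} and Theorem~\ref{thm:cont-to-cech} run unchanged.

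Two minor points. Your alternative witness $W(N_I)$ does not work from commutativity alone. Commuting $N_i$ preserve $W(N_I)$ but need not lower it by two; for example $N_2=-N_1\neq0$ makes $W(N_1+N_2)$ trivial. That step needs the CKS/Mochizuki input, so you were right to avoid it. Also, the paper's proof goes on to place the section over a shrinking $\{U_I'\}_{I\in A}$ of the adapted covering, which is what later sections use. Your argument, like the statement itself, yields only an abstract \v{C}ech section.
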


\begin{proof}
By Proposition~\ref{prop:P-iso-filt}, $\calP(U)$ is contractible for every
good open $U$ meeting $D$ in a product of discs.  For $U$ disjoint from $D$,
$\calP(U)$ is the poset of all filtrations of the (constant, since $U$ is
simply connected) local system $L|_U$ subject only to condition (2) of
Definition~\ref{def:P-presheaf}, which is automatic; this poset has a maximum
element (the trivial one-step filtration) and is therefore contractible.
Hence $\calP(U)$ is contractible for every good open $U\subset X$, and
Theorem~\ref{thm:cech-section} produces a \v{C}ech section
$\{a(I)\}_{I\in A}$.

A priori this section is indexed by the good covering produced in the proof
of Theorem~\ref{thm:cech-section} (via Theorem~\ref{thm:cont-to-cech}), which
is a \emph{refinement} $\{U_I'\}_{I\in A}$ of the adapted covering
$\{U_I\}_{I\in A}$ of Proposition~\ref{prop:covering} rather than $\{U_I\}$
itself.  However, the refinement produced by
Construction~\ref{constr:refcov}/Theorem~\ref{thm:cont-to-cech} can be chosen
so that $U_I'\subset U_I$, $U_I'$ still meets $D$ in a product of discs
exactly when $U_I$ does, and $\{U_I'\}_{I\in A}$ again satisfies properties
(1)--(3) of Proposition~\ref{prop:covering} (it is again an adapted covering,
indexed by the same poset $A$, just with smaller open sets).  Restricting the
section to such a refinement, we obtain $a(I)\in\calP(U_I')$ for each $I\in A$,
with the \v{C}ech comparability condition holding on the
$U_{IJ}' = U_I'\cap U_J'$.  This is the \v{C}ech section referred to in
Proposition~\ref{prop:cech-rees-compat} below.
\end{proof}

\begin{corollary}[Existence of a \v{C}ech section: $N_\hdot$-isotropic
variant]\label{cor:cech-exists-isotropic}
Let $\zeta_0\in R^{\mathrm{nil}}$ underlie a polarized VHS, with polarization
$Q$ and monodromy logarithms $N_i = \log T_i$ ($Q$-self-adjoint and
commuting).  Let $\calP^Q\subset\calP$ be the sub-presheaf of $\calP$ whose
sections over a good open $U$ are the $N_\hdot$-isotropic quasi-filtrations
$W_\hdot\in\calP(U)$ (i.e.\ those satisfying $W_j\subset W_{-1-j}^{\perp_Q}$
and $N_i(W_{-1-j}^{\perp_Q})\subset W_j$).  Then $\calP^Q$ admits a \v{C}ech
section over a refinement $\{U_I'\}_{I\in A}$ of the adapted covering of
Proposition~\ref{prop:covering}, of the same type as in
Corollary~\ref{cor:cech-exists}.
\end{corollary}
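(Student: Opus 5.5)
The plan mirrors the proof of Corollary~\ref{cor:cech-exists}, with the filtration poset $\Filt^{A,K}$ replaced by the poset of $N_\hdot$-isotropic filtrations. First I will check that $\calP^Q$ really is a sub-presheaf of $\calP$. Restriction from $V$ to a smaller good open $U$ is identified, via the flat trivialization of $L|_{U^*}$ on simply connected pieces, with the identity map on the fibre. It preserves the hermitian form $Q$ and the operators $N_i$, for the components meeting $U$. It therefore preserves both the isotropy condition $W_j\subset W_{-1-j}^{\perp_Q}$ and the condition $N_i(W_{-1-j}^{\perp_Q})\subset W_j$. When $U$ meets fewer components, these conditions only become weaker, since they impose fewer constraints on the operators $N_i$.

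The central step is to show that $\calP^Q(U)$ is contractible for every good open $U$, so that Theorem~\ref{thm:cech-section} applies. There are two cases.

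\textbf{$U$ meets $D$ in a product of discs.} Choose a basepoint and identify $L|_{U^*}$ with the fibre $(V,Q)$. As in Proposition~\ref{prop:P-iso-filt}, a quasi-filtration in $\calP(U)$ is then a quasi-filtration of $V$ on which the $N_i$ act strictly decreasingly, with first step in $\ker A$. Adding the isotropy conditions, I would identify $\calP^Q(U)$ with $\mathbf{ifilt}^{N_\hdot}(V,Q)$. The identification uses the symmetry $F_{-k}=F_{k-1}^\perp$, and the fact that the $Q$-self-adjoint $N_i$ satisfy $\im N_i\subset(\ker N_i)^\perp$. Together these force the smallest step to lie in the common kernel, which is what condition~(2) of Definition~\ref{def:P-presheaf} requires. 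Contractibility then follows from Proposition~\ref{prop:nisofilcontr}.

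\textbf{$U$ is disjoint from $D$.} Here all $N_i$ acting on $U$ are zero, and $\calP^Q(U)$ is the poset of filtrations satisfying only the self-duality constraint. This poset has the trivial filtration $0\subset V$ (with $F_{-1}=0$, $F_0=V$) as an extremal element, so it is contractible. The same conclusion is the case $N_\hdot=0$ of Proposition~\ref{prop:nisofilcontr}.

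The main obstacle is the identification in the first case. The indexing conventions differ: the corollary writes $W_j\subset W_{-1-j}^{\perp_Q}$, whereas the definition of $\mathbf{ifilt}$ uses $F_{-k}=F_{k-1}^\perp$. I would first try to verify that the corollary's conditions, imposed on all steps, force the equality $W_{-1-j}=W_j^{\perp}$ and not merely an inclusion. This should follow by a dimension count using the nondegeneracy of $Q$ and the fact that a quasi-filtration's steps are closed under $\perp$. If this equality fails in general, I would instead show that the inclusion $\mathbf{ifilt}^{N_\hdot}\hookrightarrow\calP^Q(U)$ admits a monotone retraction $W\mapsto W\cap W^\perp$-symmetrization. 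Such a retraction, with a natural transformation to the identity as in the proof of Theorem~\ref{thm:itro}, would give a homotopy equivalence.

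Finally, I would apply Theorem~\ref{thm:cech-section} to obtain a \v{C}ech section. I would then repeat the refinement argument at the end of Corollary~\ref{cor:cech-exists}: choose the good refinement from Theorem~\ref{thm:cont-to-cech} with $U_I'\subset U_I$, preserving properties (1)--(3) of Proposition~\ref{prop:covering}, and restrict. Comparability on the overlaps $U_I'\cap U_J'$ then follows because $\calP^Q\subset\calP$ is a full sub-poset and the order is the same refinement order.
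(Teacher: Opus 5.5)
Your route is the paper's own: identify $\calP^Q(U)$ with $\mathbf{ifilt}^{N_\hdot}(V,Q)$ when $U$ meets $D$ in a product of discs and invoke Proposition~\ref{prop:nisofilcontr}, use the trivial filtration as a cone point when $U\cap D=\emptyset$, then apply Theorem~\ref{thm:cech-section} and the refinement of Corollary~\ref{cor:cech-exists}. The paper simply asserts that $\calP(U)\cong\Filt^{A,K}$ restricts to $\calP^Q(U)\cong\mathbf{ifilt}^{N_\hdot}(V,Q)$. You are right that the indexing needs reconciling, but the argument you propose for it fails.

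The condition $W_j\subset W_{-1-j}^{\perp_Q}$ is invariant under $j\mapsto -1-j$: it only says $W_j\perp W_{-1-j}$. No dimension count upgrades it to $W_{-1-j}=W_j^{\perp_Q}$, and the steps of a quasi-filtration are not closed under $\perp$. For instance, take $V=\cc^2$ with $Q(e,e)=Q(f,f)=0$ and $Q(e,f)=1$. Let $U$ be disjoint from $D$, or any $U$ where the local monodromy is trivial, so that all relevant $N_i=0$. The filtration with $W_j=0$ for $j<0$, $W_0=\cc(e+f)$ and $W_j=V$ for $j\geq1$ lies in $\calP(U)$ and satisfies both stated conditions. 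But $W_0^{\perp_Q}=\cc(e-f)$ is incomparable with $W_0$. So $W\notin\mathbf{ifilt}^{N_\hdot}(V,Q)$, no reindexing or completion makes it self-dual, and adjoining perpendiculars (your fallback symmetrization) does not even produce a chain.

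The repair is definitional, not a retraction. Under self-duality, the statement's second condition reduces to $N_iW_j\subset W_j$, which already holds for every $W\in\calP(U)$. So the conditions as written are evidently not the intended ones. Define $\calP^Q(U)$ to consist of the $W$ with $W_{-1-j}=W_j^{\perp_Q}$ and $N_iW_j\subset W_{j-1}$ for all $i,j$. This is exactly the self-duality $F_j=F_{-1-j}^{\perp}$ that Theorems~\ref{thm:VHS-hermitian-pp} and~\ref{thm:main-hermitian} actually use.

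With this definition:
\begin{itemize}
\item The conditions are preserved by restriction, by your first paragraph.
\item The identification with $\mathbf{ifilt}^{N_\hdot}(V,Q)$ becomes tautological. The smallest nonzero step lies in the common kernel directly from $N_iW_j\subset W_{j-1}$; self-adjointness is not needed for this.
\item The rest of your argument goes through unchanged.
\end{itemize}
One caveat: this shift-one condition differs from the $W_{j-2}$ of Definition~\ref{def:P-presheaf}. So do not rely on $\calP^Q\subset\calP$ being a full sub-poset to get comparability. The \v{C}ech condition is comparability in the refinement order of $\calP^Q$ itself, and Theorem~\ref{thm:cech-section} delivers this directly.
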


\begin{proof}
The proof is identical to that of Corollary~\ref{cor:cech-exists}, with
Proposition~\ref{prop:P-iso-filt} replaced by the following variant: for $U$
meeting $D$ in a product of discs, the isomorphism $\calP(U)\cong\Filt^{A,K}$
of Proposition~\ref{prop:P-iso-filt} restricts to an isomorphism
$\calP^Q(U)\cong\mathbf{ifilt}^{N_\hdot}(V,h)$, where $h=Q_x$ is the
restriction of the polarization to the fibre $V=L_x$ ($x\in U$); this is
contractible by Proposition~\ref{prop:nisofilcontr} (in place of
Theorem~\ref{thm:filt-contractible}).  For $U$ disjoint from $D$, $\calP^Q(U)$
is the poset of $Q$-isotropic filtrations of $L|_U$, which again has a
maximum element (e.g.\ the filtration $0\subset L|_U$) and is contractible.
Theorem~\ref{thm:cech-section} then produces a \v{C}ech section of $\calP^Q$,
and the same refinement argument as in Corollary~\ref{cor:cech-exists} puts
it over an adapted covering $\{U_I'\}_{I\in A}$.
\end{proof}

\subsection{Compatibility of the \v{C}ech section with the Rees construction}

\begin{proposition}\label{prop:cech-rees-compat}
Let $\{a(I)\}_{I\in A}$ be a \v{C}ech section of $\calP$, produced either by
Corollary~\ref{cor:cech-exists}, or (in the case of a polarized VHS $\zeta_0$,
as in the proof of Theorem~\ref{thm:VHS-hermitian-pp}) by
Corollary~\ref{cor:cech-exists-isotropic} via the inclusion
$\calP^Q\subset\calP$, restricted in either case to the refined adapted
covering $\{U_I'\}_{I\in A}$ as described there.  Then the functor
$A\to\Xi_K$ defined by this section via Construction~\ref{constr:A-to-Xi}
agrees with the functor used to build the global Rees bundle in
Theorem~\ref{thm:global-rees}.
\end{proposition}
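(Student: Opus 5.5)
The plan is to compare the two functors object by object and morphism by morphism, using the fact that both are produced by Construction~\ref{constr:A-to-Xi} from the same input. Theorem~\ref{thm:global-rees} only takes as input a functor $U:A\to\Xi_K$, and in the paper this functor comes from the strict collection of patching data of Theorem~\ref{thm:patching-exists}. So the first step is to check that the \v{C}ech section $\{a(I)\}$ of Corollary~\ref{cor:cech-exists} (or of Corollary~\ref{cor:cech-exists-isotropic} composed with $\calP^Q\subset\calP$) is a strict collection of patching data over the refined adapted covering $\{U_I'\}$. This unwinds as follows:
\begin{itemize}
\item Each $a(I)\in\calP(U_I')$ is a quasi-filtration $W(I)$ of $L|_{U_I'^*}$.
\item By condition~(2) of Definition~\ref{def:P-presheaf}, its graded extends to a local system on the contractible set $U_I'$. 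Trivializing at a basepoint gives $\tau(I)$, exactly as in Construction~\ref{constr:A-to-Xi}(2).
\item The \v{C}ech comparability condition on $U_{IJ}'$ is precisely the strictness condition $(W(I),\tau(I))\leq(W(J),\tau(J))$ or the reverse.
\end{itemize}
The trivializations are compatible because both are induced from flat transport of the same local system.

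The second step is to orient the comparabilities. By Proposition~\ref{prop:covering}(2), which still holds for $\{U_I'\}$, the set $U_{IJ}'$ is nonempty only when $I\subset J$ or $J\subset I$. I will show that the inequality always points the way required by Construction~\ref{constr:A-to-Xi}(4): the datum attached to the larger multi-index, i.e.\ the deeper stratum, is the finer one. I would try to get this from the construction of the refinement in Theorem~\ref{thm:cont-to-cech}. The sets $W_{J,a}$ are chosen using supports of $f_K$ at points of $\mathrm{Supp}_K(\rho)$, and one can choose $a(I)$ to be the maximal element appearing in the support of $f_L(y)$. If this orientation fails in the raw output, I would replace each $a(I)$ by the join of the comparable restrictions on the chain ending at $I$. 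This join exists because the restrictions form a totally ordered set by the Lemma following the definition of strict collections.

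The final step is formal. Proposition~\ref{prop:functor-A-Xi} turns the oriented strict collection into a functor $A\to\Xi_K$, namely $I\mapsto\Gamma(U_I',\Gr^{W(I)},\tau(I))$ together with the morphisms $(W(I\leq I'),g(I\leq I'))$. This is literally the output of Construction~\ref{constr:A-to-Xi} applied to the section. Theorem~\ref{thm:global-rees} is stated for an arbitrary functor $U$. So the Rees bundle built in Theorem~\ref{thm:global-rees} from the functor of Theorem~\ref{thm:patching-exists} is the one attached to this functor, provided Theorem~\ref{thm:patching-exists} is read as producing its collection via Theorem~\ref{thm:cech-section}, which its proof does. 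Functoriality, meaning composition along $I\subset I'\subset I''$, follows as in the Proposition after Construction~\ref{constr:A-to-Xi}, since graded pieces of successive refinements compose.

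I expect the main obstacle to be the orientation in the second step, together with the independence from choices. Two \v{C}ech sections give possibly different functors, so ``agrees'' should be interpreted up to natural isomorphism in $\Xi_K$ compatible with the weight functions. If needed, I would argue via contractibility of $\calP(U)$ that any two sections are connected by a zigzag of comparable sections. The uniqueness Theorem~\ref{thm:uniqueness} would then identify the resulting Rees bundles.
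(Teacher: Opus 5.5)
Your decomposition is the paper's. Its proof also unwinds $a(I)$ into $(W(I),\tau(I))$, invokes Proposition~\ref{prop:functor-A-Xi}, and notes that Theorem~\ref{thm:global-rees} accepts an arbitrary functor. The genuine gap is your second step, which you rightly single out as the crux. The paper's proof does not supply it either: it only asserts that ``the ordering is determined by $I\subset J$''.

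Theorem~\ref{thm:cech-section} delivers only \emph{comparability} on each $U'_{IJ}$, and the direction on each overlap is unconstrained. Here is a counterexample with $k=1$:
\begin{itemize}
\item Take any $a((1))$ that is not a complete flag.
\item Let $a(\emptyset)$ be a complete flag of the trivial local system on the good open $U'_\emptyset$ containing the steps of $a((1))$ on the overlap. Condition~(1) of Definition~\ref{def:P-presheaf} is vacuous away from $D$, so this is a legitimate \v{C}ech section.
\item On $U'_{\emptyset(1)}$ the shallower index now carries the strictly finer datum. So Construction~\ref{constr:A-to-Xi}(4) yields an arrow $V((1))\to V(\emptyset)$ rather than $V(\emptyset)\to V((1))$.
\end{itemize}

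Neither of your repairs fixes this.
\begin{itemize}
\item In (a), the maximal element of the support of $f_L(y)$ is selected by the order of $\calP(U_L)$. That order knows nothing about the inclusion order of divisor multi-indices. The refinement in Theorem~\ref{thm:cont-to-cech} is indexed by pairs $(J,a)$, with $J$ a set of indices of the original good cover, and nothing in it refers to depth in $D$.
\item In (b), the ``join of the comparable restrictions on the chain ending at $I$'' is defined only on the overlaps $U'_{JI}$ with $J\subsetneq I$, not on $U'_I$. The maxima on different overlaps need not agree or glue to an element of $\calP(U'_I)$. Also, changing $a(I)$ changes its comparabilities on the overlaps $U'_{IK}$ with $K\supsetneq I$, and those would have to be re-established.
\end{itemize}

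What does work with no orientation hypothesis is the device behind Proposition~\ref{prop:functor-A-Xi}.
\begin{itemize}
\item Index by the chains $\sigma=(I_0\subset\cdots\subset I_m)$ with $U'_\sigma\neq\emptyset$.
\item Attach to $\sigma$ the \emph{coarsest} of the restrictions $a(I_j)|_{U'_\sigma}$; these are totally ordered there.
\item For $\sigma\subset\sigma'$, the datum of $\sigma$ restricted to $U'_{\sigma'}$ automatically refines that of $\sigma'$. This gives a functor on the chain poset under reverse inclusion.
\item Its realization is the barycentric subdivision of the nerve, so it has the same homotopy type.
\end{itemize}
This is a functor on the subdivision, not on $A$, so the Rees construction would have to be run on the corresponding cubical set. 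Getting a functor on $A$ itself needs an oriented version of Theorem~\ref{thm:cech-section}, which contractibility of the fibres $\calP(U)$ does not provide on its own.

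Separately, your fallback for independence of choices would not work as stated. Theorem~\ref{thm:uniqueness} compares bundles attached to one and the same functor, so it cannot identify the Rees bundles coming from two different sections. The statement does not need that point, however.
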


\begin{proof}
The \v{C}ech section assigns to each $I\in A$ an element $a(I)\in\calP(U_I')$,
which is a quasi-filtration $W(I)$ of $L|_{U_I'^*}$ satisfying the conditions
above.  On non-empty intersections $U_{IJ}'$ (with $I\subset J$), the element
$a(I)|_{U_{IJ}'}$ refines $a(J)|_{U_{IJ}'}$ (or vice versa; by the \v{C}ech
condition, they are comparable, and the ordering is determined by $I\subset J$).

This gives the filtration component of the patching datum.  The trivialization
$\tau(I)$ is given by the flat structure on $\Gr^{W(I)}(L|_{U_I'^*})$ (which
extends to $U_I'$ by condition 2).

The resulting functor $A\to\Xi_K$ sends $I$ to the graded vector space
$\Gr^{W(I)}(L_x)$ (for any basepoint $x\in U_I'$) and the morphisms are
given by the refinement maps as in Proposition~\ref{prop:functor-A-Xi}.

This is precisely the functor used in the Rees construction of
Theorem~\ref{thm:global-rees}.
\end{proof}

\subsection{Functoriality under restriction to open subsets}

\begin{proposition}\label{prop:CS-restriction}
Let $U\subset X$ be an open subset and $j:U\hookrightarrow X$ the inclusion.
Then the Chern-Simons classes of $j^*L$ (the restriction) are the images of
the Chern-Simons classes of $L$ under $j^*: H^{2p-1}(X,\cc/\zz)\to H^{2p-1}(U,\cc/\zz)$.
\end{proposition}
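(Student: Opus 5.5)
The plan is to argue at the level of forms and connections, by naturality under pullback. The first question is what the classes of $j^*L$ mean. Here $U$ is an open subset of $X$; we also set $U^*=U\cap X^*$ and write $D_U=D\cap U$. The Chern-Simons classes of $j^*L$ are those of the restricted canonical extension $(F|_U,\nabla|_U)$. Since Deligne's canonical extension is characterized locally (nilpotent residues, extending the local system), we have $F|_U$ equal to the canonical extension of $L|_{U^*}$.

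The first step is to choose the data on $U$ by restriction. Restrict the adapted covering $\{U_I\}$ of Proposition~\ref{prop:covering} to $\{U_I\cap U\}$, together with the patching data $(W(I),\tau(I))$ and the partition of unity used in Appendix~\ref{app:deligne-patch}. Properties (1)–(3) of Proposition~\ref{prop:covering} are inherited by the restriction. The local monodromy weight filtrations are unchanged, and the $\dlog$-subtraction is a local operation. Hence the patched connection built from the restricted data is literally $\nabla^{\Del}|_U$, and it is locally nil-flat on $U$.

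The second step is naturality of the transgression form. Pick a unitary connection $\nabla_0$ on $F$ over $X$ and a path $\nabla_t$ to $\nabla^{\Del}$. By the definition of $\CS_p$ via $v_p^{\mathrm{nm}}$ (Lemma~\ref{lem:bott-chern} and Definition~\ref{def:normalized-ch-vp}), we get $v_p^{\mathrm{nm}}(\{\nabla_t|_U\})=j^*v_p^{\mathrm{nm}}(\{\nabla_t\})$ as forms. This holds because curvature, trace and the $t$-integral all commute with restriction. Closedness on $U$ follows from Proposition~\ref{prop:nilflat-ch-zero}. The integral lattice used to reduce modulo $\zz$ is preserved by $j^*$, because $j^*$ on $H^{2p-1}(-,\cc/\zz)$ is induced from the coefficient sequence $\zz\to\cc\to\cc/\zz$ compatibly. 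Alternatively, one can argue with differential characters: restricting a Cheeger–Simons character to cycles in $U$ is exactly $j^*$.

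The remaining, and main, obstacle is independence of choices on $U$. The class of $j^*L$ might a priori be defined with a different covering, patching datum or partition of unity on $U$ than the restricted ones. For this I would invoke the independence of $\CS_p$ from these choices. The cleanest route is through Deligne classes: by Theorem~\ref{thm:DHZ} and Theorem~\ref{thm:BG-comparison}, together with Proposition~\ref{prop:Del-Rees-compare}, the class is determined by the bundle with its patching functor. Two locally nil-flat connections compatible with a common refinement of the two patching collections differ by a transgression with vanishing endpoints, which is exact modulo $\zz$. I would first try to verify this directly: pass to the common refinement, which exists by the \v{C}ech section theorem (Theorem~\ref{thm:cech-section}) applied on $U$, and then use the fact that a path of connections compatible with one filtration system has $\Tr(\dot\nabla_t\wedge F^{p-1})$ strictly triangular, hence zero, exactly as in Corollary~\ref{cor:vol-upper-triangular}.
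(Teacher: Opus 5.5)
Your Steps~1 and~2 are the paper's proof. The canonical extension, the patching data and Deligne's patched connection are all formed locally, so the data for $j^*L$ are the restrictions of those for $L$, $(j^*\nabla)^{\Del}=j^*\nabla^{\Del}$, and the Chern--Simons class pulls back.

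One sub-claim in Step~2 is wrong as written. With a \emph{unitary} reference connection $\nabla_0$, the form $v_p^{\mathrm{nm}}(\{\nabla_t\})$ is not closed: $d\,v_p^{\mathrm{nm}}=-\ch_p^{\mathrm{nm}}(\nabla_0)$, which is real but generally non-zero. Only $\Im\,v_p^{\mathrm{nm}}$ is closed (Proposition~\ref{prop:CS-closed-nm}); Proposition~\ref{prop:nilflat-ch-zero} controls only the endpoint $\nabla^{\Del}$. So that form does not represent a class in $H^{2p-1}(U,\cc/\zz)$. The differential-character version you give as an alternative is the correct one. The Cheeger--Simons character $\widehat{\ch}_p(F,\nabla^{\Del})$ has zero curvature and is natural for smooth maps. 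Restricting it to cycles in $U$ is exactly $j^*$, and gives the character of $(F|_U,\nabla^{\Del}|_U)$.

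Your Step~3 is not in the paper, which simply takes the data on $U$ to be the restricted ones; independence of choices is Theorem~\ref{thm:Del-indep-app}.

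If you want Step~3 anyway, your ``cleanest route'' is not available. Theorems~\ref{thm:DHZ} and~\ref{thm:BG-comparison} require a proper, indeed projective, base, whereas $U$ is an arbitrary open set.

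Your direct argument is the right one, and it is more elementary than the paper's proof of Theorem~\ref{thm:Del-indep-app}. However, it needs neither a common refinement nor Theorem~\ref{thm:cech-section}; that theorem produces a section, not a common refinement of two given ones. The integrand $\Tr(\dot\nabla_t\wedge F(\nabla_t)^{p-1})$ vanishes pointwise, so a local condition suffices:
\begin{enumerate}
\item For a fine enough choice of polydiscs, near each point both patched connections are, in the canonical frame, $d$ plus a smooth form valued in the span of the local $N_i$.
\item Take any local filtration strictly lowered by these $N_i$, e.g.\ the iterated common-kernel filtration. The straight-line path between the two connections preserves it and induces the same flat connection on the graded.
\item Hence $\dot\nabla_t$ and $F(\nabla_t)$ are strictly triangular, and the transgression form vanishes identically.
\item Proposition~\ref{prop:transgression} then gives equality of the two characters.
\end{enumerate}
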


\begin{proof}
The Deligne canonical extension $(j^*F, j^*\nabla)$ of $j^*L$ is the
restriction of $(F,\nabla)$ to $U$.  The patching data for $j^*L$ is the
restriction of that for $L$.  Deligne's patched connection
$(j^*\nabla)^{\Del} = j^*\nabla^{\Del}$.  The Chern-Simons form of
$j^*\nabla^{\Del}$ is the pullback of that of $\nabla^{\Del}$.
\end{proof}

\begin{remark}[Where Propositions~\ref{prop:cech-rees-compat} and
\ref{prop:CS-restriction} are used]
\label{rem:where-used}
Proposition~\ref{prop:CS-restriction} is an elementary functoriality
statement that follows directly from the local nature of the canonical
extension and the patched connection; it does \emph{not} use
Proposition~\ref{prop:cech-rees-compat} or any of the preceding material on
\v{C}ech sections.  It is recorded here for later use (e.g.\ in the
quasi-unipotent reduction of Appendix~\ref{app:further}).

Proposition~\ref{prop:cech-rees-compat}, by contrast, is the bridge between
two a priori different constructions of patching data: the abstract
\v{C}ech section $\{a(I)\}_{I\in A} = \{W(I)\}_{I\in A}$ produced by
Corollary~\ref{cor:cech-exists} (used in the proof of
Theorem~\ref{thm:VHS-hermitian-pp} to obtain the hermitian pre-patching
collection for a VHS $\zeta_0$), and the patching data used to build the
global Rees bundle $F_K$ in Theorem~\ref{thm:global-rees} (whose associated
classifying map $r_L:X\to BGL(K)^+$, constructed in
Appendix~\ref{app:rees}, defines the regulator pullback used in Step~A of
the proof of Theorem~\ref{thm:main} and in Step~C of that proof).
Proposition~\ref{prop:cech-rees-compat} guarantees that these two
constructions agree, so that the classifying map $r_{\zeta_0}$ appearing in
Step~C of the proof of Theorem~\ref{thm:main} is the same map $r_{L_0}$ (for
$L_0$ the local system underlying $\zeta_0$) built from the
$\Filt^{A,K}$-patching data of Theorem~\ref{thm:VHS-hermitian-pp}.  Without
Proposition~\ref{prop:cech-rees-compat}, the volume-regulator vanishing of
Theorem~\ref{thm:vol-zero-vhs} (proved using the $\calP$-patching data) would
not be known to say anything about the classifying map $r_{\zeta_0}$ used in
Step~C of the proof of Theorem~\ref{thm:main}.
\end{remark}

\section{Hermitian pre-patching and the compatible connection}
\label{sec:hermitian-patching}

\subsection{Pre-patching data}

\begin{definition}
A \emph{pre-patching datum} for a bundle $E$ over an open $U$ consists of:
\begin{itemize}[leftmargin=2em]
  \item a filtration $F_\hdot$ of $E|_U$ by strict subbundles;
  \item a flat connection $\tau$ on $\Gr^F(E|_U)$ (trivialization, using
    that $U$ is simply connected).
\end{itemize}
A connection $\nabla$ is \emph{compatible} with $(F_\hdot,\tau)$ if $\nabla$
preserves each $F_j$ and induces the flat connection $\tau$ on $\Gr^F(E|_U)$.
\end{definition}

\begin{definition}
A pre-patching datum $(F'_\hdot,\tau')$ is a \emph{refinement} of $(F_\hdot,\tau)$
over $U$ if:
\begin{itemize}[leftmargin=2em]
  \item every $F_j$ is one of the $F'_l$;
  \item the filtration on $\Gr^F_j(E|_U)$ induced by the intermediate
    $F'_l$'s is $\tau$-flat;
  \item the flat connection $\tau$ induces $\tau'$ on the refined associated
    graded pieces $\Gr^{F'}$.
\end{itemize}
\end{definition}

\begin{definition}
A \emph{pre-patching collection with refined neighbours} is a collection
$\{(U_j,F_\hdot(j),\tau(j))\}$ covering $X$ such that on every non-empty
$U_{jl} = U_j\cap U_l$, one of the two restricted pre-patching data is a
refinement of the other.
\end{definition}

\subsection{Indefinite Hermitian structures}

\begin{definition}
An \emph{indefinite Hermitian structure} on a pre-patching datum $(U,F_\hdot,\tau)$
is an indefinite Hermitian metric $g$ on $\Gr^F(E|_U)$, compatible with $\tau$,
with a specific orthogonal decomposition structure:
$g$ decomposes as $g_0$ on $\Gr^F_0$ and $g_{\pm i}$ on
$\Gr^F_i\oplus\Gr^F_{-i}$ (the two summands are isotropic and paired by $g_{\pm i}$).
\end{definition}

An indefinite form $g_E$ on $E|_U$ is \emph{compatible} with the Hermitian
pre-patching datum if $F_j = F_{-1-j}^\perp$ and $g_E$ induces $g$ on the
associated graded.

\subsection{Modification of open coverings and the tightness construction}
\label{subsec:modify-cov}

\begin{construction}\label{constr:refcov}
Suppose given an open covering $\{U_a\}_{a\in A}$ together with a relation
of \emph{tightness}: $b$ is \emph{tighter than} $a$ (written $b\,\mathrm{nt}\,a$)
when $U_{ab}\neq\emptyset$ and the patching datum for $a$ refines that for $b$.
This relation is required to be linear on all non-empty multiple intersections.

Given a partition of unity $1 = \sum_{a\in A}\rho_a$ with
$\mathrm{Supp}(\rho_a)\subset U_a$, define
\[
  U^{-\rho}_a := U_a \setminus \bigcup_{b\,\mathrm{nt}\,a}\mathrm{Supp}(\rho_b).
\]
Then the $U^{-\rho}_a$ cover $X$.  A good refinement $\{U'_{a'}\}$ of this
covering satisfies the property: if $x\in U'_{a'}$ and $x\in\mathrm{Supp}(\rho_b)$,
then $b$ is tighter than $r(a')$.
\end{construction}

\begin{proof}
Given $x\in X$, order the indices $a(1),\ldots,a(k)$ with $x\in U_{a(i)}$ so
that $a(i)$ is tighter than $a(j)$ for $i\leq j$ (possible by linearity).
Let $m$ be the largest index with $x\in\mathrm{Supp}(\rho_{a(m)})$
(exists since the $\rho_a$ partition unity).  Then $x\in U^{-\rho}_{a(m)}$:
for any $b$ with $b\,\mathrm{nt}\,a(m)$ and $x\in\mathrm{Supp}(\rho_b)$,
we have $b = a(i)$ for some $i\leq m$ (by maximality), so $a(i)$ is tighter
than $a(m)$, contradicting $b\,\mathrm{nt}\,a(m)$.
\end{proof}

\subsection{Warm-up: existence of a compatible connection}

\begin{theorem}[Warmup {\cite[Thm.~B.1]{IS-arXiv}}]\label{thm:warmup}
Given a bundle $E$ with a pre-patching collection with refined neighbours, there
exists a locally nil-flat connection $\nabla$ compatible with the collection.
\end{theorem}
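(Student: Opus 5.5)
The plan is to build the connection locally and glue it with a partition of unity, using the modified covering of Construction~\ref{constr:refcov} so that every local piece contributing at a point is adapted to the finest filtration present there.

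\textbf{Local step.} On each $U_j$ (simply connected, so $\tau(j)$ is a genuine trivialization of $\Gr^{F(j)}$), choose a $\mathcal{C}^\infty$ splitting $E|_{U_j}\cong\bigoplus_l\Gr^{F(j)}_l$. Let $\nabla_j$ be the direct sum of the flat connections $\tau(j)$ transported through this splitting. Then $\nabla_j$ preserves $F_\hdot(j)$ and induces $\tau(j)$ on the graded pieces. Moreover, any connection of the form $\nabla_j+B$ with $B$ strictly upper-triangular for $F_\hdot(j)$ has the same two properties.

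\textbf{Shrinking and gluing.} Order the indices by tightness, so that $b$ is tighter than $a$ when the datum of $a$ refines that of $b$ on $U_{ab}$. Choose a partition of unity $\rho_a$ and pass to the covering $U^{-\rho}_a$, or to a good refinement $U'_{a'}$, given by Construction~\ref{constr:refcov}. Set $\nabla:=\sum_a\rho_a\nabla_a$. Take $x\in U'_{a'}$, and let $b$ be any index with $\rho_b(x)\neq0$. By the construction, $b$ is tighter than $r(a')$, so the datum of $r(a')$ refines that of $b$ near $x$.

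The refinement conditions then give two facts. First, $\nabla_b$ preserves every step of $F(r(a'))$, because the intermediate steps are $\tau(b)$-flat inside $\Gr^{F(b)}$ and the steps of $F(b)$ are already preserved. Second, $\nabla_b$ induces $\tau(r(a'))$ on $\Gr^{F(r(a'))}$, because $\tau(b)$ induces $\tau(r(a'))$. Each $\nabla_b$ therefore differs from $\nabla_{r(a')}$ by a $1$-form that preserves $F(r(a'))$ and vanishes on the graded pieces, i.e.\ a strictly upper-triangular form. A convex combination of such connections is again of the form $\nabla_{r(a')}+B$ with $B$ strictly upper-triangular, since the $\rho_b$ sum to $1$. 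So on $U'_{a'}$ the glued connection preserves $F(r(a'))$ and induces the flat connection $\tau(r(a'))$ on the graded.

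\textbf{Conclusion.} This is exactly local nil-flatness in the sense of Definition~\ref{def:locally-nilflat}, with respect to the covering $U'_{a'}$ and the filtrations $F(r(a'))$. Compatibility with the original collection follows on the original opens wherever the datum is coarser, via the same refinement argument.

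\textbf{Main obstacle.} The delicate step is the pointwise claim that every $\nabla_b$ with $\rho_b(x)\neq0$ preserves the finest filtration at $x$ and induces its trivialization. This needs two things. One is the linearity of the tightness relation on multiple intersections, which comes from the refined-neighbours hypothesis. The other is the direction of the shrinking in Construction~\ref{constr:refcov}: the supports of the non-tightest pieces must be removed, not the tightest ones. I would check this first on a two-index overlap and then use the linear ordering to handle all multiple overlaps.
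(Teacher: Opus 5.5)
Your proposal is correct and follows the paper's proof essentially step for step. The common steps are: compatible local connections, the tightness relation, the shrunken cover of Construction~\ref{constr:refcov}, and the partition-of-unity average. You add an explicit check that a connection compatible with a coarser (tighter) datum is compatible with a datum refining it, and you correctly note that compatibility is an affine condition preserved by convex combinations; the paper compresses this into ``compatibility is a linear condition.''

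The one sentence to drop is your closing claim about compatibility on the original opens. A connection compatible with a finer datum need not induce exactly the flat structure $\tau(j)$ of a coarser datum: the two differ by a term that is strictly upper-triangular for the intermediate steps. So, as in the paper, compatibility holds only on the refined opens $U'_{a'}$, with respect to the data $r(a')$.
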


\begin{proof}
Say $b$ is \emph{tighter than} $a$ if on $U_{ab}$ the data for $a$ refines
that for $b$.  (Finer filtration imposes more conditions; the coarser is
``tighter'' in the sense that compatibility with it is more restrictive.)
This relation is linearly ordered on multiple intersections.

Apply Construction~\ref{constr:refcov} to obtain a refined covering
$\{U'_{a'}\}$ such that for $x\in U'_{a'}$ all supports $\mathrm{Supp}(\rho_b)$
with $x\in\mathrm{Supp}(\rho_b)$ satisfy: $b$ is tighter than $r(a')$.

Choose connections $\nabla(a)$ on each $E|_{U_a}$ compatible with the patching
data for $U_a$.  By tightness, $\nabla(b)|_{U_{ab}}$ is compatible with the
data for $U_a$ whenever $b$ is tighter than $a$.

Set $\nabla := \sum_a\rho_a\nabla(a)$.  On a neighbourhood of any $x\in U'_{a'}$,
only terms $\nabla(b)$ with $b$ tighter than $r(a')$ contribute to $\nabla$.
All such $\nabla(b)$ are compatible with the data for $U_{r(a')}$.  Since
compatibility is a linear condition (upper-triangular curvature), the average
$\nabla$ is also compatible with the data for $U_{r(a')}$.
\end{proof}

\subsection{Existence of a Hermitian compatible connection}

\begin{theorem}\label{thm:main-hermitian}
Let $E$ be a bundle with an indefinite Hermitian pre-patching collection with
refined neighbours.  Then, after refining the covering, there exists:
\begin{enumerate}[label={\rm(\arabic*)}]
  \item an indefinite Hermitian form $g$ on $E$ compatible with the
    hermitian pre-patching data;
  \item a locally nil-flat connection $\nabla$ compatible with the
    pre-patching collection and preserving $g$.
\end{enumerate}
\end{theorem}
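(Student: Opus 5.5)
The plan follows the warm-up Theorem~\ref{thm:warmup}, adding the form $g$ as a second object to be patched. Because compatible indefinite forms do not form a convex set, $g$ cannot be averaged directly, so we work with the contractible auxiliary spaces that parametrize them. The construction runs in three steps.

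\textbf{Step 1: local forms.} On each $U_a$ (contractible) we first choose a splitting $E|_{U_a}\cong\Gr^{F(a)}(E|_{U_a})$ adapted to the filtration. We then transport the given graded form $g(a)$ (a sum of $g_0$ and pairings $g_{\pm i}$ of $\Gr_i$ with $\Gr_{-i}$) to a form $g_E(a)$ on $E|_{U_a}$. This gives $F_j=F_{-1-j}^{\perp}$ by construction. The local choices of such lifts form a space on which the unipotent radical of the parabolic preserving $F(a)$ acts; it is an affine bundle, so it is contractible. The refined-neighbour condition makes these spaces nested on overlaps: if the datum for $a$ refines that for $b$ on $U_{ab}$, a form compatible with $a$ is also compatible with $b$, provided the hermitian structures are induced consistently, which is part of the hypothesis.

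\textbf{Step 2: gluing $g$.} After refining as in Construction~\ref{constr:refcov}, we glue the local forms by induction over the tightness order. We realize them as sections of a presheaf of contractible spaces and extend them inductively, as in Proposition~\ref{prop:cont-section-exists}. A concrete alternative would be to write $g_E(a)=g_0\circ(1+n_a)$ with $n_a$ nilpotent and upper triangular for the tightest filtration present, average the logarithms $\log$ of the gauge transformations with the partition of unity, and exponentiate. On the refined covering only data tighter than the given one contribute near each point, so the averaged unipotent gauge transformation still preserves the relevant filtration. It also induces the identity on the graded pieces, so the resulting $g$ remains compatible. The whole construction depends on this step, and I expect it to be the main obstacle: one must check that the averaged form is nondegenerate and satisfies $F_j=F_{-1-j}^\perp$ for every filtration present at the point, not just the tightest one. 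The first thing I would try is to show that the relevant unipotent gauge groups are nested along the chain and that the log-average stays in the smallest of them.

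\textbf{Step 3: the $g$-preserving connection.} Given $g$, I would first build, by Theorem~\ref{thm:warmup}, a locally nil-flat connection $\nabla'$ compatible with the collection. I would then project it onto its $g$-preserving part using Lemma~\ref{lem:decomp-connection}: write $\nabla'=\nabla_g+B$ and set $\nabla:=\nabla_g$. Compatibility survives the projection for two reasons. Since $F_j=F_{-1-j}^\perp$, the $g$-adjoint of an endomorphism preserving $F_\bullet$ again preserves $F_\bullet$. Moreover, on $\Gr$ the induced connection $\tau$ preserves the graded form $g$, because $g$ was assumed compatible with $\tau$. It follows that the $\mathfrak{p}$-part $B$ is strictly upper triangular, so removing it keeps $\nabla$ inducing $\tau$ on the graded pieces. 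Hence $\nabla$ is compatible with the collection and locally nil-flat, and it preserves $g$ by construction.
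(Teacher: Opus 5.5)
Your Step 2 is left open, and the reason you give for not averaging directly is mistaken. Fix a hermitian pre-patching datum $(F_\bullet,g^{\mathrm{gr}})$. The Hermitian forms on $E$ compatible with it form an affine, hence convex, subset of the real vector space of Hermitian forms. Indeed, $g_E(F_j,F_{-1-j})=0$ is a linear condition in $g_E$, and ``$g_E$ induces $g^{\mathrm{gr}}$ on $\Gr^F$'' is an affine one; you essentially note this yourself in Step 1 when you call the space of lifts affine. Nondegeneracy and the equality $F_j=F_{-1-j}^{\perp}$ then come for free. In an adapted splitting, $g_E(\Gr_i,\Gr_k)=0$ for $i+k<0$, and the blocks with $i+k=0$ are the given nondegenerate graded pairings. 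So the Gram matrix is block anti-triangular with invertible anti-diagonal blocks, and the inclusion $F_{-1-j}\subset F_j^{\perp}$ is an equality by counting dimensions. Hence, after Construction~\ref{constr:refcov}, the plain average $g=\sum_a\rho_a\,g(a)$ of the local lifts already works. This is exactly the paper's argument; neither a log-average of unipotent gauge transformations nor a section of a presheaf of contractible spaces is needed.

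What puts the average in the right affine space is the inclusion between compatible sets, and you state it backwards in Step 1. A form compatible with a finer datum $a$ need not be compatible with a coarser datum $b$, since the graded form on $\Gr^{F(b)}$ is not determined by the one on $\Gr^{F(a)}$. The true statement is the converse: a form compatible with the coarser (tighter) datum $b$ is compatible with the finer datum $a$, given that the hermitian structures are consistent under refinement. On the refined cover, every $g(b)$ contributing near $x\in U'_{a'}$ has $b$ tighter than $r(a')$. So each of them, and hence their average, is compatible with the datum of $r(a')$. Compatibility is only required with the datum of each refined open through $x$, checked one at a time, not with every filtration present. For the same reason, your guess that the log-average stays in the smallest of the nested unipotent groups is wrong: the less tight contributions lie in the larger groups, and what holds (and suffices) is that all of them lie in the group attached to $r(a')$.

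Your Step 3, by contrast, is correct and somewhat cleaner than the paper's Part~(2), which chooses local $g$-preserving compatible connections and then averages a second time. Let $\nabla'^{*}$ be the $g$-adjoint connection, defined by $dg(s,t)=g(\nabla's,t)+g(s,\nabla'^{*}t)$. The projection $\nabla'\mapsto\nabla_g=\tfrac12(\nabla'+\nabla'^{*})$ is canonical. Moreover, $\nabla'^{*}$ preserves $F_\bullet$ because $F_j^{\perp}=F_{-1-j}$, and on $\Gr$ it induces the $g^{\mathrm{gr}}$-adjoint of $\tau$, which is $\tau$ itself. So $\nabla_g$ is compatible on each refined open, with no further averaging needed.
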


\begin{proof}
\textit{Part (1): Constructing the Hermitian form.}

For each open set $U_a$ in the covering, the hermitian pre-patching structure
on $E|_{U_a}$ provides an indefinite Hermitian form $g(a)$ on $E|_{U_a}$
compatible with the patching datum: this is obtained by lifting the given metric
on $\Gr^{F(a)}(E|_{U_a})$ to $E|_{U_a}$ using the $\mathcal{C}^\infty$
splitting of the filtration.

\textit{Key claim:} If $b$ is tighter than $a$ (patching datum of $a$ refines
that of $b$), then $g(b)$ is compatible with the hermitian pre-patching datum
of $U_a$.  Indeed, the filtration for $U_a$ refines that for $U_b$, and the
required orthogonality conditions follow from the compatibility with the hermitian
pre-patching datum and the definition of refinement.

Apply Construction~\ref{constr:refcov}: form $g := \sum_a\rho_a g(a)$.  On
a neighbourhood of any $x\in U'_{a'}$, only terms $g(b)$ with $b$ tighter
than $r(a')$ contribute.  All such $g(b)$ are compatible with the hermitian
pre-patching data on $U_{r(a')}$.  The average of compatible Hermitian forms
is Hermitian and compatible with the pre-patching data.

Nondegeneracy: the filtrations $F(a)$ satisfy $F_j = F_{-1-j}^\perp$, and
the sum $\sum_a\rho_a g(a)$ is nondegenerate because each $g(a)$ induces a
nondegenerate pairing between $F_j$ and $F_{-j-1}$ with specific signs, and
these pairings are additive under the averaging.

\textit{Part (2): Constructing the compatible connection.}

Having obtained $g$, apply the argument of Theorem~\ref{thm:warmup} adapted
to the $g$-preserving setting: choose $g$-preserving connections $\nabla(a)$
on each $U_a$ compatible with the patching data, and average with the refined
partition of unity.  Since all connections $\nabla(b)$ with $b$ tighter than
$r(a')$ both preserve $g$ and are compatible with the patching data for $U_{r(a')}$,
the average inherits both properties.
\end{proof}

\begin{corollary}\label{cor:maincor}
If a locally nil-flat connection $\nabla$ is compatible with a pre-patching
collection admitting an indefinite Hermitian structure with refined neighbours,
then $\vol_p(\nabla) = 0$ for all $p\geq 2$.
\end{corollary}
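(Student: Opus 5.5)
The plan is to reduce Corollary~\ref{cor:maincor} to Proposition~\ref{prop:hodge-vanish} by replacing $\nabla$ with a $g$-preserving connection in the same class. First apply Theorem~\ref{thm:main-hermitian} to the given pre-patching collection with its indefinite Hermitian structure and refined neighbours. After refining the covering, this yields two things: an indefinite Hermitian form $g$ on $E$ compatible with the hermitian pre-patching data, and a locally nil-flat connection $\tilde\nabla$ that is compatible with the same collection and preserves $g$. Proposition~\ref{prop:hodge-vanish} then gives $\vol_p(\tilde\nabla)=0$ for every $p\geq1$. So it remains to show $\vol_p(\nabla)=\vol_p(\tilde\nabla)$.

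For the comparison, use the linear path $\nabla_s=(1-s)\tilde\nabla+s\nabla$. Compatibility with a pre-patching datum is an affine condition: preserving each $F_j$ and inducing the fixed flat connection $\tau$ on $\Gr^F$. Hence every $\nabla_s$ is compatible with the (refined) collection and is locally nil-flat, with $\dot\nabla_s=\nabla-\tilde\nabla$ strictly upper-triangular on each $U_a$. Fix a unitary $\nabla_0$ and concatenate a path from $\nabla_0$ to $\tilde\nabla$ with $\{\nabla_s\}$. By Definition~\ref{def:vol-p-normalized} and Proposition~\ref{prop:vol-indep}, the difference $\vol_p(\nabla)-\vol_p(\tilde\nabla)$ is the class of $\Im v^{\mathrm{nm}}_p(\{\nabla_s\})$.

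This form vanishes pointwise. On each $U_a$ it is the integral of $\Tr(\dot\nabla_s\wedge F(\nabla_s)^{p-1})$, where $\dot\nabla_s$ is strictly upper-triangular and $F(\nabla_s)$ is upper-triangular with respect to $F_\hdot(a)$. This is the argument of Corollary~\ref{cor:vol-upper-triangular} and Proposition~\ref{prop:nilflat-ch-zero}, and it gives $v_p(\{\nabla_s\})=0$. Therefore $\vol_p(\nabla)=\vol_p(\tilde\nabla)=0$ for $p\geq2$; indeed the argument works for all $p\geq1$.

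The main obstacle is the refinement in Theorem~\ref{thm:main-hermitian}. We must check that $\nabla$, which is compatible with the original collection, is still compatible with the data on the refined covering $U'_{a'}\subset U_{r(a')}$. Restriction preserves compatibility, so this should hold, but it is the step I would verify carefully. I would also check that the nil-flat trace argument uses a single common filtration on each open set, which is exactly why both connections must be compatible with the same $F_\hdot(a)$.
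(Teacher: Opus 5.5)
Your proof is correct and has the same skeleton as the paper's. Theorem~\ref{thm:main-hermitian} produces a $g$-preserving compatible connection $\tilde\nabla$, and Proposition~\ref{prop:hodge-vanish} gives $\vol_p(\tilde\nabla)=0$.

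The difference lies in the comparison $\vol_p(\nabla)=\vol_p(\tilde\nabla)$. The paper settles it in one line: it asserts that $\vol_p$ ``depends only on the pre-patching collection'' and cites Proposition~\ref{prop:vol-indep}. That proposition, however, only gives independence of the unitary base connection $\nabla_0$ and of the path. It says nothing about changing the endpoint within the class of connections compatible with a given collection.

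Your linear-interpolation argument supplies exactly this missing step. Compatibility with a fixed datum $(F_\hdot,\tau)$ is an affine condition. So along $\nabla_s=(1-s)\tilde\nabla+s\nabla$, on each open set, both $\dot\nabla_s$ and $F(\nabla_s)$ are strictly triangular for the same filtration. Hence $v_p$ of that segment vanishes identically, by the mechanism of Corollary~\ref{cor:vol-upper-triangular}. Additivity of $v_p$ under concatenation, together with path-independence, then gives the equality.

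Your handling of the refinement is also right, and it is genuinely needed. By construction, $\tilde\nabla$ is only guaranteed compatible with the data of $U_{r(a')}$ restricted to $U'_{a'}$, not with the original datum on all of each $U_a$. The given $\nabla$ inherits compatibility with these restricted data automatically.

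Your route costs a few extra lines, but unlike the paper's it actually proves the comparison step. As you note, it also yields the vanishing for all $p\geq1$.
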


\begin{proof}
By Proposition~\ref{prop:vol-indep}, $\vol_p$ depends only on the pre-patching
collection.  By Theorem~\ref{thm:main-hermitian}, we may replace $\nabla$ by
a connection $\tilde\nabla$ preserving an indefinite Hermitian form $g$.
By Proposition~\ref{prop:hodge-vanish}, $\vol_p(\tilde\nabla) = 0$ for every
$p\geq 1$ (no further hypothesis on $\nabla$, such as semisimplicity, is
needed), hence $\vol_p(\nabla) = 0$ for every $p\geq 2$.
\end{proof}
\section{Proof of the main theorem}
\label{sec:torsion}

In this section we assemble all ingredients to prove
Theorem~\ref{thm:main}.  The proof has five steps.

\subsection{Step 1 — Deforming to a VHS}
See \S \ref{sec:deform-to-vhs-detail}, for a detailed reduction.

The key reduction is:

\begin{theorem}[Deformation to a VHS]
\label{thm:deform-to-vhs}
Every representation $\zeta:\pi_1(X^*,x_0)\to GL_r(\cc)$ with unipotent
monodromy around each $D_i$ admits a deformation, within the space
$R^{\mathrm{nil}}$ of such representations, to a representation $\zeta_0$
that underlies a polarized complex variation of Hodge structure (VHS) on $X^*$.
\end{theorem}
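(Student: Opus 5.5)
The plan follows Simpson's deformation argument for compact Kähler manifolds, transported to the quasi-projective, tame setting via Mochizuki's non-abelian Hodge correspondence for tame harmonic bundles.

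First I fix the parameter space. Let $R^{\mathrm{nil}}\subset\Hom(\pi_1(X^*,x_0),GL_r(\cc))$ be the closed subvariety cut out by the conditions that $\zeta(\gamma_i)$ be unipotent, i.e.\ $(\zeta(\gamma_i)-1)^r=0$, for each loop $\gamma_i$ around $D_i$. It is an affine variety stable under conjugation. Two reductions follow.
\begin{itemize}
\item Any $\zeta$ lies in the same connected component of $R^{\mathrm{nil}}$ as its semisimplification $\zeta^{ss}$. Choose a flag of invariant subspaces and conjugate by $\mathrm{diag}(t^{n_1},\dots,t^{n_k})$. Letting $t\to0$ produces an algebraic path to $\zeta^{ss}$. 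Unipotence of the local monodromies is preserved along the path, because the block-diagonal parts of unipotent matrices are unipotent and conjugation preserves unipotence.
\item It therefore suffices to treat semisimple $\zeta$.
\end{itemize}

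Second, for semisimple $\zeta$ with unipotent local monodromy, I invoke Mochizuki's Kobayashi--Hitchin/Corlette correspondence on $X^*$. It gives a tame pure imaginary pluri-harmonic metric, and hence a tame harmonic bundle $(E,\theta,h)$. Its associated regular filtered Higgs bundle has all parabolic weights $0$ and nilpotent residues of $\theta$. The nilpotence of the residues comes from the unipotence of the monodromy, via the correspondence of residue eigenvalues, i.e.\ Mochizuki's table relating $(a,\alpha)$ to the monodromy eigenvalues.

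Next I use the $\cc^*$-action $t\cdot(E,\theta)=(E,t\theta)$. The rescaled objects stay in the moduli of stable parabolic Higgs bundles, with degree $0$ and zero weights, and the residues $t\cdot\mathrm{Res}\,\theta$ remain nilpotent. Their flat bundles $\zeta_t$ therefore have unipotent monodromy, so $t\mapsto\zeta_t$ is a continuous path in $R^{\mathrm{nil}}$ with $\zeta_1=\zeta$.

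Then I take the limit $t\to0$, and this is the main obstacle. One needs properness of the Hitchin-type map on the moduli of tame parabolic Higgs bundles with fixed zero weights. This properness follows from Mochizuki's (and Simpson's curve-case) estimates on tame harmonic bundles: the characteristic polynomial of $\theta$ has log-holomorphic coefficients with nilpotent residues, and the family lies in a compact fibre over $0$. With properness, $\lim_{t\to0}(E,t\theta)$ exists as a $\cc^*$-fixed polystable object, i.e.\ a complex system of Hodge bundles. Its associated flat bundle $\zeta_0$ underlies a polarized complex VHS, via Simpson's criterion extended by Mochizuki to the tame case.

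Finally, I need continuity of the Riemann--Hilbert/harmonic correspondence in families, so that $\zeta_t\to\zeta_0$ inside $R^{\mathrm{nil}}$ (up to conjugation). Where only semicontinuity is available, I would instead argue on the closure of the $\cc^*$-orbit in the character variety of $R^{\mathrm{nil}}$. Concatenating this path with the path from the first step gives the required deformation of $\zeta$ to $\zeta_0$ within $R^{\mathrm{nil}}$.
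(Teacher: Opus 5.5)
Your route differs from the paper's written proof, and at both steps yours is the sounder one.

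\emph{Reduction to the semisimple case.} The paper (Lemma~\ref{lem:ss-path}) uses the linear path $\zeta^{ss}+(1-t)\zeta^u$. Comparing strictly block-upper-triangular parts in $\zeta(gh)=\zeta(g)\zeta(h)$ shows this is a family of representations only if $\zeta^u(g)\zeta^u(h)=0$ for all $g,h$. That holds with two Jordan--H\"older blocks but fails in general with three or more. Your conjugation by $\mathrm{diag}(t^{n_1},\dots,t^{n_k})$ with $n_1<\dots<n_k$ multiplies the $(i,j)$ block by $t^{n_j-n_i}$. It is therefore an honest algebraic path of representations for any number of blocks.

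\emph{Deformation to a VHS.} The paper (Theorem~\ref{thm:deform-detailed}) stays on the unit circle, running $\lambda$ from $1$ to $i$. It identifies the endpoint as a VHS through Theorem~\ref{thm:VHS-decomp}, i.e.\ the claim that the Higgs bundle of $\zeta^{ss}$ already splits into Hodge bundles. That splitting characterizes $\cc^*$-fixed points, and being fixed is constant along an orbit. So the circle reaches a VHS only if $\zeta^{ss}$ already underlies one. Your limit $t\to0$ of $(E,t\theta)$ is the mechanism that actually produces a fixed point, and it is what the paper ultimately appeals to in Remark~\ref{rem:mochizuki-r-rho}. Your observation that zero weights and nilpotent residues survive $\theta\mapsto t\theta$ keeps the family and its limit in the closed set $R^{\mathrm{nil}}$; that is exactly part~(2) of that Remark.

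\emph{The limit step is asserted, not proved.} This step carries all the weight, and you need four things from it:
\begin{itemize}
\item continuity of $t\mapsto\zeta_t$ on $\cc^*$;
\item an honest limit as $t\to0$, not just limit points (properness of a Hitchin-type map gives only relative compactness; Simpson gets a limit from algebraicity of the orbit map into a separated moduli space);
\item identification of that limit as a polystable system of Hodge bundles;
\item continuity of the passage from Higgs data to representations, so that $\zeta_t\to\zeta_0$.
\end{itemize}
For compact $X$ these are Simpson's theorems on $M_{\mathrm{Dol}}$ and $M_B$. In the higher-dimensional tame setting they are not consequences of the harmonic-bundle estimates alone. Also, your family does not lie in the fibre over $0$: its image under the Hitchin map is the set of coefficients $t^ia_i$.

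Rather than sketching these facts, cite Mochizuki's theorem that a semisimple representation of $\pi_1(X^*)$ deforms to one underlying a polarizable complex VHS \cite[\S10]{MochizukiAsterisque}. That theorem delivers exactly the conclusion you want from them. Then add your residue/weight argument to keep the deformation inside $R^{\mathrm{nil}}$. With that substitution your proof is complete at the paper's level of rigor, and more accurate than its written Steps~A and~B.
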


\begin{proof}
We sketch the argument, drawing on the non-abelian Hodge correspondence in
the logarithmic setting.

By the Riemann-Hilbert correspondence, representations with unipotent monodromy
correspond to tame flat bundles on $X^*$ whose monodromy logarithms are
nilpotent.  The moduli space of such bundles (for a fixed $X$, $D$, and rank
$r$) is an algebraic variety, and the locus of semisimple flat bundles is open
and dense in it.

\textit{Semisimplification:} Given $\zeta$, let $\zeta^{ss}$ be its
semisimplification (the unique semisimple representation with the same
Jordan-Hölder factors).  Since the unipotent monodromy condition is
preserved under semisimplification ($\zeta^{ss}(\gamma_i) = \mathrm{Id}$
since all Jordan-Hölder factors have trivial monodromy around $D_i$),
$\zeta^{ss}\in R^{\mathrm{nil}}$.

\textit{Using the non-abelian Hodge correspondence (logarithmic setting):}
By the theorem of Corlette \cite{Corlette}, extended to the logarithmic
setting by Jost-Zuo \cite{JostZuo} and Mochizuki
\cite{Mochizuki},
every semisimple flat bundle on $X^*$ with unipotent monodromy is a harmonic
bundle: it admits a pluri-harmonic metric $h$.  The harmonic bundle structure
on $\zeta^{ss}$ provides a $\cc^*$-family of flat connections
$\nabla_\lambda = \lambda^{-1}\theta + \nabla_h + \lambda\bar\theta$
(with $\theta$ the Higgs field and $\nabla_h$ the metric connection).

\textit{Identifying the VHS:} Setting $\lambda = 1$ recovers $\nabla = \nabla^{ss}$,
and setting $\lambda = e^{i\phi}$ gives a circle of flat connections.  The
Higgs bundle $(\mathcal{E}, \theta)$ underlying $\zeta^{ss}$ decomposes via
the 
nonabelian Hodge
correspondence into a direct sum corresponding to a polarized VHS
$\zeta_0$ with the same underlying topological type.

\textit{Path in $R^{\mathrm{nil}}$:} The straight-line path from $\zeta$ to
$\zeta^{ss}$ (using that $R^{\mathrm{nil}}$ is an affine variety, so any two
points can be joined by a polynomial path) followed by the $\cc^*$-family
gives a path from $\zeta$ to $\zeta_0$ inside $R^{\mathrm{nil}}$.
\end{proof}

\begin{remark}
By Theorem~\ref{thm:deformation-app} (proved in Appendix~\ref{app:deligne-patch}),
the Chern-Simons classes $\CS_p(\zeta)$ are constant on connected components of
$R^{\mathrm{nil}}$.  So to prove $\CS_p(\zeta)$ is torsion, it suffices to
prove it for $\zeta_0$ (the VHS endpoint of the deformation).
\end{remark}

\subsection{Step 2 — The VHS polarization gives a Hermitian pre-patching
collection}

\begin{theorem}\label{thm:VHS-hermitian-pp}
Let $\zeta_0\in R^{\mathrm{nil}}$ underlie a polarized complex VHS, i.e.\
$\zeta_0$ has unipotent monodromy around each $D_i$ and underlies a polarized
complex VHS on $X^*$.  Then the Deligne canonical extension $F$ admits an
indefinite Hermitian pre-patching collection with refined neighbours.
\end{theorem}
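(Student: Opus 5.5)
The plan is to build the hermitian pre-patching collection by feeding the $N_\hdot$-isotropic \v{C}ech section of Corollary~\ref{cor:cech-exists-isotropic} into the patching formalism. Three ingredients come in order: the filtrations, their flat trivializations, and the indefinite forms on the graded pieces.

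\textbf{Step 1: the flat hermitian structure.} The polarization $Q$ of the VHS underlying $\zeta_0$ gives a flat, nondegenerate, indefinite hermitian form $h=Q$ on the local system $L_0$ on $X^*$. The local monodromies $T_i$ preserve $Q$. Their logarithms $N_i$ are nilpotent and commute on each $U_I^*$ (Lemma~\ref{lem:locsys-intersect}), and are $Q$-self-adjoint after the usual $2\pi i$ normalization. So on each good open $U$ meeting $D$ in a product of discs, the fibre $(V,h)=(L_{0,x},Q_x)$ with the commuting family $N_\hdot$ is exactly the setting of \S\ref{sec:nisotropic}.

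\textbf{Step 2: filtrations and the refined-neighbours condition.} Corollary~\ref{cor:cech-exists-isotropic} gives a \v{C}ech section $\{W(I)\}_{I\in A}$ of $\calP^Q$ over a refined adapted covering $\{U_I'\}$. Each $W(I)$ is an $N_\hdot$-isotropic quasi-filtration by sub-local systems, with $W_j=W_{-1-j}^{\perp_Q}$. By Definition~\ref{def:P-presheaf}(2), its graded pieces have trivial monodromy, so the flat structure gives the trivialization $\tau(I)$. The sub-local systems $W_j$ are $N$-stable, so they extend to strict subbundles of the Deligne canonical extension $F|_{U_I'}$. This uses the standard correspondence between $N$-stable sub-local systems and subbundles of the canonical extension compatible with the residue. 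The graded pieces become flat bundles on $U_I'$, trivialized by $\tau(I)$. The \v{C}ech comparability on $U'_{IJ}$ is exactly the refinement condition, so we get a pre-patching collection with refined neighbours, matching Proposition~\ref{prop:cech-rees-compat}.

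\textbf{Step 3: the hermitian structure on the graded pieces.} Since $W_j=W_{-1-j}^{\perp}$, the form $Q$ induces a nondegenerate flat pairing on the graded pieces, with $\Gr_j$ paired against its mirror piece $\Gr_{-1-j}$. Under the index shift used in the definition of an indefinite hermitian structure, these summands are mutually isotropic and the middle piece carries the restriction of $Q$. This pairing is $\tau(I)$-flat, because $Q$ is flat and the graded pieces have trivial monodromy. We must also check that refinement respects it. When $W(I)$ refines $W(J)$ on $U'_{IJ}$, the intermediate steps are again $Q$-orthogonal complements in pairs, so the induced form on $\Gr^{W(I)}$ is the one induced from $\Gr^{W(J)}$.

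\textbf{Main obstacle.} The hard step is the extension across $D$ in Step 2: making the forms on the graded bundles smooth and nondegenerate up to $D\cap U_I'$. Off $D$ this is automatic. Near $D$, $Q$ does not extend to $F$ as a nondegenerate form, because the flat sections of $L_0$ are multivalued. What saves us is that on $\Gr^{W(I)}$ the monodromy is trivial. I would argue that $\Gr^{W(I)}(F)$ is the trivial extension of a trivial local system, so the induced flat form is constant in flat frames and extends. The key point to verify is that the condition $N_i(W_{-1-j}^{\perp})\subset W_j$ is exactly what kills the $\log$ terms $\exp(\sum \log z_i\, N_i)$ on the graded pieces.
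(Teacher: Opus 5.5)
Your proposal is correct and follows essentially the same route as the paper. You take the $N_\hdot$-isotropic \v{C}ech section of Corollary~\ref{cor:cech-exists-isotropic}, extend its steps to strict subbundles of $F$, take $\tau(I)$ from the flat structure on the monodromy-free graded pieces, equip those pieces with the flat form induced by $Q$, and read off refined neighbours from \v{C}ech comparability. One small correction: from $W_j=W_{-1-j}^{\perp}$ one gets that $\Gr_j=W_j/W_{j-1}$ is dual to $W_{j-1}^{\perp}/W_j^{\perp}=W_{-j}/W_{-1-j}=\Gr_{-j}$, so $\Gr_j$ pairs directly with $\Gr_{-j}$, exactly as in the definition of an indefinite Hermitian structure, and no index shift is needed.
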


\begin{proof}
The VHS polarization $Q$ is a flat indefinite Hermitian form on the local
system $L$.  The monodromy logarithms $N_i = \log T_i$ are $Q$-self-adjoint
(since $T_i$ preserves $Q$) and commute.

By the Cattani-Kaplan-Schmid theorem (Theorem~\ref{thm:seq-compat}), the
weight filtrations $W(N_I)$ commute and the poset of $N_\hdot$-isotropic
filtrations on any fibre $V = L_x$ is non-empty and contractible by
Proposition~\ref{prop:nisofilcontr}.

By Corollary~\ref{cor:cech-exists-isotropic} (the $N_\hdot$-isotropic variant
of Corollary~\ref{cor:cech-exists}, combining Theorem~\ref{thm:cech-section}
with the contractibility of Proposition~\ref{prop:nisofilcontr}), there
exists a \v{C}ech section of the presheaf $\calP^Q$ of $N_\hdot$-isotropic
filtrations over a refinement $\{U_I'\}_{I\in A}$ of the adapted covering of
Proposition~\ref{prop:covering}.  This section assigns to each $U_I'$ a
filtration $W(I)$ of $L|_{U_I'^*}$, compatible with $Q$ in the
$N_\hdot$-isotropic sense.

\begin{remark}
We emphasize that the section is over the refinement $\{U_I'\}$, not over
the original $\{U_I\}$.  This causes no loss: by
Corollary~\ref{cor:cech-exists-isotropic}, $\{U_I'\}_{I\in A}$ is again an
adapted covering in the sense of Proposition~\ref{prop:covering} (same
indexing poset $A$, same combinatorial intersection pattern), so all
subsequent constructions --- extension of $W(I)$ to the Deligne canonical
extension $F$, the trivializations $\tau(I)$, the patching data, and
ultimately the Rees bundle of Proposition~\ref{prop:cech-rees-compat} ---
go through verbatim with $U_I$ replaced by $U_I'$.  We therefore write $U_I$
for $U_I'$ in what follows, it being understood that the covering has been
replaced by this refinement once and for all.
\end{remark}

Each $W(I)$ satisfies:
\begin{enumerate}[leftmargin=2em, label=\rm(\roman*)]
  \item The filtration is $N_\hdot$-isotropic: $W(I)\subset W(I)^{\perp_Q}$
    and $N_i(W(I)^{\perp_Q})\subset W(I)$.
  \item The associated-graded $\Gr^{W(I)}(L|_{U_I^*})$ extends to a local
    system on $U_I$ (by Remark~\ref{rem:catlocsys}).
  \item On overlaps $U_{IJ}$ with $I\subset J$, the filtrations satisfy the
    refined-neighbours condition (from the \v{C}ech section).
\end{enumerate}

Extending to the Deligne canonical extension $F$: each $W(I)$ extends to a
filtration of $F|_{U_I}$ by strict subbundles.  The trivialization $\tau(I)$
is the flat structure on $\Gr^{W(I)}(L|_{U_I^*})$ extended across $D\cap U_I$.

The indefinite Hermitian structure on each pre-patching datum comes from the
polarization $Q$ restricted to $\Gr^{W(I)}(L|_{U_I^*})$: this is a flat
indefinite Hermitian form compatible with $\tau(I)$, which gives the required
hermitian pre-patching structure.
\end{proof}

\subsection{Step 3 — Vanishing of the volume regulator}

\begin{theorem}\label{thm:vol-zero-vhs}
Let $\zeta_0$ be as in Step 2.  Then
$\vol_p(\nabla^{\Del}_{\zeta_0}) = 0$ for all $p\geq 2$.
\end{theorem}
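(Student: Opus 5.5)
The plan is to chain the results already in place. The proof has three steps, and most of the content sits in the first.

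First, apply Theorem~\ref{thm:VHS-hermitian-pp} to $\zeta_0$. This gives an indefinite Hermitian pre-patching collection with refined neighbours on the Deligne canonical extension $F$. It is indexed by the refined adapted covering $\{U_I'\}_{I\in A}$, and its filtrations $W(I)$ are $N_\hdot$-isotropic with respect to the polarization $Q$.

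Second, check that $\nabla^{\Del}_{\zeta_0}$ is compatible with this collection in the sense of \S\ref{sec:hermitian-patching}. Two things must hold:
\begin{itemize}
\item on each $U_I'$ it preserves $W(I)$;
\item it induces the flat structure $\tau(I)$ on $\Gr^{W(I)}$.
\end{itemize}
I would take this from Lemma~\ref{lem:Del-compatible} of Appendix~\ref{app:deligne-patch}, which is the statement invoked in the proof of Theorem~\ref{thm:vol-zero}. The subtlety is that Deligne's patched connection is built from the monodromy weight filtrations and a partition of unity on the original covering. The filtrations of Step~2, by contrast, come from a \v{C}ech section of $\calP^Q$ and are only $N_\hdot$-isotropic, not necessarily equal to $W(N_I)$.

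My first attempt would be the following. Locally the patched connection equals $\nabla$ minus the $\dlog$ terms $\sum_i N_i\,\dlog z_i$. Any filtration on which all $N_i$ act strictly decreasingly and whose graded is flat is preserved by $\nabla$. The $N_i$-terms removed are strictly upper-triangular, so they do not affect the induced connection on $\Gr^{W(I)}$.

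The averaging step then needs the tightness property of Construction~\ref{constr:refcov}. Near a point of $U_I'$, only local connections $\nabla(J)$ with $J$ tighter than $I$ contribute to the average, and these are compatible with $W(I)$. Compatibility is a linear condition, so it survives averaging. This is the main obstacle: it requires choosing the partition of unity defining $\nabla^{\Del}$ subordinate to the refined covering. The alternative is to use deformation invariance, Theorem~\ref{thm:deformation-app}, and Proposition~\ref{prop:vol-indep} to replace $\nabla^{\Del}$ by any locally nil-flat connection compatible with the collection, for instance the one produced by Theorem~\ref{thm:warmup}.

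Third, conclude. Corollary~\ref{cor:maincor}, which rests on Theorem~\ref{thm:main-hermitian} and Proposition~\ref{prop:hodge-vanish}, applied to the compatible locally nil-flat connection gives $\vol_p(\nabla^{\Del}_{\zeta_0})=0$ for all $p\geq 2$.

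Concretely, Theorem~\ref{thm:main-hermitian} produces $\tilde g$ and a $\tilde g$-preserving compatible connection $\tilde\nabla$. The $\tilde g$-preserving part of a path from a $\tilde g$-compatible unitary connection is $\mathfrak u_E$-valued, so by Lemma~\ref{lem:koszul-parity}(b) its normalized transgression is real. Independence of the path then gives equality of the volume regulators of $\tilde\nabla$ and $\nabla^{\Del}_{\zeta_0}$.

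To justify that independence, I would note that both connections preserve the same local filtrations and induce the same flat graded. The straight-line path between them therefore stays locally nil-flat and compatible, and Corollary~\ref{cor:vol-upper-triangular} shows that the transgression along it has zero trace.
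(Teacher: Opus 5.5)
Your proof follows the paper's chain: Theorem~\ref{thm:VHS-hermitian-pp}, then compatibility of $\nabla^{\Del}_{\zeta_0}$ via Lemma~\ref{lem:Del-compatible}, then Corollary~\ref{cor:maincor}. Your straight-line transgression from $\nabla^{\Del}_{\zeta_0}$ to $\tilde\nabla$, in which the difference and the curvature are both strictly decreasing for a common local filtration so the trace vanishes, is a sounder justification of $\vol_p(\nabla^{\Del}_{\zeta_0})=\vol_p(\tilde\nabla)$ than the appeal to Proposition~\ref{prop:vol-indep} inside the proof of Corollary~\ref{cor:maincor}, which only concerns the unitary reference and the path.

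The mismatch you flag is genuine: the paper applies Lemma~\ref{lem:Del-compatible}, whose proof treats $W(N_I)$, to the isotropic collection without comment. Neither of your proposed fixes closes it. The fallback through Theorem~\ref{thm:deformation-app} fails because that theorem varies $\zeta$, not the connection for fixed $\zeta$. Tightness is also the wrong mechanism, since Deligne's local pieces are not indexed by the adapted covering.

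What does close it is that lemma's own local argument. Each piece $\nabla-\sum_j N_j\,\dlog z_j$ is flat with trivial monodromy on its polydisk. It therefore preserves every filtration by sub-local systems on which the $N_j$ act strictly decreasingly, and induces $\tau$ on the graded. So take Deligne's polydisks small relative to a shrinking of $\{U_I'\}$, so that only $N_j$ with $j\in I$ occur over the shrunk $U_I'$; the average is then compatible with the isotropic collection, because compatibility is an affine condition. That this choice of polydisks does not change the class is Theorem~\ref{thm:Del-indep-app}; alternatively it follows from your own trace argument, since two such connections differ locally by forms valued in the span of the $N_j$.
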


\begin{proof}
By Theorem~\ref{thm:VHS-hermitian-pp}, the bundle $F$ with its patching data
satisfies the hypotheses of Theorem~\ref{thm:main-hermitian}: there exists an
indefinite Hermitian form $\tilde g$ on $F$ and a connection $\tilde\nabla$
preserving $\tilde g$ and compatible with the pre-patching collection.

By Corollary~\ref{cor:maincor}, any locally nil-flat connection compatible
with a pre-patching collection admitting an indefinite Hermitian structure
has zero volume regulator, for every $p\geq 2$; since
$\nabla^{\Del}_{\zeta_0}$ is compatible with the pre-patching collection
(Lemma in Appendix~\ref{app:deligne-patch}), we obtain
$\vol_p(\nabla^{\Del}_{\zeta_0}) = 0$ for all $p\geq 2$.
\end{proof}

\subsection{Step 4 --- Torsion conclusion}

\begin{proof}[Proof of Theorem~\ref{thm:main}]
\textit{Reduction to VHS.}
By Theorem~\ref{thm:deform-to-vhs}, $\zeta$ is connected to a VHS
$\zeta_0$ in $R^{\mathrm{nil}}$.  By Theorem~\ref{thm:deformation-app}
(deformation invariance, proved in Appendix~\ref{app:deligne-patch}),
$\CS_p(\zeta) = \CS_p(\zeta_0)$.  It therefore suffices to prove the result
for $\zeta_0$.

\textit{Step A --- CS class equals regulator pullback.}
Let $K$ be a field of definition of $\zeta_0$ and $L_0$ the underlying local
system.  The classifying map is not defined on $X$ directly: it is the map
$r_{L_0}$ of Construction~\ref{constr:rL}, defined on the interval
realization $\iiii\Cube A$ of the cubical set of the covering poset and
transported to $X$ through the homotopy equivalence
$X\simeq|NA|\simeq\iiii\Cube A$ of Propositions~\ref{prop:nerve-htpy}
and~\ref{prop:cuberealize}.  By Theorem~\ref{thm:CS-regulator} (proved via
the Burgos-Gil comparison, see Appendix~\ref{app:burgosgil}),
\[
  \CS_p(\nabla^{\Del}_{\zeta_0}) = r_{L_0}^*(r_{2p-1})\;\in\;
  H^{2p-1}(X,\cc/\zz),
\]
where $r_{2p-1}\in H^{2p-1}(BGL^+(K),\cc/\zz)$ is the universal regulator
class.

\textit{Step C --- Torsion of $\CS_p$.}

\begin{enumerate}[leftmargin=2em,label=\rm(\alph*),itemsep=6pt]
  \item \textbf{The classifying map is zero on rational homology.}
    We first reduce to the special linear group, as in
    \cite[p.~377, \S2.7]{Reznikov}: replacing $\zeta_0$ by
    $\zeta_0\oplus(\det\zeta_0)^{-1}$, which is $SL_{r+1}$-valued, changes
    nothing, since taking canonical extensions commutes with direct sums and
    the total class $\CS(\zeta):=1+\CS_1(\zeta)+\cdots$ is multiplicative,
    $\CS(\zeta_1\oplus\zeta_2)=\CS(\zeta_1)\CS(\zeta_2)$; so the theorem for
    the $SL$-valued representation implies it for $\zeta_0$.  All the
    constructions of \S\ref{sec:rees-global} and \S\ref{sec:F1} hold verbatim
    for the special linear subgroups, and we write $r_{L_0}:X\to BSL^+(K)$ for
    the resulting classifying map.

    By Step~3 the volume regulator $\vol_p(\nabla^{\Del}_{\zeta_0})$ vanishes,
    and the same holds after replacing $\zeta_0$ by any Galois twist
    $\sigma\circ\zeta_0$, so
    \[
      r_{L_0}^*\bigl(\vol^\sigma_{2p-1}\bigr)\;=\;0
      \qquad\text{for every embedding }\sigma:K\hookrightarrow\cc .
    \]
    Let $r_1,r_2$ be the numbers of real and (conjugate pairs of) complex
    embeddings of $K$. By Borel's theorem \cite{Borel}, in degree $2p-1$ the
    classes $\sigma^*(\vol_{2p-1})$ span $H^{2p-1}(BSL^+(K),\rr)$, but the
    relevant index set of embeddings $\sigma$, and the resulting dimension of
    $H^{2p-1}(BSL^+(K),\rr)$, depend on the parity of $p$: for $p$ odd, one
    real class is obtained from each of the $r_1$ real embeddings together
    with one complex class from each of the $r_2$ conjugate pairs, giving
    dimension $r_1+r_2$; for $p$ even, the real embeddings contribute
    nothing (their classes vanish identically in even $p$), and only the
    $r_2$ complex places contribute, giving dimension $r_2$. In neither case
    is the spanning set simply ``one $\rr$ for every embedding.''  In both
    cases, however, the embeddings $\sigma$ needed to span
    $H^{2p-1}(BSL^+(K),\rr)$ form a subset of the full set of embeddings
    $K\hookrightarrow\cc$, for which we already have
    $r_{L_0}^*(\vol_{2p-1}^\sigma)=0$ above; so in every degree $2p-1$ used in
    this proof, $r_{L_0}^*$ vanishes on $H^{2p-1}(BSL^+(K),\rr)$. Since this
    holds for every $p\geq2$ in the range of Theorem~\ref{thm:main}, and
    $H^{>0}(BSL^+(K),\rr)$ is spanned by such classes across all degrees
    $2p-1$ in that range (\cite[\S 11.4]{Borel}), $r_{L_0}^*$ vanishes on
    $H^{>0}(BSL^+(K),\rr)$.  Both spaces having finite type, this is
    equivalent to
    \[
      (r_{L_0})_*\;=\;0 \quad\text{on }H_{*}(X,\qq)\text{ in positive
      degrees.}
    \]

  \item \textbf{Hence $\CS_p$ is torsion.}
    The group $\cc/\zz$ is divisible, hence injective as a $\zz$-module, so
    the $\mathrm{Ext}$ term in the universal coefficient sequence vanishes and
    \[
      H^{n}(Y,\cc/\zz)\;\cong\;\mathrm{Hom}\bigl(H_n(Y,\zz),\cc/\zz\bigr)
    \]
    naturally in $Y$.  Under this identification $r_{L_0}^*(r_{2p-1})$ is the
    homomorphism $r_{2p-1}\circ(r_{L_0})_*$ on $H_{2p-1}(X,\zz)$.  By part~(a)
    the map $(r_{L_0})_*\otimes\qq$ vanishes, so the image of
    $(r_{L_0})_*:H_{2p-1}(X,\zz)\to H_{2p-1}(BSL^+(K),\zz)$ consists of
    torsion; and $H_{2p-1}(X,\zz)$ is finitely generated, $X$ being a compact
    manifold, so that image is a \emph{finite} group.  Therefore
    $r_{2p-1}\circ(r_{L_0})_*$ has finite image in $\cc/\zz$ and is killed by
    its exponent: $r_{L_0}^*(r_{2p-1})$ is torsion in $H^{2p-1}(X,\cc/\zz)$.

    By Step~A this pullback is exactly $\CS_p(\nabla^{\Del}_{\zeta_0})$
    (Theorem~\ref{thm:CS-regulator}), so $\CS_p(\nabla^{\Del}_{\zeta_0})$ is
    torsion.  This proves Theorem~\ref{thm:main} for $\zeta_0$, and by the
    reduction at the start of the proof, for $\zeta$.
\end{enumerate}
\end{proof}

\begin{proof}[Proof of Corollary~\ref{cor:deligne-torsion}]
The edge map $H^{2p-1}(X,\cc/\zz)\to H^{2p}_\calD(X,\zz(p))$ sends a torsion
class to a torsion class.
\end{proof}

\begin{proof}[Proof of Corollary~\ref{cor:deformation}]
Theorem~\ref{thm:deformation-app} (Appendix~\ref{app:deligne-patch}) shows $\CS_p$
is locally constant on $R^{\mathrm{nil}}$.
\end{proof}

\subsection{An effective form of the Deligne-Sullivan covering}
\label{subsec:effective-DS}

The theorem of Deligne and Sullivan \cite{DeSu} states that a flat vector
bundle of rank $r$ on a finite CW complex, whose monodromy takes values in
$GL_r(\overline{\qq})$, becomes trivial as a $\calC^\infty$-bundle after
pullback along a suitable finite covering.  The version relevant here---for
the canonical extension is a generalization of the single divisor case
due to Deligne \cite{DeligneLetter} and is reproduced as
\cite[Prop.~3.2]{IS-arXiv}, and  the flat bundle case \cite{DeSu}. See Appendix \ref{sec:deligne-sullivan}, for the full normal crossing divisor case.

Both statements are formulated as pure existence results, but the proofs
produce a \emph{congruence} covering, and its degree is therefore bounded by
the order of a finite matrix group.  The purpose of this subsection is to make
that bound explicit, and---equally important---to delimit precisely what it
does and does not control.  The reader interested only in
Theorem~\ref{thm:main} may skip the subsection entirely; nothing below is used
elsewhere in the paper.

\subsubsection{Arithmetic models}

Throughout, $A$ denotes a subring of $\cc$ which is of finite type over $\zz$.
Recall that such a ring is a Jacobson ring, so that $A/\mathfrak{q}$ is a
\emph{finite} field for every maximal ideal $\mathfrak{q}\subset A$; we write
$N(\mathfrak{q}) := |A/\mathfrak{q}|$ and let $\mathrm{char}(\mathfrak{q})$
denote its characteristic.

\begin{definition}\label{def:arith-model}
Let $\zeta:\pi_1(X^*,x_0)\to GL_r(\cc)$ be a representation with unipotent
monodromy around each $D_i$, and let $X=\bigcup_I U_I$ be an adapted covering
as in Proposition~\ref{prop:covering}, with $U_I^* := U_I\cap X^*$.  An
\emph{arithmetic model} for $\zeta$ (relative to this covering) consists of:
\begin{enumerate}[leftmargin=2em,label={\rm(\alph*)}]
  \item a subring $A\subset\cc$ of finite type over $\zz$ and a conjugate of
    $\zeta$ with image in $GL_r(A)$, giving a local system $V_A$ of free
    $A$-modules of rank $r$ on $X^*$;
  \item for each $I$, a filtration $W(I)$ of $V_A|_{U_I^*}$ by sub-local
    systems of $A$-modules, such that every graded quotient
    $\Gr^j_{W(I)}$ is a local system of \emph{free} $A$-modules, of rank
    $n_j(I)$;
  \item for each $I$ and each $j$, a local system $V^j_A(I)$ of free
    $A$-modules on $U_I$ whose restriction to $U_I^*$ is $\Gr^j_{W(I)}$.
\end{enumerate}
\end{definition}

Condition (c) is the graded-extendability of $W(I)$, which for unipotent
local monodromy is guaranteed by the constructions of
Section~\ref{sec:nisotropic}; condition (b) is a normalization which we are
free to impose after shrinking $\Spec A$.

\begin{lemma}\label{lem:arith-model-exists}
Suppose the monodromy matrices of $\zeta$ have entries algebraic over $\qq$,
and fix a strict collection of patching data $(W(I),\tau(I))$ as produced by
Corollary~\ref{cor:cech-exists}.  Then $\zeta$ admits an arithmetic model
whose filtrations are the given $W(I)$.
\end{lemma}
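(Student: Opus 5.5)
The plan is a standard spreading-out argument. First choose $A$: since the monodromy matrices have algebraic entries and $\pi_1(X^*,x_0)$ is finitely generated ($X^*$ being a quasi-projective variety, hence homotopy equivalent to a finite CW complex), the image of $\zeta$ lies in $GL_r(K_0)$ for a number field $K_0$. Take $A_0\subset K_0$ to be the subring generated over $\zz$ by the entries of the images of a finite generating set and of their inverses. This gives condition (a) with $V_{A_0}$ free of rank $r$. All further conditions will be achieved by inverting finitely many nonzero elements, i.e.\ by replacing $A_0$ with a localization $A=A_0[1/f]$, which is still a subring of $\cc$ of finite type over $\zz$.

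Next I descend the filtrations. The adapted covering is finite, so there are only finitely many $U_I$. For each $I$ the filtration $W(I)$ of $L|_{U_I^*}$ is a filtration of the fibre $V=L_x$ by subspaces stable under the finitely generated group $\pi_1(U_I^*)$, and $\pi_1(U_I^*)$ acts through $GL_r(A_0)$. I would argue that each $W(I)_j$ can be taken to be defined over $K_0$ (after enlarging $K_0$ if necessary). The filtrations of Corollary~\ref{cor:cech-exists} are built from kernels and images of polynomials in the $N_i$, as in Lemma~\ref{lem:W-commute}, and such subspaces are defined over a finite extension of $K_0$; otherwise one uses that the poset $\calP(U)$ has $\overline{\qq}$-points in every stratum. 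With this in hand, set $W(I)_j^{A}:=W(I)_j\cap V_A$. These lattices are saturated, hence the graded quotients are finitely generated torsion-free $A$-modules. By generic freeness (inverting one element per pair $(I,j)$) they become free, of rank $n_j(I)=\dim\Gr^j_{W(I)}$; this is condition (b).

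For condition (c), recall that the graded pieces $\Gr^j_{W(I)}$ have trivial monodromy around the $D_i$ meeting $U_I$, by the graded-extendability in the definition of $\calP$. Over $A$ the local monodromies $T_i$ therefore act trivially on the $A$-lattice $\Gr^j_{W(I),A}$, since triviality over $K_0$ implies triviality on a lattice. By Remark~\ref{rem:catlocsys}, a local system on $U_I^*$ with trivial monodromy around the components $D_i$, $i\in I$, extends uniquely to $U_I$, because $\pi_1(U_I^*)\to\pi_1(U_I)$ is surjective with kernel normally generated by those loops. Applying this to the $A$-local system yields $V^j_A(I)$.

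The main obstacle is the field-of-definition step. The paper's filtrations come from an abstract topological \v{C}ech-section argument (Theorem~\ref{thm:cech-section}), not from an explicit algebraic recipe, so it is not automatic that each $W(I)$ is $\overline{\qq}$-rational. I would first try to show that the construction in Theorem~\ref{thm:cont-to-cech} only ever selects finitely many elements $a\in\calP(U_J)$. I would then replace each of them by a $\overline{\qq}$-rational element of the same combinatorial type, which is possible since the $\overline{\qq}$-points of the relevant flag varieties with $N_\hdot$-conditions are dense and the comparability relations are algebraic. If that fails, I would take this rationality as a hypothesis implicit in the statement, since the lemma says "the given $W(I)$."
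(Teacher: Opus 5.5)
There is a genuine gap, and it sits exactly at the step you flag. You insist that $A$ lie in a number field, and you then need every $W(I)_j$ to be defined over $\overline{\qq}$. That claim is false in general, and neither of your fallbacks proves the lemma.

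The filtrations from Corollary~\ref{cor:cech-exists} are not assembled from kernels and images of polynomials in the $N_i$. They are whatever elements of the posets $\calP(U_I)$ the topological argument of Theorem~\ref{thm:cont-to-cech} happens to select. These posets contain positive-dimensional families of non-rational elements; on $U_\emptyset$, for instance, every filtration of the constant local system is allowed. If some $W(I)_j$ is, say, a line of transcendental slope, then $W(I)_j\cap V_A=0$ for every $A$ inside a number field, so (b) fails.

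Replacing the $W(I)$ by nearby $\overline{\qq}$-rational choices changes the filtrations, contrary to ``whose filtrations are the given $W(I)$''. It would also force you to re-check the comparability conditions on overlaps. Declaring rationality a hypothesis adds an assumption the statement does not contain.

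The obstacle is self-imposed. Definition~\ref{def:arith-model}(a) only asks for a subring $A\subset\cc$ of finite type over $\zz$, not one contained in $\overline{\qq}$. The paper's proof takes $A_0$ to be generated by two sets of numbers:
\begin{itemize}
\item the entries of $\zeta(\gamma)^{\pm1}$ for a finite generating set;
\item the (arbitrary complex) coordinates of finitely many bases adapted to the steps $W(I)_j$, together with the matrices describing the extensions across $D$.
\end{itemize}
There are only finitely many such numbers, because the adapted covering is finite and each filtration has finitely many steps. Every $W(I)_j$ is then defined over $A_0$ by construction. A single application of generic freeness, inverting one element $f$, makes all the finitely many modules in (b)--(c) free.

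With that choice of ring, your saturation argument and your treatment of (c) via $\pi_1(U_I^*)\twoheadrightarrow\pi_1(U_I)$ go through as written. The algebraicity hypothesis on the monodromy is not even used.
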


\begin{proof}
Since $\pi_1(X^*)$ is finitely generated, the entries of the matrices
$\zeta(\gamma)$ for $\gamma$ in a finite generating set, together with the
entries of a finite set of transition and splitting matrices describing the
$W(I)$ and the extensions across $D$, generate a subring $A_0\subset\cc$ of
finite type over $\zz$; all the data of
Definition~\ref{def:arith-model}(a)--(c) are then defined over $A_0$, except
that the modules occurring need not be free.  Each of them is a finitely
generated $A_0$-module which becomes free after tensoring with the fraction
field, so by generic freeness there is a nonzero $f\in A_0$ such that all of
them---finitely many in number---are free over $A:=A_0[1/f]$.  Replacing $A_0$
by $A$ gives the model.
\end{proof}

\subsubsection{The congruence covering and constancy of the reduced data}

\begin{definition}\label{def:congruence-cover}
Given an arithmetic model and maximal ideals
$\mathfrak{q}_1,\mathfrak{q}_2\subset A$, set
\[
  \Gamma(\mathfrak{q}_1,\mathfrak{q}_2)
  \;:=\;\ker\Big(\pi_1(X^*,x_0)\xrightarrow{\ \zeta\ }
  GL_r(A/\mathfrak{q}_1)\times GL_r(A/\mathfrak{q}_2)\Big),
\]
and let $\pi:\widetilde{X}^*\to X^*$ be the corresponding connected finite
\'{e}tale covering.  Let $\widetilde{X}$ be the normalization of $X$ in
$\widetilde{X}^*$, and $\widetilde{D}:=\pi^{-1}(D)_{\mathrm{red}}$.
\end{definition}

The following lemma is the technical point of this subsection: on
$\widetilde{X}^*$ one gets
 essentially automatically
the constancy not only of the
local system but also of the filtrations and of their extensions across the
divisor.  In \cite{DeligneLetter} this is arranged by passing to a further
covering; the lemma says that no further covering is needed once the graded
quotients are taken to be free.

\begin{lemma}\label{lem:constancy}
Let $\mathfrak{q}\subset A$ be any maximal ideal.  Then, on every connected
component of $\pi^{-1}(U_I^*)$ (resp.\ of $\pi^{-1}(U_I)$), the reductions
\[
  \pi^*V_A\otimes_A A/\mathfrak{q},\qquad
  \pi^*W(I)\otimes_A A/\mathfrak{q},\qquad
  \pi^*V^j_A(I)\otimes_A A/\mathfrak{q}
\]
are constant, for $\mathfrak{q}\in\{\mathfrak{q}_1,\mathfrak{q}_2\}$.
Moreover $\pi^*W(I)\otimes_A A/\mathfrak{q}$ is a filtration by sub-local
systems with locally free quotients, for \emph{every} maximal ideal
$\mathfrak{q}$.
\end{lemma}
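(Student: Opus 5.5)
The plan is to reduce every constancy assertion to one fact: the monodromy of the reduced local system $\pi^*V_A\otimes_A A/\mathfrak{q}$ is trivial on $\widetilde X^*$ for $\mathfrak{q}\in\{\mathfrak{q}_1,\mathfrak{q}_2\}$. Fix a component $C$ of $\pi^{-1}(U_I^*)$ and a base point $\tilde x\in C$. The image of $\pi_1(C,\tilde x)$ in $\pi_1(X^*,x_0)$, after choosing a path, lies in the subgroup $\Gamma(\mathfrak{q}_1,\mathfrak{q}_2)$. By Definition~\ref{def:congruence-cover}, every element of this subgroup acts through $\zeta$ as the identity on $A^r\otimes A/\mathfrak{q}_i$. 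So $\pi^*V_A\otimes A/\mathfrak{q}$ restricted to $C$ is a local system of finite-dimensional $A/\mathfrak{q}$-vector spaces with trivial monodromy, hence constant. The same argument applies on each component of $\widetilde X^*$ as a whole.

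Next I treat the filtration. Since $W(I)$ consists of sub-local systems of $V_A|_{U_I^*}$ with free graded quotients (Definition~\ref{def:arith-model}(b)), each step $W(I)_j$ is a direct summand of $V_A$ locally. Reducing mod $\mathfrak{q}$ is then exact: $W(I)_j\otimes A/\mathfrak{q}\to V_A\otimes A/\mathfrak{q}$ stays injective, and the quotients $\Gr^j\otimes A/\mathfrak{q}$ are free of rank $n_j(I)$. This holds for every maximal ideal $\mathfrak{q}$ and gives the final sentence of the lemma; no covering is used for it, only freeness and the Tor-vanishing it implies. For $\mathfrak{q}\in\{\mathfrak{q}_1,\mathfrak{q}_2\}$, the reduced $\pi^*W(I)_j$ is a sub-local system of a constant local system. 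Its monodromy is therefore the restriction of trivial monodromy, so it is trivial and the reduced filtration is constant on each component $C$.

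For the extensions $V^j_A(I)$ on $U_I$, I would argue as follows. On a component $\widetilde C$ of $\pi^{-1}(U_I)$, let $C=\widetilde C\cap\widetilde X^*$; this is connected because $\widetilde X$ is normal, so removing a divisor does not disconnect a connected open set. On $C$, the sheaf $\pi^*V^j_A(I)\otimes A/\mathfrak{q}$ agrees with $\Gr^j$ of the reduced filtration, which is constant by the previous step. The restriction $\pi_1(C)\to\pi_1(\widetilde C)$ is surjective, again by normality of $\widetilde X$: complements of divisors in normal spaces have surjective $\pi_1$ maps. Hence a local system on $\widetilde C$ whose restriction to $C$ is constant is itself constant.

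The main obstacle is this last step, because $\widetilde X$ is only normal and not smooth. I would justify the surjectivity of $\pi_1(C)\to\pi_1(\widetilde C)$ by appealing to the standard result for normal complex spaces that the complement of a nowhere dense analytic subset has surjective $\pi_1$. If that result were unavailable, the fallback is to work on $U_I$ itself: the local system $V^j_A(I)$ pulled back to $\widetilde C$ is the pullback of a local system on $U_I$ by the finite map $\pi$. Its monodromy around $\pi^{-1}(D)$ is then controlled by the monodromy of $V^j_A(I)$ on $U_I$, which already factors through $\pi_1(U_I^*)$ by Definition~\ref{def:arith-model}(c). The congruence kernel kills that monodromy as well.
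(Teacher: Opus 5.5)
Your proof is correct and follows the same three steps as the paper: freeness of the graded quotients gives Tor-vanishing and hence exact reduction of the filtration for every $\mathfrak{q}$; sub-local systems of the constant reduced local system on $\widetilde{X}^*$ are constant; and surjectivity of $\pi_1$ from the punctured component onto the full component forces the reduced extensions $\pi^*V^j_A(I)\otimes A/\mathfrak{q}_s$ to be constant. Your appeal to normality of $\widetilde{X}$ for that surjectivity supplies the justification that the paper compresses into ``the same holds after pullback'' (which it deduces from the polydisc model downstairs), so your fallback argument is not needed.
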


\begin{proof}
\emph{Step 1: the reduced filtration remains strict.}  Fix $I$ and write
$W_i := W_i(I)$.  By Definition~\ref{def:arith-model}(b) every graded quotient
of $W$ is free, hence each quotient $V_A/W_i$ is free (it is an iterated
extension of free modules, so free by induction on the length of the
filtration).  Therefore
$\mathrm{Tor}^A_1(V_A/W_i,\,A/\mathfrak{q})=0$, and the sequence
\[
  0\to W_i\otimes A/\mathfrak{q}\to V_A\otimes A/\mathfrak{q}
   \to (V_A/W_i)\otimes A/\mathfrak{q}\to 0
\]
is exact with free outer terms.  Thus $W\otimes A/\mathfrak{q}$ is again a
filtration by sub-local systems of free $A/\mathfrak{q}$-modules.  No
hypothesis on $\mathfrak{q}$ enters here.

\emph{Step 2: constancy of the reduced filtration.}  By construction of
$\Gamma(\mathfrak{q}_1,\mathfrak{q}_2)$, the monodromy of
$\pi^*V_A\otimes A/\mathfrak{q}_s$ is trivial for $s=1,2$, so this is the
constant local system on $\widetilde{X}^*$; fix the resulting trivialization.
A sub-local system of a constant local system on a connected space corresponds
to a monodromy-invariant submodule of the fibre, and, the monodromy being
trivial, the submodule attached to any two points agrees under the chosen
trivialization.  Applying this on a connected component of
$\pi^{-1}(U_I^*)$ to each step of $\pi^*W(I)\otimes A/\mathfrak{q}_s$---which
is a sub-local system by Step 1---gives the constancy.

\emph{Step 3: constancy of the reduced extensions.}  The open set $U_I$ is
(biholomorphic to) a polydisc and $U_I^*\subset U_I$ is the complement of a
union of coordinate hyperplanes, so $\pi_1(U_I^*)\to\pi_1(U_I)$ is surjective;
the same holds after pullback, on each connected component.  The local system
$V^j_A(I)$ is, by uniqueness of extensions across $D$, determined by a
representation of $\pi_1(U_I)$ whose restriction along this surjection is the
monodromy of $\Gr^j_{W(I)}$.  By Steps 1 and 2 the latter is trivial mod
$\mathfrak{q}_s$; a representation whose restriction to a subgroup surjecting
onto the whole group is trivial is itself trivial.  Hence
$\pi^*V^j_A(I)\otimes A/\mathfrak{q}_s$ is constant.
\end{proof}

\subsubsection{The degree bound}

\begin{theorem}\label{thm:effective-DS}
Let $\zeta$ be as above, with an arithmetic model over $A$, and let
$\mathfrak{q}_1,\mathfrak{q}_2\subset A$ be maximal ideals with
\emph{distinct} residue characteristics.  Assume $D$ is smooth and irreducible
{\rm(}so that we are in the situation of \cite[\S3]{IS-arXiv}{\rm)}.  Then the
canonical extension of $\pi^*\zeta$ to $\widetilde{X}$ is trivial as a
$\calC^\infty$-bundle, and the degree of $\pi$ satisfies
\[
  \deg\pi \;\Big|\;
  \big|GL_r(A/\mathfrak{q}_1)\big|\cdot\big|GL_r(A/\mathfrak{q}_2)\big|
  \;=\;\prod_{s=1,2} N_s^{\binom{r}{2}}\prod_{i=1}^{r}\big(N_s^{\,i}-1\big),
\]
where $N_s := N(\mathfrak{q}_s)$.  In particular
$\deg\pi < (N_1N_2)^{r^2}$.
\end{theorem}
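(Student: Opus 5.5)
The argument has two parts: the degree bound, which is pure group theory, and the $\calC^\infty$-triviality, which I would obtain by following the Deligne argument of \cite[Prop.~3.2]{IS-arXiv} on the congruence covering of Definition~\ref{def:congruence-cover}, using Lemma~\ref{lem:constancy} in place of the auxiliary covering.

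\textbf{Degree bound.} The covering $\pi$ corresponds to the subgroup $\Gamma(\mathfrak{q}_1,\mathfrak{q}_2)$, which is the kernel of a homomorphism into $GL_r(A/\mathfrak{q}_1)\times GL_r(A/\mathfrak{q}_2)$. Hence
\[
\deg\pi=[\pi_1(X^*):\Gamma(\mathfrak{q}_1,\mathfrak{q}_2)]
\]
equals the order of the image, and by Lagrange it divides the order of the product group. Since each $A/\mathfrak{q}_s$ is a finite field $\mathbb{F}_{N_s}$, the standard count
\[
|GL_r(\mathbb{F}_N)|=N^{\binom r2}\prod_{i=1}^r(N^i-1)
\]
gives the displayed formula. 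For the strict inequality, note that $\prod_i(N^i-1)<N^{\binom{r+1}{2}}$. Therefore $|GL_r(\mathbb{F}_N)|<N^{\binom r2+\binom{r+1}2}=N^{r^2}$, and multiplying over $s=1,2$ gives $\deg\pi<(N_1N_2)^{r^2}$.

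\textbf{$\calC^\infty$-triviality.} I would follow Deligne's letter in three steps.

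\begin{enumerate}
\item By Lemma~\ref{lem:constancy}, on $\widetilde X^*$ the reductions mod $\mathfrak{q}_s$ of $\pi^*V_A$, of the filtrations $W(I)$, and of the extended graded pieces $V^j_A(I)$ are constant. I would then form the canonical extension of $\pi^*\zeta$ on $\widetilde X$. Since $D$ is smooth, I would use the adapted covering with only the two indices $\emptyset$ and $(1)$, which is the setting of \cite[\S3]{IS-arXiv}.
\item The canonical extension is glued from the flat bundle on $U_\emptyset$ and, on $U_{(1)}$, from the filtered bundle whose graded pieces are the flat bundles $V^j_A(I)$. Its Chern character is therefore determined by these flat data. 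In the $\calC^\infty$ category, the extension is isomorphic to the sum of the graded pieces, because extensions of vector bundles split. I would use this splitting to relate the topological class of the canonical extension in $K^0(\widetilde X)$ to classes of flat bundles with $A$-integral monodromy.
\item Apply the Deligne--Sullivan argument \cite{DeSu}. For a flat bundle whose monodromy is trivial mod $\mathfrak{q}_s$, its class in $\widetilde K^0$ is killed after localization away from $\mathrm{char}(\mathfrak{q}_s)$. The argument uses the Adams operations $\psi^{\ell}$ and the fact that the monodromy representation factors through a pro-$p$ congruence subgroup, which makes the completed $K$-theory class trivial. Doing this for two primes $\mathfrak{q}_1,\mathfrak{q}_2$ with distinct residue characteristics shows that the class is annihilated both by a power of $p_1$ and by a power of $p_2$, hence vanishes. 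Stable triviality then implies actual triviality in the stable range, after adding a trivial bundle if necessary, as in \cite{DeSu}.
\end{enumerate}

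\textbf{Expected obstacle.} The hardest point is making step 3 work for the canonical extension rather than for an honest flat bundle. The bundle is flat only away from $\widetilde D$, and near $\widetilde D$ it is glued through the filtration. I would first try to express its $K$-theory class as a \v{C}ech-type gluing of flat classes on $U_\emptyset$ and $U_{(1)}$, each constant mod $\mathfrak{q}_s$ by Lemma~\ref{lem:constancy}, together with a constant transition mod $\mathfrak{q}_s$. I would then run the Deligne--Sullivan $p$-adic argument on the mod-$\mathfrak{q}_s$ model, which is constant on each chart. This is exactly how \cite[Prop.~3.2]{IS-arXiv} proceeds; the only new input here is Lemma~\ref{lem:constancy}, which replaces the passage to a further covering.
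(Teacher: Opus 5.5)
Your degree bound is correct and is the paper's argument; your check that $\binom{r}{2}+\binom{r+1}{2}=r^2$ usefully makes the strict inequality explicit. The triviality part, however, has a genuine gap: you never construct an arithmetic model of the canonical extension itself, so the Deligne--Sullivan mechanism has nothing to act on.

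The bundle $F$ is not flat. Over the collar $U_\emptyset\cap U_{(1)}$ it is glued from $V_A$ to $\bigoplus_j V^j_A(1)$ by a $\calC^\infty$ splitting of $W$. That splitting is not a morphism of local systems and carries no $A$-structure, so ``a constant transition mod $\mathfrak{q}_s$'' is not defined. Your Step~2 does not repair this. The $\calC^\infty$ splitting into graded pieces exists only over $U_{(1)}$. Moreover, the class of a clutched bundle in $K^0(\widetilde X)$ depends on the clutching map and not just on the two pieces, so $[F]$ is not thereby expressed through flat bundles with $A$-integral monodromy.

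The missing idea is the interpolation (Rees) module used in the paper's proof. Over the collar times $\aaa^1$ one replaces the gluing by the coherent subsheaf $\sum_j t^jW_j\subset A[t]\otimes V_A$. This sheaf has three properties:
\begin{itemize}
\item it is locally free, because the graded quotients of $W$ are free (Definition~\ref{def:arith-model}(b));
\item it specializes to $V_A$ at $t=1$ and to $\bigoplus_j\Gr^W_jV_A$ at $t=0$;
\item its reduction modulo $\mathfrak{q}_s$ becomes constant on the congruence covering, by Lemma~\ref{lem:constancy}.
\end{itemize}
This gives a topological model $\widetilde{\Vv}$ of the canonical extension built entirely from $A$- and $A[t]$-data. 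It is to this model that \cite[Lemma~3.3]{IS-arXiv} and the lemma of \cite{DeSu} are applied.

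Second, even with such a model, running the argument in $K$-theory only shows that $F$ is \emph{stably} trivial. ``Adding a trivial bundle if necessary'' changes the statement: a stably trivial complex bundle of rank $r$ on a complex of real dimension $2d$ need not be trivial when $r<d$, and nothing here forces $r\geq\dim_{\cc}\widetilde X$.

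The paper, like \cite{DeSu}, argues unstably:
\begin{itemize}
\item take the classifying map $f:\widetilde X\to\mathrm{Grass}(r,\cc^{r+N})$ of the rank-$r$ model, with $N\geq\tfrac12\dim_{\rr}\widetilde X$;
\item use Sullivan's Hasse principle \cite{SullivanAdams}, the Grassmannian being simply connected, to reduce null-homotopy of $f$ to null-homotopy of each $\ell$-adic completion $f_{\hat\ell}$;
\item for each $\ell$, choose $s$ with $\mathrm{char}(\mathfrak{q}_s)\neq\ell$, so that constancy of the model modulo $\mathfrak{q}_s$ kills $f_{\hat\ell}$.
\end{itemize}
This is what yields honest $\calC^\infty$-triviality of the rank-$r$ bundle, and it is the only point where the two distinct residue characteristics are used.
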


\begin{proof}
The divisibility is immediate: $\deg\pi$ is the index of
$\Gamma(\mathfrak{q}_1,\mathfrak{q}_2)$ in $\pi_1(X^*,x_0)$, which is the
order of the image of $\zeta$ in
$GL_r(A/\mathfrak{q}_1)\times GL_r(A/\mathfrak{q}_2)$ and hence divides the
order of that group; the displayed product is the standard count of
$|GL_r(\mathbb{F}_q)|$.

For the triviality we invoke \cite[Lemma~3.3]{IS-arXiv}.  The input required
there is exactly a datum $(V_A,\,W,\,V^j_A)$ as in
Definition~\ref{def:arith-model} which is \emph{constant modulo
$\mathfrak{q}_1$ and modulo $\mathfrak{q}_2$}; this is supplied by
Lemma~\ref{lem:constancy}.  Granting it, the argument runs as follows.  One
interpolates $(V_A, W, \bigoplus_j V^j_A)$ over $(U_1\cap B_1)\times\aaa^1$ by
the coherent subsheaf $\sum_j t^{\,j}W_j\subset A[t]\otimes V_A$, obtaining a
topological model $\widetilde{\Vv}$ of the canonical extension, and considers
the classifying map $f:\widetilde{X}\to\mathrm{Grass}(r,\cc^{r+N})$ with
$N\geq \tfrac{1}{2}\dim_\rr\widetilde{X}$.  Because the Grassmannian is simply
connected, Sullivan's Hasse principle \cite{SullivanAdams} reduces the
homotopy triviality of $f$ composed with the projection to the $d$-th
coskeleton to the corresponding statement for each $\ell$-adic completion.
Given $\ell$, choose $s\in\{1,2\}$ with
$\mathrm{char}(\mathfrak{q}_s)\neq\ell$: the data being constant mod
$\mathfrak{q}_s$, the lemma of \cite{DeSu} applies and gives the vanishing of
$f_{\hat{\ell}}$.  This is the only place where two ideals of distinct residue
characteristic are used.
\end{proof}

\begin{corollary}\label{cor:effective-betti}
In the situation of Theorem~\ref{thm:effective-DS}, suppose in addition that
$\widetilde{X}$ is smooth with $\widetilde{D}$ a normal crossings divisor.
Then, with $d:=\deg\pi$, the Betti Chern classes of the canonical extension
satisfy
\[
  d\cdot c^B_p(F) \;=\; 0 \quad\text{in } H^{2p}(X,\zz),\qquad p\geq 1 .
\]
In particular $\mathrm{ord}\big(c^B_p(F)\big)$ divides
$|GL_r(A/\mathfrak{q}_1)|\cdot|GL_r(A/\mathfrak{q}_2)|$.
\end{corollary}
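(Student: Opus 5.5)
The argument is a transfer: prove the vanishing upstairs on $\widetilde X$, where Theorem~\ref{thm:effective-DS} makes the bundle trivial, and push it back down to $X$ by a transfer map.

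\emph{Step 1: compare $\pi^*F$ with the canonical extension upstairs.} Let $\widetilde F$ be the canonical extension of $\pi^*\zeta$ on $\widetilde X$; it is $\calC^\infty$-trivial by Theorem~\ref{thm:effective-DS}, and we also write $\pi:\widetilde X\to X$ for the finite map extending the covering. Locally over a polydisc near a point of $D$, the map $\pi$ is given by $w_i^{e_i}=z_i$. The local monodromy of $\pi^*\zeta$ is $T_i^{e_i}$, which is again unipotent with logarithm $e_iN_i$. Deligne's construction is compatible with such ramified base change in the unipotent case, because the flat sections twisted by $\exp(-\sum \log z_i\,N_i)$ pull back to the corresponding twisted sections upstairs. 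Hence $\pi^*F\cong\widetilde F$ as holomorphic bundles, and in particular $\calC^\infty$. Since $\widetilde X$ is smooth, this gives
\[
\pi^*c^B_p(F)=c^B_p(\pi^*F)=c^B_p(\widetilde F)=0 \quad\text{in } H^{2p}(\widetilde X,\zz).
\]

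\emph{Step 2: transfer.} The map $\pi:\widetilde X\to X$ is a finite surjective morphism of degree $d$ between compact complex manifolds of the same dimension. It therefore has a Gysin/transfer map $\pi_*:H^{2p}(\widetilde X,\zz)\to H^{2p}(X,\zz)$, defined via Poincaré duality on both sides. By the projection formula,
\[
\pi_*\pi^*\alpha=\pi_*(1)\cdot\alpha=d\,\alpha,
\]
since $\pi_*[\widetilde X]=d[X]$. Applying this to $\alpha=c^B_p(F)$ gives $d\cdot c^B_p(F)=\pi_*\pi^*c^B_p(F)=0$. The final divisibility statement then follows from the relation $d\mid |GL_r(A/\mathfrak q_1)|\cdot|GL_r(A/\mathfrak q_2)|$ proved in Theorem~\ref{thm:effective-DS}.

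\emph{Main obstacle.} The delicate point is Step~1, the identification $\pi^*F\cong\widetilde F$ across the ramified locus. The claim is only needed up to $\calC^\infty$ isomorphism, or even just on Chern classes. I would check it in local coordinates, using the explicit description of the canonical extension by the frame $\exp(-\sum_i\log z_i\,N_i)\,e$ built from multivalued flat sections $e$. This is exactly where the smoothness and normal-crossings hypothesis on $\widetilde X$ is used: it makes $\widetilde F$ well defined and gives the covering a local monomial form, which a priori need not hold and is why it is assumed.
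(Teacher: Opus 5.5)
Your proposal is correct and follows essentially the same route as the paper: identify $\pi^*F$ with the canonical extension of $\pi^*\zeta$, use Theorem~\ref{thm:effective-DS} to get $c^B_p(\pi^*F)=0$, and conclude with the transfer identity $\pi_*\pi^*=d\cdot\mathrm{id}$ on $H^*(X,\zz)$. The only difference is in the first step: you check the identification with the explicit frame $\exp(-\sum_i\log z_i\,N_i)\,e$, while the paper notes that the residues of $\pi^*\nabla$ are those of $\nabla$ multiplied by the ramification index, hence still nilpotent, and then invokes uniqueness of the extension with nilpotent residues; the two arguments are equivalent.
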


\begin{proof}
First, $\pi^*F$ \emph{is} the canonical extension of $\pi^*\zeta$.  Indeed
$\pi^*\nabla$ is a logarithmic connection on $\pi^*F$ along $\widetilde{D}$
whose residue along a component of $\pi^{-1}(D_i)$ equals $e\cdot
\mathrm{Res}_{D_i}(\nabla)$, where $e$ is the ramification index of $\pi$ there;
scaling by $e$ preserves nilpotence, and the extension with nilpotent residues
is unique.  Hence $\pi^*F$ is $\calC^\infty$-trivial by
Theorem~\ref{thm:effective-DS}, so $c^B_p(\pi^*F)=0$.

Second, $\pi:\widetilde{X}\to X$ is a finite surjective morphism of degree $d$
between smooth projective varieties, so the Gysin transfer $\pi_*$ satisfies
$\pi_*\pi^* = d\cdot\mathrm{id}$ on $H^*(X,\zz)$.  Therefore
$d\cdot c^B_p(F) = \pi_*\pi^* c^B_p(F) = \pi_*c^B_p(\pi^*F) = 0$.
\end{proof}

\begin{corollary}\label{cor:effective-numberfield}
Suppose $\zeta$ is defined over a number field $K$ with $n:=[K:\qq]$, and that
the arithmetic model can be taken with $A=\Oo_K[1/M]$ for an integer $M\geq1$.
Let $\ell_1<\ell_2$ be the two smallest rational primes not dividing $M$, and
choose primes $\mathfrak{p}_s\subset\Oo_K$ above $\ell_s$.  Then
\[
  \deg\pi \;<\;(\ell_1\ell_2)^{\,n r^2},
\]
and the same bound holds for $\mathrm{ord}\big(c^B_p(F)\big)$.
\end{corollary}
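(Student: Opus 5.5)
The plan is to apply Theorem~\ref{thm:effective-DS} and Corollary~\ref{cor:effective-betti} with the specific choice $A=\Oo_K[1/M]$, $\mathfrak{q}_s=\mathfrak{p}_s A$. The proof then reduces to bounding the residue field norms $N(\mathfrak{q}_s)$ in terms of $\ell_s$ and $n$.

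\textbf{Step 1: the ideals are admissible.} Since $\ell_s\nmid M$, the prime $\mathfrak{p}_s$ of $\Oo_K$ lying above $\ell_s$ does not contain $M$. Hence $\mathfrak{q}_s:=\mathfrak{p}_s\Oo_K[1/M]$ is a maximal ideal of $A$, and its residue field is $A/\mathfrak{q}_s\cong\Oo_K/\mathfrak{p}_s$. The two residue characteristics are $\ell_1\neq\ell_2$, so the distinctness hypothesis of Theorem~\ref{thm:effective-DS} holds. We work under the standing hypotheses of that theorem and of Corollary~\ref{cor:effective-betti}: $D$ is smooth and irreducible, and the normalized cover is smooth with normal crossings boundary. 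These hypotheses are implicit in the phrase ``the same bound holds'', and I would state them explicitly.

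\textbf{Step 2: norm bound.} We have $N(\mathfrak{p}_s)=\ell_s^{f_s}$, where $f_s$ is the residue degree. Since $\sum_{\mathfrak{p}\mid\ell_s}e_{\mathfrak{p}}f_{\mathfrak{p}}=n$, it follows that $f_s\le n$, so $N_s\le\ell_s^{\,n}$. Theorem~\ref{thm:effective-DS} gives the strict bound $\deg\pi<(N_1N_2)^{r^2}$.

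It is cleaner to derive the strict bound directly from the divisibility. The divisibility gives $\deg\pi\le\prod_s|GL_r(\mathbb{F}_{N_s})|$, and $|GL_r(\mathbb{F}_q)|<q^{r^2}$ because each factor $q^i-1<q^i$ and $\binom r2+\sum_{i=1}^r i=r^2$. Therefore
\[
  \deg\pi<(N_1N_2)^{r^2}\le(\ell_1^{\,n}\ell_2^{\,n})^{r^2}=(\ell_1\ell_2)^{nr^2}.
\]

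\textbf{Step 3: Chern class order.} Corollary~\ref{cor:effective-betti} gives $\deg\pi\cdot c^B_p(F)=0$. Hence $\mathrm{ord}(c^B_p(F))$ divides $\deg\pi$, and in particular $\mathrm{ord}(c^B_p(F))\le\deg\pi<(\ell_1\ell_2)^{nr^2}$.

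\textbf{Main obstacle.} The only delicate point is Step 1: checking that the arithmetic model over $\Oo_K[1/M]$ really has free graded pieces after localization, so that Lemma~\ref{lem:constancy} applies at $\mathfrak{q}_s$. This is guaranteed by the hypothesis that the model ``can be taken'' over $A$ in the sense of Definition~\ref{def:arith-model}. Everything else is the arithmetic inequality $f_s\le n$.
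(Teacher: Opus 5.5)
Your proposal is correct and follows the paper's proof, which consists only of the estimate $N_s=N(\mathfrak{p}_s)\le\ell_s^{\,n}$ combined with the bound $\deg\pi<(N_1N_2)^{r^2}$ of Theorem~\ref{thm:effective-DS}. You also make explicit several points the paper leaves implicit: that $\mathfrak{p}_s$ survives as a maximal ideal of $\Oo_K[1/M]$ with distinct residue characteristics, that the strict inequality comes from $|GL_r(\mathbb{F}_q)|<q^{r^2}$, and that the bound on $\mathrm{ord}(c^B_p(F))$ follows from Corollary~\ref{cor:effective-betti}, whose standing hypotheses you correctly note must be in force.
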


\begin{proof}
$N_s = N(\mathfrak{p}_s)\leq \ell_s^{\,n}$, and
Theorem~\ref{thm:effective-DS} gives
$\deg\pi<(N_1N_2)^{r^2}\leq(\ell_1\ell_2)^{nr^2}$.
\end{proof}

\begin{example}
Take $r=2$, $K=\qq$, $M=1$, so $A=\zz$ and one may take
$\mathfrak{q}_1=(2)$, $\mathfrak{q}_2=(3)$.  Then
$|GL_2(\mathbb{F}_2)|\cdot|GL_2(\mathbb{F}_3)| = 6\cdot 48 = 288$.  If the
representation is unimodular---as one may assume after the $r$-fold covering
used in \cite[Thm.~9.1]{IS-arXiv}---the bound improves to
$|SL_2(\mathbb{F}_2)|\cdot|SL_2(\mathbb{F}_3)| = 6\cdot 24 = 144$.
\end{example}

\begin{remark}[The bound is necessarily arithmetic]\label{rem:no-topological-bound}
No bound on $\deg\pi$ can depend on $r$ and $\dim X$ alone.  Let
$L(m) = S^{2k+1}/\mu_m$ be a lens space and $\chi$ a faithful character of
$\mu_m$.  The associated flat line bundle has $c_1$ of exact order $m$ in
$H^2(L(m),\zz)\cong\zz/m$, and every connected covering of $L(m)$ is some
$L(m')$ with $m'\mid m$, on which the pulled-back class still has order $m'$.
So a covering of degree exactly $m$ is required, with $r=1$ and $\dim$ fixed.
The dependence of Theorem~\ref{thm:effective-DS} on the residue fields
$A/\mathfrak{q}_s$ is thus not an artefact of the proof.
\end{remark}

\begin{remark}[Betti versus Chern-Simons]\label{rem:betti-vs-CS}
Corollary~\ref{cor:effective-betti} bounds the order of the \emph{integral
Betti} Chern classes of $F$.  It does \emph{not} bound the order of
$\CS_p(\nabla^{\Del})$, and we wish to be explicit about why, since the two
are easily conflated.

The connecting map of the coefficient sequence
$\zz\to\cc\to\cc/\zz$ sits in
\[
  H^{2p-1}(X,\cc)\longrightarrow H^{2p-1}(X,\cc/\zz)
  \xrightarrow{\ \partial\ } H^{2p}(X,\zz),
\]
and $\partial\,\CS_p(\nabla^{\Del}) = c^B_p(F)$ up to the usual normalization.
Knowing that $c^B_p(F)$ is killed by $d$ therefore only says that
$d\cdot\CS_p(\nabla^{\Del})$ lies in the image of $H^{2p-1}(X,\cc)$.  That
image is the divisible group
$H^{2p-1}(X,\cc)/\mathrm{im}\,H^{2p-1}(X,\zz)$, which contains a copy of
$(\qq/\zz)^{b_{2p-1}}$ as its torsion subgroup and is very far from being
torsion-free.  Consequently \emph{no} conclusion about the finiteness, let
alone the order, of $\CS_p(\nabla^{\Del})$ follows.  The same caution applies
to any argument which attempts to bound a class in $H^{2p-1}(X,\rr/\zz)$ by
the order of its image under $\partial$: the kernel of $\partial$ is the
divisible group $\mathrm{im}\,H^{2p-1}(X,\rr)$.

This is not a defect of the present estimate but a reflection of the
difficulty of the theorem.  Torsion of $c^B_p(F)$ is elementary
{\rm(}Chern-Weil, or Grothendieck \cite{Grothendieck}{\rm)}; torsion of
$\CS_p$ is the content of Reznikov's theorem and of
Theorem~\ref{thm:main}, and the proof passes through Borel's computation of
$H^*(BSL(K)^+,\rr)$ and the vanishing of the volume regulators, which produces
only the statement that the classifying map
$r_{\zeta_0}:X\to BGL^+(K)$ of Construction~\ref{constr:rL} is zero on
\emph{real} cohomology (Step~C of the proof of Theorem~\ref{thm:main}), hence on
rational homology.  Extracting an order from that vanishing would require
effective control of the torsion in $H_{2p-1}(BGL^+(\Oo_K),\zz)$, i.e.\ of
$K_{2p-1}(\Oo_K)_{\mathrm{tors}}$, together with the exponent of
$(r_{\zeta_0})_*$ on $H_{2p-1}(X,\zz)_{\mathrm{tors}}$.  See
Problem~\ref{prob:effective} below.
\end{remark}

\subsubsection{The normal crossings case}
\label{subsubsec:DS-nc}

In the case of a smooth divisor the topological input is
\cite[Lemma~3.3]{IS-arXiv}, formulated for a decomposition $X=U\cup B$ into
two pieces and interpolating the filtration by a single parameter
$t\in\aaa^1$.  For a normal crossings divisor one needs the corresponding
statement over the adapted covering $X=\bigcup_I U_I$, with the
multi-parameter interpolation supplied by the multi-Rees construction of
Section~\ref{sec:multrees}.

That input is exactly what Appendix~\ref{sec:deligne-sullivan} provides.
There, Lemma~\ref{lem:DS-rees-model} identifies the global Rees bundle
$F_K|_{\iiii\Cube A}$ with the canonical extension as a
$\mathcal{C}^\infty$-bundle, and exhibits it as the base change of a locally
free sheaf over $\Spec A_0$; Lemma~\ref{lem:DS-hasse} then runs Sullivan's
Hasse principle on this model.  Combining that appendix with the constancy
lemma of this section, the degree bound extends verbatim to the normal
crossings setting.

We first record that the lattice rectification step of
Lemma~\ref{lem:DS-finite-cover}(ii) can be performed with a single
ramification index, uniform over all strata.

\begin{lemma}\label{lem:rectification-exponent}
Let $\rho:\pi_1(X^*)\to GL_r(A_0)$ be as in Lemma~\ref{lem:DS-arith}, let
$\mathfrak{q}_1,\mathfrak{q}_2\subset A_0$ be maximal ideals, and let
\[
  G_0 \;:=\; \mathrm{im}\Big(\pi_1(X^*)\xrightarrow{\ \rho\ }
  GL_r(A_0/\mathfrak{q}_1)\times GL_r(A_0/\mathfrak{q}_2)\Big),
  \qquad \Gamma:=\ker,
\]
and let $e:=\exp(G_0)$ be the exponent of the finite group $G_0$.  For a
non-empty $I$ let $\zz^{|I|}=\langle\gamma_i\rangle_{i\in I}\subset\pi_1(U_I^*)$
be the central meridian lattice at the stratum $D_I$, and let
$\Lambda_I\subset\zz^{|I|}$ be the preimage of $\Gamma$.  Then
\[
  e\cdot\zz^{|I|}\;\subset\;\Lambda_I \qquad\text{for every }I .
\]
Consequently one may take $n_i=e$ for all $i$ in
Lemma~\ref{lem:DS-finite-cover}(ii).
\end{lemma}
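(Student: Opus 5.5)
The plan is to reduce the statement to a group-theoretic fact about the finite group $G_0$. Since $\Lambda_I$ is the preimage of $\Gamma$ under the homomorphism $\zz^{|I|}\to\pi_1(U_I^*)\to\pi_1(X^*)$, we have $\Lambda_I=\ker(\bar\rho_I)$, where $\bar\rho_I:\zz^{|I|}\to G_0$ is the composite of $\rho$ with reduction modulo $\mathfrak{q}_1$ and $\mathfrak{q}_2$. Thus $\zz^{|I|}/\Lambda_I$ embeds in $G_0$. It suffices to show that $\bar\rho_I(e\gamma)=1$ for every $\gamma\in\zz^{|I|}$. This holds because $\bar\rho_I(e\gamma)=\bar\rho_I(\gamma)^e$, and every element of $G_0$ has order dividing $e=\exp(G_0)$.

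The steps are as follows. First, record that the meridians $\gamma_i$, $i\in I$, commute in $\pi_1(U_I^*)$, as in Lemma~\ref{lem:locsys-intersect}. Hence $\zz^{|I|}\to\pi_1(X^*)$ is a well-defined homomorphism (after fixing a path to the basepoint), and writing the group additively is legitimate. The choice of path changes $\bar\rho_I$ only by conjugation in $G_0$, which does not affect the kernel condition or the exponent. Second, apply the exponent argument above to get $e\cdot\zz^{|I|}\subset\Lambda_I$, uniformly in $I$, since $e$ depends only on $G_0$. Third, for the consequence, note that the rectification step of Lemma~\ref{lem:DS-finite-cover}(ii) requires integers $n_i$ with $\bigoplus_i n_i\zz\gamma_i\subset\Lambda_I$ at each stratum. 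The diagonal lattice $e\zz^{|I|}$ satisfies this, so $n_i=e$ works for all $i$ and all strata at once.

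The only point needing care is the compatibility of the meridian lattice with the chosen basepoint and components of $\pi^{-1}(U_I^*)$. I would handle it by observing that both the kernel of $\bar\rho_I$ and the exponent are conjugation-invariant. No real obstacle is expected beyond this.
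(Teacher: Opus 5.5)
Your proposal is correct and is essentially the paper's argument: $\Lambda_I$ is the kernel of the composite $\zz^{|I|}\to\pi_1(X^*)\to G_0$, so $\zz^{|I|}/\Lambda_I$ embeds in $G_0$ and is killed by $e=\exp(G_0)$, which gives $e\cdot\zz^{|I|}\subset\Lambda_I$ uniformly in $I$. Your remark that the choice of path to the basepoint only conjugates the composite, and so changes neither the kernel condition nor the exponent, is a harmless refinement that the paper leaves implicit.
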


\begin{proof}
The composite $\zz^{|I|}\to\pi_1(X^*)\to G_0$ has kernel $\Lambda_I$, so
$\zz^{|I|}/\Lambda_I$ is isomorphic to a subgroup of $G_0$ and therefore has
exponent dividing $e$.  Hence $e\cdot\zz^{|I|}\subset\Lambda_I$.  The
rectangular sublattice $e\cdot\zz^{|I|}$ is the one used in the Kummer cover of
the proof of Lemma~\ref{lem:DS-finite-cover}(ii), and it is independent of
$I$.
\end{proof}

\begin{theorem}[Effective Deligne--Sullivan, normal crossings case]
\label{thm:effective-DS-nc}
Let $D=D_1+\cdots+D_k$ be a simple normal crossings divisor, let $\zeta$ have
unipotent monodromy around each $D_i$, and let $A_0$ be an arithmetic model as
in Lemma~\ref{lem:DS-arith}.  Choose maximal ideals
$\mathfrak{q}_1,\mathfrak{q}_2\subset A_0$ of distinct residue
characteristics and set
\[
  G \;:=\; \big|GL_r(A_0/\mathfrak{q}_1)\big|\cdot
           \big|GL_r(A_0/\mathfrak{q}_2)\big|
      \;=\;\prod_{s=1,2} N_s^{\binom{r}{2}}\prod_{i=1}^{r}\big(N_s^{\,i}-1\big),
  \qquad N_s:=|A_0/\mathfrak{q}_s| .
\]
Let $\pi':\widetilde{X}'^{\,*}\to X^*$ be the congruence covering attached to
$\Gamma=\ker(\rho\bmod\mathfrak{q}_1,\mathfrak{q}_2)$, and let
$\widetilde{X}'$ be the normalization of $X$ in $\widetilde{X}'^{\,*}$.  Then:
\begin{enumerate}[leftmargin=2em,label={\rm(\alph*)}]
  \item $\deg\pi'$ divides $G$;
  \item the canonical extension of $\pi'^*\zeta$ to $\widetilde{X}'$ is
    trivial as a $\mathcal{C}^\infty$-bundle;
  \item if in addition one requires the covering to be smooth, i.e.\ passes to
    the covering $\pi:\widetilde{X}\to X$ of Lemma~\ref{lem:DS-finite-cover}
    obtained by rectifying the meridian lattices, then
    $\deg\pi$ divides $G\cdot e^{\,k}$, where $e=\exp(G_0)$ divides $G$; in
    particular $\deg\pi$ divides $G^{\,k+1}$.
\end{enumerate}
\end{theorem}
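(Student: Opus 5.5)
Part (a) is handled exactly as in Theorem~\ref{thm:effective-DS}. The degree $\deg\pi'$ equals the index of $\Gamma$ in $\pi_1(X^*,x_0)$. This index is the order of the finite group $G_0$, the image of $\rho$ in $GL_r(A_0/\mathfrak{q}_1)\times GL_r(A_0/\mathfrak{q}_2)$. By Lagrange, $|G_0|$ divides the order of the product group. The displayed formula for $G$ is then the standard count $|GL_r(\mathbb{F}_q)|=q^{\binom r2}\prod_{i=1}^r(q^i-1)$ applied to each factor.

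Part (b) needs more work. We first replace the smooth-divisor input of \cite[Lemma~3.3]{IS-arXiv} by the multi-Rees model of Appendix~\ref{sec:deligne-sullivan}. We fix the arithmetic model and the strict patching data $(W(I),\tau(I))$ over $A_0$, arranging freeness of all graded pieces as in Lemma~\ref{lem:arith-model-exists}. Lemma~\ref{lem:constancy} then shows that on each connected component of $\pi'^{-1}(U_I^*)$ and of $\pi'^{-1}(U_I)$, the reductions modulo $\mathfrak{q}_s$ of $V_{A_0}$, of the $W(I)$ and of the graded extensions $V^j_{A_0}(I)$ are constant. That proof used only Step~3's surjectivity $\pi_1(U_I^*)\to\pi_1(U_I)$ for a polydisc minus coordinate hyperplanes, and this holds for every stratum.

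With this in hand, Lemma~\ref{lem:DS-rees-model} identifies the canonical extension, as a $\mathcal{C}^\infty$-bundle on $\widetilde{X}'$, with the global Rees bundle $F_{A_0}|_{\iiii\Cube A}$ built from the pulled-back patching functor. It also exhibits this bundle as a base change of a locally free sheaf over $\Spec A_0$. We then run Lemma~\ref{lem:DS-hasse}. The classifying map to a simply connected Grassmannian is null on each $\ell$-adic completion, using for each $\ell$ the ideal $\mathfrak{q}_s$ with residue characteristic $\neq\ell$, where the model is constant mod $\mathfrak{q}_s$. Sullivan's Hasse principle then gives triviality on the finite coskeleton, which suffices by dimension.

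The main obstacle is that $\widetilde{X}'$ is only normal, so the adapted covering and Rees model must be pulled back to a possibly singular space. I would handle this by working on $\iiii\Cube A$ pulled back along $\pi'$, which only needs a finite CW structure and the homotopy equivalence, not smoothness. If that fails, I would prove (c) first and descend triviality from the smooth cover only at the level of the stated divisibility.

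For (c), Lemma~\ref{lem:DS-finite-cover}(ii) passes from $\widetilde{X}'$ to the smooth cover by a Kummer cover rectifying the meridian lattices $\Lambda_I$. By Lemma~\ref{lem:rectification-exponent}, one ramification index $n_i=e$ suffices for every component. The Kummer cover extracting $e$-th roots along each of the $k$ components contributes a factor dividing $e^k$. Hence $\deg\pi\mid G\cdot e^k$. Finally $e=\exp(G_0)$ divides $|G_0|$, which divides $G$, so $\deg\pi\mid G^{k+1}$.
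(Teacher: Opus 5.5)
Your proof matches the paper's: (a) is Lagrange applied to the image $G_0$; (b) rests on the observation that Steps~1--3 of Lemma~\ref{lem:constancy} already make $\rho$, each $\Gr^{W(N_I)}$, the functor morphisms and the extensions across $D$ trivial mod $\mathfrak{q}_1,\mathfrak{q}_2$ on the congruence cover, so the further covering in Lemma~\ref{lem:DS-finite-cover}(i) is redundant and Lemma~\ref{lem:DS-hasse} applies directly to $\pi'$; and (c) uses the uniform rectification index $e$ of Lemma~\ref{lem:rectification-exponent}. Your main fix for the normality of $\widetilde{X}'$ (running the Hasse argument on a finite CW structure with the pulled-back model) is exactly what the paper tacitly assumes, but your fallback would not prove (b), because $\mathcal{C}^\infty$-triviality on the smooth cover $\widetilde{X}$ does not descend along $\widetilde{X}\to\widetilde{X}'$.
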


\begin{proof}
(a) $\deg\pi' = [\pi_1(X^*):\Gamma] = |G_0|$, which divides $G$.

(b) This is Lemma~\ref{lem:DS-hasse}, whose hypothesis is
Lemma~\ref{lem:DS-finite-cover}(i): the monodromy $\rho$, all the graded
representations $\Gr^{W(N_I)}\!\big(\rho|_{\pi_1(U_I^*)}\big)$, and all
morphisms of the functor $A\to\Xi_K$ of
Construction~\ref{constr:A-to-Xi}, become trivial mod $\mathfrak{q}_1$ and
mod $\mathfrak{q}_2$ after pullback.  We claim that on $\widetilde{X}'^{\,*}$
this holds already, with no further covering, so that
Lemma~\ref{lem:DS-hasse} applies to $\pi'$.

Indeed, by Lemma~\ref{lem:DS-arith}(ii)--(iii) every step of $W(N_I)$ is a
free direct summand of $V_{A_0}$ with free graded quotients, so reduction mod
$\mathfrak{q}$ is exact on the filtration and $\Gr^{W(N_I)}(V_{A_0})\otimes
A_0/\mathfrak{q}$ is a subquotient of $V_{A_0}\otimes A_0/\mathfrak{q}$ as a
representation of $\pi_1(U_I^*)$.  Since $\pi'^*\big(V_{A_0}\otimes
A_0/\mathfrak{q}_s\big)$ has trivial monodromy by construction of $\Gamma$,
so does every subquotient; this is Step~1 and Step~2 of the proof of
Lemma~\ref{lem:constancy}, applied on each $U_I^*$.  The morphisms
$\Gr^{W(N_I\leq N_J)}(V(I)_{A_0})\cong V(J)_{A_0}$ of
Lemma~\ref{lem:DS-arith}(iv) are $A_0$-linear maps between these subquotients
and carry no further monodromy.  Finally, the extension across $D$ is handled
by Step~3 of Lemma~\ref{lem:constancy}: $\pi_1(U_I^*)\twoheadrightarrow
\pi_1(U_I)$ because $U_I$ is a polydisc and $U_I^*$ the complement of
coordinate hyperplanes, and a representation of $\pi_1(U_I)$ whose restriction
along a surjection is trivial mod $\mathfrak{q}_s$ is itself trivial mod
$\mathfrak{q}_s$.  Hence the intersection of subgroups taken in the proof of
Lemma~\ref{lem:DS-finite-cover}(i) is redundant: $\Gamma$ already trivialises
all the data.

(c) By Lemma~\ref{lem:rectification-exponent} the rectification may be carried
out with the single index $n_i=e$ for all $i$.  The residual covering
$\widetilde{X}\to\widetilde{X}'$ is then, in the local model at each stratum,
the Kummer cover with lattice $e\zz^{|I|}\subset\Lambda_I$, i.e.\ a product of
$e$-fold cyclic covers in the $|I|$ normal directions; its degree therefore
divides $e^{\,k}$.  Since $e=\exp(G_0)$ divides $|G_0|$, which divides $G$, we
get $\deg\pi \mid G\cdot e^k \mid G^{k+1}$.
\end{proof}

\begin{corollary}\label{cor:effective-betti-nc}
With $G$ as in Theorem~\ref{thm:effective-DS-nc}, the Betti Chern classes of
the canonical extension satisfy
\[
  \mathrm{ord}\big(c^B_p(F)\big) \;\Big|\; G ,\qquad p\geq 1 .
\]
\end{corollary}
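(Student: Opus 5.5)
The plan is to copy the proof of Corollary~\ref{cor:effective-betti}, replacing the smooth-divisor input (Theorem~\ref{thm:effective-DS}) by the normal crossings statement Theorem~\ref{thm:effective-DS-nc}. The argument runs in three steps: pass to a finite covering on which the canonical extension becomes trivial, use a transfer to bound the order of $c^B_p(F)$ by the degree of that covering, and then sharpen the bound from $\deg\pi$ to $G$.

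First, I take the smooth covering $\pi:\widetilde{X}\to X$ of Theorem~\ref{thm:effective-DS-nc}(c), obtained from $\pi'$ by rectifying the meridian lattices (Lemma~\ref{lem:DS-finite-cover}). On it $\widetilde{X}$ is smooth projective and $\widetilde D=\pi^{-1}(D)_{\mathrm{red}}$ is normal crossings. I then check that $\pi^*F$ is the canonical extension of $\pi^*\zeta$. The argument is the same as in Corollary~\ref{cor:effective-betti}: the residues of $\pi^*\nabla$ along components over $D_i$ are ramification multiples of the nilpotent $N_i$, hence still nilpotent, and the extension with nilpotent residues is unique. Theorem~\ref{thm:effective-DS-nc}(b), applied to $\pi'$ and pulled back further to $\widetilde X$, shows that $\pi^*F$ is $\calC^\infty$-trivial. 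Therefore $c^B_p(\pi^*F)=0$.

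Second, since $\pi$ is finite surjective between smooth projective varieties, the transfer gives $\pi_*\pi^*=\deg\pi\cdot\mathrm{id}$ on $H^*(X,\zz)$. It follows that $\deg\pi\cdot c^B_p(F)=0$. This alone only shows that the order of $c^B_p(F)$ divides $G\cdot e^k$, not $G$.

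The main obstacle is removing the extra factor $e^k$ introduced by the rectification. My first attempt is to avoid the smooth cover altogether and work on the normalization $\widetilde{X}'$ of degree $|G_0|$, which divides $G$. $\widetilde X'$ is a normal projective variety with at worst quotient (toroidal, abelian) singularities over the strata. For a finite surjective map of such spaces there is still a transfer on rational homology, but the integral statement is delicate.

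The cleaner route is to factor $\pi=\pi'\circ\kappa$, where $\kappa:\widetilde X\to\widetilde X'$ is the local Kummer cover, and to bound the two pieces separately. Triviality on $\widetilde X'$ gives $c^B_p(\pi'^*F)=0$ directly on $\widetilde X'$, using Theorem~\ref{thm:effective-DS-nc}(b) for $\pi'$ itself, so $\kappa$ plays no role in killing the class. It then remains to have a transfer $\pi'_*$ on integral cohomology with $\pi'_*\pi'^*=\deg\pi'$. I would construct it for the finite branched covering $\pi'$ by the standard branched-cover transfer, in the manner of Smith or Dold: summing over preimages with multiplicities gives a continuous multivalued section $X\to \mathrm{SP}^{d}(\widetilde X')$, which induces $\pi'_*$ on integral homology and cohomology with the required identity. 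This requires no smoothness.

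With that transfer, $|G_0|\cdot c^B_p(F)=\pi'_*\pi'^*c^B_p(F)=0$, and since $|G_0|$ divides $G$, the order of $c^B_p(F)$ divides $G$.
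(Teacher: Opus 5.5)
Your final argument is the paper's proof. You pull back along the possibly singular normalization $\pi':\widetilde{X}'\to X$, identify $\pi'^*F$ with the canonical extension (the residues scale by ramification indices), and get $c^B_p(\pi'^*F)=0$ from Theorem~\ref{thm:effective-DS-nc}(b). A transfer for finite branched coverings then gives $|G_0|\cdot c^B_p(F)=0$ with $|G_0|\mid G$; the paper states this transfer for compact ANRs, while you build it from the Smith--Dold symmetric-product construction. The detour through the smooth cover of part~(c) and the factorization through $\kappa$ are unnecessary, as the paper itself notes right after the corollary.
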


\begin{proof}
As in Corollary~\ref{cor:effective-betti}, $\pi'^*F$ is the canonical
extension of $\pi'^*\zeta$: the residue of $\pi'^*\nabla^{\Del}$ along a
component of $\pi'^{-1}(D_i)$ is $e_i\cdot\mathrm{Res}_{D_i}(\nabla^{\Del})$
for the local ramification index $e_i$, scaling preserves nilpotence, and the
extension with nilpotent residues is unique.  So $c^B_p(\pi'^*F)=0$ by
Theorem~\ref{thm:effective-DS-nc}(b).  The map
$\pi':\widetilde{X}'\to X$ is a finite surjective map of degree $d'=\deg\pi'$
from a normal projective variety onto a smooth one, hence a finite branched
covering of compact ANRs; the associated transfer satisfies
$\pi'_*\pi'^*=d'\cdot\mathrm{id}$ on $H^*(X,\zz)$, so
$d'\cdot c^B_p(F)=0$.  As $d'\mid G$ by part~(a), the order divides $G$.
\end{proof}

Note that part (c) of Theorem~\ref{thm:effective-DS-nc} is \emph{not} needed
for Corollary~\ref{cor:effective-betti-nc}: smoothness of the covering is
required for the statement of Proposition~\ref{prop:DS-NCD}(a), but the
transfer argument only needs $\pi'$ finite and surjective.  Consequently the
bound $G$ on the order of the Betti Chern classes is the same in the normal
crossings case as in the smooth divisor case of
Theorem~\ref{thm:effective-DS}: the number $k$ of components enters only
through the smoothing of the covering.

\begin{corollary}\label{cor:effective-nc-numberfield}
Suppose $\zeta$ is defined over a number field $K$ with $n=[K:\qq]$ and that
the arithmetic model may be taken with $A_0=\Oo_K[1/M]$.  Let
$\ell_1<\ell_2$ be the two smallest rational primes not dividing
$M\cdot(r-1)!$, and choose $\mathfrak{p}_s\mid \ell_s$.  Then
\[
  \mathrm{ord}\big(c^B_p(F)\big) \;\leq\; G \;<\;(\ell_1\ell_2)^{\,n r^2},
  \qquad
  \deg\pi \;<\;(\ell_1\ell_2)^{\,n r^2 (k+1)} ,
\]
the latter for the smooth covering of
Theorem~\ref{thm:effective-DS-nc}(c).
\end{corollary}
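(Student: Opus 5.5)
The plan is to derive both bounds from Theorem~\ref{thm:effective-DS-nc} and Corollary~\ref{cor:effective-betti-nc}, applied with the specific choice $\mathfrak{q}_s:=\mathfrak{p}_s\cap A_0$. The residue field computations are then the same as in the proof of Corollary~\ref{cor:effective-numberfield}.

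First I check that the choice is admissible. Since $\ell_s\nmid M$, the prime $\mathfrak{p}_s$ survives in $A_0=\Oo_K[1/M]$. Hence $\mathfrak{q}_s$ is a maximal ideal with residue field $\Oo_K/\mathfrak{p}_s$ and $N_s=N(\mathfrak{p}_s)=\ell_s^{f_s}\leq \ell_s^{\,n}$. The residue characteristics $\ell_1\neq\ell_2$ are distinct, as Theorem~\ref{thm:effective-DS-nc} requires. The additional condition $\ell_s\nmid (r-1)!$ is needed so that the arithmetic model of Lemma~\ref{lem:DS-arith} (free steps of $W(N_I)$, free graded quotients, the $\mathfrak{sl}_2$/Jacobson--Morozov data used to produce $W(N_I)$ over $A_0$) remains valid after localizing. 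The denominators occurring in $\log T_i=\sum(-1)^{m+1}(T_i-1)^m/m$ with $m\leq r-1$, and in the weight-filtration splitting, divide $(r-1)!$. So I will argue that, possibly after enlarging $M$ by $(r-1)!$, the model is still over $\Oo_K[1/M']$ with $M'=M\cdot(r-1)!$, and that $\ell_1,\ell_2$ are units-free there. I expect this bookkeeping step to be the only real obstacle. If it fails as stated, I would instead simply absorb $(r-1)!$ into $M$ from the start, which is harmless for the bound.

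Next I bound $G$. Since $|GL_r(\mathbb{F}_N)|=N^{\binom r2}\prod_{i=1}^r(N^i-1)<N^{\binom r2+\binom{r+1}2}=N^{r^2}$, I get
\[
G<(N_1N_2)^{r^2}\leq(\ell_1\ell_2)^{nr^2}.
\]
Corollary~\ref{cor:effective-betti-nc} gives $\mathrm{ord}(c^B_p(F))\mid G$, hence $\mathrm{ord}(c^B_p(F))\leq G$, which is the first inequality.

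For the smooth covering, Theorem~\ref{thm:effective-DS-nc}(c) gives $\deg\pi\mid G^{k+1}$, so $\deg\pi\leq G^{k+1}<(\ell_1\ell_2)^{nr^2(k+1)}$. The strictness is inherited from $G<(\ell_1\ell_2)^{nr^2}$, since $G\geq1$ and raising to the positive power $k+1$ preserves strict inequality.
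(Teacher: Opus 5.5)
Your proposal is correct and follows essentially the paper's argument. You bound $N_s\le\ell_s^{\,n}$, use $|GL_r(\mathbb{F}_N)|<N^{r^2}$ to get $G<(N_1N_2)^{r^2}$, and then feed this into Corollary~\ref{cor:effective-betti-nc} and Theorem~\ref{thm:effective-DS-nc}(c).

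Two small corrections. The maximal ideal you want is the extension $\mathfrak{p}_sA_0$, not $\mathfrak{p}_s\cap A_0$. The $(r-1)!$ bookkeeping is not a real obstacle: Lemma~\ref{lem:DS-arith} already inverts $(r-1)!$ in $A_0$ to form the logarithms $N_i$, so excluding its prime factors is automatic, which is exactly the paper's justification. Your speculation about further Jacobson--Morozov denominators is therefore unnecessary.
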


\begin{proof}
As in Corollary~\ref{cor:effective-numberfield}, $N_s\leq\ell_s^{\,n}$ and
$G<(N_1N_2)^{r^2}$.  The factor $(r-1)!$ is inverted in
Lemma~\ref{lem:DS-arith} in order to form the logarithms $N_i=\log T_i$, so
the primes dividing it must be avoided.
\end{proof}

\begin{remark}
\label{rem:DS-nc}

\emph{(i) The smoothing cost.}  The factor $e^k$ in
Theorem~\ref{thm:effective-DS-nc}(c) is the price of insisting that
$\widetilde{X}$ be smooth, and it is the analogue in the present setting of
the Kawamata covering.  It is presumably far from sharp: the local model
requires only that the meridian lattice be rectangular, and $e\zz^{|I|}$ is
the crudest rectangular sublattice of $\Lambda_I$.  Any $\Lambda_I$ containing
a rectangular sublattice of smaller index will do, and the resulting index can
be as small as $[\zz^{|I|}:\Lambda_I]$ when $\Lambda_I$ happens already to be
rectangular.

\emph{(ii) Relation to Problem~\ref{prob:effective}.}  What
Theorem~\ref{thm:effective-DS-nc} contributes to
Problem~\ref{prob:effective} is therefore the following precise statement: the
\emph{topological} half of the torsion theorem is effective, with an explicit
bound depending only on $r$, on the residue fields $A_0/\mathfrak{q}_s$, and
(for the smooth model) on $k$; the \emph{regulator} half is not, and the
obstruction is the one described in Remark~\ref{rem:betti-vs-CS}.
\end{remark}

\section{Quasi-unipotent local monodromy: locally abelian parabolic bundles}
\label{sec:parabolic}

Theorem~\ref{thm:main} assumes unipotent local monodromy, and it is natural to
ask what survives when the monodromy is merely quasi-unipotent.  
The naive
approach would be to pass to a covering on which the monodromy becomes unipotent, to apply
Theorem~\ref{thm:main} there, and then to descend. This however 
runs into an obstruction which is
worth stating precisely, because it is the reason this section is needed.

Let $L$ have quasi-unipotent local monodromy, with $T_i^{m_i}$ unipotent, let $F$ be
its canonical extension (residue eigenvalues in $[0,1)$), and let
$\pi:X'\to X$ be a covering ramified to order $e_i$ along $D_i$.  If $\alpha$
is a residue eigenvalue of $\nabla$ along $D_i$, then $e_i\alpha$ is a residue
eigenvalue of $\pi^*\nabla$; when $m_i\mid e_i$ this is an integer, but it is
not zero.  So $\pi^*F$ is \emph{not} the canonical extension of $\pi^*L$: the
two differ by an elementary modification along $\pi^{-1}(D_i)$ determined by
the integers $e_i\alpha$.  The identity $\pi^*\CS_p(L)=\CS_p(\pi^*L)$ therefore
fails, and with it the descent argument.

The resolution is to replace the single bundle $F$ by the \emph{parabolic}
bundle it belongs to.  The parabolic pullback along a Kawamata covering is
exactly the elementary modification just described, so the obstruction
disappears---not by being circumvented, but because the parabolic formalism
performs it automatically.  This is the setting of \cite{IS-MA}, and the
relevant condition on the parabolic structure is that it be
\emph{locally abelian}.

\subsection{Locally abelian parabolic bundles}
\label{subsec:par-defs}

We recall the definitions from \cite[\S2]{IS-MA}.

\begin{definition}\label{def:parabolic}
A \emph{parabolic bundle} on $(X,D)$ is a collection $E_\hdot =
\{E_{\vec\alpha}\}_{\vec\alpha\in\rr^k}$ of locally free sheaves on $X$
together with inclusions $E_{\vec\alpha}\subset E_{\vec\beta}$ for
$\vec\alpha\leq\vec\beta$, such that
\begin{enumerate}[leftmargin=2em,label=\rm(\arabic*)]
  \item \emph{(normalization)} $E_{\vec\alpha+\vec\delta_i}=
    E_{\vec\alpha}(D_i)$, where $\vec\delta_i$ is the $i$-th standard basis
    vector;
  \item \emph{(semicontinuity)} for each $\vec\alpha$ there is
    $\epsilon>0$ with $E_{\vec\alpha+\vec\epsilon}=E_{\vec\alpha}$ for
    $0\leq\vec\epsilon\leq(\epsilon,\dots,\epsilon)$.
\end{enumerate}
The \emph{weights} of $E_\hdot$ along $D_i$ are the values of $\alpha_i$ at
which $E_{\vec\alpha}$ jumps; they form a finite subset of $\rr/\zz$.  A
\emph{parabolic line bundle} is one of rank one; the basic examples are
$\Oo_X(-\vec\alpha D):=\Oo_X\langle\vec\alpha\rangle$, with a single weight
$\alpha_i$ along each $D_i$.
\end{definition}

\begin{definition}\label{def:locally-abelian}
A parabolic bundle $E_\hdot$ is \emph{locally abelian} if every point $x\in X$
has a Zariski-open neighbourhood $U$ on which $E_\hdot|_U$ is isomorphic, as a
parabolic bundle, to a direct sum of parabolic line bundles.
\end{definition}

The point of the condition is that all the standard operations, 
including the tensor
product, dual, pullback along a ramified covering, and the parabolic Chern
character, 
are well behaved for locally abelian bundles and badly behaved in
general; see \cite[\S3]{IS-MA}.  We shall use the following two consequences.

\begin{theorem}[{\cite[\S\S3--4]{IS-MA}}]\label{thm:par-chern}
There is a parabolic Chern character $\ch^{\mathrm{par}}_p$, defined on
locally abelian parabolic bundles on $(X,D)$ with rational weights and taking
values in $H^{2p}(X,\qq)$, which is additive, multiplicative, and functorial
for pullback along Kawamata coverings in the sense of
Proposition~\ref{prop:par-pullback} below.
\end{theorem}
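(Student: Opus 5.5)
The plan is to reduce everything to ordinary Chern characters on a Kawamata covering, following the Biswas–Iyer–Simpson correspondence between locally abelian parabolic bundles with rational weights and equivariant bundles. First fix a common denominator $m$ of all the weights of $E_\hdot$ along all the $D_i$. By Kawamata's covering lemma there is a smooth projective $Y$ with a finite Galois covering $\pi:Y\to X$, with group $\Gamma$, such that $\pi^{-1}(D)$ is a simple normal crossings divisor and $\pi$ ramifies to order divisible by $m$ along each $D_i$. I would then attach to $E_\hdot$ the sheaf
\[
  E_Y:=\Bigl(\sum_{\vec\alpha}\pi^*E_{\vec\alpha}\bigl(\textstyle\sum_i e\alpha_i\,\pi^{-1}D_i\bigr)\Bigr)
\]
on $Y$, i.e.\ the parabolic pullback. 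This is a $\Gamma$-equivariant coherent sheaf.

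The first real step, which I expect to be the main obstacle, is to show that $E_Y$ is locally free. This is exactly where local abelianness is used. On a Zariski neighbourhood $U$ as in Definition~\ref{def:locally-abelian}, $E_\hdot|_U\cong\bigoplus_j\Oo_U\langle\vec\alpha^{(j)}\rangle$. For a parabolic line bundle with weights in $\frac1m\zz$, the construction gives $\pi^*\Oo(-\sum_i e\alpha^{(j)}_i\,\pi^{-1}D_i)$, an honest line bundle because $e\alpha_i\in\zz$. The construction commutes with finite direct sums, so $E_Y$ is locally a sum of line bundles. Without local abelianness the sum of the twisted pullbacks need not be locally free, which is why the hypothesis is needed. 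The same local computation shows that the construction is exact, and that it sends the parabolic tensor product and dual to the ordinary tensor product and dual on $Y$. To check this, it suffices to do so on direct sums of parabolic line bundles, where it is the identity $\Oo\langle\vec\alpha\rangle\otimes\Oo\langle\vec\beta\rangle=\Oo\langle\vec\alpha+\vec\beta\rangle$.

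Next I would define
\[
  \ch^{\mathrm{par}}_p(E_\hdot):=\tfrac{1}{\deg\pi}\,\pi_*\ch_p(E_Y)\in H^{2p}(X,\qq),
\]
using the Gysin map $\pi_*$. Independence of the covering follows because any two such coverings $Y_1,Y_2$ are dominated by a third covering $Y_3$ (a resolution of a component of the normalized fibre product, again of Kawamata type). The parabolic pullback to $Y_3$ is the ordinary pullback of $E_{Y_s}$, and $\pi_{*}\rho^*=\deg\rho\cdot(\ )$ for the intermediate map $\rho:Y_3\to Y_s$.

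The key identity is $\pi^*\pi_*=\sum_{g\in\Gamma}g^*$ on $H^*(Y,\qq)$. Since $E_Y$ is $\Gamma$-equivariant, $\ch(E_Y)$ is $\Gamma$-invariant, so $\pi^*\ch^{\mathrm{par}}(E_\hdot)=\ch(E_Y)$. Moreover $\pi^*$ is injective rationally because $\pi_*\pi^*=\deg\pi$. Additivity and multiplicativity then follow by pulling back to $Y$: the map $E_\hdot\mapsto E_Y$ is exact and monoidal, and $\ch$ is additive and multiplicative on $Y$. Functoriality for a further Kawamata covering $\pi':X'\to X$ in the sense of Proposition~\ref{prop:par-pullback} is the statement $\pi'^*\ch^{\mathrm{par}}(E_\hdot)=\ch^{\mathrm{par}}(\pi'^*E_\hdot)$. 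This follows by choosing $Y$ to dominate $X'$ and applying the same invariance argument.

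As a consistency check, one gets the explicit formula $\ch^{\mathrm{par}}(\Oo\langle\vec\alpha\rangle)=\exp(-\sum_i\alpha_i[D_i])$. When all weights are $0$, one recovers the usual $\ch(E_0)$.
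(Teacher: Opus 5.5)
Up to one sign slip, your argument is correct and is essentially the construction in the source the paper cites, \cite{IS-MA}. The paper itself gives no argument beyond that citation, and later re-expresses the same class without choices as $q_*\ch_p(\Ee)$ on the root stack (Theorem~\ref{thm:stack-correspondence}). In both, one defines $\ch^{\mathrm{par}}=\frac{1}{\deg\pi}\pi_*\ch(E_Y)$ on a Galois Kawamata covering, obtains $\pi^*\ch^{\mathrm{par}}=\ch(E_Y)$ from $\pi^*\pi_*=\sum_{g\in\Gamma}g^*$, and transports every property through the injective map $\pi^*$.

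The slip is in your displayed sheaf. With the paper's convention $E_{\vec\alpha+\vec\delta_i}=E_{\vec\alpha}(D_i)$ it must read $E_Y=\sum_{\vec\alpha}\pi^*E_{\vec\alpha}\bigl(-\sum_i e\alpha_iD'_i\bigr)$, where $\pi^*D_i=eD'_i$. With your $+$ sign the sum is $\pi^*E_{\vec 0}(\ast D')$, which is not coherent and contradicts your own computation $\Oo\langle\vec\alpha\rangle\mapsto\Oo(-\sum_i e\alpha_iD'_i)$.

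There are two smaller points. First, a resolution of the fibre product is only generically finite, not a Kawamata covering; this is harmless, since $\rho_*\rho^*=\deg\rho$ still holds for generically finite surjective maps of smooth projective varieties. Second, exactness of $E_\hdot\mapsto E_Y$ on non-split sequences should be checked in codimension one rather than on local splittings: $E'_Y$ and $\ker(E_Y\to E''_Y)$ are both reflexive, and at the generic points of $D$ such sequences split.
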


\subsection{The parabolic bundle attached to a quasi-unipotent local system}
\label{subsec:par-from-L}

\begin{construction}\label{constr:par-from-L}
Let $L$ be a local system on $X^*$ with quasi-unipotent local monodromy, and
let $(\Ee,\nabla)$ be its meromorphic flat bundle on $(X,D)$.  For
$\vec\alpha\in\rr^k$ let
\[
  \begin{gathered}
  E_{\vec\alpha}\;:=\;\text{the extension of $L\otimes\Oo_{X^*}$ to $X$ with}\\
  \mathrm{Res}_{D_i}(\nabla)\ \text{having eigenvalues in }
  [-\alpha_i,\,-\alpha_i+1).
  \end{gathered}
\]
This is Deligne's construction applied with a shifted normalization interval.
The collection $E_\hdot=\{E_{\vec\alpha}\}$ is a parabolic bundle in the sense
of Definition~\ref{def:parabolic}, with $E_{\vec 0}=F$ the canonical
extension, and its weights along $D_i$ are the numbers $\alpha\in[0,1)$ with
$e^{-2\pi i\alpha}$ an eigenvalue of $T_i$.  Since $T_i^{m_i}$ is unipotent,
these lie in $\tfrac{1}{m_i}\zz$; in particular the weights are rational.
\end{construction}

\begin{proposition}\label{prop:par-loc-abelian}
The parabolic bundle $E_\hdot$ of Construction~\ref{constr:par-from-L} is
locally abelian.
\end{proposition}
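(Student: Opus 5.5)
The statement is local on $X$, so I will work near a point $x\in D$, choose local coordinates $z_1,\dots,z_n$ on a polydisc neighbourhood $\Delta$ with $D\cap\Delta=\{z_1\cdots z_a=0\}$, and prove that $E_\hdot|_\Delta$ splits as a direct sum of parabolic line bundles. Passing from an analytic polydisc to a Zariski neighbourhood is a separate issue. Since every $E_{\vec\alpha}$ is an algebraic (coherent) extension determined by $\nabla$, I will either use GAGA-type uniqueness of Deligne extensions or accept the analytic-local form of Definition~\ref{def:locally-abelian}, as in \cite{IS-MA}.

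\textbf{Step 1: diagonalizing the monodromy.} The local fundamental group of $\Delta\cap X^*$ is $\zz^a$, generated by commuting $T_1,\dots,T_a$. Write the Jordan decomposition $T_i=S_iU_i$ with $S_i$ semisimple and $U_i$ unipotent. All $S_i$ and $U_j$ commute, because they are polynomials in commuting operators. Hence the fibre $V=L_y$ splits as a direct sum of joint generalized eigenspaces
\[
V=\bigoplus_{\vec\lambda}V_{\vec\lambda},
\]
where $S_i$ acts by the scalar $\lambda_i$, a root of unity, on $V_{\vec\lambda}$. This splitting is preserved by the whole local monodromy, so $L|_{\Delta^*}=\bigoplus L_{\vec\lambda}$. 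Each $L_{\vec\lambda}$ is the tensor product of the rank-one local system $M_{\vec\lambda}$ with monodromies $\lambda_i$ and a local system $L^u_{\vec\lambda}$ with unipotent monodromy.

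\textbf{Step 2: compatibility of Deligne extensions with the decomposition.} The residue of $M_{\vec\lambda}\otimes\Oo$ along $D_i$ is a scalar. Choose $\beta_i$ with $e^{-2\pi i\beta_i}=\lambda_i$. The extension with residue eigenvalues in $[-\alpha_i,-\alpha_i+1)$ is then $\Oo_\Delta\bigl(\sum_i(\lceil\ldots\rceil)D_i\bigr)\cdot m$, where $m$ is a multivalued flat section and the integer shift is determined by which representative of $\beta_i$ mod $\zz$ lies in the interval. In other words, $M_{\vec\lambda}$ gives the parabolic line bundle $\Oo\langle\vec\beta\rangle$. Deligne's extension is compatible with direct sums and tensor products, because of uniqueness of extensions with residue eigenvalues in a prescribed interval. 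So
\[
E_{\vec\alpha}|_\Delta=\bigoplus_{\vec\lambda}\Oo\langle\vec\beta(\vec\lambda)\rangle_{\vec\alpha}\otimes F^u_{\vec\lambda},
\]
where $F^u_{\vec\lambda}$ is the canonical extension of $L^u_{\vec\lambda}$. The residues of $F^u_{\vec\lambda}$ are nilpotent, so its eigenvalues are $0$ and it is independent of $\vec\alpha$ within the normalization interval.

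\textbf{Step 3: trivializing the unipotent part.} Over the polydisc, $F^u_{\vec\lambda}$ is holomorphically trivial with a frame $\tilde e=\exp\bigl(-\sum_i \log z_i\,N_i\bigr)e$, where $e$ is a flat multivalued frame. This is Deligne's explicit construction, so $F^u_{\vec\lambda}\cong\Oo_\Delta^{r_{\vec\lambda}}$ as a trivial parabolic bundle. Therefore $E_\hdot|_\Delta$ is a direct sum of parabolic line bundles $\Oo\langle\vec\beta\rangle$, which is what is required.

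\textbf{Main obstacle.} The delicate point is Step 2: checking that the whole filtered family $\{E_{\vec\alpha}\}$, not just one member, decomposes simultaneously. The splitting must be independent of $\vec\alpha$, and the jumps must occur exactly at $\alpha_i\equiv\beta_i$. I would handle this by noting that the decomposition $L=\bigoplus L_{\vec\lambda}$ is a decomposition of the local system itself, chosen once. Each $E_{\vec\alpha}$ is then characterised by the residue condition, and the direct sum of the rank-one extensions tensored with $F^u$ satisfies that condition. Uniqueness of such extensions gives equality for all $\vec\alpha$ at once, and the inclusions $E_{\vec\alpha}\subset E_{\vec\beta}$ respect the summands automatically.
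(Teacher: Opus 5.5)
Your proposal is correct and follows the paper's proof essentially step for step. Both split the fibre into joint eigenspaces of the commuting semisimple parts $T_i^s$, write each summand as a rank-one local system with root-of-unity monodromy tensored with a unipotent one, and use uniqueness of Deligne extensions (with the unipotent factor's extension free on the polydisc) to decompose $E_\hdot|_U$ into copies of $\Oo_U\langle\vec\alpha(\vec\lambda)\rangle$. You also check that all $E_{\vec\alpha}$ split simultaneously, and you flag that Definition~\ref{def:locally-abelian} asks for a Zariski rather than analytic neighbourhood; the paper passes over both points silently, since it too works only on a coordinate polydisc.
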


\begin{proof}
The question is local, so let $U$ be a coordinate polydisc with
$D\cap U=\{z_1\cdots z_a=0\}$ and let $V=L_x$ be the fibre at a base-point of
$U^*$.  The local monodromies $T_1,\dots,T_a$ commute, hence so do their
semisimple parts $T_i^{s}$, and $V$ decomposes as a direct sum of joint
eigenspaces
\[
  V\;=\;\bigoplus_{\vec\lambda} V_{\vec\lambda},
  \qquad
  T^s_i|_{V_{\vec\lambda}}=\lambda_i\cdot\mathrm{id},
\]
the sum being over the finitely many joint eigenvalue vectors
$\vec\lambda=(\lambda_1,\dots,\lambda_a)$ of roots of unity.  This
decomposition is preserved by every $T_i$, hence is a decomposition of local
systems on $U^*$, and $L|_{U^*}=\bigoplus_{\vec\lambda}L_{\vec\lambda}$ with
each $L_{\vec\lambda}$ of unipotent monodromy after twisting by the character
$\vec\lambda$.

Write $\lambda_i=e^{-2\pi i\alpha_i(\vec\lambda)}$ with
$\alpha_i(\vec\lambda)\in[0,1)$.  Then $L_{\vec\lambda}$ is the tensor product
of a local system $M_{\vec\lambda}$ with unipotent monodromy and the rank-one
local system with monodromy $\lambda_i$ around $D_i$.  Passing to extensions,
\[
  E_\hdot|_U \;\cong\; \bigoplus_{\vec\lambda}
  \Bigl(M_{\vec\lambda}\otimes\Oo_U\Bigr)\otimes
  \Oo_U\langle\vec\alpha(\vec\lambda)\rangle ,
\]
where $M_{\vec\lambda}\otimes\Oo_U$ carries the trivial parabolic structure
(all weights $0$, its residues being nilpotent) and
$\Oo_U\langle\vec\alpha(\vec\lambda)\rangle$ is the parabolic line bundle of
weights $\vec\alpha(\vec\lambda)$.  Shrinking $U$ so that
$M_{\vec\lambda}\otimes\Oo_U$ is free, each summand becomes a direct sum of
copies of $\Oo_U\langle\vec\alpha(\vec\lambda)\rangle$.  Hence $E_\hdot|_U$ is
a direct sum of parabolic line bundles.
\end{proof}

\subsection{Kawamata coverings and parabolic pullback}
\label{subsec:par-pullback}

\begin{definition}\label{def:kawamata}
A \emph{Kawamata covering adapted to $L$} is a finite surjective morphism
$\pi:X'\to X$ with $X'$ smooth projective, $D'=\pi^{-1}(D)_{\mathrm{red}}$ a
normal crossings divisor, and $\pi$ ramified along $D_i$ to a constant order
$e_i$ divisible by $m_i$, \'{e}tale over $X^*$.  Such coverings exist by
Kawamata's construction, and may be taken Galois.
\end{definition}

The parabolic pullback of a locally abelian bundle is defined summand by
summand on a local decomposition into parabolic line bundles, by
$\pi^{*\mathrm{par}}\Oo_X\langle\vec\alpha\rangle :=
\Oo_{X'}(-\sum_i e_i\alpha_iD'_i)$, and glued; the local decompositions may
differ, but the resulting sheaf does not, since the operation is canonical on
each $E_{\vec\alpha}$.  This is where the locally abelian hypothesis is used:
without it there is no such description and no functorial pullback. (See \cite{Biswas} also).

\begin{proposition}\label{prop:par-pullback}
Let $L$ be quasi-unipotent, $E_\hdot$ as in
Construction~\ref{constr:par-from-L}, and $\pi:X'\to X$ a Kawamata covering
adapted to $L$.  Then $\pi^*L$ has unipotent local monodromy, and
\[
  \pi^{*\mathrm{par}}E_\hdot \;=\; F',
\]
the Deligne canonical extension of $\pi^*L$ to $X'$, with nilpotent residues
along $D'$.  Moreover
$\pi^*\ch^{\mathrm{par}}_p(E_\hdot)=\ch_p(F')$ in $H^{2p}(X',\qq)$.
\end{proposition}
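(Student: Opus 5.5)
The argument is local on $X'$ plus a gluing step, followed by a cohomological identity. I will first record that $\pi^*L$ has unipotent local monodromy. A meridian $\gamma'$ around a component $D'_j$ of $\pi^{-1}(D_i)$ maps under $\pi_*$ to $\gamma_i^{e_i}$, since $\pi$ is \'etale over $X^*$ and ramified to order $e_i$ along $D_i$. Hence the local monodromy of $\pi^*L$ around $D'_j$ is $T_i^{e_i}$. Because $m_i\mid e_i$ and $T_i^{m_i}$ is unipotent, $T_i^{e_i}$ is unipotent.

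Next I will identify $\pi^{*\mathrm{par}}E_\hdot$ with $F'$ locally. Take a point of $D'$ and a polydisc $U$ around its image on which Proposition~\ref{prop:par-loc-abelian} gives
\[
E_\hdot|_U\cong\bigoplus_{\vec\lambda}(M_{\vec\lambda}\otimes\Oo_U)\otimes\Oo_U\langle\vec\alpha(\vec\lambda)\rangle,
\]
with each $M_{\vec\lambda}$ of unipotent monodromy. By definition, the parabolic pullback of the summand indexed by $\vec\lambda$ is $\pi^*(M_{\vec\lambda}\otimes\Oo_U)\otimes\Oo_{X'}(-\sum_ie_i\alpha_i(\vec\lambda)D'_i)$. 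Equip this summand with the connection $\pi^*\nabla$ corrected by the $\dlog$ twist coming from the trivializing section $\prod z_i'^{\,e_i\alpha_i}$. Its residue along $D'_i$ is $e_i\,\mathrm{Res}(\nabla_{M_{\vec\lambda}}) + e_i\alpha_i - e_i\alpha_i$, up to the sign convention of Construction~\ref{constr:par-from-L}, which is $e_i$ times a nilpotent endomorphism and hence nilpotent. So on $\pi^{-1}(U)$ the pullback is a logarithmic extension of $\pi^*L$ with nilpotent residues. By uniqueness of the Deligne extension with nilpotent residues, it coincides with $F'|_{\pi^{-1}(U)}$.

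The local identifications glue because each one is the unique nilpotent-residue extension of the same local system $\pi^*L$ inside $j_*(\pi^*L\otimes\Oo_{X'^*})$. Any two therefore agree as subsheaves, and this holds regardless of which local abelian decomposition was used. I expect the main obstacle to be the sign and normalization bookkeeping between the eigenvalue interval $[-\alpha_i,-\alpha_i+1)$ and the twist $\Oo(-e_i\alpha_iD'_i)$; I would check it first on a rank-one model before trusting the general formula.

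Finally, the Chern character identity. Since $\ch^{\mathrm{par}}_p$ is additive and multiplicative (Theorem~\ref{thm:par-chern}), it suffices to verify $\pi^*\ch^{\mathrm{par}}(E_\hdot)=\ch(F')$ in $H^{*}(X',\qq)$ using a global splitting-type description. I will compute via the standard formula
\[
\ch^{\mathrm{par}}(\Oo\langle\vec\alpha\rangle)=\exp\bigl(-\textstyle\sum\alpha_i[D_i]\bigr),
\]
together with $\pi^*[D_i]=e_i[D'_i]$ as $\qq$-classes. This gives $\pi^*\ch^{\mathrm{par}}(\Oo\langle\vec\alpha\rangle)=\ch\bigl(\Oo(-\sum e_i\alpha_iD'_i)\bigr)$. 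For a general locally abelian $E_\hdot$, I will use the functoriality for pullback along Kawamata coverings built into Theorem~\ref{thm:par-chern}, which is exactly that $\ch^{\mathrm{par}}$ commutes with $\pi^{*\mathrm{par}}$ followed by the ordinary $\ch$. Combined with $\pi^{*\mathrm{par}}E_\hdot=F'$, this yields $\pi^*\ch^{\mathrm{par}}_p(E_\hdot)=\ch_p(F')$.
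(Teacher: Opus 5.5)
Your proposal is correct and follows the paper's proof essentially step for step: unipotency from the meridian mapping to $\gamma_i^{e_i}$, the residue computation on each summand of the local abelian decomposition of Proposition~\ref{prop:par-loc-abelian} followed by uniqueness of the extension with nilpotent residues, and the Chern character identity taken from the functoriality recorded in Theorem~\ref{thm:par-chern}. Your gluing argument via uniqueness inside $j_*$ and your rank-one Chern character check are harmless elaborations of points the paper handles in a line.
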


\begin{proof}
The local monodromy of $\pi^*L$ around a component of $D'$ over $D_i$ is
$T_i^{e_i}$, which is unipotent because $m_i\mid e_i$.  For the second
assertion, work in the local decomposition of
Proposition~\ref{prop:par-loc-abelian}.  On the summand indexed by
$\vec\lambda$ the parabolic structure is that of
$\Oo\langle\vec\alpha(\vec\lambda)\rangle$, and by definition
$\pi^{*\mathrm{par}}$ sends it to $\Oo_{X'}(-\sum_ie_i\alpha_i(\vec\lambda)D'_i)$,
an honest line bundle since $e_i\alpha_i(\vec\lambda)\in\zz$.  The residue of
the pulled-back connection on that summand is
$e_i\alpha_i(\vec\lambda)+e_iN_i$, and twisting by
$\Oo(-e_i\alpha_i(\vec\lambda)D'_i)$ shifts it by
$-e_i\alpha_i(\vec\lambda)$, leaving the nilpotent operator $e_iN_i$.  So
$\pi^{*\mathrm{par}}E_\hdot$ has nilpotent residues and restricts to
$\pi^*L\otimes\Oo$ on $X'^*$; by uniqueness of the extension with nilpotent
residues it is $F'$.  The last assertion is the functoriality of
$\ch^{\mathrm{par}}$ recorded in Theorem~\ref{thm:par-chern}.
\end{proof}

\subsection{The torsion theorem in the quasi-unipotent case}
\label{subsec:par-torsion}

\begin{definition}\label{def:par-CS}
Let $L$ be quasi-unipotent and $\pi:X'\to X$ a Kawamata covering adapted to
$L$.  Let $\nabla'^{\Del}$ be Deligne's patched connection
(Definition~\ref{def:deligne-patch}) on $F'=\pi^{*\mathrm{par}}E_\hdot$, and
set
\[
  \CS^{\mathrm{par}}_p(E_\hdot;\pi)\;:=\;
  \pi_*\,\CS_p\bigl(\nabla'^{\Del}\bigr)\;\in\;H^{2p-1}(X,\cc/\zz),
\]
where $\pi_*$ is the Umkehr map of the finite surjective morphism $\pi$
between smooth projective varieties.
\end{definition}

The class is defined on the covering and transferred, rather than directly on
$X$, because the connection on $E_{\vec0}$ obtained by subtracting the
\emph{full} residues $\mathrm{Res}_{D_i}(\nabla)=S_i+N_i$, rather than only
their nilpotent parts, is not locally nil-flat: on the local summands of
Proposition~\ref{prop:par-loc-abelian} its curvature acquires a scalar term
$\alpha_i(\vec\lambda)\,\dlog u_i$ from the semisimple part, so $\ch_p\neq0$ as
a form and the associated Cheeger-Simons character is not flat.  Consistently
with this, the primary invariant vanishes only rationally, by
Theorem~\ref{thm:quasi-unipotent}(3) below.

\begin{theorem}\label{thm:quasi-unipotent}
Let $L$ be a local system on $X^*$ with quasi-unipotent local monodromy, and
let $E_\hdot$ be the associated locally abelian parabolic bundle.  Then for
every $p\geq2$ and every Kawamata covering $\pi$ adapted to $L$:
\begin{enumerate}[leftmargin=2em,label=\rm(\arabic*)]
  \item $\CS_p(\nabla'^{\Del})\in H^{2p-1}(X',\cc/\zz)$ is torsion;
  \item $\CS^{\mathrm{par}}_p(E_\hdot;\pi)\in H^{2p-1}(X,\cc/\zz)$ is torsion;
  \item $\ch^{\mathrm{par}}_p(E_\hdot)=0$ in $H^{2p}(X,\qq)$ for all
    $p\geq1$, and the Betti Chern classes $c^B_p(E_{\vec\alpha})$ of every
    member of the parabolic family are torsion.
\end{enumerate}
No hypothesis beyond quasi-unipotency is required.
\end{theorem}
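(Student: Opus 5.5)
The plan is to reduce everything to the unipotent case on the covering $X'$ and then transfer back to $X$ with $\pi_*$.

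For (1), Proposition~\ref{prop:par-pullback} shows that $\pi^*L$ has unipotent local monodromy along the normal crossings divisor $D'$. It also shows that $F'=\pi^{*\mathrm{par}}E_\hdot$ is exactly the Deligne canonical extension of $\pi^*L$ on the smooth projective pair $(X',D')$. Hence $\CS_p(\nabla'^{\Del})$ is the extended Chern-Simons class of $\pi^*L$ in the sense of Definition~\ref{def:extended-CS}, and Theorem~\ref{thm:main}, applied to $(X',D',\pi^*L)$, gives that it is torsion for $p\geq 2$. The only check is that $X'$ is connected, or else we work componentwise, which is harmless since Theorem~\ref{thm:main} applies to each component.

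For (2), the Umkehr map $\pi_*:H^{2p-1}(X',\cc/\zz)\to H^{2p-1}(X,\cc/\zz)$ is a group homomorphism (Poincaré duality, then pushforward in homology, then duality again; this is valid between compact oriented manifolds of the same dimension). A homomorphism carries an element killed by $m$ to an element killed by $m$, so (2) follows at once from (1). The class may depend on $\pi$, but the statement makes no claim of independence.

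For (3), the parabolic Chern character is handled by pulling back and then transferring. Proposition~\ref{prop:par-pullback} gives $\pi^*\ch^{\mathrm{par}}_p(E_\hdot)=\ch_p(F')$ in $H^{2p}(X',\qq)$, and $\ch_p(F')=0$ rationally: $F'$ carries the locally nil-flat connection $\nabla'^{\Del}$, so by Proposition~\ref{prop:nilflat-ch-zero} its Chern character forms vanish identically, and Chern-Weil theory then gives $\ch_p(F')_\qq=0$ for all $p\geq 1$. Since $\pi_*\pi^*=\deg\pi\cdot\mathrm{id}$ on $H^*(X,\qq)$ and $\deg\pi\neq0$, the pullback $\pi^*$ is injective rationally, so $\ch^{\mathrm{par}}_p(E_\hdot)=0$.

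For the Betti classes, each $E_{\vec\alpha}$ is itself a Deligne-type extension of $L$ with residue eigenvalues in a fixed unit interval. The plan is to show directly that its rational Chern character vanishes. The sheaf $E_{\vec\alpha}$ carries a logarithmic connection whose residues $\mathrm{Res}_{D_i}$ have constant semisimple part. Subtracting the full residue produces, on the local summands of Proposition~\ref{prop:par-loc-abelian}, a connection whose curvature is nilpotent up to scalar pieces of the form $\alpha_i(\vec\lambda)\,\delta_{D_i}$-type terms. Esnault's formula then expresses $\ch(E_{\vec\alpha})_\qq$ as a polynomial in the classes $[D_i]$ with coefficients built from the residue eigenvalues. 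The same expression computes $\ch^{\mathrm{par}}$ after the normalizing shift, so vanishing of $\ch^{\mathrm{par}}_p$ in every degree should force $\ch_p(E_{\vec\alpha})_\qq=0$. By Newton's identities, $c^B_p(E_{\vec\alpha})\otimes\qq=0$, i.e.\ the Betti Chern classes are torsion.

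I expect this last comparison to be the main obstacle. It requires relating the ordinary Chern character of an individual member $E_{\vec\alpha}$ to $\ch^{\mathrm{par}}$ of the whole family, which rests on the definition of $\ch^{\mathrm{par}}$ in \cite{IS-MA} as an average over the weights. A fallback is to restrict to rational $\vec\alpha$ and use the identity $E_{\vec\alpha}=\pi_*^{\mathrm{inv}}$ of a twist of $F'$, namely the Galois-invariant pushforward along a Galois Kawamata cover, and then apply Grothendieck–Riemann–Roch for the finite map $\pi$.
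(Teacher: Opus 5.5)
Your treatment of (1), (2) and of the vanishing $\ch^{\mathrm{par}}_p(E_\hdot)=0$ in (3) matches the paper's argument. Proposition~\ref{prop:par-pullback} places you in the situation of Theorem~\ref{thm:main} on $(X',D')$. Torsion is carried forward by the homomorphism $\pi_*$. Finally, $\ch_p(\nabla'^{\Del})=0$ (Proposition~\ref{prop:nilflat-ch-zero}), together with injectivity of $\pi^*$ on rational cohomology, kills $\ch^{\mathrm{par}}_p$.

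\textbf{The gap.} It is in the Betti-class assertion, at the step ``vanishing of $\ch^{\mathrm{par}}_p$ in every degree should force $\ch_p(E_{\vec\alpha})_\qq=0$''. The normalizing shift between $\ch(E_{\vec\alpha})$ and $\ch^{\mathrm{par}}(E_\hdot)$ is not the identity. For a parabolic line bundle with weights $\alpha_i$ it is multiplication by $\exp(\pm\sum_i\alpha_i[D_i])$. Hence $\ch^{\mathrm{par}}=1$ forces $\ch(E_{\vec 0})_\qq=\exp(\mp\sum_i\alpha_i[D_i])$, which is nonzero as soon as the weights are.

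\textbf{The assertion is false.} Consequently neither the Esnault-formula route nor your Grothendieck--Riemann--Roch fallback can close it; the latter would only compute the same nonzero class. A counterexample:
\begin{itemize}
\item Take $X=\pp^2$, $D$ three lines in general position, so $X^*\cong(\cc^*)^2$ with affine coordinates $u,v$.
\item Let $L_0$ be the rank-one local system of $d+\tfrac13(du/u+dv/v)$. Its monodromy is a primitive cube root of unity around each $D_i$.
\item Its canonical extension has residue $\tfrac13$ along each line. So $c_1=-\sum_i\tfrac13[D_i]=-H$ with $H$ the hyperplane class; explicitly the extension is $\Oo(-1)$.
\item For $L=L_0\oplus L_0$, the member $E_{\vec 0}=\Oo(-1)^{\oplus 2}$ has $c^B_2=H^2$, a generator of $H^4(\pp^2,\zz)\cong\zz$.
\item Yet $\ch^{\mathrm{par}}_p(E_\hdot)=0$ for all $p\geq1$, by the part you correctly proved.
\end{itemize}
Even with unipotent monodromy the literal claim fails for members outside $[0,1)^k$. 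Since $E_{\vec\alpha+\vec\delta_i}=E_{\vec\alpha}(D_i)$, the classes $c_1(E_{\vec\alpha})$ and $c_1(E_{\vec\alpha+\vec\delta_i})$ differ by the non-torsion class $r[D_i]$.

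\textbf{Relation to the paper.} The paper's own proof of this last assertion has the same defect. That $\ch(E_{\vec\alpha})_\qq$ is determined by $\ch^{\mathrm{par}}_p(E_\hdot)$ and the classes $[D_i]$ does not make it vanish. What the argument actually yields is that $\ch(E_{\vec\alpha})_\qq$ is an explicit polynomial in the $[D_i]$ with coefficients built from the weights. Torsion of $c^B_p(E_{\vec\alpha})$ holds only in special cases, for instance unipotent monodromy with $\vec\alpha\in[0,1)^k$, where $E_{\vec\alpha}=F$. The statement should be restricted accordingly.
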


\begin{proof}
(1) By Proposition~\ref{prop:par-pullback}, $\pi^*L$ has unipotent local
monodromy on $X'^*=\pi^{-1}(X^*)$, $X'$ is smooth projective and $D'$ is a
normal crossings divisor, and $F'=\pi^{*\mathrm{par}}E_\hdot$ is the Deligne
canonical extension of $\pi^*L$.  This is precisely the situation of
Theorem~\ref{thm:main}, applied on $(X',D')$, which gives that
$\CS_p(\nabla'^{\Del})$ is torsion for $p\geq2$.

(2) $\pi_*$ is a homomorphism of abelian groups, so it carries torsion classes
to torsion classes.  Explicitly, if $N\cdot\CS_p(\nabla'^{\Del})=0$ then
$N\cdot\CS^{\mathrm{par}}_p(E_\hdot;\pi)=\pi_*\bigl(N\CS_p(\nabla'^{\Del})\bigr)=0$.

(3) The connection $\nabla'^{\Del}$ is locally nil-flat
(Lemma~\ref{lem:Del-nilflat}), so $\ch_p(\nabla'^{\Del})=0$ as a differential
form by Proposition~\ref{prop:nilflat-ch-zero}, whence $\ch_p(F')=0$ in
$H^{2p}(X',\qq)$.  By Proposition~\ref{prop:par-pullback},
$\pi^*\ch^{\mathrm{par}}_p(E_\hdot)=\ch_p(F')=0$; and $\pi^*$ is injective on
rational cohomology, since $\pi_*\pi^*=(\deg\pi)\cdot\mathrm{id}$ and
$\deg\pi\neq0$.  Hence $\ch^{\mathrm{par}}_p(E_\hdot)=0$.  For the last
assertion, each $E_{\vec\alpha}$ differs from $F$ by a twist along $D$, so its
rational Chern character is determined by $\ch^{\mathrm{par}}_p(E_\hdot)$ and
the classes of the $\Oo(D_i)$; the rational vanishing together with finite
generation of $H^{2p}(X,\zz)$ gives that $c^B_p(E_{\vec\alpha})$ is torsion.
\end{proof}

\begin{lemma}[Dependence on the covering]\label{lem:par-CS-scaling}
If $\pi':X''\to X$ factors as $\pi\circ\sigma$ with $\sigma:X''\to X'$ finite
of degree $d$, and both $\pi,\pi'$ are adapted to $L$, then
\[
  \CS^{\mathrm{par}}_p(E_\hdot;\pi') \;=\; d\cdot
  \CS^{\mathrm{par}}_p(E_\hdot;\pi).
\]
Consequently the normalized class
$(\deg\pi)^{-1}\CS^{\mathrm{par}}_p(E_\hdot;\pi)$ is independent of $\pi$ in
\linebreak[9]
$H^{2p-1}(X,\cc/\zz)\otimes\qq$.
\end{lemma}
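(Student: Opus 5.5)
The plan is to reduce the scaling identity to the behaviour of $\CS_p(\nabla^{\Del})$ under pullback along $\sigma$, and then apply the transfer identity $\sigma_*\sigma^* = d\cdot\mathrm{id}$.

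\textbf{Step 1: pullback of the canonical extension.} Write $F''$ for the canonical extension of $\pi'^*L$ on $X''$. Since $\pi'=\pi\circ\sigma$ and $\pi^*L$ already has unipotent local monodromy, $\sigma^*F'$ is the canonical extension of $\sigma^*\pi^*L=\pi'^*L$. The argument is the one in Corollary~\ref{cor:effective-betti}: residues get scaled by ramification indices, and scaling preserves nilpotence, so uniqueness of the extension with nilpotent residues gives $F''=\sigma^*F'$. By the parabolic formalism, i.e.\ Proposition~\ref{prop:par-pullback} together with the canonical nature of $\pi^{*\mathrm{par}}$, this is also $\pi'^{*\mathrm{par}}E_\hdot$.

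\textbf{Step 2: naturality of the class.} Next I show $\CS_p(\nabla''^{\Del})=\sigma^*\CS_p(\nabla'^{\Del})$ in $H^{2p-1}(X'',\cc/\zz)$. The connection $\sigma^*\nabla'^{\Del}$ is locally nil-flat, and it is compatible with the pulled-back patching collection of monodromy weight filtrations. This uses that $W(N)=W(eN)$, and likewise that $W(N_I)$ is unchanged under positive rescalings of the $N_i$, as recorded in the glossary via \cite{CKS}.

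Both $\sigma^*\nabla'^{\Del}$ and $\nabla''^{\Del}$ are therefore locally nil-flat connections compatible with patching data on the same bundle. I would invoke the independence of the class from the choice of compatible connection, namely the comparison $\CS_p=\delta^{-1}\ch_p^{\calD}$ of Theorem~\ref{thm:CS-regulator} and Proposition~\ref{prop:Del-Rees-compare}. Since the Deligne class of $F''=\sigma^*F'$ is $\sigma^*\ch_p^\calD(F')$, the two classes agree.

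This is the main obstacle: $\sigma$ need not carry the adapted covering of $X'$ to an adapted covering of $X''$. My first attempt would be to choose the covering on $X''$ refining $\sigma^{-1}$ of the covering on $X'$; refinement is harmless by the refinement remark after Corollary~\ref{cor:cech-exists-isotropic}. Alternatively, I would use the transgression form directly: the pulled-back transgression form is closed and represents the pulled-back class, so one only needs that the class is independent of the compatible patching.

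\textbf{Step 3: transfer.} We have $\pi'_*=\pi_*\sigma_*$, and $\sigma_*\sigma^*=d\cdot\mathrm{id}$ on $H^*(X',\cc/\zz)$ for the finite surjective map $\sigma$ of degree $d$, exactly as in the transfer arguments of Corollaries~\ref{cor:effective-betti} and \ref{cor:effective-betti-nc}. Hence
\[
\CS^{\mathrm{par}}_p(E_\hdot;\pi')=\pi_*\sigma_*\sigma^*\CS_p(\nabla'^{\Del})=d\cdot\pi_*\CS_p(\nabla'^{\Del})=d\cdot\CS^{\mathrm{par}}_p(E_\hdot;\pi).
\]

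For the consequence, let $\pi_1,\pi_2$ be two adapted Kawamata coverings. I take a Kawamata covering adapted to $L$ that dominates both, for instance one built from a component of the normalized fibre product by Kawamata's construction, which can be arranged since Galois coverings may be chosen. Applying the identity to both factorizations gives $(\deg\pi)^{-1}\CS^{\mathrm{par}}_p$ equal for $\pi_1$ and $\pi_2$ after tensoring with $\qq$.
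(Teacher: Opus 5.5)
Your proposal is correct and is essentially the paper's proof: identify the canonical extension of $\pi'^*L$ with $\sigma^*F'$, obtain $\sigma^*\CS_p(\nabla'^{\Del})$ as the class of the patched connection on $X''$ via independence of choices (Theorem~\ref{thm:Del-indep-app}), and conclude from $\pi'_*=\pi_*\sigma_*$ and $\sigma_*\sigma^*=d\cdot\mathrm{id}$. Of your two options in Step~2, the second is what the paper's one-line citation amounts to: naturality of the Cheeger--Simons character, together with the fact that $\sigma^*\nabla'^{\Del}$ is itself a patched connection for the pulled-back choices, and it removes the covering obstacle; comparing Deligne classes on $X''$ alone would not suffice, since $\alpha$ need not be injective there (Remark~\ref{rem:alpha-kernel}), and your dominating-cover argument supplies the ``consequently'' clause, which the paper leaves implicit.
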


\begin{proof}
By Proposition~\ref{prop:par-pullback} applied to $\sigma$, the canonical
extension of $\pi'^*L$ is $\sigma^{*}F'$, and by
Theorem~\ref{thm:Del-indep-app} its patched connection has
$\CS_p=\sigma^*\CS_p(\nabla'^{\Del})$.  Then
$\pi'_*\sigma^*\CS_p(\nabla'^{\Del})
=\pi_*\sigma_*\sigma^*\CS_p(\nabla'^{\Del})
=d\cdot\pi_*\CS_p(\nabla'^{\Del})$.
\end{proof}

\subsection{Realization on a Deligne-Mumford stack}
\label{subsec:par-stack}

Lemma~\ref{lem:par-CS-scaling} exhibits the dependences of
Definition~\ref{def:par-CS}: the class is attached to a pair $(E_\hdot,\pi)$
and scales with the covering, so it is an invariant of the parabolic bundle
only after normalization, hence only rationally.  The dependency is due to  presenting the parabolic bundle
through a covering.  There is a formulation in which no covering is chosen at
all, and we describe it here because it is the natural home for the
constructions of this section and because it is the form in which the
parabolic Reznikov theorem is proved in \cite{IS-parabolic}.

\subsubsection{The root stack}

Let $\vec n=(n_1,\dots,n_k)$ be positive integers.  The \emph{root stack}
$\Xx=X\langle\vec n\rangle$ is the Deligne-Mumford stack obtained from
$(X,D)$ by adjoining an $n_i$-th root of the equation of $D_i$: locally, where
$D_i=\{z_i=0\}$, one takes the quotient of $\{w_i^{n_i}=z_i\}$ by
$\mu_{n_i}$.  It comes with a coarse moduli map
\[
  q\;:\;\Xx\;\longrightarrow\;X,
\]
an isomorphism over $X^*$, and with reduced divisors
$\Dd_i\subset\Xx$ satisfying $q^*D_i=n_i\Dd_i$.  The stack $\Xx$ is smooth,
$\Dd=\sum_i\Dd_i$ is a normal crossings divisor on it, and $\Xx$ depends only
on $(X,D,\vec n)$: no covering, no Galois group, and no choices are involved.

\begin{theorem}[{\cite{Biswas}}, \cite{Borne}, {\cite[\S2]{IS-parabolic}}]
\label{thm:stack-correspondence}
Let $\vec n$ be such that $n_i$ clears the denominators of the weights of
$E_\hdot$ along $D_i$.  Then $E_\hdot\mapsto\Ee$ is an equivalence between the
category of locally abelian parabolic bundles on $(X,D)$ with weights in
$\bigoplus_i\tfrac{1}{n_i}\zz$ and the category of vector bundles on
$\Xx=X\langle\vec n\rangle$.  Under it,
\[
  E_{\vec\alpha}\;=\;q_*\Bigl(\Ee\otimes\Oo_\Xx\bigl(\textstyle\sum_i
  \lfloor n_i\alpha_i\rfloor\Dd_i\bigr)\Bigr),
\]
and the parabolic Chern character is the ordinary Chern character of $\Ee$
pushed to $X$,
\[
  \ch^{\mathrm{par}}_p(E_\hdot)\;=\;q_*\,\ch_p(\Ee)
  \;\in\;H^{2p}(X,\qq).
\]
\end{theorem}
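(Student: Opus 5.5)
The correspondence is local on $X$, so we first establish it over a coordinate polydisc $U$ with $D\cap U=\{z_1\cdots z_a=0\}$ and then glue.

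\textbf{Local chart.} On the chart $Y=\{w_i^{n_i}=z_i\}$ with the action of $G=\prod_{i\le a}\mu_{n_i}$, a vector bundle on $\Xx|_U=[Y/G]$ is the same thing as a $G$-equivariant bundle on $Y$. After shrinking $U$, any such bundle splits as a direct sum of $G$-equivariant line bundles $\Oo_Y\otimes\chi$ for characters $\chi=(\chi_i)$ of $G$. The splitting comes from linear reductivity of the finite group $G$: decompose the fibre at the origin into characters and lift equivariantly using averaging. Each character $\chi_i=\zeta\mapsto\zeta^{-c_i}$ with $0\le c_i<n_i$ corresponds to the equivariant line bundle $\Oo(-\sum c_i\Dd_i)$. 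Its invariant pushforwards
\[
  q_*\bigl(\Oo(-\textstyle\sum_i c_i\Dd_i+\sum_i\lfloor n_i\alpha_i\rfloor\Dd_i)\bigr)
\]
form exactly the parabolic line bundle $\Oo_U\langle\vec\alpha\rangle$ with weights $c_i/n_i$. This is the standard computation of invariants of $\Oo_Y$ twisted by monomials in the $w_i$. Conversely, a parabolic line bundle with weights in $\tfrac1{n_i}\zz$ arises in this way. By Definition~\ref{def:locally-abelian}, every locally abelian parabolic bundle is locally a sum of such line bundles. So the functor $\Ee\mapsto E_\hdot$, given by the displayed formula, is essentially surjective locally. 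It is fully faithful because
\[
  \Hom(\Ee,\Ee')=\Hom(E_\hdot,E'_\hdot)
\]
reduces, summand by summand, to the rank-one case, and there it is checked on monomials.

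\textbf{Gluing.} Both sides are stacks for the Zariski topology on $X$: bundles on $\Xx$ descend along $q^{-1}$ of an open cover, and parabolic bundles are sheaf data. A locally defined fully faithful and essentially surjective functor therefore globalizes to an equivalence. The normalization and semicontinuity axioms of Definition~\ref{def:parabolic} follow from the floor formula and from $q^*D_i=n_i\Dd_i$.

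\textbf{Chern character.} Both sides of the identity for $\ch^{\mathrm{par}}$ are additive and compatible with Kawamata pullback. Concretely, a Kawamata covering $\pi:X'\to X$ with $n_i\mid e_i$ factors through $\Xx$ via a map $p:X'\to\Xx$. Under this factorization, $p^*\Ee=\pi^{*\mathrm{par}}E_\hdot$, which one checks on the local line-bundle summands as in Proposition~\ref{prop:par-pullback}. We then get
\[
  \pi^*q_*\ch_p(\Ee)=\ch_p(p^*\Ee)=\pi^*\ch^{\mathrm{par}}_p(E_\hdot).
\]
Here we use $q_*$ on rational cohomology, which is an isomorphism since $q$ is a coarse moduli map and so induces an isomorphism $H^*(\Xx,\qq)\cong H^*(X,\qq)$. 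Finally, injectivity of $\pi^*$ on rational cohomology gives $\ch^{\mathrm{par}}_p(E_\hdot)=q_*\ch_p(\Ee)$.

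The main obstacle is the local splitting of equivariant bundles compatibly with the parabolic filtration, together with the precise bookkeeping of the floor function that matches the weight conventions. I would handle this first in rank one and then extend by additivity.
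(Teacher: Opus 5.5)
Your proposal is correct and follows the route the paper indicates. The paper does not prove this statement: it cites \cite{Biswas,Borne} and remarks only that local abelianness reduces everything to parabolic line bundles with weights in $\tfrac1{n_i}\zz$, each of which is $q_*$ of a line bundle on $\Xx$, which is exactly your local splitting plus rank-one check; your Chern-character argument uses the same factorization $\pi=q\circ p$ through $\Xx$ that appears in the proof of Corollary~\ref{cor:stack-deligne-torsion}.

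Two points are worth making explicit. First, you need $q^*q_*=\mathrm{id}$ on $H^*(\Xx,\qq)$, which holds because $q$ is an isomorphism over $X^*$. Second, the isomorphism $p^*\Ee\cong\pi^{*\mathrm{par}}E_\hdot$ should be produced as a global canonical map and only then checked on local summands, so that it glues; the adjunction maps $\pi^*E_{\vec\alpha}\to p^*\Ee\otimes\Oo_{X'}\bigl(\sum_i e_i\alpha_iD'_i\bigr)$ supply such a map.
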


Definition~\ref{def:locally-abelian} asserts  that locally the parabolic
structure is a direct sum of line bundle data, and a parabolic line bundle
with weight in $\tfrac{1}{n_i}\zz$ is precisely $q_*$ of a line bundle on the
root stack.  Proposition~\ref{prop:par-loc-abelian} is thus the verification
that the parabolic bundle attached to a quasi-unipotent local system is in the
image of this correspondence.

Two features distinguish the stack picture from the presentation by Kawamata
coverings.

\emph{(1) No choices.}   The root stack $X\langle\vec n\rangle$ is determined by $\vec n$
alone; and for $\vec n\mid\vec m$ there is a canonical map
$X\langle\vec m\rangle\to X\langle\vec n\rangle$ under which the bundles
correspond, so the object $\Ee$ is canonical up to this filtered system.  In
particular an invariant of $\Ee$ is an invariant of $E_\hdot$, with no
normalization and no rational denominators.

\emph{(2) The ordinary theory applies verbatim.}  $\Xx$ is a smooth
Deligne-Mumford stack with a normal crossings divisor $\Dd$, and $\Ee$ is an
ordinary vector bundle on it.  Everything used in this paper---the canonical
extension, the local monodromy weight filtrations, patching data,
$F^1$-connections, differential characters, Deligne-Beilinson cohomology---has
a direct analogue for such a stack, obtained from the smooth case by descent
along an \'{e}tale presentation.  

\subsubsection{The parabolic (extended) Reznikov theorem}

The primary consequence in this direction is due to the authors
\cite{IS-parabolic}, who compute the Chern character of a locally abelian
parabolic bundle explicitly in terms of its constituent bundles, and deduce
the following.

\begin{theorem}[{\cite{IS-parabolic}}]\label{thm:parabolic-reznikov}
Let $L$ be a local system on $X^*$ whose local monodromies at infinity are
semisimple of finite order, and let $E_\hdot$ be the associated locally
abelian parabolic bundle.  Then the parabolic Chern classes of $E_\hdot$
vanish in rational Deligne cohomology in degrees $\geq2$.
\end{theorem}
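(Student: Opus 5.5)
The plan is to work on the root stack $\Xx=X\langle\vec n\rangle$ and reduce to the unipotent torsion theorem. Since the local monodromies $T_i$ are semisimple of finite order, the nilpotent parts $N_i$ vanish. Choose $n_i$ so that $T_i^{n_i}=1$; then $n_i$ clears the denominators of the weights. By Proposition~\ref{prop:par-loc-abelian} and Theorem~\ref{thm:stack-correspondence}, $E_\hdot$ corresponds to a vector bundle $\Ee$ on $\Xx$. The pullback of $L$ to $\Xx\setminus\Dd\cong X^*$ has \emph{trivial} local monodromy around each $\Dd_i$ in the stacky sense: in the local chart $\{w_i^{n_i}=z_i\}$ the loop around $\Dd_i$ maps to $\gamma_i^{n_i}$. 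Hence $\Ee$ carries a genuine holomorphic flat connection with no poles along $\Dd$. This is the local computation of Proposition~\ref{prop:par-pullback}: on each summand the residue becomes $e_i\alpha_i-e_i\alpha_i+e_iN_i=0$.

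The key steps, in order, are as follows.

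(i) Show that the connection on $\Ee$ is flat on all of $\Xx$. Check on an étale presentation by a Kawamata covering $\pi:X'\to\Xx$: there $\pi^*\Ee=F'$ carries a logarithmic connection with zero residues, i.e.\ a regular flat connection, and it is equivariant for the Galois group. By descent, $\Ee$ is then a flat bundle on the smooth proper DM stack $\Xx$.

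(ii) Compute the Chern classes of $\Ee$ in rational Deligne cohomology. Rationally, $H^*_\calD(\Xx,\qq(p))\cong H^*_\calD(X',\qq(p))^{G}$, where $G$ is the Galois group. Over the finite cover $\pi$ the pullback is a flat bundle on a smooth projective variety, so Reznikov's theorem \cite{Reznikov} — or the case $D'=\emptyset$ of Theorem~\ref{thm:main}, Corollary~\ref{cor:deligne-torsion} — makes $c^\calD_p(\pi^*\Ee)$ torsion for $p\geq2$, hence zero rationally. Since $\pi^*$ is injective on rational Deligne cohomology (transfer, $\pi_*\pi^*=\deg\pi$), $c^\calD_p(\Ee)_\qq=0$.

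(iii) Transport the vanishing to $X$. The parabolic Chern classes are defined via $q_*$ of the Chern character of $\Ee$, as in Theorem~\ref{thm:stack-correspondence}, now read in Deligne cohomology with $q_*$ the rational pushforward of the coarse map. Since $q_*$ is a group homomorphism, vanishing in degrees $\geq2$ follows. Then pass from $\ch_p$ to $c_p$ by the Newton identities. This requires care because $c_1$ need not vanish; one uses multiplicativity together with the fact that products involving $c_1^\calD$ of a flat bundle are rationally killed once higher classes vanish. The alternative is to state and prove everything for $\ch$ and note that the paper's notion of parabolic Chern classes is defined through the stack.

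The main obstacle is step (iii), together with the well-definedness of rational Deligne cohomology and pushforward for the coarse map $q$. I would first try to avoid the stack entirely: define $c^{\mathrm{par},\calD}_p(E_\hdot):=(\deg\pi)^{-1}\pi_*c^\calD_p(F')$ for a Kawamata covering $\pi$. Independence of $\pi$ then follows as in Lemma~\ref{lem:par-CS-scaling}, and the vanishing follows from step (ii) on $X'$ together with the fact that $\pi_*$ is a homomorphism. Once this covering definition is shown to agree with $q_*$ on $\Xx$, the stack formulation follows.
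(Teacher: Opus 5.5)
Your argument is correct, and its core is the one behind the cited result (and behind Corollary~\ref{cor:stack-deligne-torsion}). On a Kawamata covering $\pi:X'\to X$ the local monodromies become $T_i^{e_i}=1$. So $F'=\pi^{*\mathrm{par}}E_\hdot$ has zero residues (Proposition~\ref{prop:par-pullback} with $N_i=0$) and is an honest flat bundle on the smooth projective $X'$. Reznikov then makes $c^\calD_p(F')$ torsion for $p\geq2$, and one descends because $\pi^*$ is injective on rational Deligne cohomology.

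The difference lies in how the parabolic classes are put into the Deligne cohomology of $X$. The paper points to the explicit formula of \cite{IS-parabolic} expressing $\ch^{\mathrm{par}}(E_\hdot)$ through the constituent bundles $E_{\vec\alpha}$ and the classes of the $D_i$. That formula lives in $H^*_\calD(X,\qq(*))$ directly: it needs no stack, no Deligne cohomology or pushforward on a DM stack, and no Gysin map, and it is independent of choices by construction. It is compatible with Kawamata pullback, $\pi^*\ch^{\mathrm{par}}(E_\hdot)=\ch(F')$, after which the deduction is one line.

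You replace the formula by the root-stack correspondence of Theorem~\ref{thm:stack-correspondence}, or by the normalized transfer $(\deg\pi)^{-1}\pi_*$. This is more conceptual and gives an integral statement on $\Xx$. But to prove the theorem as formulated, you still owe the identification of your $q_*$- or transfer-defined classes with the parabolic Chern classes of the statement. That identification is exactly the Kawamata functoriality the explicit formula provides.

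Once you have $\pi^*c^{\mathrm{par}}_p=c^\calD_p(F')$, use $\pi^*$ rather than $\pi_*$. Since $\pi^*$ is a ring homomorphism, the Newton-identity detour in (iii) disappears. If you push forward instead, the reason powers of $c_1$ die is that two classes with zero image in $H^*(X,\qq)$ multiply to zero in rational Deligne cohomology. It has nothing to do with the higher classes vanishing.

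Two assertions should be repaired, although the proof does not use them.
\begin{itemize}
\item A Kawamata covering is in general not an \'{e}tale presentation of $\Xx$. The map $f:X'\to\Xx$ is only finite and flat: it is ramified along $\Dd_i$ when $e_i>n_i$, and along the auxiliary divisors of Kawamata's construction. A covering \'{e}tale over $X^*$ with prescribed ramification need not exist, already for one point of $\pp^1$. So the descent of the connection in (i) is not justified as written. But (i) is superfluous, since (ii) only needs $F'=f^*\Ee$ to be flat on $X'$.
\item Likewise, $H^*_\calD(\Xx,\qq(p))\cong H^*_\calD(X',\qq(p))^G$ does not follow from \'{e}tale descent. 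It holds rationally because both sides are identified with $H^*_\calD(X,\qq(p))$.
\end{itemize}
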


Theorem~\ref{thm:quasi-unipotent} gives an integral refinement of this on the
stack, for local monodromy that is quasi-unipotent rather than semisimple.

\begin{corollary}\label{cor:stack-deligne-torsion}
Let $L$ be a local system on $X^*$ with quasi-unipotent local monodromy, let
$E_\hdot$ be the associated locally abelian parabolic bundle, let $\vec n$
clear the denominators of its weights, and let $\Ee$ be the corresponding
vector bundle on $\Xx=X\langle\vec n\rangle$ under
Theorem~\ref{thm:stack-correspondence}.  Then the Deligne-Beilinson Chern
classes
\[
  \ch^{\calD}_p(\Ee)\;\in\;H^{2p}_{\calD}\bigl(\Xx,\zz(p)\bigr)
\]
are torsion for every $p\geq2$.
\end{corollary}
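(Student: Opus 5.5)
The plan is to transport the torsion statement of Theorem~\ref{thm:main} from a Kawamata covering to the root stack, using the fact that $\Xx=X\langle\vec n\rangle$ is itself a quotient stack on which the ordinary theory applies. First I choose a Kawamata covering $\pi:X'\to X$ adapted to $L$ with ramification indices $e_i$ divisible by $n_i$, and take it Galois with group $\Gamma$. Because $e_i$ is divisible by $n_i$, the map $\pi$ factors through the root stack as $X'\xrightarrow{\;\tilde\pi\;}\Xx\xrightarrow{\;q\;}X$, and $\tilde\pi$ is finite flat of degree $\deg\pi$; locally $\Xx\cong[X'/\Gamma]$ after passing to the stabilizer. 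Under Theorem~\ref{thm:stack-correspondence}, the pullback $\tilde\pi^*\Ee$ is the parabolic pullback $\pi^{*\mathrm{par}}E_\hdot$. This identification holds because both sides are computed summand by summand on a local decomposition into parabolic line bundles, and $\Oo_\Xx(-\sum\lfloor n_i\alpha_i\rfloor\Dd_i)$ pulls back to $\Oo_{X'}(-\sum e_i\alpha_iD'_i)$. Proposition~\ref{prop:par-pullback} then gives $\tilde\pi^*\Ee\cong F'$, the Deligne canonical extension of $\pi^*L$.

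Next I apply Corollary~\ref{cor:deligne-torsion} on $(X',D')$: the class $\ch^\calD_p(F')\in H^{2p}_\calD(X',\zz(p))$ is torsion for $p\geq2$. By functoriality of Deligne-Beilinson Chern characters under pullback along $\tilde\pi$, which makes sense on smooth DM stacks by étale descent, this gives $\tilde\pi^*\ch^\calD_p(\Ee)=\ch^\calD_p(F')$, so $\tilde\pi^*\ch^\calD_p(\Ee)$ is torsion.

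It remains to descend torsion along $\tilde\pi$. I would construct a transfer $\tilde\pi_*:H^{2p}_\calD(X',\zz(p))\to H^{2p}_\calD(\Xx,\zz(p))$ satisfying $\tilde\pi_*\tilde\pi^*=\deg\tilde\pi\cdot\mathrm{id}$. Since $\tilde\pi$ is finite flat between smooth DM stacks, the trace on $\Oo$-modules and on $\Omega^\bullet(\log)$, together with the trace on $\zz(p)$, defines a trace map of Deligne complexes $\tilde\pi_*\calD_{X'}(p)\to\calD_\Xx(p)$. Composing it with the unit gives multiplication by the degree. With this transfer, if $N\cdot\tilde\pi^*\ch^\calD_p(\Ee)=0$, then
\[
N\deg\tilde\pi\cdot\ch^\calD_p(\Ee)=\tilde\pi_*\bigl(N\tilde\pi^*\ch^\calD_p(\Ee)\bigr)=0,
\]
so $\ch^\calD_p(\Ee)$ is torsion.

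The main obstacle is this last step: building the trace on the logarithmic Deligne complex compatibly across the ramification along $\Dd$, and checking that the composite with the unit is exactly multiplication by the degree. The claim is that over the stack $\tilde\pi$ becomes étale in codimension one on the coarse level, up to the stacky structure, so that logarithmic forms are preserved by the trace. To prove it I would first check the trace locally on the model $\{w^{e}=z\}\to[\{u^{n}=z\}/\mu_n]$, where it reduces to the averaging operator over $\mu_{e/n}$, and only then glue.
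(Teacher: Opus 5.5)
Your proposal is correct and follows the paper's proof essentially step for step. You factor the adapted Kawamata covering through the root stack as $\tilde\pi:X'\to\Xx$, identify $\tilde\pi^*\Ee=\pi^{*\mathrm{par}}E_\hdot=F'$, apply Corollary~\ref{cor:deligne-torsion} on $X'$, and kill $\ker\tilde\pi^*$ by the transfer $\tilde\pi_*\tilde\pi^*=(\deg\tilde\pi)\cdot\mathrm{id}$. The paper simply asserts that transfer, whereas you propose to construct it.

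If you carry out that construction, three corrections apply. First, $\Xx$ is proper, so only the ordinary Deligne complex $\zz(p)\to\Oo\to\Omega^1\to\cdots\to\Omega^{p-1}$ is involved and no logarithmic trace is needed. Second, $\tilde\pi$ is still ramified to order $e_i/n_i$ along $D'_i$, so it is not ``\'etale in codimension one'' unless $e_i=n_i$, as your own local model shows. Third, on $\zz(p)$ the trace must be the sum over $\mu_{e/n}$, not the average; otherwise $\tilde\pi_*\tilde\pi^*$ is not multiplication by the degree.
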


\begin{proof}
Choose a Kawamata covering $\pi:X'\to X$ adapted to $L$
(Definition~\ref{def:kawamata}) with ramification indices $e_i$ divisible by
$n_i$.  Then $\pi$ factors through the root stack,
\[
  \pi\;:\;X'\;\xrightarrow{\ f\ }\;\Xx\;\xrightarrow{\ q\ }\;X,
\]
with $f$ representable, finite, flat and surjective---an $n_i$-th root of the
equation of $D_i$ is supplied on $X'$ by $w_i^{e_i/n_i}$---and by
Theorem~\ref{thm:stack-correspondence} together with
Proposition~\ref{prop:par-pullback},
\[
  f^*\Ee\;=\;\pi^{*\mathrm{par}}E_\hdot\;=\;F',
\]
the canonical extension of $\pi^*L$ on $X'$.

Since $\pi^*L$ has unipotent local monodromy and $(X',D')$ is a smooth
projective variety with a normal crossings divisor,
Corollary~\ref{cor:deligne-torsion} applies on $X'$ and gives that
$\ch^\calD_p(F')$ is torsion in $H^{2p}_\calD(X',\zz(p))$ for $p\geq2$; say
$N\cdot\ch^\calD_p(F')=0$.  By functoriality of the Deligne-Beilinson Chern
character, $f^*\bigl(N\cdot\ch^\calD_p(\Ee)\bigr)=N\cdot\ch^\calD_p(F')=0$.
Finally $f$ is finite surjective of some degree $d$, so the transfer satisfies
$f_*f^*=d\cdot\mathrm{id}$ and $\ker f^*$ is killed by $d$; hence
$dN\cdot\ch^\calD_p(\Ee)=0$.
\end{proof}

Corollary~\ref{cor:stack-deligne-torsion} sharpens
Theorem~\ref{thm:quasi-unipotent}(3) in the following ways.  Relative to
Theorem~\ref{thm:parabolic-reznikov} it allows an arbitrary unipotent part in
the local monodromy, at the cost of passing through
Theorem~\ref{thm:main} on a covering; the two results are complementary, the
quasi-unipotent case in general being the combination of a semisimple part of
finite order handled by the parabolic structure and a unipotent part handled
by the weight filtrations of Section~\ref{sec:patching}.


\appendix

\section{Differential characters and Chern-Simons classes on manifolds}
\label{app:diffchar}

\subsection{Differential characters}

Let $X$ be a compact oriented smooth manifold.  We work with singular homology
and cohomology with $\zz$ coefficients.  Denote by $Z_k(X)$ the group of
smooth singular $k$-cycles.

\begin{definition}[Cheeger-Simons {\cite{CS}}]
A \emph{differential character} of degree $2p-1$ on $X$ is a group homomorphism
\[
  \hat h : Z_{2p-1}(X)\;\longrightarrow\;\rr/\zz
\]
such that there exists a smooth closed $2p$-form $\omega = \omega(\hat h)$
(the \emph{curvature}) with the property that for every smooth $2p$-chain $\sigma$:
\[
  \hat h(\partial\sigma) = \int_\sigma \omega \pmod{\zz}.
\]
The group of such characters is denoted $\widehat H^{2p-1}(X;\zz)$.
\end{definition}

There is an exact sequence
\begin{equation}\label{eq:diff-char-exact}
  0 \to H^{2p-1}(X;\rr/\zz) \to \widehat H^{2p-1}(X;\zz) \xrightarrow{\omega}
  \Omega^{2p}_{\mathrm{cl}}(X) \to 0,
\end{equation}
and also
\begin{equation}\label{eq:diff-char-exact2}
  0 \to \frac{A^{2p-1}(X)}{A^{2p-1}_{\zz}(X)} \to
  \widehat H^{2p-1}(X;\zz) \xrightarrow{\delta} H^{2p}(X;\zz) \to 0,
\end{equation}
where $A^{2p-1}_{\zz}(X)$ denotes the closed forms with integral periods.
The composition $\omega\circ\delta = 0$ and $\delta\circ\omega = $ de Rham map.

\subsection{Construction for vector bundles}

Let $E\to X$ be a complex vector bundle of rank $r$ with a smooth connection
$\nabla$.  Recall the Chern-Weil construction: to the invariant polynomial
$P_p = \frac{1}{p!(2\pi i)^p}\mathrm{tr}(-)^p$ one associates the closed
Chern character form
\[
  \ch_p(\nabla) = P_p(F(\nabla)) = \frac{1}{p!(2\pi i)^p}\mathrm{Tr}(F(\nabla)^p)
  \;\in\; A^{2p}_{\mathrm{cl}}(X).
\]

\begin{definition}
The \emph{Chern-Simons class} of $(E,\nabla)$ is the differential character
$\widehat{\ch}_p(E,\nabla)\in\widehat H^{2p-1}(X;\zz)$ whose curvature is
$\ch_p(\nabla)$, defined as follows.  For a singular $(2p-1)$-cycle $\gamma$,
choose a chain $\Gamma$ in $\mathrm{Map}(X,\mathbf{Gr}(r,N))$ (Grassmannian)
with $\partial\Gamma = $ (classifying map of $E$) restricted to $\gamma$.  Set
\[
  \widehat{\ch}_p(E,\nabla)(\gamma) = \int_\Gamma \ch_p^{\mathrm{univ}}
  \pmod\zz,
\]
where $\ch_p^{\mathrm{univ}}$ is the universal Chern character on the
Grassmannian.
\end{definition}

\begin{proposition}[{\cite{CS}}]
This is well-defined and gives a differential character with curvature $\ch_p(\nabla)$.
If $\nabla_0,\nabla_1$ are two connections with $\ch_p(\nabla_0)=\ch_p(\nabla_1)$,
the transgression form is closed and its class
\[
  \left[\frac{p}{p!\,(2\pi i)^p}\int_0^1
    \mathrm{Tr}\!\left(\dot\nabla_t\wedge F(\nabla_t)^{p-1}\right)\,dt\right]
  \;\in\; H^{2p-1}(X,\rr), \qquad \nabla_t = \nabla_0 + t(\nabla_1-\nabla_0),
\]
reduces modulo $\zz$ to $\widehat{\ch}_p(E,\nabla_1) - \widehat{\ch}_p(E,\nabla_0)$,
a flat character lying in the subgroup $H^{2p-1}(X,\rr/\zz)\subset\widehat
H^{2p-1}(X,\zz)$ of \eqref{eq:diff-char-exact}. (The normalising factor
$1/p!(2\pi i)^p$ is the one occurring in $\ch_p$ above and must be carried
along here: dropping it gives instead the transgression of the unnormalised
polynomial $\mathrm{Tr}(-)^p$, and the resulting form would not have curvature
$\ch_p(\nabla)$, nor integral periods after gauge transformation --- compare
\eqref{eq:CS-form-normalised} below, where the same factor is present.)
\end{proposition}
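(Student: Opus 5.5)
The plan follows the original Cheeger--Simons argument \cite{CS}, reducing everything to a universal connection on a Grassmannian and to the transgression identity of Lemma~\ref{lem:bott-chern}, rescaled as in Lemma~\ref{lem:normalized-formula}.

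\textbf{Step 1: the universal case.} Fix a classifying map $f:X\to\mathbf{Gr}(r,N)$ with $f^*\mathcal{S}\cong E$. Let $\nabla^{\mathrm{univ}}$ be the Hermitian connection on the tautological bundle $\mathcal{S}$, and set $\nabla^f:=f^*\nabla^{\mathrm{univ}}$.

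\begin{itemize}
\item \emph{Independence of the chain.} For a cycle $\gamma$, two admissible chains $\Gamma,\Gamma'$ in the Grassmannian differ by a cycle. The form $\ch_p^{\mathrm{univ}}$ carries the factor $1/p!(2\pi i)^p$, so it represents the rational class $\ch_p$; it has integral periods after passing to the integral polynomial (Newton identities), and modulo $\zz$ the ambiguity disappears. This is the place where the normalization noted in the statement is essential.
\item \emph{Additivity.} The value is additive in $\gamma$.
\item \emph{Curvature.} If $\gamma=\partial\sigma$, we may take $\Gamma=f_*\sigma$. Then $\widehat{\ch}_p(E,\nabla^f)(\partial\sigma)=\int_\sigma f^*\ch_p^{\mathrm{univ}}=\int_\sigma\ch_p(\nabla^f)$, so the curvature of the character attached to $\nabla^f$ is $\ch_p(\nabla^f)$.
\end{itemize}

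\textbf{Step 2: a general connection.} For arbitrary $\nabla$, I would interpret the definition as
\[
  \widehat{\ch}_p(E,\nabla):=\widehat{\ch}_p(E,\nabla^f)+\Bigl[\,\tfrac{p}{p!(2\pi i)^p}\int_0^1\Tr(\dot\nabla_t\wedge F(\nabla_t)^{p-1})\,dt\Bigr],
\]
where $\nabla_t$ runs from $\nabla^f$ to $\nabla$. Write $T(\nabla_0,\nabla_1)$ for this normalized transgression form. By Lemma~\ref{lem:bott-chern} divided by $p!(2\pi i)^p$, we have $d\,T=\ch_p(\nabla)-\ch_p(\nabla^f)$. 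Adding the integral of $T$ over $\gamma$ therefore changes the curvature from $\ch_p(\nabla^f)$ to $\ch_p(\nabla)$, by Stokes' theorem.

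\textbf{Step 3: well-definedness (main obstacle).} I must show the result does not depend on $f$, on the homotopy class of the classifying data, or on the path. Two ingredients are needed.

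\begin{itemize}
\item \emph{Transgressions are additive up to exact forms.} For $\nabla_0,\nabla_1,\nabla_2$ I would show $T(\nabla_0,\nabla_1)+T(\nabla_1,\nabla_2)-T(\nabla_0,\nabla_2)$ is exact. To do this, put the affine family of connections on $E\times\Delta^2\to X\times\Delta^2$ and take $\ch_p$ of the total connection. This is a closed form; integrating it over the fibre $\Delta^2$ gives a form on $X$ whose differential is the alternating sum of the three edge transgressions, by the fibre-integration Stokes formula. The same computation on $X\times[0,1]$ shows that $T$ is independent of the chosen path up to an exact form.
\item \emph{Change of classifying map.} Two classifying maps $f,f'$ are homotopic after stabilizing $N$. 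Pulling $\nabla^{\mathrm{univ}}$ back along the homotopy $X\times[0,1]\to\mathbf{Gr}$ and integrating over $[0,1]$ shows that the jump in $\widehat{\ch}_p(E,\nabla^f)$ is exactly $T(\nabla^{f},\nabla^{f'})$ modulo exact forms and integral classes.
\end{itemize}

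Exact forms integrate to zero on cycles, so in both cases the character is unchanged.

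\textbf{Step 4: the difference formula.} Given the above, by Step~3 we have
\[
  \widehat{\ch}_p(E,\nabla_1)-\widehat{\ch}_p(E,\nabla_0)=\Bigl[T(\nabla_0,\nabla_1)\Bigr]\pmod{\zz}.
\]
If $\ch_p(\nabla_0)=\ch_p(\nabla_1)$, Lemma~\ref{lem:bott-chern} gives $d\,T(\nabla_0,\nabla_1)=0$. The difference is then a character with zero curvature, i.e.\ a flat character, and by \eqref{eq:diff-char-exact} it lies in $H^{2p-1}(X,\rr/\zz)$. It is represented by the class of the closed form $T(\nabla_0,\nabla_1)$ reduced modulo $\zz$, which is the displayed statement.

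The delicate point throughout is bookkeeping the constant $1/p!(2\pi i)^p$ consistently. Only with this normalization do the universal periods land in $\zz$ rather than $\zz(p)$, which is what makes the reduction modulo $\zz$ in Steps~1 and~4 meaningful.
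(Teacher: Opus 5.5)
The gap is in Step~1, at ``modulo $\zz$ the ambiguity disappears.'' It cannot be repaired, because the first sentence of the statement is false as written for $p\geq2$.

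The form $\ch_p^{\mathrm{univ}}=\frac{1}{p!(2\pi i)^p}\Tr(F^p)$ does not have integral periods. The form $(2\pi i)^{-p}\Tr(F^p)$ has the periods of $\pm s_p(c_1,\dots,c_p)$, which are integral, but the factor $1/p!$ then introduces denominators. For instance, embed $\pp^2\hookrightarrow\mathbf{Gr}(r,N)$ by $\ell\mapsto\ell\oplus\cc^{r-1}$. The tautological bundle restricts to $\Oo(-1)\oplus\Oo^{r-1}$, and $\int_{\pp^2}\ch_2^{\mathrm{univ}}=\frac12$. So two chains $\Gamma,\Gamma'$ with $\partial\Gamma=\partial\Gamma'=f_*\gamma$ can give values differing by $\frac12$ mod $\zz$.

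Passing to the integral polynomial replaces $\ch_p$ by $p!\,\ch_p$ (or by $c_p$), so it proves something about a different character. The same rational ambiguity reappears in Step~3 when you compare two classifying maps, since the discrepancy there is a period of $\ch_p^{\mathrm{univ}}$ on a $2p$-cycle of the Grassmannian.

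More decisively, no character with this curvature exists at all. If $\hat h\in\widehat H^{2p-1}(X;\zz)$ has curvature $\omega$ and $Z$ is a $2p$-cycle, the defining identity gives $\int_Z\omega\equiv\hat h(\partial Z)=\hat h(0)=0\pmod\zz$, so $\omega$ must have integral periods. For $X=\pp^2$, $E=\Oo(1)$, $p=2$, every connection satisfies $\int_X\ch_2(\nabla)=\frac12$, so $\widehat H^{3}(X;\zz)$ contains no character with curvature $\ch_2(\nabla)$. Your closing remark is therefore incorrect: with this normalization the universal periods lie in $\frac1{p!}\zz$, not in $\zz$.

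The paper gives no argument beyond the citation. It does record in \S\ref{subsec:CS-comparison} that the Cheeger--Simons $\hat{ch}$ is $\rr/\qq$-valued, and the correct statement is obtained by changing the target accordingly. You can take values in $\rr/\qq$ (or $\cc/\qq$), work with $\hat c_p$, or work with $p!\,\widehat{\ch}_p$, which your construction does define modulo $\zz$.

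With that change your architecture is the standard proof and goes through. It consists of pulling back the universal Hermitian connection, correcting by the transgression $T$, the triangle identity on $X\times\Delta^2$, and the homotopy of classifying maps. The correction by $T$ is a necessary reinterpretation, since the recipe as written never sees $\nabla$.

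The relative half of the statement survives with $\zz$. Compute both characters with the same $f$, the same isomorphism $E\cong f^*\mathcal S$ and the same chains $\Gamma$. Then the $\Gamma$-dependence cancels and your Step~4 gives $\int_\gamma T(\nabla_0,\nabla_1)$ on the nose, so for closed $T$ its reduction mod $\zz$ is a well-defined flat class.

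Finally, for non-unitary connections $T$ is complex-valued. The targets should therefore be $H^{2p-1}(X,\cc)$ and $\cc/\zz$ (or $\cc/\qq$), not $\rr$ and $\rr/\zz$; your Step~2 silently produces complex values.
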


\subsection{The Chern-Simons form and its cocycle}

For a trivial bundle $E = X\times\cc^r$ with connection $\nabla = d+A$, the
Chern-Simons class has an explicit formula.  Define
\begin{equation}\label{eq:CS-form-normalised}
  \mathrm{CS}_{2p-1}(A) \;=\; \frac{1}{p!\,(2\pi i)^p}\;
  p\int_0^1 \mathrm{Tr}\!\left(A\wedge(tdA + t^2A\wedge A)^{p-1}
  \right)\,dt
  \;\in\; A^{2p-1}(X),
\end{equation}
so that $d\,\mathrm{CS}_{2p-1}(A) = \ch_p(d+A) - \ch_p(d)$.  The normalising
factor $1/p!(2\pi i)^p$ is the one occurring in $\ch_p$ and must be carried
along: without it the transgression of the unnormalised polynomial
$\mathrm{Tr}(-)^p$ is obtained instead, and the integrality statement of
Lemma~\ref{lem:gauge-variation} below fails.  For $p=1$
formula~\eqref{eq:CS-form-normalised} reads $\mathrm{CS}_1(A)=\mathrm{Tr}(A)/2\pi i$.  When
$\ch_p(d+A)=0$ (e.g.\ for flat or locally nil-flat $A$), the form
$\mathrm{CS}_{2p-1}(A)$ is closed and its cohomology class modulo $\zz$
is the Chern-Simons class.

\begin{lemma}[Gauge variation]\label{lem:gauge-variation}
Let $g:X\to GL_r(\cc)$ be smooth and let
$A^g := gAg^{-1}-(dg)g^{-1}$ be the gauge transform of $A$, so that
$d+A^g = g\circ(d+A)\circ g^{-1}$.  Put $\theta = g^{-1}dg$ for the
Maurer-Cartan form and
\[
  \mu_{2p-1} \;:=\; (-1)^{p-1}\,\frac{(p-1)!}{(2p-1)!}\,
  \frac{1}{(2\pi i)^p}\,\mathrm{Tr}\bigl(\theta^{\wedge(2p-1)}\bigr)
  \;\in\; A^{2p-1}\bigl(GL_r(\cc)\bigr).
\]
Then
\[
  \mathrm{CS}_{2p-1}(A^g) \;=\; \mathrm{CS}_{2p-1}(A)\;-\;g^*\mu_{2p-1}
  \;+\; d\beta
\]
for some $\beta\in A^{2p-2}(X)$.  Moreover $\mu_{2p-1}$ is a closed
bi-invariant form whose de Rham class lies in the image of
$H^{2p-1}(GL_r(\cc),\zz)\to H^{2p-1}(GL_r(\cc),\cc)$; consequently
$g^*\mu_{2p-1}$ has integral periods on $X$, and
\[
  \bigl[\mathrm{CS}_{2p-1}(A^g)\bigr] \;=\;\bigl[\mathrm{CS}_{2p-1}(A)\bigr]
  \quad\text{in } H^{2p-1}(X,\cc/\zz)
\]
whenever both forms are closed.
\end{lemma}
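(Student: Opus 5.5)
The plan is to deduce the gauge formula from two general properties of the two-connection transgression form, and then reduce the remaining term to an explicit computation for a flat pure-gauge connection. For connections $\nabla_0,\nabla_1$ on the trivial bundle, let
\[
  T_p(\nabla_0,\nabla_1)=\frac{p}{p!(2\pi i)^p}\int_0^1\mathrm{Tr}\bigl(\dot\nabla_t\wedge F(\nabla_t)^{p-1}\bigr)\,dt
\]
be the normalized transgression along the straight-line path, so that $\mathrm{CS}_{2p-1}(A)=T_p(d,d+A)$.

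The two general properties are the following.
\begin{enumerate}[leftmargin=2em,label=\rm(\roman*)]
  \item \emph{Gauge invariance.} $T_p(g\nabla_0g^{-1},g\nabla_1g^{-1})=T_p(\nabla_0,\nabla_1)$. This holds because conjugation by $g$ maps the straight-line path to the straight-line path, conjugates $\dot\nabla_t$ and $F(\nabla_t)$, and the trace is conjugation invariant.
  \item \emph{Cocycle relation.} $T_p(\nabla_0,\nabla_1)+T_p(\nabla_1,\nabla_2)+T_p(\nabla_2,\nabla_0)=d\beta$. To prove it, put the affine $2$-simplex of connections spanned by $\nabla_0,\nabla_1,\nabla_2$ on $X\times\Delta^2$ and integrate $\mathrm{ch}_p$ over the fibre. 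Stokes' formula gives the boundary terms as the three edge transgressions, and the fibre integral itself supplies $\beta$ (this is the same computation as Lemma~\ref{lem:bott-chern}, one dimension up).
\end{enumerate}
Apply (ii) to the triple $d$, $gdg^{-1}$, $d+A^g=g(d+A)g^{-1}$, and use (i) to rewrite $T_p(gdg^{-1},d+A^g)=T_p(d,d+A)=\mathrm{CS}_{2p-1}(A)$. This gives $\mathrm{CS}_{2p-1}(A^g)=\mathrm{CS}_{2p-1}(A)+\mathrm{CS}_{2p-1}(A')+d\beta$, where $A':=-(dg)g^{-1}$ is the connection form of $gdg^{-1}$.

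Next I compute $\mathrm{CS}_{2p-1}(A')$ from \eqref{eq:CS-form-normalised}. The connection $d+A'$ is flat, so $dA'=-A'\wedge A'$, and the integrand becomes $(t^2-t)^{p-1}\mathrm{Tr}(A'^{\,2p-1})$. The Beta integral gives
\[
  \int_0^1(t^2-t)^{p-1}\,dt=(-1)^{p-1}\frac{((p-1)!)^2}{(2p-1)!}.
\]
Since $A'=-g\theta g^{-1}$, we have $\mathrm{Tr}(A'^{\,2p-1})=-\mathrm{Tr}(\theta^{2p-1})$. Collecting constants yields exactly $\mathrm{CS}_{2p-1}(A')=-g^*\mu_{2p-1}$, which proves the displayed identity. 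The properties of $\mu_{2p-1}$ are then checked directly: $d\theta=-\theta^2$ together with the cyclicity of the trace on odd powers of $\theta$ gives closedness, and $\mathrm{Tr}(\theta^{2p-1})$ is visibly invariant under left and right translation. The final $\cc/\zz$ statement then follows formally once $g^*\mu_{2p-1}$ is known to have integral periods.

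The main obstacle is the integrality of $[\mu_{2p-1}]$. I would first retract $GL_r(\cc)$ onto $U(r)$ and identify $\mu_{2p-1}$ with the cohomology suspension $\sigma(\mathrm{ch}_p)$ of the universal Chern character, which has integral periods on spheres by Bott integrality. The sanity checks agree: for $p=1$ one gets $\mathrm{Tr}\,\theta/2\pi i$, and for $p=2$ one gets $\tfrac{1}{24\pi^2}\mathrm{Tr}\,\theta^3$, the integral generator on $SU(2)$.

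I must, however, flag a genuine difficulty. Since suspension kills decomposables, $\sigma(\mathrm{ch}_p)=\pm\sigma(c_p)/(p-1)!$, and $\sigma(c_p)$ is the integral generator $x_{2p-1}$ of $H^*(U(r),\zz)$. Thus for $p\geq3$ the form $\mu_{2p-1}$ appears to have periods in $\tfrac{1}{(p-1)!}\zz$ rather than $\zz$; for instance on $H_5(SU(3))$ it seems to evaluate to $\tfrac12$.

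To resolve this, my first step would be to check this evaluation carefully. If it is confirmed, the integrality claim must be weakened to "$(p-1)!\,g^*\mu_{2p-1}$ has integral periods." In that case $\mathrm{CS}_{2p-1}$ should be replaced by its $\mathrm{c}_p$-normalized version, the Cheeger--Simons $\hat c_p$, which is related to it through the Newton identities. The conclusion would then hold in $H^{2p-1}(X,\cc/\zz)$ only after multiplication by $(p-1)!$. This weaker form is harmless for the torsion statements in the body of the paper.
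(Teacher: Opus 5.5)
Your derivation of the variation formula is correct, and it differs from the paper's, which simply imports the formula from \cite[Prop.~3.7]{ChrS} ``transported through the normalisation''. You instead use two ingredients. The first is gauge invariance of the straight-line transgression, which holds on the nose. The second is the cocycle relation obtained by fibre-integrating $\mathrm{ch}_p$ over the affine $2$-simplex spanned by $d$, $g\circ d\circ g^{-1}$ and $g\circ(d+A)\circ g^{-1}$. The argument is self-contained, and your constants check out: $F_t=(t^2-t)A'^{2}$, the Beta integral, and $\mathrm{Tr}(A'^{2p-1})=-\mathrm{Tr}(\theta^{2p-1})$ give exactly $-g^*\mu_{2p-1}$. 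What your route buys over the citation is that it identifies the correction term as $\mathrm{CS}_{2p-1}$ of the flat Maurer--Cartan connection. In other words, $[\mu_{2p-1}]$ is the cohomology suspension of $\mathrm{ch}_p$, and the integrality question turns entirely on this.

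Your integrality objection is also correct, and the error lies in the paper's proof and statement, not in your proposal. The paper asserts that $\mu_{2p-1}$ represents the integral generator $x_p$ ``in the normalisation of \cite{ChrS}''. But the integral transgression belongs to the integral polynomial $c_p$, and $\mathrm{ch}_p\equiv\frac{(-1)^{p-1}}{(p-1)!}c_p$ modulo decomposables. Hence $[\mu_{2p-1}]=\pm x_p/(p-1)!$ whenever $r\geq p$. The $p=1,2$ checks cannot detect this, since $(p-1)!=1$ there.

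You can confirm your $SU(3)$ value without integrating anything.
\begin{enumerate}
\item By Bott integrality and the clutching construction, $\mu_5$ takes the value $\pm1$ on a generator of $\pi_5(SU(3))\cong\zz$.
\item The map $\pi_5(SU(3))\to\pi_5(S^5)$ has cokernel $\pi_4(SU(2))=\zz/2$, because $\pi_4(SU(3))=0$.
\item The map $H_5(SU(3),\zz)\to H_5(S^5,\zz)$ is an isomorphism.
\end{enumerate}
Together these show that the Hurewicz image of that generator is twice a generator of $H_5(SU(3),\zz)$, so $\mu_5$ has period $\pm\frac12$ on the latter.

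Consequently the last assertion of the lemma is false for $p=3$. Take $X=SU(3)$, $A=0$ and $g$ the inclusion. Both forms are closed and $\mathrm{CS}_5(A)=0$. However, $\mathrm{CS}_5(A^g)=-g^*\mu_5$ has class $\mp\frac12x_5$. This class is not in the image $\zz x_5$ of $H^5(SU(3),\zz)$, so it is nonzero in $H^5(SU(3),\cc/\zz)$.

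Your corrected statement is the right one: the periods lie in $\frac{1}{(p-1)!}\zz$, so the $\cc/\zz$ equality holds only after multiplying by $(p-1)!$, or for the $c_p$-normalised transgression. The same correction applies to Proposition~\ref{prop:DS-CS-welldef}(1). As you note, multiplying by a fixed nonzero integer does not affect any torsion conclusion.
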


\begin{proof}
The variation formula is \cite[Prop.~3.7]{ChrS}, transported through the
normalisation~\eqref{eq:CS-form-normalised}.  For the integrality, recall that
$U(r)\hookrightarrow GL_r(\cc)$ is a deformation retract and that
$H^*(U(r),\zz)=\Lambda_\zz(x_1,\dots,x_r)$ with $\deg x_p = 2p-1$, the
generator $x_p$ being represented in de Rham cohomology precisely by
$\mu_{2p-1}$; this is the normalisation fixed in \cite{ChrS} (see also
\cite{Bott}).  Hence $\mu_{2p-1}$ has integral periods, and so does its
pullback under any smooth $g$.  The last assertion follows since the two
closed forms differ by an exact form plus a form with integral periods.

The case $p=1$ is a useful check: $\mathrm{Tr}(A^g)=\mathrm{Tr}(A)-
\mathrm{Tr}\bigl((dg)g^{-1}\bigr) = \mathrm{Tr}(A)-d\log\det g$, so
$\mathrm{CS}_1(A^g)-\mathrm{CS}_1(A) = -\tfrac{1}{2\pi i}\,d\log\det g
= -g^*\mu_1$, and $\tfrac{1}{2\pi i}d\log\det$ is the pullback under
$\det:GL_r(\cc)\to\cc^*$ of the standard integral generator of
$H^1(\cc^*,\zz)$.
\end{proof}

\subsection{The volume regulator and the splitting of $\cc/\zz$}

\begin{lemma}\label{lem:CZ-splitting}
There is a canonical direct sum decomposition of abelian groups
\begin{equation}\label{eq:CZ-split-lemma}
  \cc/\zz \;\cong\; \rr/\zz \;\oplus\; i\rr ,
\end{equation}
induced by the splitting $\cc = \rr\oplus i\rr$ of real vector spaces.
Explicitly $x + iy + \zz \mapsto (x+\zz,\; iy)$ for $x,y\in\rr$, with
projections
\[
  \Re_{\cc/\zz}:\cc/\zz\to\rr/\zz,\ \ x+iy+\zz\mapsto x+\zz,
  \qquad
  \Im_{\cc/\zz}:\cc/\zz\to\rr,\ \ x+iy+\zz\mapsto y .
\]
Under this splitting the Chern-Simons class $\CS_p\in H^{2p-1}(X,\cc/\zz)$
decomposes as
\[
  \CS_p \;=\; \CS_p^{\rr} \;+\; i\cdot\vol_p,
  \quad \CS_p^{\rr}\in H^{2p-1}(X,\rr/\zz),\quad
  \vol_p\in H^{2p-1}(X,\rr),
\]
the second component being the volume regulator exactly, not merely
modulo $\zz$.
\end{lemma}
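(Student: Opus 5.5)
The plan is to treat the splitting as a statement about abelian groups first, and then transport it to cohomology. Since $\zz\subset\rr\subset\cc$ and $\cc=\rr\oplus i\rr$ as real vector spaces, the quotient by $\zz\subset\rr$ only affects the first summand. This gives
\[
  \cc/\zz=(\rr\oplus i\rr)/(\zz\oplus 0)\cong\rr/\zz\oplus i\rr .
\]
One checks directly that the displayed map $x+iy+\zz\mapsto(x+\zz,iy)$ is well defined, since changing $x$ by an integer leaves $y$ unchanged, and that it is a bijective homomorphism with the stated projections. Canonicity means only that the decomposition depends on nothing beyond the fixed real structure of $\cc$, i.e. on complex conjugation.

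Next I would pass to cohomology. Singular cohomology commutes with finite direct sums of coefficient groups, so
\[
  H^{2p-1}(X,\cc/\zz)\cong H^{2p-1}(X,\rr/\zz)\oplus H^{2p-1}(X,\rr),
\]
with the projections induced by $\Re_{\cc/\zz}$ and $\Im_{\cc/\zz}$. Set $\CS_p^{\rr}:=(\Re_{\cc/\zz})_*\CS_p$ and define the second component as $(\Im_{\cc/\zz})_*\CS_p$. The second coefficient group is the torsion-free group $\rr$, not a quotient of it, so this component is an honest real class with no reduction modulo $\zz$.

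The remaining point, which I expect to be the only substantive step, is to identify $(\Im_{\cc/\zz})_*\CS_p$ with $\vol_p$ of Definition~\ref{def:vol-p-normalized}. I would represent $\CS_p$ by the closed normalized transgression form $v_p^{\mathrm{nm}}$ along a path from a unitary $\nabla_0$ to $\nabla$. The map $H^{2p-1}(X,\cc)\to H^{2p-1}(X,\cc/\zz)$ composed with $(\Im_{\cc/\zz})_*$ equals the map induced by $\Im:\cc\to\rr$, because $\zz\subset\ker\Im$. Hence the image is $[\Im v_p^{\mathrm{nm}}]$.

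There is one subtlety: $v_p^{\mathrm{nm}}$ need not be closed, since only $\Im v_p^{\mathrm{nm}}$ is closed by Proposition~\ref{prop:CS-closed-nm}. Its real part has differential $-\ch_p^{\mathrm{nm}}(\nabla_0)$, which is handled on the differential-character level. So I would argue via the differential character exact sequence \eqref{eq:diff-char-exact2}, applying $\Im$ to the character $\widehat{\ch}_p(\nabla)-\widehat{\ch}_p(\nabla_0)$. The unitary character $\widehat{\ch}_p(\nabla_0)$ is real-valued, so it contributes nothing to the imaginary part. By Proposition~\ref{prop:transgression} the imaginary part is then exactly $[\Im v_p^{\mathrm{nm}}]=\vol_p$, which gives the decomposition $\CS_p=\CS_p^{\rr}+i\cdot\vol_p$.
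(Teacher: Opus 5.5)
Your argument is correct. Your first paragraph is the whole of the paper's proof: $\zz\subset\rr$ meets $i\rr$ trivially, so $\cc/\zz=(\rr\oplus i\rr)/\zz\cong\rr/\zz\oplus i\rr$, and $\Im_{\cc/\zz}$ is well defined with values in $\rr$ because changing a representative by an integer moves only its real part.

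The paper stops there. It treats the decomposition $\CS_p=\CS_p^{\rr}+i\cdot\vol_p$ as definitional: immediately after the lemma it sets $\vol_p:=\Im_{\cc/\zz}(\CS_p)$. Agreement with the transgression-form definition (Definition~\ref{def:vol-p-normalized}) is then only asserted, in Proposition~\ref{prop:vol-indep-app}, by appeal to matching normalizations.

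Your last two paragraphs actually prove that identification, and you handle the one real subtlety correctly. Along a path from a unitary $\nabla_0$, the form $v_p^{\mathrm{nm}}$ is not closed; only its imaginary part is, by Proposition~\ref{prop:CS-closed-nm}. So you cannot simply take $[v_p^{\mathrm{nm}}]$ in $H^{2p-1}(X,\cc/\zz)$. Instead you work with characters: $\widehat{\ch}_p(\nabla)$ is $\widehat{\ch}_p(\nabla_0)$ plus the character of the form $v_p^{\mathrm{nm}}$, and the unitary character is $\rr/\zz$-valued. This turns the phrase ``the second component being the volume regulator exactly'' into a theorem rather than a definition.

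The cost is two inputs the paper's proof never needs. First, you need the $\cc/\zz$-valued version of \eqref{eq:diff-char-exact2}, since the paper sets up differential characters only with values in $\rr/\zz$. Second, you need the form-level transgression formula, not Proposition~\ref{prop:transgression} as literally stated, because that statement already places the difference in $H^{2p-1}(Z,\cc/\zz)$ and so presupposes that the transgression form is closed. You should cite those versions explicitly.
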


\begin{proof}
Here $\zz$ is embedded in $\cc$ through $\rr$, so the subgroup being divided
out meets the summand $i\rr$ in $0$: it is the real summand alone that is
affected by the quotient.  Concretely, $\cc=\rr\oplus i\rr$ as real vector
spaces and $\zz\subset\rr$, whence
$\cc/\zz = (\rr\oplus i\rr)/\zz \cong (\rr/\zz)\oplus i\rr$.  The second
summand is \emph{not} $i(\rr/\zz)$: there is no $i\zz$ to quotient by.  The
projection $\Im_{\cc/\zz}$ is therefore well defined with values in $\rr$,
since altering a representative by an integer changes only its real part.
\end{proof}

For a locally nil-flat connection $\nabla$ on $E\to X$, the \emph{volume
regulator} is accordingly the imaginary component of the Chern-Simons class,
\[
  \vol_p(\nabla) := \Im_{\cc/\zz}\bigl(\CS_p(\nabla)\bigr)
  = [\Im\,\mathrm{CS}_{2p-1}(\nabla)]\;\in\; H^{2p-1}(X,\rr),
\]
which is well defined by Lemma~\ref{lem:CZ-splitting}.

\begin{proposition}\label{prop:vol-indep-app}
$\vol_p(\nabla)$ is independent of the choice of connection $\nabla_0$ and
of the path from $\nabla_0$ to $\nabla$ used to define it
(Proposition~\ref{prop:vol-indep}); and if $\nabla$ preserves an indefinite
Hermitian form $g$ on $E$, then $\vol_p(\nabla) = 0$ for every $p\geq1$
(Proposition~\ref{prop:hodge-vanish}).
\end{proposition}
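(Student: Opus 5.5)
The statement has two parts, and I will prove them separately.

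\emph{Part one: independence of $\vol_p(\nabla)$ from the choices.} Take two unitary reference connections $\nabla_0,\nabla_0'$ and two paths from them to $\nabla$. I will reduce to comparing transgression forms along a closed loop of connections. The first step is to join $\nabla_0'$ back to $\nabla_0$ by the straight segment $\nabla_0+s(\nabla_0'-\nabla_0)$. This segment consists of unitary connections, because the unitary connections for a fixed metric form an affine space; if the two metrics differ, first join the metrics as well.

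The second step is the standard homotopy formula for transgression forms. Apply Lemma~\ref{lem:bott-chern} to the two-parameter family filling the loop. This shows that the sum of the $v_p^{\mathrm{nm}}$ around the closed loop is exact. Hence the two values of $v_p^{\mathrm{nm}}$ differ by an exact form plus the transgression along the unitary segment.

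The third step handles the unitary segment. Along it, $\dot\nabla_s$ and $F(\nabla_s)$ are $\mathfrak{u}(h)$-valued. By Lemma~\ref{lem:koszul-parity}(b), together with the cancellation $\overline{(2\pi i)^p}=(-1)^p(2\pi i)^p$ used in Proposition~\ref{prop:Re-trace-zero}, the normalized transgression along this segment is real. Its imaginary part therefore vanishes pointwise, so $[\Im\,v_p^{\mathrm{nm}}]$ is unchanged.

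The last check is to identify this real class with $\Im_{\cc/\zz}(\CS_p)$ of Lemma~\ref{lem:CZ-splitting}. The integral periods introduced by a gauge change (Lemma~\ref{lem:gauge-variation}) are real, so they affect only the $\rr/\zz$ component and leave the imaginary part alone.

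\emph{Part two: vanishing when $\nabla$ preserves an indefinite form $g$.} I follow Proposition~\ref{prop:hodge-vanish}. Use Lemma~\ref{lem:h1h2} to write $g=h_1-h_2$, and take $h=h_1\oplus h_2$. The $h$-unitary connection $\nabla_1\oplus\nabla_2$ preserves $g$, so I use it as the reference connection. By part one this choice is allowed.

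The straight path from this reference connection to $\nabla$ stays inside the affine space of $g$-preserving connections, since $\nabla-\nabla_0\in A^1(\mathfrak{u}_E)$ by Lemma~\ref{lem:g-preserving-form}. Along this path, Proposition~\ref{prop:Re-trace-zero} makes the integrand real for every $p\ge1$. Hence $\Im\,v_p^{\mathrm{nm}}=0$ as a form, and $\vol_p(\nabla)=0$.

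The main obstacle is part one, specifically the homotopy formula for a closed loop of connections. Its curvature term must be controlled with care: along the unitary leg $\ch_p^{\mathrm{nm}}$ is real but nonzero, so only the imaginary parts, not the forms themselves, are guaranteed to close up. I will write the two-parameter Chern–Simons identity $d\,T=v(\partial)$ explicitly and take imaginary parts throughout. The endpoint terms $\Im\,\ch_p^{\mathrm{nm}}$ vanish for unitary connections, and for $\nabla$ they vanish by Proposition~\ref{prop:nilflat-ch-zero}.
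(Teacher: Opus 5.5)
Your proposal is correct and follows the same route as the paper, which proves this statement by citing Proposition~\ref{prop:vol-indep} (close up a loop through a unitary path between the two references) and Proposition~\ref{prop:hodge-vanish} (use the $(h_1\oplus h_2)$-unitary, $g$-preserving reference and a $g$-preserving path, along which Lemma~\ref{lem:koszul-parity}(b) makes $v_p^{\mathrm{nm}}$ real); your version is in fact sharper than the paper's wording, since you note that the loop transgression is only exact and the unitary leg only real, not zero. The one step to write out is ``join the metrics as well'': for $h_s$-unitary connections with varying $h_s$, $\dot\nabla_s$ is no longer $\mathfrak{u}(h_s)$-valued, but with $\dot h_s=h_s(H_s\,\cdot,\cdot)$ its self-adjoint part is $\tfrac12\nabla_sH_s$, which contributes $\tfrac12\,d\,\Tr\bigl(H_sF(\nabla_s)^{p-1}\bigr)$ by Bianchi, so the imaginary part of the transgression still changes only by an exact form.
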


\begin{proof}
Both statements are proved in \S\ref{sec:volume}, in the normalization of
Definition~\ref{def:normalized-ch-vp}, which is exactly the normalization
$\mathrm{CS}_{2p-1}$ of \eqref{eq:CS-form-normalised} used throughout this
appendix: $\vol_p(\nabla)$ as defined above agrees with
Definition~\ref{def:vol-p-normalized}, so the cited results transport
verbatim.
\end{proof}

\subsection{Relation to Deligne-Beilinson cohomology}

There is a natural map
\[
  \widehat H^{2p-1}(X;\zz)\;\longrightarrow\; H^{2p}_\calD(X,\zz(p))
\]
from differential characters to Deligne-Beilinson cohomology, compatible with
curvature and the integral characteristic class; it carries
$\widehat{\ch}_p(E,\nabla)$ to the Deligne-Beilinson Chern character
$\ch^{\calD}_p(E)$.  We write
\begin{equation}\label{eq:alpha-map}
  \alpha\;:\;H^{2p-1}(X,\cc/\zz)\;\longrightarrow\;H^{2p}_\calD(X,\zz(p))
\end{equation}
for its restriction to the subgroup of flat characters.

For locally nil-flat $\nabla$ we have $\ch_p(\nabla)=0$, so
$\widehat{\ch}_p(E,\nabla)$ is a flat character, i.e.\ lies in
$H^{2p-1}(X,\cc/\zz)\subset\widehat H^{2p-1}(X;\zz)$, and
$\alpha\bigl(\CS_p(E,\nabla)\bigr)=\ch^\calD_p(E)$.

The map $\alpha$ is \emph{not} injective in general: its kernel is the image
of $F^pH^{2p-1}(X,\cc)$ in $H^{2p-1}(X,\cc/\zz)$.  So knowing $\ch^\calD_p(E)$
does not by itself determine $\CS_p(E,\nabla)$ on an arbitrary smooth
projective $X$.  It does on the $\aaa$-realizations where the comparison of
Appendix~\ref{app:rees} takes place, $\alpha$ being an isomorphism there by
Proposition~\ref{prop:DB-A-realization}; see Remark~\ref{rem:alpha-kernel}.

\subsection{Proof of the $\ell$-adic torsion (Corollary~\ref{cor:ladic-torsion})}
\label{subsec:ladic-proof}

We give a complete proof that the $\ell$-adic Chern-Simons classes are torsion,
using the Artin comparison theorem.

\subsubsection{The Artin comparison isomorphism}

Let $X$ be a smooth complex algebraic variety and $\ell$ a prime number.
The \emph{Artin comparison isomorphism} is a canonical isomorphism
\begin{equation}\label{eq:artin}
  H^n_{\mathrm{sing}}(X(\cc),\zz/m\zz) \;\xrightarrow{\;\sim\;}
  H^n_{\mathrm{\acute{e}t}}(X,\zz/m\zz)
\end{equation}
for every $n\geq 0$ and every positive integer $m$.  Passing to the inverse
limit over $m = \ell^k$ and tensoring with $\qq_\ell$:
\begin{equation}\label{eq:artin-ladic}
  H^n_{\mathrm{sing}}(X(\cc),\zz)\otimes_\zz\zz_\ell \;\xrightarrow{\;\sim\;}
  H^n_{\mathrm{\acute{e}t}}(X,\zz_\ell),
\end{equation}
compatible with the Galois action and cup products.

For Tate twists, the Artin comparison with coefficients $\zz/m\zz(p) = \mu_m^{\otimes p}$
(where $\mu_m$ is the group of $m$-th roots of unity) gives
\begin{equation}\label{eq:artin-tate}
  H^n_{\mathrm{sing}}(X(\cc),\zz(p))\otimes_\zz\zz/m\zz
  \;\xrightarrow{\;\sim\;}
  H^n_{\mathrm{\acute{e}t}}(X,\zz/m\zz(p)),
\end{equation}
where $\zz(p) = (2\pi i)^p\zz\subset\cc$ on the left, and
$\zz/m\zz(p) = \mu_m^{\otimes p}$ on the right.

\subsubsection{The $\ell$-adic Chern-Simons class}

We stress that there is no $\ell$-adic local system on $X$: the sheaf
$L_\ell$ lives on $X^*_{\mathrm{\acute{e}t}}$ and does not extend across $D$,
and there is no \'{e}tale analogue of Deligne's canonical extension.  The
$\ell$-adic classes below are therefore \emph{not} obtained by repeating the
transcendental construction \'{e}tale-locally.  They are obtained from the
transcendental extended class on $X$ by reduction, and their existence
presupposes Theorem~\ref{thm:main}.

Concretely: by Theorem~\ref{thm:main} the class
$\CS_p(L):=\CS_p(\nabla^{\Del})\in H^{2p-1}(X,\cc/\zz)$ is torsion, say killed
by $m$.  It therefore lies in the $m$-torsion subgroup, and
$(\cc/\zz)[m]\cong\tfrac{1}{m}\zz/\zz\cong\zz/m\zz$, so $\CS_p(L)$ determines
a class in $H^{2p-1}(X,\zz/m\zz)$, which the Tate twist turns into a class in
$H^{2p-1}(X,\zz/m\zz(p))$.  (It is essential that one restricts to the torsion
subgroup: $\cc/\zz$ is divisible, so $\cc/\zz\otimes\zz/m\zz=0$ and no
reduction map exists on the whole group.)  Passing to the limit over
$m=\ell^k$ and comparing with \'{e}tale cohomology of the projective variety
$X$ by \eqref{eq:artin-ladic}:

\begin{definition}\label{def:ladic-CS}
The \emph{$\ell$-adic extended Chern-Simons class} is the image of $\CS_p(L)$
under the composite map
\[
  H^{2p-1}(X,\cc/\zz)
  \;\longrightarrow\;
  \varprojlim_k H^{2p-1}(X,\zz/\ell^k\zz)
  \;\xrightarrow[\sim]{\text{Artin}}\;
  H^{2p-1}_{\mathrm{\acute{e}t}}(X,\zz_\ell)
  \;\longrightarrow\;
  H^{2p-1}_{\mathrm{\acute{e}t}}(X,\qq_\ell/\zz_\ell(p)).
\]
We denote it $\CS_p^{(\ell)}(L_\ell)\in H^{2p-1}_{\mathrm{\acute{e}t}}(X,\qq_\ell/\zz_\ell(p))$.
\end{definition}

\subsubsection{Proof of Corollary~\ref{cor:ladic-torsion}}

\begin{proof}[Proof of Corollary~\ref{cor:ladic-torsion}]
By Theorem~\ref{thm:main}, the singular class $\CS_p(L)\in H^{2p-1}(X,\cc/\zz)$
is torsion: there exists $N\geq 1$ with $N\cdot\CS_p(L) = 0$.

\textit{Step 1: Reduction to finite coefficients.}
For each $k\geq 1$, consider the commutative diagram
\[
\begin{array}{ccc}
H^{2p-1}(X,\cc/\zz) & \xrightarrow{\times N} & H^{2p-1}(X,\cc/\zz) \\
\downarrow{\rho_k} && \downarrow{\rho_k} \\
H^{2p-1}(X,\zz/\ell^k\zz(p)) & \xrightarrow{\times N} & H^{2p-1}(X,\zz/\ell^k\zz(p)),
\end{array}
\]
where $\rho_k$ is the reduction map modulo $\ell^k$ (incorporating the Tate
twist).  Since $N\cdot\CS_p(L) = 0$, we have $N\cdot\rho_k(\CS_p(L)) = 0$
for every $k$.  Thus $\rho_k(\CS_p(L))$ is killed by $N$ in the finite group
$H^{2p-1}(X,\zz/\ell^k\zz(p))$, hence it is $N$-torsion.

\textit{Step 2: Application of the Artin comparison.}
By~\eqref{eq:artin-tate}, the Artin comparison gives isomorphisms
\[
  \rho_k(\CS_p(L)) \;\mapsto\;
  \CS_p^{(\ell,k)}(L_\ell)\;\in\; H^{2p-1}_{\mathrm{\acute{e}t}}(X,\zz/\ell^k\zz(p))
\]
compatible with the transition maps as $k$ varies.  Each
$\CS_p^{(\ell,k)}(L_\ell)$ is $N$-torsion.

\textit{Step 3: Passage to the limit.}
The $\ell$-adic class $\CS_p^{(\ell)}(L_\ell)$ is defined as the element of
\[
  H^{2p-1}_{\mathrm{\acute{e}t}}(X,\qq_\ell/\zz_\ell(p))
  = \varinjlim_k H^{2p-1}_{\mathrm{\acute{e}t}}(X,\ell^{-k}\zz_\ell/\zz_\ell(p))
\]
corresponding to the compatible system $\{\CS_p^{(\ell,k)}\}_k$.
Since each $\CS_p^{(\ell,k)}$ is $N$-torsion and the direct limit map preserves
the torsion condition, $\CS_p^{(\ell)}(L_\ell)$ is $N$-torsion in
$H^{2p-1}_{\mathrm{\acute{e}t}}(X,\qq_\ell/\zz_\ell(p))$.

\textit{Step 4: The $\ell$-part.}
More precisely, let $N = \ell^a\cdot m$ with $\gcd(m,\ell)=1$.  Then
$\ell^a\cdot\CS_p^{(\ell)}(L_\ell)$ may be non-zero (the $m$-part vanishing
is automatic since $m$ is invertible in $\zz_\ell$), but
$N\cdot\CS_p^{(\ell)}(L_\ell) = 0$.  The $\ell$-adic class is therefore
annihilated by $\ell^a$, confirming it is a torsion element of the
$\ell$-primary part of $H^{2p-1}_{\mathrm{\acute{e}t}}(X,\qq_\ell/\zz_\ell(p))$.
\end{proof}

\subsubsection{The integral $\ell$-adic version}

We also have the following integral statement:

\begin{corollary}\label{cor:ladic-integral}
The image of $\CS_p^{(\ell)}(L_\ell)$ in the $\ell$-adic cohomology
$H^{2p-1}_{\mathrm{\acute{e}t}}(X,\qq_\ell(p))$ is zero.
\end{corollary}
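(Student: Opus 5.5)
The statement to prove is Corollary~\ref{cor:ladic-integral}: the image of $\CS_p^{(\ell)}(L_\ell)$ in $H^{2p-1}_{\mathrm{\acute{e}t}}(X,\qq_\ell(p))$ vanishes. The plan is to show that the class is torsion and then use that the target is a $\qq_\ell$-vector space, where torsion dies. We must first fix which map to $\qq_\ell(p)$-cohomology is intended, since the class lives in $H^{2p-1}_{\mathrm{\acute{e}t}}(X,\qq_\ell/\zz_\ell(p))$ and there is no natural map from $\qq_\ell/\zz_\ell$ to $\qq_\ell$.

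\textbf{Step 1: identify the map.} The natural route goes through the intermediate integral class in $H^{2p-1}_{\mathrm{\acute{e}t}}(X,\zz_\ell(p))$ appearing in Definition~\ref{def:ladic-CS}, namely the image under Artin comparison of the compatible system $\{\rho_k(\CS_p(L))\}_k$. The image in $\qq_\ell(p)$-cohomology is then this integral class tensored with $\qq$, via \eqref{eq:artin-ladic} tensored with $\qq_\ell$.

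\textbf{Step 2: torsion of the integral class.} By Theorem~\ref{thm:main}, $N\cdot\CS_p(L)=0$ for some $N\geq1$. Each $\rho_k$ is a group homomorphism, so every term of the system is killed by $N$. Hence the limit class $c\in\varprojlim_k H^{2p-1}(X,\zz/\ell^k(p))\cong H^{2p-1}_{\mathrm{\acute{e}t}}(X,\zz_\ell(p))$ satisfies $N c=0$, since the inverse limit is taken compatibly with multiplication by $N$.

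\textbf{Step 3: conclude.} The group $H^{2p-1}_{\mathrm{\acute{e}t}}(X,\zz_\ell(p))$ is a finitely generated $\zz_\ell$-module, because $X$ is projective and we have \eqref{eq:artin-ladic}. Its image in $H^{2p-1}_{\mathrm{\acute{e}t}}(X,\zz_\ell(p))\otimes\qq_\ell=H^{2p-1}_{\mathrm{\acute{e}t}}(X,\qq_\ell(p))$ kills the torsion submodule, since $c\otimes1=\tfrac1N(Nc)\otimes1=0$. In the $\qq_\ell/\zz_\ell$ formulation the same conclusion follows directly: the torsion class lies in the image of the Bockstein-type map from $H^{2p-2}(X,\qq_\ell(p))$, so it maps to zero under any $\qq_\ell$-linear comparison.

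\textbf{Main obstacle.} The only delicate point is Step~1, namely making the reduction maps $\rho_k$ well defined and compatible in $k$. As remarked after Definition~\ref{def:ladic-CS}, reduction exists only on the $m$-torsion subgroup via $(\cc/\zz)[m]\cong\zz/m$. So I would first work with the $\ell$-primary part of $\CS_p(L)$: write $N=\ell^a m$ and replace $\CS_p(L)$ by $m\cdot\CS_p(L)$ up to the invertible factor $m$ in $\zz_\ell$. This class lives in $H^{2p-1}(X,\zz/\ell^a)$, and its images for $k\geq a$ are compatible under the inclusions $\tfrac1{\ell^a}\zz/\zz\subset\tfrac1{\ell^k}\zz/\zz$. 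The real content is therefore the torsion in Theorem~\ref{thm:main}, and the rest is formal.
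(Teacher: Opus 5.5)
Your closing deduction, that a torsion element has zero image under any homomorphism into a $\qq_\ell$-vector space, is exactly the paper's one-line proof. You are also right that the paper never specifies a map $H^{2p-1}_{\mathrm{\acute{e}t}}(X,\qq_\ell/\zz_\ell(p))\to H^{2p-1}_{\mathrm{\acute{e}t}}(X,\qq_\ell(p))$: the coefficient sequence only gives $H^{2p-1}(X,\qq_\ell(p))\to H^{2p-1}(X,\qq_\ell/\zz_\ell(p))\xrightarrow{\delta}H^{2p}(X,\zz_\ell(p))$. However, your way of supplying the map fails at the step you call delicate.

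Steps~2--3 act on an element of $\varprojlim_k H^{2p-1}(X,\zz/\ell^k)\cong H^{2p-1}_{\mathrm{\acute{e}t}}(X,\zz_\ell)$, i.e.\ a family compatible under the \emph{reductions} $\zz/\ell^{k+1}\to\zz/\ell^k$. Your repair instead makes the lifts compatible under $\tfrac{1}{\ell^a}\zz/\zz\subset\tfrac{1}{\ell^k}\zz/\zz$, i.e.\ under multiplication by $\ell^{k-a}\colon\zz/\ell^a\to\zz/\ell^k$. That is the direct system, whose colimit is $H^{2p-1}(X,\qq_\ell/\zz_\ell)$.

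An inverse-compatible family of lifts cannot exist. For $j_k\colon\zz/\ell^k\cong\tfrac{1}{\ell^k}\zz/\zz\subset\cc/\zz$ one has $j_k\circ(\text{reduction})=\ell\, j_{k+1}$. So if each $x_k$ lifts the same class $c$ and $x_k$ is the reduction of $x_{k+1}$, then $c=\ell c$, whence $c=\ell^a c=0$. Thus the integral class that Steps~2--3 operate on is never constructed. Definition~\ref{def:ladic-CS} routes through $\varprojlim_k$ in the same way, so you inherited this defect. In addition, Step~2's use of $\rho_k$ as a homomorphism on all of $H^{2p-1}(X,\cc/\zz)$ is, as you note yourself, unavailable.

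What your construction actually produces is the canonical object. By universal coefficients, the torsion subgroup of $H^{2p-1}(X,\cc/\zz)=\mathrm{Hom}(H_{2p-1}(X,\zz),\cc/\zz)$ is $\mathrm{Hom}(H_{2p-1}(X,\zz),\qq/\zz)=H^{2p-1}(X,\qq/\zz)=\bigoplus_{\ell}H^{2p-1}(X,\qq_\ell/\zz_\ell)$, with no choice of lift. The class $\CS^{(\ell)}_p(L_\ell)$ is then the $\ell$-component. That group consists entirely of torsion, so every homomorphism from it to $H^{2p-1}_{\mathrm{\acute{e}t}}(X,\qq_\ell(p))$ vanishes. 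This formal statement is all that the corollary, and the paper's proof, deliver; Theorem~\ref{thm:main} is needed only for the class to exist.

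Drop the alternative at the end of your Step~3. No map from $H^{2p-2}(X,\qq_\ell(p))$ into $H^{2p-1}(X,\qq_\ell/\zz_\ell(p))$ occurs in this sequence. The image of $H^{2p-1}(X,\qq_\ell(p))$ is $\ker\delta$, and being torsion does not place a class there: every element of $H^{2p-1}(X,\qq_\ell/\zz_\ell(p))$ is torsion, while $\delta$ maps onto the torsion subgroup of $H^{2p}(X,\zz_\ell(p))$.
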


\begin{proof}
The short exact sequence
$0\to\zz_\ell(p)\to\qq_\ell(p)\to\qq_\ell/\zz_\ell(p)\to 0$
gives a long exact sequence; the boundary map sends $\CS_p^{(\ell)}\in
H^{2p-1}_{\mathrm{\acute{e}t}}(X,\qq_\ell/\zz_\ell(p))$ to a class in
$H^{2p}_{\mathrm{\acute{e}t}}(X,\zz_\ell(p))$.  Since $\CS_p^{(\ell)}$ is
torsion, its image in $H^{2p-1}_{\mathrm{\acute{e}t}}(X,\qq_\ell(p))$ is zero
(as the latter group is torsion-free: étale cohomology with $\qq_\ell$
coefficients is a $\qq_\ell$-vector space).
\end{proof}

\section{Simplicial spaces, de Rham theory, and \texorpdfstring{$F^1$}{F1}-connections}
\label{app:simplicial}

\subsection{Simplicial manifolds and their de Rham complexes}

\begin{definition}
A \emph{simplicial manifold} $M_\bullet$ is a simplicial object in the
category of smooth manifolds: a collection of smooth manifolds $M_n$ ($n\geq 0$)
with smooth face maps $\partial_i:M_n\to M_{n-1}$ ($0\leq i\leq n$) and
degeneracy maps $s_j:M_n\to M_{n+1}$ ($0\leq j\leq n$) satisfying the
simplicial identities.
\end{definition}

\begin{definition}[Dupont \cite{Dupont}]
The \emph{de Rham complex} $A^\bullet(M_\bullet)$ of a simplicial manifold
$M_\bullet$ is the total complex of the double complex with
\[
  A^{p,q}(M_\bullet) = A^q(M_p),
\]
and differentials $d_{\mathrm{dR}}:A^{p,q}\to A^{p,q+1}$ (de Rham) and
$\delta = \sum_{i}(-1)^i\partial_i^*: A^{p,q}\to A^{p+1,q}$ (simplicial
coboundary).  A smooth $n$-form on $M_\bullet$ is an element of
$\bigoplus_{p+q=n}A^{p,q}$.
\end{definition}

\begin{theorem}[Dupont {\cite{Dupont}}]
The cohomology of $A^\bullet(M_\bullet)$ computes the singular cohomology of
the geometric realization $|M_\bullet|$ with $\rr$-coefficients.
\end{theorem}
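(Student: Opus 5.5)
The statement is Dupont's theorem that the total de Rham complex $A^\bullet(M_\bullet)$ computes $H^*(|M_\bullet|,\rr)$. The plan is to compare both sides with a third complex that they both map to quasi-isomorphically, namely the complex of compatible simplicial forms $A^\bullet_\Delta(M_\bullet)$. An element of this complex is a family $\omega_n\in A^\bullet(\Delta^n\times M_n)$ satisfying $(\delta^i\times\mathrm{id})^*\omega_n=(\mathrm{id}\times\partial_i)^*\omega_{n-1}$ for every coface/face pair.

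\emph{First comparison: compatible forms and the realization.} By construction, $A^\bullet_\Delta(M_\bullet)$ consists of the smooth forms on the fat realization $\|M_\bullet\|=\coprod_n\Delta^n\times M_n/\!\sim$, glued along faces only. Filter the fat realization by skeleta. On successive quotients the pieces are relative pairs $(\Delta^n,\partial\Delta^n)\times M_n$, and there the relative de Rham theorem applies: smooth forms on $\Delta^n\times M_n$ vanishing on $\partial\Delta^n\times M_n$ compute $H^*(\Delta^n\times M_n,\partial\Delta^n\times M_n;\rr)$. Induction on skeleta together with the five lemma then gives $H^*(A^\bullet_\Delta)\cong H^*(\|M_\bullet\|,\rr)$. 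Since degeneracies are not glued in, the theorem is really about the fat realization. I would remark that for good simplicial spaces, for example a \v{C}ech nerve with degeneracies given by diagonal inclusions (which are closed cofibrations), $\|M_\bullet\|\simeq|M_\bullet|$.

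\emph{Second comparison: integration along the fibre.} Define $I:A^\bullet_\Delta(M_\bullet)\to A^\bullet(M_\bullet)$ by $I(\omega)_n=\int_{\Delta^n}\omega_n\in A^{\bullet-n}(M_n)$. Stokes' formula, $d\int_{\Delta^n}=\pm\int_{\Delta^n}d\pm\int_{\partial\Delta^n}$, together with the compatibility condition shows that $I$ is a chain map to the total complex with $D=d_{\mathrm{dR}}\pm\delta$. To prove that $I$ is a quasi-isomorphism, filter both sides by simplicial degree. On the $E_1$ page of the target we get $A^\bullet(M_p)$ with cohomology $H^*(M_p)$. On the source, the associated graded in degree $p$ is forms on $\Delta^p\times M_p$ that restrict to zero on the boundary. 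Its cohomology is $H^{*-p}(M_p)$, by the Poincaré lemma for relative forms on the simplex, that is, Künneth with $H^*(\Delta^p,\partial\Delta^p)=\rr[-p]$, and on this group $I$ is integration against the fundamental class. Hence $I$ is an isomorphism on $E_1$. Both filtrations are bounded in each total degree when $M_\bullet$ is finite-dimensional in each level and the spectral sequences converge, so $I$ is a quasi-isomorphism.

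\emph{Expected obstacle.} The step I expect to be delicate is the $E_1$ computation for the compatible-forms complex. One has to show that the filtration by simplicial degree splits at the level of cohomology. Concretely, a form on the $p$-skeleton must extend to a compatible form on the $(p+1)$-skeleton, which is what makes the restriction maps surjective. I would try to prove this extension property using Whitney's elementary forms on $\Delta^n$ as explicit extension operators. Dupont's extension lemma, which extends forms defined on the horn or the boundary of a simplex, supplies exactly this.
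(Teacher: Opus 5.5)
The paper gives no proof of this statement; it is quoted from Dupont, so the comparison is with his argument. Your second comparison is essentially Dupont's theorem that fibre integration $I$ is a quasi-isomorphism. He proves it with an explicit homotopy inverse $\mathcal{E}$ built from Whitney's elementary forms, with $I\circ\mathcal{E}=\mathrm{id}$ and $\mathcal{E}\circ I$ chain homotopic to the identity, rather than by spectral sequences. You also correctly single out the extension lemma as the technical input.

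The genuine gap is in your first comparison. To run the five lemma along the skeletal filtration, you need a chain map from $A^\bullet_\Delta(M_\bullet)$ to some cochain complex computing $H^*(\|M_\bullet\|;\rr)$. It must be compatible with the filtrations and restrict to the relative de Rham map on each $(\Delta^n,\partial\Delta^n)\times M_n$. You never construct one. The obvious candidate, integration over singular simplices of $\|M_\bullet\|$, is not defined: such a simplex is only continuous and in general does not lie in a single cell $\Delta^n\times M_n$. If you restrict to cellwise-smooth simplices, you owe a separate proof that they compute the singular cohomology of $\|M_\bullet\|$. Graded pieces with abstractly isomorphic cohomology do not give an isomorphism of the abutments.

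The standard repair is to do the de Rham comparison levelwise, on the honest manifolds $M_p$. Integration over smooth singular simplices gives a map of double complexes $A^q(M_p)\to C^q(M_p;\rr)$ commuting with the $\partial_i^*$. By the ordinary de Rham theorem it is an isomorphism on the $E_1$ page of the column filtration, hence a quasi-isomorphism of total complexes, since that filtration is bounded in each total degree. Then $\mathrm{Tot}\,C^{\bullet,\bullet}(M_\bullet;\rr)$ computes $H^*(\|M_\bullet\|;\rr)$ by a purely topological argument (Eilenberg--Zilber for the bisimplicial set $\mathrm{Sing}_q(M_p)$). This proves the statement as formulated, which is about the double complex $A^{p,q}=A^q(M_p)$, without using compatible forms at all. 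Your fibre-integration step is only needed if one also wants $A^\bullet_\Delta(M_\bullet)$, which is what Chern--Weil constructions use.

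Second, the convergence claim in your fibre-integration step is false as stated: the filtration of $A^\bullet_\Delta(M_\bullet)$ by simplicial degree is not bounded in a fixed total degree. In degree $0$, take $\omega_0=0$ and $\omega_1=\phi(t)$ with $\phi$ a bump function supported in the interior of $\Delta^1$, extended upwards by the extension lemma. This family lies in $F^1$ but not in $F^2$. Likewise $F^pA^k_\Delta\neq0$ for every $p>k$, using forms vanishing near $\partial\Delta^p\times M_p$. Finite-dimensionality of the $M_p$ is irrelevant; only the target filtration and the $E_1$ terms $H^q(M_p)$, $q\geq0$, are bounded.

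What saves the step is completeness. By the extension lemma, $A^\bullet_\Delta/F^p\cong A^\bullet_\Delta(\mathrm{sk}_{p-1})$ and $A^\bullet_\Delta=\varprojlim_p A^\bullet_\Delta/F^p$. So both filtrations are complete and exhaustive, and the Eilenberg--Moore comparison theorem applies. Equivalently, induct on $p$ with the five lemma on the quotients by $F^p$, then pass to the limit through the Milnor $\varprojlim^1$ sequence.

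Finally, your caveat about $\|M_\bullet\|$ versus $|M_\bullet|$ can be dropped for simplicial manifolds. Each degeneracy $s_j$ has the left inverse $\partial_j$, so it is a closed embedding and hence a closed cofibration. Segal's criterion then gives $\|M_\bullet\|\simeq|M_\bullet|$ in general.
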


\subsection{The cubical de Rham complex}

For the cubical space $\Sigma_F(\mathcal{U}_\bullet)$ arising from the adapted
covering, we use a cubical analogue.

\begin{definition}
Let $C$ be a cubical set.  The \emph{cubical de Rham complex}
$A^\bullet(C,F)$ consists of collections $(w_c)_{c\in C}$ where
$w_c\in A^\bullet(\aaa(c))$, satisfying the face compatibility: for each
face inclusion $c'\leq c$ via face map $\iota_{c',c}:\aaa(c')\to\aaa(c)$,
\[
  \iota_{c',c}^*w_c = w_{c'}.
\]
\end{definition}

For an $F^1$-connection $\nabla(c)$ on a bundle $E(c)$ over each $\aaa(c)$,
the Chern character forms $\ch_p(\nabla(c))$ satisfy $\iota^*\ch_p(\nabla(c))
= \ch_p(\nabla(c'))$ (since the connections are compatible under restriction).
So $(\ch_p(\nabla(c)))$ is a cocycle in the cubical de Rham complex.

\subsection{\texorpdfstring{$F^1$}{F1}-connections on simplicial schemes}

Let $\bar Z_\bullet$ be a simplicial smooth projective variety with simplicial
normal crossings divisor $D_\bullet$, and $Z_\bullet = \bar Z_\bullet\setminus D_\bullet$.

\begin{definition}[Dupont-Hain-Zucker {\cite{DHZ}}]
A \emph{simplicial $F^1$-connection} on a simplicial bundle $E_\bullet$ over
$Z_\bullet$ is a collection of $F^1$-connections $\nabla_n$ on $E_n$ over
$Z_n$ (for each $n\geq 0$), compatible with the face and degeneracy maps of
$Z_\bullet$.
\end{definition}

\begin{theorem}[DHZ {\cite[Prop.~6.1.2, Lem.~6.1.3]{DHZ}}]
\label{thm:DHZ-app}
\begin{enumerate}[leftmargin=2em,label=\rm(\arabic*)]
  \item A simplicial $F^1$-connection on $E_\bullet$ exists whenever each
    $E_n$ admits an $F^1$-connection (e.g.\ when each $E_n$ is an algebraic
    bundle on a projective variety).
  \item Any two simplicial $F^1$-connections define the same class in the
    Deligne cohomology $H^{2p}_\calD(Z_\bullet,\zz(p))$ (the simplicial
    Deligne cohomology).
\end{enumerate}
\end{theorem}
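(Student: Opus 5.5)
The plan is to work in Dupont's model for simplicial objects, in which a simplicial connection is replaced by a compatible family of connections on $\Delta^n\times Z_n$. Both parts then reduce to levelwise facts already used in the body of the paper. Two facts drive everything: $F^1$-connections on a fixed bundle form an affine space over $F^1A^1(\log D)\otimes\End$, and the transgression of Lemma~\ref{lem:bott-chern} is natural under pullback.

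\textbf{Part (1).} Choose on $E_0$ an $F^1$-connection $\nabla_0$; it exists because $E_0$ is algebraic on a projective variety (Step~2 of Theorem~\ref{thm:CS-regulator}, pulled back from a Grassmannian). On $\Delta^n\times Z_n$, with barycentric coordinates $t_0,\dots,t_n$, set
\[
  \nabla_n \;:=\; \sum_{i=0}^n t_i\,\epsilon_i^*\nabla_0 ,
\]
where $\epsilon_i:Z_n\to Z_0$ is the $i$-th vertex map. This is Dupont's construction, and the face and degeneracy compatibilities follow from the simplicial identities exactly as in \cite{Dupont}. The $F^1$ condition holds because $\nabla_n$ is an affine combination of $F^1$-connections. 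The coefficients $t_i$ are $\calC^\infty$ functions and introduce no $dt_i$ terms in the connection matrix. Since $F^1A^1(\log D)$ is a module over smooth functions, the $(1,0)$-part still has only logarithmic poles.

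If one insists on connections on $Z_n$ alone, compatible with faces, I would instead proceed by induction on $n$, as in Theorem~\ref{thm:F1-exists}. The face data are prescribed on the images of the degeneracies, and the correction term is extended using flasqueness of smooth $F^1$-forms. The remaining freedom consists of strictly $F^1$ additions, which do not affect the $F^1$ condition.

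\textbf{Part (2).} Given two simplicial $F^1$-connections $\nabla^0_\bullet,\nabla^1_\bullet$, form the linear path $\nabla^s_\bullet=\nabla^0_\bullet+s(\nabla^1_\bullet-\nabla^0_\bullet)$. This path is again a simplicial family, because compatibility is a linear condition. Levelwise, the transgression
\[
  \eta_n \;=\; p\int_0^1\Tr\bigl(\dot\nabla^s_n\wedge F(\nabla^s_n)^{p-1}\bigr)\,ds
\]
satisfies $d\eta_n=\ch_p(\nabla^1_n)-\ch_p(\nabla^0_n)$ by Lemma~\ref{lem:bott-chern}. By naturality of $\Tr$ and of curvature under pullback, the $\eta_n$ form a cochain in the total (Dupont) complex, compatible with the simplicial coboundary $\delta$. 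To see that it is a Deligne coboundary, note that $\dot\nabla^s$ lies in $F^1$, and $F(\nabla^s)$ has its holomorphic-type part in $F^1$. Hence $\eta_n$ lies in $F^pA^{2p-1}(\bar Z_n\log D_n)$ modulo terms that DHZ show are exact in the filtered log complex. Consequently the pair $(\ch_p,\text{CS})$ of $\nabla^1_\bullet$ differs from that of $\nabla^0_\bullet$ by $d_\calD(\eta_\bullet,0)$ in the cone defining $H^{2p}_\calD(Z_\bullet,\zz(p))$.

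\textbf{Expected obstacle.} I expect the main obstacle to be this filtration bookkeeping. The curvature of an $F^1$-connection is not itself of pure type, so one must check, as in \cite[\S5--6]{DHZ}, that the Hodge-type components of $\Tr(\dot\nabla\wedge F^{p-1})$ below $F^p$ are $d$-exact with log-growth primitives. One must also check that these primitives can be chosen compatibly across all simplicial levels. My first attempt would be to carry out the DHZ argument on the universal case: a Grassmannian, or its simplicial bar construction. There all choices are canonical, so naturality automatically gives the simplicial compatibility.
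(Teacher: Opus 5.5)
Your Part (2) follows the paper's own argument: a linear path of $F^1$-connections, its transgression form, and membership in the Hodge filtration, which makes the difference a coboundary in the Deligne cone. Your barycentric construction supplies the Part (1) that the paper only cites.

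In (2), however, you leave open the one step that carries the proof, and you misdiagnose it. You say only ``the holomorphic-type part'' of $F(\nabla^s)$ lies in $F^1$, and you defer the components of $\Tr(\dot\nabla^s\wedge F(\nabla^s)^{p-1})$ below $F^p$ to unspecified exactness results of DHZ. There are no such components. For an $F^1$-connection as in \S\ref{sec:F1}, the connection matrix in a holomorphic frame is of type $(1,0)$ with logarithmic poles; equivalently, $\nabla^{0,1}=\delbar_E$. So the curvature $d\omega+\omega\wedge\omega$ has only $(2,0)$ and $(1,1)$ components, i.e. $F(\nabla^s)\in F^1A^2(\log D)\otimes\End(E)$. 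This condition is invariant under smooth change of frame, and likewise $\dot\nabla^s\in F^1$. Since $F^a\wedge F^b\subset F^{a+b}$, the integrand is a product of $p$ factors in $F^1$, so $\eta_n\in F^pA^{2p-1}(\log D_n)$ exactly. In Dupont's model the $dt_i$ carry no Hodge weight, so the same holds on $\Delta^n\times Z_n$. The same multiplicativity gives $\ch_p(\nabla)\in F^pA^{2p}$, which is what makes $[\nabla]_p$ a Deligne class in the first place. The two representatives then differ by $d_\calD(\eta_\bullet,0)$ with no correction terms. The universal-Grassmannian detour is unnecessary; it could not handle two arbitrary $F^1$-connections anyway, since they are not both pulled back from a universal one. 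Note also that weight $p$ needs $F^p$, not the $F^1$ asserted in the paper's sketch. And your unweighted transgression, not the paper's $(1-t)$-weighted form, is the one satisfying $d\eta=\ch_p(\nabla^1)-\ch_p(\nabla^0)$.

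Drop the fallback in Part (1). In a simplicial scheme the face maps go $\partial_i:Z_n\to Z_{n-1}$, so levelwise compatibility forces $\nabla_n=\partial_i^*\nabla_{n-1}$ for every $i$. Nothing remains to extend, and the $n+1$ pullbacks generally disagree. Already for the simplicial bundle on $B_\bullet GL_r$, where $Z_0$ is a point, $\partial_0^*\nabla_0$ and $\partial_1^*\nabla_0$ on $Z_1=GL_r$ differ by the Maurer--Cartan form $g^{-1}dg$. So no such family exists, and this is precisely why Dupont passes to $\Delta^n\times Z_n$. The extension-from-the-boundary induction of Theorem~\ref{thm:F1-exists} works only for the coface-type diagrams the paper actually uses, where a bundle on a larger cube restricts along closed face immersions $\pp(c')\hookrightarrow\pp(c)$. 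Your barycentric construction is the right proof of (1), with two conditions: the identifications $E_n\cong\epsilon_i^*E_0$ must be holomorphic, and $\epsilon_i^{-1}(D_0)\subset D_n$. These ensure that pullback preserves type $(1,0)$ and logarithmic poles.
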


\begin{proof}[Sketch of~(2)]
If $\nabla$ and $\nabla'$ are two $F^1$-connections, the difference
satisfies
$$
A = \nabla'-\nabla\in A^1_F(\bar Z_n\log D_n,\End(E_n)) \mbox{ (for each $n$)}.
$$

The Bott-Chern secondary form $\widetilde{\ch}_p(\nabla,\nabla')$ is computed
as $p\int_0^1(1-t)\mathrm{Tr}(A\wedge F(\nabla+tA)^{p-1})\,dt$, and lies in
$F^1A^{2p-1}(Z_n\log D_n)$.  This means $\widetilde{\ch}_p$ is a
$(2p-1)$-coboundary in the Deligne complex.
\end{proof}

\subsection{The de Rham complex of the cubical realization}

For the specific simplicial space $S = \iiii\Cube A$ arising from the adapted
covering:

\begin{proposition}
There is an isomorphism
\[
  H^{2p}_\calD(\aaa\Cube A, \zz(p)) \;\cong\; H^{2p-1}(\iiii\Cube A, \cc/\zz)
  \;\cong\; H^{2p-1}(X,\cc/\zz).
\]
\end{proposition}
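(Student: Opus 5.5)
The plan is to obtain the two isomorphisms separately and compose them. The first, $H^{2p}_\calD(\aaa\Cube A,\zz(p))\cong H^{2p-1}(\iiii\Cube A,\cc/\zz)$, is Lemma~\ref{lem:DB-cube}, and I will argue it again in this cubical de Rham language. The second, $H^{2p-1}(\iiii\Cube A,\cc/\zz)\cong H^{2p-1}(X,\cc/\zz)$, is pullback along the homotopy equivalence $\iiii\Cube A\simeq|NA|\simeq|\Uu_\hdot|\simeq X$ given by Propositions~\ref{prop:cuberealize} and~\ref{prop:nerve-htpy}. Singular cohomology with any coefficient group is a homotopy invariant, so the second isomorphism is formal once the identification $|NA|\simeq X$ is granted for the adapted covering.

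For the first isomorphism, cover $\aaa\Cube A$ by the closed affine cubes $\aaa(c)$, $c\in\Cube A$, and set up the descent (\v{C}ech-to-Deligne) spectral sequence for the Deligne complex over this cubical diagram. Concretely this is the double complex whose columns are the cubical de Rham complex $A^\hdot(C,F)$ in its Deligne-complex version. On each cell, Lemma~\ref{lem:DB-affine} (equation~\eqref{eq:DB-affine}) gives $H^t_\calD(\aaa(c),\zz(p))=\cc/\zz(p)$ for $t=1$ and $0$ otherwise. Hence $E_1$ is concentrated in the row $t=1$, where it is the cubical cochain complex of $\Cube A$ with constant coefficients $\cc/\zz(p)$. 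The spectral sequence degenerates at $E_2$ with no extension problems, which gives
\[
H^{2p}_\calD(\aaa\Cube A,\zz(p))\cong H^{2p-1}_{\mathrm{cub}}(\Cube A,\cc/\zz(p)).
\]
Cubical cochains of the cubical set compute the cellular cohomology of its interval realization $\iiii\Cube A$, since each $\iiii(c)$ is a closed cell glued along faces exactly as the $\aaa(c)$ are. Finally, identify $\cc/\zz(p)\cong\cc/\zz$ by dividing by $(2\pi i)^p$.

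The main obstacle is justifying the descent step. The $\aaa(c)$ form a closed cover glued along faces, not an open cover, so Mayer--Vietoris for hypercohomology of the Deligne complex is not automatic. I would first try to realize $\aaa\Cube A$ as the coend (geometric realization) of the cubical scheme $c\mapsto\aaa(c)$, and to define $H^*_\calD(\aaa\Cube A,\zz(p))$ as the cohomology of the total complex of Deligne complexes on the cells with face restrictions. This is the convention implicit in \S\ref{subsec:BG-on-cubical} and in Theorem~\ref{thm:DHZ-app}, and with it the spectral sequence is simply the filtration by cubical degree. If one instead insists on the glued scheme, one must check that the normalized cubical cochain complex (degenerate faces removed) is exact off the cells, i.e.\ that the restriction sequences are cdh/proper-descent exact. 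I would handle that by the standard argument for unions of linear subspaces in affine space, using that each stratum intersection is again an affine cube.
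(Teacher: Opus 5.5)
For the first isomorphism you follow the paper's route: a descent spectral sequence over the affine cubes $\aaa(c)$ with only one non-zero row. Your version is the consistent one. You take the row $t=1$ from Lemma~\ref{lem:DB-affine}, whereas the paper's proof at this point says the cells carry Deligne cohomology only in degree $2p$. That would give $E_2^{0,2p}=\check H^0$, not $\check H^{2p-1}$. You are also right that the $\aaa(c)$ form a closed cover, so the justification is proper (cdh) descent for Deligne--Beilinson cohomology rather than Mayer--Vietoris. One small point: if you use the skeletal filtration, the $E_1$-term is the relative group $H^{s+t}_\calD(\aaa(c),\partial\aaa(c);\zz(p))$, not the absolute one. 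This is again $\cc/\zz(p)$ exactly for $t=1$, because $(\aaa^1,\{0,1\})^s$ has cohomology $\zz$ of type $(0,0)$ in degree $s$ only, so your $E_1$ row is correct.

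The genuine gap is the second isomorphism, which you, like the paper, take as granted.
\begin{itemize}
\item Proposition~\ref{prop:nerve-htpy} concerns the realization of the simplicial \emph{space} $\Uu_\hdot$, whose terms are the open sets $U_{I_0\cdots I_n}$ themselves. By contrast, $|NA|$ is the nerve of the bare index poset.
\item The step $|NA|\simeq|\Uu_\hdot|$ would need every non-empty $U_{I_0\cdots I_m}$ to be contractible. That fails for the adapted covering: $U_I$ retracts onto a neighbourhood of the stratum $D_I$, and $U_\emptyset\simeq X^*$. So the contractibility asserted in the proof of Proposition~\ref{prop:nerve-htpy} does not hold.
\item Concretely, take $X=\pp^1\times E$ with $E$ an elliptic curve and $D=\{0\}\times E$. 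Then $A=\{\emptyset\subset\{1\}\}$, $\iiii\Cube A$ is an interval, and $\aaa\Cube A$ is two affine lines meeting in a point. For $p=2$ the first two groups in the statement therefore vanish, while $H^3(X,\cc/\zz)\cong(\cc/\zz)^2$.
\end{itemize}
So this step cannot be ``granted'', and the composite isomorphism is false for the adapted-covering poset. It holds if $A$ is the nerve poset of a covering whose non-empty finite intersections are all contractible, such as the very good covering of Theorem~\ref{thm:patching-exists}; then the nerve theorem gives $|NA|\simeq X$. Alternatively, work with the realization of $\Uu_\hdot$ itself, keeping its terms, rather than with its index poset.
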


\begin{proof}
The first isomorphism: apply the Mayer-Vietoris spectral sequence for the
cover of $\aaa\Cube A$ by the affine pieces $\{\aaa(c)\}$.  Since
$H^k_\calD(\aaa^n,\zz(p))$ is $\cc/\zz$ for $k=2p$ and $0$ otherwise
(by $\aaa^1$-contractibility), the spectral sequence degenerates to give
$H^{2p}_\calD\cong \check H^{2p-1}(\{\aaa(c)\},\cc/\zz)\cong H^{2p-1}(\iiii\Cube A,\cc/\zz)$.
The second isomorphism is the homotopy equivalence $\iiii\Cube A\simeq X$.
\end{proof}

\begin{corollary}
The $F^1$-connection $\nabla$ on $F^\sigma|_{\aaa\Cube A}$ constructed in
Theorem~\ref{thm:F1-exists} defines the class
$\CS_p(\zeta_0)\in H^{2p-1}(X,\cc/\zz)$.
\end{corollary}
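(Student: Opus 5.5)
The statement to prove is the final corollary: the $F^1$-connection $\nabla$ on $F^\sigma|_{\aaa\Cube A}$ of Theorem~\ref{thm:F1-exists} defines the class $\CS_p(\zeta_0)\in H^{2p-1}(X,\cc/\zz)$. The plan is to combine three identifications already in place and check that they are compatible.

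First, the connection $\nabla$ restricts to a compatible family $\{\nabla(c)\}$ on the cubes $\iiii(c)\subset\aaa(c)$. Each $\nabla(c)$ is locally nil-flat: it preserves $W(F(c);\mu(c))$ and induces the flat $\tau(c)$ on the graded, by Step~C of Theorem~\ref{thm:F1-exists}. So Proposition~\ref{prop:nilflat-ch-zero} gives $\ch_p(\nabla(c))=0$ as forms. Using the canonical connection $A^{\mathrm{can}}$ (Lemma~\ref{lem:can-flat}) as reference, the normalized transgression forms $v_p^{\mathrm{nm}}(\{A^{\mathrm{can}},\nabla(c)\})$ are closed. They agree on common faces because both the connections and the canonical trivializations are face-compatible (the detailed version of Theorem~\ref{thm:global-rees}). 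Hence they form a closed cocycle in the cubical de Rham complex. Reducing it modulo $\zz$ gives a class in $H^{2p-1}(\iiii\Cube A,\cc/\zz)$.

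Second, we identify this class with a Deligne class. The edge map $\delta$ sends it to the Deligne class $[\nabla]_p$, as in Step~5 of Theorem~\ref{thm:CS-regulator}. By Theorem~\ref{thm:DHZ-app}(2) (equivalently Theorem~\ref{thm:DHZ}), this equals $\ch^\calD_p(F^\sigma)$ independently of the choice of $F^1$-connection. The preceding proposition, via Lemma~\ref{lem:DB-cube}, identifies $H^{2p}_\calD(\aaa\Cube A,\zz(p))\cong H^{2p-1}(\iiii\Cube A,\cc/\zz)$, and this isomorphism is $\delta$. So the class of $\nabla$ is $\delta^{-1}\ch^\calD_p(F^\sigma)$, and it does not depend on the choices made in Step~B of Theorem~\ref{thm:F1-exists}.

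Third, we pass to $X$ and to $\zeta_0$. The homotopy equivalence $\iiii\Cube A\simeq X$ comes from Propositions~\ref{prop:nerve-htpy} and~\ref{prop:cuberealize}. Under it, Proposition~\ref{prop:Del-Rees-compare} together with the uniqueness Theorem~\ref{thm:uniqueness} identifies the class with that of $\nabla^{\Del}_{\zeta_0}$, which by definition is $\CS_p(\zeta_0)$. Here we use Proposition~\ref{prop:cech-rees-compat} to ensure that the patching functor building $F_K$ is the one coming from the $\zeta_0$-patching data. Theorem~\ref{thm:CS-regulator} then additionally matches this class with $r_{L_0}^*(r_{2p-1})$.

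I expect the main obstacle to be the second step: checking that the isomorphism of the preceding proposition really is the edge map $\delta$, and not merely an abstract isomorphism. The spectral sequence concentration in row $t=1$ must be matched with the cubical de Rham transgression. I would first verify this on a single $1$-cube, with $p=1$ as a normalization check, and then induct over skeleta using face-compatibility of the forms.
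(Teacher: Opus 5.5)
The gap is in your Step~1. Your Steps~2--3 follow the route the paper intends: the corollary is read off from the isomorphism $H^{2p}_\calD(\aaa\Cube A,\zz(p))\cong H^{2p-1}(X,\cc/\zz)$ of the preceding proposition, applied to $[\nabla]_p=\ch^\calD_p(F^\sigma)$, together with the comparison with $\nabla^{\Del}$.

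Step~1 reuses the template of Step~5 of Theorem~\ref{thm:CS-regulator}, and it does not construct $\CS_p(\nabla)$. A transgression form between two connections computes a \emph{relative} invariant. By Proposition~\ref{prop:transgression}, the mod-$\zz$ class of $v_p^{\mathrm{nm}}(\{A^{\mathrm{can}},\nabla\})$ is $\widehat{\ch}_p(\nabla)-\widehat{\ch}_p(A^{\mathrm{can}})$, and you never account for the second term.

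That term is not negligible. $A^{\mathrm{can}}$ is itself an $F^1$-connection on the same bundle: its coefficients $j_l\,\dlog(1-y_l)$ are logarithmic, and Step~5 uses it precisely as a reference $F^1$-connection. So by the independence theorem (Theorem~\ref{thm:DHZ-app}(2); compare Proposition~\ref{prop:rees-can-nilflat}), $[A^{\mathrm{can}}]_p=[\nabla]_p$. Hence the edge map sends your class to $[\nabla]_p-[A^{\mathrm{can}}]_p=0$, not to $[\nabla]_p$ as Step~2 asserts. Since $\delta$ is injective on the $\aaa$-realization (Proposition~\ref{prop:DB-A-realization}), the class you build is $0$.

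No other choice of reference connection on $F$ repairs this. The reduction mod $\zz$ of a closed complex form lies in the kernel of the Bockstein $\partial:H^{2p-1}(-,\cc/\zz)\to H^{2p}(-,\zz)$. But $\partial\,\CS_p(\nabla^{\Del})=c^B_p(F)$ (Remark~\ref{rem:betti-vs-CS}), which is torsion but in general nonzero. A single transgression gives the absolute class only from a reference whose own character vanishes, such as $d$ on a trivialized bundle. That is why Construction~\ref{con:DS-CS} first passes to the Deligne--Sullivan cover.

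There is also a separate problem with Step~1. $A^{\mathrm{can}}$ has poles along the facets $\{y_l=1\}$ of $\iiii(c)$, and it depends on a full splitting of $G(c)$, not only on $\tau(c)$. So the smooth, face-compatible cocycle you describe is not available as stated.

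The missing idea is that no reference connection is needed. Because $\nabla$ is locally nil-flat, $\ch_p(\nabla)=0$ as a form (Proposition~\ref{prop:nilflat-ch-zero}). So its Cheeger--Simons character $\widehat{\ch}_p(F^\sigma,\nabla)$ is already flat, i.e.\ an element of $H^{2p-1}(-,\cc/\zz)$, and that is the class ``defined by $\nabla$''. The argument then runs as follows:
\begin{enumerate}
\item Theorem~\ref{thm:CS-DB-app}(3) gives $\alpha(\widehat{\ch}_p(F^\sigma,\nabla))=[\nabla]_p=\ch^\calD_p(F^\sigma)$.
\item The same identity holds for $\nabla^{\Del}$ (Theorem~\ref{thm:rees-cD-equals-vol}).
\item $\alpha$ is an isomorphism on the $\aaa$-realization (Proposition~\ref{prop:DB-A-realization}, Lemma~\ref{lem:DB-cube}).
\item Hence the two flat characters agree under $\iiii\Cube A\simeq X$, giving $\CS_p(\zeta_0)$.
\end{enumerate}
With Step~1 replaced in this way, the rest of your argument is essentially the paper's.

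The point you flag as the main obstacle, that this isomorphism is $\alpha=\delta$, is exactly what Proposition~\ref{prop:DB-A-realization} and Remark~\ref{rem:alpha-kernel} supply at the level of classes. A check on a single cube would not test it. By Lemma~\ref{lem:DB-affine}, both $H^{2p-1}(\aaa(c),\cc/\zz)$ and $H^{2p}_\calD(\aaa(c),\zz(p))$ vanish there for every $p\geq1$.
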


\section{The Burgos-Gil arithmetic Chern character}
\label{app:burgosgil}

\subsection{The arithmetic intersection theory of Burgos-Gil}

We recall the framework of \cite{BG} that we use for the comparison theorem.

Let $X$ be a smooth complex projective variety.  The \emph{arithmetic Chow
group} $\widehat{\mathrm{CH}}^p(X)$ is generated by pairs $(Z,g_Z)$ where
$Z\subset X$ is an algebraic cycle of codimension $p$ and $g_Z$ is a Green
current for $Z$ (a current of type $(p-1,p-1)$ on $X(\cc)$ satisfying
$dd^cg_Z + \delta_Z = [\omega_Z]$ for a smooth form $\omega_Z$), modulo
certain equivalence relations.

\begin{theorem}[Burgos-Gil {\cite{BG}}]
\label{thm:BG-full}
There exists a theory of \emph{arithmetic characteristic classes}: to each
vector bundle $E$ on $X$ and Hermitian metric $h$ one assigns arithmetic Chern
character classes
\[
  \widehat{\ch}_p(E,h)\;\in\;\widehat{\mathrm{CH}}^p(X)\otimes\qq,
\]
with the following properties:
\begin{enumerate}[leftmargin=2em,label=\rm(\arabic*)]
  \item \textit{Curvature:} The curvature form of $\widehat{\ch}_p(E,h)$ is
    $\ch_p(E,h)=\frac{(-1)^p}{p!(2\pi i)^p}\mathrm{Tr}(\Theta_h^p)$, where
    $\Theta_h$ is the curvature of the Hermitian connection.
  \item \textit{Independence:} The image of $\widehat{\ch}_p(E,h)$ in
    $H^{2p}_\calD(X,\zz(p))$ is independent of $h$ and equals $\ch^{\calD}_p(E)$.
  \item \textit{Functoriality:} Compatible with pullback and push-forward,
    and satisfies the Whitney product formula.
  \item \textit{Comparison:} For a flat bundle (all Chern character forms zero),
    $\widehat{\ch}_p(E,h)$ maps to $\CS_p(E,\nabla_h^{\mathrm{flat}})$ in
    $H^{2p-1}(X,\cc/\zz)$ via the edge map of the Deligne exact sequence.
\end{enumerate}
\end{theorem}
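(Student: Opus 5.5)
The plan is to build the classes from the Grassmannian, where everything is explicit, and to obtain (1)--(4) by pulling back along classifying maps, using Bott--Chern secondary forms to handle the dependence on choices.

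\emph{Universal case.} Since $X$ is projective, I would first choose a surjection $\Oo_X(n)^s\twoheadrightarrow E^\vee$. This realizes $E$ as $\phi^*\mathcal{S}$ for a morphism $\phi:X\to\mathbf{Gr}(r,s)$, exactly as in Step~1 of the proof of Theorem~\ref{thm:CS-regulator}. On $\mathbf{Gr}(r,s)$ the tautological bundle $\mathcal{S}$ carries the metric $h_{\mathcal S}$ induced from $\cc^s$. I would realize $\widehat{\ch}_p(\mathcal{S},h_{\mathcal S})$ through Schubert cycles: these generate $\mathrm{CH}^p(\mathbf{Gr})\otimes\qq$, and each is equipped with Green currents normalized so that $dd^c g_Z+\delta_Z=[\omega_Z]$ holds with $\omega_Z$ a summand of $\ch_p(\mathcal S,h_{\mathcal S})$. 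For general $(E,h)$ I would then set
\[
  \widehat{\ch}_p(E,h):=\phi^*\widehat{\ch}_p(\mathcal S,h_{\mathcal S})+a\bigl(\widetilde{\ch}_p(\phi^*h_{\mathcal S},h)\bigr),
\]
where $\widetilde{\ch}_p$ is the Bott--Chern form of the pair of metrics and $a$ is the usual map from forms to $\widehat{\mathrm{CH}}$. Independence of $\phi$ follows from the standard argument: connect two classifying maps through a map to a larger Grassmannian, and absorb the difference into a further Bott--Chern term.

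\emph{Properties.} Property (1) follows from $dd^c\widetilde{\ch}_p=\ch_p(E,h)-\phi^*\ch_p(\mathcal S,h_{\mathcal S})$. For property (2), the forgetful map $\widehat{\mathrm{CH}}^p\to H^{2p}_\calD$ kills $a(\text{forms})$ modulo $\mathrm{im}\,d$, so the image is $\phi^*$ of the universal Deligne class. That class is $\ch^\calD_p(\mathcal S)$, and hence the image is independent of $h$. This is the same mechanism as Theorem~\ref{thm:DHZ}, since Hermitian Chern connections are $F^1$-connections. Property (3) is formal from the construction: pullback composes classifying maps, and the Whitney formula uses $\mathbf{Gr}\times\mathbf{Gr}\to\mathbf{Gr}$ together with additivity of Bott--Chern forms on split exact sequences. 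Push-forward I would only cite from \cite{BG}.

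\emph{Comparison.} Property (4) is the main obstacle. If all Chern character forms of the flat connection vanish, then the image of $\widehat{\ch}_p(E,h)$ in $H^{2p}_\calD$ lies in the image of the edge map $H^{2p-1}(X,\cc/\zz)\to H^{2p}_\calD(X,\zz(p))$. I would identify a preimage explicitly by transgressing from the Hermitian connection $\nabla_h$ to the flat $\nabla$: the secondary form $\int_0^1\Tr(\dot\nabla_t\wedge F(\nabla_t)^{p-1})\,dt$, suitably normalized, represents the Cheeger--Simons class of Proposition~\ref{prop:transgression}. Its Deligne image agrees with the Bott--Chern correction by the DHZ homotopy formula for $F^1$-connections. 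The delicate point is that the edge map is not injective: its kernel is the image of $F^pH^{2p-1}$ (Remark~\ref{rem:alpha-kernel}). So the statement has to be read as ``maps to'' via a chosen lift, namely the one given by the transgression form. I would verify compatibility with the differential-character map $\widehat H^{2p-1}\to H^{2p}_\calD$ of \eqref{eq:alpha-map}, rather than claim uniqueness of the preimage.
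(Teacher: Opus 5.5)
The paper does not prove this statement; it quotes it from \cite{BG}. Your argument therefore has to stand on its own, and it has a genuine gap exactly where integral information is supposed to enter.

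Your Grassmannian-plus-Bott--Chern construction is essentially the Gillet--Soul\'e/Burgos one and can deliver (1) and (3). The problem is (2), where you use a forgetful map $\widehat{\mathrm{CH}}^p(X)\otimes\qq\to H^{2p}_\calD(X,\zz(p))$ that kills $a(\text{forms})$. No such map exists. The natural maps out of $\widehat{\mathrm{CH}}^p(X)\otimes\qq$ go to $CH^p(X)\otimes\qq$ (and then, by the cycle class, to $H^{2p}_\calD(X,\qq(p))$), to closed forms, and to real Deligne cohomology $H^{2p}_\calD(X,\rr(p))$. Your claim that $a(\widetilde{\ch}_p)$ dies is correct precisely in Burgos's real Deligne complex, where $a(\eta)$ maps to the class of $dd^c\eta$, a coboundary. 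But for $X$ smooth projective the map $H^{2p-1}(X,\rr(p))\oplus F^pH^{2p-1}(X,\cc)\to H^{2p-1}(X,\cc)$ is surjective. Hence $H^{2p}_\calD(X,\rr(p))$ is just the group of real $(p,p)$ Hodge classes and contains no intermediate Jacobian. The only natural path from your class to the intermediate Jacobian is through $\zeta$, which forgets $h$ and carries $\qq$-coefficients. So your route proves (2) only rationally, where it is immediate from $\zeta(\widehat{\ch}_p(E,h))=\ch_p(E)$ and the compatibility of the Deligne cycle class with Chern classes. It cannot reach (4) at all: a statement with $\qq$-coefficients is blind to the torsion in $H^{2p-1}(X,\cc/\zz)$, which is exactly what Theorem~\ref{thm:main} is about.

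For (4), the missing idea is that the class which maps to $\CS_p$ is attached to the flat connection, not to the metric. The Chern connection $\nabla_h$ generally has $\ch_p(\nabla_h)\neq0$ as a form. So the transgression $\eta$ from $\nabla_h$ to the flat $\nabla$ satisfies $d\eta=-\ch^{\mathrm{nm}}_p(\nabla_h)\neq0$. It therefore represents the non-flat difference $\widehat{\ch}_p(E,\nabla)-\widehat{\ch}_p(E,\nabla_h)\in A^{2p-1}/A^{2p-1}_\zz$, not a chosen lift in $H^{2p-1}(X,\cc/\zz)$; Proposition~\ref{prop:transgression} yields a $\cc/\zz$-class only when the two curvature forms agree. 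The correct statement concerns the flat Cheeger--Simons character $\widehat{\ch}_p(E,\nabla)$ itself. Give $E$ the holomorphic structure $\nabla^{0,1}$; then $\nabla$ and $\nabla_h$ are both $F^1$-connections. The Dupont--Hain--Zucker argument (cf.\ Theorem~\ref{thm:DHZ}) then shows that the map $\alpha$ of \eqref{eq:alpha-map} sends $\CS_p(E,\nabla)$ to the Deligne--Beilinson class of $E$. That is an integral theorem about differential characters, not a property of $\widehat{\ch}_p(E,h)\in\widehat{\mathrm{CH}}^p(X)\otimes\qq$, and it is the form in which the paper actually uses it (Theorem~\ref{thm:CS-DB-app}(3)). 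Your last sentence points this way, but once you argue like that the arithmetic Chow classes play no role. Property (4) as literally stated cannot be obtained through $\widehat{\mathrm{CH}}^p(X)\otimes\qq$. (Separately, a surjection $\Oo_X(n)^s\twoheadrightarrow E^\vee$ realizes $E(n)$, not $E$, as $\phi^*\mathcal S$, so your definition already needs the product formula with $\Oo(-n)$.)
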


\subsection{Green currents and the Bott-Chern form}

\begin{definition}
Given two Hermitian metrics $h_0,h_1$ on $E$, the \emph{Bott-Chern secondary
form} is
\[
  \widetilde{\ch}_p(E;h_0,h_1) = p\int_0^1(1-t)
  \mathrm{Tr}\!\left(\dot h_t h_t^{-1}\wedge\Theta_{h_t}^{p-1}\right)\,dt
  \;\in\; A^{2p-1}(X)/\mathrm{im}(\partial+\bar\partial),
\]
where $h_t = (1-t)h_0+th_1$ and $\dot h_t = h_1-h_0$.  It satisfies
\[
  dd^c[\widetilde{\ch}_p] = [\ch_p(E,h_1)] - [\ch_p(E,h_0)].
\]
\end{definition}

\subsection{The comparison theorem: the Deligne class, and when it is a
Chern-Simons class}

The comparison we need has two halves, and it is important to keep them
separate because they have different hypotheses.  The first identifies the
Bott-Chern class of an $F^1$-connection with the Deligne Chern class of the
underlying bundle; it requires no flatness whatever, and it is what allows the
Deligne class to be computed on a Grassmannian, where no nil-flat connection
exists.  The second says that when the connection happens to be locally
nil-flat, the associated differential character is flat and so defines a class
in $H^{2p-1}(-,\cc/\zz)$, which is the extended Chern-Simons class.

\begin{theorem}[Deligne class from any $F^1$-connection]
\label{thm:CS-DB-app}
Let $\bar Z$ be smooth projective, $D_Z\subset\bar Z$ a normal crossings
divisor, and $\bar E$ a vector bundle on $\bar Z$.
\begin{enumerate}[leftmargin=2em,label=\rm(\arabic*)]
  \item $F^1$-connections on $\bar E$ with logarithmic poles along $D_Z$
    exist; for instance the Chern connection $\nabla_h$ of any Hermitian
    metric $h$, which has no poles at all.
  \item For every such $\nabla$, the Bott-Chern class satisfies
    \[
      [\nabla]_p \;=\; \ch^\calD_p(\bar E)\;\in\;H^{2p}_\calD(\bar Z,\zz(p)).
    \]
    No flatness or nil-flatness hypothesis is used.
  \item If in addition $\nabla$ is locally nil-flat, so that
    $\ch_p(\nabla)=0$ as a form
    (Proposition~\ref{prop:nilflat-ch-zero}), then the Cheeger-Simons
    character $\widehat{\ch}_p(\bar E,\nabla)$ has zero curvature, hence is a
    flat character and defines
    \[
      \CS_p(\bar E,\nabla)\;\in\;H^{2p-1}(\bar Z,\cc/\zz),
    \]
    whose image under the natural map
    $\alpha:H^{2p-1}(\bar Z,\cc/\zz)\to H^{2p}_\calD(\bar Z,\zz(p))$ of
    \S\ref{app:diffchar} is $\ch^\calD_p(\bar E)$.
\end{enumerate}
\end{theorem}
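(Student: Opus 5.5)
I will prove the three parts in order, since each builds on the previous one.

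For (1), I take a Hermitian metric $h$ on $\bar E$ and its Chern connection $\nabla_h$. Its $(0,1)$-part is $\delbar_E$, and in a local holomorphic frame its connection matrix is $h^{-1}\partial h$, of type $(1,0)$ and smooth. It therefore lies in $F^1A^1(\bar Z\log D_Z,\End\bar E)$, trivially with logarithmic poles along $D_Z$, and it preserves any holomorphic filtration one chooses to impose, for instance the trivial one.

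For (2), I argue in two steps. First, by Theorem~\ref{thm:BG-full}(2) the class defined by $\ch_p(\bar E,h)$ (the arithmetic Chern character) maps to $\ch^\calD_p(\bar E)$, independently of $h$, so $[\nabla_h]_p=\ch^\calD_p(\bar E)$. Second, for an arbitrary $F^1$-connection $\nabla$ with log poles, I compare it to $\nabla_h$ by the linear path $\nabla_t=\nabla_h+t(\nabla-\nabla_h)$. Every $\nabla_t$ is again an $F^1$-connection, because the $F^1$ condition is linear. The Bott-Chern transgression $p\int_0^1\Tr(A\wedge F(\nabla_t)^{p-1})\,dt$ then lies in $F^1A^{2p-1}(\bar Z\log D_Z)$, and by Theorem~\ref{thm:DHZ} (equivalently Theorem~\ref{thm:DHZ-app}(2) with a constant simplicial scheme) it is a coboundary in the Deligne complex, so $[\nabla]_p=[\nabla_h]_p$. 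This step only needs $d$ of the transgression to equal the difference of the Chern forms (Lemma~\ref{lem:bott-chern}), and that identity holds with no flatness hypothesis.

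\textbf{Main obstacle.} The delicate point is the filtration-degree bookkeeping: the Deligne complex truncates at $\Omega^{p-1}(\log D_Z)$, so the transgression must lie in $F^p$ modulo terms that die. I would verify this by counting: each $\Tr$ monomial contains $p$ factors of $A$ or $F(\nabla_t)$, and each factor contributes at least one holomorphic log differential. So the $(1,0)$-degree of the monomial is at least $p$, provided the $(0,1)$-parts combine correctly; I would handle that by choosing $\nabla_h$ with the same $(0,1)$-part $\delbar_E$, so that $A$ is pure $(1,0)$ with log poles. This is the step I expect to need care with the logarithmic growth near $D_Z$, and there I would rely on the Burgos-Gil log-form complex.

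For (3), Proposition~\ref{prop:nilflat-ch-zero} gives $\ch_p(\nabla)=0$ as a form, so by \eqref{eq:diff-char-exact} the character $\widehat\ch_p(\bar E,\nabla)$ has zero curvature and lies in $H^{2p-1}(\bar Z,\cc/\zz)$. Its image under $\alpha$ is computed using the compatibility of the map $\widehat H^{2p-1}\to H^{2p}_\calD$ recorded before \eqref{eq:alpha-map}: the differential character of $\nabla$ maps to the Deligne class represented by $(\ch_p(\nabla)$, transgression$)$, which is $[\nabla]_p$. By part (2) this equals $\ch^\calD_p(\bar E)$.
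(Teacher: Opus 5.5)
You follow the paper's proof step for step: the Chern connection for (1); Theorem~\ref{thm:BG-full}(2) to get $[\nabla_h]_p=\ch^\calD_p(\bar E)$ and Dupont--Hain--Zucker independence to reach an arbitrary $\nabla$ for (2); vanishing curvature plus the compatibility of $\alpha$ for (3). Your degree count is sharper than the $F^1$ you first write, which is also what the paper's sketch of Theorem~\ref{thm:DHZ-app}(2) writes. Every $F^1$-connection already has $(0,1)$-part $\delbar_E$, so nothing needs to be chosen. Then $A$ and each $F(\nabla_t)$ lie in $F^1$, and the transgression lies in $F^pA^{2p-1}$, which is what a coboundary in the weight-$p$ Deligne complex actually needs. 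Your side remark that $\nabla_h$ preserves any holomorphic filtration is false, because the second fundamental form obstructs it; only the trivial filtration is needed, so this does no harm.

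The genuine gap is in the step you defer to ``the Burgos-Gil log-form complex''. Forms with logarithmic singularities along $D_Z$, filtered by $F^p$, compute the Hodge filtration of $H^*(Z^o)$, where $Z^o=\bar Z\setminus D_Z$. A coboundary there therefore proves equality only in $H^{2p}_\calD(Z^o,\zz(p))$, which is exactly what Theorem~\ref{thm:DHZ} asserts. The restriction $H^{2p}_\calD(\bar Z,\zz(p))\to H^{2p}_\calD(Z^o,\zz(p))$ is not injective.

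When $\nabla$ genuinely has poles, this cannot be repaired in the generality stated, because the residues carry part of the Chern character. Take $\bar Z=\pp^1$, $D_Z=Q=\{y=1\}$, $\bar E=\Oo(-Q)$, and let $\nabla$ be the restriction of $d$ from $\Oo_{\pp^1}$. In the frame $1-y$ it is $d+\dlog(1-y)$, an $F^1$-connection with a logarithmic pole of residue $1$ at $Q$. It is flat, so $\ch_1(\nabla)=0$. But $\ch^\calD_1(\bar E)$ has non-zero image $c_1(\Oo(-1))$ in $H^2(\pp^1,\cc)$, so (2) fails on $\bar Z$. For (3), $H^1(\pp^1,\cc/\zz)=0$, so no flat class can map to $\ch^\calD_1(\bar E)$ under $\alpha$. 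The same happens for $p=2$ with $\Oo(-H)$ on $\pp^2$ and $D_Z=H$ a line. The paper's own proof has the same defect, since it cites Theorem~\ref{thm:DHZ-app}(2), which is likewise a statement on the open part.

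Here is what your argument does establish:
\begin{itemize}
\item For $F^1$-connections without poles, run it with $D_Z=\emptyset$; then (2) and (3) hold on $\bar Z$ verbatim.
\item For connections with poles, you get $[\nabla]_p=\ch^\calD_p(\bar E)|_{Z^o}$ in $H^{2p}_\calD(Z^o,\zz(p))$.
\item Part (3) must likewise be read on $Z^o$, since the Cheeger--Simons character and \eqref{eq:diff-char-exact} on $\bar Z$ need a connection that is smooth on all of $\bar Z$.
\end{itemize}
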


\begin{proof}
(1) In a local holomorphic frame the Chern connection of $(\bar E,h)$ has
connection matrix $h^{-1}\partial h$, of type $(1,0)$; a connection whose
matrix is of type $(1,0)$ lies in $F^1$ of the (logarithmic) de Rham complex,
which is the $F^1$-condition.  Existence of $h$ is standard, by a partition of
unity.

(2) Let $\nabla$ be any $F^1$-connection with log poles along $D_Z$, and let
$\nabla_h$ be a Chern connection as in (1).  By
Theorem~\ref{thm:DHZ-app}(2) any two $F^1$-connections define the same class,
so $[\nabla]_p=[\nabla_h]_p$.  By Theorem~\ref{thm:BG-full}(1)--(2) the
arithmetic Chern character $\widehat{\ch}_p(\bar E,h)$ has curvature form
$\ch_p(\bar E,h)$ and image $\ch^\calD_p(\bar E)$ in $H^{2p}_\calD$,
independent of $h$; and $[\nabla_h]_p$ is by construction that image.  Hence
$[\nabla]_p=\ch^\calD_p(\bar E)$.

It is worth recording the concrete form of this argument, since it is the one
used on the cubical realization in \S\ref{subsec:BG-on-cubical}.  For
$n\gg0$ choose a surjection $q:\Oo_{\bar Z}^N\twoheadrightarrow\bar E(n)$,
giving $f_q:\bar Z\to\mathbf{Gr}(r,N)$ with $f_q^*\mathcal{S}_r=\bar E(n)$,
where $\mathcal{S}_r$ carries the canonical Hermitian metric $h_\mathcal{S}$
induced from the standard inner product on $\cc^N$.  Then
$\nabla_{\mathrm{Gr}}:=f_q^*\nabla_{h_\mathcal{S}}\otimes 1+1\otimes\nabla_{n}$
is an $F^1$-connection on $\bar E=\bar E(n)\otimes\Oo(-n)$, for
$\nabla_n$ a Chern connection on $\Oo_{\bar Z}(-n)$, and by
multiplicativity of the Deligne-Beilinson Chern character
\[
  \ch^\calD_\bullet(\bar E)\;=\;
  f_q^*\,\ch^\calD_\bullet(\mathcal{S}_r)\cdot\ch^\calD_\bullet(\Oo(-n)),
\]
so that $[\nabla_{\mathrm{Gr}}]_p=\ch^\calD_p(\bar E)$ degree by degree.
We stress that $\nabla_{\mathrm{Gr}}$ is \emph{not} nil-flat: its Chern
character forms are the pullbacks of the universal ones on the Grassmannian
and do not vanish.  Part (2) does not require them to.

(3) If $\ch_p(\nabla)=0$ as a form then the curvature of
$\widehat{\ch}_p(\bar E,\nabla)$ vanishes, so by the exact
sequence~\eqref{eq:diff-char-exact} the character lies in the subgroup
$H^{2p-1}(\bar Z,\cc/\zz)$ of flat characters.  The map $\alpha$ of
\S\ref{app:diffchar} carries $\widehat{\ch}_p(\bar E,\nabla)$ to $[\nabla]_p$,
which is $\ch^\calD_p(\bar E)$ by (2).
\end{proof}

\begin{remark}[The two directions are not equivalent]\label{rem:alpha-kernel}
Part (3) determines the image of $\CS_p(\bar E,\nabla)$ under $\alpha$, not
$\CS_p$ itself, and the distinction matters: the kernel of $\alpha$ is the
image of $F^pH^{2p-1}(\bar Z,\cc)$ in $H^{2p-1}(\bar Z,\cc/\zz)$, which is in
general non-zero.  So on a general smooth projective $\bar Z$ the Deligne
class does not determine the Chern-Simons class.

What rescues the situation in this paper is that the comparison with the
regulator is not carried out on $X$ but on an $\aaa$-realization, where the
corresponding map is an \emph{isomorphism}: by
Proposition~\ref{prop:DB-A-realization},
$H^{2p}_\calD(\Re_{\aaa,\sigma}X_\hdot,\zz(p))\cong
H^{2p-1}(|X_\hdot|,\cc/\zz)$, the affine directions carrying no Deligne
cohomology beyond a single copy of $\cc/\zz(p)$ in degree one.  There the
Deligne class and the $\cc/\zz$-class are interchangeable, which is why
$\ch^\calD_p$ can serve as the common currency between the $K$-theoretic and
the Hodge-theoretic halves of Appendix~\ref{app:rees}.
\end{remark}

\begin{theorem}[Regulator pullback]
\label{thm:CS-regulator-app}
In the situation of Section~\ref{sec:torsion}, Step 2, under the
homotopy equivalence $S = \iiii\Cube A\simeq X$ the class
$\CS_p(\nabla^{\Del})$ equals the pullback of the universal regulator class
$r_{2p-1}\in H^{2p-1}(BGL^+(K),\cc/\zz)$ via the classifying map.
\end{theorem}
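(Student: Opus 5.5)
The plan is to prove Theorem~\ref{thm:CS-regulator-app} by composing the two comparisons whose meeting point is the Deligne--Beilinson Chern character of the global Rees bundle $F_K$ on $\pp_K\Cube A$. These are the \emph{Regulator $=$ Deligne class} comparison and the \emph{Deligne class $=$ Chern--Simons class} comparison. The point to watch throughout is that the map $\alpha$ is not injective on $X$ (Remark~\ref{rem:alpha-kernel}). So the identification must be made on the $\aaa$-realization $Z^o=\aaa_\cc\Cube A$, where $\alpha=\delta$ is an isomorphism by Lemma~\ref{lem:DB-cube}, and only afterwards transported to $X$ through $Z^o\simeq S=\iiii\Cube A\simeq X$ (Propositions~\ref{prop:A1-equiv}, \ref{prop:cuberealize}, \ref{prop:nerve-htpy}).

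\textbf{First step: $\nabla^{\Del}$ versus the Rees connection.} I would invoke Proposition~\ref{prop:Del-Rees-compare}. By the uniqueness Theorem~\ref{thm:uniqueness}, $\pi^*F$ and $F_K^\sigma|_{\iiii\Cube A}$ are isomorphic as bundles with patching functor $\Cube A\xrightarrow{\mu}A\to\Xi_K$. Under this isomorphism, $\pi^*\nabla^{\Del}$ (compatible by Lemma~\ref{lem:Del-compatible}) and the $F^1$-connection $\nabla$ of Theorem~\ref{thm:F1-exists} are both locally nil-flat connections compatible with the same patching collection. Their Chern--Simons classes then agree: the transgression between them is closed, and it is exact on each cube because both connections preserve the same filtration and induce the same graded connection $\tau(c)$. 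The trace argument of Corollary~\ref{cor:vol-upper-triangular}, applied to the full complex form rather than only its imaginary part, kills the transgression integrand cubewise, and the face compatibility makes these cubewise vanishings assemble.

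\textbf{Second step: Chern--Simons class to regulator.} With $\nabla$ in hand, I apply Theorem~\ref{thm:CS-regulator} (Steps 1--5). This runs as follows:
\begin{itemize}
\item Theorem~\ref{thm:CS-DB-app}(2)--(3), applied levelwise on the smooth projective cubes $\pp(c)\cong(\pp^1)^{|c|}$, gives $\alpha(\CS_p(\nabla))=\ch^\calD_p(F_K^\sigma)$.
\item Theorem~\ref{thm:DHZ} makes this independent of the choice of $F^1$-connection, so it may be computed with the Grassmannian connection.
\item The classifying map $r_L$ of Construction~\ref{constr:rL} pulls the universal class $r_{2p-1}$ back to the same Deligne class. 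This is the $K$-theoretic comparison of Appendix~\ref{app:rees}, and it uses homotopy invariance of $K_*(F[t_1,\dots,t_n])$.
\end{itemize}
Since $\delta$ is an isomorphism on $Z^o$, the $\cc/\zz$ classes coincide there. Pulling back along $S\simeq Z^o$ and composing with the first step then gives $\CS_p(\nabla^{\Del})=r_L^*(r_{2p-1})$ on $X$.

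\textbf{Main obstacle.} I expect the hard step to be making the levelwise Burgos-Gil/DHZ comparison descend to the singular glued scheme $\pp_K\Cube A$ compatibly with the cubical structure. Concretely, the Grassmannian classifying maps $\phi_q$ must be chosen face-compatibly (Proposition~\ref{prop:grassmann-definition}) so that the levelwise Deligne classes form a class in cubical Deligne cohomology matching the row-$t=1$ spectral sequence of Lemma~\ref{lem:DB-cube}. I would first construct $\phi_q$ by induction on cube dimension, extending surjections $\Oo^N\twoheadrightarrow F(c)(n)$ from the boundary faces, which is possible for $n\gg0$ by Serre vanishing on $(\pp^1)^{|c|}$ relative to its boundary. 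I would then check that the universal regulator on $BGL^+(K)$ is represented by the same cubical cocycle.
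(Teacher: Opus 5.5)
\textbf{Overall route.} Your architecture is the paper's. The two comparisons meet at $\ch^\calD_p(F_K)$ on the $\aaa$-realization, where $\alpha$ is an isomorphism. Theorem~\ref{thm:uniqueness} puts $\pi^*\nabla^{\Del}$ and the Rees $F^1$-connection on one bundle. The regulator side is Proposition~\ref{prop:classmap-comparison}.

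\textbf{The step that differs.} The one genuine difference is how you compare $\pi^*\nabla^{\Del}$ with $\nabla$. The paper's proof uses Theorem~\ref{thm:DHZ-app}(2): both are $F^1$-connections, so they have the same Deligne class. It then needs $\alpha$ to be injective, which holds only on the $\aaa$-realization. It also has to apply Dupont--Hain--Zucker to $\pi^*\nabla^{\Del}$, which lives on the interval realization rather than on the cubical scheme $\underline{\pp}_K\Cube A$; Proposition~\ref{prop:Del-Rees-compare} glosses this as ``the simplicial scheme $\iiii\Cube A$''.

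You instead show that the transgression integrand along the linear path vanishes identically on each cube. The reason is that $\nabla-\pi^*\nabla^{\Del}$ is strictly upper-triangular for $W(F(c);\mu(c))$, while every $F(\nabla_t)$ preserves that filtration. This is more elementary and proves more: an equality of differential characters on $S$, at the level of forms, with no Hodge theory and no appeal to $\alpha$.

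The price is a stronger input. You need the uniqueness isomorphism to match the graded connections $\tau(c)$ induced by the two connections, whereas the DHZ route only needs the $F^1$ condition. For $p\geq2$ your argument even survives if the two graded connections differ by closed scalar $1$-forms on each graded piece. In that case the difference is only upper-triangular, but $F(\nabla_t)$ stays strictly upper-triangular, so the trace still vanishes.

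\textbf{The obstacle you emphasize.} The paper places the main difficulty elsewhere. Your face-compatible presentation, built by induction on cube dimension with Serre vanishing, is exactly Lemma~\ref{lem:compatible-surjection} and Proposition~\ref{prop:grassmann-definition}. It serves only the Hodge-theoretic half.

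For the regulator half there is no ``cubical cocycle'' representing $r_{2p-1}$ to check, since $BGL^+(K)$ carries no bundle. The paper's proof stresses that the regulator class and the Deligne class live on different objects. They are identified only after both are pulled back to the rectified bisimplicial object $X'_{\hdot,\hdot}$ of Proposition~\ref{prop:classmap-comparison}. There they agree by naturality from Lemma~\ref{lem:universal-comparison}, which rests on the stabilized homology isomorphism of Lemma~\ref{lem:iota-homology-iso}. Since you already cite that comparison in your second step, this is misplaced emphasis rather than a gap.
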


\begin{proof}
The classifying map $X\simeq S\to BGL^+(K)$ is the map $r_L$ of
Construction~\ref{constr:rL}, defined by the Rees bundle $F_K$ on
$\aaa_K\Cube A$ (whose $\aaa^1$-homotopy type is $S$) through
Construction~\ref{constr:ctaut}.  The identity
$r_L^*(r_{2p-1})=\CS_p(\nabla)$, for $\nabla$ any $F^1$-connection on
$F_K^\sigma$, is Proposition~\ref{prop:classmap-comparison}.  Finally, by the
uniqueness of the bundle with given patching data
(Theorem~\ref{thm:uniqueness}) and the compatibility of patching data, the
$F^1$-connection on $F_K^\sigma$ and the restriction of $\nabla^{\Del}$ to
$\iiii\Cube A$ define the same class, by Theorem~\ref{thm:DHZ-app}(2).

The universal regulator and the Deligne cohomology class on 
$\aaa\Cube A$ are classes on different objects. In order to get to a situation where 
we have an isomorphism between their containing cohomology groups, we need to pull 
both of them back  to a common bisimplicial resolution. This is treated below
in \S\ref{subsec:classmap}. 
\end{proof}

\subsection{Application to the cubical realization: the base is not smooth}
\label{subsec:BG-on-cubical}

The statements recalled above are for a smooth projective base, and the object
we apply them to is not one.  We set out here in what sense they are used.

\begin{remark}[The difficulty]\label{rem:cubical-singular}
The projective realization $\pp_K\Cube A$ of \S\ref{sec:cubical} is obtained
by gluing the pieces $\pp(c)\cong(\pp^1_K)^{|c|}$, one for each cube $c$ of
$\Cube A$, along their faces.  As a scheme it is projective but neither smooth
nor irreducible: it is a union of products of projective lines meeting along
coordinate faces, so its singular locus contains every point lying on more
than one piece.  Neither Theorem~\ref{thm:BG-full} nor
Theorem~\ref{thm:CS-DB-app} applies to it as stated, and the Green currents of
\cite{BG} are not defined on a singular base.
\end{remark}

There are two ways to make the usage legitimate, and we use both: they give
the same class, and each is convenient for a different purpose.

\subsubsection{First route: descent, and the cubical scheme}

The first observation is that $\pp_K\Cube A$ should not be regarded as a
single singular scheme at all.  The Rees construction of
Theorem~\ref{thm:global-rees} does not produce a bundle on a glued scheme; it
produces a compatible family, namely a bundle on the \emph{cubical scheme}
\[
  \underline{\pp}_K\Cube A \;:\;\;
  c\;\longmapsto\;\pp(c)\cong(\pp^1_K)^{|c|},
\]
whose terms are all smooth projective and whose structure maps are the closed
immersions of faces.  This is exactly the setting for which the
Burgos-Gil theory is built---their arithmetic characteristic classes are
constructed on \emph{simplicial} schemes with smooth projective terms
\cite{BG}, and Theorem~\ref{thm:DHZ-app} is likewise stated in
\S\ref{app:simplicial} for a simplicial smooth projective variety, not for a
smooth variety.  Passing between cubical and simplicial objects is harmless
here: the cubical scheme is levelwise smooth projective, and its associated
diagonal simplicial scheme has the same Deligne cohomology by the usual
comparison of the two descent spectral sequences.

Accordingly all statements are to be read \emph{levelwise plus descent}:
\begin{itemize}[leftmargin=2em]
  \item $\widehat{\ch}_p(F_K,h)$ means the compatible family
    $\bigl(\widehat{\ch}_p(F(c),h_c)\bigr)_c$ of arithmetic Chern characters,
    each formed on the smooth projective $\pp(c)$ for a levelwise metric
    $h=(h_c)$;
  \item $\ch^\calD_p(F_K)$ means the class in
    $H^{2p}_\calD(\underline{\pp}_K\Cube A,\zz(p))$ defined by the descent
    spectral sequence of the cubical filtration, whose $E_1$-term is
    $\bigoplus_c H^{2p}_\calD(\pp(c),\zz(p))$ --- the same spectral sequence
    used in Proposition~\ref{prop:DB-A-realization} for the affine
    realization;
  \item an $F^1$-connection on $F_K$ means a compatible family of
    $F^1$-connections $\nabla(c)$ on $F(c)$ relative to
    $(\pp(c), D|_{\pp(c)})$, which is the definition already given in
    \S\ref{app:simplicial}.
\end{itemize}
With this reading, Theorem~\ref{thm:DHZ-app} and Theorem~\ref{thm:BG-full}
apply on each $\pp(c)$, where the base \emph{is} smooth projective, and the
conclusions---existence of $F^1$-connections, independence of the class of the
choice, and the comparison with the Chern-Simons class---descend because all
three are natural for the face immersions.

\begin{remark}[A citation-level point]
The descent step above uses that Burgos-Gil's arithmetic characteristic
classes, and the comparison theorems built on them, are natural for closed
immersions of faces in a simplicial (here, cubical) smooth projective scheme,
in the precise form needed to run the spectral-sequence descent argument.
This naturality is standard for the formalism of \cite{BG} and is exactly
the setting Theorem~\ref{thm:DHZ-app} is stated in
(\S\ref{app:simplicial}), but we have not independently re-verified the
exact hypotheses of the cited Burgos-Gil descent statement against the
specific cubical filtration used here; this is a citation-level point worth
checking precisely, though we know of no reason to expect it to fail. The
independent classifying-map argument of the second route below gives the
same class by an unrelated construction, which is some evidence for, though
not a substitute for, this check.
\end{remark}

\subsubsection{Second route: a Grassmannian classifying map}

The descent reading is the correct formal setting, but for some purposes one
wants a single smooth projective variety carrying the universal object, so
that the class may be \emph{defined} by pullback rather than by descent.  This
is supplied by a classifying map, and it requires only that a face-compatible
presentation of $F_K$ exist.

\begin{lemma}[Compatible presentation]\label{lem:compatible-surjection}
There exist $n\geq0$, $N\geq1$ and, for every cube $c$ of $\Cube A$, a
surjection of $\Oo_{\pp(c)}$-modules
\[
  q_c\;:\;\Oo_{\pp(c)}^{\,N}\;\twoheadrightarrow\;F(c)(n),
  \qquad F(c)(n):=F(c)\otimes\Oo_{\pp(c)}(n,\dots,n),
\]
such that $q_{c}|_{\pp(c')}=q_{c'}$ for every face $c'\subset c$.
\end{lemma}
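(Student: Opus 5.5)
The plan is to build the surjections by induction on $\dim c$, exactly as the $F^1$-connection was built in Theorem~\ref{thm:F1-exists}, but with sections in place of connection forms. First fix $n$ and $N$ once and for all. There are finitely many cubes. On each $\pp(c)\cong(\pp^1)^{|c|}$ the bundle $F(c)$ is an explicit direct sum $\bigoplus_{\mathbf j}G(c)_{\mathbf j}\otimes\Oo(-\sum_l j_lQ_l)$, and the twists $j_l$ are bounded by the finitely many weights of the functor $A\to\Xi_K$. So a single $n$, larger than every $|j_l|$ plus a margin, makes every $F(c)(n)$ globally generated with vanishing $H^1$. More precisely we want $H^1\bigl(\pp(c),F(c)(n)\otimes\mathcal I_{\partial c}\bigr)=0$, where $\mathcal I_{\partial c}$ is the ideal sheaf of the union of the faces. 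This ideal is $\Oo(-2,\dots,-2)$-like, since it is generated by $\prod_l y_l(1-y_l)$ on the affine part, so the vanishing holds by K\"unneth once $n\ge\max|j_l|+2$.

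Next, fix $N$ to be at least $\max_c h^0(\pp(c),F(c)(n))$, summed if necessary over the cube's own new sections.

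The induction itself goes as follows. On $0$-cubes, $\pp(c)$ is a point and $F(c)(n)$ is the core vector space $G(c)$; choose any surjection $K^N\to G(c)$. Suppose $q_{c'}$ has been constructed for all proper faces $c'$ of $c$, compatibly on their mutual intersections; this compatibility is supplied by the inductive hypothesis together with the cocycle condition of Proposition~\ref{prop:rees-glue}. Then the $N$ coordinate sections $q_{c'}(e_i)$ glue to a section $s_i$ of $F(c)(n)|_{\partial\pp(c)}$ on the reduced union of faces. The gluing uses the exact sequence for a union of coordinate hyperplanes in $(\pp^1)^{|c|}$, which is a normal crossings union, together with Proposition~\ref{prop:rees-face-detailed}. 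The vanishing of $H^1(F(c)(n)\otimes\mathcal I_{\partial c})$ then lets each $s_i$ lift to a global section $\tilde s_i$ on $\pp(c)$. This defines $q_c$ with $q_c|_{\pp(c')}=q_{c'}$.

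The main obstacle is surjectivity of $q_c$ at interior points. The lifts are unique only up to $H^0(F(c)(n)\otimes\mathcal I_{\partial c})$, and the boundary data alone need not generate the fibre off $\partial\pp(c)$. To handle this, I would reserve extra coordinates. Enlarge $N$ to $N_0+\sum_c M_c$, where $M_c:=h^0(F(c)(n)\otimes\mathcal I_{\partial c}(-1))$ or a similar bound. On $\pp(c)$, send the coordinates reserved for $c$ to a generating set of $F(c)(n)\otimes\mathcal I_{\partial c}$, which is globally generated for $n$ large by the same K\"unneth computation. Send the coordinates reserved for every other cube to zero, or to any lift of their boundary values. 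These new sections vanish on $\partial\pp(c)$, so compatibility is preserved, and together with the lifts they generate $F(c)(n)$ at every point of $\pp(c)$: off the faces because $\mathcal I_{\partial c}$ is invertible there, and on the faces by induction. If uniform global generation of $F(c)(n)\otimes\mathcal I_{\partial c}$ fails for the chosen $n$, I would first increase $n$ once more. Only finitely many cubes are involved, so a single $n$ and $N$ still suffice.
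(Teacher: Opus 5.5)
Your proposal is correct and is essentially the paper's proof. Both argue by induction on $|c|$. Both extend the glued boundary presentation using $H^1\bigl(\pp(c),\mathcal I_{\partial}\otimes F(c)(n)\bigr)=0$. Both then add finitely many sections of $\mathcal I_{\partial}\otimes F(c)(n)$, which vanish on every proper face, to force surjectivity off $\partial\pp(c)$.

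The differences are cosmetic. You fix $n$ once, before the induction, using the explicit splitting $F(c)=\bigoplus_{\mathbf j}G(c)_{\mathbf j}\otimes\Oo(-\sum_l j_lQ_l)$, $\mathcal I_\partial\cong\Oo(-2,\dots,-2)$ and K\"unneth; the paper uses Serre vanishing, and your choice avoids its mid-induction ``enlarging $n$''. You also track coordinates by reserving a block per cube rather than ``padding with zeros''. Your $M_c$ should just be the number of sections needed to generate $\mathcal I_\partial\otimes F(c)(n)$, e.g.\ its $h^0$, without the twist by $-1$.
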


\begin{proof}
Since $A$ is finite, $\Cube A$ has finitely many cubes and their dimensions
are bounded.  We construct the $q_c$ by ascending induction on $|c|$, choosing
$n$ and $N$ at the end uniformly.

Let $\partial\pp(c)=\bigcup_{c'\subsetneq c}\pp(c')$ be the union of the
proper faces, a closed subscheme with ideal sheaf
$\mathcal{I}_{\partial}\subset\Oo_{\pp(c)}$, and suppose compatible surjections have
been constructed on all proper faces; by compatibility they glue to a
surjection $q_{\partial}:\Oo_{\partial}^{\,N_0}\twoheadrightarrow
F(c)(n)|_{\partial}$.  Giving a map $\Oo^{N_0}\to F(c)(n)$ is giving $N_0$
global sections of $F(c)(n)$, so extending $q_\partial$ across $\pp(c)$ amounts
to surjectivity of the restriction
\[
  H^0\bigl(\pp(c),F(c)(n)\bigr)\;\longrightarrow\;
  H^0\bigl(\partial\pp(c),F(c)(n)|_{\partial}\bigr),
\]
whose cokernel is controlled by
$H^1\bigl(\pp(c),\mathcal{I}_\partial\otimes F(c)(n)\bigr)$.  As $\pp(c)$ is
projective and $\mathcal{I}_\partial\otimes F(c)$ coherent, this group vanishes for
$n\gg0$ by Serre vanishing; fix such an $n$, valid simultaneously for the
finitely many cubes.  Let $q'_c:\Oo^{N_0}\to F(c)(n)$ be an extension.  It is
surjective along $\partial\pp(c)$ by construction, but need not be surjective
elsewhere.  Enlarging $n$ if necessary, $\mathcal{I}_\partial\otimes F(c)(n)$ is
generated by global sections, so we may choose finitely many sections
$s_1,\dots,s_M$ of $\mathcal{I}_\partial\otimes F(c)(n)$ generating $F(c)(n)$ on
$\pp(c)\setminus\partial\pp(c)$, and set
$q_c:=(q'_c,s_1,\dots,s_M):\Oo^{N_0+M}\to F(c)(n)$.  This is surjective on all
of $\pp(c)$, and since each $s_j$ lies in $\mathcal{I}_\partial$ it restricts to zero
on every proper face, so $q_c|_{\pp(c')}=q'_c|_{\pp(c')}=q_{c'}$.  Padding all
the $q_c$ with zeros to a common $N$ completes the induction.
\end{proof}

\begin{proposition}[The class by pullback]\label{prop:grassmann-definition}
Let $q=(q_c)$ be as in Lemma~\ref{lem:compatible-surjection}.  Then $q$
defines a morphism of cubical schemes
\[
  \phi_q\;:\;\underline{\pp}_K\Cube A\;\longrightarrow\;\mathbf{Gr}(r,N)
\]
into a \emph{smooth projective} Grassmannian, with
$\phi_q^*\mathcal{S}_r\cong F_K(n)$, and
\[
  \ch^\calD_p\bigl(F_K\bigr)
  \;=\;\Bigl[\phi_q^*\,\ch^\calD_\bullet(\mathcal{S}_r)\cdot
  \ch^\calD_\bullet\bigl(\Oo(-n)\bigr)\Bigr]_p .
\]
This is independent of the choice of $(n,N,q)$ and agrees with the class
defined by descent above.
\end{proposition}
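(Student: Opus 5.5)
The plan has three parts: build the map from the compatible surjections, identify the Deligne class by pullback, and then check independence of choices and agreement with the descent class.

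First I construct $\phi_q$ cube by cube. On each smooth projective $\pp(c)$ the surjection $q_c:\Oo^N\twoheadrightarrow F(c)(n)$ determines, by the universal property of the Grassmannian of rank-$r$ quotients of $\cc^N$, a morphism $\phi_c:\pp(c)\to\mathbf{Gr}(r,N)$ with $\phi_c^*\mathcal{S}_r\cong F(c)(n)$, the isomorphism being induced by $q_c$. The face compatibility $q_c|_{\pp(c')}=q_{c'}$ of Lemma~\ref{lem:compatible-surjection} gives $\phi_c\circ\iota_{c',c}=\phi_{c'}$. It also makes the identifications of $\phi^*\mathcal{S}_r$ with $F(c)(n)$ compatible with the gluing isomorphisms of Proposition~\ref{prop:rees-glue}. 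So $(\phi_c)$ is a morphism of cubical schemes to the constant cubical scheme $\mathbf{Gr}(r,N)$, and $\phi_q^*\mathcal{S}_r\cong F_K(n)$ as bundles on $\underline{\pp}_K\Cube A$.

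Second, I read the displayed formula levelwise. On each $\pp(c)$, functoriality and multiplicativity of $\ch^\calD_\bullet$ (Theorem~\ref{thm:BG-full}(3)), applied to $F(c)=F(c)(n)\otimes\Oo(-n)$, give
\[
  \ch^\calD_p(F(c))=\bigl[\phi_c^*\ch^\calD_\bullet(\mathcal{S}_r)\cdot\ch^\calD_\bullet(\Oo(-n))\bigr]_p .
\]
This is exactly the computation in the proof of Theorem~\ref{thm:CS-DB-app}(2). At the cochain level I represent both sides by the $F^1$-connection $\nabla_{\mathrm{Gr}}$ built from $\phi_c^*\nabla_{h_{\mathcal S}}$ and a fixed Chern connection on $\Oo(-n)$. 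Since these restrict correctly to faces, they form a cubical $F^1$-connection. Theorem~\ref{thm:DHZ-app}(2) then says that its class in the cubical Deligne cohomology (the descent spectral sequence) is the descent class $\ch^\calD_p(F_K)$, because any levelwise $F^1$-connection computes that class. This proves both the formula and the agreement with the descent definition.

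Third, independence of $(n,N,q)$ is then automatic: the right-hand side always equals the descent class, which involves no choices. Alternatively, two presentations can be compared through their direct sum $\Oo^{N+N'}$ and a common twist $n''\ge n,n'$, using the stabilization $\mathbf{Gr}(r,N)\hookrightarrow\mathbf{Gr}(r,N+N')$ and the identity $\ch(\mathcal{S}(m))\ch(\Oo(-m))=\ch(\mathcal{S})$.

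The main obstacle is the step from levelwise identities to an identity in the descent cohomology. Equal $E_1$-components do not by themselves determine the class in $H^{2p}_\calD(\underline{\pp}_K\Cube A,\zz(p))$, because the descent spectral sequence can have nontrivial extensions. I would avoid this by working with an honest cocycle in the cubical Deligne complex. That requires the Bott--Chern secondary forms to be chosen compatibly with face restrictions, which holds because they are produced functorially from face-compatible connection families. This is the same naturality caveat about the Burgos--Gil descent statement flagged in the remark above.
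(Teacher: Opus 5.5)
Your proof is correct. For the first two claims it coincides with the paper's: $\phi_q$ is built from the face-compatible $q_c$, and the levelwise identity follows from multiplicativity applied to $F(c)=F(c)(n)\otimes\Oo(-n)$.

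You diverge on the last two claims, and the two orderings buy different things.

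\emph{The paper's route.} It proves independence of $(n,N,q)$ directly. It joins two presentations by the $\aaa^1$-homotopy $(1-t)q\oplus tq'$ into $\mathbf{Gr}(r,2N)$, uses homotopy invariance of Deligne cohomology, and uses the projection formula to change $n$. It then asserts agreement with the descent class levelwise, on each $\pp(c)$.

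\emph{Your route.} You reverse the order. You observe that $\phi_q^*\nabla_{h_{\mathcal S}}\otimes 1+1\otimes\nabla_n$ is itself a face-compatible $F^1$-connection. Taking, as the paper's bullet list does, the descent class to be the class of any face-compatible family of $F^1$-connections, Theorem~\ref{thm:DHZ-app}(2) in the total cubical complex identifies its class with $\ch^\calD_p(F_K)$. Independence of $(n,N,q)$ then becomes formal.

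\emph{What your ordering gains.} Levelwise agreement only controls the image of the class under restriction to $\prod_c H^{2p}_\calD(\pp(c),\zz(p))$. Since $(\pp^1)^k$ has no odd cohomology, each $H^{2p}_\calD(\pp(c),\zz(p))$ is just $H^{2p}(\pp(c),\zz(p))$. So the levelwise data record only the topological class. All the $\cc/\zz$-information the paper cares about lies in the kernel of that restriction, i.e.\ in $E^{s,2p-s}$ with $s\geq 1$. The paper's sentence ``agreement of compatible families is agreement of the descended classes'' is therefore only justified at cochain level, which is exactly what your argument supplies.

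\emph{What it costs.} Your independence statement now rests on the cubical form of Theorem~\ref{thm:DHZ-app}(2) and on the face-naturality of the Bott--Chern forms. That is the same citation-level point flagged in the remark before Lemma~\ref{lem:compatible-surjection}. The paper's homotopy argument gets independence from homotopy invariance alone. That is what lets the Grassmannian route serve as a check on the descent route, rather than being a consequence of it.

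Your fallback argument via direct sums and a common twist is unnecessary, and as written it does not work. Comparing classifying maps for different $n$ needs a presentation of $\Oo(n''-n)$, i.e.\ a Segre-type map, not just the multiplicativity identity you quote. You can drop it.
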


\begin{proof}
Each $q_c$ classifies a morphism $\pp(c)\to\mathbf{Gr}(r,N)$ with
$\phi^*\mathcal{S}_r=F(c)(n)$, and the compatibility
$q_c|_{\pp(c')}=q_{c'}$ says exactly that these commute with the face
immersions, so they assemble to a morphism of cubical schemes.  The displayed
identity is multiplicativity of the Deligne-Beilinson Chern character applied
to $F_K=F_K(n)\otimes\Oo(-n)$, the class $\ch^\calD_\bullet(\Oo(-n))$ being
that of a line bundle and so available on each $\pp(c)$ directly.

For independence, two presentations $q,q'$ with the same $(n,N)$ are joined by
the standard homotopy: the family $(1-t)q\oplus tq'$ on
$\underline{\pp}_K\Cube A\times\aaa^1$, after passing to $\Oo^{2N}$, is a
face-compatible surjection away from finitely many $t$, and enlarging $N$
removes those; the resulting map to $\mathbf{Gr}(r,2N)$ restricts to
$\phi_q,\phi_{q'}$ at $t=0,1$, and $\aaa^1$-homotopic maps induce the same map
on Deligne cohomology.  Different $n$ are compared by the projection formula.
Agreement with the descent definition holds levelwise, on each $\pp(c)$, where
it is the standard fact that the Deligne Chern character of a globally
generated bundle is pulled back from the Grassmannian; agreement of compatible
families is agreement of the descended classes.
\end{proof}

\begin{remark}[Why the second route is useful]\label{rem:grassmann-useful}
As well as validating the notation, 
Proposition~\ref{prop:grassmann-definition} 
reduces Theorem~\ref{thm:CS-DB-app}(2) on the cubical
realization to the same statement on the single smooth projective variety
$\mathbf{Gr}(r,N)$, carrying the canonical Hermitian metric on the tautological
bundle.  
This is the form in which the comparison is used in
Appendix~\ref{app:rees}, \S\ref{subsec:degeneration}: the $F^1$-connection may
be taken to be $\phi_q^*\nabla_{h_\mathcal{S}}$ twisted by a Chern connection
on $\Oo(-n)$, so that no connection needs to be manufactured on a singular
base.  Note that this connection is Hermitian and not nil-flat; nil-flatness
is needed only later, on $\pp_K\Cube A$, and only to pass from
$\ch^\calD_p$ to the Chern-Simons class.  It also
explains why the projectivity of $\pp_K\Cube A$ matters twice over---once so
that $F(c)(n)$ is globally generated, and once so that the target of $\phi_q$
is compact.
\end{remark}

\section{Deligne's patched connection and locally nil-flat connections}
\label{app:deligne-patch}

\subsection{The Deligne canonical extension}

Let $X$ be smooth projective, $D = D_1+\cdots+D_k$ a simple normal crossings
divisor, and $L$ a local system on $X^* = X\setminus D$ with unipotent
monodromy.  Deligne's canonical extension \cite{Deligne70} is:

\begin{theorem}[Deligne {\cite{Deligne70}}]
There is a unique (up to isomorphism) vector bundle $F$ on $X$ with a
meromorphic connection $\nabla:F\to F\otimes\Omega^1_X(\log D)$ such that:
\begin{enumerate}[leftmargin=2em,label=\rm(\arabic*)]
  \item $F|_{X^*} = L\otimes_\cc\Oo_{X^*}$ (and $\nabla|_{X^*}$ is the flat
    connection of $L$).
  \item The eigenvalues of the residue $\mathrm{Res}_{D_i}(\nabla)$ lie in
    $[0,1)$.
\end{enumerate}
For a local system with unipotent monodromy, the residues are nilpotent
(eigenvalues all $= 0$), so condition (2) is $\mathrm{Res}_{D_i}(\nabla)$
nilpotent.
\end{theorem}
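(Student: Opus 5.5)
This is the classical existence and uniqueness theorem for the canonical extension, and I would prove it by local construction followed by gluing, with uniqueness supplied by a local rigidity statement.

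\emph{Local model.} Work on a polydisc $U=\Delta^n$ with $D\cap U=\{z_1\cdots z_a=0\}$, so that $U^*\simeq(\Delta^*)^a\times\Delta^{n-a}$ and $\pi_1(U^*)=\zz^a$, generated by commuting monodromies $T_1,\dots,T_a$. For each $i$ take the unique logarithm $N_i=\frac{1}{2\pi i}\log T_i$ whose eigenvalues have real part in $[0,1)$; in the unipotent case $N_i$ is the nilpotent logarithm of the normalization fixed earlier. The $N_i$ commute, since each is a polynomial in the corresponding $T_i$. Let $V$ be the space of multivalued flat sections on the universal cover. The twisted sections $\tilde e(v)=\exp\bigl(-\sum_i \log z_i\, N_i\bigr)v$, $v\in V$, are single-valued on $U^*$, because the monodromy of $\exp(-\log z_i N_i)$ cancels $T_i$. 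Define $F|_U:=V\otimes\Oo_U$ with these sections as frame. In this frame the connection is $\nabla=d+\sum_i N_i\,dz_i/z_i$, which is logarithmic with constant residues $N_i$ whose eigenvalues lie in $[0,1)$. (Depending on orientation conventions the sign of $N_i$ may flip; I would fix it so that residues match $N_i$.)

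\emph{Gluing.} On overlaps of two such charts, both constructions give extensions of $L\otimes\Oo$ with residue eigenvalues in $[0,1)$, so it suffices to prove local uniqueness. The frames are then canonically identified on $U^*$ and the cocycle condition is automatic, because the identifications are restrictions of the identity on $L\otimes\Oo_{X^*}$.

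\emph{Uniqueness (the main step).} Given two such extensions $(F,\nabla)$ and $(F',\nabla')$, the identity on $X^*$ is a flat section $\phi$ of $\mathcal{H}om(F,F')$ over $X^*$, and I must show it extends holomorphically. By Hartogs it is enough to work near a generic point of a single $D_i$, so reduce to one variable $z$ in $\Delta^*$ with parameters. Write $\phi$ in the local frames. The flatness equation makes the entries of $\phi$ combinations of $z^{\lambda}(\log z)^k$, where $\lambda$ ranges over differences of residue eigenvalues of $F'$ and $F$. Single-valuedness forces $\lambda\in\zz$, and the normalization to $[0,1)$ forces $|\mathrm{Re}\,\lambda|<1$, so $\lambda=0$. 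The same single-valuedness argument kills the $\log$ terms. Hence $\phi$ has moderate growth with no negative powers, so it is holomorphic. Applying the argument to $\phi^{-1}$ shows $\phi$ is an isomorphism.

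I expect this growth argument to be the main obstacle. To make it rigorous I would use the standard fact that a flat section of a logarithmic connection with residues in a fixed strip grows at most polynomially, and then apply the Riemann extension theorem. Finally, in the unipotent case every eigenvalue of $T_i$ is $1$, so the residue eigenvalues in $[0,1)$ must all be $0$; that is, the residues are nilpotent, which gives the last sentence of the theorem.
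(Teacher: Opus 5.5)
Your outline (local model, gluing by local uniqueness, Hartogs reduction to a smooth point of $D$) is the standard one. The paper itself gives no proof, only the citation and a local description. There is, however, a gap in the uniqueness step, and both halves of your argument rest on that step.

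\textbf{The gap.} It sits at the sentence ``the flatness equation makes the entries of $\phi$ combinations of $z^{\lambda}(\log z)^k$''. That shape of solutions is available only in frames where both connections have constant-residue form $d+R\,dz/z$. Your model $F$ has such a frame by construction. For an arbitrary competitor $(F',\nabla')$, you only know that in some holomorphic frame the connection matrix is $A'=R'\,dz/z+B'$, with $R',B'$ holomorphic in $(z,w)$ and $R'|_{z=0}$ the residue. The equation $d\phi+A'\phi-\phi A=0$ then has non-constant coefficients. Claiming its solutions are sums of $z^{\lambda}(\log z)^k$ is the non-resonant normal-form theorem with parameters. That is essentially the assertion that $F'$ is locally isomorphic to your model, which is exactly what uniqueness has to prove.

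Your proposed repair does not close this as stated. Polynomial growth of flat sections plus the Riemann extension theorem only shows that $\phi$ is \emph{meromorphic} along $D$. Nothing in it excludes a pole, so ``no negative powers'' remains unproved.

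\textbf{The fix.} What is missing is a residue (leading-term) argument, which needs no normal form.
\begin{itemize}
\item Near a point of $D_i$ off the other components, use the moderate growth you already invoke to write $\phi=z^{-m}\psi$ with $m$ minimal. Then $\psi$ is holomorphic, and if $m\geq 1$ then $\psi_0:=\psi|_{z=0}\not\equiv 0$.
\item Multiply $d\phi+A'\phi-\phi A=0$ by $z^{m+1}$ and take the $dz$-coefficient along $z=0$. This gives $R'_0\psi_0-\psi_0R_0=m\,\psi_0$, where $R_0,R'_0$ are the residues.
\item So $m$ is an eigenvalue of $Y\mapsto R'_0Y-YR_0$. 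Its eigenvalues are the differences $\lambda'-\lambda$, which have real part in $(-1,1)$. This contradicts $m\geq 1$.
\item Hence $\phi$, and symmetrically $\phi^{-1}$, is holomorphic there, and Hartogs finishes as you say.
\end{itemize}
Alternatively, a Gronwall estimate in a frame where the residue is nearly diagonal gives $|\phi|=O(|z|^{-a})$ with $a<1$, and then Riemann extension applies directly.

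\textbf{A smaller point on signs.} With your conventions (continuation acts by $T_i$, and $T_i=\exp(2\pi i N_i)$), the frame $\exp(-\sum_i\log z_i\,N_i)v$ has connection matrix $-\sum_i N_i\,dz_i/z_i$. Its residues therefore have eigenvalues with real part in $(-1,0]$, not $[0,1)$. Flipping the sign of the exponent does not help, because it destroys single-valuedness. The fix is a change of branch: use $z^{R}v:=\exp(\sum_i\log z_i\,R_i)v$ with $T_i=\exp(-2\pi i R_i)$ and the eigenvalues of $R_i$ in $0\le\mathrm{Re}<1$. This is the shape of the paper's $z_1^{N_1}\cdots z_k^{N_k}v$. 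In the unipotent case the point is immaterial.
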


\subsection{Local description}

On a polydisk $U_i\subset X$ with local coordinates $(z_1,\ldots,z_d)$ where
$D\cap U_i = \{z_1\cdots z_k=0\}$:
\begin{itemize}[leftmargin=2em]
  \item $F|_{U_i}$ is the free $\Oo_{U_i}$-module generated by multi-valued
    sections $z_1^{N_1}\cdots z_k^{N_k}v$ for $v\in L_x$.
  \item The connection form is $\nabla = d + \sum_{j=1}^k N_j\frac{dz_j}{z_j}$.
\end{itemize}
Here $N_j = \frac{1}{2\pi i}\log T_j$ and $T_j$ is the monodromy around $D_j$.

\subsection{The locally nil-flat connection}

\begin{definition}
A $\mathcal{C}^\infty$ connection $\widetilde\nabla$ on $F$ is
\emph{locally nil-flat} if there exists an open covering $\{U_i\}$ and,
on each $U_i$, a filtration $W^\hdot$ of $F|_{U_i}$ such that:
\begin{enumerate}[leftmargin=2em,label=\rm(\arabic*)]
  \item $\widetilde\nabla$ preserves each $W^j$ (upper-triangular curvature);
  \item the induced connection on $\Gr^W(F|_{U_i})$ is flat.
\end{enumerate}
\end{definition}

\subsection{Construction of Deligne's patched connection}
\label{subsec:del-patch-construct}

\begin{definition}
\label{def:deligne-patch}
Choose:
\begin{enumerate}[leftmargin=2em,label=\rm(\arabic*)]
  \item A good open covering $X = \bigcup_{i\in I}U_i$ by coordinate polydisks.
  \item Local coordinates $(z_1^{(i)},\ldots,z_d^{(i)})$ on each $U_i$.
  \item A subordinate partition of unity $1=\sum_i\rho_i$.
\end{enumerate}
On each $U_i$, set
\[
  \nabla^{\Del}_i := \nabla - \sum_{j=1}^{k_i}N_j^{(i)}\,\frac{dz_j^{(i)}}{z_j^{(i)}},
\]
where the $N_j^{(i)}$ are the monodromy logarithms for the components of
$D\cap U_i$.  Then $\nabla^{\Del}_i$ is a flat connection on $F|_{U_i}$.
Define \emph{Deligne's patched connection} as
\[
  \nabla^{\Del} := \sum_{i\in I}\rho_i\,\nabla^{\Del}_i.
\]
\end{definition}

\begin{lemma}\label{lem:Del-nilflat}
$\nabla^{\Del}$ is locally nil-flat.
\end{lemma}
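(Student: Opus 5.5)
The plan is to work locally near an arbitrary point $x\in X$ and exhibit, on a small neighbourhood $V$ of $x$, an explicit frame and a filtration $W^\hdot$ of $F|_V$ for which $\nabla^{\Del}$ is block upper-triangular with flat (indeed trivial) graded connection. This is exactly the definition of local nil-flatness recalled above.

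\textbf{Step 1: choose $V$ and a reference chart.} Let $J\subset\{1,\dots,k\}$ be the set of components of $D$ through $x$. Take $V$ a small contractible polydisc around $x$ such that:
\begin{itemize}
\item $V$ meets only those $U_i$ with $x\in\overline{\mathrm{Supp}\,\rho_i}$;
\item $V\subset U_i$ for each such $i$;
\item $V$ meets only the components $D_j$ with $j\in J$.
\end{itemize}
Fix one such index $i_0$. In the local description of $F|_{U_{i_0}}$ the sections $e(v)=\bigl(z^{(i_0)}\bigr)^{N}v$, for $v\in L_{x'}$, form a holomorphic frame in which $\nabla=d+\sum_{j}N_j\,dz^{(i_0)}_j/z^{(i_0)}_j$. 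Hence $\nabla^{\Del}_{i_0}=d$ in this frame, which is flat with trivial monodromy.

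\textbf{Step 2: compare the other charts with the reference chart.} For another relevant index $i$ with $j\in J$, we have $z^{(i)}_j=u_{ij}\,z^{(i_0)}_j$ on $V$ with $u_{ij}$ a nowhere-vanishing holomorphic function. Components of $D$ that lie in the chart of $U_i$ but miss $V$ contribute smooth $\dlog$ terms. Subtracting the two defining formulas gives
\[
\nabla^{\Del}_i-\nabla^{\Del}_{i_0}=-\sum_{j\in J}N_j\,\dlog u_{ij}+(\text{smooth terms } N_{j'}\,\omega_{j'}),
\]
which is a smooth $\End(F)$-valued $1$-form lying in the span of the $N$'s tensored with forms. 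Using $\sum_i\rho_i=1$ on $V$, we get
\[
\nabla^{\Del}=d+A,\qquad A=\sum_{j}N_j\otimes\omega_j,
\]
with each $\omega_j$ a smooth $1$-form on $V$. All these $N_j$ act as constant matrices in the frame $e(v)$.

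\textbf{Step 3: the filtration.} The $N_j$ occurring in $A$ are commuting nilpotent endomorphisms of the single vector space $L_{x'}$. I would take the joint kernel filtration
\[
W_1=\bigcap_j\ker N_j,\qquad W_{m+1}=\{v : N_jv\in W_m\ \text{for all } j\},
\]
which is exhaustive by joint nilpotence and satisfies $N_j W_m\subset W_{m-1}$. One could alternatively use $W(N_J)$ via Lemma~\ref{lem:W-commute}, but the kernel filtration avoids any appeal to Mochizuki. Transported by the frame $e$, $W$ is a filtration of $F|_V$ by holomorphic subbundles; it is strict because the frame is holomorphic and $W$ is constant in it. Since $d$ preserves constant subspaces and $A$ strictly lowers $W$, $\nabla^{\Del}$ preserves $W$ and induces $d$ on $\Gr^W$, which is flat. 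Equivalently, the curvature $dA+A\wedge A$ is strictly upper-triangular with respect to $W$.

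\textbf{Main obstacle.} The delicate point is Step 2: justifying that on overlaps the local frames $\bigl(z^{(i)}\bigr)^{N}v$ for different charts differ by $\exp\bigl(\sum_j N_j\log u_{ij}\bigr)$. This is a unipotent, $W$-preserving gauge transformation with identity on $\Gr^W$, so the correction terms are genuinely smooth, with no residual singular $\dlog z_j$ terms. This requires checking that the $N_j^{(i)}$ used in each chart are the same operators $N_j$ (same component, same orientation of loop) after parallel transport within the contractible set $V^*$ covering. It also requires that components $D_{j'}$ not passing through $x$ but present in some chart $U_i$ contribute only smooth terms on $V$; shrinking $V$ away from those components handles the latter.
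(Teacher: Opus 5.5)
Your treatment of the components through $x$ is correct. For $j\in J$ the operators $N^{(i)}_j$ all restrict to the same flat endomorphism of $L|_{V^*}$; what you need for this is that meridians of $D_j$ in $V^*$ are meridians in each $U_i^*$, since $V^*$ itself is not contractible. The chart changes then produce only the smooth terms $-N_j\dlog u_{ij}$, and the joint-kernel filtration is a sound choice that needs no input from Mochizuki or Cattani--Kaplan--Schmid.

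The gap is the assertion in Steps 2--3 that every operator occurring in $A$ is a constant matrix in the frame $e$, and that all of them commute. Besides the $N_j$, $j\in J$, the form $A$ contains the terms $N^{(i)}_{j'}\dlog z^{(i)}_{j'}$ for every component $D_{j'}$ that meets some chart $U_i$ with $x\in\overline{\mathrm{Supp}\,\rho_i}$ but does not pass through $x$. Shrinking $V$ makes these terms smooth, but it does not remove them.

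Each such $N^{(i)}_{j'}$ restricts to a flat endomorphism of $L|_{V^*}$, so it commutes with the $N_j$, $j\in J$. However, two of them coming from different charts are logarithms of monodromy along meridians of different components, routed through different charts, and nothing forces these to commute in $\pi_1(X^*)$. For the same reason $N^{(i)}_{j'}$ need not be constant in the frame of $U_{i_0}$, which involves the factors $z_l^{N_l}$ for the components of $U_{i_0}$ missing $x$. The joint kernel filtration is then not exhaustive.

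The conclusion can in fact fail for such a covering. On a curve, take two discs around distinct punctures $p_1,p_2$, overlapping (simply connectedly) away from them. Suppose $N_1=\begin{pmatrix}0&1\\0&0\end{pmatrix}$ and $N_2=\begin{pmatrix}0&0\\1&0\end{pmatrix}$ in a flat frame on the overlap. There $\nabla^{\Del}=d-\rho_1N_1\,dz/z-\rho_2N_2\,dw/w$, with curvature $-d\rho_1\wedge(N_1\,dz/z-N_2\,dw/w)$. This curvature is not nilpotent wherever $d\rho_1\neq0$: writing $dw/w=c\,dz/z$, one has $(N_1-cN_2)^2=-c$ times the identity.

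The missing idea is the one the paper's proof uses when it takes a larger polydisk containing all the relevant $U_i$. The covering must be fine enough that, for each $x$, every $U_i$ with $x\in\overline{\mathrm{Supp}\,\rho_i}$ lies in a single normal-crossings polydisk $W$. Since $\pi_1(W^*)$ is free abelian on the meridians, all the $N^{(i)}_l$ are restrictions of one commuting family of flat endomorphisms over $W^*$.

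Work in the Deligne frame of $W$ rather than that of $U_{i_0}$. Then each $\nabla^{\Del}_i$ is $d$ plus a combination of these constant matrices with smooth coefficients on $U_i$. For components of $W$ missing $U_i$ the $\dlog$ term is holomorphic on $U_i$, and for components meeting $U_i$ the two defining equations differ by a unit. So $A$ has exactly the form you want, and your Step 3 goes through verbatim.
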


\begin{proof}
At any point $x\in X$, let $J = \{i:\rho_i(x)\neq 0\}$.  Choose a larger
polydisk $V$ containing $\bigcup_{i\in J}U_i$.  On $V$ the monodromy
logarithms $N_1,\ldots,N_a$ are globally defined commuting nilpotent
endomorphisms of $F|_V$.  They admit a common filtration by weight filtrations.
Each $\nabla^{\Del}_i$ for $i\in J$ preserves this filtration (since
$\nabla^{\Del}_i$ modifies $\nabla$ by subtracting terms $N_l\frac{dz_l}{z_l}$
where $N_l$ is strictly lower-triangular for the weight filtration).  The
average $\nabla^{\Del}$ therefore preserves the filtration and has strictly
upper-triangular curvature.
\end{proof}

\begin{lemma}\label{lem:Del-compatible}
$\nabla^{\Del}$ is compatible with the strict pre-patching collection of
Theorem~\ref{thm:patching-exists}.
\end{lemma}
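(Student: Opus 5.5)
The plan is to reduce compatibility to a pointwise check, exploiting the fact that compatibility with a pre-patching datum is an affine-linear condition on connections, so it survives averaging by a partition of unity. Recall the strict collection $(W(I),\tau(I))$ of Theorem~\ref{thm:patching-exists}. There $W(I)$ is built from the monodromy logarithms $N_i$, $i\in I$, via the poset $\Filt^{A,K}$ (Proposition~\ref{prop:P-iso-filt}). Each $N_i$ maps every step $W_j$ into a strictly smaller step, and the graded local system extends across $D\cap U_I$ with flat trivialization $\tau(I)$. I will show that near every point $x\in U_I$ each local piece $\nabla^{\Del}_i$ with $\rho_i(x)\neq0$ preserves $W(I)$ and induces $\tau(I)$ on $\Gr^{W(I)}$. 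Convexity then gives the same property for $\nabla^{\Del}=\sum_i\rho_i\nabla^{\Del}_i$.

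\emph{First step.} Refine the covering of Definition~\ref{def:deligne-patch}, which is allowed since the class is independent of choices (Theorem~\ref{thm:Del-indep-app}). After refinement each polydisk $U_i$ meeting $U_I$ lies inside some $U_{I'}$, and by Proposition~\ref{prop:covering}(3) the divisor components meeting $U_i$ are indexed by a subset of $I'$. Using the tightness construction (Construction~\ref{constr:refcov}), I would further arrange that whenever $\rho_i(x)\neq0$ and $x\in U_I$, the datum $W(I)$ restricted to $U_i\cap U_I$ is refined by or equal to the datum attached to the divisor components of $U_i$. This is the same device used in Theorem~\ref{thm:warmup}.

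\emph{Second step: local computation.} On $U_i$, the connection $\nabla^{\Del}_i=\nabla-\sum_j N_j\,dz_j/z_j$ is flat and holomorphically trivializes $F|_{U_i}$ by the flat sections $z^{-N}$ applied to $L_x$, which are single-valued. Every filtration of $L_x$ by $N$-stable subspaces therefore extends to a $\nabla^{\Del}_i$-flat filtration of $F|_{U_i}$. In particular the extension of $W(I)$ to $F$ is preserved, because the $W(I)$ steps are $N_j$-stable. On the graded pieces, each $N_j$ induces zero, since it shifts the filtration strictly. Hence on $\Gr^{W(I)}$ the connection $\nabla^{\Del}_i$ agrees with $\nabla$ minus zero. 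This is the flat connection of the graded local system, which has trivial monodromy and extends across $D$, so it is exactly $\tau(I)$.

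\emph{Conclusion and main obstacle.} Preserving $W(I)$ and inducing $\tau(I)$ are affine conditions, so the convex combination inherits both on a neighbourhood of $x$. The main obstacle is the first step. The filtration $W(I)$ is indexed by the adapted covering, while the polydisks $U_i$ of Deligne's construction may see components of $D$ not in $I$, or fewer of them. If $U_i$ misses a component $D_l$ with $l\in I$, the subtraction term for $D_l$ is absent. This is harmless because $\nabla$ has no pole there. If $U_i$ meets a component not in $I$, stability of $W(I)$ under that $N_l$ is not guaranteed. I would exclude this case by choosing the Deligne polydisks subordinate to the adapted covering, so that $U_i\subset U_{I'}$ with $D_l\cap U_i\ne\emptyset\Rightarrow l\in I'$. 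I would then use the refined-neighbours comparability of $W(I)$ and $W(I')$ on $U_{II'}$ (strictness, via Lemma~\ref{lem:W-commute}) to pass to the finer of the two filtrations.
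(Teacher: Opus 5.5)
Your skeleton is the paper's. The paper's proof is only the assertion that every $\nabla^{\Del}_i$ contributing on $U_I$ preserves $W(N_I)$ and induces $\tau(I)$, followed by averaging, which is in effect your second step. You are right that this needs an argument when a contributing polydisk $U_i$ meets some $D_l$ with $l\notin I$; the paper's two-line proof does not address that case.

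Your repair, however, runs in the wrong direction. Compatibility passes from a coarser datum to a finer one, and not conversely. That is why the device of Theorem~\ref{thm:warmup} arranges that, near a point of $U'_{a'}$, only pieces whose data are \emph{tighter} (i.e.\ coarser) than that of $r(a')$ contribute. Your first step arranges the opposite, namely that the datum of $U_i$ refines $W(I)$. ``Passing to the finer filtration'' then only gives compatibility with $(W(I'),\tau(I'))$, not with $(W(I),\tau(I))$.

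Concretely, let $J_i$ be the set of components meeting $U_i$, and suppose $U_i\subset U_{I'}$, $J_i\subset I'$, $I\subsetneq I'$, with $W(I')$ refining $W(I)$. Then $\nabla^{\Del}_i$ does preserve $W(I)$. But on $\Gr^{W(I)}$ it induces $\tau(I)-\sum_{l\in J_i\setminus I}\Gr^{W(I)}(N_l)\,dz_l/z_l$. For $l\notin I$ we have $D_l\cap U_I=\emptyset$, so nothing in $\calP(U_I)$ forces $\Gr^{W(I)}(N_l)=0$. For example, take $I=\emptyset$ and let $W(\emptyset)$ be the one-step filtration $0\subset L$, which is allowed away from $D$ and refined by every datum. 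Compatibility with $(W(\emptyset),\tau(\emptyset))$ then means equality with $\nabla$, whereas $\nabla^{\Del}_i=\nabla-N_l\,dz_l/z_l$.

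The fix is to use Construction~\ref{constr:refcov} the right way round: shrink the patching opens instead of comparing filtrations. Put $U_I^{\flat}:=U_I\setminus\bigcup\{\mathrm{Supp}\,\rho_i : J_i\not\subset I\}$.
\begin{itemize}
\item With your subordination ($U_i\subset U_{I(i)}$, $J_i\subset I(i)$), the indices $I(i)$ of the polydisks whose supports contain a given $x$ form a chain, by Proposition~\ref{prop:covering}(2).
\item Hence $x\in U^{\flat}_{I_{\max}}$, so the $U^{\flat}_I$ still cover $X$.
\item On $U^{\flat}_I$ only pieces with $J_i\subset I$ contribute, so your second step applies verbatim and no comparison between $W(I)$ and $W(I')$ is needed.
\end{itemize}
This yields compatibility with the collection restricted to a refinement, which is the sense used in Theorem~\ref{thm:warmup}.

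Separately, you cannot justify changing Deligne's covering by Theorem~\ref{thm:Del-indep-app}. Its proof applies the present lemma to arbitrary choices, so the appeal is circular; moreover the lemma concerns the connection, not its class. Some restriction on the choices really is needed. A polydisk meeting $D_l$ whose support reaches points lying only in opens $U_I$ with $l\notin I$ can defeat every datum available there, as the example shows. So the lemma should be stated for subordinate choices.
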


\begin{proof}
The pre-patching collection assigns to each open set $U_I$ the weight
filtration $W(N_I)$.  On $U_I$, $\nabla^{\Del}$ is a weighted average of
connections $\nabla^{\Del}_i$ that all preserve $W(N_I)$ (by the same
argument as the nil-flatness proof).  The trivialization $\tau(I)$ is
compatible because the flat structures of each $\nabla^{\Del}_i$ on the
associated-graded agree with $\tau(I)$.
\end{proof}

\subsection{Independence of choices}

\begin{theorem}\label{thm:Del-indep-app}
The Chern-Simons class $\CS_p(\nabla^{\Del})\in H^{2p-1}(X,\cc/\zz)$ is
independent of the choices of covering, coordinates, and partition of unity.
\end{theorem}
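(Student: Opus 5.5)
The plan is to connect any two choices by a path of patched connections and show that the Chern–Simons class is constant along it. Let $(\{U_i\},z^{(i)},\rho_i)$ and $(\{U'_{i'}\},z'^{(i')},\rho'_{i'})$ be two sets of choices, with patched connections $\nabla^{\Del}$ and $\nabla'^{\Del}$. I would pass to the common refinement: take the union of the two coverings, and consider the one-parameter family of partitions of unity $\rho^s=(1-s)\rho\sqcup s\rho'$. This gives the family
\[
  \nabla_s:=(1-s)\nabla^{\Del}+s\nabla'^{\Del}=\sum_i(1-s)\rho_i\nabla^{\Del}_i+\sum_{i'}s\rho'_{i'}\nabla'^{\Del}_{i'}, \qquad s\in[0,1].
\]
Each $\nabla_s$ is itself a Deligne patched connection for the joint covering. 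Hence the argument of Lemma~\ref{lem:Del-nilflat} applies verbatim. Near any point $x$, all the local connections with $\rho_i(x)\neq0$ or $\rho'_{i'}(x)\neq0$ differ from $\nabla$ by terms $N_l\,\dlog z_l$ or $N_l\,\dlog z'_l$. These preserve the common weight filtration of the commuting logarithms $N_l$ on a polydisc $V$ containing the relevant opens, and act as zero on its graded pieces. So every $\nabla_s$ is locally nil-flat for one filtration that is independent of $s$.

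A change of coordinates $z'_l=u_lz_l$, with $u_l$ a unit, changes $N_l\,\dlog z_l$ by $N_l\,\dlog u_l$. This is smooth and strictly upper-triangular, so coordinate changes are absorbed by the same family. The derivative $A=\dot\nabla_s=\nabla'^{\Del}-\nabla^{\Del}$ is a smooth global $\End(F)$-valued $1$-form. I expect to verify that it is strictly upper-triangular with respect to the local filtration. Locally it is a combination of differences $(\nabla^{\Del}_i-\nabla^{\Del}_{i'})$, which are sums of the terms $N_l\,\dlog(z_l/z'_l)$, each with $N_l$ nilpotent for $W$.

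Next I would apply the transgression formula of Proposition~\ref{prop:transgression} along this path. The difference of the two classes is
\[
  \CS_p(\nabla'^{\Del})-\CS_p(\nabla^{\Del})=\Bigl[\tfrac{p}{p!(2\pi i)^p}\int_0^1\Tr\bigl(A\wedge F(\nabla_s)^{p-1}\bigr)\,ds\Bigr].
\]
Locally, $A$ is strictly upper-triangular and $F(\nabla_s)$ is upper-triangular with respect to the same filtration $W$, so the integrand has zero trace pointwise. This is the same mechanism as in Corollary~\ref{cor:vol-upper-triangular} and Proposition~\ref{prop:nilflat-ch-zero}. The transgression form therefore vanishes identically, not merely up to an exact form, and the two classes coincide in $H^{2p-1}(X,\cc/\zz)$ with no integral ambiguity.

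The main obstacle is the uniformity of the filtration. On an overlap, the opens from the two coverings may see different sets of divisor components, so it must be shown that a single filtration on a neighbourhood of $x$ is preserved by all the contributing local connections and by $A$. I would handle this the way Lemma~\ref{lem:Del-nilflat} does. Around $x$, take the polydisc $V$ adapted to the stratum through $x$; only components passing near $x$ contribute nonzero terms. Then use the filtration associated to the full commuting family $\{N_l\}_{l\in I(x)}$, for instance $W(N_{I(x)})$ refined by a common nil-filtration as in Corollary~\ref{cor:Filt-A}. With respect to this filtration every $N_l$ with $l\in I(x)$ is strictly decreasing. If this fails near the corners, I would shrink the coverings using Construction~\ref{constr:refcov}, so that the supports of the partition functions meeting $x$ are ordered by tightness.
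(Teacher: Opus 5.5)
Your argument is correct in outline, apart from one point noted below, and it takes a genuinely different route from the paper.

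\textbf{The paper's route.} The paper argues through Deligne cohomology. Both patched connections are compatible with a common refinement of their pre-patching collections, so by Theorem~\ref{thm:DHZ-app}(2) they define the same Deligne class. By Theorem~\ref{thm:CS-DB-app}(3) their Chern--Simons classes then have the same image under $\alpha$. Equality follows from injectivity of $\alpha$ on the $\aaa$-realization.

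\textbf{Your route.} You stay on $X$. The linear path $\nabla_s$ consists of patched connections for the union of the two coverings. Moreover $A=\nabla'^{\Del}-\nabla^{\Del}$ is locally a combination of the $N_l$ with scalar $1$-forms. Hence the integrand of Proposition~\ref{prop:transgression} vanishes pointwise.

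\textbf{What each buys.} Your route gives a stronger and more elementary conclusion: equality of the Cheeger--Simons characters themselves. It uses no $F^1$-connections, no Burgos--Gil input, and no injectivity statement. This matters because on $X$ itself $\alpha$ is not injective: its kernel is the image of $F^pH^{2p-1}(X,\cc)$ (Remark~\ref{rem:alpha-kernel}). So the paper's last step really depends on transporting the comparison to the $\aaa$-realization. The paper's route, in exchange, places the statement inside its general principle that any connection compatible with the patching data computes the same Deligne class.

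\textbf{The point to tighten: the local filtration.}
\begin{itemize}
\item The commuting family at $x$ must contain $N_l$ for every component $D_l$ met by some open, from either covering, whose partition function is non-zero near $x$.
\item Such a $D_l$ need not pass through $x$, and its term $N_l\,\dlog z_l$ is smooth but non-zero. For example, at a point $x\notin D$ the filtration $W(N_{I(x)})$ is trivial and does not make these terms strictly triangular.
\item For this enlarged family to commute, all contributing opens of the joint covering must lie in a single polydisc adapted to $D$.
\item This can fail for the union even when it holds for each covering separately. Two large polydiscs around disjoint components $D_l,D_m$ with non-commuting logarithms may overlap at $x$. Then $\nabla_s$ need not be locally nil-flat for $0<s<1$, and the integrand need not vanish.
\end{itemize}

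Construction~\ref{constr:refcov} does not address this, since shrinking the covering changes the very connection being compared. Instead you have two options:
\begin{itemize}
\item Work with coverings fine relative to $D$: all opens of diameter below some $\delta$ such that every set of diameter $<2\delta$ lies in an adapted polydisc. This condition is stable under taking unions, so the joint covering inherits it.
\item Compare each of the two given choices with a common, sufficiently fine third choice.
\end{itemize}
The same tacit hypothesis already underlies Lemma~\ref{lem:Del-nilflat}, which you invoke verbatim.
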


\begin{proof}
Two different choices give connections $\nabla^{\Del}$ and $\nabla^{\Del'}$
both compatible with pre-patching collections for $L$.  The two pre-patching
collections admit a common refinement, and by Theorem~\ref{thm:DHZ-app}(2)
any two locally nil-flat connections compatible with the same pre-patching
collection define the same Deligne cohomology class.  Both connections are
locally nil-flat, so by Theorem~\ref{thm:CS-DB-app}(3) their Chern-Simons
classes have the same image under $\alpha$; they are equal because the class
is computed on the $\aaa$-realization, where $\alpha$ is injective
(Remark~\ref{rem:alpha-kernel}).
\end{proof}

\subsection{Deformation invariance}

\subsection{Generic constancy of the weight-filtration type}
\label{subsec:jordan-stratification}

$R^{\mathrm{nil}}$ is a complex affine algebraic variety, and for each
non-empty multi-index $I$ the assignment $\zeta\mapsto N_I(\zeta) :=
\sum_{i\in I}N_i(\zeta)$ (a tuple of pairwise commuting nilpotent
endomorphisms of the fibre, on the locus where these commute, as in the
construction of \S\ref{sec:nisotropic}) is a regular (polynomial) map of
$\zeta$. Consequently, for each $I$ and each $k\geq1$, the rank of
$N_I(\zeta)^k$ is given locally by the non-vanishing of finitely many
minors, so $\{\zeta : \mathrm{rank}(N_I(\zeta)^k)\geq m\}$ is Zariski open
for every $m$; the maximal value of $\mathrm{rank}(N_I(\zeta)^k)$ (over all
$\zeta$) is therefore attained on a Zariski-dense open subset, for each
fixed $I,k$. Since there are only finitely many non-empty multi-indices $I$
and only finitely many relevant powers $k\leq\mathrm{rk}(E)$, the locus
$R^{\mathrm{nil},\circ}\subset R^{\mathrm{nil}}$ on which \emph{every}
$N_I(\zeta)$ simultaneously has maximal rank at every power $k$ --- i.e.\ on
which the Jordan type of each $N_I(\zeta)$, and hence the combinatorial type
of the weight filtration $W(N_I(\zeta))$, takes its generic value --- is a
finite intersection of Zariski-dense open subsets of $R^{\mathrm{nil}}$,
hence itself Zariski-dense open. Its complement $R^{\mathrm{nil}}\setminus
R^{\mathrm{nil},\circ}$ is a proper Zariski-closed subvariety.

\begin{theorem}\label{thm:deformation-app}
The Chern-Simons classes $\CS_p(\nabla^{\Del}_\zeta)$ are locally constant
on $R^{\mathrm{nil}}$.
\end{theorem}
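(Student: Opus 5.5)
The plan is to prove local constancy by a homotopy (transgression) argument along a path in $R^{\mathrm{nil}}$, reduced to the case where the weight-filtration type is constant. Fix $\zeta_0\in R^{\mathrm{nil}}$ and a small connected (Euclidean) neighbourhood $B$ of it. The aim is to show that for every $\zeta_1\in B$ one has $\CS_p(\nabla^{\Del}_{\zeta_1})=\CS_p(\nabla^{\Del}_{\zeta_0})$.

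\textbf{Step 1: a single family over $X\times[0,1]$.} Choose a real-analytic path $\zeta_s$, $s\in[0,1]$, in $B$ joining $\zeta_0$ to $\zeta_1$. The local systems $L_{\zeta_s}$ assemble into one local system $\mathbb{L}$ on $X^*\times[0,1]$, with unipotent monodromy around each $D_i\times[0,1]$. Deligne's canonical extension is built locally by the formula $z^{N}v$ of the local description, and $N_j(\zeta_s)=\frac{1}{2\pi i}\log T_j(\zeta_s)$ is polynomial in $\zeta_s$ because $T_j-1$ is nilpotent. Hence the extensions $F_{\zeta_s}$ glue to a smooth bundle $\mathbb{F}$ on $X\times[0,1]$. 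Running Definition~\ref{def:deligne-patch} with the same covering, coordinates and partition of unity for every $s$ gives a smooth family of connections $\nabla^{\Del}_s$. These assemble to a connection on $\mathbb{F}$, once a $ds$-component is added that trivializes $\mathbb{F}$ along $[0,1]$.

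\textbf{Step 2: nil-flatness of the total connection.} I would show that the total connection $\widetilde\nabla$ on $X\times[0,1]$ is locally nil-flat. This is the main obstacle. The filtrations $W(N_I(\zeta_s))$ used in Lemma~\ref{lem:Del-nilflat} must vary continuously in $s$ and be preserved by the $ds$-part, and this fails where the Jordan type of $N_I$ jumps. To handle it, I would first treat paths inside the stratum $R^{\mathrm{nil},\circ}$ of \S\ref{subsec:jordan-stratification}, or more generally inside any locally closed stratum of constant Jordan type of all $N_I$. On such a stratum the filtrations $W(N_I(\zeta_s))$ form smooth subbundles, and they can be chosen $ds$-flat after a smooth gauge change. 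Then Lemma~\ref{lem:Del-nilflat}, applied on the total space, gives strictly upper-triangular curvature. Proposition~\ref{prop:nilflat-ch-zero} then gives $\ch_p(\widetilde\nabla)=0$, so $\widetilde\nabla$ defines a flat character $\CS_p(\widetilde\nabla)\in H^{2p-1}(X\times[0,1],\cc/\zz)$. Its restrictions to $s=0,1$ are $\CS_p(\nabla^{\Del}_{\zeta_0})$ and $\CS_p(\nabla^{\Del}_{\zeta_1})$, and they are equal by homotopy invariance of cohomology.

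\textbf{Step 3: crossing strata.} I would deal with paths that cross a change of Jordan type using the freedom in the patching data. Near $\zeta_0$ I would replace the weight filtrations by a cruder \v{C}ech section of $\Filt^{A,K}$ that works uniformly for all $\zeta\in B$, namely $N_\bullet(\zeta)$-nil-filtrations valid on all of $B$. Such filtrations exist because nil-filtration conditions are closed, and $\Filt$ is contractible (Theorem~\ref{thm:filt-contractible}). I would produce the section by the \v{C}ech section theorem (Theorem~\ref{thm:cech-section}), applied on $X\times B$. Each $\nabla^{\Del}_\zeta$ is compatible with this common collection, as in Lemma~\ref{lem:Del-compatible}. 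By Theorem~\ref{thm:Del-indep-app}, its class is the class of any connection compatible with that collection. The family argument of Step 2 then runs over all of $B$ at once.

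Since $R^{\mathrm{nil}}$ is covered by such neighbourhoods $B$, local constancy follows. The delicate point I expect is making the common nil-filtration on $X\times B$ extend through $D$ with locally free graded pieces uniformly in $\zeta$; if necessary I would shrink $B$ to achieve this.
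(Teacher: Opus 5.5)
Your Steps~1 and~2 match the paper's first two steps: a family of patched connections built from one fixed choice of covering, coordinates and partition of unity, and a transgression argument along paths of constant Jordan type. Step~3, however, fails, and it is where the real difficulty sits.

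The nil-filtration conditions $N_i(\zeta)(F_k)\subset F_{k-1}$ are closed in $\zeta$, and that works against you. A filtration admissible at $\zeta_0$ stays admissible only on a closed set of $\zeta$. Near a point where a Jordan type jumps, there is in general no filtration by subbundles that is a nil-filtration for all nearby $\zeta$.

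Here is a concrete counterexample. Take $X=\pp^1$, $D=\{0,1,\infty\}$, rank $2$, and set $\zeta_{\epsilon,g}(\gamma_0)=g\bigl(\begin{smallmatrix}1&\epsilon\\0&1\end{smallmatrix}\bigr)g^{-1}$ and $\zeta_{\epsilon,g}(\gamma_1)=g\bigl(\begin{smallmatrix}1&-\epsilon\\0&1\end{smallmatrix}\bigr)g^{-1}$, so the monodromy at $\infty$ is trivial.
\begin{enumerate}
\item These representations lie in $R^{\mathrm{nil}}$ and tend to the trivial representation $\zeta_0$ as $\epsilon\to0$, uniformly for $g\in SU(2)$.
\item For $\epsilon\neq0$, the only $N_0(\zeta)$-nil-filtration near $0$ is $0\subset g\langle e_1\rangle\subset\cc^2$, and $g\langle e_1\rangle$ can be any line.
\item So on every connected open of $X\times B$ containing $(0,\zeta_0)$, the rank-one step would have to equal $g\langle e_1\rangle$ at $\zeta_{\epsilon,g}$. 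It therefore has no continuous extension to $\zeta_0$.
\end{enumerate}
The poset presheaf you want is thus \emph{empty} there. Theorem~\ref{thm:cech-section} needs non-empty contractible values, so it does not apply. Contractibility of $\Filt$ for each fixed $\zeta$ does not help, and shrinking $B$ does not help either.

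The missing idea is to avoid filtrations at the jump points altogether and use continuity, as the paper does.
\begin{enumerate}
\item $\nabla^{\Del}_\zeta$ is defined without reference to any filtration. So your Step~1 construction, run over $X\times B$ instead of over a path, shows that $\zeta\mapsto\CS_p(\nabla^{\Del}_\zeta)$ is continuous on all of $R^{\mathrm{nil}}$. For nearby $\zeta,\zeta'$, the straight-line transgression between the two patched connections is closed (both Chern--Weil forms vanish) and small, and $H^{2p-1}(X,\cc/\zz)$ is Hausdorff.
\item Your Step~2 gives constancy on the locus of constant Jordan type.
\item Near any $\zeta_0$, on each local irreducible branch, the locus of branch-generic type is the complement of a proper analytic subset. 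It is therefore connected and dense, and continuity forces the same value at $\zeta_0$.
\end{enumerate}

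Step~2 also needs a correction. The $ds$-component can be chosen to preserve the gauged filtrations. In general it cannot make the graded pieces flat in the $s$-direction, because their monodromy varies with $s$, so $\widetilde\nabla$ is not locally nil-flat. What does hold is that the $X$-part of its curvature is strictly upper-triangular and the mixed part $ds\wedge(\cdot)$ is upper-triangular. That gives $\Tr\bigl(F(\widetilde\nabla)^p\bigr)=0$ only for $p\geq2$.

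This restriction cannot be removed: for $p=1$ the classes are not locally constant. For rank one on an elliptic curve minus a point, $R^{\mathrm{nil}}=(\cc^*)^2$ and $\CS_1$ is the logarithm of the character, which varies continuously.
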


\begin{lemma}\label{lem:family-variational}
For a smooth path of connections $\{\nabla_t\}_{t\in[0,1]}$ on a bundle $E$,
\[
  \frac{d}{dt}\CS_p(\nabla_t) = p\,\mathrm{Tr}(\dot\nabla_t\wedge F(\nabla_t)^{p-1})
  + d(\eta_t)
\]
for some $(2p-2)$-form $\eta_t$ on $X$, where $\dot\nabla_t = \frac{d}{dt}\nabla_t$.
\end{lemma}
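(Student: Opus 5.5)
This is the standard Chern--Simons variational formula, and I would prove it by direct differentiation of the transgression form, exactly as in the proof of Lemma~\ref{lem:bott-chern}. Here $\CS_p(\nabla_t)$ is read at the level of forms. Concretely, I fix a reference connection $\nabla_0$ and represent $\CS_p(\nabla_t)$ by the transgression form along the concatenated path from $\nabla_0$ to $\nabla_t$. By Lemma~\ref{lem:bott-chern} and Proposition~\ref{prop:transgression}, any two choices of path differ by an exact form plus a form with integral periods, modulo the normalization constant $p!(2\pi i)^p$, which I suppress as the statement does. It therefore suffices to compute $\frac{d}{dt}$ of the form $p\,v_p$ along a one-parameter family of paths and show the result is the displayed integrand plus something exact.

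The key step is a homotopy formula on the two-parameter family $\nabla_{s,t}$, $s\in[0,1]$, interpolating from $\nabla_0$ to $\nabla_t$. I take for instance $\nabla_{s,t}=\nabla_0+s(\nabla_t-\nabla_0)$, or the reparametrized path following $\nabla_\bullet$ up to time $t$. I then view it as a single connection $\widehat\nabla$ on the pullback of $E$ to $X\times[0,1]_s\times[0,1]_t$. Its curvature is
\[
\widehat F=F(\nabla_{s,t})+ds\wedge\partial_s\nabla+dt\wedge\partial_t\nabla .
\]
The Chern--Weil form $\Tr(\widehat F^p)$ is closed. I expand it and extract the component containing $ds\wedge dt$; closedness then gives
\[
\partial_t\bigl(p\Tr(\partial_s\nabla\wedge F^{p-1})\bigr)-\partial_s\bigl(p\Tr(\partial_t\nabla\wedge F^{p-1})\bigr)=d(\text{mixed term}),
\]
where the mixed term is $p(p-1)$ times a symmetrized trace of $\partial_s\nabla$, $\partial_t\nabla$ and $F^{p-2}$. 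I then integrate in $s$ from $0$ to $1$. The $s=0$ boundary term vanishes because $\nabla_{0,t}=\nabla_0$ is fixed. The $s=1$ boundary term is $p\Tr(\dot\nabla_t\wedge F(\nabla_t)^{p-1})$. Finally, $\eta_t$ is the $s$-integral of the mixed term, which is a smooth $(2p-2)$-form on $X$.

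The main obstacle, modest here, is bookkeeping of signs and symmetrization in the $ds\wedge dt$ component. The factors $\partial_s\nabla$ and $\partial_t\nabla$ are odd and $F$ is even, so I would do the computation with the graded-symmetric polarization of $\Tr(\cdot)^p$ and the Bianchi identity $d_{\nabla}F=0$, checking the sign in the case $p=1$, where $\eta_t=0$ and the formula is simply $\frac{d}{dt}\Tr(\nabla_t-\nabla_0)=\Tr(\dot\nabla_t)$. The remaining point is that the class ambiguity (integral periods) is locally constant in $t$. Hence it does not affect the derivative, and the identity holds for the forms representing $\CS_p(\nabla_t)$.
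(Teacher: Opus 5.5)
Your argument is correct, and it reaches the statement by a different mechanism from the paper's. The paper differentiates the one-parameter identity $d\,\CS_p(\nabla_t)=\ch_p(\nabla_t)-\ch_p(\nabla_0)$ in $t$. It uses the Bianchi identity to rewrite $\frac{d}{dt}\ch_p(\nabla_t)$ as $d$ of the (normalized) integrand $\Tr(\dot\nabla_t\wedge F(\nabla_t)^{p-1})$. Taken literally, this only shows that the discrepancy $\frac{d}{dt}\CS_p(\nabla_t)-p\,\Tr(\dot\nabla_t\wedge F(\nabla_t)^{p-1})$ is \emph{closed}. The lemma asserts it is \emph{exact}, and exactness is what Step~2 of the proof of Theorem~\ref{thm:deformation-app} needs to conclude that the derivative vanishes in cohomology.

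Your two-parameter Chern--Weil argument on $X\times[0,1]^2$ supplies exactly this missing step. You isolate the $ds\wedge dt$ component of the closedness of $\Tr(\widehat F^{\,p})$ and integrate over $s$; the $s=0$ boundary term vanishes because $\nabla_{0,t}=\nabla_0$. This produces $\eta_t$ explicitly as the $s$-integral of the mixed term, at the modest cost of a sign check. The paper's version is shorter but proves less.

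Two small comments.
\begin{enumerate}
\item With the reparametrized family $\nabla_{s,t}=\nabla_{st}$, the variations $\partial_s\nabla=t\dot\nabla_{st}$ and $\partial_t\nabla=s\dot\nabla_{st}$ are proportional. The mixed term is bilinear and antisymmetric in the two odd variations, so it vanishes. The formula then reduces to the fundamental theorem of calculus with $\eta_t=0$, and the homotopy is only genuinely needed for other representatives such as the straight-line one.
\item Your aside that different path choices differ by an exact form \emph{plus a form with integral periods} overstates the ambiguity. For paths with fixed endpoints in the affine space of connections on a fixed bundle, the same two-parameter argument applied to a homotopy rel endpoints shows the transgression forms differ by an exact form. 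That form depends smoothly on $t$ when the paths do, so it is simply absorbed into $\eta_t$, and no appeal to integral periods or local constancy is required.
\end{enumerate}
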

\begin{proof}
Differentiating $\ch_p(\nabla_t) = \frac{1}{p!(2\pi i)^p}\mathrm{Tr}(F(\nabla_t)^p)$
with respect to $t$ and using the Bianchi identity $\nabla_t(F(\nabla_t))=0$:
$\frac{d}{dt}\ch_p(\nabla_t) = \frac{p}{p!(2\pi i)^p}\mathrm{Tr}(\nabla_t(\dot\nabla_t)
\wedge F(\nabla_t)^{p-1})$.
Since $d\,\CS_p(\nabla_t) = \ch_p(\nabla_t) - \ch_p(\nabla_0)$ (by definition
of the transgression), differentiating in $t$ gives the stated formula.
\end{proof}

\begin{proof}[Proof of Theorem~\ref{thm:deformation-app}]
\textit{Step 1: $\CS_p$ is well defined and continuous on all of
$R^{\mathrm{nil}}$, with no hypothesis on weight-filtration type.} Fix once
and for all the adapted open covering of $X$ (Proposition~\ref{prop:covering}),
which depends only on $D\subset X$, not on $\zeta$. By Deligne's local
formula $\nabla^{\Del}_\zeta|_{U_I} = d+\sum_{i\in I}N_i(\zeta)\,dz_i/z_i$
(\S\ref{subsec:del-patch-construct}), the connection $\nabla^{\Del}_\zeta$
itself is an algebraic (in particular continuous, indeed real-analytic)
function of $\zeta\in R^{\mathrm{nil}}$, with no reference to any weight
filtration --- the filtration is used only to \emph{verify} nil-flatness and
to construct the patching data $(W(I),\tau(I))$ used in one particular
computation of $\CS_p$, not to define $\nabla^{\Del}_\zeta$. By
Theorem~\ref{thm:Del-indep-app}, $\CS_p(\nabla^{\Del}_\zeta)$ depends only on
$\nabla^{\Del}_\zeta$ itself and not on such auxiliary choices; since a
representative Cech/differential-character cochain for $\CS_p$ can therefore
be computed from the fixed covering above and a fixed partition of unity,
varying continuously with the connection coefficients $N_i(\zeta)$, the map
$\zeta\mapsto\CS_p(\nabla^{\Del}_\zeta)$, valued in the fixed group
$H^{2p-1}(X,\cc/\zz)$ (fixed because $X$ does not depend on $\zeta$), is
continuous on all of $R^{\mathrm{nil}}$.

\textit{Step 2: $\CS_p$ is locally constant on the dense open locus
$R^{\mathrm{nil},\circ}$ of \S\ref{subsec:jordan-stratification}.} Let
$\zeta_0\in R^{\mathrm{nil},\circ}$ and let $\zeta(t)$, $t\in(-\epsilon,\epsilon)$, be
a smooth path through $\zeta_0=\zeta(0)$ remaining in
$R^{\mathrm{nil},\circ}$. Since the Jordan type of each $N_I(\zeta(t))$ is
then constant along the path, the subbundles $W(N_I(\zeta(t)))_j$ have
constant rank, hence vary smoothly with $t$, and so the patching data
$(W(I),\tau(I))$ can be chosen smoothly, in particular to first order
constant, in $t$. Writing $\nabla_t:=\nabla^{\Del}_{\zeta(t)}$, both
$\dot\nabla_t$ and $F(\nabla_t)$ are then strictly upper-triangular with
respect to the same (instantaneously constant) filtration $W(N_I(\zeta(t)))$
on each $U_I$, so $\mathrm{Tr}(\dot\nabla_t\wedge F(\nabla_t)^{p-1})=0$
there, and Lemma~\ref{lem:family-variational} gives $\frac{d}{dt}\CS_p(\nabla_t)
= d(p\,\eta_t) = 0$ in $H^{2p-1}(X,\cc/\zz)$. As $\zeta_0\in
R^{\mathrm{nil},\circ}$ and the path were arbitrary, $\CS_p$ is locally
constant on $R^{\mathrm{nil},\circ}$.

\textit{Step 3: local constancy on all of $R^{\mathrm{nil}}$.} Let
$\zeta_0\in R^{\mathrm{nil}}$ be arbitrary. Since $R^{\mathrm{nil}}$ is an
algebraic variety of finite type, a neighbourhood of $\zeta_0$ meets only
finitely many local irreducible branches of $R^{\mathrm{nil}}$ through
$\zeta_0$; it suffices to treat each branch separately. On a small enough
connected neighbourhood $V$ of $\zeta_0$ within one such branch,
$V\setminus R^{\mathrm{nil},\circ}$ is contained in a proper analytic
subvariety of (complex) codimension $\geq1$ by \S\ref{subsec:jordan-stratification},
hence of real codimension $\geq2$; consequently $V\cap R^{\mathrm{nil},\circ}$
is connected (the complement of a proper analytic subvariety of a connected
complex-analytic space is connected) and dense in $V$. By Step~2, $\CS_p$ is
constant, say equal to $c$, on the connected set $V\cap R^{\mathrm{nil},\circ}$.
By Step~1, $\CS_p$ is continuous on $V$, and $\zeta_0$ lies in the closure of
$V\cap R^{\mathrm{nil},\circ}$ (density), so $\CS_p(\zeta_0)=c$ as well.
Applying this on each branch through $\zeta_0$ shows $\CS_p\equiv c$ on a
full neighbourhood of $\zeta_0$ in $R^{\mathrm{nil}}$, proving local
constancy at $\zeta_0$.
\end{proof}

\section{The hermitian pre-patching argument and vanishing of volume
regulators}
\label{app:hermitian}

\subsection{$N$-isotropic filtrations: proof of contractibility}

We give here a self-contained proof of Proposition~\ref{prop:nisofilcontr}
using only Theorem~\ref{thm:filt-contractible} and Quillen's Theorem A.

\begin{proof}[Proof of Proposition~\ref{prop:nisofilcontr}]
Recall: $\mathbf{ifilt}^{N_\hdot}(V,h)$ is the poset of $N_\hdot$-isotropic
filtrations on $(V,h)$.

\textbf{Step 1.}  Define the functor
\[
  T:\mathbf{ifilt}^{N_\hdot}(V,h)\;\longrightarrow\;\mathbf{itro}^{N_\hdot}(V,h),
  \quad T(F_\hdot) = F_{-1}.
\]
This is order-reversing (finer filtration $\Rightarrow$ smaller $F_{-1}$).

\textbf{Step 2.}  For any $H\in\mathbf{itro}^{N_\hdot}(V,h)$, the
over-category $T/H = \{F_\hdot: F_{-1}\supset H\}$.  This is the poset of
$N$-compatible filtrations of $H$ (filtrations $G_\hdot$ of $H$ with
$N_i G_k\subset G_{k-1}$), which is exactly $\Filt^{A,B}$ for
$A=\{N_i|_H\}$ and $B=H\cap K_H$ ($K_H=$ common kernel of $N_i$ on $H$).
By Theorem~\ref{thm:filt-contractible}, this is contractible.

\textbf{Step 3.}  By Theorem~\ref{thm:itro}, $\mathbf{itro}^{N_\hdot}(V,h)$
is contractible.

\textbf{Step 4.}  Apply Quillen's Theorem~A \cite{Quillen}: since all
over-categories $T/H$ are contractible and the target $\mathbf{itro}$ is
contractible, the source $\mathbf{ifilt}$ is contractible.
\end{proof}

\subsection{Construction of the Hermitian pre-patching collection for a VHS}

\begin{theorem}\label{thm:VHS-pp-app}
Let $L$ be a polarized VHS on $X^*$ with unipotent monodromy.  Then:
\begin{enumerate}[leftmargin=2em,label=\rm(\arabic*)]
  \item There exists a strict collection of patching data $(W(I),\tau(I))$
    for the Deligne canonical extension $F$.
  \item Each pre-patching datum carries a hermitian structure induced by the
    polarization $Q$.
  \item The hermitian structures satisfy the refined-neighbours condition.
\end{enumerate}
\end{theorem}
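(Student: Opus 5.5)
The plan is to combine the sheaf-theoretic existence machinery of \S\ref{sec:poset-presheaf} with the contractibility of $N_\hdot$-isotropic filtrations, as was done in outline in Theorem~\ref{thm:VHS-hermitian-pp}. I will check the three items in order, and put most of the care into the refined-neighbours condition.

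\textbf{Step 1 (existence of strict patching data).} Let $Q$ be the polarization, viewed as a flat indefinite Hermitian form on $L$. The monodromy logarithms $N_i$ are then $Q$-self-adjoint, nilpotent, and pairwise commuting.
\begin{itemize}
\item On a good open $U$ meeting $D$ in a product of discs, Proposition~\ref{prop:P-iso-filt} identifies $\calP(U)$ with $\Filt^{A,K}$.
\item Its isotropic sub-poset $\calP^Q(U)$ is identified with $\mathbf{ifilt}^{N_\hdot}(L_x,Q_x)$, which is contractible by Proposition~\ref{prop:nisofilcontr}.
\item On a good open $U$ disjoint from $D$, $\calP^Q(U)$ has a maximal element, so it is contractible.
\end{itemize}
Corollary~\ref{cor:cech-exists-isotropic} then gives a \v{C}ech section $\{W(I)\}$ over a refined adapted covering $\{U_I'\}_{I\in A}$. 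Condition (2) of Definition~\ref{def:P-presheaf} says that each $\Gr^{W(I)}(L|_{U_I'^*})$ extends to a local system on $U_I'$. This extension gives the trivialization $\tau(I)$. I extend $W(I)$ to strict subbundles of $F|_{U_I'}$ by taking Deligne's extension of each step. This works because the steps are sub-local systems with unipotent monodromy, and canonical extension is exact. This proves (1).

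\textbf{Step 2 (hermitian structure).} Isotropy gives $W(I)_j=W(I)_{-1-j}^{\perp_Q}$. Hence $Q$ descends to a nondegenerate pairing $\Gr_j\times\Gr_{-j}\to\cc$, and on $\Gr_0$ it restricts to a nondegenerate form. Because $Q$ is flat, this induced form is $\tau(I)$-flat, and so it extends across $D\cap U_I'$ as a flat form. This is exactly an indefinite Hermitian structure on the pre-patching datum in the sense of \S\ref{sec:hermitian-patching}, which proves (2).

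\textbf{Step 3 (refined neighbours).} This is the main obstacle. On $U_{IJ}'$ with $I\subset J$, the \v{C}ech condition says only that $W(I)$ and $W(J)$ are comparable as filtrations. I must additionally check the following:
\begin{itemize}
\item the finer filtration induces $\tau$-flat intermediate steps on the coarser graded pieces;
\item the induced form on the coarser graded pieces agrees with the form on the finer graded pieces.
\end{itemize}
For the first point, both filtrations are filtrations by sub-local systems of the same flat $L$. The intermediate steps are therefore flat for the graded flat structure, and $\tau$ of the coarser datum induces $\tau$ of the finer one. For the second point, each step of either filtration satisfies $W_j=W_{-1-j}^\perp$ with respect to the single global form $Q$. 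The pairing on the finer graded pieces is therefore obtained from the pairing on the coarser graded pieces by the usual sub-quotient construction, and the two structures agree.

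The delicate point is the index normalization. When a coarse filtration is refined, its steps get re-indexed, and I need the refined filtration to remain symmetric about the middle. I will handle this by indexing steps by subspaces rather than integers, and by noting that refinement inside $\mathbf{ifilt}^{N_\hdot}$ preserves the involution $F\mapsto F^\perp$. This proves (3).
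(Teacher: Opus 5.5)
Your proposal is correct and follows essentially the paper's argument: the terse appendix proof, and more closely the main-text proof of Theorem~\ref{thm:VHS-hermitian-pp}. Both use a \v{C}ech section of the isotropic presheaf $\calP^Q$ over a refined adapted covering, the relation $W_j=W_{-1-j}^{\perp_Q}$ to induce flat hermitian forms on the graded pieces, and \v{C}ech comparability together with the intrinsic $\perp$-involution on step sets to get refined neighbours. Taking the section from $\calP^Q$ at the outset, instead of citing the non-isotropic Theorem~\ref{thm:patching-exists} for item (1) as the appendix proof does, is the right way to make one set of data serve all three items. One small slip: for $U$ disjoint from $D$, the trivial filtration is the \emph{least} (coarsest) element of $\calP^Q(U)$ under refinement, not merely a maximal one, and it is being a least element that gives contractibility.
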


\begin{proof}
(1) is Theorem~\ref{thm:patching-exists}, whose proof uses the contractibility
of $\Filt^{A,K}$ (Theorem~\ref{thm:filt-contractible}) and the Čech section
theorem (Theorem~\ref{thm:cech-section}).

For (2): on each $U_I$, the filtration $W(I)$ is $N_\hdot$-isotropic for $Q$
(by the Čech section construction; the contractible poset is
$\mathbf{ifilt}^{N_\hdot}(L_x,Q_x)$).  The $N_\hdot$-isotropic condition
exactly provides the hermitian pre-patching structure: $F_j = F_{-1-j}^{\perp_Q}$
and the $N_i$ are $Q$-self-adjoint.

For (3): on $U_{IJ}$ with $I\subset J$, the filtration $W(J)|_{U_{IJ}}$
refines $W(I)|_{U_{IJ}}$ (by the Čech section compatibility), and the
$Q$-isotropic conditions are compatible with refinement.
\end{proof}

\subsection{Existence of a compatible Hermitian form and connection}

\begin{theorem}[Theorem~\ref{thm:main-hermitian}]
Let $E$ be a bundle with an indefinite Hermitian pre-patching collection with
refined neighbours.  After refining the covering, there exist:
\begin{enumerate}[leftmargin=2em,label=\rm(\arabic*)]
  \item An indefinite Hermitian form $\tilde g$ on $E$ compatible with the
    hermitian pre-patching data.
  \item A connection $\tilde\nabla$ preserving $\tilde g$ and compatible with
    the pre-patching collection.
\end{enumerate}
\end{theorem}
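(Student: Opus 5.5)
The plan is to follow the strategy of Theorem~\ref{thm:warmup}: construct the form and the connection locally, then average them using a partition of unity adapted by Construction~\ref{constr:refcov}. The only genuinely new issue is nondegeneracy of the averaged form.

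\textbf{Local choices.} On each $U_a$ the hermitian pre-patching datum consists of a filtration $F(a)$ with $F_j=F_{-1-j}^\perp$ and a flat indefinite form $g$ on $\Gr^{F(a)}$, pairing $\Gr_i$ with $\Gr_{-i}$ and compatible with $\tau(a)$. First I choose a $\mathcal{C}^\infty$ splitting $E|_{U_a}\cong\bigoplus_j\Gr_j^{F(a)}$. Transporting $g$ through it gives a form $g(a)$ on $E|_{U_a}$ compatible with the datum. Next, on the graded pieces, take the flat connection $\tau(a)$; it preserves $g$ because $g$ is $\tau$-flat. Transporting it through the same splitting gives a connection $\nabla(a)$ that preserves $g(a)$, preserves $F(a)$, and induces $\tau(a)$.

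\textbf{Compatibility under tightness.} Apply Construction~\ref{constr:refcov} to obtain the refined covering $\{U'_{a'}\}$ and the partition of unity $\{\rho_b\}$. Near a point of $U'_{a'}$, only indices $b$ tighter than $r(a')$ contribute. The key check is that if the datum of $a$ refines that of $b$, then $g(b)$ and $\nabla(b)$ are compatible with the datum of $a$. This should follow from the refinement definition:
\begin{itemize}
\item the intermediate steps of $F(a)$ are $\tau(b)$-flat;
\item the hermitian structure on $\Gr^{F(a)}$ is induced from that on $\Gr^{F(b)}$;
\item the orthogonality $F_j=F_{-1-j}^\perp$ holds for both filtrations with respect to the same form on graded pieces.
\end{itemize}
I would verify this on associated gradeds, where everything is flat and determined by $g$.

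\textbf{Averaging.} Set $\tilde g:=\sum_b\rho_b\, g(b)$. Compatibility with a fixed filtration and inducing a fixed form on $\Gr$ are affine-linear conditions, so $\tilde g$ is hermitian and compatible near every point.

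\textbf{Nondegeneracy (main obstacle).} This is the step I expect to be hardest, since a convex combination of indefinite forms can degenerate. The resolution is that $\tilde g$ induces the fixed nondegenerate form $g$ on $\Gr^{F(r(a'))}$, pairing $\Gr_i$ with $\Gr_{-i}$, and $F_j=F_{-1-j}^{\perp}$ holds for $\tilde g$. Choose a basis adapted to $F$, ordered so that the pairing matrix is block anti-triangular with the nondegenerate graded pairings on the anti-diagonal. Then the determinant equals the product of the graded determinants, which is nonzero.

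\textbf{The connection.} Averaging $\nabla(b)$ directly fails to preserve $\tilde g$, because each $\nabla(b)$ preserves $g(b)$, not $\tilde g$. Instead, take $\hat\nabla:=\sum\rho_b\nabla(b)$, which is compatible with the patching data as in Theorem~\ref{thm:warmup}. Then apply Lemma~\ref{lem:decomp-connection} with respect to $\tilde g$: write $\hat\nabla=\tilde\nabla+B$, where $\tilde\nabla$ preserves $\tilde g$ and $B\in A^1(\mathfrak p_E)$.

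It remains to show $\tilde\nabla$ is still compatible. Locally $\tilde\nabla=\hat\nabla-B$, where $B=\tfrac12(\hat\nabla-\hat\nabla^{\dagger_{\tilde g}})$ is half the $\tilde g$-adjoint defect. Since $F_j=F_{-1-j}^\perp$, taking the $\tilde g$-adjoint of an operator preserving $F$ again preserves $F$. Moreover $\hat\nabla$ induces on $\Gr$ the $g$-preserving $\tau$, so the defect $B$ vanishes on $\Gr$, i.e.\ $B$ is strictly upper triangular. Therefore $\tilde\nabla$ still preserves $F$ and induces $\tau$. This gives~(2), and local nil-flatness follows as before.
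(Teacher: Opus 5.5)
Your proposal is correct. Part (1) follows the paper: lift the graded forms by $\mathcal{C}^\infty$ splittings, refine via Construction~\ref{constr:refcov}, and average. Your nondegeneracy argument, via the block anti-triangular Gram matrix in an $F$-adapted frame, is more explicit than the paper's, which only asserts that the graded pairings are ``additive under the averaging''.

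Part (2) is a genuinely different ordering of the paper's steps.
\begin{itemize}
\item \emph{The paper} first fixes the global $\tilde g$. On each open set it then chooses a connection that preserves $\tilde g$ and is compatible with the local datum, and averages these, since preserving a fixed form and being compatible are both affine conditions.
\item \emph{You} average ordinary compatible connections as in Theorem~\ref{thm:warmup}. You then pass once, globally, to the $\tilde g$-preserving part $\tfrac12(\hat\nabla+\hat\nabla^{\dagger})$, where $\hat\nabla^{\dagger}$ is the $\tilde g$-adjoint connection.
\end{itemize}

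Both routes rest on the same fact: the $\tilde g$-preserving part of a connection compatible with $(F,\tau)$ is still compatible. This holds because $F_j=F_{-1-j}^{\perp}$ forces the adjoint connection to preserve $F$, and $\tau$, preserving the graded form, is its own adjoint. The paper uses this silently (``write any compatible connection as $\tilde\nabla_g+B$ and choose $B=0$''); you actually prove it.

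Your ordering buys something extra. The projection is a local operation, so it only needs $\tilde g$ and $\hat\nabla$ to be compatible with the datum of $r(a')$ near each point of $U'_{a'}$, which is exactly what the tightness refinement already provides. The paper's local choice of $\tilde\nabla(a)$ instead needs $\tilde g$ to be compatible with the datum of $a$ on the whole set where $\tilde\nabla(a)$ is chosen. After averaging, that is only guaranteed on the refined sets, since a term $g(b)$ with $b$ strictly finer than $a$ need not induce the graded form of $a$. So the paper's route strictly requires a second pass through Construction~\ref{constr:refcov}. In exchange, the paper's ordering keeps the last step a pure averaging, with $\tilde g$-preservation manifest.

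Like the paper, you take from the definitions that, under refinement, the hermitian structures are compatible: intermediate steps self-orthogonal and graded forms induced. That is part of what ``indefinite Hermitian pre-patching collection with refined neighbours'' has to mean, and it holds in the VHS application.
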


\begin{proof}
\textit{Part (1):}  On each $U_a$, lift the hermitian metric on
$\Gr^{F(a)}(E|_{U_a})$ to an indefinite metric $g(a)$ on $E|_{U_a}$ via the
$\mathcal{C}^\infty$ splitting of the filtration.

Apply Construction~\ref{constr:refcov}: refine the covering using the
``tightness'' relation (where $b$ is tighter than $a$ iff the pre-patching
datum for $a$ refines that for $b$).  After refinement, near any point $x$
only metrics $g(b)$ with $b$ tighter than $r(a')$ enter the average.
All such $g(b)$ are compatible with the pre-patching datum of $U_{r(a')}$.

Set $\tilde g = \sum_a\rho_a g(a)$.  The average is nondegenerate because
the $N_\hdot$-isotropic condition forces the pairing between $F_j$ and
$F_{-1-j}^\perp$ to be compatible across all contributing terms.

\textit{Part (2):}  For each $U_a$, choose a $\tilde g$-preserving connection
$\tilde\nabla(a)$ compatible with the pre-patching datum.  Such a connection
exists: by Lemma~\ref{lem:decomp-connection} write any compatible connection
as $\tilde\nabla_g + B$ and choose $B=0$ (or use the averaging argument
separately for the $g$-preserving part).  Average using the refined partition
of unity: the averaged connection preserves $\tilde g$ and is compatible with
the pre-patching data on the refined cover.
\end{proof}

\subsection{Corollary: vanishing of volume regulator}

\begin{corollary}[Corollary~\ref{cor:maincor}]
If $\nabla$ is compatible with a pre-patching collection admitting an
indefinite Hermitian structure with refined neighbours, then
$\vol_p(\nabla) = 0$ for every $p\geq2$.
\end{corollary}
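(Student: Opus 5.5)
The statement is the appendix restatement of Corollary~\ref{cor:maincor}: a connection $\nabla$ compatible with a pre-patching collection that carries an indefinite Hermitian structure with refined neighbours has $\vol_p(\nabla)=0$ for every $p\geq2$. The plan is to replace $\nabla$ by a $g$-preserving connection without changing the volume regulator, and then apply the reality argument of Proposition~\ref{prop:hodge-vanish}.

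\textbf{Step 1: pass to a $g$-preserving connection.} Apply Theorem~\ref{thm:main-hermitian}, in the form just proved in this appendix. After refining the covering it gives an indefinite Hermitian form $\tilde g$ on $E$ compatible with the hermitian pre-patching data. It also gives a locally nil-flat connection $\tilde\nabla$ that preserves $\tilde g$ and is compatible with the same pre-patching collection. Refining the covering is harmless, since compatibility with a datum on $U_a$ restricts to compatibility on any smaller open set.

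\textbf{Step 2: show $\vol_p(\nabla)=\vol_p(\tilde\nabla)$.} This is the main obstacle. Proposition~\ref{prop:vol-indep} only shows that $\vol_p$ does not depend on the unitary starting point or the path; it does not by itself make $\vol_p$ a function of the pre-patching collection. I would argue directly. Fix one unitary $\nabla_0$, and use the concatenated path $\nabla_0\to\tilde\nabla\to\nabla$ with a straight segment $\nabla_s=\tilde\nabla+sB$, where $B=\nabla-\tilde\nabla$. Then
\[
\vol_p(\nabla)-\vol_p(\tilde\nabla)=\Bigl[\Im\int_0^1\Tr\bigl(B^{\mathrm{nm}}\wedge F^{\mathrm{nm}}(\nabla_s)^{p-1}\bigr)\,ds\Bigr].
\]
Both connections preserve the same filtration on each chart and induce the same flat connection $\tau$ on the graded. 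Hence $B$ is strictly upper-triangular, and each $\nabla_s$ preserves the filtration, so each $F(\nabla_s)$ is upper-triangular. The argument of Corollary~\ref{cor:vol-upper-triangular} then makes the integrand vanish pointwise on each chart. So the difference form is identically zero, and the two volume regulators agree.

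If that pointwise argument breaks down on overlaps, where the relevant filtration changes, I would pass to the refined covering of Construction~\ref{constr:refcov}. There, near each point, all contributing data refine a single datum $U_{r(a')}$, so a single filtration governs both $B$ and every $F(\nabla_s)$ at that point.

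\textbf{Step 3: conclude.} Since $\tilde\nabla$ preserves the indefinite form $\tilde g$, Proposition~\ref{prop:hodge-vanish} gives $\vol_p(\tilde\nabla)=0$ for all $p\geq1$. That proposition builds a $\tilde g$-preserving path from an $h$-unitary connection using Lemmas~\ref{lem:h1h2} and \ref{lem:decomp-connection}, and gets reality of $v_p^{\mathrm{nm}}$ from Lemma~\ref{lem:koszul-parity}(b). Combining with Step~2 gives $\vol_p(\nabla)=0$, in particular for every $p\geq2$.
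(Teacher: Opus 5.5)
Your proof is correct and follows the paper's route: Theorem~\ref{thm:main-hermitian}, then $\vol_p(\nabla)=\vol_p(\tilde\nabla)$, then Proposition~\ref{prop:hodge-vanish}. The paper justifies the middle equality only by citing Proposition~\ref{prop:vol-indep-app}, which, as you rightly observe, gives independence of the unitary base point and of the path but not dependence on the pre-patching collection alone; your straight-line transgression along $\tilde\nabla+sB$ actually proves it, since $B=\nabla-\tilde\nabla$ is strictly upper-triangular and every $F(\nabla_s)$ preserves the filtration on each refined chart. That vanishing is pointwise, and every point lies in a chart where both connections are compatible with a single datum, so your fallback about overlaps is not needed.
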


\begin{proof}
By Theorem above, there exists $(\tilde g,\tilde\nabla)$ with $\tilde\nabla$
compatible with the same collection and preserving $\tilde g$. By
Proposition~\ref{prop:vol-indep-app}, $\vol_p$ depends only on the
pre-patching collection and vanishes for every $p\geq2$ whenever it
preserves an indefinite Hermitian form, so $\vol_p(\nabla) =
\vol_p(\tilde\nabla) = 0$ for every $p\geq2$.
\end{proof}

\section{The Rees bundle, the category \texorpdfstring{$\Xi$}{Xi}, and the
\texorpdfstring{$K$}{K}-theory map}
\label{app:rees}

\subsection{The category $\Xi_K$ and its nerve}

We recall the category $\Xi_K$ from Section~\ref{sec:patching} and describe
its nerve explicitly.

An $n$-simplex of $N\Xi_K$ is a chain of morphisms
\[
  V_0 \xrightarrow{(W^1,g^1)} V_1 \xrightarrow{(W^2,g^2)} \cdots
  \xrightarrow{(W^n,g^n)} V_n
\]
in $\Xi_K$.  This consists of a graded vector space $V_0$ and, inductively,
filtrations $W^j$ on $V_{j-1}$ with isomorphisms $g^j:\Gr^{W^j}(V_{j-1})\cong V_j$.
Equivalently, it is a graded vector space $U = V_n$ together with a flag of
compatible filtrations $F_1\subset F_2\subset\cdots\subset F_n$ on $U$
(identifying each $V_j\cong\Gr^{F_{n-j}}(U)$ via composition of the $g^j$'s).

\begin{definition}
The \emph{barycentric subdivision} $\mathrm{Sd}(N\Xi_K)$ is the nerve of the
category of simplices of $N\Xi_K$:
\[
  \mathrm{Sd}(N\Xi_K)_m = \{(a_0\subset a_1\subset\cdots\subset a_m) :
  a_j\in(N\Xi_K)_{d_j}\}
\]
(chains of simplices of $N\Xi_K$ ordered by face relations).  A typical
element is written $(a_\hdot, U, F_\hdot)$ where $U$ is the top graded space
and $(F_\hdot)$ is the chain of flags.
\end{definition}

\subsection{The tautological Rees bundle on $\mathrm{Sd}(N\Xi_K)$}

\begin{construction}
For an element $(a_\hdot, U, F_0\subset\cdots\subset F_{a_k})$ of
$(\mathrm{Sd}(N\Xi_K))_k$, consider the affine space $\aaa^{a_k}$ with
coordinates $s_0,\ldots,s_{a_k-1}$ (setting $s_{a_k}:=0$).  Define the
\emph{tautological Rees bundle}:
\[
  E_k(a_\hdot,U,F_\hdot) := \sigma_{a_\hdot}^*\xi(U;F_0,\ldots,F_{a_k-1}),
\]
where $\sigma_{a_\hdot}:\aaa^k\to\aaa^{a_k}$ is the map
$\sigma_{a_\hdot}(t)_j = t_i$ whenever $j$ is in the image of $[a_i]$ but not
$[a_{i-1}]$, extended by $s_{a_k}=0$.
\end{construction}

\begin{theorem}
The bundles $E_k(a_\hdot,U,F_\hdot)$ assemble into a bundle over the
$\aaa^1$-realization of $\mathrm{Sd}(N\Xi_K)$, giving a classifying map
\[
  |\mathrm{Sd}(N\Xi_K)|\simeq|N\Xi_K|\;\longrightarrow\; BGL(K)^+.
\]
\end{theorem}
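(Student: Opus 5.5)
The plan is to check three things in turn: the local bundles $E_k(a_\hdot,U,F_\hdot)$ are vector bundles, they are compatible under every face and degeneracy map of $\mathrm{Sd}(N\Xi_K)$, and the resulting $\aaa^1$-realized bundle gives a classifying map exactly as in Theorem~\ref{thm:sigma-htpy}.

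\textbf{Local freeness.} For a single simplex, the filtrations $F_0\subset\cdots\subset F_{a_k-1}$ on the graded space $U$ form a chain of refinements. Each $F_j$ is a coarsening of the next, and all of them contain the standard filtration of $U$. A chain of refining filtrations has a common splitting, namely any splitting of the finest one. So the collection is locally abelian, and Proposition~\ref{prop:rees-projective} shows that $\xi(U;F_0,\ldots,F_{a_k-1})$ is finite projective over $K[s_0,\ldots,s_{a_k-1}]$. The pullback along the linear map $\sigma_{a_\hdot}$ is then a vector bundle on $\aaa^k$.

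\textbf{Gluing.} Here I would follow the proof of Proposition~\ref{prop:rees-glue}.
\begin{enumerate}
\item A face of $\mathrm{Sd}(N\Xi_K)$ deletes some $a_i$ from the chain. On the affine side this corresponds to setting $t_i$ equal to $0$ or $1$ in $\sigma_{a_\hdot}$.
\item By Proposition~\ref{prop:rees-face}, setting a Rees variable to $1$ forgets that filtration. Setting it to $0$ passes to $\Gr$ of that filtration, with the remaining filtrations induced.
\item The passage to $\Gr$ is identified, via the isomorphisms $g^j$, with the Rees module of the shorter chain, by Proposition~\ref{prop:rees-morphism}.
\item Composition in $\Xi_K$ is defined by lifting filtrations and taking joins. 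This is exactly what makes iterated restrictions agree, so the cocycle condition follows from the compatibility clause of Proposition~\ref{prop:rees-face-detailed}.
\item Degeneracies repeat a coordinate. Repeating a filtration in the Rees construction is compatible with the diagonal pullback, so degeneracies are also respected.
\end{enumerate}
Together these give a bundle on the $\aaa^1$-realization $\Re_\aaa\mathrm{Sd}(N\Xi_K)$.

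\textbf{Classifying map.} The realization is glued from affine spaces, and these are $\aaa^1$-contractible (Proposition~\ref{prop:A1-equiv}). The argument of Theorem~\ref{thm:sigma-htpy} therefore turns a rank-graded bundle on it into a map $|\mathrm{Sd}(N\Xi_K)|\to BGL(K)^+$. There are two ways to see this.
\begin{itemize}
\item Concretely, on each simplex the bundle is free, so one can choose a trivialization and read off transition data in $GL(K[t])$. Homotopy invariance $K(K[t_1,\ldots,t_n])\simeq K(K)$ then lets these data descend to $BGL(K)^+$.
\item Alternatively, one can use the $Q$-construction model $\Omega|QVect|$.
\end{itemize}
Finally, $|\mathrm{Sd}(N\Xi_K)|\simeq|N\Xi_K|$ is the standard homeomorphism between the realization of a barycentric subdivision and the realization of the original simplicial set.

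\textbf{Main obstacle.} I expect the hardest step to be the bookkeeping in the gluing: the interaction between the reindexing $\sigma_{a_\hdot}$ (with the convention $s_{a_k}=0$) and the lifting and joining of filtrations under composition in $\Xi_K$. My first approach would be to reduce everything to the common-splitting description. In that description each Rees module is $\bigoplus_{\mathbf j}U_{\mathbf j}\otimes K[s]\cdot s^{-\mathbf j}$, and every face map becomes an explicit specialization of monomials, which can be checked directly.
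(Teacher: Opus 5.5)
Your proposal is correct in outline and follows the paper's own route: the face isomorphisms come from specializing Rees coordinates to $0$ or $1$ via Proposition~\ref{prop:rees-face-detailed}, and the classifying map comes from $\aaa^1$-contractibility of the cells, made concrete exactly as the paper later does in \S\ref{subsec:classmap} (trivialize cell by cell as in Lemma~\ref{lem:univ-classifying}, then apply homotopy invariance of stabilized $K$-theory, Lemma~\ref{lem:htpy-constant} and Construction~\ref{constr:ctaut}); your local-freeness check through a splitting of the finest filtration is a detail the paper leaves implicit. Two small corrections: the degeneracy step is unnecessary because the $\aaa$-realization is semi-simplicial (Definition~\ref{def:A-realization}), and it is misdescribed anyway, since a degeneracy of $\mathrm{Sd}(N\Xi_K)$ repeats an element of the chain and so produces an unused coordinate under $\sigma_{a_\hdot}$ rather than a repeated filtration; and with $\mathrm{Sd}$ defined as the nerve of the category of simplices, $|\mathrm{Sd}(N\Xi_K)|\simeq|N\Xi_K|$ is a homotopy equivalence rather than a homeomorphism, which is all the statement requires.
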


\begin{proof}
The structural isomorphisms under face maps follow from
Proposition~\ref{prop:rees-face-detailed}: setting coordinates to $0$ or $1$
recovers the Rees module at the corresponding face.  The composition
$|N\Xi_K|\simeq|SdN\Xi_K|\to BGL(K)^+$ is $\aaa^1$-homotopy invariant since
$\aaa^n$ is $\aaa^1$-contractible (the $\aaa^1$-homotopy theorem \cite{MV}).
\end{proof}

\subsection{Functoriality and the regulator map}
\label{subsec:functoriality-regulator}

We now describe in detail how the classifying map $r_L$ is assembled, since
this is the map $r_{\zeta_0}$ used throughout \S\ref{sec:torsion}.

\begin{construction}[The classifying map $r_L$]\label{constr:rL}
Let $L$ be a local system on $X^*$ with unipotent monodromy, admitting a
stratified collection of patching data $\{(W(I),\tau(I))\}_{I\in A}$ over the
adapted covering (Proposition~\ref{prop:covering}), where $A$ is the
covering's index poset.  The classifying map $r_L:X\to BGL(K)^+$ is the
composite of three maps:
\begin{enumerate}[leftmargin=2em,label=\rm(\arabic*)]
  \item[\textbf{(1)}] \emph{$X\simeq|NA|$.}  By
    Proposition~\ref{prop:nerve-htpy}, the realization of the nerve of the
    poset $A$ is homotopy equivalent to $X$.
  \item[\textbf{(2)}] \emph{$|NA|\to|N\Xi_K|$.}  Construction~\ref{constr:A-to-Xi}
    turns the patching data $\{(W(I),\tau(I))\}_{I\in A}$ into a functor
    $A\to\Xi_K$, $I\mapsto V(I) = \Gamma(B_I,\Gr^{W(I)}(L|_{U_I^*}),\tau(I))$,
    inducing a map of nerves $NA\to N\Xi_K$ and hence on realizations.
  \item[\textbf{(3)}] \emph{$|N\Xi_K|\to BGL(K)^+$.}  Passing to the
    barycentric subdivision $|N\Xi_K|\simeq|\mathrm{Sd}(N\Xi_K)|$, the
    tautological Rees bundle $E_k(a_\hdot,U,F_\hdot)$ of \S F.2 assembles
    into a bundle on the $\aaa$-realization of $\mathrm{Sd}(N\Xi_K)$, which is
    classified by a map to $BGL(K)^+$.   
    In this last step, the 
    objects being classified are algebraic bundles over affine spaces varying
    in a simplicial direction, rather than vector bundles on a space. See
    \S\ref{subsec:classmap} (Construction~\ref{constr:ctaut}) 
    for the discussion of this part of the classifying map. 
\end{enumerate}
The composite $r_L: X\simeq|NA|\to|N\Xi_K|\simeq|\mathrm{Sd}(N\Xi_K)|\to BGL(K)^+$
is the desired classifying map.
\end{construction}

\begin{proposition}\label{prop:rL-CS}
The map $r_L$ is independent of all choices (covering, patching data,
weights) up to homotopy.  The pullback $r_L^*(r_{2p-1})\in H^{2p-1}(X,\cc/\zz)$
equals $\CS_p(\nabla^{\Del}_L)$, where $\nabla^{\Del}_L$ is Deligne's patched
connection on the canonical extension of $L$.
\end{proposition}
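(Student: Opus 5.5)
The proof splits into the two claims of Proposition~\ref{prop:rL-CS}: independence of $r_L$ up to homotopy, and the identity $r_L^*(r_{2p-1})=\CS_p(\nabla^{\Del}_L)$. In both, the plan is to work through the Rees bundle on the $\aaa$-realization rather than on $X$ itself.

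\textbf{Independence.} We compare two sets of choices through a common refinement. Suppose we have two adapted coverings with patching data. We first refine both to a single adapted covering, as in Corollary~\ref{cor:cech-exists}. We then check that the classifying maps of the two original coverings factor, up to homotopy, through the map attached to the refinement. This uses that a refinement of posets $A'\to A$ induces a map $|NA'|\to|NA|$ compatible with the homotopy equivalences of Proposition~\ref{prop:nerve-htpy}.

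For fixed covering but different patching data, two \v{C}ech sections of the poset presheaf $\calP$ should be connected by a \v{C}ech section over $X\times[0,1]$. Such a section exists by Theorem~\ref{thm:cech-section}, applied on the product, because each $\calP(U)$ is contractible (Proposition~\ref{prop:P-iso-filt}). The resulting functor to $\Xi_K$ over the product poset gives a homotopy between the two maps to $|N\Xi_K|$. A change of weight functions only changes the Rees bundle by twists $\Oo(-jQ)$. On the affine pieces these twists are trivialized by $(1-y)^{j}$, so the classifying map does not change.

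\textbf{The identity.} We would follow the chain already set up. The composite $|NA|\to|N\Xi_K|\to BGL(K)^+$ classifies the pullback of the tautological Rees bundle. By Theorem~\ref{thm:global-rees} and Proposition~\ref{prop:cech-rees-compat}, this pullback is the global Rees bundle $F_K$ on $\aaa_K\Cube A$. Pulling back the universal regulator $r_{2p-1}$ therefore gives $\ch^\calD_p(F_K)$, read through the isomorphism $\delta$ of Lemma~\ref{lem:DB-cube}. We would justify this by evaluating on the Grassmannian presentation of Proposition~\ref{prop:grassmann-definition}, where the regulator is the Deligne Chern class of the tautological bundle.

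Next, Theorem~\ref{thm:CS-regulator} identifies $\delta^{-1}\ch^\calD_p(F_K)$ with the Chern--Simons class of the $F^1$-connection of Theorem~\ref{thm:F1-exists}. Finally, Proposition~\ref{prop:Del-Rees-compare} transports this class to $\CS_p(\nabla^{\Del}_L)$. That step uses Theorem~\ref{thm:uniqueness} to identify $\pi^*F$ with $F_K|_{\iiii\Cube A}$, together with Lemma~\ref{lem:Del-compatible}.

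\textbf{Main obstacle.} The hard step is the first one: identifying $r_L^*(r_{2p-1})$ with $\delta^{-1}\ch^\calD_p(F_K)$. The regulator lives on $BGL(K)^+$, while the Deligne class lives on the $\aaa$-realization, so the two classes are not defined on the same space. The plan is to pull both back to a common bisimplicial object indexed by $\mathrm{Sd}(N\Xi_K)$ and affine cells. There we would use acyclicity of $BGL(K)^\delta\to BGL(K)^+$ and homotopy invariance of $K$-theory of $K[t_1,\dots,t_n]$ to reduce the comparison to the tautological flat bundle. On that bundle $\CS_p$ is, by definition, $r_{2p-1}$ (Definition~\ref{def:universal-regulator}). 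I expect this reduction to carry most of the technical weight, since the naturality of $\delta$ with respect to the cubical descent spectral sequence has to be checked there.
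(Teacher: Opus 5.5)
Your proposal has the same structure as the paper's proof, but one justification you give is wrong and should be dropped. The shared structure is this: independence comes from a common refinement of coverings and from contractibility of the patching posets, which lets you join two \v{C}ech sections. The identity comes from the bisimplicial comparison of Proposition~\ref{prop:classmap-comparison}, which rests on Lemma~\ref{lem:universal-comparison}. That lemma is exactly the reduction you describe under ``Main obstacle'': acyclicity of the plus construction plus homotopy invariance, reducing to the flat tautological bundle. After that, Dupont--Hain--Zucker independence replaces the Rees $F^1$-connection by $\nabla^{\Del}_L$, and the isomorphism $\alpha$ of Proposition~\ref{prop:DB-A-realization} finishes. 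Your detour through Step~5 of Theorem~\ref{thm:CS-regulator} and Proposition~\ref{prop:Del-Rees-compare} packages these same two facts.

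The faulty justification is the Grassmannian argument for $r_L^*(r_{2p-1})=\delta^{-1}\ch^\calD_p(F_K)$. The integral cohomology of $\mathbf{Gr}(r,N)$ is free and concentrated in even degrees, so $H^{2p-1}(\mathbf{Gr}(r,N),\cc/\zz)=0$ and every class pulled back from $BGL(K)^+$ vanishes there. By contrast, $\ch^\calD_p(\mathcal{S}_r)$ lifts the rational class $\ch_p(\mathcal{S}_r)$, which is non-zero once $N$ is large. So the regulator is not the Deligne class of the tautological bundle on the Grassmannian. The Grassmannian presentation only evaluates the Deligne class, which is the Hodge-theoretic half. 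The regulator comparison needs the universal bisimplicial object, which is the route your final paragraph and the paper both take.

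There are two smaller points.

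First, in the paper's proof ``weights'' means the radii $\eta_I<\epsilon_I$ of the adapted covering, and these range over a convex set. If you also want independence of the weight functions of \S\ref{sec:rees-global}, noting that the twists $\Oo(-jQ)$ are trivial on affine cells proves nothing. Every algebraic bundle on an affine cell is trivial, and the classifying map is carried entirely by the face gluings.

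Second, Theorem~\ref{thm:cech-section} prescribes no boundary values. To join two given sections by a section over $X\times[0,1]$ you need a relative version with prescribed restrictions to $X\times\{0,1\}$. You also need to compare the nerve of the resulting covering of $X\times[0,1]$ with $|NA|\times[0,1]$. The paper's phrase ``path through sections'' glosses over the same point.
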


\begin{proof}
\textit{Independence of choices.}  Two adapted coverings $A,A'$ admit a
common refinement, and the restriction maps between the corresponding
nerve realizations are homotopy equivalences by
Proposition~\ref{prop:nerve-htpy}.  For a fixed covering, the space of
stratified patching data $\{(W(I),\tau(I))\}_{I\in A}$ compatible with the
weight filtrations is the set of \v{C}ech sections of the presheaf $\calP$
(or $\calP^Q$ in the VHS case) of \S\ref{sec:poset-presheaf}; by
Corollary~\ref{cor:cech-exists} (resp.\
Corollary~\ref{cor:cech-exists-isotropic}), this space of sections is
non-empty, and any two \v{C}ech sections are connected by a path through
sections (since each fibre $\calP(U_I')$, resp.\ $\calP^Q(U_I')$, is
contractible by Proposition~\ref{prop:P-iso-filt}, resp.\
Proposition~\ref{prop:nisofilcontr}).  A path of \v{C}ech sections induces a
homotopy of the functors $A\to\Xi_K$ from Construction~\ref{constr:A-to-Xi},
hence a homotopy of the induced maps $|NA|\to|N\Xi_K|$.  Finally, the choice
of weights ($\eta_I<\epsilon_I$ in the construction of the covering,
Proposition~\ref{prop:covering}) varies in a convex (hence contractible) set.
Combining these, $r_L$ is independent of all choices up to homotopy.

\textit{Identification with the Chern-Simons class.}  Apply
Proposition~\ref{prop:classmap-comparison} to $X_\hdot=\mathrm{Sd}(N\Xi_K)$
and to the Rees bundle $F_K$ built from the patching data
$\{(W(I),\tau(I))\}_{I\in A}$ via Theorem~\ref{thm:global-rees}, equipped with
the $F^1$-connection $\nabla$ of Theorem~\ref{thm:F1-exists}: this gives
$r_L^*(r_{2p-1}) = c^\calD_p(\nabla)$ under $X\simeq S=\iiii\Cube A$, an
identity in $H^{2p-1}(X,\cc/\zz)$ by
Proposition~\ref{prop:DB-A-realization}.  By Theorem~\ref{thm:DHZ}
(independence of $F^1$-connection), $c^\calD_p(\nabla) =
c^\calD_p(\nabla^{\Del}_L)$ for Deligne's patched connection, since both
$\nabla$ and $\nabla^{\Del}_L$ are $F^1$-connections compatible with the same
patching data.  Finally $\nabla^{\Del}_L$ is locally nil-flat, so
$\ch_p(\nabla^{\Del}_L)=0$ as a form
(Proposition~\ref{prop:nilflat-ch-zero}) and the associated Cheeger-Simons
character is flat; by Theorem~\ref{thm:CS-DB-app}(3) it defines
$\CS_p(\nabla^{\Del}_L)$ with
$\alpha(\CS_p(\nabla^{\Del}_L))=c^\calD_p(\nabla^{\Del}_L)$, and $\alpha$ is
an isomorphism on the $\aaa$-realization by
Proposition~\ref{prop:DB-A-realization}.  This last step is where the
nil-flatness of Deligne's connection---as opposed to flatness, which does not
hold---is what licenses speaking of a Chern-Simons class at all.
\end{proof}

\subsection{The classifying map: construction and comparison}
\label{subsec:classmap}

Step~\textbf{(3)} of Construction~\ref{constr:rL} was stated without proof: we
asserted that the tautological Rees bundle assembles into a bundle over
$|\mathrm{Sd}(N\Xi_K)|$ which is ``classified by a map to $BGL(K)^+$''.  The
difficulty is that the objects being classified are not vector bundles on a
space.  They are bundles on an \emph{$\aaa$-realization}: a hybrid object,
simplicial in one direction and algebraic in the other, whose pieces are
algebraic vector bundles over affine spaces $\aaa^n$ varying in a local system
over a simplicial set.  There is no immediate reason for such an object to
have a classifying map to a space at all.

A second difficulty, of a different kind, is that the invariant to be compared
with the regulator is \emph{not} a Chern-Simons class.  On the universal object
there is no connection at all---and in particular no nil-flat one---so no
Chern-Simons class is available; what is defined, for an algebraic vector
bundle and with no auxiliary choice, is the Deligne-Beilinson Chern class
$c^\calD_p$, living in the Deligne cohomology of the $\aaa$-realization.  The
comparison with the regulator is therefore carried out for bundles, without
connections.  The two kinds of invariant are reconciled in
\S\ref{subsubsec:deligne-A-realization}, where we show that the Deligne
cohomology of an $\aaa$-realization is canonically the $\cc/\zz$-cohomology of
the ordinary realization, the affine directions contributing nothing but a
degree shift.

Connections re-enter only afterwards and only in the application: for the Rees
bundle one has the extra structure of an $F^1$-connection, and it is that
structure---not anything available universally---which converts the Deligne
Chern class into the Chern-Simons class of a locally nil-flat connection
(\S\ref{subsec:degeneration}).

With that settled, the comparison itself rests on homotopy invariance of the
$K$-theory of a regular ring: it is what makes the algebraic direction
collapse, and it is also the source of the secondary invariants, since the
collapse is by a homotopy rather than by an identity.

\subsubsection{Semi-simplicial spaces and $\aaa$-realizations}

Let $\Delta_{\mathrm{inj}}\subset\Delta$ be the subcategory of injective maps.
A \emph{semi-simplicial space} is a functor
$X_\hdot:\Delta_{\mathrm{inj}}^{\mathrm{op}}\to\mathrm{Top}$; restriction along
$\Delta_{\mathrm{inj}}\subset\Delta$ turns a simplicial space into a
semi-simplicial one, and all realizations below are taken in the
semi-simplicial sense.  We write $|X_\hdot| := \coprod_n X_n\times\Delta^n/\!\sim$
for the ordinary (topological) realization.

\begin{definition}\label{def:A-realization}
Let $F$ be a field.  The \emph{$\aaa$-realization} of a semi-simplicial space
$X_\hdot$ is
\[
  \Re_\aaa X_\hdot \;:=\; \Bigl(\coprod_{n\geq0} X_n\times\aaa^n\Bigr)\Big/\!\sim ,
\]
the gluing being along the coface maps of $\Delta_{\mathrm{inj}}$, where
$\aaa^n = \aaa^n_F$ is affine $n$-space over $F$.  A \emph{bundle of rank $r$}
on $\Re_\aaa X_\hdot$ is a collection $\{V_n\}_{n\geq0}$, where $V_n$ is a
local system over $X_n$ of rank-$r$ algebraic vector bundles on $\aaa^n_F$,
together with isomorphisms over the gluing maps satisfying the evident cocycle
condition.
\end{definition}

The realizations $\iiii\Cube A$ and $\aaa_K\Cube A$ of \S\ref{sec:cubical} are
of this type, with $X_\hdot=\mathrm{Sd}(N\Xi_K)$ or $NA$, and the Rees bundle
$F_K$ of Theorem~\ref{thm:global-rees} is a bundle on the $\aaa$-realization in
the sense of Definition~\ref{def:A-realization}: over each cube it is an
algebraic bundle in the Rees parameters, and the patching data $\tau(I)$
supply the gluing.  Given an embedding $\sigma:F\hookrightarrow\cc$ we write
$(\Re_\aaa X_\hdot)^{\mathrm{top},\sigma}$ for the topological space obtained
by taking $\cc$-points in the algebraic directions; the inclusion of the
vertex $0\in\aaa^n$ in each factor gives a map
\begin{equation}\label{eq:vertex-incl}
  |X_\hdot| \;\longrightarrow\;(\Re_\aaa X_\hdot)^{\mathrm{top},\sigma},
\end{equation}
which is a homotopy equivalence because $\aaa^n(\cc)$ is contractible.

\subsubsection{Deligne cohomology of an $\aaa$-realization}
\label{subsubsec:deligne-A-realization}

Before going further we settle the question of where the invariants live.  It
is tempting to work with Chern-Simons classes throughout, but that would be a
mistake, for two separate reasons.  First, the connection carried by the Rees
bundle is \emph{not} flat: it is locally nil-flat, and the failure of flatness
is the whole content of the patching construction, so there is no
Cheeger-Simons class in the naive sense.  Second, and more basic, the universal
object of \S\ref{subsubsec:bisimplicial} carries no connection whatsoever: a
bundle on the $\aaa$-realization of the tautological bisimplicial set is just
an algebraic vector bundle, with no nil-flat or $F^1$-structure available.

The invariant that requires neither a flatness hypothesis nor a choice of
connection is the Deligne-Beilinson Chern class of an algebraic vector bundle,
\[
  c^\calD_p(V)\;\in\;H^{2p}_\calD\bigl(\Re_\aaa X_\hdot,\zz(p)\bigr).
\]
So this, and not $\CS_p$, is what the classifying map must be compared with,
and the comparison below is a statement about bundles alone.

The reconciliation is the following computation, which is the reason the two
kinds of invariant can be compared at all: on an affine space the
Deligne-Beilinson cohomology is concentrated in a single degree, where it is
just $\cc/\zz(p)$.  Consequently the algebraic direction of an
$\aaa$-realization contributes nothing beyond a degree shift, and the Deligne
cohomology of the $\aaa$-realization becomes the $\cc/\zz$-cohomology of the
\emph{ordinary} realization.

\begin{lemma}\label{lem:DB-affine}
Let $p\geq1$ and $n\geq0$.  Then
\[
  H^k_\calD\bigl(\aaa^n_\cc,\zz(p)\bigr)\;=\;
  \begin{cases}
    \cc/\zz(p), & k=1,\\[2pt]
    0, & k\neq1 .
  \end{cases}
\]
\end{lemma}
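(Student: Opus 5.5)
The plan is to compute $H^k_\calD(\aaa^n_\cc,\zz(p))$ directly from the definition of \S\ref{subsec:deligne-complex-detail}, taking as compactification $\bar Z=\pp^n$ with $D_Z=H_\infty$ the hyperplane at infinity. $D_Z$ is a smooth divisor, so in particular a normal crossings divisor. With this choice, $H^k_\calD(\aaa^n,\zz(p))=\HH^k\bigl(\pp^n,\calD(p)\bigr)$ with
\[
  \calD(p)=\bigl[\zz(p)\to\Oo\to\Omega^1(\log H_\infty)\to\cdots\to\Omega^{p-1}(\log H_\infty)\bigr],
\]
where $\zz(p)$ means $Rj_*\zz(p)$ on the open part. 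The first step is to write down the exact triangle
\[
  \Omega^{<p}_{\pp^n}(\log H_\infty)[-1]\;\longrightarrow\;\calD(p)\;\longrightarrow\;Rj_*\zz(p)\;\xrightarrow{+1}.
\]
Its hypercohomology long exact sequence reads
\[
  \cdots\to H^{k-1}(\aaa^n,\zz(p))\to \HH^{k-1}\bigl(\Omega^{<p}(\log H_\infty)\bigr)\to H^k_\calD(\aaa^n,\zz(p))\to H^k(\aaa^n,\zz(p))\to \HH^{k}\bigl(\Omega^{<p}(\log H_\infty)\bigr)\to\cdots
\]
Here Deligne's theorem, recalled in the Preliminaries, identifies $H^*(\aaa^n,\cc)$ with $\HH^*(\pp^n,\Omega^\hdot(\log H_\infty))$.

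The second step, which carries the real content, is to identify $\HH^j(\Omega^{<p}(\log H_\infty))$ with $H^j(\aaa^n,\cc)/F^pH^j(\aaa^n,\cc)$. This follows from the degeneration at $E_1$ of the logarithmic Hodge-to-de Rham spectral sequence (Deligne, mixed Hodge theory): it makes the maps $\HH^j(\Omega^{\geq p}(\log))\to H^j(\cc)$ injective with image $F^p$. Granting this, the long exact sequence becomes the usual short exact sequence
\[
  0\to \frac{H^{k-1}(\aaa^n,\cc)}{F^pH^{k-1}+H^{k-1}(\aaa^n,\zz(p))}\to H^k_\calD(\aaa^n,\zz(p))\to H^k(\aaa^n,\zz(p))\cap F^pH^k(\aaa^n,\cc)\to 0,
\]
in which the intersection on the right is taken inside $H^k(\aaa^n,\cc)$. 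I expect this strictness/degeneration input to be the main point needing care. Since it is a standard theorem for smooth varieties with an snc compactification, I would simply cite it; no computation on $\pp^n$ is required.

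The final step is to insert the topology of $\aaa^n_\cc$. It is contractible, so $H^j(\aaa^n,\zz(p))=H^j(\aaa^n,\cc)=0$ for $j>0$, and $H^0(\aaa^n,\cc)=\cc$. This $H^0$ is pure of Hodge type $(0,0)$, so $F^pH^0=0$ for $p\geq1$; alternatively, $\HH^0(\Omega^{\geq p}(\log))=0$ trivially because the complex starts in degree $p\geq1$. Hence the right-hand term vanishes for every $k$, since for $k=0$ it is $\zz(p)\cap F^pH^0=0$. The left-hand term is nonzero only for $k=1$, where it equals $\cc/\zz(p)$. Multiplication by $(2\pi i)^{-p}$ identifies this with $\cc/\zz$. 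This gives the lemma, including the vanishing of the degree-$2p$ group for $p\geq2$ that is emphasised in \S\ref{subsec:deligne-complex-detail}.
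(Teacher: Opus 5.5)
Your proposal is correct and follows the paper's proof. Both arguments use the standard short exact sequence for Deligne--Beilinson cohomology of a smooth variety and then insert $H^{j}(\aaa^n_\cc)=0$ for $j>0$ together with $F^pH^0=0$ for $p\geq1$; the only difference is that you derive the sequence from the triangle $\Omega^{<p}(\log H_\infty)[-1]\to\calD(p)\to Rj_*\zz(p)$ and $E_1$-degeneration, whereas the paper simply quotes it.
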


\begin{proof}
Use the standard exact sequence for Deligne-Beilinson cohomology of a smooth
variety $U$,
\[
  \begin{gathered}
  0\longrightarrow
  \frac{H^{k-1}(U,\cc)}{F^pH^{k-1}(U,\cc)+H^{k-1}(U,\zz(p))}
  \longrightarrow H^k_\calD(U,\zz(p))\\
  \longrightarrow F^pH^k(U,\cc)\cap H^k(U,\zz(p))
  \longrightarrow 0 ,
  \end{gathered}
\]
the Hodge filtration being that of Deligne's mixed Hodge structure on
$H^*(U)$.  For $U=\aaa^n_\cc$ we have $H^0(U,\cc)=\cc$, pure of type $(0,0)$,
and $H^k(U,\cc)=0$ for $k>0$.  Hence for $p\geq1$: $F^pH^0=0$, so for $k=1$
the left-hand term is $\cc/\zz(p)$ and the right-hand term vanishes; for
$k=0$ both terms vanish (the right-hand one because
$F^pH^0\cap H^0(\zz(p))=0$); and for $k\geq2$ both vanish.
\end{proof}

\begin{proposition}\label{prop:DB-A-realization}
Let $X_\hdot$ be a semi-simplicial \emph{set} (so each $X_n$ is discrete), let
$\sigma:F\hookrightarrow\cc$ be an embedding, and let $p\geq1$.  Then there is
a natural isomorphism
\[
  H^{k}_\calD\bigl(\Re_{\aaa,\sigma}X_\hdot,\zz(p)\bigr)
  \;\xrightarrow{\ \sim\ }\;
  H^{k-1}\bigl(|X_\hdot|,\cc/\zz(p)\bigr).
\]
In particular
\[
  H^{2p}_\calD\bigl(\Re_{\aaa,\sigma}X_\hdot,\zz(p)\bigr)
  \;\cong\;H^{2p-1}\bigl(|X_\hdot|,\cc/\zz(p)\bigr),
\]
so a Deligne-Beilinson Chern class on the $\aaa$-realization \emph{is} a
$\cc/\zz$-cohomology class on the ordinary realization.
\end{proposition}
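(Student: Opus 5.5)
The plan is to compute $H^\ast_\calD(\Re_{\aaa,\sigma}X_\hdot,\zz(p))$ with the skeletal (descent) spectral sequence for the covering of the $\aaa$-realization by its cells $\{x\}\times\aaa^n$, $x\in X_n$, and to show that this sequence collapses to the cellular cochain complex of $|X_\hdot|$ with coefficients $\cc/\zz(p)$, shifted by one. I will not try to work directly with a Deligne complex on the glued object. Instead, following \S\ref{subsec:BG-on-cubical}, I take the Deligne cohomology of $\Re_{\aaa,\sigma}X_\hdot$ to be \emph{defined} by descent: it is the cohomology of the total complex of the semi-cosimplicial diagram $[n]\mapsto\prod_{x\in X_n}R\Gamma_\calD(\aaa^n_\cc,\zz(p))$. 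The structure maps are pullback along the coface immersions $\aaa^{n-1}\hookrightarrow\aaa^n$, with alternating signs. Since $X_\hdot$ is a semi-simplicial set, no topology in the simplicial direction needs to be handled.

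First, filter the total complex by simplicial degree. This gives a spectral sequence with
\[
E_1^{n,t}=\prod_{x\in X_n}H^t_\calD(\aaa^n_\cc,\zz(p))\Longrightarrow H^{n+t}_\calD(\Re_{\aaa,\sigma}X_\hdot,\zz(p)).
\]
By Lemma~\ref{lem:DB-affine}, $E_1^{n,t}$ vanishes for $t\neq1$, and $E_1^{n,1}=\prod_{x\in X_n}\cc/\zz(p)$. Because only one row is nonzero, the sequence degenerates at $E_2$, and the abutment in total degree $k$ is $E_2^{k-1,1}$. This already gives the degree shift.

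Second, identify the row $E_1^{\hdot,1}$ with its $d_1$ differential. I have to check that each coface pullback $H^1_\calD(\aaa^n)\to H^1_\calD(\aaa^{n-1})$ is the identity of $\cc/\zz(p)$. This holds because both groups are $H^0(-,\cc)/H^0(-,\zz(p))$ of a connected space, via the injection in the exact sequence used in Lemma~\ref{lem:DB-affine}, and restriction of constants is the identity. Naturality of that exact sequence under morphisms of smooth varieties then gives compatibility. Once this is checked, $d_1$ is the alternating sum of the face maps, so $(E_1^{\hdot,1},d_1)$ is exactly the normalized semi-simplicial cochain complex $C^\hdot(X_\hdot;\cc/\zz(p))$. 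Its cohomology is $H^\ast(|X_\hdot|,\cc/\zz(p))$ by the standard comparison of semi-simplicial cochains with singular cochains of the fat realization. Naturality in $X_\hdot$ follows because every step is functorial for maps of semi-simplicial sets. The special case $k=2p$ is then immediate.

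The main obstacle is making sure that the descent complex genuinely computes the Deligne cohomology of the $\aaa$-realization, that is, that no hidden contribution comes from the gluing along faces. For this I would argue as in the first route of \S\ref{subsec:BG-on-cubical}: all terms are smooth, the face maps are closed immersions, and the descent definition is the one used consistently for Lemma~\ref{lem:DB-cube}, so it is adopted as the definition here. A smaller subtlety concerns the identification of the edge map with the coefficient map $\alpha$. That identification is needed later, but it is not needed for the isomorphism claimed in the statement.
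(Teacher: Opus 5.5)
Your proposal is correct and is essentially the paper's own proof: the skeletal descent spectral sequence, its collapse onto the single row $t=1$ by Lemma~\ref{lem:DB-affine}, and the identification of $(E_1^{\hdot,1},d_1)$ with $C^\hdot(X_\hdot;\cc/\zz(p))$; your check that the face restrictions act as the identity on $H^1_\calD(\aaa^n_\cc,\zz(p))=\cc/\zz(p)$ makes explicit a step the paper leaves implicit. One looseness, shared with the paper, is that the diagram you describe is not literally semi-cosimplicial, because pullback along the coface immersions and reindexing along the face maps of $X_\hdot$ go in opposite directions; the clean fix is the relative skeletal filtration, with $\partial\aaa^s$ the union of the coordinate faces, whose terms $E_1^{s,t}=H^{s+t}_\calD\bigl(X_s\times(\aaa^s,\partial\aaa^s),\zz(p)\bigr)$ are again $\prod_{X_s}\cc/\zz(p)$ for $t=1$ and zero otherwise, so your computation goes through unchanged.
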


\begin{proof}
The $\aaa$-realization is glued from the pieces $X_s\times\aaa^s$ along the
coface maps, so its Deligne cohomology is computed by the descent spectral
sequence of the skeletal filtration,
\[
  E_1^{s,t}\;=\;H^t_\calD\bigl(X_s\times\aaa^s_\cc,\zz(p)\bigr)
  \;\Longrightarrow\;H^{s+t}_\calD\bigl(\Re_{\aaa,\sigma}X_\hdot,\zz(p)\bigr),
\]
with $d_1$ the alternating sum of the coface maps.  Since $X_s$ is discrete,
$X_s\times\aaa^s_\cc$ is a disjoint union of copies of $\aaa^s_\cc$ indexed by
$X_s$, so by Lemma~\ref{lem:DB-affine}
\[
  E_1^{s,t}\;=\;
  \begin{cases}
    \prod_{X_s}\cc/\zz(p) \;=\; C^s\bigl(X_\hdot,\cc/\zz(p)\bigr), & t=1,\\[2pt]
    0, & t\neq1,
  \end{cases}
\]
where $C^\hdot(X_\hdot,-)$ denotes simplicial cochains.  The $E_1$ page is
therefore concentrated in the single row $t=1$; the spectral sequence
degenerates at $E_2$, and
\[
  E_2^{s,1}\;=\;H^s\bigl(C^\hdot(X_\hdot,\cc/\zz(p))\bigr)
  \;=\;H^s\bigl(|X_\hdot|,\cc/\zz(p)\bigr).
\]
Reading off $H^k$ gives the stated isomorphism, with the degree shift coming
from the row $t=1$.  Naturality in $X_\hdot$ is clear, all the maps involved
being induced by maps of semi-simplicial sets.
\end{proof}

\begin{remark}\label{rem:where-things-live}
Proposition~\ref{prop:DB-A-realization} is the answer to the question of how
the Deligne Chern class of the Rees bundle and the pullback of the regulator
class are to be compared: they are not classes on the same object to begin
with, but the algebraic directions of the $\aaa$-realization are
Deligne-cohomologically trivial apart from the single group $\cc/\zz(p)$, and
that group is precisely the coefficient group in which the regulator lives.
Note also that the isomorphism is compatible with the homotopy equivalence
$|X_\hdot|\simeq(\Re_\aaa X_\hdot)^{\mathrm{top},\sigma}$
of~\eqref{eq:vertex-incl}, so no ambiguity arises from working with one
realization rather than the other.  We use the normalisation
$\cc/\zz(p)\cong\cc/\zz$ given by division by $(2\pi i)^p$, and write
$\cc/\zz$ from now on.
\end{remark}

\begin{notation}\label{not:cD-vs-CS}
For an algebraic vector bundle $V$ on an $\aaa$-realization we write
$c^\calD_p(V)\in H^{2p}_\calD(\Re_{\aaa,\sigma}X_\hdot,\zz(p))$ for its
Deligne-Beilinson Chern class, and we use
Proposition~\ref{prop:DB-A-realization} to regard it as an element of
$H^{2p-1}(|X_\hdot|,\cc/\zz)$ without further comment.  No connection is
involved: $c^\calD_p$ is an invariant of the bundle.

Connections enter only through the following two comparisons, which are what
we shall use when $V$ carries extra structure.
\begin{enumerate}[leftmargin=2em,label={\rm(\alph*)}]
  \item If $V$ carries \emph{any} $F^1$-connection $\nabla$---nil-flat or
    not---then $c^\calD_p(V)=[\nabla]_p$, by
    Theorem~\ref{thm:CS-DB-app}(2).  This is what lets the class be computed
    from a Hermitian connection pulled back from a Grassmannian, on which no
    nil-flat connection exists.
  \item If moreover $\nabla$ is locally nil-flat, so that $\ch_p(\nabla)=0$ as
    a form (Proposition~\ref{prop:nilflat-ch-zero}), then the associated
    Cheeger-Simons character is flat, defines $\CS_p(V,\nabla)$, and
    $\alpha\bigl(\CS_p(V,\nabla)\bigr)=c^\calD_p(V)$ by
    Theorem~\ref{thm:CS-DB-app}(3).  On an $\aaa$-realization $\alpha$ is the
    isomorphism of Proposition~\ref{prop:DB-A-realization}, so there
    $c^\calD_p(V)$ and $\CS_p(V,\nabla)$ are the same class
    (Remark~\ref{rem:alpha-kernel}).  This is the case of the Rees bundle, and
    it is used in \S\ref{subsec:degeneration}, not here.
\end{enumerate}
A flat connection is a special case of (b), so on the polynomial-degree-zero
part of the universal object, where the tautological bundle underlies a flat
bundle, $c^\calD_p$ agrees with the classical Chern-Simons class.
\end{notation}

\subsubsection{The universal object and the tautological map}

Let $BGL(r,F[t_1,\dots,t_n])$ denote the classifying space of the discrete
group $GL_r\bigl(F[t_1,\dots,t_n]\bigr)$, regarded as a semi-simplicial space
in $n$ via the inclusions of polynomial rings; it carries a tautological
rank-$r$ bundle $\mathbf{V}_{\mathrm{taut}}$ on its $\aaa$-realization.

\begin{lemma}\label{lem:univ-classifying}
Let $X_\hdot$ be a semi-simplicial set and $V_X$ a rank-$r$ bundle on
$\Re_\aaa X_\hdot$.  Then there is a map of semi-simplicial spaces
$\varphi:X_\hdot\to BGL(r,F[\hdot])$, unique up to simplicial homotopy, with
$V_X\cong\varphi^*\mathbf{V}_{\mathrm{taut}}$.
\end{lemma}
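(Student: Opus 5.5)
The strategy is to construct $\varphi$ simplex by simplex from frames, trivializing $V_X$ on each affine piece. The key input is Quillen--Suslin: since $F$ is a field, every finitely generated projective module over $F[t_1,\dots,t_n]$ is free. So for each $n$-simplex $x\in X_n$, the restriction $V_X|_{\{x\}\times\aaa^n}$ is a free $F[t_1,\dots,t_n]$-module of rank $r$. Here the local system over the discrete set $X_n$ is just a family of modules indexed by $x$. I would choose a basis $e_x$ for each such module. The coface maps $d_i:X_n\to X_{n-1}$ come with gluing isomorphisms $V_X|_{\{x\}\times\aaa^n}\otimes_{F[t]}F[t']\cong V_X|_{\{d_ix\}\times\aaa^{n-1}}$. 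Transporting the basis $e_x$ through these isomorphisms and comparing it with $e_{d_ix}$ gives a transition matrix $g_{x,i}\in GL_r(F[t_1,\dots,t_{n-1}])$.

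Next I would package these matrices as a map into the bar construction. A semi-simplicial map into $BGL(r,F[\hdot])$ assigns to each $n$-simplex a string of group elements compatible with faces, so I would define $\varphi(x)$ by the transition data along the edges of the simplex. These are the matrices comparing the frame at $x$, restricted to each vertex, with the frames chosen at the vertices, read in the appropriate polynomial rings. The cocycle condition in Definition~\ref{def:A-realization} then translates exactly into the face identities of the bar construction. By construction, $\varphi^*\mathbf{V}_{\mathrm{taut}}$ has the same transition data as $V_X$, and the chosen frames give the isomorphism $V_X\cong\varphi^*\mathbf{V}_{\mathrm{taut}}$.

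Uniqueness up to simplicial homotopy is the part I expect to be the main obstacle. Changing the frames replaces each $e_x$ by $e_xh_x$ with $h_x\in GL_r(F[t_1,\dots,t_n])$, which conjugates the cocycle. The standard way to handle this is to note that a change of frames defines a bundle on $\Re_\aaa(X_\hdot\times\Delta^1)$ whose restrictions to the two ends are the two trivialized presentations. Choosing frames on this prism, again possible by Quillen--Suslin, and applying the existence argument gives a map $X_\hdot\times\Delta^1\to BGL(r,F[\hdot])$ restricting to the two given maps. The delicate point is that semi-simplicial sets lack degeneracies, so the prism must be triangulated and handled with care. I would first try to adapt Rourke--Sanderson's treatment of homotopies of $\Delta$-sets, together with the Kan-type filling available because $BGL$ of a group is Kan. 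If that fails, I would weaken the claim to uniqueness up to the equivalence relation generated by such prisms, which is all the later arguments need.
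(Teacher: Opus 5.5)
\textbf{The gap is in the packaging step of your existence argument.} The matrices you feed into $\varphi(x)$ compare $e_x|_{v_j}$ with $e_{v_j}$ at the vertices of $x$. These are constant, lying in $GL_r(F)$, so the polynomial face transitions $g_{x,i}\in GL_r(F[t_1,\dots,t_{n-1}])$ you have just computed are discarded.

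Worse, the resulting strings do not satisfy the face identities. On the $i$-th face, $e_x$ and $e_{d_ix}$ differ by $g_{x,i}$. So the vertex comparisons for $x$ and for $d_ix$ differ by the values of $g_{x,i}$ at the vertices of that face, and these values in general vary from vertex to vertex. Even where such a $\varphi$ existed, $\varphi^*\mathbf{V}_{\mathrm{taut}}$ would be glued by constant matrices and could not recover $V_X$ in general.

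The underlying issue is that the cocycle condition of Definition~\ref{def:A-realization} does not by itself arrange the $n+1$ polynomial face matrices $g_{x,0},\dots,g_{x,n}$ into a single $n$-simplex of a bar construction. You must first normalize the frames. You cannot normalize them on all faces at once: a frame prescribed compatibly on all $n+1$ faces of $\aaa^n$ need not extend over $\aaa^n$. The obstruction lies in $\pi_{n-1}$ of the simplicial group $GL_r(F[\Delta^\hdot])$; for $n=1$ it is already the determinant, and stably it is $K_n(F)$.

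\textbf{The missing idea is to normalize on a horn.} Choose the frames by induction on $n$ so that $e_x$ restricts, through the gluing isomorphisms, exactly to $e_{d_ix}$ for $i=1,\dots,n$.
\begin{itemize}
\item By the normalization in degree $n-1$ and the cocycle condition, the prescribed restrictions form a horn $\Lambda^n_0\to GL_r(F[\Delta^\hdot])$. A simplicial group is Kan, so the horn fills.
\item Degeneracies are needed only in the target, which is a genuine simplicial group coming from the cosimplicial scheme $\Delta^\hdot_F$. The semi-simplicial nature of $X_\hdot$ is therefore harmless.
\item The one remaining comparison $\tau(x)\in GL_r(F[\Delta^{n-1}])$ on the $0$-th face satisfies the identities of a twisting function. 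It therefore defines a strict map $X_\hdot\to\bar W\,GL_r(F[\Delta^\hdot])$, a model for the diagonal of $(m,n)\mapsto B_mGL_r(F[\Delta^n])$, along which the tautological bundle pulls back to $V_X$.
\end{itemize}

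\textbf{Uniqueness then needs no prism construction.} It is the direct observation the paper makes: two normalized frame systems differ by $h_x\in GL_r(F[\Delta^n])$ with $d_ih_x=h_{d_ix}$ for $i\geq 1$. This is exactly an equivalence of twisting functions, which yields the simplicial homotopy. No separate existence argument on a prism, and no weakening of the statement, is needed.

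For comparison, the paper's own proof follows your outline and is equally terse at precisely this packaging point. In Proposition~\ref{prop:classmap-comparison} it sidesteps the issue differently, by making the trivializations part of the data through the \v{C}ech nerve of the torsor of trivializations.
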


\begin{proof}
Over the affine space $\aaa^n_F$ every algebraic vector bundle of rank $r$ is
free, by the Quillen-Suslin theorem; choosing trivializations
simplex by simplex, the local system $V_n$ on $X_n$ becomes a local system of
free modules, i.e.\ is classified by a map
$X_n\to BGL_r\bigl(F[t_1,\dots,t_n]\bigr)$.  The gluing isomorphisms of
Definition~\ref{def:A-realization} make these compatible with the coface maps.
Two choices of trivialization differ by a section of $GL_r(F[t_\hdot])$, which
is exactly a simplicial homotopy between the resulting maps.
\end{proof}

The point of the next lemma is that although $BGL(r,F[t_1,\dots,t_n])$ genuinely
varies with $n$, it ceases to do so after stabilizing and applying the plus
construction.

\begin{lemma}\label{lem:htpy-constant}
The semi-simplicial space
\[
  n\;\longmapsto\;BGL\bigl(F[t_1,\dots,t_n]\bigr)^+
\]
is homotopically constant: every coface map is a homotopy equivalence.
Consequently its realization is canonically homotopy equivalent to
$BGL(F)^+$.
\end{lemma}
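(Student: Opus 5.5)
The plan is to deduce the lemma from homotopy invariance of algebraic $K$-theory for regular rings, combined with the Quillen plus-construction identification $\Omega^\infty K(R)_0\simeq BGL(R)^+$. Concretely, $F[t_1,\dots,t_n]$ is a regular noetherian ring, since $F$ is a field. Quillen's fundamental theorem gives $K_*(R)\cong K_*(R[t])$ for regular $R$ via the inclusion, and by induction on $n$ the inclusion $F\hookrightarrow F[t_1,\dots,t_n]$ therefore induces isomorphisms on all $K_i$, $i\geq 1$. Since $\pi_i(BGL(R)^+)=K_i(R)$ for $i\geq1$, the induced map $BGL(F)^+\to BGL(F[t_1,\dots,t_n])^+$ is a weak equivalence of connected H-spaces. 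It is then a homotopy equivalence, both being CW complexes (or by Whitehead after replacing by CW models).

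Next I would treat the coface maps. Each coface map $\delta_i:n-1\to n$ of the semi-simplicial structure is induced by a ring map $F[t_1,\dots,t_n]\to F[t_1,\dots,t_{n-1}]$ that specializes one variable to $0$ or $1$, or by an inclusion of variables, depending on the convention of Definition~\ref{def:A-realization}. Every such map is an $F$-algebra map, so it is compatible with the structure maps from $F$. It therefore fits into a commutative triangle with $BGL(F)^+$, two of whose sides are equivalences by the previous step. Two-out-of-three then shows that every coface map is a homotopy equivalence. This needs functoriality of the plus construction on the nose or up to coherent homotopy, so I would fix a functorial model, for instance $\Omega^\infty$ of Quillen's $Q$-construction or the functorial plus construction, to obtain an honest semi-simplicial space.

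For the consequence, I would use the standard fact that a semi-simplicial space all of whose face maps are homotopy equivalences has (fat) realization equivalent to its $0$-th space. The proof is by induction over skeleta: each skeleton is a homotopy pushout attaching $X_n\times\Delta^n$ along $X_n\times\partial\Delta^n$, and since the faces are equivalences the $n$-skeleton retracts onto the $(n-1)$-skeleton. Equivalently, the constant semi-simplicial object $BGL(F)^+$ maps levelwise by equivalences into our object, so the realizations are equivalent. The constant object's fat realization is $BGL(F)^+\times\|\Delta_{\mathrm{inj}}\text{-point}\|$, which is contractible in the second factor, and this gives the canonical map. I expect this realization step to be the main obstacle: semi-simplicial (fat) realizations of constant objects are not literally the space itself, and I would handle this by citing the levelwise-equivalence invariance of fat realization (Segal) together with contractibility of the fat realization of the terminal semi-simplicial set, rather than trying to build a retraction by hand.
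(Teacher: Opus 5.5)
Your proof is correct and follows the paper's route: regularity of $F[t_1,\dots,t_n]$ gives homotopy invariance of $K$-theory, then $\pi_i BGL(R)^+=K_i(R)$ plus Whitehead gives the equivalences, and a homotopically constant semi-simplicial space realizes to its value. Three of your details fill gaps the paper leaves implicit: the two-out-of-three argument through $BGL(F)^+$, which also covers specialization cofaces and not only the inclusions the paper mentions, the functorial model of the plus construction, and the fat-realization citations. Discard the skeletal sketch, though: it is false that the $n$-skeleton retracts onto the $(n-1)$-skeleton (for the terminal semi-simplicial set the $1$-skeleton is already a circle), so keep the levelwise-equivalence invariance of fat realization together with $\|\ast\|\simeq\ast$, as you yourself propose.
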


\begin{proof}
$F$ is a regular Noetherian ring, so $F[t_1,\dots,t_n]$ is regular and
homotopy invariance of algebraic $K$-theory \cite[\S6, Thm.~8]{Quillen}
gives $K_i(F)\xrightarrow{\ \sim\ }K_i(F[t_1,\dots,t_n])$ for all $i\geq0$.
Since $BGL(R)^+$ has homotopy groups $K_i(R)$ for $i\geq1$ and is connected,
the maps induced by the inclusions $F[t_1,\dots,t_m]\hookrightarrow
F[t_1,\dots,t_n]$ are weak equivalences of connected spaces with abelian
fundamental group, hence homotopy equivalences.  The realization of a
homotopically constant semi-simplicial space with value $Y$ is homotopy
equivalent to $Y\times|\Delta_{\mathrm{inj}}\text{-pt}|\simeq Y$.
\end{proof}

\begin{construction}[The tautological map]\label{constr:ctaut}
Composing the stabilization and plus construction on each piece with the
equivalence of Lemma~\ref{lem:htpy-constant} gives
\[
  c_{\mathrm{taut}}\;:\;\bigl|BGL(r,F[\hdot])\bigr|\;\longrightarrow\;BGL(F)^+ ,
\]
well defined up to homotopy.  For a bundle $V_X$ on $\Re_\aaa X_\hdot$ with
classifying map $\varphi$ as in Lemma~\ref{lem:univ-classifying}, set
\[
  c(V_X)\;:=\;c_{\mathrm{taut}}\circ|\varphi|\;:\;|X_\hdot|\;\longrightarrow\;BGL(F)^+ .
\]
By Lemma~\ref{lem:univ-classifying} this is independent of choices up to
homotopy, and it is natural in $X_\hdot$.
\end{construction}

Applying Construction~\ref{constr:ctaut} to $X_\hdot=\mathrm{Sd}(N\Xi_K)$ and
the tautological Rees bundle supplies exactly the map asserted in
step~\textbf{(3)} of Construction~\ref{constr:rL}, and $r_L$ is its composite
with steps \textbf{(1)} and \textbf{(2)}.

\subsubsection{The bisimplicial model}
\label{subsubsec:bisimplicial}

To compute $c(V_X)^*(r_{2p-1})$ we replace $BGL(r,F[\hdot])$ by a
bisimplicial set, so that both the regulator class and the Chern-Simons class
become classes on one and the same object.  This is the step that answers the
question of how the two classes are to be compared: they are compared after
pulling both back to the $\aaa$-realization of a bisimplicial resolution.

\begin{definition}\label{def:bisimplicial-model}
Let
\[
  \begin{gathered}
  B_\hdot GL\bigl(r,F[\hdot]\bigr)\;:\;(n,m)\;\longmapsto\;
  B_nGL\bigl(r,F[t_1,\dots,t_m]\bigr)\\
  \;=\;\mathrm{Hom}\bigl(\Delta[n],BGL(r,F[t_1,\dots,t_m])\bigr),
  \end{gathered}
\]
a bisimplicial set.  Its $\aaa$-realization $\Re_\aaa\bigl(B_\hdot
GL(r,F[\hdot])\bigr)$ is glued from the pieces
$B_nGL(r,F[t_1,\dots,t_m])\times\aaa^m\times\aaa^n$ and carries a tautological
rank-$r$ bundle $\mathbf{V}_{\mathrm{taut},2}$.  We write
$\Re\bigl(B_\hdot GL(r,F[\hdot])\bigr)$ for the ordinary realization; as
in~\eqref{eq:vertex-incl}, the natural inclusion
\[
  \Re\bigl(B_\hdot GL(r,F[\hdot])\bigr)\;\longrightarrow\;
  \Re_\aaa\bigl(B_\hdot GL(r,F[\hdot])\bigr)^{\mathrm{top},\sigma}
\]
is a homotopy equivalence.
\end{definition}

Realizing in the first variable recovers $BGL(r,F[t_1,\dots,t_m])$, so
Lemma~\ref{lem:htpy-constant} applies again and yields
\begin{equation}\label{eq:bisimp-to-BGL}
  \kappa\;:\;\Re\bigl(B_\hdot GL(r,F[\hdot])\bigr)\;\longrightarrow\;BGL(F)^+ .
\end{equation}

\subsubsection{The universal regulator class}

We first fix the definition of the class $r_{2p-1}$, since the comparison below
is meaningless until it is pinned down.  Recall that the plus construction
$BGL(R)^\delta\to BGL(R)^+$ is an \emph{acyclic} map: it induces an isomorphism
on integral homology, and hence on cohomology with arbitrary coefficients
\cite[IV.1.4]{Weibel}.

\begin{definition}\label{def:universal-regulator}
Let $F\subset\cc$ be a subfield and let $\mathbf{V}_{\mathrm{taut}}^{\,\delta}$
be the tautological flat rank-$r$ bundle on $BGL(r,F)^\delta$, i.e.\ the local
system attached to the identity representation, with its canonical flat
connection.  Its Cheeger-Simons class
$$
\CS_p(\mathbf{V}_{\mathrm{taut}}^{\,\delta})\in H^{2p-1}(BGL(r,F)^\delta,\cc/\zz)
$$
is compatible with the stabilization maps in $r$, because $\CS_p$ is additive
and vanishes on a trivial flat bundle
(Propositions~\ref{prop:CS-add-app} and \ref{prop:CS-funct-app}), and so
defines a class on $BGL(F)^\delta$.  The \emph{universal regulator class}
\[
  r_{2p-1}\;\in\;H^{2p-1}\bigl(BGL(F)^+,\cc/\zz\bigr)
\]
is the class corresponding to it under the isomorphism
$$
H^{2p-1}(BGL(F)^+,\cc/\zz)\xrightarrow{\sim}H^{2p-1}(BGL(F)^\delta,\cc/\zz)
$$
induced by the acyclic map.
\end{definition}

\begin{remark}\label{rem:regulator-borel}
With this definition, the imaginary part of $r_{2p-1}$ is the volume regulator
$\vol_p$ of \S\ref{app:diffchar}, since $\vol_p$ is \emph{defined} there as the
imaginary part of the Chern-Simons class under the splitting
$\cc/\zz=\rr/\zz\oplus i\rr$.  Its agreement with Borel's regulator,
defined through continuous cohomology and used in \S\ref{sec:torsion}, is
the Beilinson-Borel comparison; it holds up to an explicit non-zero rational
factor, by \cite{BG}.  Only the span of the classes
$\{\vol^\sigma_p\}_\sigma$ is used in Step~3 of \S\ref{sec:torsion}, so the
factor is immaterial for our purposes.
\end{remark}

\subsubsection{The comparison on the universal object}

We now prove the comparison unconditionally.  The mechanism is that homotopy
invariance of $K$-theory, combined with acyclicity of the plus construction,
gives an equivalence not merely after applying $(-)^+$ but already on
\emph{homology of the discrete classifying spaces}; and cohomology is
therefore rigid enough to detect the comparison.

Write $F[\Delta^m] := F[t_0,\dots,t_m]/(\textstyle\sum_i t_i-1)$ for the
coordinate ring of the algebraic $m$-simplex, so that $m\mapsto F[\Delta^m]$ is
a simplicial ring and $F[\Delta^m]\cong F[t_1,\dots,t_m]$ is regular.  Put
\[
  \Omega_r \;:=\; \bigl|\,m\mapsto BGL(r,F[\Delta^m])^\delta\,\bigr|,
  \qquad
  \Omega_\infty \;:=\; \varinjlim_r \Omega_r
  \;=\;\bigl|\,m\mapsto BGL(F[\Delta^m])^\delta\,\bigr| ,
\]
the realizations being taken in the simplicial direction $m$.  These are the
realizations $\Re\bigl(B_\hdot GL(r,F[\hdot])\bigr)$ of
Definition~\ref{def:bisimplicial-model}.  Let
\[
  \iota_r:BGL(r,F)^\delta\longrightarrow\Omega_r,
  \qquad
  \iota_\infty:BGL(F)^\delta\longrightarrow\Omega_\infty
\]
be induced by the inclusion of constants $F\hookrightarrow F[\Delta^\hdot]$,
which is a map of simplicial rings; on realizations it is the inclusion of the
constant simplicial space.

\begin{lemma}\label{lem:iota-homology-iso}
The map $\iota_\infty:BGL(F)^\delta\to\Omega_\infty$ is an isomorphism on
integral homology.  Consequently
\[
  \iota_\infty^*\;:\;H^{k}\bigl(\Omega_\infty,M\bigr)
  \;\xrightarrow{\ \sim\ }\;H^{k}\bigl(BGL(F)^\delta,M\bigr)
\]
is an isomorphism for every $k$ and every abelian group $M$, in particular for
$M=\cc/\zz$.
\end{lemma}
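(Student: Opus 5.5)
The plan is to factor $\iota_\infty$ through the plus construction and use homotopy invariance of $K$-theory levelwise in the simplicial direction $m$.

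First form the commutative square
\[
  \begin{array}{ccc}
  BGL(F)^\delta & \xrightarrow{\ \iota_\infty\ } & \Omega_\infty \\
  \downarrow & & \downarrow \\
  BGL(F)^+ & \longrightarrow & \bigl|\,m\mapsto BGL(F[\Delta^m])^+\,\bigr| ,
  \end{array}
\]
where the vertical maps are the plus constructions, applied levelwise on the right. These can be chosen functorially, for instance via Quillen's fibrewise construction or the $+ = Q$ model, so that the right column is a map of simplicial spaces.

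The left vertical map is acyclic, hence an integral homology isomorphism. For the right vertical map, each level $BGL(F[\Delta^m])^\delta\to BGL(F[\Delta^m])^+$ is a homology isomorphism. A levelwise integral homology isomorphism of simplicial spaces induces a homology isomorphism on realizations; this follows by comparing the skeletal spectral sequences $E^1_{m,q}=H_q(Y_m,\zz)\Rightarrow H_{m+q}(|Y_\hdot|,\zz)$, which are isomorphic from $E^1$ on. For the bottom map, Lemma~\ref{lem:htpy-constant} shows that $m\mapsto BGL(F[\Delta^m])^+$ is homotopically constant with value $BGL(F)^+$, and the inclusion of constants is a section of the realization equivalence. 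This uses regularity of $F[\Delta^m]\cong F[t_1,\dots,t_m]$ and Quillen's homotopy invariance. Hence the bottom map is a homotopy equivalence.

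By two-out-of-three, $\iota_\infty$ is then an isomorphism on $H_*(-,\zz)$. The cohomology statement follows from the universal coefficient theorem: a map inducing an isomorphism on integral homology induces isomorphisms on both the $\mathrm{Hom}$ and $\mathrm{Ext}$ terms, hence, by the five lemma, on $H^k(-,M)$ for every abelian group $M$.

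The delicate step is the levelwise-to-realization passage, specifically making the plus construction functorial in the simplicial ring. I would handle it by replacing $(-)^+$ with the fibrewise Quillen plus construction relative to the perfect subgroups $E(F[\Delta^m])$, which are natural in $m$ and preserved by the face and degeneracy maps, so that the map of simplicial spaces is strictly defined. The second point to watch is the colimit over $r$. Homology commutes with filtered colimits and with realization, so it suffices to work directly with $GL=\varinjlim GL_r$ as above. Only at $r=\infty$ is the plus construction meaningful, which is why the statement is for $\iota_\infty$ rather than $\iota_r$.
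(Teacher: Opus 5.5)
Your argument is correct and uses the same three inputs as the paper's proof: homotopy invariance of $K$-theory for the regular rings $F[\Delta^m]$, acyclicity of the plus construction, and comparison of skeletal spectral sequences. The difference is the order in which they are applied.

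The paper applies two-out-of-three in the square at each fixed level $m$. This shows that $BGL(F)^\delta\to BGL(F[\Delta^m])^\delta$ is already an integral homology isomorphism, and only then does the paper realize, and only the top row. That row consists of discrete classifying spaces, which are strictly functorial in the ring. So the levelwise squares never have to assemble into a map of simplicial spaces, and the functorial plus construction you single out as the delicate step is never needed.

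Your ordering realizes the whole square. It therefore does need that functoriality, and it also needs properness of the levelwise plus-construction simplicial space for the spectral-sequence comparison. In exchange, your route shows at the same time that the resulting map $\Omega_\infty\to BGL(F)^+$ is a homology isomorphism whose composite with $\iota_\infty$ is the canonical map. That is exactly the compatibility invoked in Step 3 of Lemma~\ref{lem:universal-comparison}.
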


\begin{proof}
Fix $m$.  The ring $F[\Delta^m]\cong F[t_1,\dots,t_m]$ is regular Noetherian,
so homotopy invariance of algebraic $K$-theory \cite[\S6, Thm.~8]{Quillen}
gives $K_i(F)\xrightarrow{\sim}K_i(F[\Delta^m])$ for all $i\geq0$, whence
\[
  BGL(F)^+\;\xrightarrow{\ \simeq\ }\;BGL(F[\Delta^m])^+ .
\]
Since the plus construction is acyclic, the vertical maps in
\[
\begin{array}{ccc}
  BGL(F)^\delta & \longrightarrow & BGL(F[\Delta^m])^\delta\\
  \downarrow & & \downarrow\\
  BGL(F)^+ & \xrightarrow{\ \simeq\ } & BGL(F[\Delta^m])^+
\end{array}
\]
are integral homology isomorphisms, and therefore so is the top map.  Thus the
map of simplicial spaces from the constant object $BGL(F)^\delta$ to
$m\mapsto BGL(F[\Delta^m])^\delta$ is a levelwise homology isomorphism.  Both
simplicial spaces are proper (they are classifying spaces of simplicial
groups, hence realizations of simplicial sets in each degree), so comparison
of the skeletal-filtration spectral sequences
\[
  E^1_{s,q}=H_q(X_s,\zz)\;\Longrightarrow\;H_{s+q}(|X_\hdot|,\zz)
\]
shows that the induced map on realizations is a homology isomorphism.  The
realization of a constant simplicial space is the space itself, giving the
first assertion.  The second follows because a map inducing an isomorphism on
integral homology induces an isomorphism on cohomology with arbitrary
coefficients.
\end{proof}

\begin{remark}\label{rem:why-stabilize}
Stabilization in $r$ is essential in Lemma~\ref{lem:iota-homology-iso} and is
not a convenience.  Homotopy invariance of $K$-theory is a statement about
$K_i=\pi_i(BGL(-)^+)$, i.e.\ about the \emph{stable} groups; the individual
maps $BGL(r,F)^\delta\to BGL(r,F[t])^\delta$ are not homology isomorphisms,
since $H_*(GL_r(F[t]))$ genuinely differs from $H_*(GL_r(F))$ for finite $r$.
This is exactly why the naive argument---restrict to the fibre over
$0\in\Delta$ and invoke essential constancy---does not work at finite level:
before stabilizing, the simplicial space is not essentially constant in any
sense that cohomology in degree $2p-1$ can detect.
\end{remark}

\begin{lemma}\label{lem:universal-comparison}
Let $\mathbf{V}_{\mathrm{taut},2}$ be the tautological algebraic vector bundle
on $\Re_\aaa\bigl(B_\hdot GL(r,F[\hdot])\bigr)$ and let
$\kappa:\Omega_r\to BGL(F)^+$ be the map \eqref{eq:bisimp-to-BGL}.  Then
\[
  \kappa^*\bigl(r_{2p-1}\bigr)\;=\;
  c^\calD_p\bigl(\mathbf{V}_{\mathrm{taut},2}\bigr)
  \qquad\text{in }H^{2p-1}\bigl(\Omega_r,\cc/\zz\bigr),
\]
the right-hand side being read through the isomorphism of
Proposition~\ref{prop:DB-A-realization}.  No connection on
$\mathbf{V}_{\mathrm{taut},2}$ is used or assumed to exist.
\end{lemma}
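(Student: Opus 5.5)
The plan is to reduce the comparison to the constant (polynomial-degree-zero) part of the universal object, using Lemma~\ref{lem:iota-homology-iso} to detect classes there. Both sides of the claimed identity are classes in $H^{2p-1}(\Omega_r,\cc/\zz)$, natural under stabilization $r\mapsto r+1$. The left side is stable because $\kappa$ factors through stabilization, and $c^\calD_p$ is additive and vanishes on trivial bundles. So I will first pass to $\Omega_\infty$ and prove the stabilized identity there, then restrict back along $\Omega_r\to\Omega_\infty$. By Lemma~\ref{lem:iota-homology-iso}, $\iota_\infty^*$ is an isomorphism on $H^{2p-1}(-,\cc/\zz)$. It therefore suffices to check that
\[
  \iota_\infty^*\kappa^*(r_{2p-1}) \;=\; \iota_\infty^* c^\calD_p(\mathbf{V}_{\mathrm{taut},2})
\]
in $H^{2p-1}(BGL(F)^\delta,\cc/\zz)$.

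For the left side, $\kappa\circ\iota_\infty$ is the canonical map $BGL(F)^\delta\to BGL(F)^+$. This holds because on the constant simplicial space the equivalence of Lemma~\ref{lem:htpy-constant} restricts to the identity of $BGL(F)^+$ at level $m=0$. By Definition~\ref{def:universal-regulator}, the pullback of $r_{2p-1}$ along this acyclic map is exactly $\CS_p(\mathbf{V}^{\,\delta}_{\mathrm{taut}})$.

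For the right side, I use the naturality of Proposition~\ref{prop:DB-A-realization} in the semi-simplicial set. The restriction of $\mathbf{V}_{\mathrm{taut},2}$ along $\iota_\infty$ is the bundle on the $\aaa$-realization of $B_\hdot GL(F)$ (constant in $m$), which is the trivial algebraic bundle on each $\aaa^n$ glued by constant matrices in $GL_r(F)$. That bundle carries the tautological flat connection $d$ on each piece, glued by constant transition functions. This connection is flat, hence an $F^1$-connection that is (trivially) locally nil-flat. By Notation~\ref{not:cD-vs-CS}(b) and Theorem~\ref{thm:CS-DB-app}(3), together with the isomorphism $\alpha$ on $\aaa$-realizations, its Deligne class equals the Chern--Simons class of this flat bundle. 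Under $|B_\hdot GL(F)|=BGL(F)^\delta$ and the vertex inclusion \eqref{eq:vertex-incl}, that Chern--Simons class is $\CS_p(\mathbf{V}^{\,\delta}_{\mathrm{taut}})$. The two sides then agree, which proves the lemma. No connection on $\mathbf{V}_{\mathrm{taut},2}$ itself has been used, only on its restriction to the constant part.

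The main obstacle is the right-side identification. Applying Theorem~\ref{thm:CS-DB-app} requires a smooth projective base, while the realization here is a glued $\aaa$-realization of an infinite semi-simplicial set. I would handle this levelwise: compactify each $\aaa^n$ to $(\pp^1)^n$, extend the trivial bundle trivially so that $d$ is an $F^1$-connection without poles, and apply the comparison on each cell. I would then descend through the skeletal spectral sequence of Proposition~\ref{prop:DB-A-realization}, in the same cubical-descent reading as \S\ref{subsec:BG-on-cubical}. Finite skeleta suffice, since cohomology in a fixed degree is detected on a finite skeleton up to $\varprojlim^1$ terms. I would control those terms by Mittag-Leffler, using that $\cc/\zz$ is injective.
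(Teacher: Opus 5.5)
Your proposal follows the paper's proof step for step: stabilize to $\Omega_\infty$, detect classes through the homology isomorphism $\iota_\infty$ of Lemma~\ref{lem:iota-homology-iso}, identify the left side via Definition~\ref{def:universal-regulator} because $\kappa_\infty\circ\iota_\infty$ is the canonical map $BGL(F)^\delta\to BGL(F)^+$, and identify the right side through the flat structure on the constant part via Notation~\ref{not:cD-vs-CS} (the paper cites (a); your (b) is the more apt reference for the final equality). One caution on your last paragraph: applying Theorem~\ref{thm:CS-DB-app} cell by cell only yields $0=0$, since on every cell the bundle is trivial with connection $d$ and the relevant groups vanish by Lemma~\ref{lem:DB-affine}, so no levelwise statement can be ``descended''---the identification must instead come from a face-compatible, cochain-level comparison in the total complex, which is in substance the Dupont--Hain--Zucker comparison of Deligne and Cheeger--Chern--Simons classes of flat bundles in its universal simplicial form, and which the paper's proof likewise takes as input rather than re-deriving.
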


\begin{proof}
\emph{Step 1: reduction to $\Omega_\infty$.}  The map $\kappa$ was constructed
in \eqref{eq:bisimp-to-BGL} by stabilizing in $r$ and applying the plus
construction, so it factors as
$\Omega_r\to\Omega_\infty\xrightarrow{\kappa_\infty}BGL(F)^+$.  On the other
side, $c^\calD_p$ is additive and vanishes on trivial summands, so the classes
$c^\calD_p(\mathbf{V}_{\mathrm{taut},2})$ for varying $r$ are compatible under the
stabilization maps $\Omega_r\to\Omega_{r+1}$ and are pulled back from a single
class on $\Omega_\infty$.  It therefore suffices to prove the identity on
$\Omega_\infty$.

\emph{Step 2: both sides are detected by $\iota_\infty$.}  By
Lemma~\ref{lem:iota-homology-iso}, $\iota_\infty^*$ is an isomorphism on
$H^{2p-1}(-,\cc/\zz)$.  So it suffices to prove that the two classes have the
same pullback along $\iota_\infty$.

\emph{Step 3: the left-hand side.}  The composite
$\kappa_\infty\circ\iota_\infty:BGL(F)^\delta\to BGL(F)^+$ is, by
construction, the canonical map: on the constant part of the simplicial
direction the identification of Lemma~\ref{lem:htpy-constant} is the identity.
Hence, by Definition~\ref{def:universal-regulator},
\[
  \iota_\infty^*\kappa_\infty^*(r_{2p-1})
  \;=\;\CS_p\bigl(\mathbf{V}^{\,\delta}_{\mathrm{taut}}\bigr).
\]

\emph{Step 4: the right-hand side.}  We must identify
$\iota_\infty^*\mathbf{V}_{\mathrm{taut},2}$.  Over the constant part of the
simplicial direction the transition matrices lie in
$GL(F)\subset GL(F[\Delta^m])$, so they are independent of the polynomial
parameters, and the restricted bundle is the pullback of
$\mathbf{V}^{\,\delta}_{\mathrm{taut}}$ along the projection killing the
$\aaa^m$ factor.  In particular it underlies a flat bundle: the local system
attached to the identity representation of $GL(F)^\delta$.  This is the one
point in the argument at which a flat connection is available, and it is
available because the bundle happens to be constant in the algebraic
directions, not because any connection was chosen on the universal object.  By
Notation~\ref{not:cD-vs-CS}(a) and functoriality of $c^\calD_p$,
\[
  \iota_\infty^*\,c^\calD_p\bigl(\mathbf{V}_{\mathrm{taut},2}\bigr)
  \;=\;c^\calD_p\bigl(\mathbf{V}^{\,\delta}_{\mathrm{taut}}\bigr)
  \;=\;\CS_p\bigl(\mathbf{V}^{\,\delta}_{\mathrm{taut}}\bigr).
\]

Steps 3 and 4 give equal pullbacks; Step 2 concludes.
\end{proof}

\begin{remark}\label{rem:classmap-gap}
It is worth isolating what makes the argument work.
Lemma~\ref{lem:iota-homology-iso} is not a statement that the
simplicial space $m\mapsto BGL(F[\Delta^m])^\delta$ is homotopically constant:
the individual spaces $BGL(F[\Delta^m])^\delta$ are not homotopy equivalent to
$BGL(F)^\delta$, only \emph{homologically} so, and only after stabilization
(Remark~\ref{rem:why-stabilize}).  What one gets is precisely enough: a
homology isomorphism on realizations, which is exactly what is needed to
detect classes in $H^{2p-1}(-,\cc/\zz)$, and no more.  The role of acyclicity
of the plus construction is to convert a statement about homotopy groups
($K_i(F)\cong K_i(F[t]))$ into one about homology of discrete groups, where
cohomology with $\cc/\zz$ coefficients can be compared.
\end{remark}

\subsubsection{Descent to the given bundle}

With Lemma~\ref{lem:universal-comparison} in hand, the passage to an arbitrary
$X_\hdot$ is straightforward.

\begin{proposition}\label{prop:classmap-comparison}
Let $X_\hdot$ be a simplicial set and $V_X$ a rank-$r$ algebraic vector bundle
on $\Re_\aaa X_\hdot$.  Then, under the homotopy equivalence
$|X_\hdot|\simeq(\Re_\aaa X_\hdot)^{\mathrm{top},\sigma}$
of~\eqref{eq:vertex-incl},
\[
  c(V_X)^*\bigl(r_{2p-1}\bigr)\;=\;c^\calD_p(V_X)
  \qquad\text{in } H^{2p-1}\bigl(|X_\hdot|,\cc/\zz\bigr),
\]
the right-hand side being the Deligne-Beilinson Chern class read through
Proposition~\ref{prop:DB-A-realization}.  Neither side involves a connection.
\end{proposition}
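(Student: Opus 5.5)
The plan is to reduce Proposition~\ref{prop:classmap-comparison} to the universal statement, Lemma~\ref{lem:universal-comparison}, by naturality. Both sides are natural in the pair $(X_\hdot,V_X)$, and the universal lemma already does the real work.

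First, by Lemma~\ref{lem:univ-classifying}, choose a classifying map $\varphi:X_\hdot\to BGL(r,F[\hdot])$ with $V_X\cong\varphi^*\mathbf{V}_{\mathrm{taut}}$. We want to land in the bisimplicial model, so we regard $X_\hdot$ as a bisimplicial set that is constant in the $n$-direction (or, equivalently, take its diagonal). We then promote $\varphi$ to a map $\tilde\varphi$ into $B_\hdot GL(r,F[\hdot])$. Concretely, an $m$-simplex of $X_\hdot$ is sent to the $0$-simplex (or, via the nerve, the degenerate simplex) of $BGL(r,F[\Delta^m])$ given by its trivialization, with coface compatibility supplied by the gluing isomorphisms. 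This gives $V_X\cong\tilde\varphi^*\mathbf{V}_{\mathrm{taut},2}$ on $\aaa$-realizations, together with a homotopy commutative triangle $c(V_X)\simeq\kappa\circ|\tilde\varphi|$ on ordinary realizations. The triangle follows from Construction~\ref{constr:ctaut}, because $c_{\mathrm{taut}}$ and $\kappa$ are both built from stabilization, the plus construction and the equivalence of Lemma~\ref{lem:htpy-constant}.

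Second, we apply naturality on each side.

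\begin{itemize}
\item On the regulator side, $c(V_X)^*r_{2p-1}=|\tilde\varphi|^*\kappa^*r_{2p-1}$.
\item On the Deligne side, $c^\calD_p$ is functorial for the morphism of $\aaa$-realizations induced by $\tilde\varphi$, so $c^\calD_p(V_X)=\tilde\varphi^*c^\calD_p(\mathbf{V}_{\mathrm{taut},2})$.
\end{itemize}

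The isomorphism of Proposition~\ref{prop:DB-A-realization} is natural in the semi-simplicial set, since it is induced by the morphism of descent spectral sequences. Hence the pullback on Deligne cohomology corresponds to $|\tilde\varphi|^*$ on $\cc/\zz$-cohomology. Combining this with Lemma~\ref{lem:universal-comparison}, which gives $\kappa^*r_{2p-1}=c^\calD_p(\mathbf{V}_{\mathrm{taut},2})$, yields the claim. Compatibility with the vertex inclusion~\eqref{eq:vertex-incl} is Remark~\ref{rem:where-things-live}.

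The main obstacle is the first step: checking that $c(V_X)$ genuinely factors through $\kappa$ up to homotopy. The subtlety is that $c_{\mathrm{taut}}$ is defined on the one-variable object $|BGL(r,F[\hdot])|$, while $\kappa$ is defined on the bisimplicial realization. To handle this, I would observe that realizing $B_\hdot GL(r,F[\hdot])$ first in the $n$-direction recovers $BGL(r,F[\hdot])$ levelwise, and that both maps are the realization of the same levelwise composite $BGL(r,F[t_1,\dots,t_m])\to BGL(F[t_1,\dots,t_m])^+\simeq BGL(F)^+$. The only point needing care is that the homotopy inverse in Lemma~\ref{lem:htpy-constant} is chosen consistently for the two models, and I would fix it once on the homotopically constant object and use it for both. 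A secondary check is that the choice of trivializations does not matter: by Lemma~\ref{lem:univ-classifying} different choices change $\tilde\varphi$ by a simplicial homotopy, and this affects neither side.
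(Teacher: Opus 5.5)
Your final step, naturality plus Lemma~\ref{lem:universal-comparison}, is the right one. The first step, however, fails as described, and it is where the actual work lies. You need a strict map $\tilde\varphi$ from $X_\hdot$ into $B_\hdot GL(r,F[\hdot])$ with $\tilde\varphi^*\mathbf{V}_{\mathrm{taut},2}\cong V_X$. If $X_\hdot$ is regarded as constant in the bar direction $n$, any such map of (semi-)bisimplicial sets is constant. Indeed, $B_0$ of a group is a single vertex. Moreover, in bar degree $2$ the faces of $(g_1,g_2)$ are $g_2$, $g_1g_2$ and $g_1$, and all three must equal the bar-degree-$1$ image $h$ of the same simplex, forcing $h=h^2=1$. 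So ``the $0$-simplex given by its trivialization'' carries no information, and $\tilde\varphi^*\mathbf{V}_{\mathrm{taut},2}$ is the trivial bundle, not $V_X$.

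The content of $V_X$ is the family of transition matrices $\theta_{d_ix}^{-1}\circ(\theta_x|_{\mathrm{face}})\in GL_r(F[t_1,\dots,t_{m-1}])$. These compare the trivialization chosen on a simplex with those chosen on its faces, and they are not identities. So with one trivialization per simplex the classifying map exists only up to coherent homotopy; this is why the paper's proof does not use the $\varphi$ of Lemma~\ref{lem:univ-classifying} directly. Saying that coface compatibility is ``supplied by the gluing isomorphisms'' gives at best a homotopy-coherent map. Along such a map you can neither pull back $\mathbf{V}_{\mathrm{taut},2}$ as an algebraic bundle nor invoke the naturality of Proposition~\ref{prop:DB-A-realization}, which holds for strict maps of semi-simplicial sets. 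Passing to the diagonal does not help unless you actually construct the transition data there, which the proposal does not. By comparison, the point you flag as the main obstacle (a consistent homotopy inverse in Lemma~\ref{lem:htpy-constant}) is routine: the paper treats $\kappa\circ|\phi|\simeq c(V_X)\circ|\pi|$ as holding by construction.

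The missing idea is to rectify by making the choice of trivializations part of the data. The paper replaces $X_\hdot$ by $X'_{n,m}=\coprod_{x\in X_n}\mathcal{T}(x)^{m+1}$. Here $\mathcal{T}(x)$ is the $GL_r(F[t_1,\dots,t_n])$-torsor of trivializations of the sections of $V_X$ over $\{x\}\times\aaa^n$, and $X'$ is its \v{C}ech nerve in the $m$ direction. On $X'$ one has the strict bisimplicial map
\[
  \phi(x;\theta_0,\dots,\theta_m)\;=\;\bigl(\theta_0^{-1}\theta_1,\dots,\theta_{m-1}^{-1}\theta_m\bigr)\;\in\;B_mGL(r,F[t_1,\dots,t_n]).
\]
Note the transposition of indices: the bar index of the target receives the \v{C}ech direction, and the ring index receives the simplicial direction of $X_\hdot$. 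That is exactly the slot your constant-in-$n$ picture leaves empty. For the forgetful map $\pi:X'_{\hdot,\hdot}\to X_\hdot$ one then has $\phi^*\mathbf{V}_{\mathrm{taut},2}\cong\pi^*V_X$ and $\kappa\circ|\phi|\simeq c(V_X)\circ|\pi|$. Each $\mathcal{T}(x)^{\hdot+1}$ is contractible, so $|\pi|$ is a homotopy equivalence and $\pi^*$ is injective. Your second step now goes through verbatim with $(\phi,\pi)$ in place of $\tilde\varphi$. Pulling back Lemma~\ref{lem:universal-comparison} along $\phi$ gives $\pi^*c(V_X)^*(r_{2p-1})=\pi^*c^\calD_p(V_X)$, and injectivity of $\pi^*$ finishes the proof.
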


\begin{proof}
The classifying map $\varphi$ of Lemma~\ref{lem:univ-classifying} is well
defined only up to simplicial homotopy, since the trivializations there are
chosen simplex by simplex.  We first rectify it, by making the choice part of
the data.

\emph{The bisimplicial set $X'_{\hdot,\hdot}$.}  For $x\in X_n$ let $M(x)$
denote the $F[t_1,\dots,t_n]$-module of sections of $V_X$ over the cell
$\{x\}\times\aaa^n$ of $\Re_\aaa X_\hdot$, and let
\[
  {\mathcal T}(x)\;:=\;\bigl\{\,\theta:F[t_1,\dots,t_n]^r
  \xrightarrow{\ \sim\ }M(x)\,\bigr\}
\]
be its set of trivializations, a $GL_r(F[t_1,\dots,t_n])$-torsor, non-empty by
the Quillen-Suslin theorem (for the Rees bundle no appeal to Quillen-Suslin is
needed: Corollary~\ref{cor:F-trivial-affine} exhibits the trivialization
explicitly, by the sections $(1-y_i)^{a_i}$ on each affine cube).  Put
\[
  X'_{n,m}\;:=\;\coprod_{x\in X_n}{\mathcal T}(x)^{m+1},
\]
the \v{C}ech nerve of the torsor in the second direction, with $\pi$ the map
forgetting the trivializations.  Each vertical simplicial set
${\mathcal T}(x)^{\hdot+1}$ is the nerve of a codiscrete groupoid on a non-empty set,
hence contractible, so $\pi$ is a levelwise equivalence and $|\pi|$ is a
homotopy equivalence.

\emph{The map $\phi$.}  It is combinatorial, and only afterwards realized:
\begin{equation}\label{eq:phi-formula}
  \phi(x;\theta_0,\dots,\theta_m)\;:=\;
  \bigl(\theta_0^{-1}\theta_1,\ \theta_1^{-1}\theta_2,\ \dots,\
  \theta_{m-1}^{-1}\theta_m\bigr)
  \;\in\;B_mGL\bigl(r,F[t_1,\dots,t_n]\bigr),
\end{equation}
an $m$-tuple of matrices over the ring in $n$ variables.  This is a strict map
of bisimplicial sets, compatible with all face maps in both directions by
construction.

\emph{The two bundles.}  Applying $\Re_\aaa$, the $(n,m)$-cell of
$\Re_\aaa X'_{\hdot,\hdot}$ is $X'_{n,m}\times\aaa^m\times\aaa^n$, and $\pi$
realizes to the projection onto $\{x\}\times\aaa^n$ which forgets the
trivializations and the $\aaa^m$ factor; so $\pi^*V_X$ is, cell by cell,
$M(x)$ pulled back along $\aaa^m\times\aaa^n\to\aaa^n$, with the gluing data
of $V_X$ unchanged.  On the other side, $\phi^*\mathbf{V}_{\mathrm{taut},2}$
is the bundle glued along the $\aaa^m$ direction by the matrices
\eqref{eq:phi-formula}.  Since those matrices are precisely the comparisons
between the chosen trivializations of $M(x)$, the glued bundle is canonically
$M(x)$ again, whence
\[
  \phi^*\mathbf{V}_{\mathrm{taut},2}\;\cong\;\pi^*V_X
\]
as algebraic vector bundles over $\Re_\aaa X'_{\hdot,\hdot}$.

Now both classes are natural.  Pulling back along $\phi$ the identity of
Lemma~\ref{lem:universal-comparison} gives
\[
  \phi^*\kappa^*(r_{2p-1})\;=\;\phi^*c^\calD_p(\mathbf{V}_{\mathrm{taut},2})
  \;=\;c^\calD_p\bigl(\pi^*V_X\bigr)\;=\;\pi^*c^\calD_p(V_X).
\]
On the other hand $\kappa\circ\phi$ is, by construction, the composite
$c(V_X)\circ|\pi|$ up to homotopy, so the left-hand side is
$\pi^*c(V_X)^*(r_{2p-1})$.  As $|\pi|$ is a homotopy equivalence, $\pi^*$ is
injective, and the two classes on $|X_\hdot|$ agree.
\end{proof}

\begin{remark}[The transposition of indices]\label{rem:index-transpose}
It is worth recording which index of $X'_{\hdot,\hdot}$ maps to which index of
the target, since the two exchange r\^oles.  Writing
$B_{a,b}:=B_aGL(r,F[t_1,\dots,t_b])$, formula~\eqref{eq:phi-formula} says
\[
  \phi\;:\;X'_{n,m}\;\longrightarrow\;B_{m,n} :
\]
the \emph{bar} index of the target is the \v{C}ech direction $m$, while its
\emph{ring} index is the simplicial direction $n$ of $X_\hdot$.  This is
forced: the transition matrices live over the affine cube $\aaa^n$ on which
$V_X$ is trivialized, not over the resolution direction.  On realizations the
cells match after the same transposition,
\[
  X'_{n,m}\times\aaa^m\times\aaa^n
  \;\longrightarrow\;
  B_{m,n}\times\aaa^n\times\aaa^m ,
\]
and the two affine factors of Definition~\ref{def:bisimplicial-model} are thus
accounted for: $\aaa^n$ is the affine cube of $X_\hdot$, where the matrices
actually act, and $\aaa^m$ is the direction of the \v{C}ech resolution.  The
r\^ole of $X'_{\hdot,\hdot}$ is only to make $\phi$ strict; it is larger than
$X_\hdot$, but at no cost, the fibres of $\pi$ being contractible.
\end{remark}

\subsection{The Rees connection, projective completion, and the volume
regulator}
\label{subsec:degeneration}

The comparison of \S\ref{subsec:classmap} is a statement about bundles: it
identifies $c^\calD_p(F_K)$ with the pullback of the regulator class, and no
connection enters.  To extract from it a statement about the \emph{volume
regulator}---which is what Step~3 of the proof of Theorem~\ref{thm:main}
needs---one must evaluate $c^\calD_p$, and for that the finer structure of the
Rees bundle is used: it carries connections with logarithmic poles, and the
Dupont-Hain-Zucker and Burgos-Gil theories convert the Deligne class into a
Chern-Simons class whose imaginary part is the volume regulator.

\subsubsection{Why the projective completion is needed}

Both theories are formulated for a pair $(\bar Z,D)$ with $\bar Z$ smooth
projective and $D\subset\bar Z$ a normal crossings divisor---or, in the form
we actually use, for a cubical or simplicial scheme all of whose terms are of
that type: Theorem~\ref{thm:DHZ-app}(1) produces an $F^1$-connection because
each piece is an algebraic bundle on a projective variety, and the Burgos-Gil
arithmetic Chern character $\widehat{\ch}_p(E,h)$ of
Theorem~\ref{thm:BG-full} lives in an arithmetic Chow group of a smooth
complex projective variety.  Neither is available on an affine space as such.

Two distinct hypotheses are therefore in play, and it is worth keeping them
apart.  \emph{Projectivity} is what the present subsection is about, and its
failure on $\aaa_K\Cube A$ is fatal.  \emph{Smoothness} is a separate matter:
the glued scheme $\pp_K\Cube A$ is projective but singular along the faces
where its pieces meet, so the theories do not apply to it directly either.
That difficulty is resolved in \S\ref{subsec:BG-on-cubical} by reading
$\pp_K\Cube A$ as the cubical scheme $c\mapsto\pp(c)\cong(\pp^1)^{|c|}$, whose
terms are smooth projective, and equivalently by the Grassmannian classifying
map of Proposition~\ref{prop:grassmann-definition}; we use both freely below
and refer to that subsection rather than repeating the reduction.

Returning to projectivity: on $\aaa^n$ every bundle is free and carries a vast
supply of connections with no Hodge-theoretic constraint; the Bott-Chern
secondary forms of Theorem~\ref{thm:DHZ-app}(2) need not lie in $F^1$, and the
independence statement fails.  The apparatus must therefore be applied on a
projective completion, and the affine realization entered only afterwards, by
restriction.

This is why the Rees bundle was constructed on $\pp_K\Cube A$ in
Theorem~\ref{thm:global-rees} and not merely on $\aaa_K\Cube A$.  We write
\[
  \pp_K\Cube A \;\supset\; \aaa_K\Cube A,
  \qquad
  D_\infty \;:=\; \pp_K\Cube A \setminus \aaa_K\Cube A ,
\]
so that on each cube $\pp(c)\cong(\pp^1)^k \supset \aaa(c)\cong\aaa^k$ and
$D_\infty|_{\pp(c)}=\bigcup_i\{y_i=\infty\}$, a normal crossings divisor.  We
further write $Q_i=\{y_i=1\}\subset\pp(c)$ for the divisors appearing in
Theorem~\ref{thm:global-rees}, and put $D:=D_\infty+\sum_iQ_i$, again normal
crossings.

It is worth noting that this is the same completion already used implicitly in
\S\ref{subsubsec:deligne-A-realization}: Deligne-Beilinson cohomology of a
smooth variety $U$ is defined through a smooth compactification with normal
crossings boundary and is independent of the choice, so the computation of
$H^k_\calD(\aaa^n,\zz(p))$ in Lemma~\ref{lem:DB-affine} is already a
computation on $\pp^n$ relative to the hyperplane at infinity.  The two halves
of the appendix therefore use the same convention; what changes here is that
the completion carries a connection, which is exactly what the affine picture
cannot see.

\subsubsection{The Rees connection as an $F^1$-connection}

\begin{lemma}\label{lem:rees-log-poles}
The canonical Rees connection $\nabla^{\mathrm{can}}$, whose form on each
graded piece of $F(c)$ is $\sum_j n_j\,\dlog(1-y_j)$, extends to a connection
on $F_K$ over $\pp_K\Cube A$ with logarithmic poles along $D$.  It is an
$F^1$-connection in the sense of \S\ref{app:simplicial}, and it is compatible
with the face maps, hence simplicial.
\end{lemma}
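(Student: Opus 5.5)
The plan is to work cube by cube on $\pp(c)\cong(\pp^1)^k$ with coordinates $y_1,\dots,y_k$, using the explicit description of the local Rees bundle from \S\ref{subsec:rees-local-coords}:
\[
  F(c)=\bigoplus_{\mathbf{j}}G(c)_{\mathbf{j}}\otimes\Oo_{\pp(c)}\Bigl(-\textstyle\sum_l j_lQ_l\Bigr).
\]
On each summand the canonical connection is the scalar form $\sum_l j_l\,\dlog(1-y_l)$ in the frame $s_{\mathbf{j}}=\prod_l(1-y_l)^{j_l}$ over $\aaa(c)$.

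First, I will check logarithmic poles in each chart. Near $Q_l$ the form $j_l\,dy_l/(y_l-1)$ is visibly logarithmic. Near $y_l=\infty$, put $w_l=1/y_l$. Then $\dlog(1-y_l)=\dlog(w_l-1)-\dlog w_l$, which has a simple pole along $w_l=0$ with residue $-j_l$. Changing to a holomorphic frame of $\Oo(-j_lQ_l)$ near $\infty$ alters the connection form only by $\dlog$ of an invertible function, which is holomorphic. So $\nabla^{\mathrm{can}}$ extends with logarithmic poles along $D=D_\infty+\sum_lQ_l$.

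Second, I will verify the $F^1$ condition. The connection matrix is of type $(1,0)$, holomorphic, and lies in $\Omega^1_{\pp(c)}(\log D)\otimes\End F(c)$, hence in $F^1$. The matrix is block diagonal for the grading, so it certainly preserves the filtration $W(F(c);\mu(c))$. This gives the definition of \S\ref{app:simplicial} levelwise on the smooth projective pieces, in the sense of \S\ref{subsec:BG-on-cubical}.

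Third, I will check compatibility with the face maps, which is where I expect the real work. Under the isomorphisms $F(c)|_{\pp(c')}\cong F(c')$ of Proposition~\ref{prop:rees-glue}, restriction to a face $y_i=0$ or $y_i=1$ should carry $\nabla^{\mathrm{can}}(c)$ to $\nabla^{\mathrm{can}}(c')$.

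For a face $y_i=0$, the term $j_i\,\dlog(1-y_i)$ pulls back to zero. The graded pieces of $G(c')=\Gr^{W(c;a_i)}G(c)$ are the $G(c)_{\mathbf{j}}$ regrouped, and the remaining scalars agree.

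The face $y_i=1$ is the delicate case, because $\dlog(1-y_i)$ has its pole exactly there. Here I will not restrict the connection form literally. Instead I will use the identification of Proposition~\ref{prop:rees-face}, under which the fibre at $y_i=1$ forgets $W^i$ via the frame $s_{\mathbf{j}}|_{y_i=1}$ with the factor $(1-y_i)^{j_i}$ removed. Computed in that frame, the restricted connection loses the $j_i$-term and equals $\nabla^{\mathrm{can}}(c')$.

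I will take care to phrase this step honestly. $Q_i$ is a component of the pole divisor, so the correct statement is that the residue-free part restricts compatibly. Equivalently, the face $\pp(c')$ embeds transversally to the other components of $D$, and $D|_{\pp(c')}$ is the corresponding divisor for $c'$.

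Once both face types are checked, the cocycle compatibility follows from the cocycle condition in Proposition~\ref{prop:rees-face-detailed}. This gives a compatible, hence simplicial, family.
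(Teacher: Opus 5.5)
Your outline is the paper's proof, with the chart computations that the paper leaves implicit written out. The three steps are the same: $\dlog(1-y_l)$ is a logarithmic form along $D=D_\infty+\sum_lQ_l$; a holomorphic logarithmic connection matrix lies in $F^1$; and face compatibility comes from the Rees face identifications of Proposition~\ref{prop:rees-face}. One of your computations is wrong as stated and one should be sharpened; neither affects the conclusion.

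\emph{The frame change at $y_l=\infty$.} This change is \emph{not} by an invertible function.
\begin{itemize}
\item Near $w_l=0$ the bundle $\Oo(-\sum_l j_lQ_l)$ is trivial, while $1-y_l=(w_l-1)/w_l$. So $s_{\mathbf j}$ has order $-j_l$ there.
\item A holomorphic frame is $w_l^{j_l}s_{\mathbf j}$; in the $l$-th factor this is $(w_l-1)^{j_l}$, a unit.
\item The extra term $j_l\,\dlog w_l$ exactly cancels the pole.
\end{itemize}
So in a holomorphic frame $\nabla^{\mathrm{can}}$ is regular along $D_\infty$, with residue $0$ rather than $-j_l$. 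This is forced: the residues on the $l$-th factor must sum to $-\deg\Oo(-j_lQ_l)=j_l$, which is already the residue along $Q_l$. Hence ``logarithmic along $D$'' holds a fortiori.

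\emph{The face $y_i=1$.} Your caution here is justified and goes beyond the paper, whose ``setting $y_i=1$'' in $\dlog(1-y_i)$ has no literal meaning. The precise version is as follows.
\begin{itemize}
\item $s_{\mathbf j}|_{Q_i}$ is an honest frame of $F(c)|_{Q_i}$, because $(1-y_i)^{j_i}$ generates $\Oo(-j_iQ_i)$ on all of $\{y_i\neq\infty\}\supset Q_i$.
\item Since $1-y_i$ is a canonical normal coordinate, the induced connection on the polar component is well defined: discard the $j_i\,\dlog(1-y_i)$ term via the residue sequence.
\item The result equals $\nabla^{\mathrm{can}}(c')$.
\end{itemize}
Be aware that ``simplicial'' then holds only in this residue-dropped sense. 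The faces $y_i=1$ lie inside $D$, so they do not map the open part of $\pp(c')$ into $\pp(c)\setminus D$, as the simplicial framework of \S\ref{app:simplicial} presupposes. The paper's proof passes over this point as well.
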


\begin{proof}
On $\pp(c)\cong(\pp^1)^k$ the form $\dlog(1-y_j)$ has simple poles exactly on
$Q_j=\{y_j=1\}$ and on $\{y_j=\infty\}$, with residues $+1$ and $-1$; both are
components of $D$, so the form is a section of
$\Omega^1_{\pp(c)}(\log D)$.  By Theorem~\ref{thm:global-rees} the graded
pieces of $F(c)$ are $\Oo_{\pp(c)}(-j_1Q_1-\cdots-j_kQ_k)\otimes U_j(\mu(c))$,
and $\sum_jn_j\dlog(1-y_j)$ is precisely the connection form of the canonical
meromorphic connection on that line-bundle twist.  Hence
$\nabla^{\mathrm{can}}$ is a logarithmic connection on $F_K$ along $D$, so its
connection matrix lies in $F^1$ of the log de Rham complex; this is the
$F^1$-condition.  Compatibility with the face maps is the content of
Proposition~\ref{prop:rees-face}: setting $y_i=0$ or $y_i=1$ carries the
displayed form to the corresponding form on the face.
\end{proof}

\begin{proposition}\label{prop:rees-can-nilflat}
$\nabla^{\mathrm{can}}$ is locally nil-flat with respect to the filtrations
$W(c)$: it preserves each $W(c)$, and the induced connection on every graded
quotient is flat and, over $\iiii\Cube A$, agrees with the locally constant
structure determined by the trivialization $\tau(c)$.  Its curvature is
strictly upper-triangular and non-zero, so $\nabla^{\mathrm{can}}$ is
\emph{not} flat.  The $F^1$-connection $\nabla$ of Theorem~\ref{thm:F1-exists}
is a deformation of $\nabla^{\mathrm{can}}$ through $F^1$-connections, and
\[
  c^\calD_p(F_K,\nabla)\;=\;c^\calD_p(F_K,\nabla^{\mathrm{can}})
  \;=\;c^\calD_p(F_K)\;\in\;H^{2p}_\calD\bigl(\pp_K\Cube A,\zz(p)\bigr).
\]
\end{proposition}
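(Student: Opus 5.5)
The proof splits into three checks: the filtration and graded statements, the non-flatness of the curvature, and the equality of Deligne classes.

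\emph{Local nil-flatness.} Work cube by cube on $\aaa(c)$, in the canonical trivialization $\{s_\mathbf{j}\}$ of \S\ref{subsec:canconn}. There $\nabla^{\mathrm{can}}$ is the block-diagonal form $A^{\mathrm{can}}$. The first step is to reconcile the statement with Lemma~\ref{lem:can-flat}, which says $A^{\mathrm{can}}$ is flat on each affine cube. The curvature I expect to be nonzero is the one of the global connection on $F_K$. There the diagonal pieces, glued across faces through the maps of Proposition~\ref{prop:rees-glue}, are no longer simultaneously diagonal: restricting $W(G(c);a_i)$ to a face where $y_i=1$ only gives a filtration, not a grading. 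The transition isomorphisms $\Phi_{c',c}$ are therefore upper-triangular with respect to $W(F(c);\mu(c))$.

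Preservation of $W(c)$ follows directly. The connection is diagonal in a splitting adapted to the filtration, and by Proposition~\ref{prop:patching-monotone} the face restrictions respect the filtrations.

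On $\Gr^{W(c)}$ the induced connection is the scalar form $\sum_l j_l\,\dlog(1-y_l)$, which is closed, so it is flat. By Proposition~\ref{prop:can-conn-compatible} it induces $\tau(c)$, which gives agreement with the locally constant structure over $\iiii\Cube A$.

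\emph{Strictly triangular, nonzero curvature.} Strict upper-triangularity follows as in Proposition~\ref{prop:nilflat-ch-zero}. The diagonal part of the curvature is the curvature of the graded connection, which vanishes. The off-diagonal part comes from $d$ of the transition correction terms. For nonvanishing I would exhibit a two-dimensional cube with two genuinely non-commuting filtrations in the $\mu(c)$-direction and compute one off-diagonal entry. I regard this as the main obstacle, because Lemma~\ref{lem:can-flat} pulls the other way. If the computation shows the curvature vanishes in some cases, I would weaken the claim to ``not flat in general'' and state that restriction explicitly.

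\emph{Equality of Deligne classes.} Both $\nabla$ (Theorem~\ref{thm:F1-exists}) and $\nabla^{\mathrm{can}}$ are $F^1$-connections on the cubical scheme $\underline{\pp}_K\Cube A$: the second by Lemma~\ref{lem:rees-log-poles}. Their difference $A(c)$ is strictly upper-triangular and lies in $F^1A^1(\log D)$, so the linear path $\nabla^{\mathrm{can}}+tA(c)$ stays inside the $F^1$-connections; this is the claimed deformation. Theorem~\ref{thm:DHZ-app}(2), applied levelwise on each smooth projective $\pp(c)$ and then descended as in \S\ref{subsec:BG-on-cubical}, gives $c^\calD_p(F_K,\nabla)=c^\calD_p(F_K,\nabla^{\mathrm{can}})$.

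Theorem~\ref{thm:CS-DB-app}(2), or equivalently Proposition~\ref{prop:grassmann-definition} with the Grassmannian connection, identifies either class with $c^\calD_p(F_K)$. Compatibility of the levelwise equalities with the face immersions gives the equality in the descended group $H^{2p}_\calD(\pp_K\Cube A,\zz(p))$.
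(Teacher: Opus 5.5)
Your treatment of the filtration and graded-quotient claims, and of the Deligne-class identity, follows the paper. Both $\nabla$ and $\nabla^{\mathrm{can}}$ are $F^1$-connections (Theorem~\ref{thm:F1-exists}, Lemma~\ref{lem:rees-log-poles}), Theorem~\ref{thm:DHZ-app}(2) identifies their classes levelwise, and the common value is $c^\calD_p(F_K)$. Your linear path $\nabla^{\mathrm{can}}+tA(c)$ only makes the word ``deformation'' explicit.

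The gap is the non-vanishing of the curvature, and the mechanism you propose cannot supply it.

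\begin{itemize}
\item A connection on $F_K$ over the cubical scheme is a face-compatible family $(\nabla(c))_c$. Its curvature is the family of cubewise curvatures, and the gluing maps enter only through the requirement $\Phi_{c',c}\circ\nabla(c)|_{\pp(c')}=\nabla(c')\circ\Phi_{c',c}$.
\item If that requirement holds, $F(\nabla(c'))$ is the $\Phi_{c',c}$-conjugate of the restriction of $F(\nabla(c))$, so gluing adds no curvature. If it fails, $\nabla^{\mathrm{can}}$ is not a connection on $F_K$ at all, and your Deligne-class step loses its input.
\item On a single cube everything is written in the one frame $\{s_\mathbf{j}\}$, so there are no transition terms to differentiate.
\end{itemize}

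So, taking $\nabla^{\mathrm{can}}$ to be the block-diagonal $A^{\mathrm{can}}$ of \S\ref{subsec:canconn}, Lemma~\ref{lem:can-flat} gives $dA^{\mathrm{can}}+A^{\mathrm{can}}\wedge A^{\mathrm{can}}=0$ on every $\aaa(c)\setminus\bigcup_lQ_l$. No two-cube computation will produce a non-zero off-diagonal entry, and for that connection the clause is false. Your fallback (``not flat in general'') concedes this rather than proving the statement.

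The paper obtains non-vanishing from a different source. It asserts that the off-diagonal blocks of the connection form on one cube are not closed. That is, it treats $\nabla^{\mathrm{can}}$ as prescribed only on the graded pieces of $W(c)$, with a non-trivial strictly upper-triangular part. This is not the object you worked with, and even under that reading one would still have to exhibit a block whose curvature contribution is non-zero. Your suspicion that the clause clashes with Lemma~\ref{lem:can-flat} is well founded, but as a proof of the stated proposition the proposal is incomplete at exactly this point.

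A smaller issue, which you share with the paper: $\dlog(1-y_l)$ has a pole along $Q_l=\{y_l=1\}$, and $Q_l$ meets $\iiii(c)$ in the face $y_l=1$. So agreement with $\tau(c)$ ``over $\iiii\Cube A$'' is not settled by citing Proposition~\ref{prop:can-conn-compatible}.
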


\begin{proof}
The connection form is block upper-triangular for $W(c)$, with diagonal blocks
$\sum_jn_j\dlog(1-y_j)\cdot\mathrm{id}$, which are flat; so
$\nabla^{\mathrm{can}}$ preserves $W(c)$ with flat graded quotients, i.e.\ is
locally nil-flat, and its curvature is strictly upper-triangular.  It is
non-zero because the off-diagonal blocks of the form are not closed---this is
the failure of flatness, and it is the substance of the patching construction
rather than a defect: a globally flat connection would make $\CS_p$ an
invariant of a flat bundle and nothing would have been gained over the compact
case.  Over $\iiii(c)$ all $y_j\in[0,1]$, so $\dlog(1-y_j)$ is smooth there
and the graded structure is the locally constant one given by $\tau(c)$.

For the last statement, $\nabla$ and $\nabla^{\mathrm{can}}$ are both
$F^1$-connections on $F_K$ relative to $(\pp_K\Cube A, D)$---the first by
Theorem~\ref{thm:F1-exists}, the second by
Lemma~\ref{lem:rees-log-poles}---so Theorem~\ref{thm:DHZ-app}(2) applies and
they define the same Deligne class.  Concretely, the Bott-Chern secondary form
of the pair lies in $F^1A^{2p-1}(\log D)$ and is therefore a coboundary in the
Deligne complex.  The common value is $c^\calD_p(F_K)$, the Deligne-Beilinson
Chern class of the underlying bundle, by
Theorem~\ref{thm:DHZ-app}(1) together with
Theorem~\ref{thm:BG-full}(2).
\end{proof}

\subsubsection{From the Deligne class to the volume regulator}

\begin{theorem}\label{thm:rees-cD-equals-vol}
Let $\nabla^{\Del}$ be Deligne's patched connection on $F_K$ and let
$\sigma:K\hookrightarrow\cc$ be an embedding.  Then, under the identification
of Proposition~\ref{prop:DB-A-realization} and the homotopy equivalence
$X\simeq S=\iiii\Cube A$,
\[
  c^\calD_p(F_K^\sigma)\;=\;\CS_p(\nabla^{\Del})\;\in\;H^{2p-1}(X,\cc/\zz),
\]
and consequently, by Lemma~\ref{lem:CZ-splitting},
\[
  \Im_{\cc/\zz}\bigl(c^\calD_p(F_K^\sigma)\bigr)\;=\;\vol_p(\nabla^{\Del})
  \;\in\;H^{2p-1}(X,\rr).
\]
\end{theorem}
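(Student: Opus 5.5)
The identity will be obtained by chaining three comparisons already available in the paper, each carried out in Deligne cohomology and transported to $H^{2p-1}(X,\cc/\zz)$ only at the end, where the map $\alpha$ is known to be an isomorphism.

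\emph{Step 1 (Deligne class via the Rees connection).} By Proposition~\ref{prop:rees-can-nilflat}, the canonical Rees connection $\nabla^{\mathrm{can}}$ and the $F^1$-connection $\nabla$ of Theorem~\ref{thm:F1-exists} both compute $c^\calD_p(F_K^\sigma)$. Both are locally nil-flat, so $\ch_p=0$ as a form by Proposition~\ref{prop:nilflat-ch-zero}. Theorem~\ref{thm:CS-DB-app}(3) then gives $\alpha(\CS_p(F_K^\sigma,\nabla))=c^\calD_p(F_K^\sigma)$. This is applied levelwise on the smooth projective cubes $\pp(c)$ and descended, as explained in \S\ref{subsec:BG-on-cubical}. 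Restricting to $\aaa_K\Cube A$ and invoking Proposition~\ref{prop:DB-A-realization}, $\alpha$ becomes an isomorphism (Remark~\ref{rem:alpha-kernel}). Hence $c^\calD_p(F_K^\sigma)$, read in $H^{2p-1}(S,\cc/\zz)$, equals $\CS_p(\nabla|_{\iiii\Cube A})$.

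\emph{Step 2 (comparison with $\nabla^{\Del}$).} Proposition~\ref{prop:Del-Rees-compare} identifies $\pi^*\CS_p(\nabla^{\Del})$ with the Chern–Simons class of the Rees $F^1$-connection on $S$. Its proof goes through the uniqueness Theorem~\ref{thm:uniqueness}: $\pi^*F$ and $F^\sigma_K|_{\iiii\Cube A}$ have the same patching functor $\Cube A\to\Xi_K$, namely $U\circ\mu$, so the two bundles are isomorphic compatibly with patching data. Under this isomorphism both connections are compatible with the same patching collection. The independence statement Theorem~\ref{thm:DHZ-app}(2), together with injectivity of $\alpha$ on the $\aaa$-realization, then gives equality of the classes. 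Composing with the homotopy equivalence $X\simeq S$ of Propositions~\ref{prop:nerve-htpy} and \ref{prop:cuberealize} yields the first displayed equality.

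\emph{Step 3 (volume regulator).} Apply $\Im_{\cc/\zz}$ from Lemma~\ref{lem:CZ-splitting}. By definition, $\vol_p(\nabla^{\Del})=\Im_{\cc/\zz}\CS_p(\nabla^{\Del})$. Agreement with Definition~\ref{def:vol-p-normalized} follows because both use the normalization \eqref{eq:CS-form-normalised} (Proposition~\ref{prop:vol-indep-app}). The splitting is natural in the space, so it commutes with pullback along $X\simeq S$.

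The main obstacle is Step 2. The class of $\nabla^{\Del}$ lives on $X$ as a $\calC^\infty$ connection with no projective/$F^1$ structure, while the Rees connection lives on the cubical scheme. The bridge is only the topological isomorphism of bundles given by Theorem~\ref{thm:uniqueness}, and one must check that transporting $\nabla^{\Del}$ through it yields a connection to which the $F^1$-independence argument still applies. I would first try to avoid $F^1$ here altogether. The idea is to join the two connections on $S$ by the straight-line path inside the convex set of connections compatible with the fixed patching collection. Along this path every $\nabla_t$ is locally nil-flat with respect to the same filtrations, and $\dot\nabla_t$ is strictly upper-triangular. The transgression integrand $\Tr(\dot\nabla_t\wedge F(\nabla_t)^{p-1})$ therefore vanishes identically, exactly as in Corollary~\ref{cor:vol-upper-triangular}. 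This gives equality of the $\cc/\zz$ classes directly. The one point needing care is the trivializations $\tau$: compatibility must fix the graded connection, not only the filtration, so that $\dot\nabla_t$ is strictly rather than merely upper-triangular.
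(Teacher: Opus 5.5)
Your proposal is correct at the paper's own level of rigor, but the key step takes a different route from the paper's proof.

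\textbf{How the paper argues.} It puts $\nabla^{\Del}$ itself into the $F^1$ machinery. In step (ii) it lists $\nabla^{\Del}$, together with the Rees connection $\nabla$, $\nabla^{\mathrm{can}}$ and the Grassmannian connection, as an $F^1$-connection on $F_K$ relative to $(\pp_K\Cube A,D)$. Theorem~\ref{thm:DHZ-app}(2) then gives $[\nabla^{\Del}]_p=c^\calD_p(F_K^\sigma)$. Theorem~\ref{thm:CS-DB-app}(3), together with injectivity of $\alpha$ on the $\aaa$-realization (Proposition~\ref{prop:DB-A-realization}), then yields $\CS_p(\nabla^{\Del})=c^\calD_p(F_K^\sigma)$ in one step.

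\textbf{How you argue.} You run the $F^1$/Burgos--Gil/$\alpha$ argument only for the Rees connection, which genuinely lives on the projective cubes with logarithmic poles. You then reach $\nabla^{\Del}$ on the interval realization by a transgression along the straight segment of patching-compatible connections. Along that segment $\Tr(\dot\nabla_t\wedge F(\nabla_t)^{p-1})\equiv0$, exactly as in Corollary~\ref{cor:vol-upper-triangular}.

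\textbf{What your route buys.} It does not need the paper's unargued claim that $\nabla^{\Del}$ is an $F^1$-connection on $\underline{\pp}_K\Cube A$. That claim is doubtful: $\nabla^{\Del}$ is a $\calC^\infty$ connection defined only on $X\simeq S$, with no holomorphic dependence on the Rees parameters. Your route also turns the comparison of the two connections into an identity of transgression forms, hence of differential characters, with no appeal to $\alpha$.

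Your first option, via Proposition~\ref{prop:Del-Rees-compare} and Theorem~\ref{thm:DHZ-app}(2), is essentially the paper's route and inherits the same weakness. The transgression version is the one to keep.

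\textbf{What it costs.} You must make explicit the patching-compatible identification $\pi^*F\cong F_K^\sigma|_S$ of Theorem~\ref{thm:uniqueness}. The paper uses it only tacitly when it speaks of $\nabla^{\Del}$ ``on $F_K$''. You rightly flag the point to check: the identification must match the graded trivializations $\tau$, not just the filtrations. Only existence of one such isomorphism is needed. The classes are invariant under isomorphism of bundles with connection, so the uniqueness claimed in the paper's proof of Theorem~\ref{thm:uniqueness}, which is overstated (such isomorphisms are unique only up to automorphisms unipotent with respect to $W$), plays no role.
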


\begin{proof}
The three ingredients enter in the following order.

\emph{(i) Projectivity and existence (DHZ(1)).}  Read as a cubical scheme,
$\underline{\pp}_K\Cube A$ has smooth projective terms
$\pp(c)\cong(\pp^1)^{|c|}$ and $F_K$ is an algebraic bundle on it, so by
Theorem~\ref{thm:DHZ-app}(1) a simplicial $F^1$-connection exists.  Concretely
one may take $\phi_q^*\nabla_{\mathcal{S}}$, the pullback of the canonical
Hermitian connection on the tautological bundle of the Grassmannian along the
classifying map $\phi_q$ of Proposition~\ref{prop:grassmann-definition}, which
exists by the face-compatible presentation of
Lemma~\ref{lem:compatible-surjection}.  This is the step that fails on
$\aaa_K\Cube A$ alone, since a surjection from a twist of $\Oo$ is what
requires projectivity; and it is also the step for which the singularity of
the glued scheme $\pp_K\Cube A$ has to be circumvented, either by descent or
by $\phi_q$ (\S\ref{subsec:BG-on-cubical}).

\emph{(ii) Independence (DHZ(2)).}  By
Proposition~\ref{prop:rees-can-nilflat} and
Theorem~\ref{thm:DHZ-app}(2), the connections $\nabla$,
$\nabla^{\mathrm{can}}$, $\nabla^{\Del}$ and the Grassmannian connection of
(i) are all $F^1$-connections on $F_K$ relative to $(\pp_K\Cube A,D)$, hence
all define the same class $c^\calD_p(F_K)$, the hypotheses of
Theorem~\ref{thm:DHZ-app}(2) being verified on each smooth projective
$\pp(c)$ separately and the conclusion descending by naturality
(\S\ref{subsec:BG-on-cubical}).  In particular the Deligne class is computed
by whichever of them is convenient, and it is an invariant of the bundle.

\emph{(iii) Comparison (Burgos-Gil).}  Here $\widehat{\ch}_p$ is formed
levelwise, on each $\pp(c)$, for a levelwise metric $h=(h_c)$; equivalently it
is $\phi_q^*$ of the arithmetic Chern character of the tautological bundle on
the Grassmannian with its canonical metric.  By Theorem~\ref{thm:BG-full}(2)
the image of the arithmetic Chern character $\widehat{\ch}_p(F_K^\sigma,h)$
in
$H^{2p}_\calD$ is $c^\calD_p$ and is independent of the metric $h$.  Since
$\nabla^{\Del}$ is locally nil-flat, $\ch_p(\nabla^{\Del})=0$ as a form by
Proposition~\ref{prop:nilflat-ch-zero}, so by
Theorem~\ref{thm:CS-DB-app}(3) the character
$\widehat{\ch}_p(F_K^\sigma,\nabla^{\Del})$ is flat, defines
$\CS_p(\nabla^{\Del})\in H^{2p-1}(-,\cc/\zz)$, and satisfies
$\alpha(\CS_p(\nabla^{\Del}))=c^\calD_p(F_K^\sigma)$.  It remains to know that
$\alpha$ is injective here, and this is exactly what the $\aaa$-realization
supplies: by Proposition~\ref{prop:DB-A-realization} the map $\alpha$ is an
isomorphism, its kernel $F^pH^{2p-1}$ vanishing because the affine directions
carry no Deligne cohomology beyond degree one
(Remark~\ref{rem:alpha-kernel}).  Hence
$c^\calD_p(F_K^\sigma)=\CS_p(\nabla^{\Del})$.

Restricting to $\iiii\Cube A\simeq X$ and applying
Proposition~\ref{prop:DB-A-realization} places the identity in
$H^{2p-1}(X,\cc/\zz)$.  The final assertion is
Lemma~\ref{lem:CZ-splitting}: the imaginary component of a class in
$H^{2p-1}(X,\cc/\zz)$ is a well-defined element of $H^{2p-1}(X,\rr)$, and for
$\CS_p(\nabla^{\Del})$ it is by definition $\vol_p(\nabla^{\Del})$.
\end{proof}

\begin{remark}\label{rem:division-of-labour}
The two halves of the appendix use quite different mechanisms.
The classifying-map comparison is topological and
$K$-theoretic: it identifies $c^\calD_p$ of a \emph{bundle} with the pullback
of the regulator, using homotopy invariance of $K$-theory and acyclicity of the
plus construction, and it takes place on the affine realization where no
connection exists.  The present subsection is Hodge-theoretic: it evaluates
that same Deligne class, using the projective completion, the existence and
independence statements of Dupont-Hain-Zucker for $F^1$-connections with
logarithmic poles, and the Burgos-Gil comparison, and it produces the volume
regulator as the imaginary part.

Neither argument can be run in the other's setting.  One cannot evaluate
$c^\calD_p$ on the tautological bisimplicial object, because there is no
projective completion of $B_\hdot GL(r,F[\hdot])$ carrying a logarithmic
connection to which Dupont-Hain-Zucker and Burgos-Gil could be applied; and one
cannot compare with the regulator on $\pp_K\Cube A$, because the regulator is
defined through a classifying map to $BGL(K)^+$ and the comparison needs the
universal object.  It is the fact that both compute the same Deligne class of
the same bundle that joins them, and that is why $c^\calD_p$, rather than
$\CS_p$, has to be the common currency.
\end{remark}

\subsection{Synopsis in diagrams}
\label{subsec:rees-diagrams}

We close this appendix with three diagrams recording the objects, the bundles
they carry, and the groups in which the various classes live.
Figure~\ref{fig:spaces} collects the spaces; Figure~\ref{fig:regcohom} the
cohomology groups and the two places at which the Deligne class coincides with
the Chern-Simons class; and Figure~\ref{fig:torsion} the torsion argument of
\S\ref{sec:torsion}.

A word on degrees, since two realizations of the same simplicial object occur
and only one of them carries a bundle.  For a simplicial set $Y_\hdot$ the
bundle lives on the \emph{algebraic} realization $\Re_\aaa Y_\hdot$ and its
$p$-th Deligne-Beilinson Chern character lives there in degree $2p$,
\[
  \ch^{\calD}_p\;\in\;H^{2p}_\calD\bigl(\Re_\aaa Y_\hdot,\zz(p)\bigr)
  \;\cong\;H^{2p-1}\bigl(|Y_\hdot|,\cc/\zz\bigr),
\]
the right-hand group being the $\cc/\zz$-cohomology of the \emph{topological}
realization (Proposition~\ref{prop:DB-A-realization}).  The three instances
used are
\[
\begin{array}{lll}
Y_\hdot=B_\hdot GL(r,F[\hdot]): &
  H^{2p}_\calD\bigl(\Re_\aaa Y_\hdot,\zz(p)\bigr) &\cong\;
  H^{2p-1}(\Omega_\infty,\cc/\zz),\\[3pt]
Y_\hdot=X'_{\hdot,\hdot}: &
  H^{2p}_\calD\bigl(\Re_\aaa X'_{\hdot,\hdot},\zz(p)\bigr) &\cong\;
  H^{2p-1}(|X'_{\hdot,\hdot}|,\cc/\zz),\\[3pt]
Y_\hdot=\Cube A: &
  H^{2p}_\calD\bigl(\aaa_K\Cube A,\zz(p)\bigr) &\cong\;
  H^{2p-1}(X,\cc/\zz),
\end{array}
\]
each isomorphism having the same source, namely Lemma~\ref{lem:DB-affine}:
the Deligne cohomology of an affine cell is $\cc/\zz(p)$ in degree
\emph{one} and zero otherwise, so the degree $2p$ is produced by the
combinatorics of the covering and not by any single cell.  In particular
$\ch^{\calD}_p(\mathbf{V}_{\mathrm{taut},2})$ is a class on
$\Re_\aaa B_\hdot GL(r,F[\hdot])$ and not on $\Omega_\infty$, which carries no
bundle.  In the figures the $\cc/\zz$ groups are the ones drawn.

\begin{figure}[ht]
\centering
\begin{tikzpicture}[scale=0.9,
  every node/.style={font=\small},
  sp/.style={draw,rounded corners=2pt,inner sep=4pt,fill=gray!8,align=center},
  ar/.style={->,>=stealth,thick},
  lb/.style={font=\footnotesize}]

\node[sp] (Bd) at (0,2.0)
  {$BGL(F)^{\delta}$\\[1pt]{\footnotesize$\mathbf{V}^{\,\delta}_{\mathrm{taut}}$, flat}};
\node[sp] (Om) at (5.5,2.0)
  {$\Omega_{\infty}=\bigl|\,m\mapsto BGL(F[\Delta^m])^{\delta}\bigr|$\\[1pt]
   {\footnotesize$\mathbf{V}_{\mathrm{taut},2}$ on $\Re_\aaa$}};
\node[sp] (Bp) at (11.3,2.0)
  {$BGL^{+}(F)$\\[1pt]{\footnotesize no bundle}};

\draw[ar] (Bd) -- node[lb,above] {$\iota_{\infty}$}
                 node[lb,below] {$H_{*}$-iso} (Om);
\draw[ar] (Om) -- node[lb,above] {$\kappa_{\infty}$} (Bp);
\draw[ar] (Bd) to[out=22,in=158] node[lb,above] {acyclic} (Bp);

\node[sp] (Xp) at (0,0.1) {$|X'_{\hdot,\hdot}|$\\[1pt]
   {\footnotesize$\phi^{*}\mathbf{V}_{\mathrm{taut},2}\;\cong\;\pi^{*}F_K$}};
\node[sp] (Or) at (5.5,0.1) {$\Omega_r$};
\draw[ar] (Xp) -- node[lb,above] {$\phi$} (Or);
\draw[ar] (Or) -- node[lb,right] {stab.\ in $r$} (Om);

\node[sp] (X)   at (0,-1.8)
  {$X\;\simeq\;|\Uu_{\hdot}|\;\simeq\;\iiii\Cube A$};
\node[sp] (Aff) at (6.6,-1.8)
  {$\aaa_K\Cube A\;\subset\;\pp_K\Cube A$\\[1pt]
   {\footnotesize$F_K$, the Rees bundle}};
\draw[ar] (Xp) -- node[lb,left] {$|\pi|$} node[lb,right] {$\simeq$} (X);
\draw[ar] (Aff) -- node[lb,above] {$\simeq$}
                   node[lb,below] {$\aaa^1$-htpy} (X);
\draw[ar] (Aff.east) to[out=0,in=-90]
  node[lb,below right,pos=0.45] {$r_L$} (Bp.south);
\end{tikzpicture}
\caption{Spaces and tautological bundles.  Top row, the universal side: the
flat tautological local system exists only on the discrete classifying space
$BGL(F)^{\delta}$; the algebraic bundle $\mathbf{V}_{\mathrm{taut},2}$ lives on
the $\aaa$-realization $\Re_\aaa B_\hdot GL(r,F[\hdot])$ of the same
bisimplicial object, not on the topological realization $\Omega_\infty$ drawn
here; and $BGL^{+}(F)$ carries no bundle at all, only the universal class.
Bottom row, the geometric side: the Rees bundle $F_K$ on the cubical
realization, and the rectification $X'_{\hdot,\hdot}$ of
Proposition~\ref{prop:classmap-comparison} through which the classifying map is
made strict.  The isomorphism at $X'_{\hdot,\hdot}$ is one of algebraic bundles
over $\Re_\aaa X'_{\hdot,\hdot}$: this is the common site over which
$\mathbf{V}_{\mathrm{taut},2}$ and $F_K$ are compared.}
\label{fig:spaces}
\end{figure}

\begin{figure}[ht]
\centering
\begin{tikzpicture}[scale=0.78,
  every node/.style={font=\small},
  gp/.style={draw,rounded corners=2pt,inner sep=3.5pt,fill=gray!8},
  ar/.style={->,>=stealth,thick},
  el/.style={font=\footnotesize,inner sep=1pt},
  nt/.style={font=\scriptsize,inner sep=1pt,text=black!70}]

\node[gp] (Hp)  at (0,2.6)    {$H^{2p-1}\bigl(BGL^{+}(F),\cc/\zz\bigr)$};
\node[gp] (HOm) at (6.6,2.6)  {$H^{2p-1}\bigl(\Omega_{\infty},\cc/\zz\bigr)$};
\node[gp] (Hd)  at (12.6,2.6) {$H^{2p-1}\bigl(BGL(F)^{\delta},\cc/\zz\bigr)$};

\node[el] at (0,3.5)    {$r_{2p-1}$};
\node[el] at (6.6,3.5)  {$\ch^{\calD}_p(\mathbf{V}_{\mathrm{taut},2})$};
\node[el] at (12.6,3.6)
  {$\ch^{\calD}_p(\mathbf{V}^{\,\delta}_{\mathrm{taut}})
    \;=\;\CS_p(\mathbf{V}^{\,\delta}_{\mathrm{taut}})$};
\node[nt] at (12.6,4.25) {\textbf{CS here}: the bundle is flat};

\draw[ar] (Hp) -- node[el,above] {$\kappa_{\infty}^{*}$} (HOm);
\draw[ar] (HOm) -- node[el,above] {$\iota_{\infty}^{*}$}
                   node[el,below] {$\cong$} (Hd);

\node[gp] (HX)  at (0,0.6)   {$H^{2p-1}\bigl(X,\cc/\zz\bigr)$};
\node[gp] (HXp) at (6.6,0.6) {$H^{2p-1}\bigl(|X'_{\hdot,\hdot}|,\cc/\zz\bigr)$};

\draw[ar] (Hp)  -- node[el,left] {$r_L^{*}$} (HX);
\draw[ar] (HOm) -- node[el,right] {$\phi^{*}$} (HXp);
\draw[ar] (HX)  -- node[el,above] {$\pi^{*}$}
                   node[el,below] {$\cong$} (HXp);

\node[el] at (6.9,-0.45)
  {$\phi^{*}\ch^{\calD}_p(\mathbf{V}_{\mathrm{taut},2})
   \;=\;\ch^{\calD}_p(\pi^{*}F_K)\;=\;\pi^{*}\ch^{\calD}_p(F_K)$};

\node[gp] (HD) at (0,-1.7) {$H^{2p}_{\calD}\bigl(\aaa_K\Cube A,\zz(p)\bigr)$};
\draw[ar] (HX) -- node[el,left] {$\delta$} node[el,right] {$\cong$} (HD);
\node[el] at (-3.4,0.6) {$\CS_p(\nabla^{\Del})$};
\node[el] at (0,-2.5) {$\ch^{\calD}_p(F_K)$};
\node[nt,align=left] at (5.9,-2.2)
  {\textbf{CS here}: $F^1$-connection $+$ local\\
   nil-flatness (Theorem~\ref{thm:CS-regulator})};
\node[nt,align=left] at (6.6,-3.5)
  {every $\ch^{\calD}_p$ above lives in $H^{2p}_{\calD}$ of the
   $\aaa$-realization,\\
   read here through
   $H^{2p}_{\calD}(\Re_\aaa Y_\hdot,\zz(p))\cong H^{2p-1}(|Y_\hdot|,\cc/\zz)$};
\end{tikzpicture}
\caption{Where the classes are compared.  The square commutes and $\pi^{*}$ is
injective, which is what carries the universal identity of
Lemma~\ref{lem:universal-comparison} down to $X$.  Over
$X'_{\hdot,\hdot}$ the two tautological bundles become isomorphic, so their
$p$-th Deligne-Beilinson Chern characters agree there.  The Deligne class
coincides with the Chern-Simons class at exactly two places, both marked: on
$BGL(F)^{\delta}$, for free, because the tautological bundle is flat there;
and on $X$, by Theorem~\ref{thm:CS-regulator}, because $\nabla^{\Del}$ is
locally nil-flat and an $F^1$-connection is available.}
\label{fig:regcohom}
\end{figure}

\begin{figure}[ht]
\centering
\begin{tikzpicture}[scale=0.85,
  every node/.style={font=\small},
  gp/.style={draw,rounded corners=2pt,inner sep=3.5pt,fill=gray!8},
  ar/.style={->,>=stealth,thick},
  el/.style={font=\footnotesize,inner sep=1pt}]

\node[gp] (HX) at (0,1.3) {$H^{2p-1}\bigl(X,\cc/\zz\bigr)$};
\node[el] at (-3.9,1.3) {$\CS_p(\nabla^{\Del})=r_L^{*}(r_{2p-1})$};

\node[gp] (Hs) at (0,-0.5)
  {$H^{2p-1}(X,\rr/\zz)\;\oplus\;i\,H^{2p-1}(X,\rr)$};
\draw[ar] (HX) -- node[el,left] {$\cong$} (Hs);

\node[gp] (HB) at (8.6,-0.5)
  {$H^{2p-1}\bigl(BGL^{+}(F),\rr\bigr)\;\cong\;\bigoplus_{\sigma}\rr$};
\draw[ar] (HB) -- node[el,above] {$r_L^{*}=0$} (Hs);
\node[el] at (8.6,-1.35) {Borel; spanned by $\vol^{\sigma}_p$};
\node[el] at (0,-1.8) {torsion \ $\oplus$ \ $0$};
\end{tikzpicture}
\caption{The torsion argument of \S\ref{sec:torsion}.  The imaginary summand is
torsion free, so a torsion class must have vanishing volume regulator; the
hermitian argument kills $\vol^{\sigma}_p$ for each embedding $\sigma$
separately, and Borel's theorem \cite{Borel} says these span, so
$r_L^{*}$ annihilates the whole of the real cohomology.}
\label{fig:torsion}
\end{figure}

\clearpage

\section{The non-abelian Hodge correspondence and deformation to a VHS}
\label{sec:deform-to-vhs-detail}

The purpose of this section is to give a self-contained and detailed account
of Theorem~\ref{thm:deform-to-vhs}: every representation
$\zeta:\pi_1(X^*,x_0)\to GL_r(\cc)$ with unipotent monodromy deforms, within
$R^{\mathrm{nil}}$, to a representation underlying a polarized complex VHS.
We explain in full the roles of semisimplification, the pluri-harmonic metric,
the $\cc^*$-family of flat connections, and the 
decomposition theorem.

\subsection{Setup and notation}

Throughout this section $X$ is a smooth connected projective complex variety,
$D = D_1+\cdots+D_k\subset X$ a simple normal crossings divisor,
$X^* = X\setminus D$, and $x_0\in X^*$ a base-point.  We fix a rank $r$ and
consider the space
\[
  R^{\mathrm{nil}} \;:=\;
  \Bigl\{\,\zeta:\pi_1(X^*,x_0)\to GL_r(\cc) \;\Big|\;
  \zeta(\gamma_i)\text{ is unipotent for every local loop }\gamma_i
  \text{ around }D_i\Bigr\}.
\]
This is an affine algebraic variety over $\cc$.  The quotient
$\mathcal{M}^{\mathrm{nil}} := R^{\mathrm{nil}}/GL_r(\cc)$ (by simultaneous
conjugation) is the corresponding moduli space of flat bundles with unipotent
monodromy.

We also fix the following standard notation for a harmonic bundle.  A
$\mathcal{C}^\infty$ complex vector bundle $E$ on $X^*$ carries:
\begin{itemize}[leftmargin=2em]
  \item a Hermitian metric $h$ (a smooth positive-definite Hermitian form on
    the fibres);
  \item a holomorphic structure $\delbar_E$ (making $E$ into a holomorphic
    bundle $\mathcal{E}$);
  \item a Higgs field $\theta\in A^{1,0}(X^*,\End(E))$, holomorphic
    ($\delbar_E\theta = 0$) and satisfying $\theta\wedge\theta = 0$;
  \item the Chern connection $\nabla_h = \nabla_h^{1,0}+\delbar_E$, the
    unique connection on $(E,h,\mathcal{E})$ that is compatible with both $h$
    and the holomorphic structure;
  \item the adjoint Higgs field $\bar\theta := \theta^{*_h}\in A^{0,1}(X^*,\End(E))$
    (the $h$-adjoint of $\theta$, of type $(0,1)$).
\end{itemize}

\subsection{Step 1: Semisimplification}
\label{subsec:semisimplify}

\begin{definition}
A representation $\zeta:\pi_1(X^*,x_0)\to GL_r(\cc)$ is \emph{semisimple} if
the corresponding local system (equivalently, the corresponding flat bundle
$(E,\nabla)$) is a direct sum of irreducible local systems.  Equivalently,
$\zeta$ is isomorphic (up to conjugation) to a direct sum of irreducible
subrepresentations.
\end{definition}

\begin{definition}
The \emph{semisimplification} of $\zeta$ is the unique semisimple
representation $\zeta^{ss}$ whose Jordan-H\"{o}lder factors (the irreducible
sub-quotients of the Jordan-H\"{o}lder filtration) are isomorphic to those of
$\zeta$, and whose underlying $\mathcal{C}^\infty$ bundle coincides with that
of $\zeta$ (as graded).
\end{definition}

More concretely: choose a Jordan-H\"{o}lder filtration
$0 = E_0\subset E_1\subset\cdots\subset E_s = E$ of the flat bundle $(E,\nabla)$
by flat sub-bundles, with successive quotients $(L_j,\nabla_j) = E_j/E_{j-1}$
irreducible.  The semisimplification is
\[
  (E^{ss},\nabla^{ss}) \;:=\; \bigoplus_{j=1}^s (L_j,\nabla_j).
\]
It is independent of the choice of filtration, up to isomorphism.

\begin{lemma}\label{lem:ss-Rnil}
If $\zeta\in R^{\mathrm{nil}}$, then $\zeta^{ss}\in R^{\mathrm{nil}}$.
\end{lemma}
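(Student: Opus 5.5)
The plan is to argue directly from a Jordan--H\"older filtration of the local system, since membership in $R^{\mathrm{nil}}$ is a condition on the conjugacy class of each $\zeta(\gamma_i)$ alone.

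First, fix a Jordan--H\"older filtration $0=E_0\subset E_1\subset\cdots\subset E_s=E$ of $\zeta$ by $\pi_1(X^*,x_0)$-stable subspaces of the fibre $V=L_{x_0}$, with irreducible quotients $L_j=E_j/E_{j-1}$. Choosing a basis of $V$ adapted to this filtration, every $\zeta(\gamma)$ becomes block upper-triangular. Its diagonal blocks are exactly the matrices of the quotient representations $L_j(\gamma)$. By definition, $\zeta^{ss}(\gamma)$ is (up to conjugation) the block-diagonal matrix with these same diagonal blocks.

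Next, fix a local loop $\gamma_i$ around $D_i$. Since $\zeta(\gamma_i)$ is unipotent, all its eigenvalues are $1$. The characteristic polynomial of a block upper-triangular matrix is the product of those of its diagonal blocks. Hence each $L_j(\gamma_i)$ has only the eigenvalue $1$, i.e.\ is unipotent. A block-diagonal matrix with unipotent blocks is unipotent, so $\zeta^{ss}(\gamma_i)$ is unipotent.

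Since $\gamma_i$ was arbitrary (any conjugate of the local loop gives a conjugate matrix, so unipotency is insensitive to the choice of local loop and base-point path), $\zeta^{ss}\in R^{\mathrm{nil}}$. The result is also independent of the chosen filtration, by the Jordan--H\"older uniqueness already recalled.

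There is no real obstacle here. The only point to be careful about is that the paper's earlier phrasing ``$\zeta^{ss}(\gamma_i)=\mathrm{Id}$'' in the proof of Theorem~\ref{thm:deform-to-vhs} is too strong. The correct statement is unipotency, not triviality, since an irreducible factor may carry nontrivial unipotent local monodromy (e.g.\ a rank-two irreducible local system with a Jordan-block meridian). So I will state and prove only unipotency, which is all that membership in $R^{\mathrm{nil}}$ requires.
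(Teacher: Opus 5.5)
Your proof is correct and is essentially the paper's argument: both take a Jordan--H\"older filtration and observe that the local monodromy of each irreducible factor, being a subquotient of the unipotent $\zeta(\gamma_i)$, is again unipotent. The paper phrases this via nilpotency of the induced endomorphism $\bar N_i$, you via the characteristic polynomial of the diagonal blocks. Your caveat is also well taken: the paper proves only unipotency here, and the stronger claim $\zeta^{ss}(\gamma_i)=\mathrm{Id}$ used in the proofs of Theorem~\ref{thm:deform-to-vhs} and Lemma~\ref{lem:ss-path} does not follow and is false in general.
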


\begin{proof}
The unipotent monodromy condition says $\zeta(\gamma_i) = \Id + N_i$ with
$N_i$ nilpotent.  Since every Jordan-H\"{o}lder factor is a quotient of a
sub-bundle of $(E,\nabla)$, its monodromy $L_j(\gamma_i)$ is a sub-quotient of
$\zeta(\gamma_i) = \Id + N_i$.  A sub-quotient of a unipotent matrix is
unipotent (the sub-quotient of $\Id+N_i$ is $\Id + \bar N_i$ where $\bar N_i$
is the induced endomorphism on $L_j$, which is still nilpotent).  Hence each
$L_j$ has unipotent monodromy around $D_i$, so
$\zeta^{ss} = \bigoplus_j \zeta_{L_j}\in R^{\mathrm{nil}}$.
\end{proof}

\begin{lemma}\label{lem:ss-path}
There is a path inside $R^{\mathrm{nil}}$ from $\zeta$ to $\zeta^{ss}$.
\end{lemma}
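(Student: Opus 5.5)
The path will be the standard ``Rees degeneration to the associated graded'' carried out inside the representation variety. Fix a Jordan--H\"older filtration $0=E_0\subset E_1\subset\cdots\subset E_s=E$ of the local system by flat sub-local systems, as in \S\ref{subsec:semisimplify}. Choose a basis of the fibre at $x_0$ adapted to this filtration. Then every matrix $\zeta(\gamma)$ is block upper-triangular, with diagonal blocks $\zeta_{L_j}(\gamma)$. Let $H=\mathrm{diag}(\lambda^{1}\Id_{r_1},\dots,\lambda^{s}\Id_{r_s})$ for $\lambda\in\cc^*$, where $r_j=\mathrm{rk}\,L_j$. Set
\[
  \zeta_\lambda(\gamma)\;:=\;H_\lambda^{-1}\,\zeta(\gamma)\,H_\lambda .
\]
The $(j,l)$ block of $\zeta_\lambda(\gamma)$ is then $\lambda^{l-j}$ times the corresponding block of $\zeta(\gamma)$, and it vanishes for $l<j$.

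The key steps are as follows.

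First, for $\lambda\in\cc^*$ each $\zeta_\lambda$ is a conjugate of $\zeta$. It is therefore a homomorphism, and $\zeta_\lambda(\gamma_i)$ is unipotent, being conjugate to $\zeta(\gamma_i)$. Hence $\zeta_\lambda\in R^{\mathrm{nil}}$.

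Second, the entries of $\zeta_\lambda(\gamma)$ are polynomials in $\lambda$, since only non-negative powers $\lambda^{l-j}$ with $l\ge j$ occur. So the family extends to $\lambda=0$, where $\zeta_0(\gamma)$ is block diagonal with blocks $\zeta_{L_j}(\gamma)$; that is, $\zeta_0=\bigoplus_j\zeta_{L_j}=\zeta^{ss}$.

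Third, $R^{\mathrm{nil}}$ is Zariski closed in $\mathrm{Hom}(\pi_1(X^*,x_0),GL_r(\cc))$. The homomorphism relations and the conditions $(\zeta(\gamma_i)-\Id)^r=0$ are polynomial identities, and they hold for all $\lambda\ne0$, hence they also hold at the limit $\lambda=0$. This can be checked on a finite generating set of the finitely presented group $\pi_1(X^*,x_0)$. Note also that $\det\zeta_0(\gamma)=\prod_j\det\zeta_{L_j}(\gamma)\neq0$, so the limit is still a genuine representation; this is consistent with Lemma~\ref{lem:ss-Rnil}.

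Restricting $\lambda$ to the real segment $[0,1]$ then gives a continuous, indeed polynomial, path $\lambda\mapsto\zeta_\lambda$ in $R^{\mathrm{nil}}$ from $\zeta_1=\zeta$ to $\zeta_0=\zeta^{ss}$.

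The only point needing care is that the basis must be fixed at the base point $x_0$, so that we are conjugating honest representations rather than flat bundles. Within that, the limit $\lambda\to0$ is the main obstacle: one has to verify that it stays inside the closed subvariety $R^{\mathrm{nil}}$ and does not leave $GL_r$. Both follow from the block upper-triangular form, as explained in the second and third steps. The path has no reason to lie in a single $GL_r$-orbit, and it does not need to.
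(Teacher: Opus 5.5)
Your proof is correct, and it takes a different route from the paper's. The paper writes $\zeta=\zeta^{ss}+\zeta^u$ in a Jordan--H\"older frame and uses the uniform interpolation $\zeta^{ss}+(1-t)\zeta^u$. You instead conjugate by the one-parameter subgroup $H_\lambda$, which scales the $(j,l)$ block by $\lambda^{l-j}$.

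The difference matters. From $\zeta^u(gh)=\zeta^{ss}(g)\zeta^u(h)+\zeta^u(g)\zeta^{ss}(h)+\zeta^u(g)\zeta^u(h)$ one computes that the uniform interpolation fails to be multiplicative by $-t(1-t)\,\zeta^u(g)\zeta^u(h)$. This defect vanishes identically only when the Jordan--H\"older filtration has at most two steps, since then $\zeta^u(g)\zeta^u(h)=0$. For $s\geq 3$ the paper's intermediate points need not be homomorphisms, so they need not lie in $R^{\mathrm{nil}}$.

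Your weighting by $\lambda^{l-j}$ is exactly what makes each $\zeta_\lambda$ with $\lambda\neq 0$ a conjugate of $\zeta$, so membership in $R^{\mathrm{nil}}$ is automatic. With $\lambda=1-t$ your path recovers the paper's when $s\leq 2$, so your route both agrees with the paper in that case and repairs it in general.

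Your argument also avoids the paper's assertion that $\zeta^{ss}(\gamma_i)=\Id$. That assertion is not justified, because an irreducible factor may have nontrivial unipotent local monodromy. Unipotency survives in either approach, since the diagonal blocks are unipotent.

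One simplification of your third step: $\zeta_0$ is the composite of $\zeta$ with the block-diagonal projection from the parabolic subgroup onto its Levi factor, which is a group homomorphism. So $\zeta_0$ is directly a representation, and no closure argument or finite presentation is needed.
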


\begin{proof}
The upper-triangular form of $\zeta$ (in a frame respecting the Jordan-H\"{o}lder
filtration) writes $\zeta = \zeta^{ss} + \zeta^{u}$ where $\zeta^u$ is strictly
upper-triangular.  The path
\[
  \zeta(t) \;:=\; \zeta^{ss} + (1-t)\,\zeta^u, \qquad t\in[0,1],
\]
lies in $R^{\mathrm{nil}}$: for every $t$, $\zeta(t)(\gamma_i)$ is upper-triangular
with $\Id$ on the diagonal (since $\zeta^{ss}(\gamma_i) = \Id$ and
$\zeta^u(\gamma_i)$ is strictly upper-triangular), hence unipotent.  The path
is algebraic (polynomial in $t$), so it is a morphism of algebraic varieties
$\aaa^1\to R^{\mathrm{nil}}$.
\end{proof}

\subsection{Step 2: The pluri-harmonic metric and the harmonic bundle}
\label{subsec:pluriharmonic}

The key analytic tool is the following existence theorem.

 In the compact case the existence statement is due to
Corlette and, in the algebraic formulation, to the second author; in the
quasi-projective tame case it is due to Jost-Zuo \cite{JostZuo} and, in the
generality used here, to Mochizuki.  

\begin{theorem}[Corlette {\cite{Corlette}}, {the second author}
{\cite{Simpson-IHES}}, {Jost-Zuo {\cite{JostZuo}}},
Mochizuki {\cite{Mochizuki,Mochizuki2}}]
\label{thm:harmonic-exists}
Let $(V,\nabla_{\mathrm{flat}})$ be a semisimple flat bundle on $X^*$ with
unipotent monodromy around each divisor component $D_i$.  Then there exists a
Hermitian metric $h$ on $V$ that is \emph{pluri-harmonic}: the associated
curvature equation
\begin{equation}\label{eq:self-dual}
  F(\nabla_h) + [\theta,\bar\theta] = 0
\end{equation}
holds, where $\nabla_h$ is the Chern connection of $(V,h)$,
$\theta = \nabla_{\mathrm{flat}}^{1,0} - \nabla_h^{1,0}$
is the Higgs field, and $\bar\theta = \theta^{*_h}$ is its $h$-adjoint.
Furthermore, the metric $h$ is unique up to the action of the automorphisms
of $(V,\nabla_{\mathrm{flat}})$, and the resulting triple $(V,\theta,h)$ is a
\emph{tame harmonic bundle} in the sense of Mochizuki: $h$ has at most
polynomial growth (moderate growth) near each $D_i$, with no exponential
singularities.
\end{theorem}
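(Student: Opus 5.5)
The plan is to follow the classical Corlette--Donaldson strategy (equivariant harmonic maps into the symmetric space $GL_r(\cc)/U(r)$), adapted to the non-compact base by working with a complete Kähler metric of Poincaré type on $X^*$ and an initial metric of controlled growth near $D$. First, since $(V,\nabla_{\mathrm{flat}})$ is semisimple, it is a direct sum of irreducible flat bundles, and a direct sum of pluri-harmonic metrics is pluri-harmonic. So I would reduce to the irreducible case, where the uniqueness statement becomes uniqueness up to a positive scalar. Each irreducible summand still has unipotent local monodromy, by the same sub-quotient argument as Lemma~\ref{lem:ss-Rnil}.

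Second, I would construct an initial Hermitian metric $h_0$ on $V$ with finite energy. Near a point of $D_I$, write the flat bundle in the local model $\nabla=d+\sum_{i\in I}N_i\,dz_i/z_i$ of \S\ref{subsec:del-patch-construct}. Take $h_0$ to be the model metric that is, in an $\mathfrak{sl}_2$-adapted splitting of the weight filtration $W(N_I)$, given by $|\log|z_i||^{k}$ on the weight-$k$ piece (the Cattani--Kaplan--Schmid norm estimates). Then glue these local metrics with a partition of unity subordinate to the adapted covering of Proposition~\ref{prop:covering}. The compatibility of the weight filtrations $W(N_{I_j})$ along chains (Theorem~\ref{thm:seq-compat}, Lemma~\ref{lem:W-commute}) should ensure that the glued metric still has polynomial growth, and that $|\nabla h_0|$ is square-integrable for the Poincaré metric. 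Unipotence is what makes the energy finite: the model contributes only logarithmic growth, with no $|z|^{\alpha}$ terms.

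Third, I would run Donaldson's heat flow, or directly minimize energy, in the class of metrics at bounded distance from $h_0$. The key estimate is that a minimizing sequence does not escape to the boundary at infinity of the symmetric space. Escape would produce a flat sub-bundle, i.e.\ a $\pi_1$-invariant parabolic subgroup, which irreducibility excludes. Here the difficulty compared to the compact case is that the flow must be controlled near $D$. I would use Simpson's maximum principle for the Donaldson functional on the complete manifold, with an exhaustion function and the boundedness of the mean curvature of $h_0$. This gives a harmonic metric $h$ quasi-isometric to $h_0$, hence of moderate growth, and tameness follows.

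Fourth, I would upgrade harmonic to pluri-harmonic by the Siu--Sampson Bochner formula. On a Kähler manifold the formula shows that $\theta$ is holomorphic and satisfies $\theta\wedge\theta=0$, and that \eqref{eq:self-dual} holds. The integration by parts is justified by the finite energy and an $L^2$ cutoff argument on the complete Poincaré metric. Uniqueness follows from geodesic convexity of the energy along paths $h_t=h\,e^{ts}$: two harmonic metrics differ by a flat endomorphism, which is scalar in the irreducible case.

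I expect the main obstacle to be the third step: the a priori $C^0$ bound for the flow near the normal crossings locus. This is exactly where the genuine content of Jost--Zuo and Mochizuki lies, and in practice I would invoke \cite{Mochizuki} for it rather than reprove it.
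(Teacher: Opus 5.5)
Apart from one point, your outline is fine and is in substance the paper's proof. The paper's proof consists only of a layered citation: Corlette and Simpson for compact $X$, Simpson's heat flow on curves, and Jost--Zuo and Mochizuki for normal crossings in higher dimension. Your sketch reconstructs the Jost--Zuo finite-energy harmonic-map argument, and it imports the decisive step (the $C^0$/non-escape estimate near $D$) from the same source. What your version adds is an account of where each hypothesis is used. Unipotence gives a finite-energy starting point for the Poincar\'e metric, semisimplicity rules out escape to the ideal boundary, and the K\"ahler condition enters through Siu--Sampson.

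\emph{The uniqueness step fails as written.} Geodesic convexity only shows that two harmonic metrics of \emph{finite energy} differ by a flat endomorphism. Without such a normalization the claim is false. Take $X=\pp^1$, $D=\{0\}+\{\infty\}$ and the trivial rank-one local system. For every $a\in\rr$ the metric $h_a=|z|^{2a}$ is pluri-harmonic, with $\theta=-\tfrac a2\,dz/z$ and $F(\nabla_{h_a})=0=[\theta,\bar\theta]$. It is tame and of polynomial growth. Yet the automorphisms are the constants $c\in\cc^*$, acting by $h\mapsto|c|^2h$, so they do not relate $h_a$ to $h_{a'}$ for $a\neq a'$. For $a\neq0$ these metrics have infinite energy: the energy density is $\sim a^2|z|^{-2}$ on a punctured disc, and in real dimension two the energy is conformally invariant. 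They also have real non-zero Higgs residues.

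So you must state uniqueness within the finite-energy class. That is the class your construction produces, and in the unipotent case it corresponds to Mochizuki's pure-imaginary normalization (nilpotent residues of $\theta$). The integration by parts in your convexity argument must then be justified in that class on the complete Poincar\'e-type metric. The same qualification is missing from the theorem as stated, so the paper's citation does not cover unqualified uniqueness either.

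\emph{A smaller point in your second step.} For $|I|\geq2$, a single power of $-\log|z_i|$ on the pieces of $W(N_I)$ is not the Cattani--Kaplan--Schmid model. Such local models need not be mutually bounded on overlaps $U_I\cap U_J$. For example, if $N_1=0\neq N_2$, then near $D_1\setminus D_2$ the $U_{(1)}$-model is bounded and non-degenerate, while a $W(N_1+N_2)$-model built with $-\log|z_1|$ is not. Use the sector-wise CKS expression along $W(N_{i_1}),W(N_{i_1}+N_{i_2}),\dots$ before gluing.
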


\begin{proof}
The theorem has three layers of increasing generality.

\textit{Step 1: Compact K\"{a}hler manifolds (no boundary).}
Corlette \cite{Corlette} proved that every semisimple flat bundle on a compact
K\"{a}hler manifold admits a pluri-harmonic metric, by solving the harmonic
map equation (which in this context is the Hermitian--Einstein equation on
the associated principal bundle).  Independently, in the algebraic setting,
Hitchin \cite{Hitchin} (for rank $2$ on curves) and the second author
\cite{Simpson-IHES} (in full generality for any smooth projective variety
$X$) established the correspondence.

\textit{Step 2: Noncompact curves.}
The second author in
\cite{Simpson-harmonic} extended the correspondence to the case where
$X^* = X\setminus D$ with $X$ a smooth projective curve and $D$ a finite set
of points.  That paper proves existence and uniqueness of the
pluri-harmonic metric
in the tame case (regular singular connections, i.e.\ unipotent or
quasi-unipotent monodromy) by a heat-flow argument, establishing precise
growth estimates for $h$ near the punctures.

\textit{Step 3: Quasi-projective varieties in higher dimensions (our setting).}
For $X^*$ a smooth quasi-projective variety of dimension $\geq 2$ with
$X = \bar X$ projective and $D = \bar X\setminus X^*$ a normal crossings
divisor, the existence and uniqueness of the pluri-harmonic metric for tame
flat bundles is due to \textbf{Mochizuki} \cite{Mochizuki,Mochizuki2}.
For the unipotent case considered in this paper, the existence of a
tame pluriharmonic metric had been obtained earlier by Jost-Zuo
\cite{JostZuo}.  Mochizuki's
two-volume Memoirs establish:
\begin{enumerate}[leftmargin=2em,label=\rm(\roman*)]
  \item \emph{Existence:} every semisimple tame flat bundle (i.e.\ with
    quasi-unipotent, in particular unipotent, monodromy around each
    component of $D$) admits a pluri-harmonic metric.
  \item \emph{Uniqueness:} the metric is unique up to the structure group
    of automorphisms.
  \item \emph{Asymptotic behaviour:} the metric $h$ has at most polynomial
    growth in a local coordinate $z_i$ near $D_i$, of the form
    $h\sim c_i\cdot|z_i|^{2\alpha_i}$ where $\alpha_i\in\rr$ (and
    $\alpha_i = 0$ in the unipotent case, giving moderate growth).
  \item \emph{Extension:} the harmonic bundle $(V,\theta,h)$ extends across
    $D$ as a \emph{tame harmonic bundle} in Mochizuki's sense, i.e.\ the
    Higgs field $\theta$ has at most a simple pole along each $D_i$.
\end{enumerate}

In our paper, $X^*$ is quasi-projective of arbitrary dimension $d\geq 1$ and
$D$ is a normal crossings divisor.  The flat bundle has unipotent (a special
case of quasi-unipotent) monodromy.  The existence of $h$ in this setting is
therefore a consequence of Mochizuki's theorem.
\end{proof}

\begin{definition}
A triple $(E, \theta, h)$ satisfying \eqref{eq:self-dual} is called a
\emph{harmonic bundle}.  The Higgs field $\theta$ and its adjoint $\bar\theta$
are determined by the flat connection $\nabla$ and the metric $h$:
\begin{equation}\label{eq:Higgs-decomp}
  \nabla = \nabla_h + \theta + \bar\theta,
\end{equation}
where $\nabla_h = \nabla_h^{1,0}+\delbar_E$ is of type
$(1,0)+(0,1)$ and $\theta$ is of type $(1,0)$, $\bar\theta$ of type $(0,1)$.
\end{definition}

\begin{remark}
The decomposition \eqref{eq:Higgs-decomp} is canonical once $h$ is fixed.
The flatness $\nabla^2 = 0$ combined with the type decomposition gives exactly
\eqref{eq:self-dual}, together with the holomorphicity condition
$\delbar_E\theta = 0$ and $\theta\wedge\theta=0$.  Thus $(E,\theta,h)$ is
a harmonic bundle if and only if $\nabla = \nabla_h+\theta+\bar\theta$
is a flat connection.
\end{remark}

\subsection{Step 3: The \texorpdfstring{$\cc^*$}{C*}-family of flat connections}
\label{subsec:Cstar-family}

Given a harmonic bundle $(E,\theta,h)$, one constructs a family of flat
connections parametrised by $\lambda\in\cc^*$.

\begin{definition}
For $\lambda\in\cc^*$, define the \emph{$\lambda$-connection}
\begin{equation}\label{eq:lambda-conn}
  \nabla_\lambda \;:=\; \nabla_h + \lambda^{-1}\theta + \lambda\,\bar\theta.
\end{equation}
\end{definition}

\begin{proposition}\label{prop:lambda-flat}
$\nabla_\lambda$ is flat for every $\lambda\in\cc^*$.
\end{proposition}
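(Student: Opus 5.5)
The plan is to compute the curvature $\nabla_\lambda^2$ directly and split it by $\lambda$-degree and by $(p,q)$-type. We then show that each piece vanishes, using only the harmonic bundle identities recorded in the Remark after \eqref{eq:Higgs-decomp}. These identities are the self-duality equation \eqref{eq:self-dual} $F(\nabla_h)+[\theta,\bar\theta]=0$, together with $\delbar_E\theta=0$, $\theta\wedge\theta=0$ and their $h$-adjoints.

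First, expand
\[
  \nabla_\lambda^2 = F(\nabla_h) + \lambda^{-1}\nabla_h(\theta) + \lambda\,\nabla_h(\bar\theta) + \lambda^{-2}\theta\wedge\theta + \lambda^{2}\bar\theta\wedge\bar\theta + [\theta,\bar\theta],
\]
where the mixed term is $\lambda^{-1}\lambda(\theta\wedge\bar\theta+\bar\theta\wedge\theta)=[\theta,\bar\theta]$. The terms are then handled degree by degree in $\lambda$.

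\begin{itemize}
\item In $\lambda$-degree $0$, the sum $F(\nabla_h)+[\theta,\bar\theta]$ vanishes by \eqref{eq:self-dual}.
\item In degrees $\pm2$, $\theta\wedge\theta=0$ by the Higgs condition, and $\bar\theta\wedge\bar\theta=-(\theta\wedge\theta)^{*_h}=0$.
\item In degree $-1$, split $\nabla_h(\theta)=\nabla_h^{1,0}\theta+\delbar_E\theta$. The $(1,1)$ part $\delbar_E\theta$ vanishes by holomorphicity. The $(2,0)$ part $\nabla_h^{1,0}\theta$ must be shown to vanish separately.
\item In degree $+1$, the term is the $h$-adjoint of the degree $-1$ term. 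Since $\nabla_h$ is $h$-unitary, $\nabla_h^{1,0}$ and $\delbar_E$ are exchanged by adjunction, so $\nabla_h(\bar\theta)=\pm(\nabla_h\theta)^{*_h}$ and it vanishes once degree $-1$ does.
\end{itemize}

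The main obstacle is the $(2,0)$-identity $\nabla_h^{1,0}\theta=0$. This is not a consequence of \eqref{eq:self-dual} alone. I would extract it from flatness of the original connection, as the Remark indicates: $\nabla=\nabla_1$ is flat by hypothesis. Taking the $(2,0)$-component of $\nabla_1^2=0$ gives
\[
  (\nabla_h^{1,0})^2+\nabla_h^{1,0}\theta+\theta\wedge\theta=0 .
\]
For the Chern connection, $(\nabla_h^{1,0})^2=0$, since $F(\nabla_h)$ has type $(1,1)$. Combined with $\theta\wedge\theta=0$, this yields $\nabla_h^{1,0}\theta=0$. The $(0,2)$-component gives the adjoint statement for $\bar\theta$ in the same way.

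With all six pieces vanishing, $\nabla_\lambda^2=0$ for every $\lambda\in\cc^*$. Only pointwise identities on $X^*$ are used, so the tame asymptotics near $D$ play no role in flatness itself.
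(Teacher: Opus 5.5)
Your proof is correct. It follows the same direct computation as the paper: you split $\nabla_\lambda^2$ by type and by power of $\lambda$, and you use \eqref{eq:self-dual}, $\delbar_E\theta=0$, $\theta\wedge\theta=0$ and $h$-adjunction.

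The one substantive difference is the $(2,0)$-term $\lambda^{-1}\nabla_h^{1,0}\theta$. The paper disposes of it by saying it vanishes ``since $\theta$ is holomorphic''. That inference is not valid: holomorphy gives only $\delbar_E\theta=0$, and $\nabla_h^{1,0}\theta=0$ is a separate harmonic-bundle identity, automatic only on curves.

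You correctly treat it as independent of \eqref{eq:self-dual}. You extract it from the $(2,0)$-part of $\nabla_1^2=0$, together with $(\nabla_h^{1,0})^2=0$ and $\theta\wedge\theta=0$. This is legitimate here because $\nabla_1$ is the original flat connection of $\zeta^{ss}$ (Theorem~\ref{thm:harmonic-exists}, Proposition~\ref{prop:lambda-one}). So your version repairs that step of the paper's argument rather than merely reproducing it.
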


\begin{proof}
We compute the curvature directly.  Write
$\nabla_\lambda = (\nabla_h^{1,0} + \lambda^{-1}\theta)
+ (\delbar_E + \lambda\,\bar\theta)$, decomposed by type.
The $(2,0)$-part of the curvature is:
\[
  (\nabla_h^{1,0} + \lambda^{-1}\theta)^2
  = F^{2,0}(\nabla_h) + \lambda^{-1}\nabla_h^{1,0}\theta
  + \lambda^{-2}\theta\wedge\theta.
\]
Since $\nabla_h$ is the Chern connection of a holomorphic bundle,
$F^{2,0}(\nabla_h) = 0$.  Since $\theta$ is holomorphic,
$\nabla_h^{1,0}\theta = \partial\theta = 0$ in the holomorphic sense.
And $\theta\wedge\theta = 0$ by the Higgs condition.  So the $(2,0)$-part
vanishes.  By conjugation (or the same computation) the $(0,2)$-part also
vanishes.

The $(1,1)$-part is:
\[
  F^{1,1}(\nabla_h) + \lambda^{-1}[\delbar_E,\theta]
  + \lambda\,[\nabla_h^{1,0},\bar\theta]
  + \lambda^{-1}\lambda\,[\theta,\bar\theta].
\]
Since $\delbar_E\theta = 0$ (holomorphicity of $\theta$), the second term is
zero.  The third term is $\lambda[\nabla_h^{1,0},\bar\theta] = \lambda(\delbar_E\theta)^* = 0$.
The remaining terms give $F^{1,1}(\nabla_h) + [\theta,\bar\theta] = 0$
by the harmonic bundle equation \eqref{eq:self-dual}.

Hence $F(\nabla_\lambda) = 0$ for all $\lambda\in\cc^*$.
\end{proof}

\begin{proposition}\label{prop:lambda-one}
At $\lambda = 1$ one recovers the original flat connection:
$\nabla_1 = \nabla_h + \theta + \bar\theta = \nabla$.
\end{proposition}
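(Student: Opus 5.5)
The identity should follow directly from the definitions. Substitute $\lambda=1$ into the defining formula \eqref{eq:lambda-conn}, which gives $\nabla_1=\nabla_h+\theta+\bar\theta$. Then compare this with the decomposition \eqref{eq:Higgs-decomp} of the original flat connection, $\nabla=\nabla_h+\theta+\bar\theta$.

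The only thing to check is that the three ingredients $\nabla_h$, $\theta$, $\bar\theta$ in \eqref{eq:lambda-conn} are the same ones appearing in \eqref{eq:Higgs-decomp}. That is, they must all come from the single pluri-harmonic metric $h$ supplied by Theorem~\ref{thm:harmonic-exists} for the flat bundle $(V,\nabla)$.

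\begin{itemize}
\item In Theorem~\ref{thm:harmonic-exists}, $\theta$ is defined as $\nabla^{1,0}-\nabla_h^{1,0}$.
\item The $(0,1)$-part of $\nabla$ decomposes as $\delbar_E+\bar\theta$, with $\bar\theta=\theta^{*_h}$.
\end{itemize}

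The Remark after the definition of harmonic bundles records that this decomposition is canonical once $h$ is fixed. So the type decomposition of $\nabla$ into $(1,0)$ and $(0,1)$ parts, with each part split into its $h$-unitary and $h$-self-adjoint components, is exactly $\nabla_h+\theta+\bar\theta$.

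The one point that needs care is the $(0,1)$ side. We need $\nabla^{0,1}-\delbar_E=\theta^{*_h}$, where $\delbar_E$ is the holomorphic structure whose Chern connection is $\nabla_h$. This is the standard fact that the $h$-unitary part of $\nabla$ is a unitary connection whose $(1,0)$ and $(0,1)$ parts are $\nabla_h^{1,0}$ and $\delbar_E$, and that the self-adjoint remainder $\theta+\theta^{*_h}$ splits by type. I would state it by citing \eqref{eq:Higgs-decomp} rather than reproving it. Once it is in place, the equality $\nabla_1=\nabla$ is immediate, and there is no real obstacle.
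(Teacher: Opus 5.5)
Your proposal is correct and is exactly the paper's argument: substitute $\lambda=1$ into \eqref{eq:lambda-conn} and compare with the decomposition \eqref{eq:Higgs-decomp}, which the paper likewise takes as given. Your extra check that $\nabla_h$, $\theta$ and $\bar\theta$ all come from the same pluri-harmonic metric (the $h$-unitary part of $\nabla$, plus the self-adjoint remainder split by type) spells out what the paper leaves implicit and is harmless.
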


\begin{proof}
This is immediate from the definition \eqref{eq:lambda-conn} and the
decomposition \eqref{eq:Higgs-decomp}.
\end{proof}

\begin{proposition}\label{prop:lambda-circle}
For $\lambda = e^{i\phi}\in S^1\subset\cc^*$, the connection $\nabla_\lambda$
has the same monodromy type as $\nabla$: its monodromy around each $D_i$ is
unipotent.  In particular, the circle
$\{e^{i\phi}\cdot(E,\theta,h) : \phi\in[0,2\pi)\}\subset\mathcal{M}^{\mathrm{nil}}$
lies inside $R^{\mathrm{nil}}$.
\end{proposition}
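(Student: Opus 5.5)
The plan is to compute the local monodromy of $\nabla_\lambda$ around $D_i$ from the residue of the tame harmonic bundle, and then to propagate along the connected circle.

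The first step is to recall the local structure near a point of $D_i$ from Mochizuki's theory (Theorem~\ref{thm:harmonic-exists}). The Higgs field has at most a simple pole, $\theta = \mathrm{Res}_{D_i}(\theta)\,dz_i/z_i + (\text{holomorphic})$. The monodromy of the flat connection $\nabla_\lambda$ around $D_i$ is governed by the eigenvalues of the residue of the associated filtered $\lambda$-flat bundle. Mochizuki's correspondence between the parabolic weights and residue eigenvalues at $\lambda$ and the data $(a,\alpha)$ of the Higgs residue at $\lambda=0$ gives the standard formula. If $\alpha$ is an eigenvalue of $\mathrm{Res}(\theta)$ and $a$ a parabolic weight, the corresponding monodromy eigenvalue of $\nabla_\lambda$ is
\[
\exp\bigl(-2\pi i(a + 2\operatorname{Re}(\lambda^{-1}\alpha))\bigr)\cdot(\text{a factor depending on }\operatorname{Im}(\lambda^{-1}\alpha)).
\]
Here $a+2\operatorname{Re}(\lambda^{-1}\alpha)$ plays the role of the new weight and $\operatorname{Im}$ that of the new eigenvalue.

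The second step uses the unipotent hypothesis at $\lambda=1$. All monodromy eigenvalues equal $1$, and by the Kobayashi--Hitchin/Mochizuki dictionary this forces $\alpha=0$ and $a\in\zz$ at every point of every $D_i$. In other words, $\mathrm{Res}_{D_i}\theta$ is nilpotent and all parabolic weights are integral.

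The third step feeds these data back into the formula. For every $\lambda$, and in particular for $\lambda=e^{i\phi}$, all eigenvalues of the local monodromy of $\nabla_\lambda$ equal $1$, so the monodromy is unipotent. As an alternative to the explicit formula, I would argue by continuity. The monodromy $\zeta_\lambda(\gamma_i)$ depends continuously (indeed real-analytically) on $\lambda\in S^1$, because the $\nabla_\lambda$ vary smoothly on the fixed bundle. The eigenvalues lie in the discrete set dictated by the fixed parabolic weights, which is forced once $\mathrm{Res}\,\theta$ is nilpotent, so they are constant on the connected circle and equal their value $1$ at $\lambda=1$. Hence $\zeta_{e^{i\phi}}\in R^{\mathrm{nil}}$ for all $\phi$.

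The main obstacle is the second step: deducing nilpotency of $\mathrm{Res}\,\theta$ from unipotency of the monodromy requires Mochizuki's precise asymptotic description, not merely tameness. In particular, the eigenvalue set at a general $\lambda$ is discrete only once $\alpha=0$ is known. I would cite the table relating $(a,\alpha)$ at $\lambda=0$, $\lambda=1$ and general $\lambda$ from \cite{Mochizuki}, and treat the continuity argument as the rigorous backbone once discreteness is established.
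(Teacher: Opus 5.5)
The gap is your second step, and citing the right table will not close it: unipotency of the monodromy of $\nabla_1$ does not force $\mathrm{Res}\,\theta$ to be nilpotent.

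In Mochizuki's conventions the local monodromy eigenvalues of $\nabla_\lambda$ are $\exp\bigl(-2\pi i\,\lambda^{-1}\mathfrak{e}(\lambda,u)\bigr)$, with $\mathfrak{e}(\lambda,(a,\alpha))=\alpha-a\lambda-\bar\alpha\lambda^2$. For $|\lambda|=1$ this gives $\lambda^{-1}\mathfrak{e}=-a+2i\operatorname{Im}(\lambda^{-1}\alpha)$, so the phase is governed by $a$ alone and the modulus by $\operatorname{Im}(\lambda^{-1}\alpha)$. The new weight $a+2\operatorname{Re}(\bar\lambda\alpha)$ is a growth order and does not enter the monodromy, so your displayed formula misplaces it. At $\lambda=1$, unipotency therefore gives only $a\in\zz$ and $\operatorname{Im}\alpha=0$; $\operatorname{Re}\alpha$ is unconstrained. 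Even with your own formula you would only get $a+2\operatorname{Re}\alpha\in\zz$, which again is not $\alpha=0$.

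This is not a bookkeeping issue. On $\pp^1\setminus\{0,\infty\}$ take the rank-one Higgs bundle $(\Oo,\theta=c\,dz/z)$ with $c\in\rr\setminus\{0\}$ and $h\equiv1$. It is a tame harmonic bundle ($\nabla_h=d$, $[\theta,\bar\theta]=0$), and $\nabla_1=d+2c\,d\log|z|$ has trivial monodromy. Yet $\nabla_{e^{i\phi}}=d+c\,(e^{-i\phi}dz/z+e^{i\phi}d\bar z/\bar z)$ has monodromy $e^{-4\pi c\sin\phi}\neq1$ around $z=0$ whenever $\sin\phi\neq0$. So no argument using only the monodromy at $\lambda=1$ plus the KMS dictionary can prove the statement. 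Your continuity fallback does not rescue it either: as you note, the eigenvalues lie in a discrete set only once $\alpha=0$ is known.

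What is missing is the normalization of the harmonic metric. The statement holds for the metric singled out in Theorem~\ref{thm:harmonic-exists}: the tame \emph{pure imaginary} metric of Mochizuki's correspondence for flat bundles ($\operatorname{Re}\alpha=0$). Equivalently, it is the metric adapted to the canonical extension with all parabolic weights $0$ at $\lambda=1$. This is what the uniqueness clause and the assertion ``$\alpha_i=0$ in the unipotent case'' encode, and in the example above it forces $c=0$.

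With $\operatorname{Re}\alpha=0$ from the normalization, and $\operatorname{Im}\alpha=0$, $a\in\zz$ from unipotency, you get $\alpha=0$. Then $\lambda^{-1}\mathfrak{e}(\lambda,(a,0))=-a\in\zz$ for every $\lambda\in\cc^*$, and unipotency follows directly with no continuity argument.

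The paper's proof uses the same input in another guise. It works in a frame adapted to $W(N_i)$ in which $\mathrm{Res}\,\theta$ is already strictly triangular, and reads off that the residue of $\nabla_\lambda$ stays nilpotent. So there too, nilpotency of $\mathrm{Res}\,\theta$ comes from the choice of $h$, not from the monodromy. Once you add that hypothesis, your KMS-spectrum route is a sound alternative to the paper's residue computation, with the advantage that it never needs a literal residue of $\nabla_h$ along $D_i$.
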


\begin{proof}
The monodromy of $\nabla_\lambda$ around a small loop $\gamma$ near $D_i$ is
computed using the connection $\nabla_\lambda = \nabla_h + \lambda^{-1}\theta
+ \lambda\,\bar\theta$.  In a suitable local frame near $D_i$ adapted to the
parabolic structure (the weight filtration $W(N_i)$), the Higgs field $\theta$
is strictly lower-triangular (it maps weight-$k$ to weight-$(k-1)$), and
$\bar\theta$ is strictly upper-triangular.  The metric connection $\nabla_h$
has logarithmic singularities with nilpotent residues $N_i$ of the same
upper-triangular form as $\nabla$.

The monodromy of $\nabla_\lambda$ is therefore $T_i^\lambda = \exp(2\pi i\,
\mathrm{Res}_{D_i}(\nabla_\lambda))$.  Since the residue of $\nabla_\lambda$
at $D_i$ is the same nilpotent matrix $N_i$ for all $\lambda\in S^1$ (the
phase $e^{i\phi}$ multiplies only the off-diagonal terms $\theta$ and
$\bar\theta$, whose residues at $D_i$ are zero or strictly lower/upper-triangular
below the diagonal), we get $T_i^\lambda = \exp(2\pi i N_i) = T_i$, which is
unipotent.  Hence $\nabla_\lambda\in R^{\mathrm{nil}}$ for all $\lambda\in S^1$.
\end{proof}

\subsection{Step 4: The Higgs bundle decomposition and the VHS}
\label{subsec:Higgs-to-VHS}

We now explain why the Higgs bundle $(\mathcal{E},\theta)$ underlying a
semisimple flat bundle with unipotent monodromy decomposes, via the 
 nonabelian Hodge
correspondence, into a VHS.  This is the heart of the ``Identifying the VHS''
step.

\subsubsection{Stability of Higgs bundles and the tame nonabelian Hodge
correspondence}

\begin{definition}
A Higgs bundle $(\mathcal{E},\theta)$ on $X^*$ is \emph{stable} (resp.\ 
\emph{semistable}) if for every $\theta$-invariant subsheaf
$0\neq\mathcal{F}\subsetneq\mathcal{E}$ (i.e.\ $\theta(\mathcal{F})\subset
\mathcal{F}\otimes\Omega^1_{X^*}$), the slope inequality
$\mu(\mathcal{F}) < \mu(\mathcal{E})$ (resp.\ $\leq$) holds, where
$\mu(\mathcal{F}) = \deg(\mathcal{F})/\mathrm{rank}(\mathcal{F})$.
\end{definition}

A key theorem in 
non-abelian Hodge theory, as 
extended to the
quasi-projective setting by Mochizuki, is:

\begin{theorem}[{\cite{Simpson-IHES}} for projective $X$;
  Mochizuki {\cite{Mochizuki,Mochizuki2}} for quasi-projective $X^*$]
\label{thm:simpson-correspondence}
There is a natural equivalence of categories (the \emph{non-abelian Hodge
correspondence}, in the tame quasi-projective form established by
Mochizuki)
\begin{equation}\label{eq:simpson-corr}
\left\{\begin{array}{c}
  \text{semisimple flat bundles}\\[2pt]
  (V,\nabla_{\mathrm{flat}})\text{ on }X^*
\end{array}\right\}
\;\xleftrightarrow[\displaystyle\sim]{\;\;\;\;\;\;\;\;\;\;\;\;\;\;\;\;\;\;\;}\;
\left\{\begin{array}{c}
  \text{polystable Higgs bundles }(\mathcal{E},\theta)\\[2pt]
  \text{on }X^*\text{ with }\mu(\mathcal{E})=0
\end{array}\right\}
\end{equation}
in which the two directions are:
\begin{enumerate}[leftmargin=2em,label=\rm(\arabic*)]
  \item \textbf{Flat to Higgs} (Corlette--Donaldson direction): given a
    semisimple flat bundle $(V,\nabla_{\mathrm{flat}})$, the pluri-harmonic
    metric $h$ of Theorem~\ref{thm:harmonic-exists} determines a Higgs
    bundle $(\mathcal{E},\theta)$ by setting $\mathcal{E} = (V,\delbar_h)$
    (the holomorphic structure induced by $h$) and
    $\theta = \nabla_{\mathrm{flat}}^{1,0} - \nabla_h^{1,0}$ (the $(1,0)$-part
    of the difference between $\nabla_{\mathrm{flat}}$ and the Chern connection
    $\nabla_h$).

  \item \textbf{Higgs to flat} (Hitchin direction): given a polystable Higgs
    bundle $(\mathcal{E},\theta)$ of slope zero, Theorem~\ref{thm:harmonic-exists}
    applied in the Higgs-to-flat direction gives the unique pluri-harmonic
    metric $h$ on $\mathcal{E}$ (solving the same equation
    $F(\nabla_h)+[\theta,\bar\theta]=0$), and the flat connection is recovered
    as $\nabla_{\mathrm{flat}} = \nabla_h + \theta + \bar\theta$.
\end{enumerate}

\noindent
Note that the underlying $\mathcal{C}^\infty$ bundles on both sides are the same
object: $V = \mathcal{E}$ as smooth bundles (the correspondence is the identity
on underlying smooth bundles).  The distinction is that the flat bundle uses the
flat connection $\nabla_{\mathrm{flat}}$, while the Higgs bundle uses the pair
$(\delbar_h,\theta)$.
\end{theorem}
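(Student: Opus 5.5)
The statement is the tame non-abelian Hodge correspondence (Theorem~\ref{thm:simpson-correspondence}). I would not reprove its analytic core. The plan is to assemble it from Theorem~\ref{thm:harmonic-exists}, read in both directions, and to check only the formal properties: the two constructions land in the stated categories, they are mutually inverse on objects, and they are functorial.

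\emph{Flat to Higgs.} Start with a semisimple $(V,\nabla_{\mathrm{flat}})$ with unipotent local monodromy. Theorem~\ref{thm:harmonic-exists} supplies a tame pluri-harmonic metric $h$. Define $\mathcal{E}=(V,\delbar_h)$ and $\theta=\nabla^{1,0}_{\mathrm{flat}}-\nabla_h^{1,0}$. By the Remark following \eqref{eq:Higgs-decomp}, flatness of $\nabla_h+\theta+\bar\theta$ splits by type into three conditions: $\delbar_E\theta=0$, $\theta\wedge\theta=0$, and \eqref{eq:self-dual}. So $(\mathcal{E},\theta)$ is a Higgs bundle, and $(\mathcal{E},\theta,h)$ is a tame harmonic bundle.

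For polystability, I would first split $V$ into irreducible summands; $h$ can be chosen compatibly with this splitting because of the uniqueness clause of Theorem~\ref{thm:harmonic-exists}. On an irreducible summand, take a $\theta$-invariant saturated subsheaf $\mathcal{F}$. Compute $\deg\mathcal{F}$ by Chern--Weil with the induced metric, and use \eqref{eq:self-dual} together with the second fundamental form; this gives $\deg\mathcal{F}\le 0$, with equality only when $\mathcal{F}$ is $\nabla_{\mathrm{flat}}$-invariant. Irreducibility then excludes equality. The degrees on $X^*$ must be understood as parabolic degrees computed with Mochizuki's tame growth estimates, clause (iii) of the proof of Theorem~\ref{thm:harmonic-exists}; with unipotent monodromy the weights $\alpha_i$ vanish, so these are ordinary degrees of the extension. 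The same Chern--Weil computation gives $\mu(\mathcal{E})=0$, since $\Tr F(\nabla_h)=-\Tr[\theta,\bar\theta]=0$.

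\emph{Higgs to flat.} Given a polystable $(\mathcal{E},\theta)$ of slope zero, the Higgs-side existence statement of Mochizuki yields a tame pluri-harmonic $h$. Set $\nabla_{\mathrm{flat}}=\nabla_h+\theta+\bar\theta$; it is flat by the curvature computation of Proposition~\ref{prop:lambda-flat} at $\lambda=1$. For semisimplicity, a flat subbundle is $h$-orthogonally split because the harmonic metric is unique up to automorphisms. Unipotent local monodromy comes from the tameness and vanishing weights in clause (iv).

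\emph{Inverses and morphisms.} The two assignments are the identity on underlying $\mathcal{C}^\infty$ bundles and are mutually inverse once $h$ is fixed. Uniqueness of $h$ up to automorphisms shows the objects are well defined up to isomorphism. On morphisms, the Kähler-type identities for harmonic bundles show that a map that is flat on one side is $h$-parallel in the appropriate sense on the other; this is the standard Bochner argument, here with cutoffs near $D$. That gives an equivalence.

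The main obstacle is the Higgs-to-flat existence and the parabolic degree bookkeeping near $D$. This is genuinely Mochizuki's theorem, so I would cite it rather than reprove it. The only local check is that the unipotent/nilpotent-residue hypotheses match his weight-zero tame case, which requires identifying the parabolic degree with the degree of the Deligne extension.
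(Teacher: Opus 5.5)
\paragraph{Where the proposal breaks.} The paper gives no proof of this statement; it is quoted from Simpson and Mochizuki and followed only by the attribution remark. Your flat-to-Higgs half is the standard Simpson argument and is acceptable as a sketch.

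Your Higgs-to-flat half fails as you formulate it, and with it your claim that the two constructions are mutually inverse. When $\dim X\geq 2$, polystability and $\mu(\mathcal{E})=0$ do not produce a flat connection. From these hypotheses the Kobayashi--Hitchin existence theorem (Simpson in the compact case, Mochizuki in the tame case) only gives a Hermitian--Einstein metric, that is, the contracted equation $\Lambda_\omega\bigl(F(\nabla_h)+[\theta,\bar\theta]\bigr)=0$.

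So you cannot invoke Proposition~\ref{prop:lambda-flat}. Its computation needs the full pluriharmonic equations, namely \eqref{eq:self-dual} together with $\nabla_h^{1,0}\theta=0$, and a Hermitian--Einstein metric on an arbitrary polystable Higgs bundle provides neither. Passing from the contracted equation to these is the equality case of a Bogomolov--L\"ubke type inequality. It requires the extra hypotheses $\ch_1(\mathcal{E})=0$ and $\ch_2(\mathcal{E})\cdot[\omega]^{d-2}=0$. In the tame case these become $\ch^{\mathrm{par}}_1(\mathcal{E}_\hdot)=0$ and $\int_X\ch^{\mathrm{par}}_2(\mathcal{E}_\hdot)\,c_1(\mathcal{L})^{d-2}=0$ for the filtered bundle on $(X,D)$, with $\mathcal{L}$ ample.

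Without these hypotheses the equivalence is false; the displayed statement omits them as well. Take $D=\emptyset$, $\theta=0$, and $\mathcal{E}$ a stable bundle on a projective surface with $c_1(\mathcal{E})=0$ and $c_2(\mathcal{E})\neq 0$. This is a polystable Higgs bundle of slope zero. It cannot correspond to any flat bundle, because the correspondence is the identity on underlying $\mathcal{C}^\infty$ bundles and flat bundles have vanishing rational Chern classes.

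If you push your own Chern--Weil computation on the flat side one degree further, it shows that every object in the image of the flat-to-Higgs construction has these vanishing Chern numbers. The right-hand side of \eqref{eq:simpson-corr} must therefore be cut down to that subcategory. Only when $\dim X=1$ do slope zero and polystability suffice.

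\paragraph{Two smaller repairs.} First, stability and $\mu$ are not defined for a Higgs bundle on the non-compact $X^*$. The target category must consist of regular filtered (parabolic) Higgs bundles on $(X,D)$. To recover unipotent monodromy you must also impose nilpotent residues and integral weights on them. Clause (iv) in the proof of Theorem~\ref{thm:harmonic-exists} describes harmonic bundles that already come from flat bundles, so it cannot supply tameness for an arbitrary object of the target category.

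Second, semisimplicity of $\nabla_h+\theta+\bar\theta$ does not follow from uniqueness of $h$ up to automorphisms. It is a separate input: the theorem of Corlette and Donaldson that a flat bundle carrying a harmonic metric is semisimple. One proof uses convexity of the energy along the family of metrics that rescales an invariant subbundle. In the tame case this again needs Mochizuki's estimates near $D$.
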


\begin{remark}[Attribution of the correspondence]\label{rem:attribution}
The non-abelian Hodge correspondence was established in several stages, each
handling a more general geometric setting.

\begin{enumerate}[leftmargin=2em,label=\rm(\arabic*),itemsep=4pt]
  \item \textbf{Compact curves (Hitchin, 1987).}  Hitchin \cite{Hitchin}
    proved the correspondence for rank-$2$ bundles on smooth projective
    curves.  He introduced the Higgs bundle formalism and solved the
    self-duality equations on curves using gauge theory. 
    Donaldson treated the existence of harmonic maps in this setting 
    in an appendix to \cite{Hitchin}

  \item \textbf{Compact K\"{a}hler manifolds (Corlette 1988;
    the second author, 1992).}
    Corlette \cite{Corlette} proved the existence of the pluri-harmonic metric
    (flat-to-Higgs direction) on compact K\"{a}hler manifolds via the
    equivariant harmonic map theorem.  
    The second author
     \cite{Simpson-IHES} then proved
    both directions of the correspondence for arbitrary smooth projective
    varieties $X$ (in any dimension), establishing the equivalence between
    semisimple flat bundles and polystable Higgs bundles of slope zero.

  \item \textbf{Noncompact curves (the second author, 1990).}  
  The second author
    \cite{Simpson-harmonic} extended the correspondence to the case
    $X^* = X\setminus D$ where $X$ is a smooth projective \emph{curve} and
    $D$ is a finite set of points.  That paper treats the tame case (regular singular
    connections, which includes our unipotent monodromy hypothesis) and the
    wild case.  That paper also establishes the precise growth conditions
    (``Higgs bundles on noncompact curves'') which give the title.  However,
    this paper handles only the curve case, not higher-dimensional
    quasi-projective varieties.

  \item \textbf{Quasi-projective varieties, unipotent monodromy
    (Jost-Zuo, 1996).}  Jost and Zuo \cite{JostZuo} proved the existence of a
    tame pluriharmonic metric for every reductive representation of
    $\pi_1(X^*)$ with unipotent local monodromy, for $X^*=X\setminus D$ of
    arbitrary dimension with $D$ a normal crossings divisor.  This is exactly
    the hypothesis under which we work, so their theorem already suffices for
    the existence statement of Theorem~\ref{thm:harmonic-exists}; we quote
    Mochizuki below for the full equivalence of categories and for the
    asymptotic estimates.

  \item \textbf{Quasi-projective varieties in higher dimensions (Mochizuki,
    2002--2007).}  The extension to $X^*$ of arbitrary dimension is the main
    achievement of Mochizuki's two-volume Memoirs
    \cite{Mochizuki,Mochizuki2}.  He introduces the category of
    \emph{tame harmonic bundles} on $(X,D)$ (where $D$ is a normal crossings
    divisor) and proves:
    \begin{itemize}[leftmargin=2em]
      \item existence and uniqueness of the pluri-harmonic metric for any
        semisimple tame flat bundle (Theorem 1.1 in \cite{Mochizuki});
      \item the equivalence of categories between semisimple tame flat bundles
        and polystable ``tame'' Higgs bundles (i.e.\ those whose Higgs field
        has at most a simple pole along $D$ with nilpotent residue);
      \item precise asymptotic estimates for the metric near $D$, with
        polynomial growth in the tame case and exponential growth in the
        wild case.
    \end{itemize}

  \item \textbf{Our setting.}  In this paper $X^*$ is a smooth quasi-projective
    variety, $D$ is a normal crossings divisor, and the monodromy around each
    $D_i$ is \emph{unipotent} (the special case of tame monodromy with
    $\alpha_i = 0$).  The non-abelian Hodge correspondence we use is therefore
    Mochizuki's theorem, and the attribution in
    Theorem~\ref{thm:simpson-correspondence} reflects this; the existence of the
    pluriharmonic metric in our unipotent case is already contained in
    \cite{JostZuo}.  We note that \cite{Simpson-harmonic} does not treat
    the quasi-projective correspondence in dimension $\geq 2$; it covers only
    noncompact curves.
\end{enumerate}
\end{remark}  Since the flat bundle $(V,\nabla_{\mathrm{flat}})$ is semisimple
(a direct sum of irreducibles), the corresponding Higgs bundle is polystable
of slope zero.  Choosing a splitting into irreducible flat summands
$(V,\nabla_{\mathrm{flat}}) = \bigoplus_j(V_j,\nabla_j)$ with each $V_j$
irreducible, the correspondence gives a decomposition
\begin{equation}\label{eq:polystable-decomp}
  (\mathcal{E},\theta) \;=\; \bigoplus_{j=1}^m (\mathcal{E}_j,\theta_j),
\end{equation}
where each $(\mathcal{E}_j,\theta_j)$ is stable of slope zero and corresponds
to $(V_j,\nabla_j)$.  (Since each $V_j$ is irreducible as a flat bundle, the
corresponding Higgs bundle is stable, not merely polystable.)

\begin{remark}\label{rem:same-bundle}
The notation in \eqref{eq:simpson-corr} uses $(V,\nabla_{\mathrm{flat}})$ on
the left and $(\mathcal{E},\theta)$ on the right to emphasize the different
structures, even though $V = \mathcal{E}$ as $\mathcal{C}^\infty$ bundles.
In the literature one often writes $(E,\nabla)$ for both, understanding that
the same bundle $E$ appears with different additional structures on the two sides.
\end{remark}

\subsubsection{Stable Higgs bundles of slope zero and Hodge types}

The stable summands in the decomposition \eqref{eq:polystable-decomp} carry
a special structure.

\begin{theorem}[Mochizuki {\cite{Mochizuki}}]
\label{thm:VHS-decomp}
Let $X^*$ be a smooth quasi-projective variety.
A stable Higgs bundle $(\mathcal{E}_j,\theta_j)$ of slope zero on $X^*$ that
arises from an irreducible semisimple flat bundle via the nonabelian Hodge
correspondence decomposes as a direct sum of holomorphic sub-bundles
\begin{equation}\label{eq:Hodge-decomp}
  \mathcal{E}_j \;=\; \bigoplus_{p\in\zz} \mathcal{E}_j^p,
\end{equation}
such that the Higgs field satisfies the \emph{Griffiths transversality}
condition:
\begin{equation}\label{eq:Griffiths}
  \theta_j\bigl(\mathcal{E}_j^p\bigr) \;\subset\; \mathcal{E}_j^{p-1}
  \otimes\Omega^1_{X^*}.
\end{equation}
That is, $\theta_j$ strictly lowers the index $p$ by one.
\end{theorem}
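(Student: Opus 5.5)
The plan is to obtain the grading \eqref{eq:Hodge-decomp} from a $\cc^*$-symmetry of $(\mathcal{E}_j,\theta_j)$: the grading will be the weight decomposition of a one-parameter group of automorphisms. There are four steps.

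\textbf{Step 1: an isomorphism for each $t$.} I would first show that for every $t\in\cc^*$ there is an isomorphism of Higgs bundles
\[
  g_t:(\mathcal{E}_j,\theta_j)\xrightarrow{\ \sim\ }(\mathcal{E}_j,t\theta_j).
\]
The tools are the tame correspondence of Theorem~\ref{thm:simpson-correspondence} together with the uniqueness clause of Theorem~\ref{thm:harmonic-exists}. The bundle $(\mathcal{E}_j,t\theta_j)$ is again stable of slope zero, with the same parabolic weights, and $t$ rescales the residues of $\theta_j$, which remain nilpotent. So $(\mathcal{E}_j,t\theta_j)$ corresponds to some irreducible flat bundle $(V_j^{(t)},\nabla_j^{(t)})$. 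The isomorphism $g_t$ exists exactly when $(V_j^{(t)},\nabla_j^{(t)})\cong(V_j,\nabla_j)$, that is, when the point of $\mathcal{M}^{\mathrm{nil}}$ is fixed under the $\cc^*$-action of \S\ref{subsec:Cstar-family}.

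\textbf{Step 2: normalising $g_t$ into a homomorphism.} Stability makes $\End(\mathcal{E}_j,\theta_j)=\cc$. Hence each $g_t$ is unique up to a scalar, and $t\mapsto g_t$ is a projective representation of $\cc^*$. Since $\cc^*$ is connected, I would normalise it to a genuine algebraic homomorphism $\cc^*\to\mathrm{Aut}(\mathcal{E}_j)$ satisfying $g_t\theta_j g_t^{-1}=t\theta_j$.

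\textbf{Step 3: reading off the grading.} Decompose $\mathcal{E}_j$ into weight spaces $\mathcal{E}_j^p$, on which $g_t$ acts by $t^{p}$. These are holomorphic subbundles, because $g_t$ is a holomorphic family of bundle automorphisms. The relation $g_t\theta_jg_t^{-1}=t\theta_j$ shows that $\theta_j$ carries $\mathcal{E}_j^p$ into $\mathcal{E}_j^{p+1}\otimes\Omega^1_{X^*}$. Reindexing $p\mapsto -p$ gives \eqref{eq:Griffiths}.

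\textbf{Step 4: compatibility near $D$.} Uniqueness of the tame harmonic metric shows that $h$ is $g_t$-invariant for $|t|=1$. The $\mathcal{E}_j^p$ are therefore $h$-orthogonal and extend across $D$ compatibly with the parabolic structure.

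\textbf{Main obstacle.} The main obstacle is Step 1, namely $\cc^*$-invariance. As literally worded the statement asserts it for every stable summand, and that cannot be deduced in general. In rank one the grading forces $\theta_j=0$, whereas a rank-one unitary-twisted local system with a nonzero holomorphic $1$-form as Higgs field is stable and arises from an irreducible flat bundle. So the first thing I would try is to verify invariance in the situation where the theorem is used: at the endpoint $\zeta_0=\lim_{t\to0}t\cdot\zeta^{ss}$ of Theorem~\ref{thm:deform-to-vhs}. The existence of that limit is Mochizuki's properness of the tame Hitchin map. Steps 2 to 4 then go through verbatim for its stable summands. I would state the result with that hypothesis added, since without it the argument, and the claim itself, fails.
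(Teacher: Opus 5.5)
Your diagnosis is right, and it is also the paper's own position. As literally worded the statement is false. Take $0\neq\omega\in H^0(X,\Omega^1_X)$. The rank-one Higgs bundle $(\Oo,\omega)$ is stable of slope zero and corresponds to the irreducible flat bundle $d+\omega+\bar\omega$, which has trivial monodromy around $D$. But a grading of a line bundle in which the Higgs field has degree $-1$ forces the Higgs field to vanish.

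The paper gives no proof beyond the citation. Remark~\ref{rem:mochizuki-r-rho} concedes that the decomposition is not a consequence of stability, and obtains it only for the limit of the $\cc^*$-deformation via Mochizuki's Theorem~10.5. That is exactly your corrected statement. The difference is that you supply the fixed-point argument (Simpson's, \cite{Simpson-IHES}) instead of citing it.

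Your endpoint $\lim_{t\to0}t\cdot\zeta^{ss}$ is the correct one. Theorem~\ref{thm:zeta0-VHS} (at $\lambda=1$) and Step~B of Theorem~\ref{thm:deform-detailed} (at $\lambda=i$) apply the statement to points of the $S^1$-orbit of $\zeta^{ss}$. Those points are $\cc^*$-fixed only if $\zeta^{ss}$ already is, so your counterexample defeats them too.

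Steps 1--4 need the following repairs.

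\emph{(i) Work with filtered Higgs bundles on $(X,D)$.} Use Mochizuki's prolongations, with morphisms preserving the parabolic filtrations, not bundles on $X^*$. On $X^*$ the degree, and hence stability, is not defined. Moreover $\End(\mathcal{E}_j,\theta_j)$ contains multiplication by every function in $\Oo(X^*)$, which is infinite-dimensional when $X^*$ is affine. So neither $\End=\cc$ in Step~2 nor algebraicity of $t\mapsto g_t$ holds there. In the filtered category both hold, and the eigenvalues of $g_t$ are constant because its characteristic polynomial has holomorphic coefficients on the compact $X$. Your Step~4 then becomes part of the set-up rather than a check made afterwards.

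\emph{(ii) Lifting in Step~2.} Connectedness of $\cc^*$ is not what lifts the projective representation. The reason is that $\{(t,g):g\theta_jg^{-1}=t\theta_j\}$ is an extension of $\cc^*$ by $\mathrm{Aut}(\mathcal{E}_j,\theta_j)=\cc^*$, hence a two-dimensional torus, and such an extension splits. You can skip Step~2 entirely: a single $g=g_t$ with $t$ not a root of unity suffices. Indeed $(g-t\lambda)\theta_j=t\,\theta_j(g-\lambda)$ gives $\theta_j(E_\lambda)\subset E_{t\lambda}\otimes\Omega^1_X(\log D)$ on generalized eigenspaces. A stable object is indecomposable, so only one string $\lambda_0t^{\zz}$ of eigenvalues survives, and that string is the grading.

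\emph{(iii) Which action.} The action $(\mathcal{E},\theta)\mapsto(\mathcal{E},t\theta)$ agrees with the family $\nabla_\lambda$ of \S\ref{subsec:Cstar-family} only for $|t|=1$. For $|t|\neq1$ the harmonic metric must be solved afresh, so Step~1 should refer to the action on the moduli space, not to $\nabla_\lambda$.

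\emph{(iv) Summand by summand.} The limit is a fixed point of the moduli space, not a priori of each summand. Before running Steps~1--4 summand by summand, note that the connected group $\cc^*$ cannot permute the finitely many stable summands of the polystable limit.
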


\begin{remark}[Source of the deformation, and preservation of unipotent
monodromy]\label{rem:mochizuki-r-rho}
The decomposition of Theorem~\ref{thm:VHS-decomp} should not be read as a
consequence of stability of the Higgs bundle alone. Rather, it is obtained
from Mochizuki's deformation theorem
\cite[\S 10.1, Theorem 10.5]{MochizukiAsterisque}, and there are really two
logically separate claims involved, which we record separately.

\emph{(1) Deformation to a VHS.} After passing to the semisimple
representation and its associated tame harmonic bundle $(\mathcal E,\theta)$,
Mochizuki considers the $\cc^*$-deformation $(\mathcal E,\theta)\mapsto
(\mathcal E,t\theta)$, $t\in\cc^*$, and uses a descending induction on the
invariant $r(\rho)$ (the sum of the multiplicities of the stable
constituents of the associated polystable Higgs bundle) to obtain a limit
representation whose Higgs bundle is a \emph{system of Hodge bundles}: this
is exactly the graded object of \eqref{eq:Hodge-decomp}, satisfying
Griffiths transversality by construction. This first claim is what
\cite[Theorem 10.5]{MochizukiAsterisque} directly supplies.

\emph{(2) The deformation stays inside $R^{\mathrm{nil}}$.} This is the
additional fact our setting needs, and is not itself part of Mochizuki's
statement. Since the original representation lies in $R^{\mathrm{nil}}$,
its associated tame harmonic bundle has trivial parabolic weights and
nilpotent residue at each $D_i$. The deformation used in Mochizuki's
argument leaves the underlying parabolic filtration and its weights
unchanged; and the residue endomorphisms remain nilpotent under the
$\cc^*$-scaling, since $(tN)^k=t^kN^k$ shows $tN$ is nilpotent whenever $N$
is, for every $t\in\cc^*$. Thus the local monodromy remains unipotent
throughout the deformation, so the limiting graded Higgs bundle still has
trivial parabolic structure and nilpotent residues, and the corresponding
flat bundle lies in $R^{\mathrm{nil}}$, as needed for
Theorem~\ref{thm:simpson-correspondence} to apply to it.
\end{remark}

\begin{remark}
The index $p$ is the \emph{Hodge type} or \emph{Hodge degree}.  If we set
$q := n - p$ for a fixed integer $n$ (the \emph{weight}), then
$\mathcal{E}_j^p$ corresponds to the $(p,q) = (p,n-p)$ piece of a pure Hodge
structure of weight $n$.  The grading in \eqref{eq:Hodge-decomp} and the
condition \eqref{eq:Griffiths} are exactly the data of a complex VHS.
\end{remark}

\subsubsection{The polarization}

The pluri-harmonic metric $h$ provides the missing piece: a \emph{polarization}.

\begin{proposition}\label{prop:polarization}
Let $(E,\theta,h)$ be a harmonic bundle corresponding to a semisimple flat
bundle $\zeta^{ss}$ with unipotent monodromy.  Define the \emph{Hodge metric}
\[
  h_{\mathrm{Hodge}}(u,v) \;:=\; h(C\,u, v),
\]
where $C$ is the \emph{Weil operator}: $C|_{\mathcal{E}_j^p} = i^{p-q}
= i^{2p-n}$ on the $(p,q)$-piece.  Then $h_{\mathrm{Hodge}}$ is positive
definite on each fibre.

The sesquilinear form
\begin{equation}\label{eq:polarization}
  Q(u,v) \;:=\; h(u,\bar{v})
\end{equation}
(viewing $h$ without the Weil operator) is a flat indefinite Hermitian form
preserved by the flat connection $\nabla = \nabla_h+\theta+\bar\theta$.  It
satisfies $Q|_{\mathcal{E}_j^p\otimes\overline{\mathcal{E}_j^q}} = 0$ for
$p\neq n-q$ (Hodge-Riemann bilinear relations of the first kind) and
$(-1)^p Q(u,\bar{u})>0$ for $0\neq u\in\mathcal{E}_j^p$ (bilinear relations
of the second kind, after restriction to each Hodge summand).
\end{proposition}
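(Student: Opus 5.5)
The plan is to read the statement in the standard form for a system of Hodge bundles. The pluri-harmonic metric $h$ itself is the positive definite Hodge metric. The flat indefinite form $Q$ is obtained from $h$ by inserting the signs of the Weil operator $C$, i.e.\ $Q(u,v)=h(C^{-1}u,v)$ up to a harmless overall normalisation, so that $Q=(-1)^p h$ on $\mathcal{E}_j^p$. This is what \eqref{eq:polarization} encodes once $h$ is viewed on the graded bundle. The positivity of $h_{\mathrm{Hodge}}$ is then tautological, and all the content lies in two points: the $h$-orthogonality of the decomposition \eqref{eq:Hodge-decomp}, and the $\nabla$-flatness of $Q$.

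\textbf{Step 1 (orthogonality).} I would prove that the summands $\mathcal{E}_j^p$ are mutually $h$-orthogonal. By Theorem~\ref{thm:VHS-decomp} and Remark~\ref{rem:mochizuki-r-rho}, $(\mathcal{E}_j,\theta_j)$ is a $\cc^*$-fixed point. Concretely, the holomorphic automorphism $g_t$ acting by $t^p$ on $\mathcal{E}_j^p$ intertwines $(\mathcal{E}_j,\theta_j)$ with $(\mathcal{E}_j,t\theta_j)$. For $|t|=1$ the metric $g_t^*h$ is again pluri-harmonic for $(\mathcal{E}_j,\theta_j)$. By the uniqueness in Theorem~\ref{thm:harmonic-exists}, and stability of the summand (so the automorphism group consists of scalars), we get $g_t^*h=h$. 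Averaging over $S^1$ then forces $h(\mathcal{E}_j^p,\mathcal{E}_j^q)=0$ for $p\neq q$. Consequences:
\begin{itemize}
\item the Chern connection $\nabla_h$ preserves each $\mathcal{E}_j^p$, since each summand is holomorphic and they are orthogonal;
\item the adjoint $\bar\theta=\theta^{*_h}$ maps $\mathcal{E}^{p-1}$ to $\mathcal{E}^{p}$, by \eqref{eq:Griffiths}.
\end{itemize}

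\textbf{Step 2 (flatness).} I would write $\nabla=\nabla_h+\theta+\bar\theta$ as in \eqref{eq:Higgs-decomp} and check $dQ(u,v)=Q(\nabla u,v)+Q(u,\nabla v)$ term by term.
\begin{itemize}
\item The $\nabla_h$-part preserves $h$ and commutes with $C$, because it preserves the grading.
\item For the $\theta$-part, take $u\in\mathcal{E}^{p}$ and $v\in\mathcal{E}^{p-1}$. Then $Q(\theta u,v)=(-1)^{p-1}h(\theta u,v)$ and $Q(u,\bar\theta v)=(-1)^p h(u,\theta^{*}v)=(-1)^p h(\theta u,v)$. These cancel, so $\theta+\bar\theta$ is $Q$-skew. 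All other degree combinations vanish by orthogonality.
\end{itemize}
Hence $Q$ is flat, and it is indefinite as soon as two Hodge degrees occur. Non-degeneracy is inherited from $h$.

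\textbf{Step 3 (bilinear relations).} The first relation, $Q(\mathcal{E}^p,\mathcal{E}^q)=0$ for $p\neq q$, is Step 1 rewritten in the $(p,n-p)$ indexing. The second relation, $(-1)^pQ(u,u)>0$, is immediate from $Q=(-1)^ph$ on $\mathcal{E}^p$. Finally, summing over the stable summands $j$ of \eqref{eq:polystable-decomp}, taken to be $h$-orthogonal by uniqueness of $h$ up to automorphisms, gives the global statement.

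\textbf{Main obstacle.} I expect Step 1 to be the hardest, because uniqueness of the harmonic metric holds only up to automorphisms. If the summand is merely polystable, $g_t^*h$ could differ from $h$ by a non-scalar automorphism. I would first reduce to stable summands, where automorphisms are scalars; in that case $g_t^*h=c(t)h$ with $|c(t)|=1$ on $S^1$, and $c\equiv1$ by continuity and $c(1)=1$. Near $D$, I would invoke Mochizuki's tame uniqueness so that moderate growth is preserved under $g_t$, which holds because $g_t$ is bounded.
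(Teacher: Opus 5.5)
Your argument is correct for the only coherent reading of the statement, and it is not the route the paper takes.

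\textbf{The paper's route.} The paper's proof asserts that the pluri-harmonic metric itself satisfies $\nabla h=0$, sets $Q=h$, and defers the bilinear relations to an ``equivariance'' remark plus citations. That assertion fails as soon as $\theta\neq0$. Since $\nabla_h$ preserves $h$ and $\bar\theta=\theta^{*_h}$, one gets $h(\nabla u,v)+h(u,\nabla v)-dh(u,v)=2\,h\bigl((\theta+\bar\theta)u,v\bigr)$, so $h$ is parallel only in the unitary case. Likewise, with $h$ the harmonic metric, $h(C\cdot,\cdot)$ is not positive once two consecutive Hodge degrees occur, and is not even Hermitian for odd $n$. Your reading, with $h$ the Hodge metric and $Q=(-1)^ph$ on $\mathcal{E}^p$, is the one compatible with the second bilinear relation and with the use of $Q$ in Theorem~\ref{thm:zeta0-VHS}(4).

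\textbf{Your route.} You get $S^1$-invariance of $h$ from uniqueness, hence $h$-orthogonality of the $\mathcal{E}^p_j$. Then $\nabla_h$ is even and $\theta+\bar\theta$ is odd for the grading, so $(-1)^ph$ is parallel. This is the standard argument of \cite{Simpson-IHES}, and it genuinely proves flatness. Its cost is a real analytic input, namely uniqueness of the tame harmonic metric on a stable Higgs bundle. Like the statement, it presupposes that $(\mathcal{E}_j,\theta_j)$ is a system of Hodge bundles. That holds for the endpoint $\zeta_0$ of Remark~\ref{rem:mochizuki-r-rho}, not for an arbitrary $\zeta^{ss}$.

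\textbf{Two points to tighten.}
\begin{enumerate}
\item Uniqueness gives $g_t^*h=c(t)h$ with $c(t)>0$, and ``continuity and $c(1)=1$'' does not force $c\equiv1$. Take determinants instead: $|\det g_t|=1$, so $c(t)^{\mathrm{rk}\,\mathcal{E}_j}=1$. Alternatively, use that $c$ is a continuous homomorphism $S^1\to\rr_{>0}$, hence trivial.
\item Near $D$, $g_t=\sum_pt^p\pi_p$ is not a priori $h$-bounded. Bounding the projections $\pi_p$ in $h$-norm is essentially the orthogonality you are trying to prove. What the tame uniqueness theorem actually needs is that $g_t$ extend to an automorphism of the prolongation respecting the parabolic structure. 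This holds when the Hodge decomposition is one of filtered Higgs bundles, as for the $\cc^*$-fixed limit in Mochizuki's deformation.
\end{enumerate}
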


\begin{proof}
Preservation by $\nabla$: the harmonic metric $h$ satisfies $\nabla h = 0$
(the connection is compatible with the metric in the sense that
$dh(u,v) = h(\nabla u,v)+h(u,\nabla v)$ for all sections $u,v$), which is
the flatness condition on the metric.  Hence $Q = h$ is a flat Hermitian form.

The bilinear relations: these are the standard Hodge-Riemann bilinear
relations for a VHS, which hold for the harmonic metric associated to a
semisimple representation.  The proof uses the fact that $h$ is the unique
$\mathrm{GL}_r(\cc)$-equivariant solution of the harmonic bundle equations,
and the equivariance forces the positivity properties on each Hodge summand.
See \cite{Simpson-IHES} and \cite{Mochizuki} for details.
\end{proof}

\subsubsection{The resulting VHS}

\begin{theorem}\label{thm:zeta0-VHS}
The flat connection $\zeta_0$ defined by the harmonic bundle data
$(E,\theta,h)$ of $\zeta^{ss}$ at $\lambda = 1$ underlies a polarized complex
VHS on $X^*$ of the following form:
\begin{enumerate}[leftmargin=2em,label=\rm(\arabic*)]
  \item \textbf{Local system}: $\zeta_0 = \bigoplus_j \zeta_j$ where each
    $\zeta_j$ is an irreducible local system with fibres
    $V_j = \bigoplus_p \mathcal{E}_j^p|_{x_0}$ (the Hodge decomposition on the
    fibre at $x_0$).
  \item \textbf{Hodge filtration}: the Hodge filtration on each $V_j$ is
    $F^k V_j = \bigoplus_{p\geq k}\mathcal{E}_j^p|_{x_0}$.
  \item \textbf{Griffiths transversality}: $\nabla(F^k)\subset F^{k-1}\otimes
    \Omega^1_{X^*}$, which holds because $\theta$ shifts degree by $-1$
    (equation \eqref{eq:Griffiths}).
  \item \textbf{Polarization}: the flat indefinite Hermitian form $Q$ of
    Proposition~\ref{prop:polarization} is the polarization.
\end{enumerate}
\end{theorem}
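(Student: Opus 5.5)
The plan is to assemble the four items from the harmonic bundle data already constructed. By Remark~\ref{rem:mochizuki-r-rho}, the relevant harmonic bundle is the \emph{limit} of the $\cc^*$-deformation $(\mathcal{E},t\theta)$ of the tame harmonic bundle of $\zeta^{ss}$, and this limit is a system of Hodge bundles. So I would first fix that limiting tame harmonic bundle $(E,\theta,h)$, and let $\zeta_0$ be the monodromy of $\nabla_1=\nabla_h+\theta+\bar\theta$. Flatness of $\nabla_1$ is Proposition~\ref{prop:lambda-flat}. That $\zeta_0$ lies in $R^{\mathrm{nil}}$ follows from part~(2) of Remark~\ref{rem:mochizuki-r-rho} together with Proposition~\ref{prop:lambda-circle}. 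Item~(1) then comes from the polystable decomposition \eqref{eq:polystable-decomp} and the equivalence Theorem~\ref{thm:simpson-correspondence}. Each stable summand $(\mathcal{E}_j,\theta_j)$ corresponds to an irreducible flat summand $\zeta_j$, and the fibre at $x_0$ inherits the grading \eqref{eq:Hodge-decomp}.

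For items~(2) and~(3), I would define $F^k=\bigoplus_{p\geq k}\mathcal{E}_j^p$ and check two things.
\begin{itemize}
\item $F^k$ is a holomorphic subbundle for the flat structure $\nabla_1^{0,1}=\delbar_E+\bar\theta$. This needs $\bar\theta(\mathcal{E}^p)\subset\mathcal{E}^{p+1}$, which follows from Griffiths transversality \eqref{eq:Griffiths} once the grading is $h$-orthogonal, since then the $h$-adjoint raises the index by one.
\item $\nabla_1^{1,0}=\nabla_h^{1,0}+\theta$ maps $F^k$ into $F^{k-1}\otimes\Omega^1$. Here $\theta$ lowers the index by one, and $\nabla_h$ preserves each $\mathcal{E}^p$.
\end{itemize}
Both checks rest on the fact that the Hodge summands are $h$-orthogonal and preserved by the Chern connection. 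I would derive this from the $S^1$-invariance of the limit: the grading is the eigen-decomposition of the $U(1)$-action that realizes the isomorphism $(\mathcal{E},\theta)\cong(\mathcal{E},e^{i\phi}\theta)$, and by uniqueness in Theorem~\ref{thm:harmonic-exists} that action is $h$-unitary and holomorphic.

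Item~(4) is the step I expect to be the main obstacle. The positive definite harmonic metric $h$ is not itself flat for $\nabla_1$, because the $\theta+\bar\theta$ part is $h$-self-adjoint rather than skew. The correct flat form is therefore the \emph{sign-twisted} metric $Q:=\sum_p(-1)^p h|_{\mathcal{E}^p}$, which should be read as the content of Proposition~\ref{prop:polarization}. I would verify $dQ(u,v)=Q(\nabla_1u,v)+Q(u,\nabla_1v)$ term by term:
\begin{itemize}
\item the $\nabla_h$ term works because $\nabla_h$ is $h$-unitary and preserves each summand;
\item for the $\theta$ and $\bar\theta$ terms, $\theta$ moves $\mathcal{E}^p$ to $\mathcal{E}^{p-1}$ and flips the sign $(-1)^p$, so $Q(\theta u,v)+Q(u,\bar\theta v)=\pm\bigl(h(\theta u,v)-h(\theta u,v)\bigr)=0$.
\end{itemize}
With the sign conventions arranged this way, $Q$ is skew-compatible and hence flat. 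The Hodge--Riemann relations are then immediate: the first because the summands are $h$-orthogonal, the second because $(-1)^pQ=h>0$ on $\mathcal{E}^p$.

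Finally, I would check that $Q$ is $\nabla_1$-flat near $D$ with nilpotent residues, so that it descends to a polarization of the local system. This follows from the tameness of $h$ (its moderate growth), and it completes the identification of $\zeta_0$ as a polarized complex VHS.
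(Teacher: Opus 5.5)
Your argument is correct, but it proves a \emph{corrected} version of the statement, and it departs from the paper exactly where the paper's proof breaks down.

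The paper keeps $(E,\theta,h)$ equal to the harmonic bundle of $\zeta^{ss}$ itself. It gets (1)--(3) by applying Theorem~\ref{thm:VHS-decomp} to that Higgs bundle, gets (4) from Proposition~\ref{prop:polarization} with $Q=h$ (justified by ``$\nabla h=0$''), and concludes that $\zeta_0$ has the same monodromy as $\zeta^{ss}$. Taken literally, this would make every semisimple $\zeta^{ss}\in R^{\mathrm{nil}}$ a complex VHS, which fails in general (e.g.\ for a generic irreducible rank-two local system on a curve of genus at least two). A grading with $\theta$ of degree $-1$ gives $(\mathcal{E},\theta)\cong(\mathcal{E},t\theta)$ for all $t\in\cc^*$, and $\cc^*$-fixed points are not generic. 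Moreover, as you note, $h$ is $\nabla$-flat only when $\theta=0$.

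Your two changes are precisely the repairs needed: passing to the $\cc^*$-limit of Remark~\ref{rem:mochizuki-r-rho}, and replacing $h$ by $\sum_p(-1)^p h|_{\mathcal{E}^p}$. So the twisted form is a correction of Proposition~\ref{prop:polarization}, not a reading of it. The price is that your $\zeta_0$ is in general a different representation from $\zeta^{ss}$. You should say explicitly that you are not proving the statement as worded (``the harmonic data of $\zeta^{ss}$ at $\lambda=1$''). You should also say that the paper's closing claim of equal monodromy must be replaced by a path in $R^{\mathrm{nil}}$ from $\zeta^{ss}$ to $\zeta_0$, produced by the degeneration $t\theta$, $t\to0$. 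It cannot come from the circle $|\lambda|=1$ of Theorem~\ref{thm:deform-detailed}, whose points are $\cc^*$-fixed only if $\zeta^{ss}$ already is. That path is all Theorem~\ref{thm:deform-to-vhs} needs.

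Your route also fills a step the paper skips. A holomorphic grading with $\theta$ of degree $-1$ is a system of Hodge bundles, not yet a VHS, and your $h$-orthogonality argument is what makes $F^k$ holomorphic for $\delbar_E+\bar\theta$ and $\nabla_h$-compatible. Make this precise one stable summand at a time, where $h$ is unique up to a positive constant. With $g_\phi=e^{ip\phi}$ on $\mathcal{E}^p$, one has $g_\phi^*h=c(\phi)h$ for a continuous homomorphism $c:S^1\to\rr_{>0}$, so $c\equiv1$. On a polystable bundle with multiplicities, uniqueness holds only up to $\mathrm{Aut}(\mathcal{E},\theta)$, so your direct appeal to uniqueness needs this refinement.

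Your final step is superfluous. A polarized VHS on $X^*$ requires nothing along $D$, and a $\nabla_1$-flat form on $X^*$ is already a form on the local system. So the tameness of $h$ plays no role in this theorem.
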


\begin{proof}
Items (1)--(3) are a restatement of Theorem~\ref{thm:VHS-decomp}: the
decomposition \eqref{eq:Hodge-decomp} with Griffiths transversality
\eqref{eq:Griffiths} is precisely the definition of a complex VHS.  The
polarization (4) is furnished by Proposition~\ref{prop:polarization}.

That $\zeta_0$ has the \emph{same underlying topological type} as $\zeta^{ss}$
follows because the non-abelian Hodge correspondence is a diffeomorphism on
the underlying $\mathcal{C}^\infty$ bundles: the flat bundle $(E,\nabla_1) =
(E,\nabla) = \zeta^{ss}$ and the VHS $(E,\nabla_1,F^\hdot,Q) = \zeta_0$ use
the \emph{same} bundle $E$ and the \emph{same} flat connection $\nabla_1 =
\nabla$ at $\lambda=1$.  In particular, $\zeta_0$ has the same monodromy
representation as $\zeta^{ss}$, hence the same unipotent monodromy around each
$D_i$.
\end{proof}

\subsection{Step 5: The path in \texorpdfstring{$R^{\mathrm{nil}}$}{Rnil}}
\label{subsec:path-in-Rnil}

We now assemble all steps into the proof of Theorem~\ref{thm:deform-to-vhs}.

\begin{theorem}[Detailed proof of Theorem~\ref{thm:deform-to-vhs}]
\label{thm:deform-detailed}
There is an explicit path $\zeta(t)$, $t\in[0,1]$, inside $R^{\mathrm{nil}}$
from $\zeta = \zeta(0)$ to $\zeta_0 = \zeta(1)$, a polarized complex VHS.
\end{theorem}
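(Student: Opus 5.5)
The path will be built as the concatenation of two segments, each lying in $R^{\mathrm{nil}}$, and I will check membership in $R^{\mathrm{nil}}$ segment by segment.

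\emph{First segment: $\zeta$ to $\zeta^{ss}$.} Choose a Jordan--H\"older filtration of the flat bundle, as in \S\ref{subsec:semisimplify}. In a frame adapted to it, write $\zeta=\zeta^{ss}+\zeta^u$ with $\zeta^u$ strictly block upper-triangular. Then use the path of Lemma~\ref{lem:ss-path}. It is cleaner to realize this path as conjugation by $\mathrm{diag}(s^{j})$ on the $j$-th graded block, $s\to 0$. For $s\neq0$ this stays in the conjugacy class of $\zeta$, and the limit at $s=0$ is $\zeta^{ss}$, so homomorphism-ness is automatic. Lemma~\ref{lem:ss-Rnil} shows that the endpoint lies in $R^{\mathrm{nil}}$. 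Along the path the images $\zeta(t)(\gamma_i)$ are conjugates of $\zeta(\gamma_i)$ or the block-diagonal limit, hence unipotent.

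\emph{Second segment: $\zeta^{ss}$ to a VHS.} Apply Theorem~\ref{thm:harmonic-exists} to $\zeta^{ss}$ to obtain the tame harmonic bundle $(E,\theta,h)$. Let $(\mathcal E,\theta)$ be the associated polystable Higgs bundle from Theorem~\ref{thm:simpson-correspondence}. Consider the family $(\mathcal E,t\theta)$ for $t\in(0,1]$. Each member is again polystable of slope zero with nilpotent residues, so via Theorem~\ref{thm:simpson-correspondence} it gives a semisimple flat bundle $\zeta_t$. These lie in $R^{\mathrm{nil}}$ by the argument of Remark~\ref{rem:mochizuki-r-rho}(2): the residues $tN$ stay nilpotent and the parabolic weights stay trivial.

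By Mochizuki's theorem, quoted in Remark~\ref{rem:mochizuki-r-rho}(1) and Theorem~\ref{thm:VHS-decomp}, the limit as $t\to0$, or more precisely a limit reached after finitely many steps of the descending induction on $r(\rho)$, is a system of Hodge bundles. Its flat bundle $\zeta_0$ underlies a polarized complex VHS, by Proposition~\ref{prop:polarization} and Theorem~\ref{thm:zeta0-VHS}. I then reparametrize $t\in[0,1]$ and concatenate the two segments.

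\emph{Main obstacle.} The hard step is continuity of $t\mapsto\zeta_t$ at $t=0$ as a path in $R^{\mathrm{nil}}$, rather than only in the moduli space $\mathcal M^{\mathrm{nil}}$. The Hitchin--Simpson limit exists in the good quotient, and the limit point may be non-semisimple-like in the Betti space. To handle this, I would first use properness of the Hitchin map in the tame setting, which Mochizuki supplies, to get a limit in $\mathcal M^{\mathrm{nil}}$. I would then lift to $R^{\mathrm{nil}}$ using the fact that $R^{\mathrm{nil}}\to\mathcal M^{\mathrm{nil}}$ is a good quotient, so that orbit closures are connected. If the chosen lift lands in an orbit whose closure contains the polystable VHS point rather than equals it, I would append a further segment of the type used in the first segment. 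That extra segment degenerates a non-semisimple representation to its semisimplification inside $R^{\mathrm{nil}}$. Since the semisimplification of a VHS limit is again a VHS, the endpoint is unaffected.
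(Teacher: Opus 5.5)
Your argument is correct in outline. It takes a genuinely different route from the paper's, and in both segments yours is the sounder one.

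\emph{First segment.} The paper uses the linear path $\zeta^{ss}+(1-t)\zeta^u$ of Lemma~\ref{lem:ss-path}. Write $\zeta=D+U$ in the Jordan--H\"older frame, so that $U(gh)=D(g)U(h)+U(g)D(h)+U(g)U(h)$. Along the linear path the last term carries the factor $(1-t)$ on one side of the homomorphism identity and $(1-t)^2$ on the other. So once there are three or more Jordan--H\"older blocks it is in general not a path of representations; with two blocks $U(g)U(h)=0$ and it is fine. Your conjugation path, with exponents arranged so that the $(j,k)$ block ($j<k$) is multiplied by $s^{k-j}$, is therefore a needed repair rather than a cleaner presentation.

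\emph{Second segment.} The paper moves along the unit circle $\lambda=e^{i\pi s/2}$ in $\nabla_\lambda=\nabla_h+\lambda^{-1}\theta+\lambda\bar\theta$ and declares $\nabla_i$ to be the VHS. But for $|\lambda|=1$, $\nabla_\lambda$ is simply the flat connection of the harmonic bundle $(E,\lambda^{-1}\theta,h)$. Being a system of Hodge bundles is invariant under $\theta\mapsto\mu\theta$, so $\nabla_i$ underlies a VHS only if $\zeta^{ss}$ already did. The Hodge frame the paper uses there exists only for the limit object, as Remark~\ref{rem:mochizuki-r-rho} itself concedes. Your use of the real ray $(\mathcal E,t\theta)$, $t\to0$, with the limit supplied by Mochizuki's deformation theorem (Remark~\ref{rem:mochizuki-r-rho}(1)), is the standard and correct mechanism. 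Had the paper's formula worked, it would have given a genuinely explicit path. The cost of yours is that the endpoint is reached through an existence theorem, so the path is not explicit near $t=0$.

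\emph{Two points to tighten.}
\begin{enumerate}
\item At $t=0$, invoke that deformation theorem directly rather than a properness statement for a tame Hitchin map in higher dimension, which you would otherwise have to locate separately.
\item ``Orbit closures are connected'' does not by itself lift a path from $\mathcal M^{\mathrm{nil}}$ to $R^{\mathrm{nil}}$. Argue with connected components instead. Each connected component $Z_i$ of $R^{\mathrm{nil}}$ is $GL_r$-invariant, since $GL_r$ is connected. A good quotient maps closed invariant sets to closed sets and separates disjoint ones. Hence the finitely many sets $\pi(Z_i)$ are pairwise disjoint and clopen, with $\pi^{-1}\pi(Z_i)=Z_i$. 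The connected image of $t\mapsto[\zeta_t]$ therefore lies in a single $\pi(Z_i)$. So $\zeta^{ss}$ and every representative of $[\zeta_0]$, in particular the semisimple one underlying the VHS, lie in the same component $Z_i$, which is path-connected.
\end{enumerate}
This makes your extra degeneration segment unnecessary, and it is exactly what Theorem~\ref{thm:deformation-app} needs.
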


\begin{proof}
The path is the concatenation of two steps.

\textit{Step A: Path from $\zeta$ to $\zeta^{ss}$ (Lemma~\ref{lem:ss-path}).}
Define $\zeta_t^A := \zeta^{ss} + (1-t)\,\zeta^u$ for $t\in[0,1]$, where
$\zeta^u$ is the strictly upper-triangular part in the Jordan-H\"{o}lder frame.
By Lemma~\ref{lem:ss-Rnil} and Lemma~\ref{lem:ss-path}, this is a polynomial
path in $R^{\mathrm{nil}}$ from $\zeta^A_0 = \zeta$ to $\zeta^A_1 = \zeta^{ss}$.

\textit{Step B: Path from $\zeta^{ss}$ to $\zeta_0$ (the $\cc^*$-family).}
Let $(E,\theta,h)$ be the harmonic bundle of $\zeta^{ss}$
(Theorem~\ref{thm:harmonic-exists}).  Define $\zeta_s^B := \nabla_\lambda$
with $\lambda = e^{i\pi s/2}$ for $s\in[0,1]$.  Then:
\begin{itemize}[leftmargin=2em]
  \item At $s=0$: $\lambda=1$, so $\nabla_1 = \nabla = \nabla^{ss} = \zeta^{ss}$.
  \item At $s=1$: $\lambda = e^{i\pi/2} = i$, so
    $\nabla_i = \nabla_h + i^{-1}\theta + i\,\bar\theta
    = \nabla_h - i\theta + i\,\bar\theta$.
  \item All points lie in $R^{\mathrm{nil}}$ by Proposition~\ref{prop:lambda-circle}.
  \item The endpoint $\nabla_i$ is the flat connection of the VHS $\zeta_0$.
    To see this: at $\lambda = i$, the connection matrix in the Hodge frame
    \eqref{eq:Hodge-decomp} satisfies $i^{-1}\theta|_{\mathcal{E}_j^p}
    = -i\cdot\theta|_{\mathcal{E}_j^p}$ (going from degree $p$ to degree $p-1$)
    and $i\,\bar\theta|_{\mathcal{E}_j^{p-1}} = i\cdot\bar\theta$ (going from
    degree $p-1$ to degree $p$).  The resulting flat connection
    $\nabla_i$ preserves the Hodge filtration up to a phase, giving a
    connection compatible with the VHS structure.
\end{itemize}

\textit{Concatenation.}
Define the path
\[
  \zeta(t) := \begin{cases}
    \zeta^A_{2t} & 0\leq t\leq\tfrac{1}{2}, \\
    \zeta^B_{2t-1} & \tfrac{1}{2}\leq t\leq 1.
  \end{cases}
\]
This is a continuous path (piecewise smooth) in $R^{\mathrm{nil}}$ from
$\zeta(0) = \zeta$ to $\zeta(1) = \zeta_0$.  It is a path in $R^{\mathrm{nil}}$
by the results of Steps A and B.
\end{proof}

\subsection{Why this reduction suffices for the main theorem}
\label{subsec:reduction-suffices}

The full force of the deformation to a VHS (Theorem~\ref{thm:deform-detailed})
enters the proof of Theorem~\ref{thm:main} as follows.

\begin{enumerate}[leftmargin=2em,label=\rm(\arabic*),itemsep=6pt]

  \item \textbf{Deformation invariance.}  By Theorem~\ref{thm:deformation-app}
    (proved in Appendix~\ref{app:deligne-patch}), the Chern-Simons class
    \[
      \CS_p(\zeta)\;=\;\CS_p(\zeta_0)\;\in\; H^{2p-1}(X,\cc/\zz)
    \]
    because $\zeta$ and $\zeta_0$ lie in the same connected component of
    $R^{\mathrm{nil}}$ (they are connected by the path of Theorem
    \ref{thm:deform-detailed}).  It therefore suffices to prove torsion for
    $\zeta_0$.

  \item \textbf{The VHS gives a flat indefinite Hermitian form.}
    For $\zeta_0$ a polarized VHS, Theorem~\ref{thm:zeta0-VHS}(4) supplies a
    flat indefinite Hermitian form $Q$ on the underlying local system
    $L_0 = E|_{X^*}$ preserved by $\nabla_0 = \nabla^{\zeta_0}$.  The monodromy
    logarithms $N_i = \log T_i^{\zeta_0}$ are $Q$-self-adjoint:
    $Q(N_i u,v) = Q(u,N_i v)$ for all $u,v$.

  \item \textbf{$N$-isotropic filtrations exist.}
    Because the $N_i$ are $Q$-self-adjoint and commute (Cattani-Kaplan-Schmid
    theorem \cite{CKS}), the poset of $N_\hdot$-isotropic filtrations on each
    fibre is contractible (Proposition~\ref{prop:nisofilcontr}).  The
    \v{C}ech section theorem (Theorem~\ref{thm:cech-section}) then produces
    a hermitian pre-patching collection for the Deligne canonical extension
    of $\zeta_0$.

  \item \textbf{Volume regulator vanishes.}
    By Theorem~\ref{thm:main-hermitian} and Corollary~\ref{cor:maincor},
    the existence of the hermitian pre-patching collection forces
    $\vol_p(\nabla^{\Del}_{\zeta_0}) = 0$.

  \item \textbf{Torsion.}
    The vanishing of the volume regulator $\vol_p = 0$ (the imaginary component
    under $\cc/\zz = \rr/\zz\oplus i\rr$, Lemma~\ref{lem:CZ-splitting})
    combined with Borel's theorem, implies $\CS_p(\zeta_0)$ is torsion
    (Steps B and C of the proof of Theorem~\ref{thm:main}).
\end{enumerate}

The deformation to a VHS is thus the \emph{only point} where the specific
geometry of $\zeta$ is used.  All subsequent steps---the Hermitian patching, the
vanishing, and the torsion conclusion---depend only on the fact that the endpoint
$\zeta_0$ underlies a polarized VHS.  The non-abelian Hodge correspondence is the
bridge ensuring that such a VHS always exists within $R^{\mathrm{nil}}$.

\subsection{Diagram of the full reduction}
\label{subsec:diagram}

\begin{figure}[h]
\[
\xymatrix@R=1.8em@C=2.2em{
\zeta\in R^{\mathrm{nil}}
  \ar[r]^-{\text{Lem.~\ref{lem:ss-path}}}_-{\text{upper-tri.\ path}}
& \zeta^{ss}\in R^{\mathrm{nil}}
  \ar[r]^-{\text{Thm.~\ref{thm:harmonic-exists}}}_-{\text{Corlette-Mochizuki}}
& (E,\theta,h)\text{ harmonic}
  \ar[d]^-{\lambda\in S^1}_-{\text{Prop.~\ref{prop:lambda-circle}}} \\
& \CS_p(\zeta)=\CS_p(\zeta_0)
  \ar@{<-}[r]^-{\text{deform.\ inv.}}
& \zeta_0\in R^{\mathrm{nil}}
  \ar[d]^-{\text{Thm.~\ref{thm:zeta0-VHS}}}_-{\text{VHS decomp.}} \\
&& \zeta_0\text{ is a VHS with polar.\ }Q
  \ar[d]^-{\text{Prop.~\ref{prop:polarization}}} \\
&&
  N_i\ Q\text{-self-adj., }N\text{-iso.\ filts.\ contractible}
  \ar[d]^-{\text{Cor.~\ref{cor:maincor}}} \\
&&
  \vol_p(\nabla^{\Del}_{\zeta_0})=0
  \ar[d]^-{\text{Borel+splitting}} \\
&&
  \CS_p(\zeta)\text{ is torsion}
}
\]
\caption{The chain of deductions from $\zeta$ to the torsion of $\CS_p(\zeta)$.}
\label{fig:reduction}
\end{figure}

\begin{remark}
The passage $\zeta\to\zeta^{ss}\to(E,\theta,h)\to\zeta_0$ involves, at each
stage, objects of increasing geometric richness:
\begin{itemize}[leftmargin=2em]
  \item $\zeta\to\zeta^{ss}$: a purely algebraic (Jordan-H\"{o}lder) construction,
    working over any field.
  \item $\zeta^{ss}\to(E,\theta,h)$: an analytic construction using the
    solution of a non-linear elliptic PDE (the harmonic map equation), which
    requires $X$ to be K\"{a}hler (or quasi-projective in the tame setting).
  \item $(E,\theta,h)\to\zeta_0$: an algebro-geometric construction
    (the grading by Hodge type), made possible by the eigenspace decomposition
    of the harmonic metric.
\end{itemize}
Each step uses the specific geometry of the quasi-projective setting
$(X^*,D)$ in an essential way.
\end{remark}


\section{Deligne--Sullivan triviality of the canonical extension and the Rees model}
\label{sec:deligne-sullivan}

In this section we prove that the canonical extension of a flat bundle with
unipotent monodromy becomes $\mathcal{C}^\infty$-trivial over a finite cover
(Proposition~\ref{prop:DS-NCD} below), generalising to the full normal
crossings setting the two-divisor result of \cite{IS-2div}.  The key new
ingredient is that the multi-Rees construction of this paper (Section~\ref{sec:multrees})
plays the role played by the two-variable interpolation sheaf in
\cite{IS-2div}: in both cases the topological model of the canonical extension
is an explicit locally-free module that becomes trivial mod any prime once the
monodromy is trivialised.

\subsection{Setup and statement}

Throughout this section $X$ is a smooth projective variety over $\cc$,
$D = D_1\cup\cdots\cup D_k$ is a simple normal crossings divisor with smooth
irreducible components, and $X^* = X\setminus D$.  We fix a flat bundle
$(E,\nabla)$ on $X^*$ with \emph{unipotent monodromy} around each $D_i$
(i.e.\ the local monodromy operators $T_i = \rho(\gamma_i)$ are unipotent,
where $\gamma_i$ is a small loop around $D_i$).  Let $L = E^{\nabla}$ be the
associated local system, and let $(F,\nabla^{\Del})$ denote Deligne's canonical
extension to $X$ \cite{Deligne70} (a vector bundle on $X$ with logarithmic
connection having nilpotent residues along $D$).

\begin{proposition}[Deligne--Sullivan triviality, normal crossings case]%
\label{prop:DS-NCD}
In the above situation there exists a finite covering $\pi:\tilde X^*\to X^*$
such that, writing $\tilde X$ for the normalization of $X$ in $\tilde X^*$:
\begin{enumerate}[leftmargin=2em,label=\rm(\alph*)]
  \item $\tilde X$ is smooth and $\tilde X\to X$ is finite and \'etale over $X^*$,
    branched exactly along $D$, with ramification index $n_i\geq 1$ along the
    strict transform of $D_i$ for each $i$, and product ramification
    indices $(n_i)_{i\in I}$ in the normal directions along each stratum $D_I=
    \bigcap_{i\in I}D_i$;
  \item the canonical extension of $\pi^*E$ to $\tilde X$ is trivial as a
    $\mathcal{C}^\infty$-bundle.
\end{enumerate}
\end{proposition}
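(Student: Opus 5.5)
The plan is to follow the structure of \cite{IS-2div} and of Theorem~\ref{thm:effective-DS}, replacing the one- or two-parameter interpolation sheaf by the multi-Rees bundle $F_K$ of Theorem~\ref{thm:global-rees}. There are three ingredients. First, an arithmetic model: choose a finitely generated subring $A_0\subset\cc$ over which $\rho$, the weight filtrations $W(N_I)$ (with free graded pieces, after inverting finitely many elements including $(r-1)!$ so that $N_i=\log T_i$ is defined), and the morphisms of the functor $A\to\Xi_K$ of Construction~\ref{constr:A-to-Xi} are all defined. This is the content of Lemma~\ref{lem:DS-arith}, obtained by generic freeness exactly as in Lemma~\ref{lem:arith-model-exists}. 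Second, a finite cover (Lemma~\ref{lem:DS-finite-cover}). Third, a Hasse-principle argument on the Rees model (Lemmas~\ref{lem:DS-rees-model} and \ref{lem:DS-hasse}).

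For the cover, pick maximal ideals $\mathfrak{q}_1,\mathfrak{q}_2\subset A_0$ of distinct residue characteristics. Let $\Gamma$ be the kernel of $\rho$ modulo both ideals, and take the associated congruence cover $\widetilde X'^{\,*}$. As in Step~B of Theorem~\ref{thm:effective-DS-nc}, every graded representation and every morphism of the functor is then trivial mod $\mathfrak{q}_s$, using Lemma~\ref{lem:constancy}. The normalization $\widetilde X'$ is in general only normal, with quotient singularities over the strata $D_I$; there the local structure is governed by the finite-index sublattice $\Lambda_I\subset\zz^{|I|}$ of meridians. To obtain (a), compose with a Kawamata-type cover ramified to order $e=\exp(G_0)$ along each $D_i$. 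By Lemma~\ref{lem:rectification-exponent}, $e\zz^{|I|}\subset\Lambda_I$ for all $I$, so the fibre product is locally a product of cyclic covers $w_i^{e}=z_i$, hence smooth with normal crossings preimage of $D$ and product ramification along each stratum. Its normalization is the $\tilde X$ of (a). Because scaling a nilpotent residue preserves nilpotence, the pullback of $F$ is the canonical extension of $\pi^*E$, as in Corollary~\ref{cor:effective-betti}.

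For (b), transport the patching data to $\tilde X$ with the adapted covering of Proposition~\ref{prop:covering}. Lemma~\ref{lem:DS-rees-model} identifies the canonical extension, as a $\mathcal{C}^\infty$ bundle, with the pullback along $\tilde X\simeq\iiii\Cube A$ of the Rees bundle $F_{A_0}$. This bundle is a locally free sheaf defined over $A_0$ on the projective realization, by Proposition~\ref{prop:rees-projective}, glued by Proposition~\ref{prop:rees-face-detailed}. Now take the classifying map $f:\tilde X\to\mathrm{Grass}(r,\cc^{r+N})$ with $N\geq\tfrac12\dim_\rr\tilde X$. The Grassmannian is simply connected, so Sullivan's Hasse principle reduces null-homotopy of $f$ (on the relevant skeleton) to null-homotopy of each $\ell$-adic completion $f_{\hat\ell}$. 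For a given $\ell$, choose $s$ with $\mathrm{char}(\mathfrak{q}_s)\neq\ell$. Étale-homotopically, $f_{\hat\ell}$ is computed from the reduction of the Rees model mod $\mathfrak{q}_s$. Because all gluing data and graded local systems are constant mod $\mathfrak{q}_s$, the reduced Rees bundle is the multi-Rees bundle of a constant functor $\Cube A\to\Xi$, which is a direct sum of line bundles $\Oo(-\sum j_lQ_l)$ on each cube with constant gluing. It is therefore pulled back from a point in the $\aaa^1$-homotopy sense, and the Deligne--Sullivan lemma of \cite{DeSu} gives $f_{\hat\ell}\simeq*$.

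I expect the main obstacle to be this last comparison. The difficulty is making precise that the $\ell$-adic completion of the topological classifying map is controlled by the mod-$\mathfrak{q}_s$ reduction of an algebraic bundle on the cubical realization. This requires a specialization and comparison argument for the cubical scheme $\underline{\pp}_{A_0}\Cube A$ over $\Spec A_0$, analogous to \cite[Lemma~3.3]{IS-arXiv}. My approach is to work cube by cube on the smooth schemes $(\pp^1)^{|c|}$ and descend via the skeletal spectral sequence. Triviality of the Rees bundle over each affine cube (Corollary~\ref{cor:F-trivial-affine}) reduces everything to the gluing cocycle, which is constant mod $\mathfrak{q}_s$.
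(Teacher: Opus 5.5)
Your route is the paper's own: the arithmetic model of Lemma~\ref{lem:DS-arith}, a congruence cover, lattice rectification, and then the Rees model with Sullivan's Hasse principle and the lemma of \cite{DeSu} (Lemmas~\ref{lem:DS-rees-model} and~\ref{lem:DS-hasse}). Using the single subgroup $\Gamma$ together with Lemma~\ref{lem:constancy}, and the uniform exponent $e$ of Lemma~\ref{lem:rectification-exponent}, is exactly Theorem~\ref{thm:effective-DS-nc}(b),(c).

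The step that does not deliver the statement is the rectification. Kawamata's covering lemma gives a smooth cover with ramification $e$ along each $D_i$ by extracting $e$-th roots of $D_i+H_{ij}$, where the $H_{ij}$ are auxiliary general divisors. Your fibre product is therefore also branched along $\bigcup H_{ij}$, which meets $X^*$. So $\tilde X\to X$ is neither \'{e}tale over $X^*$ nor branched exactly along $D$, $\tilde X$ is not the normalization of $X$ in a covering of $X^*$, and (a) is not what you have proved. Your local computation near $D$ is fine: there the normalized fibre product is locally a disjoint union of copies of $w_i^e=z_i$.

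A better choice of cover cannot remove the extra branching. Take $X=\pp^2$ and $D=L_1+L_2$ two lines. Then $X^*\cong\cc^*\times\cc$ and $\gamma_1=\gamma_2^{-1}$ in $\pi_1(X^*)\cong\zz$. Every subgroup of index $m>1$ therefore meets the corner lattice $\zz\gamma_1\oplus\zz\gamma_2$ in $\{(a,b): m\mid a-b\}$, which contains $(1,1)$ and is never rectangular. Hence the normalization of $X$ in any nontrivial covering of $X^*$ has a cyclic quotient singularity over $L_1\cap L_2$, while the congruence cover is nontrivial whenever $\rho\not\equiv 1\pmod{\mathfrak q_s}$. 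Here $F$ is in fact trivial, so this does not refute the Proposition, but it shows that rectification inside coverings of $X^*$ is impossible in general.

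The paper's Lemma~\ref{lem:DS-finite-cover}(ii) has the same lacuna: its Kummer covers live on the local pieces $U_I^*$, and ``intersecting'' them does not produce a covering of $X^*$. The same goes for the \'{e}taleness built into Definition~\ref{def:kawamata}. What your argument honestly gives is a smooth $\tilde X$ satisfying (b), once (a) is weakened to allow branching along a divisor $H$ with $D+H$ normal crossings. In that case the pullback of $F$ is still the canonical extension, and the monodromy on $\tilde X\setminus\pi^{-1}(D)$ still lies in $\Gamma$ because that map factors through $\widetilde X'$. If \'{e}taleness over $X^*$ is kept, smoothness must be dropped, and (b) on the normal congruence cover is then Theorem~\ref{thm:effective-DS-nc}(b).

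A smaller correction in (b): reduction modulo $\mathfrak q_s$ kills the monodromy, not the filtrations. The reduced functor is therefore not constant, since its morphisms are still the nontrivial filtrations $\Gr^{W(I)}(W(J))$, and the reduced Rees bundle is not a constant bundle with constant gluing. What is true is the following. Via the global trivialization of the reduced local system on $\tilde X^*$, every reduced $W(I)$ is a filtration of one fixed space $\bar V$, compatibly with refinement. So the reduced functor receives a natural transformation from $\bar V$ with its trivial filtration, i.e.\ it factors through a poset with a minimum element, and that is what makes the reduced Rees model $\aaa^1$-null. Turning this into $f_{\hat\ell}\simeq *$ is the specialization step you single out. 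Your cube-by-cube plan does not yet supply it, and neither does the paper, which delegates it to \cite{DeSu} and \cite[Lemma~3.3]{IS-arXiv}.
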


The proof occupies \S\S\ref{subsec:DS-arith}--\ref{subsec:DS-hasse} below.
It follows the four-step strategy of the two-divisor case \cite[Prop.~6.6]{IS-2div},
with the two-variable interpolation sheaf (= multi-Rees module) of that paper
replaced by the multi-Rees construction of Section~\ref{sec:multrees}.

\begin{remark}\label{rem:DS-why}
In the two-divisor case ($k=2$), two separate difficulties had to be overcome:
(A) failure of common refinement at the corner (two filtrations of a vector
space generically do not admit a common refinement), and (B) failure of the
codimension argument for the two-parameter deformation collar; see
\cite[\S1]{IS-2div}.  Both were resolved by keeping the \emph{pair}
$(W^{(1)},W^{(2)})$ at the corner and using the elementary two-filtration
splitting lemma of Deligne.  For $k\geq 3$ this pair strategy breaks down
since three filtrations generically have no simultaneous splitting; however,
the Rees construction of this paper sidesteps the issue entirely by always
placing the \emph{single} filtration $W(N_I) = W\!\left(\sum_{i\in I}N_i\right)$
at the stratum $D_I$ (on the cover $U_I$ of the adapted covering), and the
adapted covering ensures that on every non-empty intersection $U_I\cap U_J$
there is a containment of index sets $I\subset J$ or $J\subset I$, so that
only \emph{pairs of sequentially compatible} weight filtrations are ever compared.
For such pairs, the Cattani--Kaplan--Schmid theorem \cite{CKS} (and
Mochizuki's extension \cite{Mochizuki} in the quasi-projective case) guarantees
a common splitting, so the difficulty (A) does not arise.
\end{remark}

\subsection{Arithmetic model}
\label{subsec:DS-arith}

\begin{lemma}\label{lem:DS-arith}
Let $\{(W(I),\tau(I))\}_{I\in A}$ be the \v{C}ech patching data produced by
Corollary~\ref{cor:cech-exists} (or Corollary~\ref{cor:cech-exists-isotropic}
in the VHS case), over the adapted covering $\{U_I\}_{I\in A}$ of
Proposition~\ref{prop:covering}.  There exists a subring $A_0\subset\cc$,
finitely generated as a $\zz$-algebra, and a free $A_0$-module $V_{A_0}\cong A_0^r$
with an action of $\pi_1(X^*)$ through $\rho$, such that:
\begin{enumerate}[leftmargin=2em,label=\rm(\roman*)]
  \item the monodromy logarithms $N_i = \log T_i\in\End_{A_0}(V_{A_0})$;
  \item for every non-empty multi-index $I\in A$, the weight filtration
    $W(N_I)$ of $N_I = \sum_{i\in I}N_i$ has all steps as free $A_0$-direct
    summands of $V_{A_0}$;
  \item the successive quotients $\Gr^{W(N_I)}_j(V_{A_0})$ are free
    $A_0$-modules, and the trivialization $\tau(I)$ (a flat connection on
    $\Gr^{W(N_I)}(L|_{U_I^*})$) is defined over $A_0$;
  \item for each covering pair $I\subset J$, the functor $A\to\Xi_K$
    (Construction~\ref{constr:A-to-Xi}) at the morphism $I\to J$ is defined
    over $A_0$: the filtration $W(N_I\leq N_J):= \Gr^{W(N_I)}(W(N_J))$ on
    $V(I)_{A_0}:=\Gr^{W(N_I)}(V_{A_0})$ and the isomorphism
    $\Gr^{W(N_I\leq N_J)}(V(I)_{A_0})\cong V(J)_{A_0}$ are
    free $A_0$-modules and $A_0$-linear maps.
\end{enumerate}
\end{lemma}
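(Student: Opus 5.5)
The argument is a spreading-out (generic freeness) argument, in the same spirit as Lemma~\ref{lem:arith-model-exists}. All the data in question are finitely many matrices over $\cc$. We adjoin their entries to $\zz$ and then invert one nonzero element so that every module in sight becomes free.

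\textbf{Step 1: a ring for the representation.} Since $\pi_1(X^*)$ is finitely generated, choose generators $\gamma_1,\dots,\gamma_s$. Let $A_1\subset\cc$ be the $\zz$-algebra generated by the entries of $\rho(\gamma_j)^{\pm1}$. Then $\rho$ takes values in $GL_r(A_1)$ and acts on $V_{A_1}=A_1^r$. Each $T_i$ is unipotent of size $r$, so $N_i=\log T_i=\sum_{m=1}^{r-1}(-1)^{m+1}(T_i-1)^m/m$ is a finite sum with denominators dividing $(r-1)!$. Setting $A_2:=A_1[1/(r-1)!]$ gives (i).

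\textbf{Step 2: adjoin the filtration and patching data.} The adapted covering has finitely many $I\in A$. For each $I$ the weight filtration $W(N_I)$ over $\cc$ has finitely many steps. Choose bases adapted to all steps, and for each covering pair $I\subset J$ a common splitting of $W(N_I)$ and $W(N_J)$, which exists by Lemma~\ref{lem:W-commute}. Also take bases of the flat sections giving each $\tau(I)$, computed at a base point of $U_I$ from the monodromy of $\Gr^{W(N_I)}$ on $\pi_1(U_I^*)$. Let $A_3$ be $A_2$ with the entries of all these finitely many base-change matrices, and their inverses' determinants, adjoined.

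Over $A_3$ each step of each $W(N_I)$ is then spanned by part of a basis of $V_{A_3}$. Hence each step is a free direct summand, and the graded pieces are free, which gives (ii) and (iii). The filtrations $\Gr^{W(N_I)}(W(N_J))$ and the identifications $g(I\le J)$ are expressed in these bases by the same $A_3$-matrices. This gives (iv) and makes the morphisms of the functor $A\to\Xi_K$ of Construction~\ref{constr:A-to-Xi} $A_3$-linear.

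\textbf{Step 3: a generic-freeness fallback.} If one prefers not to assume that the adapted bases have $\cc$-entries that remain unimodular, use the fallback instead. The finitely many modules (the steps, their quotients, and the $\Gr$ of $\Gr$) are finitely generated over the noetherian domain $A_3$. By Grothendieck's generic freeness there is $0\ne f\in A_3$ such that all of them are free over $A_0:=A_3[1/f]$. Direct-summand-ness then follows because quotients by free modules are free, exactly as in Step~1 of Lemma~\ref{lem:constancy}.

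\textbf{Main obstacle.} The main obstacle is (iv), the compatibility between pairs. One must check that the weight filtration $W(N_I)$, characterized over $\cc$ by the Jacobson--Morozov conditions, is really the base change of the $A_0$-filtration defined by the chosen bases. One must also check that the induced filtration on $\Gr^{W(N_I)}$ is the relative one, so that the isomorphism $\Gr^{W(N_I\le N_J)}V(I)\cong V(J)$ holds integrally and not merely after tensoring with $\cc$.

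I would handle this by defining everything over $A_0$ as images of explicit matrices, and using the characterization $W(N)_k=\sum_{a-b=k}\ker N^{a+1}\cap\operatorname{im}N^{b}$. This characterization commutes with flat base change after inverting enough elements to make all kernels and images direct summands. That again costs one more localization by generic freeness.
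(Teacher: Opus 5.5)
Your proposal is correct and is essentially the paper's proof: the same ring $A_2=A_1[1/(r-1)!]$, the weight-filtration steps described by kernel/image conditions on $N_I$, and a single generic-freeness localization. The direct-summand property comes from freeness (hence projectivity) of the quotients $V/W(N_I)_j$. You already put those quotients on your list, so the phrase ``quotients by free modules are free'' (false in general) should simply be replaced by that reasoning. Your Step~2 variant, which adjoins bases adapted to each $W(N_I)$ and bi-adapted to each pair $W(N_I),W(N_J)$ together with the inverses of their determinants, gives (ii)--(iv) directly without generic freeness. It is also more explicit than the paper about the bifiltered pieces $W(N_I)_a\cap W(N_J)_b$ needed for (iv), which the paper's generic-freeness list does not mention.
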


\begin{proof}
Since $\pi_1(X^*)$ is finitely presented (quasi-projective varieties have
finitely presented fundamental groups), choose generators including small loops
$\gamma_i$ around each $D_i$.  Let $A_1 = \zz[\text{matrix entries of
}\rho(g)^{\pm 1}]_{g\in\text{generators}}$.  Since each $T_i = \rho(\gamma_i)$
is unipotent, the logarithm series $N_i = \log T_i$ is a finite sum and has
denominators dividing $(r-1)!$ (where $r=\mathrm{rk}\,E$); let $A_2 = A_1[(r-1)!^{-1}]$.
Over $A_2$ we have $N_i\in\End(V_{A_2})$.  Commutativity of the $N_i$ is
needed, and holds, \emph{locally at each stratum}.  For every non-empty $I$
with $D_I\neq\emptyset$ the normal crossings local model gives
$U_I\cong\Delta^n$ with
$U_I^*\cong(\Delta^*)^{|I|}\times\Delta^{\,n-|I|}$, so that
$\pi_1(U_I^*)\cong\zz^{|I|}$ is generated by the coordinate meridians
$\gamma_i$, $i\in I$.  Hence the local monodromies $T_i$, $i\in I$, commute in
this neighbourhood of $D_I$, and so do their logarithms $N_i$, $i\in I$.  This
is exactly the commutativity required to form $N_I=\sum_{i\in I}N_i$ and its
monodromy weight filtration $W(N_I)$.  No global commutativity is asserted:
meridians around arbitrary distinct components of $D$ need not commute in
$\pi_1(X^*)$, and the argument never uses that they do.

For each non-empty $I$, the weight filtration $W(N_I)$ has steps
$W(N_I)_j = \ker\bigl((N_I)^{|j|+1}\bigr)\cap\text{image conditions}$,
which are finitely generated $A_2$-submodules of $V_{A_2}$.  By \emph{generic
freeness} (Grothendieck: a finitely generated module over a finitely generated
$\zz$-algebra becomes free after inverting finitely many elements), applied
simultaneously to the finitely many modules $W(N_I)_j$, the quotients
$V_{A_2}/W(N_I)_j$, and the associated graded pieces $\Gr^{W(N_I)}_j$ (there
are only finitely many, since $A$ is finite and each weight filtration has
finitely many steps), there exists a \emph{single} localization
$A_0 = A_2[S^{-1}]$ over which all of these simultaneously become free
$A_0$-modules. Because each quotient $V_{A_0}/W(N_I)_j$ is then free, hence
projective, over $A_0$, the exact sequence
$0\to W(N_I)_j\to V_{A_0}\to V_{A_0}/W(N_I)_j\to0$ splits, so $W(N_I)_j$ is a
free \emph{direct summand} of $V_{A_0}$, as required (freeness of $W(N_I)_j$
and of $\Gr^{W(N_I)}_j$ alone would not by itself give this: a short exact
sequence of finitely generated modules over a general ring need not split
even when the sub- and quotient-modules are individually free, unless the
splitting is arranged for, as here, by choosing the quotient free/projective
before extracting the summand).  The morphisms $\Gr^{W(N_I\leq N_J)}$ in the
functor $A\to\Xi_K$ are then $A_0$-linear maps between free $A_0$-modules.
Since $A$ (the nerve poset) is finite, only finitely many elements of $A_2$
need to be inverted, so $A_0$ remains a finitely generated $\zz$-algebra.
\end{proof}

\subsection{Finite cover and lattice rectification}
\label{subsec:DS-finite-cover}

\begin{lemma}\label{lem:DS-finite-cover}
With $A_0$ as in Lemma~\ref{lem:DS-arith}, let $\mathfrak{q}_1, \mathfrak{q}_2$
be two maximal ideals of $A_0$ with distinct residue characteristics
$\ell_1\neq\ell_2$.  Then there exists a finite covering $\pi:\tilde X^*\to X^*$
such that:
\begin{enumerate}[leftmargin=2em,label=\rm(\roman*)]
  \item the representations $\rho$ and all the graded representations
    $\Gr^{W(N_I)}(\rho|_{\pi_1(U_I^*)})$ for $I\in A$, and all the morphisms
    in the functor $A\to\Xi_K$, become trivial modulo $\mathfrak{q}_1$ and
    modulo $\mathfrak{q}_2$ after pullback along $\pi$;
  \item for each stratum $D_I = \bigcap_{i\in I}D_i$ (with $I\neq\emptyset$),
    the induced finite-index sublattice of the central lattice
    $\zz^{|I|} = \langle\gamma_i\rangle_{i\in I}\subset\pi_1(U_I^*)$ is
    \emph{rectangular}: of the form $\bigoplus_{i\in I}n_i\zz$ for some
    $n_i\geq 1$, so that the normalization $\tilde X$ is smooth along the
    preimage of $D_I$.
\end{enumerate}
\end{lemma}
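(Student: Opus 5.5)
The construction proceeds in two stages: first a congruence covering to obtain (i), then a Kawamata-type rectification of the meridian lattices to obtain (ii). Using Lemma~\ref{lem:DS-arith}, all the data are defined over $A_0$: the representation $\rho$ on $V_{A_0}$, the filtrations $W(N_I)$ with free direct-summand steps, the graded representations, and the $A_0$-linear morphisms of the functor $A\to\Xi_K$. Set
\[
\Gamma:=\ker\Bigl(\pi_1(X^*)\to GL_r(A_0/\mathfrak{q}_1)\times GL_r(A_0/\mathfrak{q}_2)\Bigr).
\]
This is a finite-index normal subgroup, so it defines a connected finite étale cover $\pi':\widetilde X'^{\,*}\to X^*$.

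For (i), the steps of $W(N_I)$ are free direct summands with free graded quotients. Hence reduction modulo $\mathfrak{q}_s$ is exact on each filtration, and $\Gr^{W(N_I)}(V_{A_0})\otimes A_0/\mathfrak{q}_s$ is a subquotient of $V_{A_0}\otimes A_0/\mathfrak{q}_s$ as a $\pi_1(U_I^*)$-representation. Since $\Gamma$ kills the reduced monodromy of $V_{A_0}$, it kills the monodromy of every such subquotient. The morphisms $\Gr^{W(N_I\leq N_J)}(V(I)_{A_0})\cong V(J)_{A_0}$ are $A_0$-linear maps between these subquotients and carry no additional monodromy. The extension across $D$ is handled as in Step~3 of Lemma~\ref{lem:constancy}: the map $\pi_1(U_I^*)\twoheadrightarrow\pi_1(U_I)$ is surjective, so triviality mod $\mathfrak{q}_s$ descends. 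This gives (i) already for $\pi'$, and any further covering preserves it.

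For (ii), fix $I\ne\emptyset$ and let $\Lambda_I\subset\zz^{|I|}=\langle\gamma_i\rangle_{i\in I}$ be the preimage of $\Gamma$. The quotient $\zz^{|I|}/\Lambda_I$ embeds in the finite image group $G_0$, so with $e=\exp(G_0)$ we have $e\zz^{|I|}\subset\Lambda_I$ uniformly in $I$; this is Lemma~\ref{lem:rectification-exponent}. Then choose an auxiliary Kawamata covering $Y\to X$ ramified to order exactly $e$ along every $D_i$, with $Y$ smooth and its branch divisor in normal crossings. This uses Kawamata's construction with general hyperplane sections $H_{i}$, $eH_i\sim D_i+\ldots$, chosen so that the extra branch locus meets $D$ transversally. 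Take $\tilde X^*$ to be a connected component of the fibre product of $Y^*\to X^*$ with $\widetilde X'^{\,*}$, restricted over the locus where the auxiliary branching is absorbed. Locally at a point of $D_I$, the meridian lattice of the composite becomes the rectangular lattice $e\zz^{|I|}$, because $e\zz^{|I|}\subset\Lambda_I$ forces the fibre-product extension to be unramified over $Y$ near $D_I$. In the polydisc model, the normalization of $\Delta^n$ in a cover of $(\Delta^*)^{|I|}\times\Delta^{n-|I|}$ with group $\zz^{|I|}/\bigoplus n_i\zz$ is $w_i^{n_i}=z_i$, which is smooth. This gives (ii) with $n_i=e$.

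The main obstacle is the global part of the rectification. The local Kawamata cover must be realized globally on the projective $X$ without the auxiliary branch divisors $H_i$ destroying either the smoothness or the étaleness over $X^*$ that the statement asserts. I would first try Kawamata's covering lemma in its standard form and check that the extra ramification along the $H_i$ can be removed by the component choice above, using Abhyankar's lemma to see that the fibre product is étale over $Y$ along the $H_i$. If that fails, I would weaken the conclusion to smoothness along the preimage of $D$, which is all that (ii) actually asserts.
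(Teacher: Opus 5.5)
Your argument for (i) is correct, and cleaner than the paper's. The paper ``intersects'' $\Gamma$ with kernels of graded representations of the local groups $\pi_1(U_I^*)$, which are subgroups of different groups. Your subquotient argument shows that $\Gamma$ alone already trivializes everything, which is the observation the paper itself makes in the proof of Theorem~\ref{thm:effective-DS-nc}(b).

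For (ii) you take a different route. The paper rectifies stratum by stratum with the local Kummer cover of $N\zz^{|I|}\subset\Lambda_I$ and then ``intersects all these Kummer covers''; you use one global Kawamata cover $Y\to X$ with exponent $e$. The gap is exactly the step you flag, and the proposed repair fails. Abhyankar's lemma does show that the normalization $\tilde X$ of a component of $Y\times_X\widetilde X'$ is \'etale over $Y$ everywhere. Along $D$ this is because $e\zz^{|I|}\subset\Lambda_I$, and along the $H_i$ because $\widetilde X'\to X$ is unramified there; so $\tilde X$ is smooth. But every component of that fibre product maps to $Y$. Hence $\tilde X\to X$ factors through $Y\to X$ and has ramification index at least $e$ along the preimage of $H_i\cap X^*$. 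No choice of component removes ramification already present in $Y\to X$. Restricting to where it is ``absorbed'' gives a covering of $X^*\setminus\bigcup_i H_i$, not of $X^*$.

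No cleverer construction can fix this, because (i) and (ii) are in general incompatible for coverings \'etale over $X^*$. Take $X=\pp^2$ and $D=L_1+L_2$ two lines. Then $\pi_1(X^*)\cong\zz$ with $\gamma_2=\gamma_1^{-1}$, so the corner map $\zz^2\to\pi_1(X^*)$ is $(a,b)\mapsto a-b$. Let $\rho(\gamma_1)=\bigl(\begin{smallmatrix}1&1\\0&1\end{smallmatrix}\bigr)$, $A_0=\zz$, $\mathfrak q_1=(2)$, $\mathfrak q_2=(3)$. Condition (i) forces the covering to correspond to $m\zz$ with $6\mid m$. The corner lattice $\{(a,b):a\equiv b\bmod m\}$ then contains $(1,1)$ yet has index $m>1$, so it is not rectangular; the normalization acquires an $A_{m-1}$ singularity over $L_1\cap L_2$. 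The paper's proof conceals the same problem: the local Kummer covers are subgroups of the local groups $\pi_1(U_I^*)$, and intersecting them does not produce a subgroup of $\pi_1(X^*)$.

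Your fallback should therefore not be to weaken smoothness, which Kawamata gives for free, but to drop \'etaleness over $X^*$. What your construction actually proves is the following. There is a finite surjective $\pi:\tilde X\to X$ with $\tilde X$ smooth projective, branched along $D$ and the auxiliary $H_i$. The pullback $\pi^*\rho$ on $\pi^{-1}(X^*)$ has monodromy in $\Gamma$, so (i) holds. The meridian lattices along $D$, away from the $H_i$, are $e\zz^{|I|}$.

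This weaker statement is all the later arguments use. The canonical extension still pulls back, and the transfer and Hasse-principle steps need only a finite surjective map from a smooth projective variety. But the phrase ``finite covering of $X^*$'', Proposition~\ref{prop:DS-NCD}(a), and the degree bound $G\cdot e^{k}$ of Theorem~\ref{thm:effective-DS-nc}(c) must be revised accordingly.
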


\begin{proof}
\textit{Part (i).}  Since $A_0$ is a finitely generated $\zz$-algebra and
$\mathfrak{q}_j$ is maximal, the residue field $A_0/\mathfrak{q}_j$ is finite,
so $\prod_j\mathrm{GL}_r(A_0/\mathfrak{q}_j)$ is a finite group.  The kernel of
$\pi_1(X^*)\to\prod_j\mathrm{GL}_r(A_0/\mathfrak{q}_j)$ (via $\rho$) is a normal
finite-index subgroup; let $\pi':\tilde X^{*\prime}\to X^*$ be the corresponding
covering.  Along each $U_I^*$, the representation $\Gr^{W(N_I)}(\rho)$ is
also an $A_0$-valued representation of a finitely generated group (a quotient
of $\pi_1(U_I^*)$); applying the same argument to each and intersecting
(finitely many) subgroups, we obtain a single covering along which all the
data of the functor $A\to\Xi_K$ are simultaneously trivial mod
$\mathfrak{q}_1,\mathfrak{q}_2$.

\textit{Part (ii).}  For each stratum $D_I$, the normal subgroup $\ker(\pi_1(U_I^*)\to
\pi_1(U_I))$ is normally generated by the $\gamma_i$, $i\in I$, which form the
\emph{central meridian subgroup of the chosen local normal crossings
neighbourhood} $U_I^*$: in the local model
$U_I^*\cong(\Delta^*)^{|I|}\times\Delta^{\,n-|I|}$ the coordinate meridians
generate $\pi_1(U_I^*)\cong\zz^{|I|}$, which is abelian.  Centrality is thus a
statement about the local group $\pi_1(U_I^*)$ only, and is not claimed for the
images of the $\gamma_i$ in the global group $\pi_1(X^*)$.  The covering $\pi'$ restricts, near $D_I$, to a
finite-index sublattice $\Lambda_I\subset\zz^{|I|}$ of the central torus.  By
exactly the argument of \cite[Lem.~6.8(ii)]{IS-2div}: any finite-index
sublattice $\Lambda\subset\zz^k$ contains $N\zz^k$ (with $N = [\zz^k:\Lambda]$)
as a rectangular sublattice; the further Kummer cover corresponding to
$N\zz^k\subset\Lambda$ is smooth at the corner stratum (since the toric variety
of the cone $\rr^k_{\geq 0}$ with the lattice $N\zz^k$ is a product of
$k$ copies of $\Delta\to\Delta$, $w_i\mapsto w_i^N$, which is smooth).  Taking
the intersection of all these Kummer covers over all strata $D_I$ gives the
final covering $\pi:\tilde X^*\to X^*$ satisfying both (i) and (ii).
\end{proof}

\begin{remark}
Part (ii) is the \emph{lattice rectification} step.  In the two-component case
the lattice is $\zz^2$ and the argument is \cite[Lem.~6.8(ii)]{IS-2div};
for general $k$, the same argument applies since $N\zz^k\subset\Lambda$ for
$N = [\zz^k:\Lambda]$ regardless of the shape of $\Lambda$.  The key point is
that the normal crossings hypothesis makes the $\gamma_i$, $i\in I$, into the
central meridian subgroup of the \emph{local} group $\pi_1(U_I^*)$, which is
what makes the local cover of a polydisk extend smoothly to a product of
branched covers.  Here too, no centrality in the global group $\pi_1(X^*)$ is
claimed or used.
\end{remark}

\subsection{The Rees topological model of the canonical extension}
\label{subsec:DS-rees-model}

We now show that the global Rees bundle $F_K$ of Theorem~\ref{thm:global-rees}
(or rather its base change to $\cc$, restricted to the interval realization
$\iiii\Cube A\simeq X$) provides the required topological model of the canonical
extension.

\begin{lemma}\label{lem:DS-rees-model}
The $\mathcal{C}^\infty$ complex vector bundle $F_K^{\cc}|_{\iiii\Cube A}$ on
$X\simeq\iiii\Cube A$ (the Rees bundle of Theorem~\ref{thm:global-rees},
restricted to the interval realization) is isomorphic, as a $\mathcal{C}^\infty$
bundle, to the canonical extension $F$ of $L$.  In particular, the classifying
map of $F$ factors as
\[
  X \;\simeq\; |\mathbb{I}\Cube A| \;\longrightarrow\; BGL_r(\cc)^+,
\]
where the second map is classified by the Rees local system of the functor
$A\to\Xi_K$ (Construction~\ref{constr:A-to-Xi}).

If the functor $A\to\Xi_K$ and all its data are defined over a subring
$A_0\subset\cc$ (as in Lemma~\ref{lem:DS-arith}), then the Rees bundle
descends to a locally free sheaf of $A_0$-modules on a model of $\iiii\Cube A$
over $\Spec A_0$, and the reduction modulo any ideal $\mathfrak{q}\subset A_0$
is a locally free $A_0/\mathfrak{q}$-module on the corresponding finite-field model.
\end{lemma}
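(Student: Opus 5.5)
The plan is to compare the two bundles through their patching data and then invoke uniqueness. First, I would equip the canonical extension $F$ with the strict collection of patching data $\{(W(I),\tau(I))\}_{I\in A}$ of Corollary~\ref{cor:cech-exists}. These data are extended across $D$ as in Construction~\ref{constr:A-to-Xi}. The steps of $W(I)$ extend to strict subbundles of $F|_{U_I}$ because the residues $N_i$ preserve them. The trivialization $\tau(I)$ extends because $\Gr^{W(I)}$ has trivial monodromy near $D\cap U_I$ (Remark~\ref{rem:catlocsys}). Pulling back along the homotopy equivalence $\pi:\iiii\Cube A\to X$ of Propositions~\ref{prop:nerve-htpy} and~\ref{prop:cuberealize} then gives a bundle $\pi^*F$ with a stratified collection of patching data. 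Its associated functor $\Cube A\to\Xi$ is $\Cube A\xrightarrow{\mu}A\to\Xi_K$. On the other side, Theorem~\ref{thm:global-rees} says that $F_K^{\cc}|_{\iiii\Cube A}$ carries stratified patching data with the same functor. Theorem~\ref{thm:uniqueness} then yields a $\mathcal{C}^\infty$ isomorphism $\pi^*F\cong F_K^{\cc}|_{\iiii\Cube A}$. This is exactly the argument already used in Proposition~\ref{prop:Del-Rees-compare}. Since $\pi$ is a homotopy equivalence and $\mathcal{C}^\infty$ bundles are classified up to homotopy, we get $F\cong F_K^{\cc}$ under $X\simeq\iiii\Cube A$.

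For the factorization of the classifying map, I would take the classifying map of $F_K$ produced by Construction~\ref{constr:ctaut}. It is applied to the bundle on the $\aaa$-realization, whose pieces are the Rees modules on the affine cubes. I then compose it with the homotopy equivalence~\eqref{eq:vertex-incl}. The topological bundle underlying $F_K^{\cc}$ is classified by the composite of this map with $BGL(\cc)^\delta\to BGL(\cc)\simeq BU$, through the plus construction. By the first paragraph, this classifies $F$.

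For the arithmetic descent, I would use Lemma~\ref{lem:DS-arith}. It says that every filtration step is a free direct summand with free graded pieces, and that all transition isomorphisms are $A_0$-linear. Given this, the multi-Rees modules $\xi(G(c)_{A_0};\ldots)$ are defined over $A_0[y_1,\dots,y_k]$. The case $F=A_0$ of Proposition~\ref{prop:rees-projective} shows they are projective; in fact they are free, since splittings exist over $A_0$ after the localization of Lemma~\ref{lem:DS-arith}. The face identifications of Proposition~\ref{prop:rees-face-detailed} are also defined over $A_0$. Together these glue to a locally free sheaf on the cubical $A_0$-scheme $c\mapsto\pp^{|c|}_{A_0}$. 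Reduction modulo $\mathfrak q$ commutes with the Rees construction, because all the quotients $V/W_j$ are free, so the relevant $\mathrm{Tor}_1$ vanishes as in Step~1 of Lemma~\ref{lem:constancy}. The reduction is therefore again locally free.

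The main obstacle I expect is the application of Theorem~\ref{thm:uniqueness}. The quoted uniqueness argument is sketchy, so I would make it concrete by induction on the skeleta of $\iiii\Cube A$. On each cube, the space of $\mathcal{C}^\infty$ isomorphisms that respect the filtration and induce $\tau$ on the graded pieces is an affine space modelled on strictly upper-triangular endomorphisms. That space is contractible, so an isomorphism fixed on $\partial\iiii(c)$ extends over $\iiii(c)$. This gives existence of the isomorphism, which is all that is needed here; uniqueness holds only up to homotopy.
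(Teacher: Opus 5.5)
Your proof is correct at the paper's level of rigour. For the main claim, however, it takes a different route from the paper's proof of this lemma.

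The paper does not invoke Theorem~\ref{thm:uniqueness} here. It appeals to Theorem~\ref{thm:global-rees}, Proposition~\ref{prop:rL-CS} and the canonical Rees connection $\nabla^{\mathrm{can}}$, and asserts that the restriction to $\iiii\Cube A$ agrees with $F$ up to a $\mathcal{C}^\infty$ gauge transformation. It then adds a comparison of Deligne classes via $F^1$-connections (Theorems~\ref{thm:DHZ} and~\ref{thm:BG-comparison}), stressing that the bundle isomorphism is not deduced from that comparison.

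You instead match the patching functors, which is the mechanism of Proposition~\ref{prop:Del-Rees-compare}. You then build the isomorphism cube by cube inside the contractible affine spaces of filtered isomorphisms inducing $\tau$ on the graded.

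Your route is the more substantive one, for two reasons.
\begin{itemize}
\item It uses no connection. Proposition~\ref{prop:rL-CS} identifies classes, not bundles, and $\nabla^{\mathrm{can}}$ has poles on the faces $y_i=1$ of $\iiii(c)$.
\item It repairs the proof of Theorem~\ref{thm:uniqueness}. That proof's ``canonical isomorphism'' at the vertices and ``trivial'' automorphisms both fail once the filtration is non-trivial, since the automorphisms inducing the identity on the graded form a unipotent group. What one actually gets is existence, with uniqueness only up to homotopy, which is all the lemma needs.
\end{itemize}
What your route does not produce is the Deligne-class comparison, but the statement does not ask for it.

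Two points should be made explicit.
\begin{itemize}
\item Pulling back the data requires $\pi$ to map each cube $\iiii(c)$ into the open carrying $(W(\mu(c)),\tau(\mu(c)))$. The standard nerve map of a very good covering does this, but Propositions~\ref{prop:nerve-htpy} and~\ref{prop:cuberealize} do not supply it. Moreover, with $A$ the multi-index poset of the adapted covering one cannot expect $X\simeq\iiii\Cube A$ at all: one point on a curve of positive genus already gives a contractible $\iiii\Cube A$. So $A$ should be the nerve poset of the very good covering of Theorem~\ref{thm:patching-exists}.
\item In the inductive step, the isomorphism already fixed on a face $c'$ a priori only respects $(W(\mu(c')),\tau(c'))$. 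You need monotonicity of the data on both sides: Proposition~\ref{prop:patching-monotone} for $F_K$, and the refinement condition of the strict collection for $\pi^*F$. Both refinements are given by the same morphism $U(\mu(c')\leq\mu(c))$. This shows the face isomorphism preserves $W(\mu(c))$ and induces the identity on its graded, so it lies in the affine space you extend over $\iiii(c)$.
\end{itemize}

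Your descent argument agrees with the paper's. Your treatment of the classifying-map factorization goes beyond the paper's proof, which says nothing about it.
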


\begin{proof}
Two assertions are involved in the first claim, and they are established by
different means; we separate them.

\emph{(i) Isomorphism of the underlying $\mathcal{C}^\infty$ bundles.}  This is
the explicit topological comparison provided by
Theorem~\ref{thm:global-rees} together with Proposition~\ref{prop:rL-CS}: the
Rees bundle on the projective cubical realization $\pp_K\Cube A$ carries the
canonical connection $\nabla^{\mathrm{can}}$ (the multi-variable Rees
connection, which is locally nil-flat but not flat), and its restriction to the
interval realization $\iiii\Cube A\simeq X$ agrees, up to a
$\mathcal{C}^\infty$ gauge transformation, with the canonical extension $F$.
The isomorphism $F_K^{\cc}|_{\iiii\Cube A}\cong F$ is obtained in this way,
directly from the Rees data.

\emph{(ii) Comparison of the Deligne classes.}  Granting (i), the
$F^1$-connection argument of Section~\ref{sec:F1} identifies the associated
characteristic data: the $F^1$-connection $\nabla$ on $F_K|_{\iiii\Cube A}$
produces the same Deligne cohomology class as $\nabla^{\Del}$
(Theorem~\ref{thm:BG-comparison}), and that class is independent of the choice
of $F^1$-connection (Theorem~\ref{thm:DHZ}).

The order of the two steps is essential.  Equality of Deligne cohomology
classes does not by itself imply that the underlying $\mathcal{C}^\infty$
bundles are isomorphic, so (i) is proved by the Rees comparison and is not
inferred from (ii).

For the arithmetic descent: the affine Rees module of
Construction~\ref{constr:A-to-Xi} is built from the $A_0$-lattice
$V_{A_0}$ and the weight filtrations $W(N_I)_{A_0}$ of Lemma~\ref{lem:DS-arith}.
On each affine cube $\aaa(c)$ of the cubical decomposition, the local Rees
module is
\[
  F_K(c)|_{\aaa(c)} \;=\;
  \bigoplus_{j\in\zz^{|c^{-1}(t)|}} \Gr^{W(c)}_j(V_{A_0})
  \otimes_{A_0}
  A_0[t_i : i\in c^{-1}(t)],
\]
which is a free $A_0[\mathbf{t}]$-module (by Lemma~\ref{lem:DS-arith}(ii)).
These modules glue along the face maps (by the functoriality of the $A\to\Xi_K$
morphisms), giving a locally free sheaf over $\Spec A_0$.  Reduction mod
$\mathfrak{q}$ is compatible with the face gluings, so the reduction is again
locally free, with fiber $(A_0/\mathfrak{q})^r$.
\end{proof}

\begin{remark}
The multi-Rees module
$\sum_{\mathbf{a}\in\zz^k} t_1^{a_1}\cdots t_k^{a_k}(W(N_I)_{a_1}\cap\cdots)$
that appears here is the natural generalization of the two-variable
interpolation sheaf of \cite[Lem.~6.9]{IS-2div}.  The key point noted in
\cite[Rmk.~6.11]{IS-2div} is that the two descriptions are the same object:
the Rees module of this paper and the two-variable interpolation sheaf of the
two-divisor case agree on their common domain ($k=2$ with a corner patch).
Proposition~\ref{prop:rL-CS} (Appendix~\ref{app:rees}) makes this explicit:
the local system of the Rees bundle over each affine cube $\aaa(c)$ is
precisely the deformation $\mathrm{Ad}(\psi^{c_1}_{t_1}\cdots\psi^{c_k}_{t_k})(\rho|)$
of the representation, the same formula used in \cite[\S 6.3]{IS-2div}.
\end{remark}

\subsection{Hasse principle and conclusion}
\label{subsec:DS-hasse}

\begin{lemma}\label{lem:DS-hasse}
With notation as in Lemmas~\ref{lem:DS-arith}--\ref{lem:DS-rees-model}, let
$\pi:\tilde X^*\to X^*$ and $\tilde X$ be as in Lemma~\ref{lem:DS-finite-cover}
for the ideals $\mathfrak{q}_1,\mathfrak{q}_2$.  Then the pullback of $F$ to
$\tilde X$ is trivial as a $\mathcal{C}^\infty$-bundle.
\end{lemma}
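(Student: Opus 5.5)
The plan is to run the Deligne--Sullivan/Sullivan Hasse-principle argument of \cite{DeSu}, in the form used in the proof of Theorem~\ref{thm:effective-DS}, on the multi-Rees model of Lemma~\ref{lem:DS-rees-model}.

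First, the canonical extension of $\pi^*L$ on $\tilde X$ should again be a Rees bundle. Smoothness of $\tilde X$ from Lemma~\ref{lem:DS-finite-cover}(ii) lets us form the adapted covering of Proposition~\ref{prop:covering} on $(\tilde X,\tilde D)$, indexed by a poset $\tilde A$. Near $\tilde D_I$ the residues of the pullback are the scaled operators $n_iN_i$, which are still nilpotent. By the independence of $W(N_I)$ from positive coefficients \cite{CKS}, the weight filtrations $W(\sum_i n_iN_i)=W(N_I)$ are unchanged. The \v{C}ech patching data therefore pull back, and the corresponding functor $\tilde A\to\Xi$ is the pullback of the one on $A$. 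Lemma~\ref{lem:DS-rees-model}, applied on $\tilde X$, then identifies $\pi^*$-canonical extension with the $\mathcal{C}^\infty$ bundle $\tilde F_{A_0}\otimes\cc$ on $\iiii\Cube\tilde A\simeq\tilde X$. Here $\tilde F_{A_0}$ is a locally free $A_0$-model glued from the free modules $\Gr^{W(c)}(V_{A_0})\otimes A_0[\mathbf t]$ along the $A_0$-linear face maps of Lemma~\ref{lem:DS-arith}(iv).

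Second, reduce to $\ell$-adic statements. Let $f:\tilde X\to\mathrm{Grass}(r,\cc^{r+N})$ classify $\pi^*F$, with $N\ge\frac12\dim_\rr\tilde X$. The Grassmannian is simply connected, so Sullivan's Hasse principle \cite{SullivanAdams} shows that $f$ composed with the projection to the $d$-th Postnikov/coskeleton stage is null-homotopic provided every $\ell$-adic completion $f_{\hat\ell}$ is. Since we are in the stable range, this gives $\mathcal{C}^\infty$-triviality.

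Third, fix $\ell$ and choose $s\in\{1,2\}$ with $\mathrm{char}(A_0/\mathfrak q_s)\ne\ell$; distinct residue characteristics guarantee such an $s$ exists. By Lemma~\ref{lem:DS-finite-cover}(i), every gluing datum of $\tilde F_{A_0}$ is trivial modulo $\mathfrak q_s$ on $\tilde X$. These data are the monodromy, the graded monodromies on each $U_I^*$, and the $\Xi$-morphisms; the extensions across $\tilde D$ are handled as in Step~3 of Lemma~\ref{lem:constancy}. The Rees polynomial parameters $t_i$ carry no monodromy. Hence $\tilde F_{A_0}\otimes A_0/\mathfrak q_s$ is the constant (trivial) sheaf on the cubical model. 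We then apply the lemma of \cite{DeSu}: a bundle defined over a finitely generated ring whose reduction modulo a prime of residue characteristic $\ne\ell$ is trivial has null-homotopic $\ell$-completed classifying map. Its proof passes through the étale/profinite comparison for the algebraic group $GL_r$ over $A_0$ localized at $\mathfrak q_s$, and the cubical model of Lemma~\ref{lem:DS-rees-model} is exactly an object to which it applies, cell by cell and compatibly with face maps. This gives $f_{\hat\ell}\simeq *$ for all $\ell$, which concludes the proof.

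The main obstacle is the third step. The Deligne--Sullivan lemma is stated for flat bundles, that is for local systems of $A_0$-lattices, whereas our model is a locally free sheaf on a hybrid space: algebraic in the Rees parameters and glued over a cubical set. I would first express $\tilde F_{A_0}$ through the classifying map of Construction~\ref{constr:ctaut} into $|BGL(r,A_0[\hdot])|$. Homotopy invariance (Lemma~\ref{lem:htpy-constant}), in its $\ell$-adic form away from $\mathrm{char}\,\mathfrak q_s$, should reduce this to a map into $BGL_r(A_0)$ that is trivial mod $\mathfrak q_s$, where \cite{DeSu} applies verbatim.
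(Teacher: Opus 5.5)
Your three steps are the paper's proof: Sullivan's Hasse principle for the Grassmannian classifying map, a choice of $\mathfrak q_s$ with residue characteristic different from $\ell$, triviality of all the Rees data modulo $\mathfrak q_s$ on the congruence cover by Lemma~\ref{lem:DS-finite-cover}(i), and then the lemma of \cite{DeSu} applied to the model of Lemma~\ref{lem:DS-rees-model}. There are two differences. The paper pulls that model back from $X$ rather than rebuilding it on $\tilde X$. It also does not address the point you single out as the main obstacle at all: it simply asserts that \cite{DeSu} applies to the Rees model, as in \cite[Lem.~6.10]{IS-2div}. Up to your last paragraph you are therefore at exactly the paper's level of detail.

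The reduction you sketch in that last paragraph would not close the gap. Lemma~\ref{lem:htpy-constant} and Construction~\ref{constr:ctaut} are statements about $BGL(-)^+$ after stabilizing in the rank, and Remark~\ref{rem:why-stabilize} explains why there is no unstable version. So they cannot produce a map to the discrete $BGL_r(A_0)$. Still less can they produce one factoring through the congruence subgroup $\ker\bigl(GL_r(A_0)\to GL_r(A_0/\mathfrak q_s)\bigr)$, which is the actual input of \cite{DeSu}. At best you would control the stable class of $\pi^*F$ in $[\tilde X,BU]$. Stable triviality does not give triviality of a rank-$r$ bundle when $r<\dim_\cc\tilde X$: rank-two bundles on $S^6$ are classified by $\pi_5(U(2))\cong\zz/2$, which dies in $\pi_5(U)\cong\zz$. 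Your second step, by contrast, is run in $\mathbf{Gr}(r,\cc^{r+N})$, unstably in $r$. A justification of this step has to remain unstable. One possibility is to treat the reduced Rees model as an algebraic bundle on the cubical $A_0$-scheme and use \'{e}tale homotopy types, in which affine spaces are $\ell$-adically contractible in characteristic $\neq\ell$.

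A smaller point concerns your first step. The appeal to \cite{CKS} is not valid for arbitrary unipotent monodromy: if $N_2=-N_1\neq0$ then $W(N_1+N_2)\neq W(N_1+2N_2)$. It is also unnecessary. Definition~\ref{def:P-presheaf} is unchanged when $N_i$ is replaced by $n_iN_i$ and $T_i$ by $T_i^{n_i}$, so the pulled-back filtrations are already admissible patching data on $\tilde X$.
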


\begin{proof}
Let $n = r = \mathrm{rk}\,F$ and $d = \dim_\cc\tilde X$.  The classifying map
$f:\tilde X\to\mathbf{Gr}(n,\cc^{n+N})$ for the pullback of $F$
(for $N\geq d$, by a general position argument) factors through the
$d$-coskeleton.  Note that the stable-range condition is a condition on the
\emph{real} dimension: since $\dim_\rr\tilde X = 2d$, the requirement
$N\geq\tfrac12\dim_\rr\tilde X$ reads $N\geq d$, not $N\geq d/2$.  By Sullivan's Hasse principle for morphisms \cite{SullivanAdams},
the map $f$ is null-homotopic if and only if its $\ell$-adic completion
$f_{\hat\ell}$ is null-homotopic for every prime $\ell$.  Fix $\ell$.

Since $\mathfrak{q}_1,\mathfrak{q}_2$ have \emph{distinct} residue characteristics
$\ell_1\neq\ell_2$, at least one of them --- say $\mathfrak{q}$ --- has residue
characteristic $\neq\ell$.  By Lemma~\ref{lem:DS-finite-cover}(i), the monodromy
$\rho$ and all the data of the functor $A\to\Xi_K$ are trivial mod $\mathfrak{q}$
after pullback to $\tilde X^*$.  By Lemma~\ref{lem:DS-rees-model}, the Rees bundle
$F$ (hence its pullback to $\tilde X$) is a topological model for the canonical
extension $F$; and the reduction of the Rees bundle mod $\mathfrak{q}$ is,
by triviality of the monodromy data, isomorphic to the \emph{constant} bundle
$(A_0/\mathfrak{q})^r$ on the mod-$\mathfrak{q}$ model.  Hence $F|_{\tilde X}$
becomes trivial as a bundle of $A_0/\mathfrak{q}$-modules.  By a theorem of
Deligne \cite[Lemme]{DeSu} (as used in the analogous argument of
\cite[Lem.~6.10]{IS-2div}), a bundle that is $(\text{prime-to-}\ell)$-trivial
has trivial $\ell$-adic completion.  Hence $f_{\hat\ell}$ is null-homotopic.
As $\ell$ was arbitrary, $f$ is null-homotopic and $F|_{\tilde X}$ is
$\mathcal{C}^\infty$-trivial.
\end{proof}

\begin{remark}[Hypotheses of Sullivan's Hasse principle]
The invocation of \cite[Thm.~3.1 and \S 3]{SullivanAdams} above depends on
the precise form of Sullivan's arithmetic-square/Hasse-principle theorem
being applicable to the map $f$ in the stated range: in particular, on the
target $\mathbf{Gr}(n,\cc^{n+N})$ being of finite type and sufficiently
connected (or the argument being run one Postnikov stage at a time), on the
$d$-coskeletal truncation used above lying within the range where profinite
completion detects null-homotopy, on the passage from the finite
Grassmannian to its stable (rationalized/profinitely completed) model being
legitimate at this stage of the argument, and on the precise meaning of
``$\ell$-adic null-homotopy'' of $f$ (null-homotopy of the $\ell$-adic
completion $f_{\hat\ell}$ in Sullivan's sense, as opposed to, say, vanishing
after applying $H_*(-,\zz_\ell)$). We have not independently re-verified
each of these hypotheses against \cite{SullivanAdams} in the generality
used here, and flag this as a citation-level point that should be checked
against the precise statement of Sullivan's theorem before this appendix is
regarded as complete.
\end{remark}

\begin{proof}[Proof of Proposition~\ref{prop:DS-NCD}]
Let $A_0$ be the arithmetic model of Lemma~\ref{lem:DS-arith}.  Choose
two maximal ideals $\mathfrak{q}_1,\mathfrak{q}_2$ of $A_0$ with distinct
residue characteristics (they exist since $A_0$ is a finitely generated
$\zz$-algebra of positive Krull dimension, hence has infinitely many maximal
ideals with infinitely many distinct residue characteristics).  Let
$\pi:\tilde X^*\to X^*$ and $\tilde X$ be as in Lemma~\ref{lem:DS-finite-cover}.
By Lemma~\ref{lem:DS-finite-cover}(ii) and the smoothness criterion for
toric covers of polydisk corners, $\tilde X$ is smooth (part~(a) of the
Proposition).  By Lemma~\ref{lem:DS-hasse}, the canonical extension of
$\pi^*E$ to $\tilde X$ is $\mathcal{C}^\infty$-trivial (part~(b)).
\end{proof}

\subsection{The Chern-Simons classes via the Deligne--Sullivan covering}
\label{subsec:DS-CS}

Proposition~\ref{prop:DS-NCD} makes possible a second construction of the
extended Chern-Simons classes, independent of the theory of differential
characters.  On the covering $\tilde X$ the canonical extension becomes a
trivial $\mathcal{C}^\infty$ bundle, so the connection is a matrix-valued
one-form and the Chern-Simons class is represented by an explicit closed
differential form.  The content of the subsection is that this form-level
class descends, and agrees with the class $\CS_p(\nabla^{\Del})$ of
\S\ref{app:diffchar}.

\begin{construction}[Chern-Simons classes from the Deligne--Sullivan model]
\label{con:DS-CS}
Let $\pi:\tilde X\to X$ and
$\Phi:\pi^*F\overset{\sim}{\longrightarrow}\tilde X\times\cc^{\,r}$ be as in
Proposition~\ref{prop:DS-NCD}, and write
\[
  \Phi_*\bigl(\pi^*\nabla^{\Del}\bigr) \;=\; d+A,\qquad
  A\in A^1\bigl(\tilde X,\mathfrak{gl}_r(\cc)\bigr).
\]
By Proposition~\ref{prop:nilflat-ch-zero} the connection $\nabla^{\Del}$ is
locally nil-flat, so $\ch_p(\nabla^{\Del})=0$ as a form, and hence
$\ch_p(d+A)=\pi^*\ch_p(\nabla^{\Del})=0$.  Therefore the Chern-Simons form of
\eqref{eq:CS-form-normalised},
\[
  \mathrm{CS}_{2p-1}(A) \;=\; \frac{1}{p!\,(2\pi i)^p}\;p\int_0^1
  \Tr\bigl(A\wedge(t\,dA+t^2A\wedge A)^{p-1}\bigr)\,dt ,
\]
is \emph{closed}, and we may set
\[
  \widetilde{\CS}_p \;:=\;\bigl[\mathrm{CS}_{2p-1}(A)\bigr]
  \;\in\;H^{2p-1}(\tilde X,\cc/\zz).
\]
\end{construction}

\begin{proposition}\label{prop:DS-CS-welldef}
\leavevmode
\begin{enumerate}[leftmargin=2em,label=\rm(\arabic*)]
  \item $\widetilde{\CS}_p$ is independent of the trivialization $\Phi$.
  \item $\widetilde{\CS}_p = \pi^*\,\CS_p(\nabla^{\Del})$.
  \item $m\cdot\CS_p(\nabla^{\Del}) = \pi_*\widetilde{\CS}_p$, where
    $m=\deg\pi$ and $\pi_*$ is the Umkehr map.
\end{enumerate}
\end{proposition}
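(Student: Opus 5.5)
The three parts are proved in order, each resting on the previous one.

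For (1), take two trivializations $\Phi,\Phi'$ of $\pi^*F$. They differ by a smooth gauge map $g:\tilde X\to GL_r(\cc)$, and the two connection forms are related by $A'=A^g=gAg^{-1}-(dg)g^{-1}$. Lemma~\ref{lem:gauge-variation} then gives
\[
\mathrm{CS}_{2p-1}(A^g)=\mathrm{CS}_{2p-1}(A)-g^*\mu_{2p-1}+d\beta ,
\]
where $g^*\mu_{2p-1}$ has integral periods. Since both forms are closed by Construction~\ref{con:DS-CS}, their classes agree in $H^{2p-1}(\tilde X,\cc/\zz)$.

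For (2), naturality of the Cheeger--Simons character under pullback gives
\[
\pi^*\widehat{\ch}_p(F,\nabla^{\Del})=\widehat{\ch}_p(\pi^*F,\pi^*\nabla^{\Del}).
\]
Both characters are flat, since the Chern--Weil forms vanish by Proposition~\ref{prop:nilflat-ch-zero}. It then suffices to show that, for a trivial bundle with connection $d+A$ and $\ch_p(d+A)=0$, the flat character $\widehat{\ch}_p(d+A)$ equals the class of $\mathrm{CS}_{2p-1}(A)$ mod $\zz$. Apply the transgression formula (Proposition~\ref{prop:transgression}, equivalently the proposition of \S\ref{app:diffchar} on transgression) to the linear path $d+tA$ from the trivial connection $d$. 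The character of $d$ on the trivial bundle is zero: it is pulled back from a point of the Grassmannian. Its curvature $\ch_p(d)=0$, so the difference of characters is the flat class $[\mathrm{CS}_{2p-1}(A)]$, and this is exactly the form in \eqref{eq:CS-form-normalised}. One subtlety: the path $d+tA$ has nonzero intermediate curvature. I would handle this by invoking the general transgression identity in $\widehat H$, which does not require the intermediate characters to be flat, and then observe that both endpoints have zero curvature. Combined with (1), this gives $\widetilde{\CS}_p=\pi^*\CS_p(\nabla^{\Del})$.

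For (3), apply the Umkehr map to (2). We need $\pi_*\pi^*=m\cdot\mathrm{id}$ on $H^{2p-1}(X,\cc/\zz)$ for the finite surjective map $\pi$ of degree $m$ between smooth projective varieties; $\tilde X$ is smooth by Proposition~\ref{prop:DS-NCD}(a). This is the transfer identity used in Corollary~\ref{cor:effective-betti}. It holds with arbitrary coefficients because $\pi_*$ is Poincar\'e dual to the homology pushforward and $\pi_*[\tilde X]=m[X]$, via the projection formula $\pi_*(\pi^*a\cap[\tilde X])=a\cap\pi_*[\tilde X]$. Hence $\pi_*\widetilde{\CS}_p=\pi_*\pi^*\CS_p(\nabla^{\Del})=m\,\CS_p(\nabla^{\Del})$.

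The main obstacle is step (2): making precise that the Cheeger--Simons character of $d+A$ on a globally trivialized bundle is represented by $\mathrm{CS}_{2p-1}(A)$ in the normalization \eqref{eq:CS-form-normalised}, with periods mod $\zz$ rather than $\zz(p)$. I would settle this with the universal definition in \S\ref{app:diffchar}: pick the constant classifying map for the trivial bundle, so the integral over $\Gamma$ reduces to the transgression form.
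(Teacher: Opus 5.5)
Your proposal is correct relative to the results it cites, and it follows the paper's proof step for step: (1) via Lemma~\ref{lem:gauge-variation}; (2) via naturality of $\widehat{\ch}_p$ under the smooth (branched) map $\pi$, flatness, and the transgression of Proposition~\ref{prop:transgression} along $d+tA$ with $\widehat{\ch}_p(d)=0$; and (3) via Poincar\'e duality with $\cc/\zz$ coefficients, the projection formula and $\pi_*[\tilde X]=m[X]$.

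One caveat applies equally to the paper's proof. It sits at the normalization point you singled out, but the issue is a factorial, not $(2\pi i)^p$. The form $\mu_{2p-1}$ is the pure-gauge transgression of $\ch_p$, so it represents $\pm\tfrac{1}{(p-1)!}x_p$, where $x_p=\sigma(c_p)$ is the integral generator. For example, when $p=3$ it has period $\pm\tfrac12$ on a generator of $H_5(SU(3),\zz)$, because the Hurewicz map $\pi_5(SU(3))\to H_5(SU(3),\zz)$ is multiplication by $2$. Hence for $p\geq3$ the integrality asserted in Lemma~\ref{lem:gauge-variation} fails, and step (1) only gives independence of $\Phi$ up to classes killed by $(p-1)!$. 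This is harmless for torsion statements, but weaker than the literal claim in $H^{2p-1}(\tilde X,\cc/\zz)$.
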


\begin{proof}
Both $X$ and $\tilde X$ are smooth projective varieties, hence closed oriented
$\mathcal{C}^\infty$ manifolds of the same real dimension $n=2\dim_\cc X$
(smoothness of $\tilde X$ is Proposition~\ref{prop:DS-NCD}(a)), and $\pi$ is a
morphism of degree $m$, in particular a smooth map.  Note that $\pi$ is
\emph{not} a covering: it is branched along $\pi^{-1}(D)$.  Only smoothness of
$\pi$ and its degree are used below, so this costs nothing.

\smallskip
\emph{(1) Independence of the trivialization.}
Two trivializations $\Phi,\Phi'$ differ by a smooth map
$g:\tilde X\to GL_r(\cc)$, and the corresponding connection forms are related
by the gauge transformation $A'=A^g=gAg^{-1}-(dg)g^{-1}$.  Both
$\mathrm{CS}_{2p-1}(A)$ and $\mathrm{CS}_{2p-1}(A^g)$ are closed, since
Chern-Weil forms are gauge invariant and vanish by
Construction~\ref{con:DS-CS}.  Lemma~\ref{lem:gauge-variation} gives
\[
  \mathrm{CS}_{2p-1}(A^g)-\mathrm{CS}_{2p-1}(A) \;=\; -\,g^*\mu_{2p-1}+d\beta ,
\]
with $g^*\mu_{2p-1}$ of integral periods; so the two forms define the same
class in $H^{2p-1}(\tilde X,\cc/\zz)$.

The statement has content: the set of trivializations is a torsor under
$\mathrm{Map}(\tilde X,GL_r(\cc))$, whose $\pi_0$ is $[\tilde X,GL_r(\cc)]$
and is in general non-trivial; so $g$ need not be null-homotopic, and
$g^*\mu_{2p-1}$ need not vanish in $H^{2p-1}(\tilde X,\cc)$.  What
Lemma~\ref{lem:gauge-variation} provides is that it vanishes modulo $\zz$.
This is precisely where the normalising factor of
\eqref{eq:CS-form-normalised} is used.

\smallskip
\emph{(2) Comparison with the differential character.}  Three steps.

\emph{(2a) Naturality.}  The Cheeger-Simons character is natural for smooth
maps: for $f:Y\to X$ smooth and $(E,\nabla)$ any bundle with connection,
$\widehat{\ch}_p(f^*E,f^*\nabla)=f^*\widehat{\ch}_p(E,\nabla)$.  This is
immediate from the definition in \S\ref{app:diffchar}, the classifying map of
$f^*E$ being the composite of $f$ with that of $E$.  Applied to $f=\pi$,
\[
  \widehat{\ch}_p\bigl(\pi^*F,\pi^*\nabla^{\Del}\bigr)
  \;=\;\pi^*\,\widehat{\ch}_p\bigl(F,\nabla^{\Del}\bigr)
  \;=\;\pi^*\,\CS_p(\nabla^{\Del}).
\]
The branching of $\pi$ is irrelevant here: naturality requires only that
$\pi$ be smooth.

\emph{(2b) Flatness.}  The curvature of
$\widehat{\ch}_p(\pi^*F,\pi^*\nabla^{\Del})$ is
$\pi^*\ch_p(\nabla^{\Del})=0$, so by \eqref{eq:diff-char-exact} this character
is flat and both sides of the asserted identity are classes in
$H^{2p-1}(\tilde X,\cc/\zz)$.

\emph{(2c) Evaluation.}  Transporting through $\Phi$ replaces
$(\pi^*F,\pi^*\nabla^{\Del})$ by $(\tilde X\times\cc^r,\,d+A)$, and
$\widehat{\ch}_p$ is an invariant of the isomorphism class of a bundle with
connection.  Using the straight-line homotopy $\nabla_t=d+tA$ from the trivial
connection and Proposition~\ref{prop:transgression}, together with
$F(\nabla_t)=t\,dA+t^2A\wedge A$,
\[
  \widehat{\ch}_p(d+A)-\widehat{\ch}_p(d)
  \;=\;\bigl[\mathrm{CS}_{2p-1}(A)\bigr] ,
\]
and $\widehat{\ch}_p(d)=0$ for $p\geq1$.  Combined with (2a) this is the
claim.

\smallskip
\emph{(3) The transfer identity.}  Define
$\pi_*:=\mathrm{PD}_X\circ\pi_*\circ\mathrm{PD}_{\tilde X}^{-1}$ on
$H^*(-,\cc/\zz)$, using Poincar\'e duality with coefficients in $\cc/\zz$,
available on any closed oriented manifold.  The projection formula
$\pi_*(\pi^*x\frown y)=x\frown\pi_*y$ with $y=[\tilde X]$ and
$\pi_*[\tilde X]=m[X]$ gives $\pi_*\pi^*=m\cdot\mathrm{id}$.  Applying $\pi_*$
to (2) yields the assertion.
\end{proof}

\begin{remark}\label{rem:DS-is-construction}
Proposition~\ref{prop:DS-CS-welldef} may be read as an independent
\emph{construction} of $\CS_p(\nabla^{\Del})$: one first defines
$\widetilde{\CS}_p\in H^{2p-1}(\tilde X,\cc/\zz)$ by an explicit closed form on
the Deligne--Sullivan cover, then observes that it is invariant under the deck
group $G$ of $\tilde X\to X$.  Deck invariance is immediate from part~(1): for
$\sigma\in G$ one has $\pi\sigma=\pi$, so $\sigma^*\Phi$ is another
trivialization of $\pi^*F$ and hence
$\sigma^*\widetilde{\CS}_p=\widetilde{\CS}_p$.  (No averaging of $\Phi$ over
$G$ is available, the space of trivializations being a torsor under
$\mathrm{Map}(\tilde X,GL_r(\cc))$, which is neither contractible nor convex.)

Granting deck invariance, $\widetilde{\CS}_p$ descends to a class in
$H^{2p-1}(X,\cc/\zz)\otimes\zz[1/m]$ whose $m$-fold multiple is
$\pi_*\widetilde{\CS}_p$.  The content of
Proposition~\ref{prop:DS-CS-welldef}(2) is that this descended class has no
denominators and coincides with the differential-character definition.  The
advantage of the route through $\tilde X$ is that it is manifestly arithmetic:
everything is computed from the $K$-rational Rees model $F_K$ of
Lemma~\ref{lem:DS-rees-model}.
\end{remark}

\begin{remark}\label{rem:DS-CS-no-order}
It bears repeating, in view of the explicit form $\mathrm{CS}_{2p-1}(A)$ now
available on $\tilde X$, that $\widetilde{\CS}_p$ is \emph{not} zero merely
because $\pi^*F$ is a trivial bundle.  The Chern-Simons class is a secondary
invariant of the connection, not of the underlying bundle; the standard
example is a flat $SU(2)$ connection on the trivial bundle over a spherical
space form $S^3/\Gamma$, whose Chern-Simons invariant is a non-zero element
of $\qq/\zz$.  This is why Proposition~\ref{prop:DS-CS-welldef}(3) yields no
bound on the order of $\CS_p(\nabla^{\Del})$; see
Remarks~\ref{rem:betti-vs-CS} and \ref{rem:DS-significance}.
\end{remark}

\subsection{Relation to the two-divisor companion paper}
\label{subsec:DS-companion}

We record the precise relationship between Proposition~\ref{prop:DS-NCD} and
the corresponding result \cite[Prop.~6.6]{IS-2div} for $k=2$.

\begin{proposition}\label{prop:DS-comparison}
For $k=2$ (two divisor components $D_1,D_2$ meeting transversally along $Z = D_1\cap D_2$),
Proposition~\ref{prop:DS-NCD} specializes to \cite[Prop.~6.6]{IS-2div}.
More precisely, the arithmetic model $A_0$ of Lemma~\ref{lem:DS-arith} agrees
with the ring $A$ of \cite[Lem.~6.7]{IS-2div}, and the Rees bundle
(Lemma~\ref{lem:DS-rees-model}) agrees with the two-variable interpolation
bundle of \cite[Lem.~6.9]{IS-2div} at the corner.
\end{proposition}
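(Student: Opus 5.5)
The plan is to prove both identities by unwinding the constructions of Lemma~\ref{lem:DS-arith} and Lemma~\ref{lem:DS-rees-model} in the case $k=2$, and checking that they produce literally the same objects as \cite[Lem.~6.7, Lem.~6.9]{IS-2div}. We do not stop at an isomorphism after further localization or after passing to the $\mathcal{C}^\infty$ category. For $k=2$ the index poset of the adapted covering is $A=\{\emptyset,(1),(2),(12)\}$. By Proposition~\ref{prop:covering}(2) the maximal chains are $\emptyset\subset(1)\subset(12)$ and $\emptyset\subset(2)\subset(12)$. Hence $\Cube A$ has exactly two top-dimensional $2$-cubes meeting the corner, one for each chain, and these are the two squares of the corner patch of Figure~\ref{fig:opencover}.

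\textbf{Step 1: the arithmetic model.} I will go through the list of elements adjoined or inverted in the proof of Lemma~\ref{lem:DS-arith}. These are the matrix entries of $\rho(g)^{\pm1}$ on a generating set containing $\gamma_1,\gamma_2$, the factor $(r-1)!^{-1}$ needed for $N_i=\log T_i$, and the finite set $S$ of elements inverted by generic freeness. The modules to which generic freeness is applied are the steps, quotients and graded pieces of $W(N_1)$, $W(N_2)$ and $W(N_1+N_2)$, together with the morphisms of the functor $A\to\Xi_K$.

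I will check that this is exactly the list used to define $A$ in \cite[Lem.~6.7]{IS-2div}. This needs the fact that the corner datum there is the pair $(W(N_1),W(N_2))$ together with the induced filtrations on its bigraded pieces. By the Cattani--Kaplan--Schmid description \cite{CKS}, $W(N_1+N_2)$ is the relative monodromy filtration of $N_2$ with respect to $W(N_1)$, and symmetrically with $1$ and $2$ exchanged. Its steps are therefore built from the same sums and intersections of the submodules $W(N_1)_a\cap W(N_2)_b$. Freeness of one family over a localization is thus equivalent to freeness of the other over that same localization. Since $S$ is taken to be the same set of elements, the two rings coincide as subrings of $\cc$, and no further localization is required.

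\textbf{Step 2: the Rees bundle at the corner.} Over $A_0$, fix a common splitting $V_{A_0}=\bigoplus_{(a,b)}V_{a,b}$ adapted to $W(N_1)$ and $W(N_2)$. It exists over $A_0$ itself by the freeness in Lemma~\ref{lem:DS-arith}(ii)--(iii). On the $2$-cube of the chain $\emptyset\subset(1)\subset(12)$, the multi-Rees module of Section~\ref{sec:multrees} is the submodule $\sum t_1^{a}t_2^{c}\bigl(W(N_1)_a\cap W(N_1+N_2)_c\bigr)$ of $V_{A_0}[t_1^{\pm1},t_2^{\pm1}]$. Using the splitting and the relative filtration description, I will rewrite this submodule as $\bigoplus_{(a,b)}t_1^{-a}t_2^{-\phi(a,b)}V_{a,b}\otimes A_0[t_1,t_2]$, where $\phi$ is the weight function computing the $W(N_1+N_2)$-degree. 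After the linear change of Rees parameter recorded in \cite[Rmk.~6.11]{IS-2div}, this is exactly the two-variable interpolation sheaf $\sum t_1^{a}t_2^{b}\bigl(W(N_1)_a\cap W(N_2)_b\bigr)$ of \cite[Lem.~6.9]{IS-2div}. Equality is then checked as an equality of submodules of the same Laurent module, not merely as an isomorphism. The same argument applies to the other chain. Away from the corner, both constructions reduce to the one-variable Rees module of a single $W(N_i)$, so they agree there by definition. The face identifications of Proposition~\ref{prop:rees-face-detailed} are induced by $t_i\mapsto0,1$ on both sides, so the glued objects agree.

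\textbf{Main obstacle.} I expect the hardest step to be Step 2: matching the single filtration $W(N_1+N_2)$ at the stratum $D_{12}$ with the companion paper's pair $(W^{(1)},W^{(2)})$. This requires proving the relative monodromy filtration identity over $A_0$ and not only over $\cc$. I would first try to deduce this from the $\cc$-statement together with the direct-summand property of Lemma~\ref{lem:DS-arith}(ii): an equality of free direct summands of $V_{A_0}$ can be checked after base change to $\cc$. I would also have to pin down the exact reparametrization of $t_2$ used in \cite{IS-2div}.
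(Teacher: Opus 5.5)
The identity your argument rests on is false in general. Step~2 claims that the multi-Rees module $\sum t_1^{a}t_2^{c}\bigl(W(N_1)_a\cap W(N_1+N_2)_c\bigr)$ becomes, after the change of Rees parameter you invoke, literally the interpolation sheaf $\sum t_1^{a}t_2^{b}\bigl(W(N_1)_a\cap W(N_2)_b\bigr)$. The reason you give in Step~1 does not support this. The Cattani--Kaplan--Schmid description $W(N_1+N_2)=M(N_2,W(N_1))$ builds $W(N_1+N_2)$ from two ingredients only: $W(N_1)$, and the kernel/image filtrations of the induced maps $\Gr N_2$ on the pieces $\Gr^{W(N_1)}_k$. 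The filtration $W(N_2)$ never enters. For a general $\zeta$, which is the generality of Proposition~\ref{prop:DS-NCD}, even that description is unavailable. So nothing forces the steps of $W(N_1+N_2)$ to be sums of the $W(N_1)_a\cap W(N_2)_b$, or its degree to be a function $\phi(a,b)$.

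Here is a concrete counterexample. Let $ne_2=e_1$, $ne_1=0$ on $\cc^2$, and take $V=(\cc^2\otimes\cc^2)\oplus\cc^2$ with $N_1=(n\otimes 1)\oplus n$ and $N_2=(1\otimes n)\oplus n$. This is a product orbit plus the pullback of a one-variable orbit along $z_1z_2$.
\begin{itemize}
\item Each of $W(N_1)$, $W(N_2)$ has a single proper non-zero step, so the lattice they generate has six elements. But $W(N_1+N_2)_{-2}=\cc\,(e_1\otimes e_1)\oplus 0$ is not among them.
\item The vectors $e_1\otimes e_1$ and $0\oplus e_1$ both have bidegree $(-1,-1)$, yet their $W(N_1+N_2)$-weights are $-2$ and $-1$. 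So no $\phi$ exists, and no monomial substitution in $(t_1,t_2)$ matches the two submodules.
\item Even on the product summand, where $c=a+b$, the first module is only the pullback of the second along $(t_1,t_2)\mapsto(t_1t_2,t_2)$. That is a blow-up chart, not an automorphism of $A_0[t_1,t_2]$.
\item In any case, the face $t_1=1$ carries the Rees module of $W(N_1+N_2)$ on one side and that of $W(N_2)$ on the other.
\end{itemize}
Hence equality of submodules compatible with the faces fails, and your gluing claim fails with it. Note also that each maximal chain contributes three $2$-cubes to $\Cube A$ (six in all), not one. So the corner here is subdivided differently from the single corner square of \cite{IS-2div}, and no cube-by-cube matching exists.

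Step~1 also fails for reasons of its own.
\begin{itemize}
\item The modules made free in Lemma~\ref{lem:DS-arith} are the steps and graded pieces of the $W(N_I)$, and the morphisms of $A\to\Xi_K$. These are not the modules the bifiltered construction needs to be free.
\item The localizing set produced by generic freeness is a choice, so ``taking $S$ to be the same'' presupposes the conclusion.
\item Lemma~\ref{lem:DS-arith}(ii)--(iii) does not give your common splitting of $W(N_1),W(N_2)$ over $A_0$. Freeness of each filtration separately does not make the $W(N_1)_a\cap W(N_2)_b$ or the bigraded quotients projective. For example, the lines spanned by $(1,0)$ and $(1,2)$ are both direct summands of $\zz^2$, but they have no common splitting over $\zz$.
\end{itemize}
What does hold, and is all that Proposition~\ref{prop:DS-NCD} uses, is that both families of data become free over a common localization, still of finite type over $\zz$.

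The paper claims much less than you do. It says only that the rings are built from the same data, and that the Rees bundle matches the interpolation bundle ``up to a relabelling of index sets,'' by comparing the local systems the two modules define over the corner square. It then notes that the trivialisation steps of Lemmas~\ref{lem:DS-finite-cover} and~\ref{lem:DS-hasse} reduce to \cite[Lems.~6.8, 6.10]{IS-2div}. You should aim at that comparison instead. Over a common localization, both bundles are $\calC^\infty$ models of the canonical extension (Lemma~\ref{lem:DS-rees-model} and \cite[Lem.~6.9]{IS-2div}), and both become constant modulo $\mathfrak{q}_1$ and $\mathfrak{q}_2$ after the same congruence covering. The two Hasse-principle arguments then run in parallel. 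Literal equality of rings and of submodules is not available.
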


\begin{proof}
The arithmetic model $A_0$ is built from the same data --- matrix entries of
$\rho$, the $N_i$, and the associated graded modules --- in both constructions;
the only formal difference is that here we use $W(N_I) = W(N_1+N_2)$ at the
corner (the single total weight filtration), while \cite{IS-2div} uses the
pair $(W^{(1)},W^{(2)})$.  By the comparison Proposition~\ref{prop:rL-CS}
(specifically Appendix~\ref{app:rees} \S F.3, which identifies the Rees local
system with the two-variable conjugation deformation $\mathrm{Ad}(\psi^1_{t_1}\psi^2_{t_2})$),
the local system of the Rees module $\sum_{a,b}t_1^a t_2^b(W_a^{(1)}\cap W_b^{(2)})$
in the bigraded splitting of \cite[\S 9.1, Prop.~9.1]{IS-2div} agrees with the
Rees local system of $W(N_1+N_2)$ over the corner affine square: both compute
the same degeneration geometry, in the sense of \cite[\S9]{IS-2div}.  Hence
the two interpolation modules agree (up to a relabelling of index sets), and
the trivialisation arguments of Lemmas~\ref{lem:DS-finite-cover} and
\ref{lem:DS-hasse} reduce exactly to \cite[Lems.~6.8, 6.10]{IS-2div} for $k=2$.
\end{proof}

\begin{remark}\label{rem:DS-significance}
Proposition~\ref{prop:DS-NCD} has the following consequence for the structure
of the torsion Chern-Simons classes.  The covering $\pi:\tilde X\to X$ has
degree $m$, and $\pi^*F$ is the canonical extension of $\pi^*L$, so
$c^B_p(\pi^*F)=0$ by part~(b).  By the transfer $\pi_*\pi^* =
m\cdot\mathrm{id}$ we obtain
\[
  m\cdot c^B_p(F) \;=\; \pi_*\big(c^B_p(\pi^*F)\big)\;=\;0
  \;\in\; H^{2p}(X,\zz),
\]
and $m$ is bounded explicitly in Theorem~\ref{thm:effective-DS-nc}.

It must be stressed that this does \emph{not} bound the order of $\CS_p(L)$,
and indeed says nothing about it.  Triviality of $\pi^*F$ as a
$\mathcal{C}^\infty$-bundle gives $c^B_p(\pi^*F)=0$, but not
$\CS_p(\pi^*L)=0$: the Chern-Simons class is a secondary invariant of the flat
connection and not of the underlying bundle, and is routinely non-zero on a
trivial bundle---already for a flat $SU(2)$ connection on the trivial bundle
over a spherical space form $S^3/\Gamma$.  Equivalently, the kernel of
$\partial:H^{2p-1}(X,\cc/\zz)\to H^{2p}(X,\zz)$ is divisible, so killing the
image gives no information about the class; see
Remark~\ref{rem:betti-vs-CS}.  What is effective here is thus the topological
half of the torsion theorem only, as recorded in
Remark~\ref{rem:DS-nc}(iii) and in Problem~\ref{prob:effective} of
\S\ref{app:further}.
\end{remark}

\section{The bifiltered method in low multiplicity, and why it does not
extend}
\label{app:bifiltered}

The smooth-divisor case \cite{IS-arXiv} and the two-divisor case
\cite{IS-2div} were proved by a method quite different from the one used here,
and we make the comparison explicit here.  Nothing in this appendix is used elsewhere in
the paper; its purpose is to explain why the normal crossings case required a
new route rather than an extension of the old one.

\subsection{The bifiltered method}

In \cite{IS-2div} the boundary divisor is $D=D_1\cup D_2$ with smooth
components meeting transversally along $Z=D_1\cap D_2$.  The extended classes
are constructed directly on $X$: one writes down a $\calC^\infty$ connection
$\nabla^{\#}$ on the canonical extension, patched from local models over a
cubical decomposition of $X$ induced by corner coordinates
$(r_1,r_2):X\to[0,1]^2$, and takes its Cheeger-Simons differential character.
The corner datum is the \emph{pair} of monodromy weight filtrations
$W^{(1)}=W(N_1)$, $W^{(2)}=W(N_2)$, together with a flat connection on the
double graded $\Gr^{W^{(2)}}\Gr^{W^{(1)}}(E)$, and $\nabla^{\#}$ is required
to preserve both filtrations at the corner.  Comparison with the regulator is
made through a $2$-cubical homotopy pushout with $BGL(F[t_1,t_2])^+$ at the
$2$-face, and vanishing of the volume regulators is obtained in hermitian
$K$-theory: a self-dual bigrading makes the two-parameter deformation a family
in $O_{p,q}(\cc[t_1,t_2])$, Karoubi's homotopy invariance
\cite{KaroubiHerm} applied twice identifies the resulting classifying space
rationally with $BO_{\infty,\infty}(\cc)^+$, and Reznikov's invariant-theory
computation kills the Borel classes there once and for all.

Everything in that construction rests on the following lemma.

\begin{lemma}[Two-filtration lemma]\label{lem:twofilt}
Any two finite filtrations $W^{(1)},W^{(2)}$ of a finite-dimensional vector
space $V$ admit a simultaneous splitting: a bigrading
$V=\bigoplus_{a,b}V_{a,b}$ with $W^{(1)}_a=\bigoplus_{a'\leq a}V_{a',\bullet}$
and $W^{(2)}_b=\bigoplus_{b'\leq b}V_{\bullet,b'}$.  The same holds locally
for a pair of filtrations of a $\calC^\infty$ bundle by subbundles.
\end{lemma}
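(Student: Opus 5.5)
The plan is to prove the two-filtration lemma by induction on $\dim V$, splitting off one step of $W^{(1)}$ at a time. Concretely, I will pick complements adapted to both filtrations. For each pair $(a,b)$ set
\[
  V_{\le a,\le b} \;:=\; W^{(1)}_a\cap W^{(2)}_b,
\]
and choose a subspace $V_{a,b}\subset V_{\le a,\le b}$ complementary to
\[
  V_{<a,\le b}+V_{\le a,<b} \;=\; W^{(1)}_{a-1}\cap W^{(2)}_b + W^{(1)}_a\cap W^{(2)}_{b-1}.
\]
The claim to verify is that $V=\bigoplus_{a,b}V_{a,b}$, with $W^{(1)}_a=\bigoplus_{a'\le a}V_{a',\bullet}$, and similarly for $W^{(2)}$.

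To see this, I would first show by induction on $(a,b)$ in the product order that $\sum_{a'\le a,\,b'\le b}V_{a',b'}=V_{\le a,\le b}$. That is immediate from the construction, since each $V_{\le a,\le b}$ is the sum of $V_{a,b}$ and the two smaller intersections. The key point is the dimension count. Here the decisive fact is that the lattice generated by two flags is distributive. Distributivity gives
\[
  \dim\bigl(V_{<a,\le b}+V_{\le a,<b}\bigr)=\dim V_{<a,\le b}+\dim V_{\le a,<b}-\dim V_{<a,<b},
\]
because $V_{<a,\le b}\cap V_{\le a,<b}=V_{<a,<b}$ exactly, this intersection being computed termwise. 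Summing the resulting $\dim V_{a,b}$ by inclusion–exclusion then telescopes to $\dim V_{\le a,\le b}$, so the sum is direct. Taking $b$ maximal yields the description of $W^{(1)}_a$, and symmetrically for $W^{(2)}$.

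For the bundle version, the point is that the ranks of $W^{(1)}_a\cap W^{(2)}_b$ need not be constant. So ``locally'' should be read as: near a point where these intersections have locally constant rank (an open dense set), they are subbundles, and smooth complements $V_{a,b}$ can be chosen with a hermitian metric. I expect the main obstacle to be exactly this rank issue for bundles. I will state the bundle assertion under the hypothesis that the intersections $W^{(1)}_a\cap W^{(2)}_b$ are subbundles, which holds in \cite{IS-2div} because the weight filtrations there are flat for the local system. Under that hypothesis, orthogonal complements taken with respect to a metric give the splitting globally on the patch.
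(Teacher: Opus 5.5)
Your argument is correct, but it is not the route the paper takes. The paper gives no proof of Lemma~\ref{lem:twofilt}. It attributes the lemma to Deligne and justifies it, in the introduction and after Example~\ref{ex:threeflags}, by the Bruhat decomposition: $GL(V)$ has finitely many orbits on pairs of flags, and a simultaneous splitting is a basis adapted to both, i.e.\ a witness that the pair is in standard position. You instead build the splitting directly from the grid of intersections $W^{(1)}_a\cap W^{(2)}_b$, choosing complements and checking directness by an inclusion--exclusion count.

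The Bruhat viewpoint buys a classification of relative positions, which is what the paper actually uses to explain why the lemma fails for three flags. Your construction buys three things: it is self-contained, it handles partial filtrations directly, and it behaves well in families. The numbers $\dim V_{a,b}$ you compute are exactly the complete invariant of the relative position. Your argument shows that a smooth splitting by subbundles exists precisely when the ranks $\dim(W^{(1)}_a\cap W^{(2)}_b)$ are locally constant, and that it then exists on the whole domain (via orthogonal complements), not merely locally.

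You are right to add this hypothesis, and it is necessary, not a convenience: the bundle clause of the lemma as printed is false without it. If a bigrading by subbundles exists near a point, then $W^{(1)}_a\cap W^{(2)}_b=\bigoplus_{a'\le a,\,b'\le b}V_{a',b'}$ has locally constant rank. Now take the two-step filtrations $0\subset\langle e_1\rangle\subset\cc^2$ and $0\subset\langle e_1+xe_2\rangle\subset\cc^2$ of the trivial bundle over $x\in\rr$. The rank of $\langle e_1\rangle\cap\langle e_1+xe_2\rangle$ jumps at $x=0$, where the pair changes Bruhat cell. In the application the hypothesis does hold, for essentially the reason you give. Near a corner, the canonical extension is trivialized by a frame obtained from the fibre $V$ through the commuting operators $N_j$. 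These preserve both weight filtrations, so the filtrations and all their intersections are constant in that frame.

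Two small corrections.
\begin{enumerate}
\item The step you attribute to distributivity needs nothing of the kind. The identity $(W^{(1)}_{a-1}\cap W^{(2)}_b)\cap(W^{(1)}_a\cap W^{(2)}_{b-1})=W^{(1)}_{a-1}\cap W^{(2)}_{b-1}$ is immediate from the nesting, and $\dim(P+Q)=\dim P+\dim Q-\dim(P\cap Q)$ holds for any two subspaces. This is just as well, since distributivity of the lattice generated by two chains is essentially a restatement of the lemma.
\item Your opening sentence announces an induction on $\dim V$ that you never carry out. The induction on $(a,b)$ in the product order is what does the work, and it suffices.
\end{enumerate}
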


The lemma is what makes the corner datum available, what defines the scalings
generating the two-parameter deformation, what drives the proof that the class
is independent of the pair of filtrations, and---in its self-dual form---what
produces the family in $O_{p,q}(\cc[t_1,t_2])$.

\subsection{The barrier at a triple point}

For three filtrations the lemma is false, and it fails in the smallest
possible example.

\begin{example}\label{ex:threeflags}
Let $V=\cc^2$ and let $L_1=\langle e_1\rangle$, $L_2=\langle e_2\rangle$,
$L_3=\langle e_1+e_2\rangle$, with the three two-step filtrations
$0\subset L_i\subset V$.  A simultaneous splitting would exhibit each $L_i$ as
a sum of summands of one fixed decomposition of $V$.  As $\dim V=2$, such a
decomposition has at most two non-zero summands and hence yields at most two
lines of that form.  Three distinct lines cannot all be among them.
\end{example}

The failure: for a \emph{pair} of flags,
$GL(V)$ acts on the product of flag varieties with finitely many orbits---the
Bruhat decomposition---and a simultaneous splitting is exactly a basis adapted
to both, i.e.\ a witness that the pair is in standard position.  For
\emph{triples} of flags there is no normal form.  

\subsection{Obstruction}\label{prop:triple-obstruction}
Let $p\in D_1\cap D_2\cap D_3$ with commuting nilpotent logarithms $N_i$ and
weight filtrations $W^{(i)}=W(N_i)$, $i=1,2,3$.  Then the corner datum of
\cite{IS-2div} has no analogue: the triple graded
$\Gr^{W^{(3)}}\Gr^{W^{(2)}}\Gr^{W^{(1)}}(E)$ depends on the order of the
gradings, with no canonical identification between the six orderings absent a
simultaneous splitting; consequently neither the corner model connection
preserving all three filtrations with flat triple graded, nor the
Jordan-H\"older argument for independence of the filtrations, nor the
polarization-compatible splitting needed for the hermitian step, can be
written down.

By contrast the \emph{other} difficulty recorded in \cite[\S1]{IS-2div}, the
failure of the codimension count on a multi-parameter collar, does not recur.
A connection preserving a tuple of filtrations with flat graded has curvature
strictly decreasing the total degree filtration
$F_c=\sum_{a_1+\cdots+a_k\leq c}\bigcap_iW^{(i)}_{a_i}$, whatever $k$ is, so
all Chern-Weil forms vanish identically; this is
Proposition~\ref{prop:nilflat-ch-zero} in the present formulation.  The
simultaneous splitting is the sole obstruction.

There is a second, purely combinatorial degeneracy at $k=2$.  The poset of
non-empty strata is $\{D_1,D_2,Z\}$, its nerve is a single square, and there
are no chains $I\subset J\subset K$ of length three: every coherence condition
on the patching data is either between a stratum and the corner or vacuous,
and all can be checked by hand.  For $k\geq3$ the nerve has cells in every
dimension up to $k$, the coherence conditions become a genuine cocycle
problem, and one needs both a fibrewise contractibility statement and a
mechanism converting it into a global choice.  In the present paper these are
the contractibility of the poset of $N_\bullet$-isotropic filtrations
(Theorem~\ref{thm:itro}) and the \v{C}ech section theorem
(Theorem~\ref{thm:cech-section}).  Neither has anything to do at $k=2$, which
is why neither appears in \cite{IS-2div}.

\subsection{Why the barrier forces the present route}

\S~\ref{prop:triple-obstruction} indicates what the correct corner
datum is in general.  At multiplicity $k\geq3$ the \emph{tuple}
$(W^{(1)},\dots,W^{(k)})$ is not a usable object, but a \emph{single}
filtration is, and Hodge theory supplies one: the monodromy weight filtration
$W(N_I)$ of the total operator $N_I=\sum_{i\in I}N_i$ on the stratum $D_I$,
independent of the chosen positive coefficients by Cattani-Kaplan-Schmid
\cite{CKS}, with compatibility along chains $I\subset J$ supplied by
Mochizuki's theorem \cite{Mochizuki} for tame harmonic bundles.  This is the
datum on which the patching collections of Section~\ref{sec:patching} are
built, and it is the reason those data are indexed by \emph{chains} rather
than by tuples.

The rest of the present method follows from that choice.  Once the corner
datum is a filtration attached to each stratum, coherent along chains, the
natural way to interpolate between the strata is a multi-parameter Rees
construction (Section~\ref{sec:multrees}), which converts the filtrations into
an algebraic bundle $F_K$ over a cubical realization
(Theorem~\ref{thm:global-rees}); the comparison with the regulator then splits
into a $K$-theoretic half on the affine realization
(Appendix~\ref{app:rees}, \S\ref{subsec:classmap}) and a Hodge-theoretic half
on the projective completion (\S\ref{subsec:degeneration}), the latter
requiring the Dupont-Hain-Zucker and Burgos-Gil theories.  The hermitian
argument likewise moves from the universal setting of \cite{IS-2div} to a
global one on $X$: an indefinite hermitian metric on the canonical extension
with a compatible connection, obtained from contractibility of the poset of
$N_\bullet$-isotropic filtrations and the \v{C}ech section theorem, after
which the volume regulators vanish \emph{pointwise as forms}
(Section~\ref{sec:volume}) rather than universally on a classifying space.

\subsection{What the bifiltered method gives where it applies}

The comparison is not simply that one method is more general.  

\emph{(1)} The corner datum comes from Lemma~\ref{lem:twofilt}, whereas the chain compatibility used here rests on Mochizuki's
theorem.  The construction of the classes in \cite{IS-2div}, with their
additivity, functoriality, rigidity and compatibility with the Deligne Chern
class, is therefore free of harmonic bundle theory; Mochizuki's theorem
re-enters only at the deformation to a variation of Hodge structure, a step
common to both papers.

\emph{(2)} The vanishing of the volume regulators is proved once and for all
on a classifying space, so that vanishing for a given $(X,D,\zeta)$ is formal.
Here it is a global existence problem on $X$, solved anew for each situation.

\emph{(3)} The class is the differential character of an explicit connection
on $X$ itself: no cubical realization, no algebraic model over a number field,
no projective completion, and hence no need for the arithmetic Chern character
of \cite{BG}.

\emph{(4)} The corner model is explicit---a bigrading and two commuting
scalings---which makes the two-component case the natural place to look for
refinements such as effective bounds on the order of the torsion
(Problem~\ref{prob:effective}).


\section{Further properties: deformation invariance, VHS, and open problems}
\label{app:further}

\subsection{Properties of the Chern-Simons classes}

\begin{proposition}[Functoriality]
\label{prop:CS-funct-app}
For a morphism $f:Y\to X$ of smooth projective varieties, $f^*\CS_p(L)=\CS_p(f^*L)$.
\end{proposition}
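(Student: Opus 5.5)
The approach is to reduce the statement to naturality of the ingredients used to define $\CS_p(L):=\CS_p(\nabla^{\Del})$. Let $f:Y\to X$ be a morphism such that $D_Y:=f^{-1}(D)_{\mathrm{red}}$ is a simple normal crossings divisor on $Y$ (this is implicit in the statement, since otherwise $\CS_p(f^*L)$ is not defined). The local monodromy of $f^*L$ around a component $E$ of $D_Y$ is a product $\prod_i T_i^{m_i(E)}$, where $m_i(E)$ is the multiplicity of $E$ in $f^*D_i$. The $T_i$ meeting at a point of $D$ are commuting unipotent operators, so this product is again unipotent. Hence $f^*L$ falls within the setting of Theorem~\ref{thm:main}.

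The first step is to identify the canonical extension of $f^*L$. The pullback $(f^*F,f^*\nabla)$ is a logarithmic connection along $D_Y$. Its residue along $E$ is $\sum_i m_i(E)N_i$ restricted to $E$, which is nilpotent because the $N_i$ commute locally. By the uniqueness of the extension with nilpotent residues, $f^*F$ is therefore the canonical extension $F_Y$ of $f^*L$. This is the same argument used in Corollary~\ref{cor:effective-betti}.

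The second step uses naturality of the Cheeger--Simons character for smooth maps, as in the proof of Proposition~\ref{prop:DS-CS-welldef}(2a). This gives $f^*\CS_p(\nabla^{\Del})=\widehat{\ch}_p(f^*F,f^*\nabla^{\Del})$. The resulting character is flat, because its curvature is $f^*\ch_p(\nabla^{\Del})=0$ by Proposition~\ref{prop:nilflat-ch-zero}.

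The main obstacle is the third step: showing that $f^*\nabla^{\Del}$ computes the same class as Deligne's patched connection $\nabla^{\Del}_Y$ built directly on $Y$.

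1. I would first check that $f^*\nabla^{\Del}$ is locally nil-flat. On $f^{-1}(U_I)$ it preserves the pulled-back filtration $f^*W(N_I)$, and its graded pieces are flat.
2. The pulled-back patching data differ from those of $Y$. The filtration $W(N_I)$ need not equal $W(\sum_E m_i(E)N_i)$, but both are nil-filtrations for the same commuting family of nilpotents. By Theorem~\ref{thm:filt-contractible} the poset $\Filt^{A,K}$ is contractible, so the two collections can be joined through a common refinement.
3. I would then apply the argument of Theorem~\ref{thm:Del-indep-app}. Both connections are locally nil-flat $F^1$-type connections compatible with patching collections that admit a common refinement. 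By Theorem~\ref{thm:DHZ-app}(2) they have the same Deligne class, and Theorem~\ref{thm:CS-DB-app}(3) together with injectivity of $\alpha$ on the $\aaa$-realization (Remark~\ref{rem:alpha-kernel}) gives equality of the $\CS_p$ classes.

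If this comparison proves delicate, a fallback is a transgression argument in the style of Step 2 of Theorem~\ref{thm:deformation-app}. One would join the two connections by the straight-line path. The integrand $\Tr(\dot\nabla_t\wedge F(\nabla_t)^{p-1})$ vanishes because both $\dot\nabla_t$ and the curvature are strictly triangular for a common local refinement, so the transgression form is zero.

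Combining the three steps gives $f^*\CS_p(L)=\CS_p(\nabla^{\Del}_Y)=\CS_p(f^*L)$.
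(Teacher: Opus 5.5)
Your Steps~1 and~2 are fine, but the main line of Step~3 does not work.

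\textbf{Why the main line of Step~3 fails.} First, contractibility of $\Filt^{A,K}$ (Theorem~\ref{thm:filt-contractible}) says nothing about two given elements having a common refinement. A contractible poset can contain pairs with no upper bound, joined only by zigzags. Moreover, the two collections do not live over the same covering. The pulled-back data sit on $\{f^{-1}(U_I)\}_{I\in A_X}$, indexed by the poset of $X$. That is not an adapted covering of $(Y,D_Y)$, so there is no common covering, let alone a common cubical realization, on which a ``common refinement'' would make sense.

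Second, and decisively, Theorem~\ref{thm:DHZ-app}(2) on $Y$ only identifies classes in $H^{2p}_\calD(Y,\zz(p))$. It does so for \emph{any} two $F^1$-connections on $F_Y$, so patching compatibility plays no role there. To return to $H^{2p-1}(Y,\cc/\zz)$ you need $\alpha$ to be injective on $Y$ itself. Its kernel is the image of $F^pH^{2p-1}(Y,\cc)$, which is non-zero in general (Remark~\ref{rem:alpha-kernel}). Injectivity on an $\aaa$-realization is a statement about bundles on $\aaa\Cube A_Y$. Placing $f^*\nabla^{\Del}$ there is exactly the comparison you are trying to prove, so item~3 is circular.

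\textbf{The fallback is the right argument, but it needs a real computation.} It also makes the Deligne-cohomology detour unnecessary. However, ``strictly triangular for a common local refinement'' is not enough. Two connections that preserve a common filtration but induce different flat structures on the graded pieces have transgression equal to a non-zero multiple of $\Tr\bigl((\Gr A)^{2p-1}\bigr)$ there, and this need not vanish as a form.

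What is true, and suffices, is the following local statement. Near $x=f(y)$, take a polydisc $V$ adapted to $D$ that contains every $U_k$ with $\rho_k\neq0$ near $x$, as in the proof of Lemma~\ref{lem:Del-nilflat}. In the frame $\tilde v=z_1^{N_1}\cdots z_a^{N_a}v$ one has $\nabla^{\Del}=d+\sum_iN_i\,\eta_i$ with smooth $\eta_i=\sum_k\rho_k\,\dlog(z_i/z_i^{(k)})$, where $z_i^{(k)}:=1$ if $U_k\cap D_i=\emptyset$.

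Next take a polydisc $V_Y\ni y$ adapted to $D_Y$ with $f(V_Y)\subset V$, and write $f^*z_i=u_i\prod_Ew_E^{m_i(E)}$ with $u_i$ units. Then:
\begin{itemize}
\item $f^*\tilde v$ and the canonical frame $\prod_Ew_E^{N_E}v$ of $F_Y$ differ by $g=\exp(\sum_i\log u_i\,N_i)$, where $N_E=\sum_im_i(E)N_i$; this also reproves Step~1.
\item In the canonical frame of $F_Y$, $\nabla^{\Del}_Y=d+\sum_EN_E\,\eta^Y_E$.
\item The gauge term $g^{-1}dg=\sum_iN_i\,\dlog u_i$ lies in the same span.
\end{itemize}
Hence along the straight line $\nabla_t$ from $f^*\nabla^{\Del}$ to $\nabla^{\Del}_Y$, both $\dot\nabla_t$ and $F(\nabla_t)$ take values in $\mathcal N\otimes A^\bullet$, where $\mathcal N$ is the non-unital algebra generated by the commuting nilpotents $N_i$. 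Every product of elements of $\mathcal N$ is nilpotent, so $\Tr(\dot\nabla_t\wedge F(\nabla_t)^{p-1})\equiv0$. Proposition~\ref{prop:transgression} then gives equality of the differential characters themselves, and with Step~2 this gives $f^*\CS_p(L)=\CS_p(f^*L)$.

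\textbf{Comparison with the paper.} Repaired this way, your proof takes a genuinely different route. The paper deduces the statement in one line from $\CS_p(L)=r_L^*(r_{2p-1})$ and naturality of the classifying map $r_L$. Your route is elementary: naturality of Cheeger--Simons characters plus pointwise vanishing of a transgression form. It gives equality at the level of characters and is independent of the regulator comparison. The paper's route additionally relies tacitly on $r_{f^*L}\simeq r_L\circ f$, which requires comparing patching data over the different adapted coverings of $X$ and $Y$. That is the same difficulty that defeats your primary Step~3. The same local computation also proves Theorem~\ref{thm:Del-indep-app} without appealing to injectivity of $\alpha$.
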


\begin{proposition}[Additivity]
\label{prop:CS-add-app}
$\CS_p(L_1\oplus L_2) = \CS_p(L_1)+\CS_p(L_2)$.
\end{proposition}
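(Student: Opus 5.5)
The plan is to prove additivity at the level of connections and then pass to classes. Write $F_1,F_2$ for the canonical extensions of $L_1,L_2$. The first step is to observe that the canonical extension of $L_1\oplus L_2$ is $F_1\oplus F_2$, with connection $\nabla_1\oplus\nabla_2$. This sum has logarithmic poles and nilpotent residues $N_i^{(1)}\oplus N_i^{(2)}$, so it is the canonical extension by the uniqueness part of Deligne's theorem.

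Next, I fix one covering, one set of coordinates and one partition of unity, and use them for $F_1$, $F_2$ and $F_1\oplus F_2$ simultaneously in Definition~\ref{def:deligne-patch}. Subtracting the $\dlog$ terms is done blockwise, and averaging with the $\rho_i$ is linear. Hence
\[
  \nabla^{\Del}_{L_1\oplus L_2}=\nabla^{\Del}_{L_1}\oplus\nabla^{\Del}_{L_2}
\]
on the nose. By Theorem~\ref{thm:Del-indep-app}, the class $\CS_p$ does not depend on these choices, so computing with the common choice is legitimate.

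The third step is to check additivity of the Cheeger--Simons character for a direct sum of connections. Curvature is block diagonal and $\Tr$ is additive on block-diagonal matrices. Choose unitary reference connections $\nabla_0^{(1)},\nabla_0^{(2)}$ and use their direct sum as the reference for $F_1\oplus F_2$, together with the block-diagonal straight-line path. Along this path $\dot\nabla_t$ and $F(\nabla_t)$ are block diagonal, so
\[
  \Tr\bigl(\dot\nabla_t\wedge F(\nabla_t)^{p-1}\bigr)=\sum_{j=1,2}\Tr\bigl(\dot\nabla^{(j)}_t\wedge F(\nabla^{(j)}_t)^{p-1}\bigr).
\]
Integrating in $t$ shows that the normalized transgression form $v_p^{\mathrm{nm}}$ is additive. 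Equivalently, in the definition of $\widehat{\ch}_p$ via classifying maps (Appendix~\ref{app:diffchar}), the universal $\ch_p$ is additive under the Whitney sum map of Grassmannians. In either formulation, $\widehat{\ch}_p$ of a direct sum equals the sum of the $\widehat{\ch}_p$, with curvatures $0+0$, so the identity holds in $H^{2p-1}(X,\cc/\zz)$.

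The main obstacle I expect is the absolute, rather than relative, normalization: one must confirm that $\widehat{\ch}_p$ of the reference connections is itself additive, so that no cross term appears. I would handle this with the Grassmannian definition and the Whitney sum map $\mathbf{Gr}(r_1,N_1)\times\mathbf{Gr}(r_2,N_2)\to\mathbf{Gr}(r_1+r_2,N_1+N_2)$, which pulls back $\ch^{\mathrm{univ}}_p$ to $\ch^{\mathrm{univ}}_p\otimes1+1\otimes\ch^{\mathrm{univ}}_p$ as forms, since the universal connections are orthogonal sums. This gives additivity of characters for arbitrary connections, and therefore in particular for $\nabla^{\Del}$.
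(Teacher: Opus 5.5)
Your argument is correct, but it is not the paper's route. The paper disposes of additivity in one line through the classifying map. It uses $\CS_p(L)=r_L^*(r_{2p-1})$ (Theorem~\ref{thm:CS-regulator}, Proposition~\ref{prop:rL-CS}), together with the fact that $L\mapsto r_L$ respects direct sums: the weight filtration of $N_I^{(1)}\oplus N_I^{(2)}$ is the direct sum of the weight filtrations, so the functor $A\to\Xi_K$, the Rees bundle, and hence $r_L$ all split. Thus $r_{L_1\oplus L_2}$ is the Whitney sum of $r_{L_1}$ and $r_{L_2}$ under the H-space structure of $BGL^+(K)$, and the paper concludes by primitivity of $r_{2p-1}$.

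You instead stay on $X$. You use uniqueness of the Deligne extension, the observation that with common choices $\nabla^{\Del}$ is literally block diagonal (Theorem~\ref{thm:Del-indep-app} licensing the common choice), and additivity of the Cheeger--Simons character. You split the last into additivity of the block-diagonal transgression form and additivity for unitary reference connections via the Whitney sum map of Grassmannians.

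Your route needs no field of definition, no Rees bundle, and none of the regulator comparison. It also supplies something the paper's route presupposes. Primitivity of $r_{2p-1}$ amounts to additivity of $\CS_p$ for the tautological flat bundle on $BGL(F)^\delta$. Moreover, Definition~\ref{def:universal-regulator} actually cites the present proposition to make $r_{2p-1}$ stable in $r$. So deriving additivity from $BGL^+(K)$ is circular unless an argument like yours is run first. The paper's route, in return, gives additivity of the regulator pullbacks $r_L^*(r_{2p-1})$ directly in the $K$-theoretic language of Step~C.

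One precision. The Grassmannian (Narasimhan--Ramanan) classification applies only to unitary connections. Additivity for the non-unitary $\nabla^{\Del}$ therefore comes from combining it with your transgression step (Proposition~\ref{prop:transgression}), not from the Grassmannian argument alone as your last sentence suggests. Also, the bounding chains in $\mathbf{Gr}(r_1,N_1)\times\mathbf{Gr}(r_2,N_2)$ exist because that product has no odd homology.
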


Both follow immediately from the classifying map construction and the
corresponding properties of $BGL^+(K)$.

\subsection{The harmonic bundle interpretation}

In the setting of the non-abelian Hodge correspondence, the patching data has
a natural interpretation:

\begin{proposition}
Let $(E,\theta,h)$ be a tame harmonic bundle on $X^*$ with unipotent monodromy.
The weight filtrations $W(N_I)$ near each stratum $D_I$ are the filtrations by
$L^2$ growth rate of sections near $D_I$.  The $N_\hdot$-isotropic condition
on the weight filtration is equivalent to the admissibility condition for the
harmonic bundle.
\end{proposition}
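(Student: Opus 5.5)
The plan is to work locally at a point of a stratum $D_I$ and reduce both assertions to the asymptotic norm estimates for tame harmonic bundles with unipotent monodromy. In the polydisc neighbourhood $U_I$ of Proposition~\ref{prop:covering}, write $D\cap U_I=\{z_i=0,\ i\in I\}$ and identify $L|_{U_I^*}$ with a fixed fibre $V$ carrying the commuting nilpotents $N_i$, $i\in I$. First I would invoke Mochizuki's norm estimate \cite{Mochizuki,Mochizuki2}, the higher-dimensional form of the one-variable estimate of \cite{Simpson-harmonic}. It says that on each sector in which the quantities $-\log|z_i|$, $i\in I$, are ordered and comparable, a flat section $v$ with nonzero image in $\Gr^{W(N_I)}_k$ satisfies
\[
  |v|_h \;\asymp\; \prod_{i\in I}\bigl(-\log|z_i|\bigr)^{k/2}
\]
up to the corrections coming from the relative weight filtrations. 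Along the diagonal direction, where all $-\log|z_i|$ are equal, this reduces to $|v|_h\asymp(-\log|z|)^{k/2}$. Since $W(N_I)$ does not depend on the positive coefficients, by \cite{CKS}, this growth rate characterises $W(N_I)_k$. Integrating the estimate against the Poincaré-type volume near $D_I$ then shows that $W(N_I)_k$ is exactly the space of flat sections of $L^2$ growth order at most $k$. To make this rigorous I would restrict to a generic holomorphic disc transverse to $D_I$ along the diagonal direction. On such a disc the monodromy logarithm is $N_I$, and Simpson's curve case applies directly.

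For the second assertion I would use Proposition~\ref{prop:polarization}. The flat form $Q$ is nondegenerate and each $N_i$ is $Q$-self-adjoint. By the $\mathfrak{sl}_2$-splitting of $W(N_I)$, the graded pieces $\Gr_k$ and $\Gr_{-k}$ are $Q$-dual. This gives $W(N_I)_{-k}=W(N_I)_{k-1}^{\perp_Q}$, and together with $N_iW(N_I)_k\subset W(N_I)_{k-2}$ this is precisely the $N_\bullet$-isotropic condition of \S\ref{sec:nisotropic}. Admissibility in Kashiwara's sense requires the relative weight filtrations $W(N_J\,|\,W(N_I))$ to exist for $I\subset J$, and the Hodge filtration to extend. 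Along chains these are supplied by Theorem~\ref{thm:seq-compat}, and the Hodge filtration extends because the norm estimates above bound $h$ polynomially. So the harmonic bundle data give admissibility together with isotropy. For the converse direction, I would show that a $Q$-isotropic filtration $F$ that is strictly decreased by all the $N_i$, and whose graded carries the induced polarization, determines the limiting mixed Hodge structure. The aim is then to show, via the $SL_2$-orbit theorem of \cite{CKS}, that $F$ coincides with $W(N_I)$. That identification is what recovers admissibility.

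I expect the converse direction to be the main obstacle. The statement only makes sense once ``equivalent'' is made precise, because the isotropic filtrations form a whole contractible poset (Proposition~\ref{prop:nisofilcontr}) and not a single filtration. My first move would be to restrict the claim to the weight filtration itself, asking that $W(N_I)$ be $N_\bullet$-isotropic. With that reading, the converse reduces to the uniqueness of the monodromy weight filtration among self-dual filtrations that are strictly decreased by the $N_i$. That uniqueness I would deduce from Jacobson--Morozov together with the positivity of $Q$ on the primitive parts.
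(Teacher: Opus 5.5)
The paper states this proposition without proof; only a heuristic sentence follows it. So there is no argument of the paper to compare with. Judged on its own, your proposal handles the first sentence adequately but has a genuine gap in the second.

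\textbf{First sentence.} Restricting to a disc $z_i=z$ ($i\in I$) transverse to $D_I$, on which the monodromy logarithm is $N_I$, and applying the one-variable estimate of \cite{Simpson-harmonic} is a sound route. Several points need fixing.
\begin{itemize}
\item Your displayed estimate is misquoted. $\prod_{i\in I}(-\log|z_i|)^{k/2}$ has exponent $|I|k/2$ on the diagonal, which contradicts your next line.
\item The correct estimate, from \cite{CKS} and \cite{Mochizuki}, holds on a sector $-\log|z_{i_1}|\geq\cdots\geq-\log|z_{i_m}|$ with ratios bounded below. There it reads $\prod_{j<m}\bigl(\log|z_{i_j}|/\log|z_{i_{j+1}}|\bigr)^{k_j/2}\,(-\log|z_{i_m}|)^{k_m/2}$, with $k_j$ the weight for $W(N_{i_1}+\cdots+N_{i_j})$.
\item Consequently growth near $D_I$ depends on the direction of approach. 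What you actually prove is that $W(N_I)$ is the growth filtration along directions interior to the cone, and you should state it that way.
\item You need weighted norms. Plain $L^2$ against the Poincar\'e volume only separates $k\leq 0$ from $k\geq 1$.
\item For a tame harmonic bundle, two facts you use come from \cite{Mochizuki}, not \cite{CKS}: independence of $W(\sum_i a_iN_i)$ from $a_i>0$, and the shift $N_iW(N_I)_k\subset W(N_I)_{k-2}$ for the individual $N_i$. Neither holds for arbitrary commuting nilpotents (take $N_2=-N_1$).
\end{itemize}

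\textbf{Second sentence: the gap.} The $N_\hdot$-isotropic condition of \S\ref{sec:nisotropic} presupposes a \emph{flat} nondegenerate Hermitian form making the $N_i$ self-adjoint. A general tame harmonic bundle has none. Since $\theta+\theta^{*_h}$ is $h$-self-adjoint, $\nabla_h+\theta+\theta^{*_h}$ preserves $h$ only when $\theta=0$, and the monodromy need not preserve any Hermitian form at all.

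Proposition~\ref{prop:polarization}, which you invoke, asserts that $h$ itself, without the Weil operator, is a flat indefinite form. That is incorrect: for a VHS the flat polarization is $h$ twisted by the Weil operator, with sign $(-1)^p$ on $\mathcal{E}^p$. So your argument runs only when $\theta$ is graded, i.e.\ for a polarized VHS. In that case Kashiwara admissibility, which is a condition on graded-polarized \emph{mixed} variations, holds automatically. For a general harmonic bundle ``admissibility'' is not defined. Either way the claimed equivalence has no content your argument can establish.

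\textbf{The converse is false as formulated.} Isotropy plus strict decrease does not single out $W(N_I)$. If all $N_i=0$ and $Q$ is indefinite, then $0\subset H\subset H^{\perp}\subset V$ is $N_\hdot$-isotropic for every isotropic $H$, while $W(0)$ is the trivial filtration. This non-uniqueness is exactly why Proposition~\ref{prop:nisofilcontr} concerns a contractible poset rather than a single point. What characterizes $W(N_I)$ is the hard Lefschetz condition $N_I^k:\Gr_k\xrightarrow{\sim}\Gr_{-k}$. Isotropy does not contain it, and your appeal to positivity on primitive parts presupposes it.

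\textbf{What your argument does prove.} It establishes a weaker, correct statement: if the harmonic bundle underlies a polarized VHS with flat polarization $Q$, then $W(N_I)$ is one element of $\mathbf{ifilt}^{N_\hdot}(V,Q)$. This follows from $W(N_I)_k^{\perp}=W(N_I)_{-k-1}$ together with the shift property above.
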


This provides an independent justification for the contractibility of the
poset of filtrations: the space of ``admissible filtrations'' in the sense of
harmonic bundle theory is contractible (it parametrizes choices of
``gauge-fixing'' near the boundary).

\subsection{Open problems}
\label{subsec:open-problems}

The natural hierarchy of boundary behaviour is
\[
  \text{unipotent}\;\longrightarrow\;\text{quasi-unipotent}
  \;\longrightarrow\;\text{arbitrary regular singular}
  \;\longrightarrow\;\text{irregular (wild)} ,
\]
of which the first stage is Theorem~\ref{thm:main} and the second is
Theorem~\ref{thm:quasi-unipotent}, proved in
Section~\ref{sec:parabolic} through locally abelian parabolic bundles.  The
third stage is not treated here; it is a question of residue eigenvalues that
are not rational, and the parabolic formalism of
Section~\ref{sec:parabolic}, which rests on rational weights and Kawamata
coverings, does not reach it.  It should not be confused with the fourth: a
regular-singular local system may have non-quasi-unipotent semisimple local
monodromy without having an irregular singularity, so ``non-quasi-unipotent''
and ``wild'' are not synonyms, the latter concerning Stokes structures and
being a genuinely different order of difficulty.  The problems below concern
that hierarchy together with effective bounds, arithmetic and $p$-adic
refinements, motivic realizations, mixed Hodge structures, and the global
behaviour of the invariant on moduli.

\begin{enumerate}[leftmargin=2em,label=\rm(\arabic*)]

\item \textbf{Effective torsion bounds.}\label{prob:effective}
Give an explicit bound for the order of $\CS_p(L)$ in terms of specified
geometric and arithmetic data of $X$, $D$ and $L$---the rank, the dimension,
the field of definition, and the denominators of the monodromy matrices.  More
strongly, determine whether such a bound can be chosen uniformly on a
connected component of $R^{\mathrm{nil}}$.

  It is worth being precise about what is already effective.  The
  Deligne-Sullivan covering can be taken to be a congruence covering, and its
  degree is then bounded by $|GL_r(A/\mathfrak{q}_1)|\cdot
  |GL_r(A/\mathfrak{q}_2)|$ for any two maximal ideals of distinct residue
  characteristic in a ring of definition $A$
  (Theorem~\ref{thm:effective-DS}); this bounds the order of the
  \emph{integral Betti} Chern classes $c^B_p(F)$
  (Corollary~\ref{cor:effective-betti}), and for $A=\Oo_K[1/M]$ gives
  $(\ell_1\ell_2)^{n r^2}$ with $n=[K:\qq]$ and $\ell_1<\ell_2$ the smallest
  primes not dividing $M$.  Remark~\ref{rem:no-topological-bound} shows that
  some such arithmetic dependence is unavoidable.

  None of this bounds $\mathrm{ord}\,\CS_p(L)$: as explained in
  Remark~\ref{rem:betti-vs-CS}, the kernel of
  $\partial:H^{2p-1}(X,\cc/\zz)\to H^{2p}(X,\zz)$ is divisible, so control of
  $c^B_p(F)$ gives no control of the class lifting it.  The obstruction is
  structural.  Our proof, like Reznikov's, produces the torsion statement by
  showing that $r_{\zeta_0}:X\to BGL^+(K)$ vanishes on real cohomology
  (Step~C of the proof of Theorem~\ref{thm:main}); converting this into an order
  requires bounding the exponent of the torsion subgroup of
  $K_{2p-1}(\Oo_K)$ together with the exponent of $(r_{\zeta_0})_*$ on
  $H_{2p-1}(X,\zz)_{\mathrm{tors}}$.  By Borel's theorem $K_{2p-1}(\Oo_K)$ is
  finitely generated, so its torsion is finite and such a bound exists in
  principle; making it explicit, and in particular making it uniform in
  $\zeta$ within a connected component of $R^{\mathrm{nil}}$, seems to be open
  even in the compact case $D=\emptyset$.

\item \textbf{Arbitrary regular-singular monodromy.}\label{prob:regsing}
Extend Theorem~\ref{thm:quasi-unipotent} to regular-singular local systems
whose local monodromies are not quasi-unipotent.  The parabolic bundle of
Construction~\ref{constr:par-from-L} still exists and is locally abelian by
the argument of Proposition~\ref{prop:par-loc-abelian}, but its weights are
irrational, so no Kawamata covering makes them integral and
Proposition~\ref{prop:par-pullback} has no analogue.  Determine what replaces
the reduction to the unipotent case, and whether $\CS_p$ depends explicitly on
the semisimple parts of the local monodromies.

\item \textbf{Irregular and wild singularities.}\label{prob:wild}
Construct extended Chern-Simons classes for irregular flat bundles,
incorporating the Stokes data, and determine whether an analogue of the
torsion theorem holds.  In particular, identify the appropriate replacement
for the $N_\hdot$-isotropic filtration formalism of
Section~\ref{sec:nisotropic} in the presence of Stokes filtrations.  The
analytic input would be Mochizuki's theory of wild harmonic bundles in place
of the tame theory used in Section~\ref{sec:deform-to-vhs-detail}.

\item \textbf{$p$-adic analogue.}\label{prob:padic}
Construct a $p$-adic analogue of the extended Chern-Simons class for suitable
crystalline or semistable $p$-adic local systems, with values in an
appropriate syntomic or $p$-adic Deligne cohomology theory, and investigate
whether an analogue of the torsion theorem holds.  Specifying the target
theory is part of the problem: the $\ell$-adic classes of
Corollary~\ref{cor:ladic-torsion} are obtained by reduction from the
transcendental class and so presuppose Theorem~\ref{thm:main}
(\S\ref{subsec:ladic-proof}), whereas a genuinely $p$-adic construction should
be intrinsic.

\item \textbf{Motivic interpretation.}\label{prob:motivic}
Determine whether $\CS_p(L)$ admits a canonical motivic lift under an
appropriate regulator map, and, if so, describe such a lift in terms of
motivic cohomology and higher Chow groups.  The degree is consistent with the
secondary nature of the invariant, $H^{2p-1}_{\mathcal{M}}(X,\zz(p))\cong
CH^p(X,1)$; what should be part of the formulation, rather than assumed, is
the regulator map itself.

\item \textbf{Mixed variations of Hodge structure.}\label{prob:mixed}
Extend the torsion theorem to admissible graded-polarizable mixed variations
of Hodge structure, and determine the contribution to $\CS_p$ of the extension
classes between the graded pieces.

\item \textbf{Moduli spaces.}\label{prob:moduli}
By Theorem~\ref{thm:deformation-app} the assignment
\[
  R^{\mathrm{nil}}\;\longrightarrow\;H^{2p-1}(X,\cc/\zz),
  \qquad L\;\longmapsto\;\CS_p(L),
\]
is locally constant.  Describe this map: determine whether it descends to the
character variety or to the moduli space of local systems, determine its
image, and describe its compatibility with the functoriality and additivity of
Propositions~\ref{prop:CS-funct-app} and \ref{prop:CS-add-app}.

\end{enumerate}

\end{document}